\numberwithin{equation}{section}
\def \Aut{\mathrm{Aut}}
\def\calA{\mathcal A}
\def\diag{\mathrm{diag}}
	\def\av{\mathrm{top}}
\def\rank{\mathrm{rank}}
\def\vol{\mathrm{vol}}
\renewcommand{\emph}[1]{{\bf #1}}
\newcommand{\starsection}[1]{ \refstepcounter{section} \addcontentsline{toc}{section}{\hspace*{-.75em}$\bigast$\thesection.\hspace{1.em}#1}%
{\vspace{.7\linespacing}\normalfont\scshape
\hspace{.375em} $\bigast$\thesection.\hspace{.2em} #1}\par\vspace{.5\linespacing}}
\newcommand{\starsubsection}[1]{ \refstepcounter{subsection} \addcontentsline{toc}{subsection}{\hspace*{-.75em}$\bigast$\thesubsection.\hspace{1em}#1}%
{\vspace{1em} \par\noindent $\bigast$\thesubsection. \bfseries #1.}
} 
\def\Folner{F{\o}lner }
\theoremstyle{Theorem}
\newtheorem{theorem}{Theorem}[section]
\newtheorem{proposition}[theorem]{Proposition}
\newtheorem{claim}[theorem]{Claim} 
\newtheorem{lemma}[theorem]{Lemma}
\newtheorem{conjecture}[theorem]{Conjecture}
\newtheorem*{conjecture*}{Conjecture}
\newtheorem{corollary}[theorem]{Corollary}
\theoremstyle{definition}
\newtheorem{definition}[theorem]{Definition}
\newtheorem{question}[theorem]{Question}
\newtheorem{questions}[theorem]{Questions}
\newtheorem{example}[theorem]{Example}
\newtheorem{remark}[theorem]{Remark}
\theoremstyle{remark}
\theoremstyle{theorem}
\newenvironment{customthm}[1]
  {\innercustomthm}
  {\endinnercustomthm}
\newlist{enumlemma}{enumerate}{3} 
\setlist[enumlemma]{label*={ (\alph*)}, ref= {(\alph*)} }
\newlist{enumcount}{enumerate}{3} 
\setlist[enumcount]{label*={ (\arabic*)}, ref= {(\arabic*)} }
\newcommand{\diff}{\mathrm{Diff}}
\newcommand{\Diff}{\diff}
\newcommand{\supp}{\mathrm{supp}}
\newcommand{\td}{\tilde}
\newcommand{\id}{\mathrm{Id}}
\newcommand{\restrict}[2]{{#1}{\restriction_{{ #2}}}}
\newcommand{\Fol}{\mathcal{F}}
\newcommand{\diam}{\mathrm{diam}}
\newcommand{\sm}{\smallsetminus}
\newcommand{\C}{\mathbb {C}}
\newcommand{\R}{\mathbb {R}}
\newcommand{\Q}{\mathbb {Q}}
\newcommand{\Z}{\mathbb {Z}}
\newcommand{\N}{\mathbb {N}}
\newcommand{\T}{\mathbb {T}}
\newcommand{\vect}[1]{\vec{ #1}}
\newcommand{\inv}{^{-1}}
\newcommand{\Sl}{\mathrm{SL}}
\def\SL{\Sl}
\newcommand{\Gl}{\mathrm{GL}}
\def\GL{\Gl}
\newcommand{\SO}{\mathrm{SO}}
\def\So{\SO}
\newcommand{\SU}{\mathrm{SU}}
\newcommand{\su}{\mathfrak{su}}
\newcommand{\Sp}{\mathrm{Sp}}
\def\calE{\mathcal E}
\def\calG{\mathcal G}
\def \RP{\R P}
 \newcommand{\oldepsilon}{\mathchar"10F}
\newcommand{\eps}{\oldepsilon}
\newcommand{\fakeSS}[1]{\medskip \noindent{\bf #1.}}
\def\ae{a.e.\ }
\def\bs{\backslash}
\newcommand{\lieg}{\mathfrak g}
\newcommand{\lieh}{\mathfrak h}
\newcommand{\lien}{\mathfrak n}
\newcommand{\liea}{\mathfrak a}
\newcommand{\lie}{\mathrm{Lie}}
\def\calB{\mathcal B}
\renewcommand{\bar}{\overline}
\def\erg{\mathcal E}
\title{Entropy, Lyapunov exponents, and rigidity of group actions} 
\author[A.~Brown (et al.)]{Aaron Brown \\
\\
with 4 appendices by Dominique Malicet, Davi Obata, Bruno Santiago and Michele Triestino, S\'ebastien Alvarez and Mario Rold\'an\\
\\
Edited by Michele Triestino}
\long\def\symbolfootnote[#1]#2{\begingroup\def\thefootnote{\fnsymbol{footnote}}
\footnote[#1]{#2}\endgroup}
\def\vect{\mathbf t}
\def\vecs{\mathbf s}
\def\vecv{\mathbf v}
\def\vecn{\mathbf n}
\def\vecm{\mathbf m}
\newcommand{\dfcn}[5]{\setlength{\arraycolsep}{1.5pt}\begin{array}{cccc}#1:&#2&\to&#3\\&#4&\mapsto&#5\end{array}}
\newcommand{\A}{\mathcal A}
\newcommand{\W}{W}
\newcommand{\B}{\mathcal B}
\newcommand{\Pp}{\mathcal P}
\newcommand{\Jac}{{\rm Jac}}
\newcommand{\essinf}{{\rm ess\,inf}}
\newcommand{\esssup}{{\rm ess\,sup}}
\newcommand{\HD}{{\rm HD}}
\newcommand{\vide}{\emptyset}
\def\lims{\mathop{\overline{\rm lim}}}
\def\limi{\mathop{\underline{\rm lim}}}
\def\dans{\mathop{\subset}}
\begin{document}

%\title{Entropy, Lyapunov exponents, and rigidity of group actions} 
%\author[A.~Brown (et al.)]{Aaron Brown} %\\
%%\contrib[with appendices by]{Dominique Malicet}
%%%,
%% \contrib{Davi Obata} \contrib[]{Bruno Santiago and Michele Triestino}  \contrib[]{S\'ebastien Alvarez and Mario Rold\'an}
%%\contrib[Edited by]{Michele Triestino}

\maketitle

	\pagenumbering{roman}

	\begin{abstract}
	This  text is an expanded series of lecture notes based on a 5-hour course given at the workshop entitled  {\it Workshop for young researchers: Groups acting on manifolds} held in Teres\'opolis, Brazil in June 2016.  
	The  course  introduced a number of classical tools in smooth ergodic theory---particularly Lyapunov exponents and metric entropy---as tools to study  rigidity properties of group actions on manifolds.  
	
	We do not present a comprehensive treatment of group actions or general rigidity programs.  Rather, we focus   on two rigidity results in  higher-rank dynamics: the measure rigidity theorem for  affine Anosov abelian actions on tori due to A. Katok and R. Spatzier and recent the work of the author with D. Fisher, S. Hurtado, F. Rodriguez Hertz, and Z. Wang on  actions of lattices in higher-rank semisimple Lie groups on manifolds  We give complete proofs of these results and present sufficient background in smooth ergodic theory needed for the proofs. A unifying theme in this text is the use of metric entropy and its relation to the geometry of conditional measures along foliations  as a mechanism to verify invariance of measures.   
	
	\smallskip
	{\noindent\footnotesize \textbf{MSC\textup{2010}:} Primary 22F05, 22E40. Secondary 37D25, 37C85.}
	\end{abstract}
	
%\maketitle	
	
	\newpage

%\part*{Front matter}	
	\setcounter{tocdepth}{2}
	
	\phantomsection
\addcontentsline{toc}{part}{Front matter}	
	\phantomsection
\addcontentsline{toc}{section}{Contents}
\tableofcontents
%	\tableofcontents
	
	\newpage

\section*{Preface}
This collective work originates from the workshop for young researchers \textit{Groups acting on manifolds} held in Teres\'opolis, June 20-24 2016. The structure of the text respects the format of the event: Aaron Brown was lecturing about rigidity of smooth group actions, but other talks were given by young researchers and devoted to explain background notions. We thank all the participants, whose active presence contributed to a very pleasant scientific experience.  The event was sponsored by the French-Brazilian Network in Mathematics, CNPq, UFF and EDAI.

\medskip

%group actions on manifolds
Smooth ergodic theory takes a prominent r\^ole in the modern theory of differentiable dynamical systems.  Typically one is interested in studying iterations of one single map, or a flow; this can be done under many different aspects, one of the most successful being the description of statistical behaviors, or the ergodic properties. That is, given a probability measure which is invariant for the dynamics, one wants to describe the distribution of the orbits, the typical rates of contraction/expansion, etc. One of the difficulties, or the richness, is that a given system usually admits different invariant measures, and for each measure the statistical descriptions may differ in a significant way. The book \cite{BDV} was giving a good account of the state of the art about a decade ago (see also \cites{AA,Mane} as more classical references).

The study of dynamical systems can be enlarged to include also general \textbf{group actions} on manifolds. This was historically motivated by geometry and foliation theory, but since the appearance of hyperbolic dynamical systems, dynamicists found an own interest for it. A starting point for what will be discussed in this text, goes back to works of Hirsch, Pugh and Shub \cites{HPS,PS}, where the notion of \textbf{Anosov action} first appeared. 
Not all isomorphism classes of groups are adapted to usual dynamical tools, and one usually restricts the attention to abelian, or amenable groups, or at least to groups containing ``large'' abelian subgroups.
Compared to the previous discussion, a first relevant difference for group actions is that 
invariant probability measures (for the full group!) in general do not abound. In the first part of this work, Aaron Brown presents a pioneering result by Katok and Spatzier \cites{KS1,KS2} stating that, under suitable hypotheses, invariant ergodic probability measures for the action of a higher rank abelian group must be of \textit{algebraic nature}; see Theorem \ref{thm:KS}. Indeed, the main source of examples of Anosov actions is given by Lie groups, notably by discrete diagonalizable groups of matrices acting on homogeneous spaces. One usually refers to these kinds of results as \textbf{measure rigidity}. 

\smallskip

The term \textit{rigidity} here will also refer to a different, but closely related setting: the so-called \textbf{Zimmer program}. Before explaining it, let us make a preliminary digression. Instead of discrete groups, we first focus on Lie groups. The celebrated Montgomery--Zippin theorem \cite{MZ} tells that the topological structure is intimately connected to geometry; one can go even further:

\begin{conjecture*}[Hilbert--Smith]
	If a locally compact topological group $G$ acts faithfully on
	some connected $n$-manifold $M$, then $G$ is a Lie group.
\end{conjecture*}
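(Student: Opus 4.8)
\medskip
\noindent\textbf{Proof proposal.} The Hilbert--Smith conjecture is a famous open problem, so what follows is the standard line of attack together with the cases in which it has been carried through. The plan has two stages: a classical group-theoretic reduction to the $p$-adic integers, and a topological argument, via Smith theory, that aims to rule those out. The difficulty---and the gap---is entirely in the second stage.

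\emph{Stage one: reduction to $\mathbb{Z}_p$.} Suppose $G$ is a locally compact group acting faithfully on a connected $n$-manifold $M$ and $G$ is not a Lie group. First I would invoke the solution of Hilbert's fifth problem (Gleason, Montgomery--Zippin, Yamabe): every locally compact group has an open subgroup which is an inverse limit of Lie groups, and such a group is a Lie group precisely when it has no small subgroups. Hence a non-Lie $G$ has arbitrarily small nontrivial compact subgroups; a short supplementary argument---using the Mann--Su bound on the rank of an elementary abelian $p$-group acting effectively on a fixed manifold to eliminate the ``wild'' profinite possibilities---produces inside $G$ a closed subgroup isomorphic to $\mathbb{Z}_p$ for some prime $p$. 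Restricting the action, it then suffices to prove: \textit{for every prime $p$, the group $\mathbb{Z}_p$ admits no faithful continuous action on a connected manifold} (the conjecture is in fact equivalent to this).

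\emph{Stage two: ruling out $\mathbb{Z}_p$-actions, and the main obstacle.} Assuming $\mathbb{Z}_p$ acts faithfully on $M^n$, the action factors through the finite quotients $\mathbb{Z}/p^k$, whose fixed-point sets are $\mathbb{Z}/p$-cohomology manifolds controlled by Smith theory; I would use this to pass to the orbit space $M/\mathbb{Z}_p$. The decisive input is Yang's ``dimension-raising'' phenomenon: a free $\mathbb{Z}_p$-action on an $n$-dimensional cohomology manifold has orbit space of cohomological dimension strictly larger than $n$ (as much as $n+2$ over $\mathbb{Q}$). The goal is to contradict this jump---either by bounding the covering dimension of $M/\mathbb{Z}_p$ by $n$ directly, or by exploiting extra regularity of the action so the orbit space cannot be so large. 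This is precisely the obstruction that keeps the conjecture open: in general there is no known way to bound the dimension of $M/\mathbb{Z}_p$, and it is not even known whether that orbit space is finite-dimensional. The argument \emph{can} be completed for $n\le 2$ (Montgomery--Zippin) and $n=3$ (Pardon, using Perelman's geometrization theorem), and for actions by Lipschitz homeomorphisms (Repov\v{s}--\v{S}\v{c}epin)---hence in particular for actions by $C^1$ diffeomorphisms, so that within the smooth category relevant to these notes the conjecture is a theorem---and likewise for H\"older or quasiconformal actions, and for actions preserving sufficient geometric structure. Any robust dimension bound for $M/\mathbb{Z}_p$ valid for arbitrary continuous actions would settle the general case; supplying one is the crux.
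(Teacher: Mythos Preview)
Your assessment is correct: the Hilbert--Smith conjecture is stated in the paper as an open \emph{conjecture}, not as a theorem with a proof. The paper offers no argument for it beyond the parenthetical remark that ``the conjecture reduces to prove that a group of $p$-adic integers $\Z_p$ admits no faithful actions on manifolds'' and the observation that it has been validated only for $n\le 3$ (citing Pardon). There is thus no proof in the paper to compare your proposal against.

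Your write-up is an accurate and well-informed summary of the state of the problem. You correctly give the Newman--Smith reduction to $\Z_p$ (which is all the paper mentions), you identify Yang's dimension-raising theorem as the key tool in the known partial results, and you list the settled cases---including the Repov\v{s}--\v{S}\v{c}epin Lipschitz case, which the paper itself invokes later (in Section~\ref{ssec:MSRcompact}) to conclude that $\mathrm{Isom}_g(M)$ is a Lie group when $g$ is merely continuous. Your discussion goes well beyond what the paper provides and is appropriate for a statement labeled \emph{Conjecture}.
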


This conjecture has  been validated only for dimension $n\le 3$ \cite{Pardon} (actually, the conjecture reduces to prove that a group of $p$-adic integers $\Z_p$ admits no faithful actions on manifolds). One may further ask, given a Lie group $G$, what is the lowest dimension $n(G)$ such that $G$ admits a faithful action on a manifold of dimension $n$? For example, the group $\mathrm{PSL}(n,\R)$ acts on the real projective space $\R P^{n-1}$, which is of dimension $n-1$, but it cannot act on a lower dimensional space. In general, for a simple Lie group $G$, the optimal lower bound depends on the maximal parabolic subgroup of $G$ \cite{Stuck}, but a satisfactory bound can be given in terms of the so-called \textit{(real) rank} of $G$ (this is by definition, the dimension of the largest abelian subalgebra $\mathfrak{A}$ of $\mathrm{Lie}(G)$ which satisfies that for every $a\in\mathfrak A$, the adjoint operator $ad(a):\mathrm{Lie}(G)\to\mathrm{Lie}(G)$ is diagonalizable over $\R$, see also Section \ref{sec:rank}). Since the fundamental work of Margulis \cite{Margulis}, it is natural to consider the same kind of question for \textit{lattices} (i.e.\ discrete, finite covolume subgroups) in simple Lie groups. The so-called \textit{Margulis's superrigidity} roughly states that every linear representation of a lattice of a simple Lie group of rank $\ge 2$ extends to the ambient group (more precisely, modulo finite subgroups and up to some bounded error, see Theorem~\ref{thm:MSR}), and therefore all linear representations of lattices are classified. Zimmer program is about the \textit{nonlinear} analogue of superrigidity.

\begin{conjecture*}[Zimmer]\label{c:zimmer}\index{conjecture!Zimmer}
	Let $G$ be a simple Lie group with $\mathrm{rank}(G)\ge 2$ and $\Gamma$ a lattice of $G$. Let $M$ be a closed $d$-dimensional manifold, and $\rho:\Gamma\to\mathrm{Diff}(M)$ a homomorphism. If $d< \mathrm{rank}(G)$ then  $\rho$ has finite image.
\end{conjecture*}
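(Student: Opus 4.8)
The plan is to convert the nonlinear statement into a statement about a linear cocycle by a suspension construction, and then to exploit higher-rank rigidity --- measure rigidity of the Katok--Spatzier type developed earlier, together with Margulis/Zimmer superrigidity and Lafforgue's strong property~(T) --- to show that the cocycle has no nonzero Lyapunov exponents, from which finiteness of the image follows quickly. Concretely: assume $\rho(\Gamma)$ is infinite, and --- first under the simplifying assumption that $\Gamma$ is cocompact, deferring the cusps of non-uniform lattices to a separate treatment --- form the suspension $M^\alpha=(G\times M)/\Gamma$, a bundle over $G/\Gamma$ with compact fibre $M$ equipped with a smooth $G$-action lifting the translation action on $G/\Gamma$. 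Differentiating along the fibres yields a linear cocycle $A\colon G\times M^\alpha\to\GL(d,\R)$ with $d=\dim M$; proving the theorem amounts to showing $A$ is, up to coboundary, compact-group-valued.

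I would next assemble Lyapunov data. Since a minimal parabolic $P<G$ is amenable and the fibres are compact, there is a $P$-invariant probability $\mu$ on $M^\alpha$ projecting to Haar on $G/\Gamma$; restricting to the maximal split torus $A_0=\exp(\liea)\subset P$ gives an action of $\R^r$, $r=\rank(G)\ge2$, preserving $\mu$, and the higher-rank Oseledec theorem produces fibrewise Lyapunov functionals $\lambda_1,\dots,\lambda_p\in\liea^*$ with $\sum_i\dim E^{\lambda_i}=d$. The restricted roots $\Sigma(\lieg,\liea)\subset\liea^*$ occur as the Lyapunov exponents of the same $A_0$-action along the base directions $T(G/\Gamma)$, so the dynamics on $M^\alpha$ carries two interacting families of exponents. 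Two external inputs then enter: (i) a measure-rigidity criterion --- Katok--Spatzier refined by Brown--Rodriguez Hertz--Wang --- to the effect that if no nonzero $\lambda_i$ is positively proportional to a restricted root, the conditional measures of $\mu$ along the coarse unstable foliations are invariant under the corresponding unipotent subgroups, forcing $\mu$ to be $G$-invariant and homogeneous; and (ii) Lafforgue's strong property~(T) for $G$ (available because $\rank(G)\ge2$), which controls the growth of $A$ so tightly that the top fibrewise exponent, averaged over the directions of a sphere in $\liea$, is nonpositive.

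Now the dichotomy. If every $\lambda_i=0$, then strong property~(T) upgrades this to sub-exponential growth of $A$ in all directions, hence a measurable $G$-invariant Riemannian metric along the fibres; a regularity bootstrap (exploiting the ambient $G$-action and ellipticity, in the spirit of Zimmer's arguments) makes this metric smooth and $\rho(\Gamma)$-invariant on $M$, so $\rho(\Gamma)\subset\mathrm{Isom}(M,g)$, a compact Lie group, and Margulis superrigidity --- a finite-dimensional linear representation of a higher-rank lattice with precompact image, $G$ noncompact and simple, has finite image --- finishes. If some $\lambda_i\ne0$ but none is positively proportional to a root, criterion~(i) makes $\mu$ $G$-invariant, and Zimmer's cocycle superrigidity then identifies $A$, up to a compact correction and a coboundary, with a finite-dimensional representation of $G$; but $d<\rank(G)$ forces every such representation to be trivial, so all $\lambda_i=0$, a contradiction. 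The surviving case is that some nonzero $\lambda_i$ is positively proportional to a root $\alpha$; here I would play the averaged-exponent bound of~(ii) against the Weyl symmetry of the root system (made available by conjugating $P$, which permutes the Weyl chambers and renders the collection of fibrewise exponents essentially Weyl-symmetric): a positive resonant exponent would have to be balanced around its Weyl orbit by enough negative fibrewise exponents to push $\sum_i\dim E^{\lambda_i}$ up to at least $r$, contradicting $\sum_i\dim E^{\lambda_i}=d<r$.

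The genuinely hard part is this last step together with the regularity bootstrap: extracting from strong property~(T) a sufficiently quantitative bound, and combining it with the Weyl-symmetrisation of the Lyapunov data and the combinatorics of the root system, so as to exclude \emph{every} nonzero fibrewise exponent (i.e.\ to always land in the vanishing case); and then promoting a measurable invariant metric to a smooth one in the presence of the $G$-action. By contrast the suspension formalism, the existence of the $P$-invariant measure, the higher-rank Oseledec theory, and the invocations of measure rigidity and of Margulis/Zimmer superrigidity are machinery whose statements are available --- the measure-rigidity criterion in particular being precisely the kind of result these notes are building toward.
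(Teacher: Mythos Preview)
The paper presents this as a conjecture; what is actually proved (Theorem~\ref{thm:serge} and its $\Sl(n,\R)$ prototype Theorem~\ref{slnr}) is the cocompact $C^{2}$ case you also restrict to. Your ingredient list is essentially right, but the logical flow has a real gap.

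You use strong property~(T) backwards. It does not bound Lyapunov exponents or manufacture subexponential growth; it \emph{requires} subexponential norm growth of a representation as a hypothesis and then produces an invariant vector (Theorem~\ref{lafforgue1}). So ``strong property~(T) \ldots controls the growth of $A$ so tightly that the top fibrewise exponent \ldots is nonpositive'' and ``If every $\lambda_i=0$, then strong property~(T) upgrades this to sub-exponential growth'' are both false as stated. Relatedly, vanishing of the fibrewise exponents for your single $P$-invariant $\mu$ is far from uniform subexponential growth of derivatives (USEGOD): the latter is equivalent to vanishing for \emph{every} invariant measure (Proposition~\ref{prop:ZLEUSEGOD}), and your dichotomy addresses only one. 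The paper instead proves USEGOD by contradiction: if it fails, a Krylov--Bogolyubov construction on the sphere bundle yields an ergodic $A$-invariant $\mu'$ with a nonzero fibrewise exponent (Proposition~\ref{eq6677}); this $\mu'$ need not project to Haar, and upgrading it to one that does requires averaging along unipotent root subgroups centralising well-chosen elements of $A$, combined with Ratner's theorem on $G/\Gamma$ (Proposition~\ref{prop:goodmeas}) --- an ingredient your outline omits entirely. Once the measure sits over Haar, the hypothesis $d<\rank(G)$ lets one pick $s_0\in A$ annihilating all fibrewise exponents, and the entropy argument of Theorem~\ref{thm:invmsr} forces $G$-invariance; Zimmer's cocycle superrigidity then gives the contradiction. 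There is no separate ``resonant'' case to handle at this critical dimension.

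Your Weyl-symmetrisation idea for the resonant case does not work: conjugating $P$ gives a \emph{different} $P$-invariant measure on $M^\alpha$, with its own fibrewise exponents, not a Weyl permutation of the original ones, so no constraint of the form $\sum\dim E^{\lambda_i}\ge r$ emerges. Finally, the passage to a smooth invariant metric is not an elliptic bootstrap from a measurable one: once USEGOD is in hand, strong~(T) applied directly to the $\Gamma$-representation on the Sobolev space $W^{2,k}(S^2(T^*M))$ yields an invariant $C^\ell$ section, and positive-definiteness is checked by playing the exponential convergence rate of the averaging operators against the sub-exponential lower bound on $p_n(g)(\xi,\xi)$ coming from USEGOD (Section~\ref{sec:outline3steps}, Step~2).
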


More generally, it is conjectured that a lattice $\Gamma\subset G$ cannot act (with infinite image) on a closed manifold whose dimension is lower than the least dimension $n(G)$ introduced above. For detailed discussions, we recommend the expository works by David Fisher \cites{F1,F2}.
Very recently, Aaron Brown, in collaboration with David Fisher and Sebastian Hurtado, solved Zimmer's conjecture \cites{BFH,BFH-SL} (some additional hypotheses are required, see Theorem \ref{slnr}). The third part of these notes discusses the main ingredients of their work. As for measure rigidity, the theory of nonuniformly hyperbolic dynamical systems takes the major part. The works by Ledrappier--Young \cites{LY1,LY2} on the relationship between \textit{Lyapunov exponents} and \textit{entropy} are of notable relevance here. These are very deep works, but we hope the reader will find a suitable introduction here. The main notions and results are recalled throughout the text, especially in the second part and in the appendices by Bruno Santiago and myself, Davi Obata, S\'ebastien Alvarez and Mario Rold\'an. Another important ingredient of similar flavor is the work of Ratner on unipotent flows, but we do not treat it in detail, as several very good introductions are available (in primis \cite{WitteRatner}). 

One delicate aspect in Zimmer's conjecture is about \textit{regularity}: in \cites{BFH,BFH-SL} the authors require the action to be by $C^2$ diffeomorphisms, that is $\rho(\Gamma)\subset \mathrm{Diff}^2(M)$. This is a very mild condition, at least compared to the previous approaches appearing in the literature, which had strong requirements such as invariant volume or geometric structures, or the action to be by real-analytic diffeomorphisms or on low dimensional manifolds \cites{CZ,Zimmer,FS,Ghys,WitteLine,BM,Polterovich,FH1,FH2}. We recommend the beautiful collection of contributions \cite{FF} for more detailed discussions on these works (and much more!).

As this text is more focused on smooth ergodic theory and applications to rigidity properties, we will shortly mention the other aspects of Zimmer's conjecture. These include the algebraic properties of Lie groups and their lattices, especially their \textit{rigidity} properties. An essential ingredient of \cites{BFH,BFH-SL} is the \textit{strong property (T)}, introduced by V.~Lafforgue \cite{Lafforgue} and studied by de Laat--de la Salle \cites{dldlS,dlS} which generalizes the more classical Kazhdan's property (T) \cite{BHV} and is also enjoyed by lattices in higher-rank simple Lie groups. Very roughly, strong property (T) is a machine to produce invariant vectors for representations as operators of Banach spaces, and is used by Brown--Fisher--Hurtado to reduce the nonlinear problem to a linear one, and then apply superrigidity.

Finally, in the fourth and last part, Aaron Brown discusses further rigidity results that rely on tools of smooth ergodic theory, and which are, in some sense, extensions of Katok--Spatzier \cite{KS1,KS2} discussed in the first part. First, he reviews theorems by Kalinin, Katok and F.~Rodriguez-Hertz \cite{prKK,prKRH,prKKRH} about rigidity problems for non-uniformly hyperbolic $\Z^2$ actions on general 3-manifolds. Secondly \textit{Cartan flows}, which constitute an other important class of Anosov actions, are treated. Among these, a classical example is the action of the group $A\subset \mathrm{SL}(3,\R)$ of diagonal matrices with positive entries, on the homogeneous space $X=\mathrm{SL}(3,\R)/\mathrm{SL}(3,\Z)$, which has dimension 8. An analogue of Katok--Spatzier result in this setting is motivated by a conjecture by Margulis (see \cite{prMargulis}) that $A$-invariant ergodic measures should be algebraic: this was solved by Einsiedler--Katok--Lindenstrauss \cite{prEKL} for measures of positive entropy.

Sections that are not required for the rest of the text or that may be skipped on first reading are marked  by an asterisk $\bigast$.

\begin{flushright}
	Michele Triestino (editor)
\end{flushright}

\begin{bibdiv}
	\begin{biblist}*{prefix={pr}}
		
		\bib{AA}{book}{
			author={Arnol\cprime d, V. I.},
			author={Avez, A.},
			title={Ergodic problems of classical mechanics},
			series={Translated from the French by A. Avez},
			publisher={W. A. Benjamin, Inc., New York-Amsterdam},
			date={1968},
			pages={ix+286},
%			review={\MR{0232910}},
		}

		\bib{BHV}{book}{
			author={Bekka, Bachir},
			author={de la Harpe, Pierre},
			author={Valette, Alain},
			title={Kazhdan's property (T)},
			series={New Mathematical Monographs},
			volume={11},
			publisher={Cambridge University Press, Cambridge},
			date={2008},
			pages={xiv+472},
			isbn={978-0-521-88720-5},
%			review={\MR{2415834}},
%			doi={10.1017/CBO9780511542749},
		}
		
		\bib{BDV}{book}{
			author={Bonatti, Christian},
			author={D\'\i az, Lorenzo J.},
			author={Viana, Marcelo},
			title={Dynamics beyond uniform hyperbolicity},
			series={Encyclopaedia of Mathematical Sciences},
			volume={102},
			note={A global geometric and probabilistic perspective;
				Mathematical Physics, III},
			publisher={Springer-Verlag, Berlin},
			date={2005},
			pages={xviii+384},
			isbn={3-540-22066-6},
%			review={\MR{2105774}},
		}
		
		\bib{BFH}{article}{
			author = {{Brown}, Aaron},
			author = {{Fisher}, David},
			author = {{Hurtado}, Sebastian},
			title = {Zimmer's conjecture: Subexponential growth, measure rigidity, and strong property (T)},
			journal = {ArXiv e-prints},
			eprint = {1608.04995},
			year = {2016},
		}
		
		\bib{BFH-SL}{article}{
			author = {{Brown}, Aaron},
			author = {{Fisher}, David},
			author = {{Hurtado}, Sebastian},
			title = {Zimmer's conjecture for actions of $\mathrm{SL}(m,\mathbb{Z})$},
			journal = {ArXiv e-prints},
			eprint = {1710.02735},
			%primaryClass = "math.DS",
			% keywords = {Mathematics - Dynamical Systems, Mathematics - Differential Geometry, Mathematics - Group Theory, Mathematics - Geometric Topology},
			year = {2017},
			%    month = {oct},
			%   adsurl = {http://adsabs.harvard.edu/abs/2017arXiv171002735B},
			%  adsnote = {Provided by the SAO/NASA Astrophysics Data System}
		}

		\bib{BM}{article}{
			author={Burger, Marc},
			author={Monod, Nicolas},
			title={Bounded cohomology of lattices in higher rank Lie groups},
			journal={J. Eur. Math. Soc. (JEMS)},
			volume={1},
			year={1999},
			number={2},
			pages={199\ndash 235}
		}
		
		\bib{CZ}{article}{
			author={Cantat, Serge},
			author={Zeghib, Abdelghani},
			title={Holomorphic actions, Kummer examples, and Zimmer program},
			language={English, with English and French summaries},
			journal={Ann. Sci. \'Ec. Norm. Sup\'er. (4)},
			volume={45},
			date={2012},
			number={3},
			pages={447--489},
			issn={0012-9593},
%			review={\MR{3014483}},
%			doi={10.24033/asens.2170},
		}

		\bib{dldlS}{article}{
			author={de Laat, Tim},
			author={de la Salle, Mikael},
			title={Strong property (T) for higher-rank simple Lie groups},
			journal={Proc. Lond. Math. Soc. (3)},
			volume={111},
			date={2015},
			number={4},
			pages={936--966},
			issn={0024-6115},
%			review={\MR{3407190}},
%			doi={10.1112/plms/pdv040},
		}
		
		\bib{dlS}{article}{
			author = {de la Salle, Mikael},
			title = {Strong property (T) for higher rank lattices},
			journal = {ArXiv e-prints},
			eprint = {1711.01900},
			year = {2017},
		}
		
		\bib{prEKL}{article}{
			author={Einsiedler, Manfred},
			author={Katok, Anatole},
			author={Lindenstrauss, Elon},
			title={Invariant measures and the set of exceptions to {L}ittlewood's
				conjecture},
			date={2006},
			ISSN={0003-486X},
			journal={Ann. of Math. (2)},
			volume={164},
			%number={2},
			pages={513\ndash 560},
%			url={http://dx.doi.org/10.4007/annals.2006.164.513},
			%      %review={\MR{2247967 (2007j:22032)}},
		}
		
		\bib{FF}{collection}{
			title={Geometry, rigidity, and group actions},
			series={Chicago Lectures in Mathematics},
			editor={Farb, Benson},
			editor={Fisher, David},
			publisher={University of Chicago Press, Chicago, IL},
			date={2011},
			pages={xii+646},
			isbn={978-0-226-23788-6},
			isbn={0-226-23788-5},
%			review={\MR{2797111}},
%			doi={10.7208/chicago/9780226237909.001.0001},
		}
		
		\bib{FS}{article}{
			author={Farb, Benson},
			author={Shalen, Peter},
			title={Real-analytic actions of lattices},
			journal={Invent. Math.},
			volume={135},
			date={1999},
			number={2},
			pages={273--296},
			issn={0020-9910},
			%	review={\MR{1666834}},
			%	doi={10.1007/s002220050286},
		}
		
		\bib{F1}{article}{
			author={Fisher, David},
			title={Groups acting on manifolds: around the Zimmer program},
			conference={
				title={Geometry, rigidity, and group actions},
			},
			book={
				series={Chicago Lectures in Math.},
				publisher={Univ. Chicago Press, Chicago, IL},
			},
			date={2011},
			pages={72--157},
			%	review={\MR{2807830}},
			%	doi={10.7208/chicago/9780226237909.001.0001},
		}
		
		\bib{F2}{article}{
			author={Fisher, David},
			title = {Recent progress in the Zimmer program},
			journal = {ArXiv e-prints},
			eprint = {1711.07089},
			year = {2017},
		}
		
		\bib{FH1}{article}{
			author={Franks, John},
			author={Handel, Michael},
			title={Area preserving group actions on surfaces},
			journal={Geom. Topol.},
			volume={7},
			date={2003},
			pages={757--771},
			issn={1465-3060},
%			review={\MR{2026546}},
%			doi={10.2140/gt.2003.7.757},
		}
		\bib{FH2}{article}{
			author={Franks, John},
			author={Handel, Michael},
			title={Distortion elements in group actions on surfaces},
			journal={Duke Math. J.},
			volume={131},
			date={2006},
			number={3},
			pages={441--468},
			issn={0012-7094},
%			review={\MR{2219247}},
%			doi={10.1215/S0012-7094-06-13132-0},
		}

		\bib{Ghys}{article}{
			author={Ghys, \'E.},
			title={Actions de r\'eseaux sur le cercle},
			journal={Invent. Math.},
			volume={137},
			year={1999},
			number={1},
			pages={199\ndash 231}
		}
		
		\bib{HPS}{article}{
			author={Hirsch, Morris W.},
			author={Pugh, Charles C.},
			author={Shub, Michael},
			title={Invariant manifolds},
			journal={Bull. Amer. Math. Soc.},
			volume={76},
			date={1970},
			pages={1015--1019},
			issn={0002-9904},
			%   review={\MR{0292101}},
			%   doi={10.1090/S0002-9904-1970-12537-X},
		}
		
	\bib{prKK}{article}{
		author={Kalinin, Boris},
		author={Katok, Anatole},
		title={Measure rigidity beyond uniform hyperbolicity: invariant measures
			for {C}artan actions on tori},
		date={2007},
		ISSN={1930-5311},
		journal={J. Mod. Dyn.},
		volume={1},
		%number={1},
		pages={123\ndash 146},
		%review={\MR{2261075 (2008h:37023)}},
	}

	\bib{prKKRH}{article}{
		author={Kalinin, Boris},
		author={Katok, Anatole},
		author={Rodriguez~Hertz, Federico},
		title={Nonuniform measure rigidity},
		date={2011},
		ISSN={0003-486X},
		journal={Ann. of Math. (2)},
		volume={174},
		%number={1},
		pages={361\ndash 400},
%		url={http://dx.doi.org/10.4007/annals.2011.174.1.10},
		%review={\MR{2811602}},
	}		
		
				\bib{prKRH}{article}{
					author={Katok, Anatole},
					author={Rodriguez~Hertz, Federico},
					title={Arithmeticity and topology of smooth actions of higher rank
						abelian groups},
					date={2016},
					ISSN={1930-5311},
					journal={J. Mod. Dyn.},
					volume={10},
					pages={135\ndash 172},
%					url={http://dx.doi.org/10.3934/jmd.2016.10.135},
					%review={\MR{3503686}},
				}

		\bib{KS1}{article}{
			author={Katok, Anatole},
			author={Spatzier, Ralf J.},
			title={Invariant measures for higher-rank hyperbolic abelian actions},
			journal={Ergodic Theory Dynam. Systems},
			volume={16},
			date={1996},
			number={4},
			pages={751--778},
%			issn={0143-3857},
			%   review={\MR{1406432}},
			%   doi={10.1017/S0143385700009081},
		}
		\bib{KS2}{article}{
			author={Katok, Anatole},
			author={Spatzier, Ralf J.},
			title={Corrections to: ``Invariant measures for higher-rank hyperbolic
				abelian actions'' [Ergodic Theory Dynam. Systems {\bf 16} (1996), no. 4,
				751--778;}, %MR1406432 (97d:58116)]},
			journal={Ergodic Theory Dynam. Systems},
			volume={18},
			date={1998},
			number={2},
			pages={503--507},
%			issn={0143-3857},
			%   review={\MR{1619571}},
			%   doi={10.1017/S0143385798110969},
		}
		
		\bib{Lafforgue}{article}{
			author={Lafforgue, Vincent},
			title={Un renforcement de la propri\'et\'e (T)},
			language={French, with English and French summaries},
			journal={Duke Math. J.},
			volume={143},
			date={2008},
			number={3},
			pages={559--602},
			issn={0012-7094},
%			review={\MR{2423763}},
%			doi={10.1215/00127094-2008-029},
		}
		
		\bib{LY1}{article}{
			author={Ledrappier, Fran\c cois},
			author={Young, Lai-Sai},
			title={The metric entropy of diffeomorphisms. {I}. {C}haracterization
				of measures satisfying {P}esin's entropy formula},
			date={1985},
			journal={Ann. of Math. (2)},
			volume={122},
			number={3},
			pages={509\ndash 539},
		}
		
		\bib{LY2}{article}{
			author={Ledrappier, Fran\c cois},
			author={Young, Lai-Sai},
			title={The metric entropy of diffeomorphisms. {II}. {R}elations
				between entropy, exponents and dimension},
			date={1985},
			journal={Ann. of Math. (2)},
			volume={122},
			number={3},
			pages={540\ndash 574},
		}
		
		\bib{Mane}{book}{
			author={Ma\~{n}\'{e}, Ricardo},
			title={Ergodic theory and differentiable dynamics},
			series={Ergebnisse der Mathematik und ihrer Grenzgebiete (3) [Results in
				Mathematics and Related Areas (3)]},
			volume={8},
			note={Translated from the Portuguese by Silvio Levy},
			publisher={Springer-Verlag, Berlin},
			date={1987},
			pages={xii+317},
			isbn={3-540-15278-4},
%			review={\MR{889254}},
%			doi={10.1007/978-3-642-70335-5},
		}
		
		\bib{Margulis}{book}{
			author = {Margulis , Grigory},
			title = {Discrete subgroups of semisimple Lie groups},
			year = {1991},
			publisher = {Springer-Verlag},
			series = {Ergebnisse de Mathematik und ihrer Grenzgebiete},
			pages = {x+388},
		}
		
			\bib{prMargulis}{incollection}{
				author={Margulis, Gregory},
				title={Problems and conjectures in rigidity theory},
				date={2000},
				booktitle={Mathematics: frontiers and perspectives},
				publisher={Amer. Math. Soc., Providence, RI},
				pages={161\ndash 174},
				%%review={\MR{1754775}},
			}
		
		\bib{MZ}{book}{
			author={Montgomery, Deane},
			author={Zippin, Leo},
			title={Topological transformation groups},
			publisher={Interscience Publishers, New York-London},
			date={1955},
			pages={xi+282},
%			review={\MR{0073104}},
		}
		
		\bib{Pardon}{article}{
			author = {Pardon, John},
			title = {The Hilbert-Smith conjecture for three-manifolds},
			journal ={J. Amer. Math. Soc.},
			volume ={26},
			year ={2013},
			pages ={879--899},
		} 
		
		\bib{Polterovich}{article}{
			author={Polterovich, Leonid},
			title={Growth of maps, distortion in groups and symplectic geometry},
			journal={Invent. Math.},
			volume={150},
			date={2002},
			number={3},
			pages={655--686},
			issn={0020-9910},
%			review={\MR{1946555}},
%			doi={10.1007/s00222-002-0251-x},
		}
		
		\bib{PS}{article}{
			author={Pugh, Charles},
			author={Shub, Michael},
			title={Axiom ${\rm A}$ actions},
			journal={Invent. Math.},
			volume={29},
			date={1975},
			number={1},
			pages={7--38},
			issn={0020-9910},
%			review={\MR{0377989}},
%			doi={10.1007/BF01405171},
		}
		
		\bib{Stuck}{article}{
			author={Stuck, Garrett},
			title={Low-dimensional actions of semisimple groups},
			journal={Israel J. Math.},
			volume={76},
			date={1991},
			number={1-2},
			pages={27--71},
			issn={0021-2172},
%			review={\MR{1177331}},
%			doi={10.1007/BF02782843},
		}

		\bib{WitteLine}{article}{
			author={Witte, Dave},
			title={Arithmetic groups of higher $\mathbb Q$-rank cannot act on $1$-manifolds},
			journal={Proc. Amer. Math. Soc.},
			volume={122},
			year={1994},
			number={2},
			pages={333\ndash 340},
		}
		
		\bib{WitteRatner}{book}{
			author = {Witte Morris, Dave},
			title = {Ratner's theorems on unipotent flows},
			series = {Chicago Lectures in Mathematics},
			publisher ={University of Chicago Press, Chicago, IL},
			year ={2005},	
		}
		
		\bib{Zimmer}{article}{
			author = {Zimmer, Robert J.},
			journal = {Invent. Math.},
			pages = {425--436},
			title = {Kazhdan groups acting on compact manifolds},
			volume = {75},
			year = {1984},	
		}

	\end{biblist}
\end{bibdiv}

\bigskip

\noindent
{\scshape Michele Triestino}\\
Institut de Math\'ematiques de Bourgogne (IMB, UMR 5584)\\
9 av.~Alain Savary, 21000 Dijon, France\\
\texttt{michele.triestino@u-bourgogne.fr}
	
\newpage	

\pagenumbering{arabic}
\part*{Introduction}

%\section{Introduction: Groups acting on manifolds and rigidity programs}
\section{Groups acting on manifolds and rigidity programs}
In the classical theory of dynamical systems, one typically studies actions of 1-parameter groups:  Given a compact manifold $M$, a diffeomorphism $f\colon M\to M$ 
generates an action of the group $\Z$;  a smooth vector field $X$ on $M$ generates a flow $\phi^t\colon M\to M$ or an action of the group $\R$.   
However, one might consider groups more general than $\Z$ or $\R$ acting on a manifold $M$.  Natural families  of group  actions  arise in many geometric and algebraic settings and the  study of  group actions connects many areas of mathematics including    geometric group theory, representation theory,  Lie theory, geometry, and dynamical systems.  

This text focuses on  various {\it rigidity programs} for group actions. Roughly, such rigidity results  aim to classify all actions or   all invariant geometric structures (such as closed subsets, probability measures, etc.) under 
\begin{enumerate}
	\item  suitable algebraic hypotheses on the acting group, and/or
	\item suitable dynamical hypotheses on the action.
\end{enumerate}
This text primarily takes the first approach: under certain algebraic conditions on the acting group, we establish certain rigidity properties of the action.  Specifically,  we will consider actions  of various {\it higher-rank} discrete groups:  higher-rank, torsion-free abelian groups $\Z^k$ for  $k\ge 2$ or lattices $\Gamma$ in higher-rank simple Lie groups such as $\Gamma= \SL(n,\Z)$ for $n\ge 3$.  At times we impose certain dynamical hypotheses as well.  In particular, in Part \ref{part:I} we consider certain families of algebraic Anosov actions and  will discuss more general results on Anosov actions in this introduction.

Neither this introduction nor this text as a whole  gives  a comprehensive account of rigidity results for group actions on manifolds.  (For instance, we do not discuss the vast literature and many recent results concerning group action on the circle $S^1$.  See, however \cite{MR2809110}.) Our goal is rather to give detailed proofs (in simplified examples) of a small number of rigidity theorems coming from higher-rank dynamics and to present  the necessary background and constructions in smooth ergodic theory required for  these proofs.  
This introduction aims to give context to these results and give the reader some familiarity with broader rigidity programs in the literature.  
%The  results  main in the text,  the background results and discussion, and the citations in the introduction  reflect the  expertise and biases of the author.  

\subsection{Smooth group actions}\index{group!action}\index{action!group}
Let $M$ be a compact    manifold without boundary and  denote by $\Diff^r(M)$ the group of $C^r$ diffeomorphisms $f\colon M\to M$.   Recall that if $r\ge 1$ is not an integer then, writing $$r= k+ \beta  \quad \text{ for $k\in \N$ and $\beta\in (0,1)$}, $$ we say that  $f\colon M\to M$ is $C^r$ or is $C^{k+\beta}$ if it is $C^{k}$ and if the $k$th derivatives of $f$ are $\beta$-H\"older continuous.  

For $r\ge 1$,  the set $\Diff^r(M)$ has a group structure given by composition of maps. 
Given a (typically countably infinite, finitely generated)  discrete  group $\Gamma$, a  \emph{$ {\boldsymbol {C^r}}$ action} of   $\Gamma$ on $M$ is  a homomorphism 
$$ \alpha\colon \Gamma \to \Diff^r(M)$$
from the group $\Gamma$ into the group $\Diff^r(M)$; that is, for each $\gamma\in \Gamma$ the image $\alpha(\gamma)$ is a $C^r$ diffeomorphism $\alpha(\gamma)\colon M\to M$ and for $x\in M$ and $\gamma_1, \gamma_2\in \Gamma$ we have 
\begin{equation*}
	\alpha(\gamma_1\gamma_2)(x) = \alpha(\gamma_1)\big(\alpha(\gamma_2)(x)\big).\end{equation*}
If the discrete group $\Gamma$ is instead 
replaced by 
a Lie group $G$, we also require  that the map $ G\times M\to M$ given by $ (g,x)\mapsto \alpha(g)(x)$  be $C^r$. %

%Let $M$ be a compact, boundaryless manifold.  %s  $M$ by   diffeomorphisms.  
%Recall that a  $C^r$ action $\alpha$ of $\Gamma$ on $M$  is a homomorphism $\alpha \colon \Gamma \to \Diff^r(M)$ from $\Gamma$ to the group of $C^r$-diffeomorphisms of $M$.
If $\vol$ is some fixed smooth volume form on $M$ (which we always normalize to be a probability measure)  we write $\diff^r_\vol(M)$ for the group of $C^r$-diffeomorphisms preserving 
$\vol$. A classical result of Moser ensures the $\vol$ is uniquely defined up to a   smooth change of coordinates \cite{Moser}.
  A \emph{volume-preserving} action is a homomorphism $\alpha\colon \Gamma \to  \diff^r_\vol(M)$ for some volume form $\vol$.  \index{action!volume-preserving}

As discussed above, actions of the group of integers $\Z$   are generated by iteration of a single diffeomorphism  $f\colon M\to M$  and its inverse.  For instance, given an integer $n>1$, the diffeomorphism $\alpha(n)\colon M\to M$ is defined as the $n$th iterate of $f$:  for $x\in M$
$$
\alpha(n)(x) = f^n(x):= \underbrace{f\circ f \circ \dots \circ f}_{n \text{ times}} (x).$$   
Given a manifold $M$, any pair of  diffeomorphisms $f,g\in \Diff(M)$  naturally induces an action $\alpha\colon F_2\to\Diff(M)$ of the free group on two generators $F_2 =\langle a,b\rangle$ which is uniquely defined by the conditions $\alpha(a)=f$ and $\alpha(b)=g$.  
If a   pair of diffeomorphisms $f  \colon M\to M$ and $g\colon M\to M$   commute, we naturally  obtain a   $\Z^2$-action $\alpha\colon \Z^2\to \diff(M)$ given  by $$
\alpha(n,m)  (x) = f^n \circ g^m(x).$$

\subsection{Rigidity of Anosov diffeomorphisms}
As a prototype for general rigidity results discussed below, we recall certain rigidity properties exhibited by Anosov diffeomorphisms $f\colon M\to M$.   
We first recall the definition of an Anosov diffeomorphism.  
%The most successful global rigidity results are for Anosov actions on tori.  
\begin{definition}\label{def:anosov}\index{Anosov!diffeomorphism}
	A $C^1$ diffeomorphism $f\colon M\to M$ of a compact Riemannian manifold $M$ is \emph{Anosov} if there is a $Df$-invariant splitting of the tangent bundle $TM= E^s\oplus E^u$ and constants $0<\kappa<1$ and $C\ge 1$ such that for every $x\in M$ and every $n\in \N$
	\begin{align*} \|D_xf^n(v) \| \le C \kappa ^n \|v\|  &\quad \quad \text{ for all $v\in E^s(x)$}\\
		\|D_xf^{-n}(w) \| \le C \kappa ^n \|w\|& \quad \quad \text{ for all $w\in E^u(x)$}.
	\end{align*}
\end{definition}
As a primary example, consider  a matrix $A\in \Gl(n,\Z)$ with all eigenvalues of modulus different from $1$.  Then, with $\T^n: = \R^n/\Z^n$ the $n$-torus, the induced toral automorphism  $L_A\colon \T^n\to \T^n$ given by $$L_A(x+\Z^n) = Ax + \Z^n$$ is Anosov.  More generally, given $v\in \T^n$ we have $f\colon \T^n\to \T^n$ given by $$f(x) = L_A (x) + v$$ is an affine Anosov map.  
In  dimension 2, a standard example of an Anosov diffeomorphism is given by $L_A\colon \T^2\to \T^2$ where $A$ is  the matrix $A= \left(\begin{array}{cc}2 & 1 \\1 & 1\end{array}\right).$

As a prototype for {\it local rigidity} results, it is known (see \cites{MR0224110,MR0238357}, \cite[Corollary 18.2.2]{MR1326374}) that Anosov maps are \emph{structurally stable}:\index{rigidity!structural stability}\index{Anosov!structural stability} if $f$ is Anosov and $g$ is $C^1$ close to $f$ then $g$ is also Anosov and there is a homeomorphism $h\colon \T^n\to \T^n$ such that \begin{equation}\label{eq:Anosov1} h\circ g = f\circ h.\end{equation}  The map $h$ is always H\"older continuous but in general need not be $C^1$ even when $f$ and $g$ are $C^\infty$ or real-analytic.   
The map $h$ in \eqref{eq:Anosov1} is called a \emph{topological conjugacy}\index{conjugacy} between $f$ and $g$.

All known examples of Anosov diffeomorphisms occur on finite factors of tori and nilmanifolds.  From \cites{MR0271990,MR0358865} we have a complete classification---a prototype {\it global rigidity} result---of Anosov diffeomorphisms on  tori  (as well as nilmanifolds)  up to a continuous change of coordinates: If $f\colon \T^n\to \T^n$ is Anosov, then $f$ is homotopic to $L_A$ for some 
$A\in \Gl(n,\Z)$ with all eigenvalues of modulus different from $1$; moreover there is a homeomorphism $h\colon \T^n\to \T^n$ such that $$h\circ f = L_A\circ h.$$
% \begin{equation}\label{eq:Anosov2} g\circ g = L_A\circ h.\end{equation}  
Again, the topological conjugacy $h$ is  H\"older continuous but need not be $C^1$.  
Conjecturally, all Anosov diffeomorphisms are,  up to finite covers, topologically conjugate to affine maps on tori and nilmanifolds.

\subsection{Actions of higher-rank lattices and the Zimmer program}  \label{sec:ZZIM}\index{Lie group!lattice}
A principal family of discrete groups considered in this text are  lattices $\Gamma$ in (typically higher-rank, see Section \ref{sec:rank}) simple Lie groups $G$. That is, we consider discrete subgroups $\Gamma\subset G$ such that $G/\Gamma$ has finite volume.  Examples of such groups include $\Gamma = \Sl(n,\Z)$ where $G= \Sl(n,\R)$ (which is higher-rank if $n\ge 3$).  It is well known that   the matrices $$\text{$\left(\begin{array}{cc}1 & 2 \\0 & 1\end{array}\right)$ 
and $\left(\begin{array}{cc}1 & 0 \\2 & 1\end{array}\right)$ }$$  freely generate a finite-index subgroup of $\Sl(2,\Z)$ and thus the free group $F_2$  is a lattice subgroup of  $G= \Sl(2, \R)$ (which has rank $1$.)   See Section \ref{ss:latices} for background and additional  details.  

\subsubsection{Linear representations}
To motivate the  results and conjectures concerning smooth  actions of such $\Gamma$, first consider the setting of linear representations $\rho\colon \Gamma\to \Gl(d,\R)$. A  linear representation $\pi\colon \Z \to \Gl(d,\R)$ of the group of integers  is determined  by a  choice of a matrix $A\in \Gl(d,\R)$; similarly, a linear representation $\pi\colon F_2 \to \Gl(d,\R)$ of  the free group $F_2$ is determined  by a  choice of  a  pair of matrices $A,B\in \Gl(d,\R).$
These representations may be perturbed  to non-conjugate representations $\tilde \pi$.

In contrast, for groups such as  $\Gamma = \Sl(n,\Z)$ for $n\ge 3$ (and other lattices  $\Gamma$  in higher-rank simple Lie groups),  linear representations $\pi\colon \Gamma \to \Gl(d,\R)$   are very rigid as demonstrated by various classical   results including \cite{Selberg, Weil-I, Margulis-nonuniformtwo,Mostow-book,MR0385005,MR1090825}.  
  For instance,  for cocompact $\Gamma\subset \Sl(n,\R)$,  local rigidity results in  \cite{Selberg, Weil-I} established that any representation $\pi\colon \Gamma \to \Sl(n,\R)$ sufficiently close to the inclusion $\iota \colon \Gamma \to \Sl(n,\R)$ 
%any representation $\tilde \pi\colon \Gamma \to \Gl(d,\R)$ sufficiently close to $\pi$ 
is   conjugate to $\iota$.  
A cohomological criteria for local rigidity of general linear representations $ \pi\colon \Gamma \to \Gl(d,\R)$  was given in  \cite{MR0169956}, further studied in \cite{MR0153028,MR0173730}, and is known to hold for lattices in higher-rank simple Lie groups.  
Margulis's superrigidity theorem (see Theorem \ref{thm:MSR} below and \cite{MR1090825}) establishes  that every linear  representation $\pi\colon \Gamma \to \Gl(d,\R)$ extends to a representation $\bar \pi\colon \Sl(n,\R) \to \Gl(d,\R)$ up to a ``compact error;''  this  effectively   classifies  all   representations $\Gamma \to \Gl(d,\R)$ up to conjugacy. 

%In contrast, for groups such as  $\Gamma = \Sl(n,\Z)$ for $n\ge 3$ (and other lattices  $\Gamma$  in higher-rank simple Lie groups),  linear representations $\pi\colon \Gamma \to \Gl(d,\R)$   are very rigid as demonstrated by various classical   results including \cites{Selberg, Weil-I, Margulis-nonuniformtwo,Mostow-book,MR0385005,MR1090825}.  
%For instance,   local rigidity results in  \cites{Selberg, Weil-I} establish that any representation $\tilde \pi\colon \Gamma \to \Gl(d,\R)$ sufficiently close to $\pi$ is   conjugate to $\pi$.  Margulis's superrigidity theorem\index{Margulis superrigidity} (see Theorem \ref{thm:MSR} below and \cite{MR1090825}) establishes  that every linear  representation $\pi\colon \Gamma \to \Gl(d,\R)$ extends to a representation $\bar \pi\colon \Sl(n,\R) \to \Gl(d,\R)$ up to a ``compact error;''  this  effectively   classifies  all   representations $\Gamma \to \Gl(d,\R)$ up to conjugacy.  %into $\Gl(d,\R)$ for $d\ge n$ may be  classified. 

\subsubsection{Smooth actions of lattices} 
As in the case of linear representations,  actions of    $\Z$  or $F_2$  on a manifold $M$ are determined by  a choice of diffeomorphism $f\in \diff^r(M)$ or pair of diffeomorphisms $f,g\in \diff^r(M)$. 
Such actions may be  perturbed to create new actions  that are inequivalent under continuous change of coordinates.  In particular, there is no possible classification of all actions of $\Z$ or $F_2$ on arbitrary  manifolds $M$. % groups. 
The free group on two generators  $ F_2$  and the group $\Sl(2,\Z)$ (which contains a finite-index subgroup isomorphic to $F_2$) are isomorphic to lattices in  the Lie group $\Sl(2,\R)$.  Both $F_2$ and $\Sl(2,\Z)$ admit many actions that are ``non-algebraic'' (i.e.\ not built from modifications of algebraic constructions) and the algebraic actions of such groups often display less rigidity then actions of higher-rank groups.  For instance, there exists a 1-parameter family of deformations   of   the standard $\Sl(2,\Z)$-action on the $2$-torus $\T^2$ such that  no continuous change of coordinates conjugates  the  deformed actions to  the original  affine action.  See Examples \ref{ex:free} and \ref{ex:hurder} for further discussion.  

However, as in the case of linear representations, the situation is expected to be very different for actions  by lattices in $\Sl(n,\R)$ for $n\ge 3$ and other higher-rank simple Lie groups.  In particular, the \emph{Zimmer program}\index{Zimmer!program} refers to a collection of conjectures and questions which roughly 
aim to establish analogues of rigidity results for  linear representations $\pi\colon \Gamma \to \Gl(d,\R)$ in the context of    smooth (often volume-preserving) actions  $$\alpha\colon \Gamma \to \diff^\infty(M)$$  or ``nonlinear representations.'' 
In particular, it is expected that  all nonlinear actions $\alpha\colon \Gamma\to \diff^r(M)$ are, in some sense, of ``an algebraic origin.''
We note that genuinely ``non-algebraic'' actions exist; see for instance the discussion in \cref{ex:exotic} and \cite[Sections 9, 10]{MR2807830}.  Thus, a complete classification of all   actions  of higher-rank lattices up to smooth conjugacy is impossible.  However, it seems plausible that certain families of actions (Anosov, volume-preserving, low-dimensional, actions on specific manifolds, actions preserving a geometric structure, etc.)\  are classifiable and that all such actions are constructed from  modifications of standard algebraic actions. See \cref{sec:ex} for   examples of standard algebraic actions.
We refer   to the surveys \cites{MR2369442,MR2807830,1711.07089,MR1648087} for further  discussion on  various notions of ``algebraic actions,''  the Zimmer program,  and precise statements of related conjectures and results.

For volume-preserving actions, the primary  evidence supporting  conjectures  in  the Zimmer program  is   Zimmer's   superrigidity\index{Zimmer!cocycle superrigidity} theorem for  cocycles,  Theorem \ref{thm:ZCSR} below. This extension of Margulis's superrigidity theorem (for homomorphisms) shows that the derivative cocycle of any  volume-preserving action $\alpha\colon \Gamma\to \diff^r_\vol(M)$  is---up to a  compact error and measurable coordinate change---given by a linear representation $\Gamma\to \Gl(d,\R)$.

\subsubsection{Actions in low-dimensions} % If referee objects,  withdraw text from consideration for publication}
Precise conjectures in the Zimmer program are easiest to formulate for actions in low dimensions.   See in particular  \cref{Q:qq}.  % for a number of precise conjectures.  
For instance, if the dimension of $M$ is sufficiently small,
\emph{Zimmer's conjecture}\index{conjecture!Zimmer}  \index{Zimmer!conjecture} states that all actions should have finite image (see \cref{def:dumb}).  See Conjectures \ref{conj:slnr} and \ref{conjecture:zimmergne} for  precise  statements of this conjecture. Early results establishing this conjecture in the setting of actions the circle   appear in  \cites{MR1198459,MR1911660,MR1703323} and in the setting of volume-preserving (and more general measure-preserving) actions on surfaces  in   \cites{MR1946555, MR2026546, MR2219247}.  See also    \cite{MR1254981} and \cite{MR1666834} for results on  real-analytic actions and \cites{MR2103473,MR3014483,MR3849285} for results on holomorphic and birational actions.  There are also many results (usually in the $C^0$ setting) for actions of specific lattices on manifolds where there are topological obstructions  to the group acting; a partial list of such results includes \cites{MR2807834,MR1470739,MR2163900,MR2493377,MR2745276,MR3150210,Ye1,Ye2}.
Part \ref{part:III} of this text presents recent progress towards this conjecture made in \cite{1608.04995}.  %and \cite{1710.02735}.  

\subsubsection{Local rigidity}
Beyond the finiteness of actions in low dimensions, there are a number of local rigidity  conjectures that aim to classify perturbations of non-finite actions.   %number of conjectures on the local and global rigidity properties of actions of higher-rank lattices as well as a number of  results in the literature.  % for various families of actions.  
We recall one common definition of local rigidity of a $C^\infty$ group action:
\begin{definition}\label{def:LR} \index{rigidity!local}
	An action $ \alpha\colon \Gamma\to \diff^\infty(M)$ of a finitely generated group $\Gamma$ is said to be \emph{locally rigid} if, for any action $\td \alpha\colon \Gamma\to \Diff^\infty(M)$ sufficiently $C^1$-close   to  $\alpha$, there exists  a $C^\infty$ diffeomorphism $h\colon M\to M$ %n   $h\in \diff^\infty( M) $ 
	such that  \begin{equation}\label{eq:conj}h\circ  \td \alpha(\gamma)\circ h\inv  = \alpha(\gamma) \quad \quad \text{ for all $\gamma\in \Gamma$.}\end{equation}
	%\end{quote}
\end{definition}
\noindent

In \cref{def:LR}, using that $\Gamma$ is finitely generated, we define the $C^1$ distance between $\alpha$ and $\td \alpha$ to be $$\max \{ d_{C^1}(\alpha (\gamma), \td \alpha(\gamma))\mid \gamma\in F\}$$
where $F\subset \Gamma$ is a finite, symmetric generating subset.  

Local rigidity results have been established for actions of higher-rank lattices in many settings.  For instance, local rigidity is known to hold  for   isometric actions by \cites{MR1779610,MR2198325}.   
In the non-isometric setting, local rigidity has been established for  affine   Anosov  actions. 
\begin{definition}\label{def:anosov2}\index{Anosov!action}\index{action!Anosov}
	We say an action $\alpha\colon \Gamma \to \diff (M)$ is \emph{Anosov} if $\alpha(\gamma) $ is an Anosov diffeomorphism for some $\gamma\in \Gamma$.  
\end{definition}
\noindent See  Example \ref{ex:standard} and \cref{rem:affineAnosov} for examples of affine Anosov actions of lattices  on tori.

For Anosov actions, note that while structural stability  \eqref{eq:Anosov1} holds for individual Anosov elements of an action, local rigidity requires that the map $h$ in \eqref{eq:conj} intertwines the action of the entire group $\Gamma$; moreover, unlike in the case of a single Anosov map where $h$ is typically only H\"older continuous, we ask that the map $h$ in \eqref{eq:conj} be smooth.

There are a number of results  establishing local rigidity of affine Anosov actions on tori and nilmanifolds including \cites{MR1154597,MR1631740, MR1164591, MR1367273,MR1740993,MR1332408}. The full result on  local rigidity of Anosov actions by higher-rank lattices was obtained in \cite[Theorem 15]{MR1632177}. 
See also related rigidity results including  \cite{MR1154597} for results on deformation rigidity and \cites{MR1154597, MR1338481, MR1058434, MR1415754} for  various infinitesimal   rigidity results.  
Additionally, see \cites{MR2521112,MR1826664} for local rigidity of closely related actions and  \cite{MR1421873}  and \cite[Theorem 17]{MR1632177} for results on the local rigidity of projective  actions by cocompact lattices.

\subsubsection{Global rigidity}
Beyond the study of  perturbations, there are a number of conjectures and results on the  \emph{global rigidity}\index{rigidity!global} of
smooth actions of higher-rank lattices.  
Most global rigidity results in the literature focus on various families of  Anosov actions.  (Though, see  \cref{conj:koko} for a global rigidity conjecture that is not about Anosov actions.)
Such conjectures and results aim to classify  all (typically volume-preserving) Anosov actions by showing they are smoothly conjugate to affine   actions on (infra-)tori and nilmanifolds.  
See for instance \cites{MR1154597,MR1380646, MR1367273,MR1740993, MR1401783,MR1643954,MR1866848, MR1826664} for various  global rigidity results for  Anosov actions.     % where  (typically volume-preserving) Anosov actions on tori and nilmanifolds are shown to be conjugate to affine actions under various hypotheses. 

Recently, \cite{BRHW1} gave a new mechanism to study rigidity of Anosov actions on tori; in particular, it is shown in  \cite{BRHW1} that all Anosov actions (satisfying a certain lifting condition which holds, for instance, when the lattice is cocompact)  of  higher-rank lattices are smoothly conjugate to affine actions, even when the action is not assumed to preserve a measure.  This provides the most general global rigidity result for Anosov actions on tori and nilmanifolds.

\subsection{Actions of higher-rank abelian groups}\label{sec:HRabelian}
In Part \ref{part:I}, the discrete groups we consider are higher-rank abelian groups of the form $\Z^k$ for $k\ge 2$.  We  focus on certain affine   Anosov actions and aim to classify all invariant measures for   such actions.    

Recall that Anosov diffeomorphisms $f\colon \T^d\to \T^d$ on tori are classified up to continuous changes of coordinates.  Such maps $f$ leave invariant many closed  subsets and probability measures on $\T^d$ of intermediate dimension.  (See Proposition \ref{prop:zoo} and nearby discussion.)  
For Anosov actions   (satisfying certain non-degeneracy conditions) of higher-rank abelian groups $\Z^k$, a number of rigidity results show that properties of higher-rank actions are strikingly different from actions of a single Anosov diffeomorphism.    
We outline some of these results known to hold in  this setting:
\begin{enumerate}
	\item {\it Local rigidity}\index{rigidity!local} results---in which  perturbations of affine Anosov actions are  smoothly conjugate  to the original actions---have been established in \cites{MR1164591,MR1307298,MR1632177} with the most general results appearing in \cite{MR2342454}.  A partial list  of related local rigidity results in the setting of partially hyperbolic actions includes  \cites{MR2726100,MR2838045, MR2672298,MR2753946, MR3395259,1510.00848}.
	
	\item {\it Global rigidity}\index{rigidity!global}  results---in which all Anosov actions on tori and nilmanifolds are shown to be smoothly conjugate to affine actions---have been established in \cites{MR2240907,MR2372620,MR2322492,MR2318497,MR2776843,MR2983009,1801.04904} with the most complete result being \cite{MR3260859}.   Under strong dynamical hypotheses, a number of these results including \cites{MR2240907,MR2372620,1801.04904} establish global rigidity results without any assumption on the underlying manifold.   
	\item Results classifying all invariant sets (such as showing all closed invariant sets are finite or all of $M$) including Furstenberg's theorem (\cite{MR0213508},   Appendix \ref{App:furstenberg},  and Theorem \ref{thm:furst} below) and \cite{MR716835}. %, MR2974218}
	\item {\it Measure rigidity}\index{rigidity!measure} results---in which all ergodic, invariant Borel probability measures with positive entropy are shown to be algebraic or smooth---have been established in a number of settings including Theorems \ref{thm:rudolph} and \ref{thm:KS} discussed below and in  \cites{MR1406432,MR2029471,MR1062766}.  See  also \cites{MR2261075,MR2811602} for versions of these results in non-linear and non-uniformly hyperbolic settings  (discussed in Section \ref{sec:NUHZd})  and \cites{MR1989231,MR2191228,MR2247967,MR2366231} for related results for diagonal actions on homogeneous spaces (discussed in Section \ref{sec:WCF}.)
\end{enumerate}

\subsection{Rigidity and classification of orbit closures and invariant measures}  \label{sec:ratner} 
A direction which is not pursued in this text concerns actions of groups $\Gamma$ with much less structure than those considered above.  As a prototype, one should   consider $\Gamma= F_2$, the free group on two generators. Instead of studying all  actions of such  groups,  one might consider  families of actions arising from geometric or algebraic constructions or actions satisfying certain dynamical properties.
The aim  is then to  classify certain dynamically defined objects, such as  orbit closures and invariant (or stationary)  measures, by showing that  such objects are  smooth or  homogeneous. 

Consider a discrete group  $\Gamma$  and an  action $\alpha\colon \Gamma\to \diff^r(M)$ on a compact manifold $M$.  Given $x\in M$ the \emph{orbit} of $x$ is $$O_x:=\{\alpha(\gamma)(x):\gamma\in \Gamma\}$$
and the \emph{orbit closure} of $x$ is $\overline{O_x}$, the closure of $O_x$ in $M$.  A probability measure $\mu$ on $M$ is \emph{$\Gamma$-invariant} if for all $\gamma\in \Gamma$  and Borel measurable $B\subset M$ we have $$\mu(B) = \mu\big(\alpha(\gamma\inv)(B)\big).$$   Given a probability measure $\nu$ on the acting group $\Gamma$, we say that a  probability measure $\mu$ on $M$ is \emph{$\nu$-stationary} if  for all Borel measurable $B\subset M$ we have  $$\mu(B) = \int _\Gamma \mu\big(\alpha(\gamma\inv)(B)\big) \ d \nu(\gamma).$$   That is, $\mu$ is $\nu$-stationary if it is ``invariant on average.''  While an action might not admit invariant measures (for instance if the group $\Gamma$ is non-amenable),   for any measure $\nu$ on $\Gamma$  there always exists at least one $\nu$-stationary measure (assuming $M$ is compact and the action is $C^0$.)

For a diffeomorphism  $f\colon M\to M$ exhibiting strong hyperbolicity properties, there always exist    orbit closures that  are  Cantor sets (of intermediate Hausdorff dimension) and  singular invariant probability measures supported on these Cantor sets.  This holds, for instance, if $f$ is Anosov or preserves an invariant measure  with no  Lyapunov exponent equal to zero; see Proposition \ref{prop:zoo} and nearby discussion as well as \cites{MR0442989,MR573822}.  
Similarly, singular orbit closures and invariant or stationary measures  may appear  for   actions of   free groups.

However,  there are  a number of extremely influential results establishing   homogeneity of orbit closures and invariant measures  in  certain  homogeneous or affine settings.
An  extremely important  setting in which such a  program was carried out is Raghunathan's conjecture  (see \cite[pg.\ 358]{MR629475}) on the homogeneity of orbit  closures for unipotent flows on homogeneous spaces.   Important special  cases of this conjecture were established in many papers including \cites{MR0393339,MR0578655,MR0407233,MR0447476,MR744294,MR629475,MR835804}.  Classification of orbit closures was   central to Margulis's proof of the  Oppenheim conjecture \cites{MR882782,MR993328} and   later results of  Dani and Margulis \cites{MR1016271,MR1032925}.  %   provided a simplified version of .  
% \new
The full conjecture on the  homogeneity  of all orbit closures and invariant measures for   unipotent flows was established by Ratner  in a series of papers   \cites{MR1054166,MR1075042,MR1135878,MR1262705}.  Similar results in more general homogeneous spaces and using   different techniques were    obtained in \cite{MR1253197}.  %through somewhat different arguments. 

%\new

More recently, there have been a number of breakthroughs in the setting of  homogeneous dynamics and Teichm\"uller dynamics where new techniques are developed to classify  orbit closures and  invariant  and  stationary measures for certain families of  group actions.  
In these settings, a number of common   rigidity properties of an action $\alpha\colon \Gamma\to \diff(M)$ are established:\index{rigidity} %\new
\begin{enumerate}
	\item {\it Stiffness of stationary measures:} all $\nu$-stationary measures are  $\Gamma$-invariant (for a finitely supported measure $\nu$ whose support generates $\Gamma$).
	\item {\it Rigidity of invariant measures:} all ergodic, 
	$\Gamma$-invariant measures  
	are a volume on a `nice' (e.g.\ homogeneous, affine, or smooth)  submanifold. 
	\item {\it Rigidity of orbit closures:} all orbit closures are  `nice' submanifolds.
\end{enumerate}

In  a  homogeneous setting, one may consider the natural action (see Example \ref{ex:standard}) of a subgroup $\Gamma$ of  $\Sl(n,\Z)$ on the torus $\T^n$.  In \cites{MR2136018,MR2182271,MR2060998} closed invariant sets were classified under various hypotheses on the acting group.  
Assuming certain algebraic properties of the group $\Gamma$,  in   \cite{MR2726604} and  \cite{MR2831114} all  stationary measures are shown to be either supported on a finite set or are the Lebesgue volume on $\T^n$ and hence  are   $\Gamma$-invariant; moreover, every orbit is  either finite or dense.
Similar  results appear in  \cite{MR2831114} for groups of translations on homogeneous spaces and under weaker hypotheses (which allow for orbit closures to be    finite unions of proper homogeneous submanifolds) in \cites{MR3037785,BQIII}.  %\new
See also \cite{1708.04464} for an application of the method from \cite{MR3037785} to a certain non-volume-preserving homogeneous action and the  recent preprint \cite{ELlong} that extends many of the above results with fewer algebraic conditions.  

In Teichm\"uller dynamics,   an  affine but non-homogeneous action of $\Sl(2, \R)$  (the natural $\Sl(2,\R)$-action on a stratum $\mathcal H(\kappa)$ in the moduli space of  abelian differentials  on a  surface) is studied in the breakthrough work \cite{MR3814652}.  For the action of   the upper-triangular subgroup $P\subset \Sl(2,\R)$ and for certain measures  $\nu$ on   $\Sl(2,\R)$, the  $P$-invariant and $\nu$-stationary measures are shown in \cite{MR3814652} to be  $\Sl(2,\R)$-invariant and to coincide with  natural volume forms on affine submanifolds.   %\new
This classification of $P$-invariant measures is used in  \cite{MR3418528} to show that    $P$- and $\Sl(2,\R)$-orbit closures are affine submanifolds.

In inhomogeneous settings,  there are  a number of families of actions for which   a classification of orbit closures and invariant measures is both expected and desired.   Such a  classification was attained  for nonlinear  group actions on surfaces (satisfying certain dynamical hypotheses) in \cite{1506.06826}.  Analogous results are expected to hold in higher-dimension.

\subsection{Common themes}
We end this introduction by outlining two common themes that recur  throughout this text.

\subsubsection{Entropy, exponents, and the geometry of conditional measures}
The first  major theme that runs throughout  this text is the relationship between metric entropy, Lyapunov exponents, and the geometry of measures along foliations and orbits.  The most basic relationship between these quantities is expressed in Lemmas \ref{entropyvatoms} and \ref{entropyvatoms2} which, for $C^2 $ (or $C^{1+\beta}$) diffeomorphisms, characterizes measures with zero metric entropy precisely  as those measures whose conditional measures along unstable Pesin manifolds are purely atomic.  

For measures with positive entropy but failing to attain equality in the Margulis--Ruelle inequality (Theorem \ref{entropyfacts}\ref{EFF1} below), the Ledrappier--Young entropy formula (Theorem \ref{thm:LYII} below) gives a very general relationship between the geometry of conditional measures on unstable manifolds (specifically, the transverse dimension relative to the stratification into fast unstable manifolds), Lyapunov exponents, and metric entropy.  

Our principal interest is in measures   which  attain  equality in the Margulis--Ruelle inequality. (At times we will also be interested in measures that attain the maximal value for entropy  conditioned along some expanding foliation or orbit of a group; see  Definition \ref{def:entsub} and \eqref{eq:popopopo}, page \pageref{eq:popopopo}.)  For such measures, Ledrappier  and Ledrappier--Young showed (see Theorem \ref{thm:led}, and \eqref{eq:LY}, page \pageref{eq:LY}) that conditional measures along unstable manifolds are absolutely continuous with respect to the Riemannian volume. Moreover, Ledrappier explicitly computes the density function of the conditional measures; in the case that the foliation and its dynamics are homogeneous, this yields invariance of the measure along the foliation.   See   \cref{prop:easyLed} and \cref{thm:led'}.

Deriving  invariance of a measure from entropy considerations underlies the ``invariance principle'' for linear cocycles in \cite{MR850070} and its extension to $C^1$-cocycles in \cite{MR2651382}.  It is one of the key ideas in the    classification theorem     of Margulis and Tomanov \cite{MR1253197} extending and giving some alternative arguments to Ratner's measure classification theorem.  See for example discussion in  \cite[Section 5.6]{MR2158954}.   Related entropy arguments are used in \cite{MR3814652}.  
The relationship between entropy and the geometry of conditional measures also plays a key role in \cites{MR2191228,MR2247967}.

In this text, we use  the relationship between entropy and geometry of conditional measures in our  proofs of  Theorem \ref{thm:KS} and Theorem \ref{thm:invmsr} (and its extension in Proposition \ref{prop:nonresinv}.)
In our  proof of  Theorem \ref{thm:KS}, we use \cref{prop:easyLed} %Theorems \ref{thm:led} and \ref{thm:led'} 
(as well as the fact that all foliations considered are one-dimensional) to simplify certain arguments from   \cite{MR1406432}.  %Here we use that  all foliations are one-dimensional.  
In the proofs of Theorem \ref{thm:invmsr} and Proposition \ref{prop:nonresinv}, we use Theorem  \ref{thm:led'} to obtain invariance of certain measures under a group action by studying the entropy conditioned  along the orbits of the  group.  

%The use of entropy considerations to  
\subsubsection{Linear functionals and higher-rank dynamics}
In the proofs of the rigidity results considered in this text, we   always reduce part of the proof to studying dynamics of  higher-rank groups of the form $\R^k$ for $k\ge 2$.  The proofs of Theorems \ref{thm:KS}, \ref{thm:invmeas},  \ref{slnr}, and \ref{thm:invmsr} all use similar tricks that rely on the fact that $\R^k$ is higher-rank when $k\ge 2$.  To each action of $\R^k$, we will associate certain dynamically defined linear functionals.  In the proof of Theorem \ref{thm:KS}, these are the Lyapunov exponents.  In the proofs of  Theorems \ref{thm:invmeas}, \ref{slnr}, \ref{thm:invmsr} these are the fiberwise Lyapunov exponents and the roots of the Lie algebra (where $\R^k \simeq A$ is the maximal split Cartan subgroup of diagonal matrices).  

The higher-rank tricks we employ are all variations on the following trivial fact: if $\lambda\colon \R^k\to \R$ is a non-zero linear functional and if $k\ge 2$, then there exists $s_0\in \R^k$ with $s_0\in \ker(\lambda)$ and $s_0\neq 0$.  In the proof of Theorem \ref{thm:KS}, the selection of such a $s_0$ ensures there exists nontrivial dynamics acting isometrically along a dynamical foliation (see Lemma \ref{lem:transinv}.)
In the proofs of Theorem \ref{thm:invmeas} and \cref{thm:invmsr}, the higher-rank assumption and the low-dimensionality of the fiber ensures we may find a nontrivial $s_0$ for which all fiberwise Lyapunov exponents vanish (see \eqref{eq:jazz}, page \pageref{eq:jazz}.)
In the proof of   \cref{slnr}, we use that if $\lambda, \beta \colon \R^k\to \R$ are non-proportional, non-zero linear functionals then we may select $s_0\in \R^k$ such that $s_0\in \ker\beta$ and $\lambda(s_0)>0$.  When $\lambda$ is a fiberwise Lyapunov exponent (for some $\R^k$-invariant measure) and $\beta$ is a root, this implies that $s_0$ is centralized by  a unipotent root subgroup and we can average (the measure) over this subgroup to obtain a new fiberwise Lyapunov exponent (for a new measure) $\lambda'\colon \R^k\to \R$ with $\lambda'(s_0)>0$.  See Claim \ref{average} and the proof of Proposition \ref{prop:goodmeas} in  \cref{ss:sl3}.   %Section \ref{sec:owlpate}.    
%\newpage 

\fakeSS{Acknowledgements}  The author would like to thank the organizers of the workshop for the invitation to present the  mini-course on which this text is based.   He is   especially grateful to  Michele Triestino  for his work in organizing the workshop and encouraging this publication.  
Finally, he would like to thank all who gave feedback on early drafts of this text including {Brian Chung,  David Fisher, Homin Lee, Emmanuel Militon, Michele Triestino, Dave Witte Morris, and the anonymous referee}.
He is especially grateful to the  anonymous referee who made a number of suggestions that substantially improved the text.  

This material is based upon work supported by the National Science Foundation under Grant No.~1752675.

\part{Rudolph and Katok--Spatzier measure rigidity theorems}\label{part:I}
\section{Furstenberg's conjecture; Theorems by Rudolph and Katok--Spatzier}

\subsection{Furstenberg conjecture}
Let $S^1 = \R/\Z$ be the additive circle.  Note that for $k\in \{2,3 ,4,   \dots\}$ the map $$M_k\colon x \mapsto kx \mod 1$$ is an expanding map of $S^1$.
The following properties of $M_k$ are well known.   For instance, using that $M_k$ is uniformly expanding for $k\ge 2$, one may pass to a symbolic extension and derive such properties using symbolic dynamics of the full $k$-shift.  % for $M_k$ we have the following.  
\begin{proposition}\label{prop:zoo}
	For  $k\in \{2,3,4, \dots\}$ there exist
	\begin{enumerate}
		\item uncountably many mutually disjoint, closed, invariant subsets $\Lambda\subset S^1$;
		\item uncountably many ergodic, $M_k$-invariant  Borel probability  measures $\mu$ with positive metric entropy $h_\mu(M_k)$.  
	\end{enumerate}
\end{proposition}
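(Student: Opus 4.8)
The plan is to exploit the standard symbolic coding of $M_k$ by the full one-sided shift on $k$ symbols, to carry out both constructions at the symbolic level where they are essentially transparent, and then to transport them back to $S^1$. Let $\Sigma_k=\{0,1,\dots,k-1\}^{\N}$ with the left shift $\sigma$, and let $\pi\colon\Sigma_k\to S^1$ be the base-$k$ expansion map $\pi\big((a_i)_{i\ge1}\big)=\sum_{i\ge1}a_ik^{-i}$. Then $\pi$ is continuous and surjective, satisfies $\pi\circ\sigma=M_k\circ\pi$, and is injective away from the countable set $E\subset\Sigma_k$ of eventually-constant sequences (those ending in $0^\infty$ or $(k-1)^\infty$), on which it is at most $2$-to-$1$; concretely, $\pi$ restricts to a Borel isomorphism, conjugating $\sigma$ to $M_k$, between $\Sigma_k\setminus E$ and the complement in $S^1$ of the (countable) set of $k$-adic rationals. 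Essentially all the care in the argument goes into checking that the constructions below avoid, or do not see, this exceptional set $E$.

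For part (1): since $k\ge2$, the full $2$-shift $\{0,1\}^{\N}$ is a $\sigma$-invariant subset of $\Sigma_k$, and it contains a continuum of pairwise disjoint minimal subshifts. A convenient explicit family is the Sturmian subshifts $X_\alpha\subset\{0,1\}^{\N}$ for $\alpha\in(0,1)\setminus\Q$: each $X_\alpha$ is infinite and minimal and consists of aperiodic sequences along which the asymptotic frequency of the symbol $1$ equals $\alpha$, so $X_\alpha\cap X_{\alpha'}=\emptyset$ for $\alpha\ne\alpha'$. Since Sturmian sequences are not eventually constant, $X_\alpha\cap E=\emptyset$, hence $\pi$ is injective on $\bigcup_\alpha X_\alpha$; therefore $\Lambda_\alpha:=\pi(X_\alpha)$ is a closed (compact image) $M_k$-invariant subset of $S^1$ and $\Lambda_\alpha\cap\Lambda_{\alpha'}=\emptyset$ for $\alpha\ne\alpha'$. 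This yields the required uncountable family. (Any elementary source of continuum-many pairwise disjoint minimal subsets of the full shift would serve equally well.)

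For part (2): inside $\{0,1\}^{\N}\subset\Sigma_k$ take the Bernoulli measures $\nu_p=(p,1-p)^{\otimes\N}$ for $p\in(0,1)$. Each $\nu_p$ is $\sigma$-ergodic with metric entropy $h_{\nu_p}(\sigma)=-p\log p-(1-p)\log(1-p)>0$. Put $\mu_p:=\pi_*\nu_p$. Since $\nu_p$ is nonatomic it gives zero mass to the countable set $E$, so (using the restriction of $\pi$ described above) $\pi$ is an isomorphism of measure-preserving systems from $(\Sigma_k,\nu_p,\sigma)$ onto $(S^1,\mu_p,M_k)$; hence $\mu_p$ is $M_k$-invariant and ergodic, and, metric entropy being an isomorphism invariant, $h_{\mu_p}(M_k)=h_{\nu_p}(\sigma)>0$. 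Finally the $\mu_p$ are pairwise distinct — for instance $\mu_p\big(\{t\in S^1:\text{the first base-$k$ digit of }t\text{ is }0\}\big)=\nu_p([0])=p$, the ambiguity of the digit on the null set of $k$-adic rationals being irrelevant — so $\{\mu_p:p\in(0,1)\}$ is an uncountable family of ergodic $M_k$-invariant measures of positive entropy.

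The only genuinely nonformal point, and the one I would treat most carefully, is the bookkeeping around the exceptional set $E$: one must confirm that the chosen subshifts lie entirely outside $E$ (so that disjointness survives pushing forward by $\pi$) and that $E$ is null for the chosen Bernoulli measures (so that $\pi$ really is a measure isomorphism and ergodicity and entropy transfer verbatim). The remaining ingredients — ergodicity and the entropy formula for Bernoulli measures, and the minimality and pairwise disjointness of Sturmian subshifts — are standard. Should one wish to sidestep the invariance of metric entropy under isomorphism, one can instead observe that the partition of $S^1$ into the $k$ base-$k$ intervals is generating mod $\mu_p$ and compute $h_{\mu_p}(M_k)$ directly from the masses $\nu_p$ assigns to cylinders.
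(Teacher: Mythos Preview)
Your proof is correct and follows exactly the approach the paper suggests: the paper does not give a detailed proof but simply remarks that the properties are well known and can be derived by passing to the symbolic extension by the full $k$-shift. You have supplied precisely those details, using Sturmian subshifts for the disjoint closed invariant sets and Bernoulli measures for the positive-entropy ergodic measures, with appropriate care about the countable exceptional set where the coding fails to be injective.
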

Analogous results % similar to Proposition \ref{prop:zoo} 
hold for Anosov diffeomorphisms and Axiom  A systems  \cite{MR0442989}  and for any $C^{1+\beta}$-diffeomorphism of a surface with positive topological entropy \cite{MR573822}.  

Note that  each  map $M_k$ generates an action of the semigroup $\N_0$ on $S^1$.  In   \cite{MR0213508} Furstenberg considered the  action  of the semigroup $\N_0^2$ generated by $$ x \mapsto 2x \mod 1,\quad  x \mapsto 3x \mod 1.$$
\begin{theorem}[Furstenberg's theorem \cite{MR0213508}.  See Appendix \ref{App:furstenberg}]\label{thm:furst}
	The only closed subsets of $S^1$ that are invariant under both 
	$$ x \mapsto 2x \mod 1,\quad  x \mapsto 3x \mod 1$$
	are finite subsets (of rational numbers) or all of $S^1$.
\end{theorem}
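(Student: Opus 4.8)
The plan is to exploit the joint structure of the two maps $M_2$ and $M_3$ on a closed invariant set $\Lambda$. Set $A = \{n \in \N_0 : 2^n \text{ appears}\}$ — more precisely, consider the semigroup $\Sigma = \{2^a 3^b : a,b \ge 0\}$ acting on $S^1$ by multiplication mod $1$, and let $\Lambda \subseteq S^1$ be closed, nonempty, and $\Sigma$-invariant. The goal is to show that either $\Lambda$ is finite (and then necessarily consists of rationals, since a finite invariant set under $M_2$ forces each point to be eventually periodic, hence rational) or $\Lambda = S^1$. The key dichotomy to establish is: if $\Lambda$ is infinite, then $\Lambda = S^1$.

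First I would reduce to showing that an infinite closed $\Sigma$-invariant set $\Lambda$ must be dense, since a closed dense subset of $S^1$ is all of $S^1$. The standard approach (Furstenberg's original argument, via topological dynamics) goes through the notion of a minimal set: pick a minimal nonempty closed $\Sigma$-invariant subset $\Lambda_0 \subseteq \Lambda$. The heart of the matter is a pigeonhole/covering argument. For $x \in \Lambda_0$, the orbit closure $\overline{\Sigma x}$ equals $\Lambda_0$ by minimality. Now here is where the higher-rank (multiplicative independence of $2$ and $3$) enters: one shows that the set of "return times" — pairs $(a,b)$ with $2^a 3^b x$ close to $x$ — is syndetic in a suitable sense, and then one uses that $\log 2 / \log 3$ is irrational to conclude that the images $2^a 3^b \Lambda_0$ for small $(a,b)$ cover $S^1$ up to arbitrarily small gaps. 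Concretely: if $\Lambda_0$ has a gap (complementary interval) of maximal length $\ell > 0$, then applying $M_2$ and $M_3$ to $\Lambda_0$ produces sets whose gaps have lengths controlled by $\ell/2$ and $\ell/3$; combined with the fact that multiplication by $2^a 3^b$ mod $1$ can approximate any rotation (density of $\{a \log 2 + b \log 3 \bmod \log(\text{something})\}$), one derives that the maximal gap length must be $0$, i.e., $\Lambda_0 = S^1$, hence $\Lambda = S^1$.

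The main obstacle — and the step requiring genuine care — is the covering/gap-reduction argument: one must combine the contraction of gap lengths under $M_2$ and $M_3$ with the arithmetic fact that $\{2^a 3^b \bmod 1 : a, b \ge 0\}$ is dense in $S^1$ (equivalently, $\{a \log 2 + b \log 3 : a,b \in \Z\}$ is dense in $\R$, which follows from $\log 2/\log 3 \notin \Q$), in a way that is uniform enough to push the maximal gap to zero. A clean way to organize this is: let $d = d(\Lambda)$ be the supremum of gap lengths (which is attained, as $\Lambda$ is closed); show $d(M_k \Lambda) \le d(\Lambda)$ trivially and that a gap of $M_k \Lambda$ of length $\ge d/k$ lifts to a gap of $\Lambda$; then, assuming $d > 0$, use density of the multiplicative semigroup to find $2^a 3^b$ carrying a fixed gap-midpoint into a region forcing a contradiction with minimality, or alternatively invoke that any minimal set for an expanding map that is not a single periodic orbit must have no isolated points and — via the self-similar structure under both maps — must be all of $S^1$. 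I would then finish by noting the finite case: a finite $\Sigma$-invariant set is a finite union of $M_2$-periodic orbits (and $M_3$-periodic orbits), and $M_2$-periodic points are exactly the rationals with denominator coprime to $2$... in fact all rational points are eventually periodic under $M_2$, so finiteness plus invariance forces $\Lambda \subseteq \Q/\Z$.
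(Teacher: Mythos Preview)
Your outline has the right arithmetic input---the irrationality of $\log 2/\log 3$, equivalently the density of $\{a\log 2 + b\log 3 : a,b\in\Z\}$ in $\R$---but several of the concrete claims you make are incorrect, and the central mechanism is never actually assembled.

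First, two explicit errors. You write that ``$\{2^a 3^b \bmod 1 : a,b\ge 0\}$ is dense in $S^1$''; but every $2^a3^b$ is an integer, so this set is $\{0\}$. The genuinely useful fact is the logarithmic one you state next, and the way it enters the proof is through the observation that if $(s_k)$ enumerates $\{2^a3^b\}$ in increasing order then $s_{k+1}/s_k\to 1$. Second, you say that applying $M_2$ and $M_3$ to $\Lambda_0$ produces sets ``whose gaps have lengths controlled by $\ell/2$ and $\ell/3$''. But $M_k$ \emph{expands} by a factor of $k$; gaps grow (and may wrap), they do not shrink. What is true is that $M_k^{-1}$ of a gap of length $\ell$ consists of $k$ intervals of length $\ell/k$ lying in the complement of $\Lambda$---but turning this into a contradiction with maximality of $\ell$ requires a real argument that you have not supplied. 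More seriously, the phrase ``multiplication by $2^a3^b$ mod $1$ can approximate any rotation'' is simply false: these are $k$-to-$1$ expanding endomorphisms, not rotations, and they do not approximate rotations in any useful sense.

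The paper's proof (Appendix~\ref{App:furstenberg}) takes a rather different route, and it is worth knowing. Step one is the ratio lemma $s_{k+1}/s_k\to 1$ mentioned above. Step two treats the special case where $F$ has a \emph{rational} accumulation point: reducing to accumulation at $0$, one uses the ratio lemma to show that the $\Sigma$-orbit of any point near $0$ is $\epsilon$-dense, forcing $F=S^1$. Step three is the clever part: for general infinite $F$, the difference set $F-F$ is closed, $\Sigma$-invariant, and accumulates at $0$, so $F-F=S^1$ by step two; this means every translate $T_\alpha(F)$ meets $F$. One then finds, for each large $k$ coprime to $6$, a nonempty closed subset of $F$ invariant under $T_{1/k}$ (hence $1/k$-dense), using that $T_{1/k}$ commutes with suitable powers of $M_2,M_3$. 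Since $k$ is arbitrary, $F$ is dense, hence all of $S^1$. Nowhere does a maximal-gap or minimal-set argument appear; the difference-set trick and the commuting translations do all the work.
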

In \cite{MR716835}, Berend extended Furstenberg's result to subsets of tori invariant under certain abelian groups of automorphisms.  (See also \cite{MR2974218} for further discussion on higher-rank abelian actions of toral automorphisms and \cites{MR2136018,MR2060998,MR2182271,MR2726604,MR2831114,MR3037785,BQIII} for results concerning actions by toral automorphisms of more general groups.)

Note that both generators $ x \mapsto 2x \mod 1$ and $  x \mapsto 3x \mod 1$ preserve the Lebesgue measure $m$ on $S^1$.   Thus, $m$ is invariant under the action of the semigroup $\N_0^2$ generated by  $ M_2$ and $M_3$.  %x \mapsto 2x \mod 1$ and $  x \mapsto 3x \mod 1$.    
Also, for any rational point $p/q \mod 1\in \Q/\Z$, the orbit of $p/q$ under the action of $\N_0^2$  is finite and there exists  an $\N_0^2$-invariant measure supported on finitely many points of this orbit.  

From Theorem \ref{thm:furst} and the above observations, it is natural to conjecture the following.  
\begin{conjecture}[Furstenberg's conjecture] \label{conj:furst}
	\index{conjecture!Furstenberg}The only ergodic, Borel probability measure on $S^1$ that is invariant under both 
	$$ x \mapsto 2x \mod 1,\quad  x \mapsto 3x \mod 1$$
	is either   supported on a finite set (of rational numbers) or is the Lebesgue measure on $S^1$.
\end{conjecture}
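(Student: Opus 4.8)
Throughout let $\mu$ be ergodic for the action of the semigroup $\N_0^2$ generated by $M_2$ and $M_3$ (the general case reduces to this by the ergodic decomposition, since both maps preserve the atoms of the decomposition). The plan is to split according to metric entropy. If $h_\mu(M_2)>0$, I would conclude $\mu=m$ from Rudolph's theorem (Theorem \ref{thm:rudolph} below); if instead $h_\mu(M_3)>0$, I would apply the same theorem with the roles of $2$ and $3$ interchanged — Johnson's extension of Rudolph's argument works for any two multiplicatively independent integers, and neither the semigroup nor the ergodicity hypothesis changes. The engine in this step is the theme emphasized in the introduction: positive entropy forces the conditional measures of $\mu$ along one of the two expanding directions to be Haar, and multiplicative independence of $2$ and $3$ lets one spread this into full invariance, forcing $\mu=m$; this is carried out in Part \ref{part:I}. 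After this step the only surviving case is
\[h_\mu(M_2)=h_\mu(M_3)=0,\]
in which $\mu$ cannot be $m$ (as $h_m(M_2)=\log 2\neq 0$), so the remaining goal is to prove that $\mu$ is supported on a single finite $\N_0^2$-orbit of rational points.

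This zero-entropy case is the entire substance of the conjecture, and I do not know how to complete it — it is open — but I can at least say what one needs. Equivalently, since an $M_2$-invariant ergodic measure on $S^1$ is exact dimensional with $\dim_H\mu=h_\mu(M_2)/\log 2$, the task is: an ergodic measure of Hausdorff dimension zero that is invariant under both $M_2$ and $M_3$ must be finitely supported on rationals. The two natural attacks are (i) to exploit zero entropy combinatorially — coding $\mu$ by base-$2$ and by base-$3$ expansions yields shift-invariant measures of zero entropy, so $\mu$-almost every $x$ has binary and ternary expansions of sub-exponential word complexity, and one would like joint invariance to force such an $x$ to be rational; and (ii) to pass to the scenery flow of $\mu$ and analyze the resulting highly symmetric family of tangent distributions, via Hochman's fractal-distribution machinery (the same circle of ideas that settled Furstenberg's companion conjectures on the intersections and sums of $\times 2,\times 3$-invariant \emph{sets}). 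Neither route has been pushed through: essentially every mechanism known to produce measure rigidity — the invariance principle of \cite{MR850070}, the Ledrappier--Young apparatus, Host's Fourier-analytic normality criterion, Lindenstrauss's high-entropy methods, the quantitative measure classification of Bourgain--Lindenstrauss--Michel--Venkatesh — is driven by positive entropy or by a spectral gap, and none of them says anything when $h_\mu(M_2)=h_\mu(M_3)=0$.

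So the step I expect to be the main obstacle is precisely the zero-entropy case: there is no known dynamical mechanism forcing a zero-entropy, jointly $\times 2,\times 3$-invariant ergodic measure to be atomic. Accordingly, in this text the positive-entropy half of the plan is executed as Theorem \ref{thm:rudolph}, while the zero-entropy half — which is exactly the content of Conjecture \ref{conj:furst} — is left open; no counterexample is known, so the conjecture is expected to hold as stated.
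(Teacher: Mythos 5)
The statement you set out to prove is a conjecture: the paper offers no proof of it, and your proposal correctly does not claim one — it reduces the positive-entropy case to Theorem \ref{thm:rudolph} and identifies the zero-entropy case as the genuinely open content, which is precisely the paper's own assessment (the text states that Conjecture \ref{conj:furst} remains open, with Rudolph's theorem as the optimal partial resolution, except possibly for zero-entropy measures with infinite support). One small simplification: Theorem \ref{thm:rudolph} as stated already assumes only $h_\mu(M_2)>0$ \emph{or} $h_\mu(M_3)>0$, so no appeal to Johnson's extension is needed to interchange the roles of $2$ and $3$.
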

\begin{remark} In Conjecture \ref{conj:furst}, the word \emph{ergodic}\index{measure(s)!ergodic} means ergodic for the semi-group action generated by $M_2$ and $M_3$. That is,  if $\mu$ is an $M_2$- and $M_3$-invariant measure, then $\mu$ is ergodic if any measurable set $D\subset \T^3$ satisfying   
	%$$
	%\begin{equation}
	$$M_2\inv (D)= D \quad \quad M_3\inv (D) = D\label{eq:ll}
	$$
	%\end{equation}
	has either $\mu(D) = 1$ or $\mu(D) =0$.  It is possible that $\mu$ is ergodic for the $\N_0^2$-action but not ergodic for either of the generators $M_2$ or $M_3$.
\end{remark}

\subsection{Rudolph's theorem}
Conjecture \ref{conj:furst} remains open.  Building on previous    results (specifically  \cite{MR941238} and \cite{MR1194793}),  Rudolph obtained what is still the optimal partial  resolution of    Conjecture \ref{conj:furst}.  

To state the result, we refer to the definition of metric entropy $h_\mu(f)$ for a $\mu$-preserving transformation $f$  defined in Section \ref{sec:ME} below.
If $f\colon X\to X$ is a continuous transformation of a compact metric space and if $\mu$ is an $f$-invariant measure supported on a finite set then the metric entropy $h_\mu(f)$ is zero.  The converse need not hold; indeed using symbolic dynamics one can build measures  $\mu$ on $S^1$ that are ergodic and invariant under $M_2$, satisfy $h_\mu(M_2) =0$, and have no atoms and hence have infinite support.  Explicit examples of such measures include measures supported on infinite minimal subshifts with zero topological entropy such as Sturmian subshifts and Morse-Thue (and more general substitution) subshifts; see \cite[\S13.7]{MR1369092}.

In \cite{MR1062766}, Rudolph resolved Conjecture \ref{conj:furst} except, possibly, for zero entropy measures with infinite support.  
\begin{theorem}[\cite{MR1062766}]\label{thm:rudolph}\index{theorem!Rudolph}
	The only ergodic Borel probability measure on $S^1$ that is invariant under both 
	$$M_2\colon  x \mapsto 2x \mod 1 \quad \text{and} \quad  M_3\colon  x \mapsto 3x \mod 1$$ and satisfies
	$$h_\mu (M_2)>0\quad \text{or} \quad  h_\mu (M_3)>0$$
	is the Lebesgue measure on $S^1$.
\end{theorem}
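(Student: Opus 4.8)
The plan is to exploit the commuting pair $(M_2, M_3)$ together with the positive-entropy hypothesis, using the relationship between metric entropy and the geometry of conditional measures along the (one-dimensional) unstable direction — in this case the whole circle, since both maps are expanding. First I would reduce to a normalized situation: suppose $\mu$ is ergodic for the $\N_0^2$-action, invariant under both $M_2$ and $M_3$, and (say) $h_\mu(M_2)>0$; by the Abramov-type relation and the fact that $h_\mu(M_2^a M_3^b)$ is linear in $(a,b)$ on the positive cone, one sees all of these entropies are positive, and in particular the $\sigma$-algebra generated by the action is nonatomic. The key structural input is that a single expanding map $M_k$ of $S^1$ (which is $C^\infty$, hence $C^{1+\beta}$) has $S^1$ itself as the unstable manifold through every point; by the Ledrappier--Young theory (Theorem \ref{thm:LYII}, or for the equality case Theorem \ref{thm:led}) the metric entropy $h_\mu(M_k)$ equals $\delta_k \log k$, where $\delta_k\in[0,1]$ is the (pointwise / Hausdorff) dimension of $\mu$. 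Since $\mu$ is a common invariant measure, the dimension $\delta$ does not depend on $k$, so $h_\mu(M_2)=\delta\log 2$ and $h_\mu(M_3)=\delta\log 3$ with the \emph{same} $\delta>0$.

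Next I would bring in the higher-rank trick and the fact that $\log 2/\log 3$ is irrational. The classical approach (following Rudolph, building on Lyons and Feldman--Parry) is to pass to the natural extension of the $\N_0^2$-action, realizing it as a genuine $\Z^2$-action $(\bar M_2,\bar M_3)$ on an inverse limit $\bar X$, and to study the conditional measures of the lifted measure $\bar\mu$ along the ``central'' foliation — the orbit foliation of the $\R$-flow obtained by viewing the natural extension as a suspension/solenoid over which $M_2$ and $M_3$ act as commuting automorphisms. One shows, using the entropy data above and a ``restricted entropy'' or relative-entropy estimate, that these conditional measures are either atomic or Haar (Lebesgue) along the fibre direction. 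The dichotomy is forced by a self-similarity argument: the conditionals must be simultaneously invariant under scaling by $2$ and by $3$ on a one-dimensional homogeneous space, and by the irrationality of $\log 2/\log 3$ the only locally finite measures with this property are multiples of Haar or sums of point masses. Positivity of entropy (equivalently $\delta>0$) rules out the atomic alternative on a positive-measure set, hence excludes it altogether by ergodicity.

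Finally I would push the Haar-ness of the conditional measures along the central fibres back down to $S^1$. Invariance of the conditional measures under the fibre translations, combined with $\N_0^2$-invariance and ergodicity, yields that $\mu$ is invariant under a nontrivial one-parameter translation group acting on $S^1$ — but the translations form the full circle, so $\mu$ is translation-invariant and therefore equals Lebesgue measure $m$. The main obstacle, and the technical heart of the argument, is the middle step: setting up the natural extension/solenoid correctly and proving the rigidity dichotomy (atomic vs.\ Haar) for the conditional measures along the central direction. This requires the careful entropy bookkeeping of Ledrappier--Young type — relating $h_\mu(M_2)$, $h_\mu(M_3)$, and the entropy conditioned along the central foliation — together with the arithmetic input that $2$ and $3$ are multiplicatively independent; everything else (ergodic decomposition, Abramov linearity, passing invariance down from the extension) is comparatively routine. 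One should also be slightly careful that ``ergodic for the $\N_0^2$-semigroup action'' is the correct hypothesis and behaves well under taking the natural extension, which it does after the standard reduction.
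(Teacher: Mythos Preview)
The paper does not give its own proof of Theorem \ref{thm:rudolph}; the result is stated with a citation to \cite{MR1062766} and then used to motivate the Katok--Spatzier reformulation on $\T^3$ (Theorem \ref{thm:KS}), whose proof occupies the rest of Part \ref{part:I}. So there is no in-paper argument to compare your proposal against directly.

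On the proposal itself: the overall architecture---pass to the invertible natural extension, study leaf-wise conditional measures along the one-dimensional expanding direction, use multiplicative independence of $2$ and $3$ to force a dichotomy, and eliminate the atomic case by positive entropy---is broadly the right shape and is indeed close in spirit to the Katok--Spatzier argument the paper carries out for the toral case. The observation that $h_\mu(M_k)=\delta\log k$ with a \emph{common} dimension $\delta$ (since pointwise dimension is a property of $\mu$, not of the map) is correct and is a genuine ingredient.

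There is, however, a real gap at the step you flag as the ``technical heart.'' You write that ``the conditionals must be simultaneously invariant under scaling by $2$ and by $3$,'' but this is not what the dynamics hands you. What you get from invariance of $\bar\mu$ is \emph{equivariance}: $(\bar M_k)_*\mu^u_x \propto \mu^u_{\bar M_k(x)}$, a relation between conditional measures at \emph{different} points, not a self-similarity of a single $\mu^u_x$. Turning equivariance into an honest scaling- or translation-invariance of almost every leaf-wise measure is precisely the content of the theorem, and irrationality of $\log 2/\log 3$ alone does not do it. In the paper's toral proof this bridge is built by (i) finding an element of the acting group in the kernel of the relevant Lyapunov functional, so that it acts \emph{isometrically} along the leaf (Lemma \ref{lem:transinv}), and (ii) using the $\pi$-partition trick (Lemma \ref{lem:Pipart}) to show ergodic components for that element are saturated by leaves. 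In the $\times 2,\times 3$ setting there is no nonzero element of $\N_0^2$ (or $\Z^2$ on the solenoid) with vanishing exponent, so this route is not available verbatim; Rudolph's original argument replaces it with a relative-entropy identity (entropy of $M_2$ conditioned on a suitable $M_3$-tail $\sigma$-algebra equals the unconditional entropy), which is what actually forces the conditionals to be Lebesgue. Your outline does not supply any substitute for this step, and without it the ``atomic versus Haar'' dichotomy is asserted rather than proved.
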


\subsection{Katok--Spatzier reformulation}
A minor technical nuisance when studying Furstenberg's conjecture is that the action is noninvertible.  That is, the maps 
$$  x \mapsto 2x \mod 1,\quad    x \mapsto 3x \mod 1$$
generate an action of the semigroup $\N_0^2$ rather than the action of a group.  
By passing to the \textit{natural extension solenoid} one can induce an action of the group $\Z^2$  that contains the $\N_0^2$-action as a topological factor.

One can view the natural extension solenoid as an analogue of 3-dimensional torus except that the solenoid has  non-Archimedean   directions.  A.\ Katok proposed studying a related action on a more familiar space: the action of two commuting (hyperbolic) automorphisms of $\T^3$.  %, which 
One then naturally obtains a version of Furstenberg's conjecture for  $\Z^k$-actions by automorphisms of tori and solenoids of arbitrary dimension.  
A generalization of Rudolph's theorem under a number of hypotheses was established in this setting by Katok and Spatzier \cites{MR1406432, MR1619571}.

We will focus  on the following concrete example which demonstrates many of the ideas in the paper \cite{MR1406432}.  %, MR1619571}.% and prove Katok and Spatzier's theorem.  
\begin{example}\label{ex:Key}\label{ex:key}
	Let \begin{equation}
		A=\left(
		\begin{array}{ccc}
			3 & 2 & 1 \\
			2 & 2 & 1 \\
			1 & 1 & 1 \\
		\end{array}
		\right) , \quad \quad B=\left(
		\begin{array}{ccc}
			2 & 1 & 1 \\
			1 & 2 & 0 \\
			1 & 0 & 1 \\
		\end{array}
		\right).
	\end{equation}
	One verifies the following properties of $A$ and $B$:
	\begin{claim}\label{claim:trivial} \
		\begin{enumerate}
			\item \label{trivial:1} $\det A = \det B =1$ so $A$ and $B$ preserve the orientation on $\R^3$ and the integer lattice $\Z^3$;
			\item \label{trivial:2}  $A$ has 3 distinct real eigenvalues $$\chi_A^1>1> \chi_A^2>\chi_A^3>0;$$
			\item \label{trivial:3} $B$ has
			3 distinct real eigenvalues $$\chi_B^1>\chi_B^3>1> \chi_B^2>0;$$
			\item \label{trivial:4} $A$ and $B$ commute: $AB= BA$;
			\item \label{trivial:5} $A^kB^\ell= \id $ only when $k = \ell = 0$.
		\end{enumerate}
	\end{claim}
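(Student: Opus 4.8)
The plan is to verify each of the five items by direct computation on the explicit matrices $A$ and $B$, organizing the work so that the eigenvalue count and the commutation relation do most of the heavy lifting.

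First I would handle \ref{trivial:1}: compute $\det A$ and $\det B$ by cofactor expansion; each is a $3\times 3$ determinant with small integer entries, so this is immediate, and since the entries are integers with determinant $1$ both matrices lie in $\SL(3,\Z)$, hence preserve $\Z^3$ and the orientation. Next, for \ref{trivial:2} and \ref{trivial:3}, I would write down the characteristic polynomials $p_A(t)=\det(tI-A)$ and $p_B(t)=\det(tI-B)$, both monic cubics in $\Z[t]$ with constant term $-\det = -1$ (so the product of the roots is $1$). To see that each has three distinct real roots, I would evaluate the cubic at a few integer or half-integer points to locate sign changes (e.g. check $p_A$ at $t=0$, small positive values, and large $t$ to trap one root in $(0,1)$, another in $(0,1)$, and a large root $>1$; similarly for $B$ one expects two roots above $1$ and one in $(0,1)$ as recorded in the claim). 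Distinctness then follows either from the strict sign changes pinning three roots in disjoint intervals, or by checking $\gcd(p_A,p_A')=1$. The specified ordering and the location relative to $1$ (namely $\chi_A^1>1>\chi_A^2>\chi_A^3>0$ and $\chi_B^1>\chi_B^3>1>\chi_B^2>0$) are read off from which intervals the roots fall in, using that the product of all three is $1$.

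For \ref{trivial:4} I would simply multiply $AB$ and $BA$ and check the two $3\times 3$ products agree entrywise — a routine but finite calculation; note this is exactly the symmetry of $A$ and $B$ plus a direct check, since symmetric matrices do not automatically commute. Finally, for \ref{trivial:5}: since $A$ and $B$ commute and are each diagonalizable over $\R$ with simple spectrum, and (by inspection of eigenvectors, or because $A$ is not a power of $B$ and vice versa) they are simultaneously diagonalizable, the relation $A^kB^\ell = \id$ translates into $(\chi_A^i)^k(\chi_B^i)^\ell = 1$ for each $i=1,2,3$. Taking logarithms, this says the vector $(k,\ell)$ is orthogonal to each of the three vectors $(\log\chi_A^i,\log\chi_B^i)$ in $\R^2$. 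Because $\log\chi_A^1>0>\log\chi_A^2$ while $\log\chi_B^1>0$ and the columns are not proportional — which one checks from the eigenvalue locations, or more concretely by noting $A\ne B^m$ for any $m$ — at least two of these vectors are linearly independent, forcing $(k,\ell)=(0,0)$.

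The main obstacle is \ref{trivial:5}: one must be careful that the multiplicative independence of the pairs $(\chi_A^i,\chi_B^i)$ genuinely holds, i.e. that no nontrivial integer combination of logarithms of eigenvalues vanishes. The clean way around this is the linear-algebra argument above (orthogonality in $\R^2$ to two independent vectors), which only requires checking that two of the three eigenvalue-pairs are not proportional — and that follows from the sign patterns already established in \ref{trivial:2}–\ref{trivial:3}, since e.g. $\log\chi_A^1$ and $\log\chi_A^3$ have opposite signs while $\log\chi_B^1$ and $\log\chi_B^3$ have the same sign. Everything else is a bounded computation.
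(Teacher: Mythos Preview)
Your approach is correct and is exactly what the paper intends: the claim is stated without proof (the paper simply writes ``One verifies the following properties''), and the intended verification is precisely the direct computation you outline. One small point to watch: the enumeration of the $\chi_B^i$ in \ref{trivial:3} is \emph{not} by size but by the joint-eigenvector pairing with $A$ (as the paper remarks just after the claim), so to confirm the specific inequality $\chi_B^1>\chi_B^3>1>\chi_B^2$ with those indices---and hence to use the sign pattern $(\log\chi_A^1,\log\chi_B^1)\in(+,+)$, $(\log\chi_A^3,\log\chi_B^3)\in(-,+)$ in your argument for \ref{trivial:5}---you need one extra step beyond locating the roots of $p_B$, namely computing an eigenvector of $A$ and checking how $B$ acts on it; this is a bounded computation you should make explicit.
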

	%; .  One also verifies the followign  $AB= BA$ and $A^kB^\ell= \id $ only when $k = \ell = 0$.     
	As $A$ and $B$ commute and are diagonalizable over $\R$, they are jointly diagonalizable.  The enumerations of the eigenvalues     of $A$ and $B$ are chosen so that $\chi_A^i$ and $\chi_B^i$  correspond to the same joint eigenvector; see \eqref{eq:jointdiag} below.
	
	Since both $A$ and $B$ preserve the integer lattice $\Z^3\subset \R^3$, they induce diffeomorphisms $$L_A\colon \T^3\to \T^3,\quad \quad L_B\colon \T^3\to \T^3 $$
	where $\T^3$ is the quotient group $\T^3 = \R^3/\Z^3$ and $L_A\colon \T^3\to \T^3$ is the automorphism
	$$L_A\colon (x+ \Z^3 )\mapsto Ax + \Z^3.$$
	Note that each of the diffeomorphisms $L_A\colon \T^3\to \T^3$ and $ L_B\colon \T^3\to \T^3 $ is an Anosov diffeomorphism.

	The maps $L_A$ and $L_B$ generate %\trivZfoot
	a $\Z^2$-action $\alpha\colon\Z^2\to \Diff ( \T^3)$  on the 3-torus given by $$\alpha \left(n_1, n_2\right) ( x) =  L_A^{n_1} (L_B^{n_2}(x))= L_B^{n_2} (L_A^{n_1}(x)) = L_{A^{n_1}B^{n_2}}(x).$$
	%Note that given a $\Z$-action, any homomorphism $\psi\colon \Z^2\to \Z$ induces a ``fake'' $\Z^2$-action where the kernel of $\psi$ acts trivially.  Claim \ref{claim:trivial}(\ref{trivial:5}) ensures   the action $\alpha$ is not of this form; that is $\alpha$ is   a ``true'' $\Z^2$-action.
\end{example}
Note that given any $\Z$-action, any homomorphism $\psi\colon \Z^2\to \Z$ induces a ``fake'' $\Z^2$-action where the kernel of $\psi$ acts trivially.  Claim \ref{claim:trivial}(\ref{trivial:5}) ensures   the action $\alpha$ is not of this form; that is,  $\alpha$ is   a ``genuine'' $\Z^2$-action.

In \cites{MR1406432, MR1619571}, Katok and Spatzier proved a  generalization of Rudolph's theorem for $\Z^k$-actions on tori and solenoids generated by automorphisms under a number of  technical hypotheses.  %\footnote{.} 
These hypotheses are satisfied by the action in Example \ref{ex:key}.  We note that some of  these hypotheses were later removed in \cite{MR2029471}.
The main result from \cite{MR1406432} applies to the action constructed in Example \ref{ex:Key} and yields the following natural analogue of Rudolph's Theorem, Theorem \ref{thm:rudolph}.
\begin{theorem}\label{thm:KS}\index{theorem!Katok--Spatzier}
	Let $L_A, L_B\colon \T^3\to \T^3$   be as in Example \ref{ex:Key}.  Then, the only  ergodic, Borel probability measure $\mu$ on $\T^3$ that is invariant under both $L_A$ and $L_B$ and satisfies $$h_\mu (L_A)>0\quad \text{or} \quad  h_\mu (L_B)>0$$ is the Lebesgue measure on $\T^3$.  
\end{theorem}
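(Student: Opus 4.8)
The plan is to show that $\mu$ is invariant under translations along each of the three joint eigendirections of $A$ and $B$; since these directions span $\R^3$, $\mu$ is then invariant under every translation of $\T^3$ and hence equal to the Haar (Lebesgue) measure.

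First I would record the coarse Lyapunov structure. As $A$ and $B$ commute and are $\R$-diagonalizable, fix a common eigenbasis $v_1,v_2,v_3$ (Claim~\ref{claim:trivial}) and let $\mathcal W^1,\mathcal W^2,\mathcal W^3$ be the one-dimensional linear foliations of $\T^3$ in these directions; each is invariant under the whole action $\alpha$. Regarding $\alpha$ as the restriction to $\Z^2$ of an $\R^2$-action, the Lyapunov exponents of $\alpha$ are the linear functionals $\lambda_i\colon\R^2\to\R$ with $\lambda_i(s_1,s_2)=s_1\log\chi_A^i+s_2\log\chi_B^i$. Since $\det A=\det B=1$ we have $\lambda_1+\lambda_2+\lambda_3=0$, and the $\lambda_i$ are pairwise non-proportional (readily checked for the matrices of Example~\ref{ex:Key}; this expresses that $\alpha$ is genuinely rank two, cf.\ Claim~\ref{claim:trivial}(\ref{trivial:5})), so $\ker\lambda_1,\ker\lambda_2,\ker\lambda_3$ are three distinct lines through the origin. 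These lines cut $\R^2$ into six open Weyl chambers, on each of which the sign pattern of $(\lambda_1,\lambda_2,\lambda_3)$ is constant; for every $i$ there is a chamber on which $\lambda_i>0$ and $\lambda_j<0$ for $j\neq i$, and since an open cone in $\R^2$ contains integer points, we get an Anosov automorphism $f_i\in\alpha(\Z^2)$ of $\T^3$ with $E^u(f_i)=\mathcal W^i$ and $E^s(f_i)=\bigoplus_{j\neq i}\mathcal W^j$.

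Now for the crux. From $h_\mu(L_A)>0$ or $h_\mu(L_B)>0$, using the Ledrappier--Young entropy formula (Theorem~\ref{thm:LYII}) to decompose entropy along the coarse Lyapunov foliations together with Lemmas~\ref{entropyvatoms} and~\ref{entropyvatoms2}, $\mu$ has non-atomic conditional measures along at least one of $\mathcal W^1,\mathcal W^2,\mathcal W^3$; relabel so it is $\mathcal W^1$. Because the acting group has rank $\geq 2$, the line $\ker\lambda_1$ is nonzero; choosing $0\neq s_0\in\ker\lambda_1$ and passing to the $\R^2$-suspension of $\alpha$ (to which $\mu$, its conditional measures along the $\mathcal W^i$, and its metric entropies lift compatibly), the time-$s_0$ map of the suspension preserves the lifted measure, acts isometrically along $\mathcal W^1$, and --- since $\lambda_2(s_0)=-\lambda_3(s_0)\neq 0$ --- is hyperbolic transverse to $\mathcal W^1$; this is the content of Lemma~\ref{lem:transinv}. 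Feeding this $\mu$-preserving isometry along $\mathcal W^1$, the Anosov element $f_1$ with $E^u(f_1)=\mathcal W^1$, and the non-atomicity of the one-dimensional conditionals into Ledrappier's theorem in the one-dimensional form of Proposition~\ref{prop:easyLed}, I would conclude that the conditional measures of $\mu$ along $\mathcal W^1$ are Haar, i.e.\ $\mu$ is invariant under translations along $\mathcal W^1$. I expect this step to be the main obstacle: a non-atomic unstable conditional of a single affine Anosov map need not be Haar (e.g.\ Bernoulli measures for a hyperbolic toral automorphism), so one genuinely needs the isometry produced by $\ker\lambda_1\neq\{0\}$ to rule out ``fractal'' conditionals, and making Ledrappier's explicit density computation cooperate with this extra symmetry is the technical heart of the argument.

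Finally I would bootstrap to the whole torus. Once $\mu$ is invariant along $\mathcal W^1=E^u(f_1)$, its conditionals along the full unstable of $f_1$ are absolutely continuous, so by Theorem~\ref{thm:led} equality holds in the Margulis--Ruelle inequality for $f_1$, i.e.\ $h_\mu(f_1)=\lambda_1(f_1)$. As $f_1$ has determinant $1$, $h_\mu(f_1^{-1})=h_\mu(f_1)=\lambda_1(f_1)=|\lambda_2(f_1)|+|\lambda_3(f_1)|$, which is exactly the sum of the positive Lyapunov exponents of $f_1^{-1}$ (those along $\mathcal W^2$ and $\mathcal W^3$); hence $f_1^{-1}$ also attains equality in Margulis--Ruelle. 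By Theorem~\ref{thm:led} again, $\mu$ is an SRB measure for $f_1^{-1}$: its conditionals along $E^u(f_1^{-1})=\mathcal W^2\oplus\mathcal W^3$ are absolutely continuous. Since this foliation and the action of $f_1^{-1}$ on it are homogeneous, Ledrappier's formula for the density (Theorem~\ref{thm:led'}) forces these conditionals to be Haar, so $\mu$ is invariant under translations along $\mathcal W^2\oplus\mathcal W^3$. Combining this with the previous paragraph, $\mu$ is invariant under all translations of $\T^3$, hence $\mu$ is Lebesgue, as claimed.
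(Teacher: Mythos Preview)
Your overall architecture is right and matches the paper's: set up the three Lyapunov functionals, find a direction $i$ with non-atomic leafwise conditionals, pass to the suspension, and exploit a nontrivial element $s_0\in\ker\lambda_i$ that acts isometrically along $\mathcal W^i$. But the heart of the argument---how exactly the isometry forces the $\mathcal W^i$-conditionals to be Haar---is not there, and the mechanism you suggest is not the one that works.

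You propose to ``feed the isometry and the non-atomicity into Proposition~\ref{prop:easyLed}.'' But Proposition~\ref{prop:easyLed} takes \emph{entropy equality} $h_\mu(f_i)=\lambda_i(f_i)$ as its input, not an isometry; non-atomicity only gives $h_\mu(f_i)>0$. The isometry along $\ker\lambda_i$ is not used via Ledrappier's density formula at all. The paper's mechanism is a \emph{shearing} argument on the leafwise measures themselves: because $\phi_{s_0}$ is isometric along $\mathcal W^i$, the normalized leafwise measure $\nu^i_x$ (viewed as a measure on $E^i\simeq\R$) satisfies $\nu^i_{\phi_t(x)}=\nu^i_x$ (Lemma~\ref{lem:transinv}). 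One then shows that $\nu^i_x=\nu^i_{x'}$ for $\mu^i_x$-typical $x'\in W^i(x)$; writing $x'=x+v$, this forces $(T_v)_*\nu^i_x\propto\nu^i_x$, so the closed subgroup $G(\nu^i_x)\subset\R$ acts transitively on $\mathrm{supp}\,\nu^i_x$ (Lemma~\ref{lem:trans}). Since $\nu^i_x$ is non-atomic, $G(\nu^i_x)$ cannot be discrete, hence $G(\nu^i_x)=\R$ and $\nu^i_x$ is absolutely continuous (Proposition~\ref{lem:abscon}). Only \emph{then} does Proposition~\ref{prop:easyLed} upgrade absolute continuity to Haar. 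You have identified the right obstacle but not the bridge.

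There is a second, independent gap: the flow $\phi_t$ along $\ker\lambda_i$ need not be $\td\mu$-ergodic, and the step ``$\nu^i_x$ constant along $\phi_t$-orbits $\Rightarrow$ $\nu^i_x=\nu^i_{x'}$ for typical $x'\in W^i(x)$'' genuinely fails without ergodicity. The paper handles this with the $\pi$-partition trick (Lemma~\ref{lem:Pipart}): one chooses $s_1$ near $s_0$ with $\lambda_i(s_1)<0$ and the other two signs unchanged, and uses the chain $\erg_{s_0}\prec\Xi^s_{s_0}=\pi_{s_0}=\Xi^u_{s_0}=\Xi^u_{s_1}=\pi_{s_1}=\Xi^s_{s_1}\prec\Xi^i$ to show that $\phi_t$-ergodic components are $W^i$-saturated. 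This step uses crucially that no $\lambda_j$ is negatively proportional to $\lambda_i$; your proposal does not address it.

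Finally, your bootstrap at the end (invariance along $\mathcal W^1$ $\Rightarrow$ entropy equality for $f_1^{-1}$ $\Rightarrow$ SRB along $\mathcal W^2\oplus\mathcal W^3$) is correct but unnecessary: each $E^i$ is totally irrational, so invariance along a \emph{single} $E^i$ already forces $\mu$ to be Lebesgue (Lemma~\ref{lem:popo}).
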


The rest of this part will be devoted to proving \cref{thm:KS}.  For a more concise yet complete proof of this result, see  \cite[Section 2.2]{MR1858547}.

\begin{remark}
	To generalize the action constructed from \cref{ex:key}, let $A\in \Gl(d,\Z)$ be a matrix whose characteristic polynomial is irreducible over $\Q$ and has $d$ distinct real roots.  It follows from Dirichlet's unit theorem (see \cite[Proposition 3.7]{MR1949111}) that the centralizer of $A$ in $\Gl(d,\Z)$ contains $\Z^{d-1}$ as a subgroup of finite index.  Let $\alpha\colon \Z^{d-1}\to \Diff(\T^d)$ be the induced action.   See   \cite{MR1949111}, where such actions of $\Z^{d-1}$ on $\T^d$ are called \emph{Cartan actions}, for further discussion.

	The proof we present of \cref{thm:KS} adapts  to show the following.
	\begin{theorem}[\cite{MR1406432}]\label{thmKSBig}
		For $d\ge 3$ and any Cartan action $\alpha\colon \Z^{d-1}\to \Diff(\T^d)$ as above, any ergodic, $\alpha$-invariant Borel probability measure  $\mu$ on $\T^d$ with $$h_\mu(\alpha(\vecn))>0$$ for some $\vecn\in \Z^{d-1}$ is the Lebesgue measure on $\T^d$.  
	\end{theorem}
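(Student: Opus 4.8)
The plan is to show that $\mu$ must be invariant under translation along one of the coarse Lyapunov foliations of the action; since each such foliation is a densely immersed line, this alone forces $\mu$ to be Haar. Write $v_1,v_2,v_3$ for a joint eigenbasis of $A$ and $B$, set $V_i=\R v_i$, and let $\mathcal W^i$ be the one-dimensional foliation of $\T^3$ with leaves $x+V_i$; these are the three coarse Lyapunov foliations. Since $L_A,L_B$ are Anosov, $\chi_A^i\neq1$ and $\chi_B^i\neq1$, so each $\chi_i\colon\R^2\to\R$, $\chi_i(s,t)=s\log\chi_A^i+t\log\chi_B^i$, is a nonzero linear functional, and — checking directly for the matrices of Example~\ref{ex:key}, or in general from irreducibility of the characteristic polynomials plus Galois theory — the $\chi_i$ are pairwise non-proportional. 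I also record that $V_i$ is totally irrational in $\T^3$ (no nonzero rational covector kills $v_i$): otherwise $A$ would have a proper nonzero rational invariant subspace, impossible since its characteristic polynomial is monic of degree $3$ with no rational root (a rational root would be $\pm1$, excluded because $L_A$ is Anosov), hence irreducible over $\Q$. Let $\mu^i_x$ denote the conditional measures of $\mu$ along $\mathcal W^i$.

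First I would reduce to showing that some $\mu^i$ is non-atomic. If $h_\mu(L_A)>0$, then since the unstable subspace of $L_A$ is exactly $V_1$ (Claim~\ref{claim:trivial}(\ref{trivial:2})), Lemma~\ref{entropyvatoms} shows the conditionals of $\mu$ along $W^u(L_A)=\mathcal W^1$ are not purely atomic, so (by ergodicity) $\mu^1$ is non-atomic. If instead $h_\mu(L_B)>0$, the unstable subspace of $L_B$ is $V_1\oplus V_3$ (Claim~\ref{claim:trivial}(\ref{trivial:3})), and the Ledrappier--Young machinery (Theorem~\ref{thm:LYII}) decomposes $h_\mu(L_B)$ into non-negative contributions of $\mathcal W^1$ and $\mathcal W^3$, each of which vanishes exactly when the corresponding coarse conditional is atomic; so $\mu^1$ or $\mu^3$ is non-atomic. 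Fix $i$ with $\mu^i$ non-atomic.

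The heart of the argument is upgrading non-atomicity to translation-invariance. Using that $\R^2$ is higher rank, choose $s_0\in\ker\chi_i\setminus\{0\}$; pairwise non-proportionality gives $\chi_j(s_0)\neq0$ for $j\neq i$. Passing to the suspension $\R^2$-action (so that the possibly irrational parameter $s_0$ acts), the element $a_0:=\alpha(s_0)$ is partially hyperbolic, isometric along its neutral foliation $\mathcal W^i$ (its exponent along $\mathcal W^i$ is $\chi_i(s_0)=0$) and genuinely hyperbolic transverse to $\mathcal W^i$. I would then invoke Lemma~\ref{lem:transinv}: the conditionals $\mu^i$ are $a_0$-equivariant, and since $a_0$ acts isometrically along $\mathcal W^i$, Poincar\'e recurrence of $a_0$ along $\mu$-generic orbits produces, for a.e.\ $x$, nontrivial translations of the leaf $\mathcal W^i(x)$ under which $\mu^i_x$ is invariant — nontriviality being exactly where non-atomicity of $\mu^i$ and the transverse hyperbolicity of $a_0$ enter. (Equivalently, once the isometric element is seen to force $\mu^i$ absolutely continuous via the Ledrappier--Young equality, Ledrappier's density formula, Proposition~\ref{prop:easyLed}, makes the density leafwise constant because $\mathcal W^i$ and its dynamics are homogeneous.) Writing $\phi_i^t$ for the translation $x\mapsto x+tv_i$, this shows the closed subgroup $\Lambda:=\{t\in\R:(\phi_i^t)_*\mu=\mu\}$ is nontrivial.

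Finally I would bootstrap $\Lambda\neq\{0\}$ to $\mu=\mathrm{Leb}$. Conjugation gives $L_A\circ\phi_i^t\circ L_A^{-1}=\phi_i^{\chi_A^i t}$, so $L_A$-invariance of $\mu$ yields $\chi_A^i\Lambda=\Lambda$. A nontrivial closed subgroup of $\R$ is $\R$ or $c\Z$, and $c\Z$ cannot be invariant under multiplication by the irrational $\chi_A^i$ (which is not $\pm1$). Hence $\Lambda=\R$, i.e.\ $\mu$ is invariant under every translation along $V_i$; since $V_i$ is totally irrational and $g\mapsto g_*\mu$ is weak-$*$ continuous, the stabiliser of $\mu$ in $\T^3$ is a closed subgroup containing the dense set $\{tv_i:t\in\R\}$, hence equals $\T^3$, so $\mu$ is Haar. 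The one genuinely hard step is the third paragraph: the recurrence argument easily yields \emph{some} translations fixing $\mu^i_x$, but extracting a \emph{nontrivial} one from mere non-atomicity is the substance of Lemma~\ref{lem:transinv} (or of Ledrappier's density computation), and is where one-dimensionality of the coarse foliations and the higher-rank structure are used essentially; the rest is bookkeeping with the functionals $\chi_i$ and elementary facts about closed subgroups of $\R$.
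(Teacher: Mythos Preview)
Your overall architecture is right---suspend to an $\R^2$-action, pick $s_0\in\ker\chi_i$ so the flow $a_0$ is isometric along $\mathcal W^i$, and leverage this to get translation-invariance---but the central step is misdescribed and there is a genuine gap. Lemma~\ref{lem:transinv} does \emph{not} produce translations via Poincar\'e recurrence; it says only that $x\mapsto\nu^i_x$ is \emph{constant along $\phi_t$-orbits}. The actual production of translations is Lemma~\ref{lem:trans}: if $\phi_t$ were ergodic, constancy along orbits upgrades to constancy a.e., so for $\mu^i_x$-typical $x'=x+v$ one has $\nu^i_{x'}=\nu^i_x$, and comparing parametrizations gives $v\in G(\nu^i_x)$; hence $G(\nu^i_x)$ acts \emph{transitively} on $\mathrm{supp}\,\nu^i_x$, and Proposition~\ref{lem:abscon} plus non-atomicity forces $G(\nu^i_x)=\R$. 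Since $\phi_t$ need not be ergodic, the $\pi$-partition trick (Lemma~\ref{lem:Pipart}) is required to show $\phi_t$-ergodic components are $W^i$-saturated---this is exactly where the absence of negatively proportional Lyapunov functionals (a feature of Cartan actions) enters, and you omit it entirely. Your parenthetical that ``the isometric element forces $\mu^i$ absolutely continuous via Ledrappier--Young'' inverts the logic: absolute continuity is the \emph{output} of the transitivity argument, not a direct consequence of isometry.

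The specific gap is the sentence ``this shows the closed subgroup $\Lambda=\{t:(\phi_i^t)_*\mu=\mu\}$ is nontrivial.'' Even granting your recurrence sketch, what it yields are \emph{leaf-wise} translations $\tau_x\in G(\nu^i_x)$ depending on $x$ and preserving $\nu^i_x$ only up to proportionality; neither feature lets you conclude $(\phi_i^t)_*\mu=\mu$ for a single $t\neq0$. Your bootstrap $\chi_A^i\Lambda=\Lambda\Rightarrow\Lambda=\R$ is correct and elegant, but it fires only after $\Lambda\neq\{0\}$, and establishing that already requires showing $\mu^i_x$ is genuinely Lebesgue (not merely equivalent to it)---which is precisely what Proposition~\ref{prop:easyLed} delivers via the entropy equality. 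A conjugation argument in the spirit of your bootstrap \emph{can} replace Proposition~\ref{prop:easyLed}, but it must be run on the leaf-wise density exponent $\alpha_x$ (where $\nu^i_x\propto e^{\alpha_x t}\,dt$) rather than on $\Lambda$: one gets $\alpha_{L_A x}=\alpha_x/\chi_A^i$, and Poincar\'e recurrence for $L_A$ then forces $\alpha_x=0$; see Remark following Proposition~\ref{prop:easyLed}.
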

\end{remark}

\section{Reductions in the proof of Theorem \ref{thm:KS}}
\subsection{Lyapunov exponent functionals}\label{sec:penisinthesalsa}
Let $A$ and $B$ be as in Example \ref{ex:Key}.  Since the eigenvalues  of $A$ and $B$ are  distinct real numbers, $A$ and $B$ are diagonalizable over $\R$. %Similarly, $B$ is also diagonalizable over $\R$. 
Moreover, since $A$ and  $B$ commute, they are {jointly diagonalizable}; that is there is a $Q\in \Gl(3,\R)$ such that \begin{equation}\label{eq:jointdiag}
	Q\inv A Q=   \left(\begin{array}{ccc}\chi^1_A  & 0 & 0 \\0 & \chi^2_A  & 0 \\0 & 0 &\chi^3_A  \end{array}\right), \quad \quad Q\inv B Q =   \left(\begin{array}{ccc}\chi^1_B  & 0 & 0 \\0 & \chi^2_B & 0 \\0 & 0 &\chi^3_B  \end{array}\right) .
\end{equation}

For $1\le j\le 3$, let $E^j$ denote the   $j$th joint eigenspace   of $A$ and $B$ (corresponding to $\chi_A^j$ and $\chi_B^j$.)  
%\note{I don't think providing a proof helps}
As each $A$ and $B$ is irreducible over $\Q$, the eigenspaces $E^j$ are totally irrational: if $\vecv\in E^i\sm \{0\}$ has coordinates $\vecv= (v_1, v_2, v_3)$ then $v_1, v_2$, and $v_3$ are linearly independent over $\Q$.  

It is more convenient at times to work with the logarithm of the eigenvalues of $A$ and $B$.  For   $j\in \{1,2,3\}$  let 
$$\lambda^j_A = \log (\chi_A^j),  \quad \quad \lambda^j_B= \log (\chi_B^j).$$
Note that for any $(n_1, n_2)\in \Z^2$ we have 
\begin{align*}Q\inv A^{n_1}B^{n_2} Q &=  \left(\begin{array}{ccc}(\chi^1_A)^{n_1}(\chi^1_B)^{n_2}  & 0 & 0 \\0 & (\chi^2_A)^{n_1}(\chi^2_B)^{n_2}  & 0 \\0 & 0 &(\chi^3_A)^{n_1}(\chi^3_B)^{n_2}   \end{array}\right) \\& = 
	\left(\begin{array}{ccc}e^{n_1 \lambda^1_A + n_2  \lambda^1_B} & 0 & 0 \\0 & e^{n_1 \lambda^2_A + n_2 \lambda^2_B} & 0 \\0 & 0 &e^{n_1 \lambda^3_A + n_2 \lambda^3_B} \end{array}\right).\end{align*}

For  any $(n_1,n_2)\in \Z^2$, the subspace $E^j$ is an eigenspace for $A^{n_1} B^{n_2}$.  Let $\chi^j(n_1,n_2)$ be the eigenvalue of $A^{n_1} B^{n_2}$ corresponding to the eigenspace $E^j$.  We have
$$\log (\chi^j(n_1,n_2)) = n_1 \lambda^j_A + n_2  \lambda^j_B.$$
Thus, the map $\lambda^j\colon \Z^2\to \R$ given by 
$$\lambda^j(n_1,n_2) = \log (\chi^j(n_1,n_2)) = n_1 \lambda^j_A + n_2  \lambda^j_B$$
is additive.  In particular, each $\lambda^j$ extends to a linear functional $\lambda^j\colon \R^2\to \R$.  

\begin{definition}\index{Lyapunov exponent!functionals}
	The linear functionals 
	$\lambda^j\colon \R^2\to \R$ are called the \emph{Lyapunov exponent functionals} for the action $\alpha$.
\end{definition}

\subsection{Stable, unstable, and Lyapunov  foliations of $\T^3$} %\note{I don't know what typos the referee is referring too}
Note that $\R^3$ acts by translation on $\T^3$ as does any vector subspace $V\subset \R^3$.  % acts by translations.  
For $1\le j\le 3$ and any $x\in \T^3$ let $W^j(x)$ denote the orbit of $x$ under  translation by elements of the vector subspace $E^j$:  $$W^j(x) = \{x+ v: v\in E^j\}.$$
The sets $W^j(x)$ form a   {foliation} of $\T^3$ by lines.  We call $W^j(x)$ the $j$th \emph{Lyapunov manifold} through $x$ and call the corresponding foliation the \emph{$j$th Lyapunov foliation.}\index{Lyapunov manifold}
Note that if $x' \in W^j(x)$ with $x'= x+v$ for some $v\in E^j$ then for any $(n_1,n_2)\in \Z^2$ we have  $\alpha(n_1,n_2)(x') \in W^j(\alpha(n_1,n_2)(x))$ and 
$$ \alpha(n_1,n_2)(x') = \alpha(n_1,n_2)(x) + e^{\lambda^j(n_1,n_2)}v.$$
In particular, the action by $\alpha(n_1,n_2)$ dilates distances in $W^j$-leaves by exactly $e^{\lambda^j(n_1,n_2)}$.  

%Similarly, for $1\le i< j\le 3$ we can define $W_{ij}(x)$ to be the orbit of $x$ under the 2 dimensional space $E_{ij} = E_i \oplus E^j$.  The sets $W_{ij}(x)$ for a foliation by planes of $\T^3$.  

Given $(0,0)\neq (n_1,n_2)\in \Z^2$, let $$\text{$E^s_{(n_1,n_2)} = \bigoplus _{\lambda^j(n_1,n_2)<0} E^j$ and $E^u_{(n_1,n_2)} = \bigoplus _{\lambda^j(n_1,n_2)>0} E^j$}$$ be the \emph{stable} and \emph{unstable} subspaces for the matrix $A^{n_1} B^{n_2}$.  
For $x\in \T^3$ we  similarly define $W^s_{(n_1,n_2)}(x)$ and $W^u_{(n_1,n_2)}(x)$  to be the orbits of $x$ under $E^s_{(n_1,n_2)}$ and $E^u_{(n_1,n_2)}$, respectively.  For $(n_1, n_2)\neq (0,0)$, the map $$\alpha(n_1, n_2)= L_A^{n_1} L_B^{n_2} \colon \T^3\to \T^3$$ is Anosov and 
$W^s_{(n_1,n_2)}(x)$ and $W^u_{(n_1,n_2)}(x)$ are the stable and unstable manifolds through $x$ for the Anosov diffeomorphism $\alpha(n_1, n_2).$  

Observe
\begin{claim}\label{claim:plp} For any $(n_1, n_2)\in \Z^2$, any $x\in \T^3$, and any $(0,0)\neq  (m_1, m_2)\in \Z^2$
	\begin{enumcount}
		\item \label{chickenheadman1} $\alpha(n_1, n_2)(W^j(x)) =  W^j(\alpha(n_1, n_2)(x))$;
		\item \label{chickenheadman2} $\alpha(n_1, n_2)(W^u_{(m_1,m_2)}(x)) =  W^u_{(m_1,m_2)}(\alpha(n_1, n_2)(x))$;
		\item \label{chickenheadman3}  $\alpha(n_1, n_2)(W^s_{(m_1,m_2)}(x)) =  W^s_{(m_1,m_2)}(\alpha(n_1, n_2)(x))$;
		\item \label{chickenheadman4}   $E^u_{(m_1,m_2)}$ and $E^s_{(m_1,m_2)}$ are positive-dimensional and  have complementary dimension in $\R^3$; 
		\item \label{chickenheadman5}  the sets $W^u_{(m_1,m_2)}(x)$ and $W^s_{(m_1,m_2)}(x)$ are injectively immersed  planes or    lines that intersect transversally and have complementary dimension in $\T^3$.
	\end{enumcount} 
\end{claim}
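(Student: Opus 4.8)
The plan is to reduce every assertion to linear algebra on $\R^3$ and then push it down through the covering projection $\pi\colon\R^3\to\T^3$. The only facts I will use are that $\alpha(n_1,n_2)$ is covered by the linear automorphism $A^{n_1}B^{n_2}$, i.e.\ $\alpha(n_1,n_2)\circ\pi=\pi\circ A^{n_1}B^{n_2}$; that $A$ and $B$ are jointly diagonalized as in \eqref{eq:jointdiag}, with one-dimensional joint eigenspaces $E^1,E^2,E^3$; and that the characteristic polynomial of $A$ is irreducible over $\Q$. For parts \ref{chickenheadman1}--\ref{chickenheadman3} I would simply observe that each $E^j$ is invariant under the invertible matrix $A^{n_1}B^{n_2}$, hence so is every partial sum $E=\bigoplus_{j\in S}E^j$ (in particular $E^j$ itself and the subspaces $E^u_{(m_1,m_2)},E^s_{(m_1,m_2)}$), and moreover $A^{n_1}B^{n_2}$ maps each such $E$ \emph{onto} itself. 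Since the foliation leaves in question are $W(x)=\pi(x+E)$ for the corresponding $E$, the intertwining relation gives $\alpha(n_1,n_2)(\pi(x+E))=\pi(A^{n_1}B^{n_2}x+E)=\pi(\alpha(n_1,n_2)(x)+E)$, which is exactly \ref{chickenheadman1}, \ref{chickenheadman2}, and \ref{chickenheadman3}; that $W^u_{(m_1,m_2)},W^s_{(m_1,m_2)}$ are the stable/unstable manifolds of the Anosov map $\alpha(m_1,m_2)$ is built into the definition of $E^u_{(m_1,m_2)},E^s_{(m_1,m_2)}$ through the signs of the $\lambda^j(m_1,m_2)$.

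For part \ref{chickenheadman4} I would first use $\det A=\det B=1$ (Claim~\ref{claim:trivial}(\ref{trivial:1})) to get $\det(A^{m_1}B^{m_2})=1$, hence $\sum_{j=1}^3\lambda^j(m_1,m_2)=\log\det(A^{m_1}B^{m_2})=0$. The real content is that no individual $\lambda^j(m_1,m_2)$ vanishes when $(m_1,m_2)\neq(0,0)$: if it did, then $A^{m_1}B^{m_2}$ would have $1$ as an eigenvalue, so the integer matrix $A^{m_1}B^{m_2}-\Id$ would be singular, whence $\ker_\Q(A^{m_1}B^{m_2}-\Id)$ is a nonzero $\Q$-subspace of $\Q^3$; it is $A$-invariant because $A$ commutes with $A^{m_1}B^{m_2}$, so by $\Q$-irreducibility of the characteristic polynomial of $A$ it equals $\Q^3$, forcing $A^{m_1}B^{m_2}=\Id$ and contradicting Claim~\ref{claim:trivial}(\ref{trivial:5}). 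Then, since the three nonzero numbers $\lambda^j(m_1,m_2)$ sum to zero, at least one is positive and at least one negative, so $E^u_{(m_1,m_2)}$ and $E^s_{(m_1,m_2)}$ are both positive-dimensional; and since none of the exponents vanishes, $E^u_{(m_1,m_2)}\oplus E^s_{(m_1,m_2)}=\R^3$.

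For part \ref{chickenheadman5} I would exploit that, by part \ref{chickenheadman4}, $E^u_{(m_1,m_2)}$ and $E^s_{(m_1,m_2)}$ are \emph{proper} $A$-invariant subspaces of $\R^3$, and that any proper $A$-invariant subspace $V\subsetneq\R^3$ meets $\Z^3$ only in $0$: indeed $V\cap\Q^3$ is an $A$-invariant $\Q$-subspace, and it cannot be all of $\Q^3$ (that would force $V=\R^3$), so by irreducibility $V\cap\Q^3=\{0\}$, hence $V\cap\Z^3=\{0\}$ and $\pi$ is injective on every affine translate $x+V$. Consequently the maps $v\mapsto\pi(x+v)$ realize $W^u_{(m_1,m_2)}(x)$ and $W^s_{(m_1,m_2)}(x)$ as injectively immersed lines or planes of complementary dimension (using part \ref{chickenheadman4} again), and at any point of their intersection their tangent spaces are translates of $E^u_{(m_1,m_2)}$ and $E^s_{(m_1,m_2)}$, which are complementary in $\R^3$; hence the intersection is transverse.

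The one step that is not pure bookkeeping with the covering map and the joint eigenspace decomposition is the non-vanishing of the individual exponent values $\lambda^j(m_1,m_2)$ for $(m_1,m_2)\neq(0,0)$, equivalently that $A^{m_1}B^{m_2}$ never has $1$ as an eigenvalue unless $(m_1,m_2)=(0,0)$; this is where the $\Q$-irreducibility of the characteristic polynomial of $A$ and Claim~\ref{claim:trivial}(\ref{trivial:5}) are genuinely needed. I expect this to be the only point requiring any thought.
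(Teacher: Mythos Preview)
Your proof is correct and follows essentially the same route as the paper, which sketches the same linear-algebraic reasoning more tersely (invoking commutativity for \ref{chickenheadman2}--\ref{chickenheadman3}, the relation $\sum_j\lambda^j=0$ together with the asserted nonvanishing of each $\lambda^j(m_1,m_2)$ for \ref{chickenheadman4}, and ``total irrationality'' of the $E^j$ for \ref{chickenheadman5}). In particular, your explicit argument that $\lambda^j(m_1,m_2)\neq 0$ via the $\Q$-irreducibility of $A$ and Claim~\ref{claim:trivial}(\ref{trivial:5}) fills in a step the paper simply asserts, and your proof that proper $A$-invariant subspaces meet $\Z^3$ trivially is exactly what underlies the paper's appeal to total irrationality.
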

Property  \ref{chickenheadman1}  is clear as $E^j$ is an eigenspace of $A^{n_1}B^{n_2}.$  Properties  \ref{chickenheadman2} and  \ref{chickenheadman3}  follow from the commutativity of $\alpha(n_1,n_2)$ and $\alpha(m_1,m_2)$.  Property \ref{chickenheadman4} follows as $\lambda^j(m_1,m_2)\neq 0$  for each $(0,0)\neq (m_1, m_2) \in \Z^2$  and  $$\lambda^1(m_1,m_2)+\lambda^2(m_1,m_2)+\lambda^3(m_1,m_2)=0$$ for every  $(m_1,m_2)\in \Z^2$.  
Property \ref{chickenheadman5} follows from  \ref{chickenheadman4} and that the spaces $E^i$ are totally irrational.  
\begin{remark}
	If $\alpha\colon \Z^{d-1}\to \Diff(\T^d)$ is as in \cref{thmKSBig} then there are $d$ Lyapunov exponent functionals $\lambda^i\colon \Z^{d-1} \to \R$, $1\le i\le d$.   Moreover, these are in general position. 
	Analogous properties to those in \cref{claim:plp} hold in this case.  For instance, we claim 
	that   for each $1\le i\le d$ there is some $\vecn\in \Z^{d-1}$ with $\lambda^i(\vecn)>0$ and $\lambda^j(\vecn)<0$ for all $j\neq i$; in particular, for such $\vecn$,  $E^u_\vecn$ is 1-dimensional  and $E^s_\vecn$ is $(d-1)$-dimensional.  We actually claim a stronger fact as in \cref{claim:plp}\ref{chickenheadman4}: for any non-trivial partition $$\{1,\dots, d\} = A      \sqcup B, \quad A\neq \emptyset,\quad B\neq \emptyset$$ there exists $\vecn\in \Z^{d-1}$ such that $$\lambda^i(\vecn)<0\quad \text{ for all $i\in A$}$$ and $$\lambda^i(\vecn)>0\quad \text{ for all $i\in B$.}$$
	
	This can be seen by observing there are $2^d-2$ such partitions.  Similarly, $d$ hyperplanes in $\R^{d-1}$ in general position divide $\R^{d-1}$ into $2^d-2$ connected components each of which corresponds to a different collection  of signs.  
	% f $d$ signs
	% As $$\sum_{i=1}^d \lambda^i (\vecn) =0$$ for every $\vecn\in \Z^{d-1}$, there is no $\vecn\in \Z^{d-1}$ with $\lambda^i(\vecn)>0$   for every $i$.  Moreover, $\lambda^i(\vecn)\neq 0$  for every $\vecn\in \Z^{d-1}\sm\{0\}$ and  every $i$.  We claim that for every  other combination of signs, 
\end{remark}

Recall  that  the eigenspaces  $E^j$ are totally irrational.  In particular, from the unique ergodicity of totally irrational flows on tori, we obtain  the following.
\begin{lemma}\label{lem:popo}
	A Borel probability measure $\mu$ on $\T^3$ is the Lebesgue (Haar) measure if and only if there exists  $1\le j\le 3$ such that the measure $\mu$ is invariant under the 1-parameter group of translations generated by $E^j$.  
\end{lemma}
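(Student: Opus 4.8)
The plan is to prove the two implications separately: the forward direction is immediate, and the reverse direction reduces to a short Fourier-coefficient computation that uses the total irrationality of $E^j$ recorded above.

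For the ``only if'' direction, observe that if $\mu$ is the Haar measure on $\T^3$ then it is invariant under every translation of $\T^3$, hence in particular under the $1$-parameter subgroup $\{x\mapsto x+\vecv : \vecv\in E^j\}$ for any (equivalently, every) $j$. So this direction needs nothing.

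For the ``if'' direction, fix $j$ and a nonzero $\vecv=(v_1,v_2,v_3)\in E^j$, and let $\phi_t(x)=x+t\vecv$ denote the linear flow on $\T^3$; assume $\mu$ is $\phi_t$-invariant for all $t\in\R$. I would examine the Fourier coefficients $\widehat\mu(k)=\int_{\T^3} e^{2\pi i\langle k,x\rangle}\,d\mu(x)$, $k\in\Z^3$. Invariance under $\phi_t$ gives, for every $t\in\R$ and every $k$,
\[
\widehat\mu(k)=\int_{\T^3} e^{2\pi i\langle k,x+t\vecv\rangle}\,d\mu(x)=e^{2\pi i t\langle k,\vecv\rangle}\,\widehat\mu(k),
\]
so for each $k$ either $\widehat\mu(k)=0$ or $\langle k,\vecv\rangle=0$. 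Since $E^j$ is totally irrational, $v_1,v_2,v_3$ are linearly independent over $\Q$, so $\langle k,\vecv\rangle=k_1v_1+k_2v_2+k_3v_3=0$ forces $k=0$. Hence $\widehat\mu(k)=0$ for all $k\neq 0$ while $\widehat\mu(0)=1$; these are exactly the Fourier coefficients of the Haar measure $m$, and since a finite Borel measure on $\T^3$ is determined by its Fourier coefficients, $\mu=m$.

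Equivalently — and this is the phrasing the statement alludes to — the arithmetic condition $\langle k,\vecv\rangle\neq 0$ for all $k\in\Z^3\sm\{0\}$ says precisely that the flow $\phi_t$ has rationally independent frequencies, hence is minimal and, by the classical Kronecker--Weyl equidistribution theorem, uniquely ergodic with Haar measure as its unique invariant Borel probability measure; any $\phi_t$-invariant $\mu$ therefore equals $m$. I expect no real obstacle here: the only nontrivial input is the total irrationality of $E^j$, which has already been established from the $\Q$-irreducibility of the characteristic polynomials of $A$ and $B$.
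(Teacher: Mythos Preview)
Your proof is correct and aligns with the paper's approach: the paper simply remarks that the lemma follows ``from the unique ergodicity of totally irrational flows on tori,'' which is exactly the Kronecker--Weyl statement you invoke at the end, and your Fourier-coefficient computation is the standard direct verification of that unique ergodicity.
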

Thus, to prove Theorem \ref{thm:KS}, it is enough to verify that any ergodic, $(L_A,L_B)$-invariant measure with positive entropy is invariant under translation by $E^j$ for some $1\le j\le 3$.  

\subsection{Conditional measures and leaf-wise measures} \label{sec:condi} (See Appendix \ref{App:rokhlin}, \cite{1208.4550}, \cref{ss:measpart}, and   \cite[Section 5]{MR2723325} for additional details and references.) 
Let $\mu$ be a Borel probability measure on $\T^3$.  In general, the partition of $(\T^3,\mu)$ into the $j$th Lyapunov  manifolds $W^j$ is not a measurable partition.  (See Lemma \ref{entropyvatoms} below for precise statement as well as  Appendix \ref{App:rokhlin}.)  
Let $\xi$ be a  {measurable partition} (see  \cref{ss:measpart} and Definition \ref{B:meas_partition} in Appendix \ref{App:rokhlin}) of $\T^3$ \emph{subordinate to $W^j$} (see Definition \ref{def:sub}); that is\index{partition!subordinate} \label{pg:opopopod}
\begin{enumerate}
	\item $\xi$ is a measurable partition of the measure space ($\T^3, \mu)$; 
	\item $\xi(x)\subset W^j(x)$ for $\mu$-\ae $x$;
	\item $\xi(x)$ contains an open neighborhood  of $x$  (in the immersed topology) in $W^j(x)$ for $\mu$-\ae $x$;  
	\item $\xi(x)$ is precompact in the immersed topology of $W^j(x)$ for $\mu$-\ae $x$;
\end{enumerate}

Let $\{\mu_x ^\xi\}$ denote a family of \emph{conditional measures} of $\mu$ relative to  the partition $\xi$.   \index{measure(s)!conditional}
That is (see \cref{def:condmeas} and  Definition \ref{B:disintegration} in Appendix \ref{B:secDisintegration})
\begin{enumerate}
	\item $\mu_x^\xi$ is a Borel probability measure on $\T^3$ such that  $\mu_x^\xi(\xi(x))=1$;
	\item if $y\in \xi(x)$ then $\mu_y^\xi= \mu_x^\xi$; 
	\item if $D\subset \T^3$ is a Borel set then $x\mapsto \mu_x^\xi(D)$ is measurable and 
	\item $\mu(D) = \int \mu_x^\xi(D) \ d \mu(x)$.
\end{enumerate}
Such a family $\{\mu_x^\xi\}$ of probability measures exists and is unique modulo $\mu$-null sets (see \cite{MR0047744}.)  

%We explain the construction of a related family of measures.  There is a measurable family $

%\note{Details of construction, new-name, and references added}

%\subsection{Leaf-wise measures}
Rather than studying conditional probability measures $\{\mu^\xi_x\}$ discussed  above that depend on the choice of partition $\xi$, it is more convenient to study a family of \emph{leaf-wise measures}\index{measure(s)!leaf-wise}  denoted by $\{\mu^j_x\}$ along $W^j$.  Each measure $\mu^j_x$ in this family is  a locally finite, Borel measure (with respect to the {immersed} topology) on $W^j(x)$ but is typically an infinite measure.  We discuss the properties of these measures and then outline their construction.

Given $x\in \T^3$, let $$I^j_x:= \{x+ v : v\in E^j , |v| <1\}$$ denote the unit ball (i.e.\ interval) in $W^j(x)$ centered at $x$.
Given two locally finite, Borel   measures $\eta_1$ and $\eta_2$ on $W^j(x)$ we say $\eta_1$ and $\eta_2$ are \index{measure(s)!proportional}\emph{proportional}, written $\eta_1\,\propto\, \eta_2$, if there is $c>0$ with $$\eta_1 = c\, \eta_2.$$
\begin{proposition}[Leaf-wise measures]
	For almost every $x\in \T^3$ there is a locally finite, Borel (in the immersed topology) measure $\mu^j_x$ on $W^j(x)$ such that 
	\begin{enumcount}
		\item \label{rere1} each $\mu^j_x$ is  normalized so that $\mu^j_x (I^j_x) = 1$;
		\item \label{rere2} the family $\{\restrict{\mu_x^j }{I^j_x}\}$ of probability measures on $\T^3$ depends measurably on $x$;
		\item \label{rere3}  for $x'\in W^j(x)$ we have $\mu^j_x \,\propto\,   \mu^j_{x'};$
		\item \label{rere4} if $\mu$ is $\alpha$-invariant then for any $(n_1,n_2)\in \Z^2$  we have $$\alpha(n_1,n_2)_* (\mu^j_x) \,\propto\, \mu^j_{\alpha(n_1, n_2)(x)};$$
		\item \label{rere5} given any  measurable partition $\xi$ subordinate to $W^j$,  the conditional probability measure $\mu_x^\xi$ at $x$  is  given by $$\mu^\xi_x = \frac{1}{\mu^j_x(\xi(x))} \restrict {\mu^j_x}{\xi(x)}.$$
	\end{enumcount}
\end{proposition}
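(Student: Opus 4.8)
The plan is to run the standard construction of leaf-wise (conditional) measures along a linear foliation, as in \cite{1208.4550} and \cite[Section 5]{MR2723325}; I outline it and indicate where the real work lies. Fix $1\le j\le 3$ and a unit vector $v$ spanning $E^j$, so that each leaf $W^j(x)$ is the orbit of $x$ under the translation flow $\phi_t(x)=x+tv$ and is canonically identified with $\R$ via $t\mapsto x+tv$, with $I^j_x$ corresponding to $(-1,1)$. I would produce $\mu^j_x$ by choosing an increasing, exhausting sequence of subordinate partitions, taking conditional measures with respect to each, normalizing them on $I^j_x$, and gluing.

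First I would construct, for each $n\in\N$, a measurable partition $\xi_n$ of $(\T^3,\mu)$ subordinate to $W^j$ with $\xi_n(x)$ an open subinterval of $W^j(x)$ containing $x$, such that $\xi_n(x)\subseteq\xi_{n+1}(x)$ for all $x$ and $\bigcup_n\xi_n(x)=W^j(x)$ for $\mu$-a.e.\ $x$ --- concretely, flow-box plaques for $\phi_t$ whose extent in the $v$-direction grows with $n$, the delicate point being the measurability and the a.e.\ exhaustion. Next I would check that $x\in\supp\mu^{\xi_n}_x$ for $\mu$-a.e.\ $x$: if not, then for some box $B$ in a fixed countable basis the set $Z=\{x\in B:\ \mu^{\xi_n}_x(B)=0\}$ has positive measure, but $\mu^{\xi_n}_x$ is constant on atoms and carried by $\xi_n(x)$, so $\mu^{\xi_n}_x(Z)=0$ whenever $\xi_n(x)\cap Z\neq\emptyset$, whence $\mu(Z)=\int\mu^{\xi_n}_x(Z)\,d\mu(x)=0$, a contradiction; consequently $\mu^{\xi_n}_x(I^j_x)>0$ once $I^j_x\subseteq\xi_n(x)$, i.e.\ for $n$ large depending on $x$. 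For $n\le m$ the partition $\xi_n$ refines $\xi_m$, so the tower property of conditional measures gives $\restrict{\mu^{\xi_m}_x}{\xi_n(x)}=\mu^{\xi_m}_x(\xi_n(x))\,\mu^{\xi_n}_x$ for a.e.\ $x$; intersecting over the countably many pairs, the rescaled measures $\mu^{\xi_n}_x(I^j_x)^{-1}\mu^{\xi_n}_x$ (defined once $I^j_x\subseteq\xi_n(x)$) agree on $\xi_n(x)$ and glue to a locally finite Borel measure $\mu^j_x$ on $W^j(x)=\bigcup_n\xi_n(x)$ with $\mu^j_x(I^j_x)=1$. This is \ref{rere1}, and \ref{rere2} follows from measurability of $x\mapsto\mu^{\xi_n}_x$ on each $\{x:\ I^j_x\subseteq\xi_n(x)\}$.

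For \ref{rere5}, given an arbitrary subordinate partition $\xi$, for a.e.\ $x$ the atom $\xi(x)$ is precompact, hence contained in $\xi_n(x)$ for $n$ large; applying the tower property to the common refinement $\xi\vee\xi_n$ over $\xi$ and over $\xi_n$ yields $\mu^\xi_x\ \propto\ \restrict{\mu^{\xi_n}_x}{\xi(x)}\ \propto\ \restrict{\mu^j_x}{\xi(x)}$, and since $\mu^\xi_x$ is a probability measure this is exactly \ref{rere5}; the same computation shows that \ref{rere1} together with \ref{rere5} determine $\mu^j_x$ uniquely among locally finite Borel measures on $W^j(x)$, which I would then use. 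For \ref{rere3}, choose $n$ with $x'\in\xi_n(x)$, so $\xi_n(x)=\xi_n(x')$ and $\mu^{\xi_n}_x=\mu^{\xi_n}_{x'}$; then $\mu^j_x$ and $\mu^j_{x'}$ are each proportional to $\mu^{\xi_n}_x$ on $\xi_n(x)$, hence proportional there, hence (by consistency across $n$) proportional on the whole leaf. For \ref{rere4}, $g=\alpha(n_1,n_2)$ is a $\mu$-preserving diffeomorphism carrying $W^j(x)$ onto $W^j(gx)$ by \cref{claim:plp}\ref{chickenheadman1}, and $g\xi$ is again subordinate with $g_*\mu^\xi_x=\mu^{g\xi}_{gx}$ for a.e.\ $x$; pushing \ref{rere5} forward gives $g_*\bigl(\restrict{\mu^j_x}{\xi(x)}\bigr)\ \propto\ g_*\mu^\xi_x=\mu^{g\xi}_{gx}\ \propto\ \restrict{\mu^j_{gx}}{g(\xi(x))}$, and letting $\xi=\xi_n$ (so $g(\xi_n(x))$ exhausts $W^j(gx)$) yields $g_*\mu^j_x\ \propto\ \mu^j_{gx}$, which is \ref{rere4}.

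I expect the genuinely non-formal content to be confined to the first two tasks: constructing the nested, exhausting family $\{\xi_n\}$ of subordinate partitions with the required measurability, and the nondegeneracy statement $x\in\supp\mu^{\xi_n}_x$ (equivalently, that the conditional measures do not systematically vanish near the base point). Everything else is manipulation of the tower property of disintegrations together with the naturality of conditional measures under measure-preserving maps; the one thing to be careful about is intersecting the relevant co-null sets over the countably many $\xi_n$ (and the refinements $\xi\vee\xi_n$) so that a single full-measure set serves all of \ref{rere1}--\ref{rere5} simultaneously.
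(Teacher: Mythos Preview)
Your proposal is correct and follows essentially the same route as the paper's outline: build a nested, exhausting sequence $\{\xi_n\}$ of subordinate partitions, renormalize the conditional measures on $I^j_x$, and glue via the tower property; the verifications of \ref{rere3}--\ref{rere5} are then the same as the paper's. You supply a bit more detail than the paper does---notably the nondegeneracy argument that $x\in\supp\mu^{\xi_n}_x$ and the explicit use of $\xi\vee\xi_n$ to deduce \ref{rere5} for an arbitrary subordinate $\xi$---but the architecture is identical.
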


\begin{proof}[Outline of construction] To construct the family  $\{\mu^j_x\}$ of leaf-wise measures, consider a sequence  $\{\xi^k\}_{k\in \N}$  of  {measurable partitions} of $\T^3$ such that  
	\begin{enumerate}\item each $\xi^k$ is  {subordinate to $W^j$},
		\item  for almost every $x$, we have  $\xi^k(x)\subset \xi^{k+1}(x)$, and 
		\item  for almost every $x$, we have  $\bigcup _{k} \xi^k(x) = W^j(x).$
	\end{enumerate}
	%Given $x\in \T^3$  let $C_{k,\ell}(x) = \xi^k(x) \cap \xi^\ell(x)$.  
	From the uniqueness of conditional measures, for almost every $x$ we have that $
	\mu^{\xi^k}_x$ and $\mu^{\xi^\ell}_x$ coincide on $\xi^k(x) \cap \xi^\ell(x)$ up to normalization: if $\ell\ge k$ then 
	$$\mu^{\xi^k}_x= 
	\frac{1}{\mu^{\xi^\ell}_x(\xi^k(x))}\restrict{\mu^{\xi^\ell}_x}{\xi^k(x)} .$$
	%Let us fix once and for all a  choice normalization.  A standard choice of normalization is to normalize on the unit ball (i.e. interval) centered at $x$. 
	%Let $B^{E^j}_r(0)$ denote the ball in $E^j\subset \R^3$ centered at $0$ of radius $r$. 
	For each $x$ and  every $k$ sufficiently large  so that  $$I_x^j \subset \xi^k(x),$$   % \quad\text{(up to zero measure)},$$
	let $$\td \mu^{\xi^k}_x = \frac 1 {\mu^{\xi^k}_x (I_x^j) } \mu^{\xi^k}_x.$$
	Then, given any compact (in the immersed  topology) subset $K\subset W^j(x)$, we have for any  $k$ and $\ell$ such that $K\subset \xi^k(x)$ and $K\subset \xi^\ell(x)$ that 
	\begin{equation}\label{eq:leafwise}\td \mu^{\xi^k}_x (K) = \td \mu^{\xi^\ell}_x (K).\end{equation}
	The measure $\mu^j_x$ on $W^j(x)$ is then defined to be the locally finite Borel measure defined by \eqref{eq:leafwise} for each compact $K\subset W^j(x)$. 
	Properties \ref{rere1},  \ref{rere2},  and \ref{rere5}  follow from construction and the properties of  the families of conditional measures  $\{\mu_x^{\xi_k}\}.$  Property  \ref{rere4} follows from the invariance of $\mu$.

	For Property \ref{rere3}, note that if $x'\in W^j(x)$ then both $\mu^j_x $ and $  \mu^j_{x'}$ are locally finite Borel  measures on the same space $W^j(x) = W^j(x')$.   Moreover, there is some $\ell$ such that $x'\in \xi^\ell(x)$.  Since the conditional measures $\mu^{\xi^\ell}_x = \mu^{\xi^\ell}_{x' }$ coincide, it follows that the leaf-wise measures  $\mu^j_x$ and $  \mu^j_{x'}$ are {proportional}; however, due to the choice of normalization we  typically have $\mu^j_x \neq  \mu^j_{x'}$.  \end{proof}

See also Section 6 of \cite{MR2648695}, especially Theorem 6.3, where  the  construction of the family of  leaf-wise measures is presented in a more general setting.  
We emphasize that the topology for  which $ \mu^j_x$ is Borelian is the {\it immersed} topology on the submanifold $W^j(x)$ rather than the topology inherited as a subset of $\T^3$;  as measures on $\T^3$,  the measures  $ \mu^j_x$ are   rather   pathological whenever they are non-atomic (see \cref{entropyvatoms} below.)

For $(0,0)\neq (n_1, n_2)\in \Z^2$ and for \ae $x\in \T^3$ we similarly construct locally finite,  leaf-wise measures $\mu^s_{(n_1,n_2),x}$ and $\mu^u_{(n_1,n_2),x}$ on the leaf of the corresponding stable or unstable foliation through $x$.  

Recall that an \emph{atom} of a locally finite  measure $\mu$ on a space $X$ is a point $x\in X$ with $\mu(\{x\}) >0$.  A probability measure $\mu$ on $X$ is \emph{an atom supported at $x$} if $\mu(X\sm \{x\}) = 0$ and $\mu(\{x\}) = 1$ in which case we write $\mu = \delta_x$.   
We have the following equivalences. (See Lemma \ref{entropyvatoms2} below for a proof of a more general statement.)
\begin{lemma}\label{entropyvatoms}
	Let $(n_1, n_2)\in \Z^2\sm\{(0,0)\}$ and let $\mu$ be an ergodic, $\alpha(n_1, n_2)$-invariant measure on $\T^3$.  The following are equivalent:
	\begin{enumerate}
		\item $h_\mu(\alpha(n_1, n_2)) = 0$;
		\item for $\mu$-\ae $x$, the measure $\mu^u_{(n_1,n_2),x}$ has at least one atom; 
		\item for $\mu$-\ae $x$ the measure $\mu^u_{(n_1,n_2),x}= \delta_x$ is a single  atom supported at $x$;
		\item the partition of $(\T^3, \mu)$ into full $W^u_{(n_1,n_2)}$-leaves is a measurable partition.
	\end{enumerate}
\end{lemma}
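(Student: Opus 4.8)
\textbf{Proof strategy for Lemma \ref{entropyvatoms}.}

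The plan is to establish the chain of implications $(1)\Rightarrow(3)\Rightarrow(2)\Rightarrow(1)$ together with the equivalence $(3)\Leftrightarrow(4)$, since $(3)$ is the strongest and cleanest of the four conditions to relate to the rest. Write $f = \alpha(n_1,n_2)$, which is an Anosov diffeomorphism of $\T^3$, and let $W^u = W^u_{(n_1,n_2)}$ be its unstable foliation. The key external input is the abstract entropy theory for $C^{1+\beta}$ (here even affine) diffeomorphisms, which I will invoke via the later results alluded to in the excerpt: metric entropy can be computed along a measurable partition subordinate to the unstable foliation (an increasing partition in the sense of Ledrappier--Young/Rokhlin), and $h_\mu(f)$ equals the conditional entropy $H_\mu(f^{-1}\xi \mid \xi)$ for such a $\xi$, which in turn is expressed through the leaf-wise measures $\mu^u_x$. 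The crucial point is the formula relating positivity of entropy to the leaf-wise measures being non-trivial: $h_\mu(f) > 0$ if and only if $\mu^u_x$ is non-atomic on a positive-measure (hence, by ergodicity, full-measure) set. This is exactly the content of the more general Lemma \ref{entropyvatoms2} cited in the statement, so for this lemma I would simply specialize that result.

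Concretely, the steps are as follows. First, $(1)\Rightarrow(3)$: assuming $h_\mu(f)=0$, I use that along the one-dimensional unstable foliation the map $f$ expands leaves uniformly (by the factor $e^{\lambda^j(n_1,n_2)}$ on each Lyapunov line inside $E^u$), and I invoke the general principle (Lemma \ref{entropyvatoms2}) that zero entropy forces the leaf-wise measures $\mu^u_x$ to be purely atomic. Then an ergodicity-plus-invariance argument upgrades "purely atomic" to "a single atom at $x$": the set of atoms of $\mu^u_x$ is $f$-equivariant by property \ref{rere4} of the leaf-wise measures, the mass of the largest atom is a measurable $f$-invariant function hence constant, and if $\mu^u_x$ had more than one atom the uniform expansion of $W^u$-leaves under $f^n$ would spread atoms apart and force either infinitely many atoms of equal mass (contradicting local finiteness) or a contradiction with the normalization $\mu^u_x(I^u_x)=1$; so $\mu^u_x = \delta_x$. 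Second, $(3)\Rightarrow(2)$ is immediate since $\delta_x$ has the atom $x$. Third, $(2)\Rightarrow(1)$: if $\mu^u_x$ has an atom for a.e.\ $x$, then by the contrapositive of the entropy formula (again Lemma \ref{entropyvatoms2}, or directly: an atom in the conditional measure makes the conditional entropy $H_\mu(f^{-1}\xi\mid\xi)$ vanish), $h_\mu(f)=0$. Finally, $(3)\Leftrightarrow(4)$: if $\mu^u_x=\delta_x$ a.e., the partition into points of each $W^u$-leaf has conditional measures that are Dirac masses, and such a partition is measurable (it is the partition into atoms, refined by $\mu$-null adjustments; more precisely the "points partition" is measurable precisely when the conditional measures are atomic, cf.\ Appendix \ref{App:rokhlin}); conversely, if the partition into full $W^u$-leaves is measurable, then the conditional measures $\mu^u_x$ (suitably normalized restrictions) live on a single leaf but the leaf itself, being a measurable-partition element, forces $\mu^u_x$ atomic by the standard dichotomy for conditional measures along a non-atomic versus atomic measurable partition, and the equivariance argument again collapses this to $\delta_x$.

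The main obstacle I anticipate is the step $(1)\Rightarrow(3)$, specifically passing from "the leaf-wise measures are atomic" to "they are single atoms". This is where one genuinely uses both the $\Z$-action dynamics (uniform expansion along the one-dimensional unstable leaves, so that atoms cannot accumulate and their orbits escape to infinity in the leaf) and ergodicity (to make the atom-mass function constant). If instead one has the general Lemma \ref{entropyvatoms2} available in the form that already outputs $\mu^u_x=\delta_x$, this difficulty evaporates; but if \ref{entropyvatoms2} only yields "purely atomic", the escape-of-atoms argument must be run carefully. Everything else --- the entropy-to-leaf-wise-measure dictionary, the measurability characterization of the points partition, and the easy implications --- is routine given the abstract machinery set up earlier in the text, so I would keep those brief and point to Lemma \ref{entropyvatoms2} and Appendix \ref{App:rokhlin} for details.
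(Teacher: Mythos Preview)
Your top-level strategy---specialize Lemma \ref{entropyvatoms2}---is exactly what the paper does, and the chain $(1)\Rightarrow(3)\Rightarrow(2)$ together with $(3)\Rightarrow(1)$ is fine. However, your sketch of $(4)\Rightarrow(3)$ contains a genuine error. You write that measurability of the partition into full $W^u$-leaves ``forces $\mu^u_x$ atomic by the standard dichotomy for conditional measures along a non-atomic versus atomic measurable partition.'' There is no such dichotomy: measurability of a partition says nothing about atomicity of its conditional measures (the vertical partition of $[0,1]^2$ with Lebesgue measure is measurable with non-atomic conditionals). The paper's argument for $(4)\Rightarrow(3)$, and likewise for $(2)\Rightarrow(3)$, is dynamical: given conditional probabilities $\mu^u_x$ on full leaves, one uses $f_*\mu^u_x=\mu^u_{f(x)}$, fixes $\delta>0$, chooses a positive-measure set $G'$ on which $\mu^u_x$ gives mass $\ge 1-\delta$ to a fixed-radius leaf ball $W^u(x,R)$, and then uses Poincar\'e recurrence of $f^n(x)$ to $G'$ together with the \emph{contraction} of unstable balls under $f^{-n}$ to conclude $\mu^u_x(\{x\})\ge 1-\delta$. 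Your ``escape of atoms under expansion'' heuristic is in the right spirit but should be replaced by this contraction-plus-recurrence argument.

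Separately, your direct claim for $(2)\Rightarrow(1)$ that ``an atom in the conditional measure makes $H_\mu(f^{-1}\xi\mid\xi)$ vanish'' is false as stated (a conditional measure with two atoms still contributes positive entropy); the correct route is $(2)\Rightarrow(3)\Rightarrow(1)$, with the first step again via the contraction argument above. Finally, note that the paper gets $(1)\Rightarrow(3)$ and $(1)\Rightarrow(4)$ in one stroke via the Pinsker partition: $h_\mu(f)=0$ forces the Pinsker partition to be the point partition, and since by Proposition \ref{prop:pinsker} the Pinsker partition equals the measurable hull $\Xi^u$ of the $W^u$-partition, the $W^u$-partition is mod $0$ the point partition. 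This is cleaner than first obtaining ``atomic'' and then upgrading.
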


For every $x\in \T^3$,  the subspace $E^j\subset \R^3$ gives a coordinate system (inducing  the immersed topology) on the embedded line $W^j(x)$.  It is convenient to make these coordinates explicit:  for $x\in \T^3$, define an identification $\Phi_x$ between the vector space $E^j$  and the immersed manifold $W^j(x)\subset \T^3$ by 
$$\Phi_x \colon E^j \to W^j(x), \quad \quad \Phi_x(v) = x + v.$$
Let $\nu^j_x$ denote the locally finite Borel measure on $E^j$ given by pull-back under $\Phi_x$; that is (see also  {Figure \ref{fig:2}}, page \pageref{pgF2}),  let  
$$\nu^j_x = (\Phi_x\inv)_* (\mu_x^j).$$ 

\begin{remark}
	%What is the above paragraph really saying? 
	The map $\Phi_x$ %simply allow us to pretend that each 
	describes   $W^j(x)$ as  an immersed  copy of $E^j$ with $x$ as the origin.  Thus, for $x'\in W^j(x)$ with $x\neq x'$ we have  $\Phi_x\neq \Phi_{x'}$.  % as $\Phi_x$ treats $x$ like the origin and $ \Phi_{x'}$ treats ${x'}$ like the origin.  
	However, writing $x'= x+v$ for some $v\in E^j$ we have 
	$$\Phi_x(t) = \Phi_{x'}(t-v).$$

	What is the difference between $ \mu^j_x$ and $ \nu^j_x$?  
	For each $x\in \T^3$, the measure $ \mu^j_x$ is a locally finite measure on the immersed curve  $W^j(x)\subset \T^3$.  For $x'\notin W^j(x)$ it is difficult to compare the measures $ \mu^j_x$ and $ \mu^j_{x'}$.
	On the other hand, for each $x\in \T^3$, the measure  $ \nu^j_x$ is a locally finite measure on the vector space $E^j\simeq \R$; in particular, it is much more convenient to work with the collection $ \{ \nu^j_x\}$ as we can  easily compare  $ \nu^j_x$ and $ \nu^j_{x'}$ for $x\neq x'\in \T^3$.  
	%\end{remark}

	%\begin{remark}
	\label{rem:kklloopp}
	%For $x'\in W^j(x)$ we  {do not} have $\Phi_x= \Phi_{x'}$; 
	For $x'\in W^j(x)$, recall that  $\mu^j_x \,\propto\, \mu^j_{x'}$ as $W^j(x) = W^j(x')$.  However, for $x'\in W^j(x)$ we  {do not} necessarily   have  that $\nu^j_x  \,\propto\, \nu^j_{x'}$. 
	%However, since $\mu^j_x \,\propto\, \mu^j_{x'}$ we have 
	The key step in the proof of Theorem \ref{thm:KS} is to establish that $\nu^j_x \,\propto\, \nu^j_{x'}$ for (typical)  $x'\in W^j(x)$.  
\end{remark}

The following lemma characterizes measures on $\T^3$ that are invariant under translations by $E^j$.   Together with Lemma \ref{lem:popo}, this reduces the proof of Theorem \ref{thm:KS} to studying the geometry of the family of measures $\{\nu_x^j\}$.  
\begin{lemma}\label{lem:invariance}\label{lem:popo2}
	A probability measure $\mu$  on $\T^3$ is invariant under translations by $E^j$   if and only if for $\mu$-\ae $x$ the measure $\nu^j_x$ is proportional  to  the Lebesgue (Haar) measure on $E^j\simeq \R$.  
\end{lemma}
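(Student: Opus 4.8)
The plan is to prove both implications by relating the family $\{\nu^j_x\}$ on $E^j$ to translation-invariance of $\mu$ on $\T^3$, using the defining properties of leaf-wise measures and their compatibility with conditional measures (Property \ref{rere5}) together with the uniqueness of conditional measures.

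First, suppose $\mu$ is invariant under the one-parameter translation group $T_t\colon x\mapsto x+tw$ ($w\in E^j$, $t\in\R$), i.e.\ under translations by $E^j$. I would fix a measurable partition $\xi$ subordinate to $W^j$ and examine how the conditional measures $\mu^\xi_x$ transform under $T_t$. The key observation is that $T_t$ maps $W^j(x)$ to $W^j(T_t x)$ and intertwines the coordinate maps via $\Phi_{T_t x}(v) = T_t x + v = x + (v + tw) = \Phi_x(v+tw)$; thus in the $E^j$-coordinates, $T_t$ acts by the translation $v\mapsto v+tw$ of $E^j$. Invariance of $\mu$ under $T_t$ forces (by uniqueness of the disintegration) that the pushed-forward family $\{(T_t)_*\mu^\xi_x\}$ is again a disintegration of $\mu$ over the image partition; translating this statement into $E^j$-coordinates yields that $\nu^j_x$ is, up to normalization, invariant under translation by $tw$ for $\mu$-a.e.\ $x$ and all $t$. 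A locally finite Borel measure on $E^j\simeq\R$ that is invariant (up to scalar, hence genuinely by local finiteness) under all translations must be a multiple of Lebesgue measure; so $\nu^j_x\propto \mathrm{Leb}$ for $\mu$-a.e.\ $x$. Some care is needed to handle the scalar factors and the a.e.-in-$(x,t)$ quantifiers simultaneously — one should fix a countable dense set of $t$'s, use Fubini, and invoke continuity.

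Conversely, suppose $\nu^j_x\propto\mathrm{Leb}_{E^j}$ for $\mu$-a.e.\ $x$. Then for a subordinate partition $\xi$, Property \ref{rere5} gives $\mu^\xi_x = \frac{1}{\mu^j_x(\xi(x))}\restrict{\mu^j_x}{\xi(x)}$, which in $E^j$-coordinates is normalized Lebesgue measure restricted to $\Phi_x^{-1}(\xi(x))$. To recover translation-invariance of $\mu$ I would test $\mu$ against a continuous function $\varphi$ and a small translation $T_t$: write $\int \varphi\circ T_t\,d\mu = \int\!\!\int \varphi\circ T_t\,d\mu^\xi_x\,d\mu(x)$, and compare with $\int\!\!\int\varphi\,d\mu^\xi_x\,d\mu(x)$. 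Since each $\mu^\xi_x$ is Lebesgue along the leaf and the leaves are translates of $E^j$, the inner integrals differ only by boundary effects near $\partial\xi(x)$, which can be controlled by refining $\xi$ (or using a sequence $\xi^k$ exhausting the leaves as in the construction of leaf-wise measures) so that the boundary contribution vanishes in the limit. This yields $\int\varphi\circ T_t\,d\mu = \int\varphi\,d\mu$ for all $t$ small, hence for all $t$, i.e.\ $\mu$ is $E^j$-translation-invariant.

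The main obstacle, I expect, is the bookkeeping in the converse direction: controlling the boundary terms $\partial\xi(x)$ uniformly enough to pass to the limit, and ensuring the scalar normalizations $\mu^j_x(\xi(x))$ behave measurably and do not degenerate. The cleanest route is probably to avoid a single partition and instead argue directly with the leaf-wise measures $\mu^j_x$ on the full leaves: translation-invariance of $\mu$ is equivalent to the statement that for a.e.\ $x$ and a.e.\ $x'=x+v\in W^j(x)$ one has $\nu^j_x \propto \nu^j_{x'}$ (shifted by $v$) — which by Remark \ref{rem:kklloopp} is exactly the geometric input the rest of the proof of Theorem \ref{thm:KS} is designed to supply — combined with the fact that a translation-quasi-invariant locally finite measure on $\R$ is Lebesgue. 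So the lemma is really the dictionary entry: ``$\nu^j_x$ Lebesgue a.e.'' $\Longleftrightarrow$ ``$\mu$ is $E^j$-invariant,'' and both halves reduce, after the coordinate change $\Phi_x$, to the elementary classification of translation-invariant locally finite measures on $\R$ together with the uniqueness of Rokhlin disintegrations.
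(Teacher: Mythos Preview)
The paper states this lemma without proof, so there is no ``paper's own proof'' to compare against; the statement is treated as an elementary dictionary entry to be used in \S\ref{ss:prooff} via the converse implication only.

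Your converse direction (the one the paper actually needs) is sound: testing against continuous $\varphi$, disintegrating over a subordinate $\xi^k$, and letting $k\to\infty$ so that $m(\xi^k(x))\to\infty$ kills the boundary terms. Concretely, for fixed $v\in E^j$ and any $N$, set $A_{k,N}=\{x:m(\xi^k(x))\ge N\}$; since $\xi^k(x)\nearrow W^j(x)$ one has $\mu(A_{k,N})\to 1$, and on $A_{k,N}$ the inner-integral discrepancy is at most $2|v|\|\varphi\|_\infty/N$. Sending $k\to\infty$ and then $N\to\infty$ gives $\int\varphi\circ T_v\,d\mu=\int\varphi\,d\mu$. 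So your sketch is correct and there is no real obstacle here.

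Your forward direction, however, contains a genuine error. The parenthetical ``invariant (up to scalar, hence genuinely by local finiteness)'' is false: the measure $e^{\alpha s}\,ds$ on $\R$ is locally finite and satisfies $(T_t)_*\nu=e^{-\alpha t}\nu$ for every $t$, yet is neither translation-invariant nor a multiple of Lebesgue (this is exactly the content of Claim~\ref{claim:bowl}). What your argument actually yields is $G(\nu^j_x)=\R$, which by Proposition~\ref{lem:abscon} gives only that $\nu^j_x$ is \emph{equivalent} to Lebesgue. To conclude $\nu^j_x\propto\mathrm{Leb}$ you must rule out the exponent $\alpha\neq 0$. The quickest fix in this setting is to invoke Lemma~\ref{lem:popo}: since $E^j$ is totally irrational, $E^j$-invariance of $\mu$ forces $\mu$ to be Lebesgue on $\T^3$, after which Fubini in an adapted chart gives Lebesgue conditionals on $W^j$-plaques and hence $\nu^j_x\propto\mathrm{Leb}$. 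Without using the irrationality of $E^j$, a further argument (e.g.\ recurrence along the dense leaf) is needed to kill the exponential density.
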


\section{Interlude: Tools from smooth ergodic theory}
To complete the proof of Theorem \ref{thm:KS}, a number of additional tools from smooth ergodic theory are needed.  These tools and facts,  as well as many additional   facts that will be used in Part \ref{part:III}, are collected in Part \ref{part:II} below.  
For  the proof of Theorem \ref{thm:KS}, we encourage the reader to first consult
\begin{enumerate}
	\item  Section \ref{sec:abstractergodictheory}, especially Propositions \ref{prop:hopf} and \ref{prop:pinsker} (used in Section \ref{ss:pipart}), and 
	\item Section \ref{sec:led}, especially Theorems \ref{thm:led} and \ref{thm:led'} (used in Section \ref{sec:shear} and Section \ref{ss:prooff}.)
\end{enumerate} 
To understand these statements, the reader should refer to Section \ref{ss:lyapexp}, Section \ref{sec:unstmanifold} (especially \ref{unstmanifold11}), Section \ref{sec:ME}, Section \ref{unsentropy}, and Definition \ref{def:SRB}.

\section{Entropy, translation, and  geometry of leaf-wise measures} %\note{new introduction to section}
In this section, we present two key propositions, \cref{lem:abscon} and \cref{prop:easyLed} below, that will be used in the proof of Theorem \ref{thm:KS}. 
To motivate these results, in the setting of Theorem \ref{thm:KS}, let $\mu$ be an ergodic, $\alpha$-invariant probability measure with positive entropy (for some element of the action).  
By Lemmas \ref{lem:popo} and \ref{lem:popo2}, the proof of the theorem reduces to showing that for almost every $x$, the measure $\nu^j_x$ is proportional to the Lebesgue (Haar)  measure  $m$ on $E^j\simeq \R$ for some $j\in \{1,2,3\}.$  This is equivalent to showing that the measure $\mu^j_x$ is the Lebesgue (Haar) measure along the manifold $W^j(x) = x+ E^j$.  

We present here two key propositions that will give us such properties of the measures $\mu^j_x$ and $\nu^j_x$. First, under suitable geometric hypotheses on  a measure $\nu$ on $\R$, we show in \cref{lem:abscon} that $\nu$ is of the form $\nu=  \rho\, m $ where $m$ denotes the Lebesgue measure on $\R$ and $\rho\colon \R\to \R$ is a density function with $0< \rho(x) < \infty$ for $m$-\ae $x$.  In Section \ref{sec:proofKS}, we show for some $j\in \{1,2,3\}$ that these geometric hypotheses hold for the measure  $\nu^j_x$ for $\mu$-\ae $x$.   Second,  using an entropy computation due to Ledrappier (see Theorem \ref{thm:led} below),  we show in \cref{prop:easyLed} that the density function $\rho$ above is constant and, specifically, that $\mu^j_x$ is the Lebesgue measure on $W^j(x)$ for almost every $x$.

\subsection{Shearing  measures on  $\R$} \label{sec:shear}
%\note{measure is now called $\nu$}
Consider   a locally finite Borel measure $\nu$  on $\R$. For $t\in \R$, denote by  $T_t\colon \R\to \R$   the translation $$T_t(x) = x+ t$$ and let $(T_t)_*\nu$ denote the measure  defined by $$(T_t)_*\nu(B) = \nu (T_{-t}(B)) =  \nu (B-t).$$

Recall that  two locally finite Borel measures $\nu_1$ and $\nu_2$ on $\R$ are  \index{measure(s)!proportional} \emph{proportional}, written $\nu_1 \,\propto\, \nu_2$, if there is a constant $c>0$ with 
$$\nu_1 = c\,  \nu_2.$$%\note{the integrability of $\rho$ and $1/\rho$ is a trivial consequence the measures being locally finite (hence the densities are locally $L^1$)}
Two locally finite measures $\nu_1$ and $\nu_2$ on $\R$ are \emph{equivalent}\index{measure(s)!equivalent} if there is a measurable function $\rho$ with $0<\rho(x) <\infty$ for all $x$ such that  $$\nu_1= \rho\,  \nu_2$$
where $ \rho\, \nu_2$ indicates the measure defined as $$( \rho\, \nu_2)(E) = \int _E \rho (x) \ d \nu_2(x)  $$
for any Borel $E$.  
%Note that proportionality is an equivalence relation.  

Given a locally finite Borel measure $\nu$ on $\R$, let $G(\nu)\subset \R$ denote the subgroup of translations satisfying 
$$G(\nu) = \{ t\in \R : (T_t)_*\nu \,\propto\, \nu\}.$$
\begin{example}
	Consider the   Lebesgue measure $m$ on $\R$.   Then $G(m) = \R$; in fact for every $t\in \R$ we have $(T_t)_* m = m$.
\end{example}
\begin{example}
	Consider $\nu$ to be the measure  on $\R$ given by the density $e^{x}$; that is $$d\nu(x) = e^x \, d m(x).$$
	For $t\in \R$ we have $$d((T_t)_*\nu )(x) = e^{x-t} d m(x-t) = e^{x-t} d m(x) = e^{-t}e^{x} d m(x)$$
	so $$(T_t)_*\nu = e^{-t} \nu\,\propto\, \nu.$$  Again we have $G(\nu) = \R$. 
\end{example}
Note that for a generic  density  function   $\rho\colon \R\to (0,\infty)$, we expect $G(\rho\, m)$ to be the trivial  subgroup $G(\rho\, m) = \{0\}$.

Although not needed in our proof of Theorem \ref{thm:KS}, one can show the following.
\begin{claim}\label{claim:bowl} %\note{claim and proof sketch added}
	If $\nu$ is a locally finite Borel measure on $\R$ with $G(\nu) = \R$ then there exist $C>0$ and $\alpha\in \R$ such that $\nu = \rho \, m$ where  $$\rho (x) = C e^{\alpha t} .$$
\end{claim}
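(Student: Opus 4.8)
The claim is a rigidity statement: a locally finite Borel measure $\nu$ on $\R$ all of whose translates are proportional to $\nu$ must be an exponential density $C e^{\alpha x}\,dm$. The first step is to show $\nu$ is equivalent to Lebesgue measure $m$, i.e.\ $\nu = \rho\, m$ for some positive measurable $\rho$. For this I would argue that $\nu$ cannot have atoms: if $\nu(\{x_0\})>0$, then $(T_t)_*\nu \propto \nu$ forces $\nu(\{x_0+t\}) = c(t)\,\nu(\{x_0\})>0$ for every $t$, so every point is an atom of positive mass, contradicting local finiteness on a bounded interval. More is true: the same proportionality, applied to small intervals, shows that $\nu$ of any fixed-length interval is bounded above and below uniformly, which gives mutual absolute continuity of $\nu$ with $m$ (a $\nu$-null set would translate to a family of $\nu$-null sets covering $\R$, and one checks this forces it to be $m$-null, and symmetrically). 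Hence $\nu = \rho\, m$ with $0<\rho<\infty$ a.e.

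\textbf{Extracting the functional equation.} Once $\nu = \rho\,m$, the hypothesis $G(\nu) = \R$ says: for every $t\in\R$ there is $c(t)>0$ with $(T_t)_*(\rho\, m) = c(t)\,\rho\, m$. Computing the left side, $(T_t)_*(\rho\, m)$ has density $x\mapsto \rho(x-t)$ with respect to $m$ (since $m$ is translation-invariant). Therefore for each $t$,
\begin{equation*}
\rho(x-t) = c(t)\,\rho(x) \qquad \text{for $m$-a.e.\ } x.
\end{equation*}
This is a Cauchy-type multiplicative cocycle equation. Writing $g = \log \rho$ (well-defined a.e.\ and locally integrable, since $\rho$ is locally bounded above and below — this boundedness again coming from local finiteness of $\nu$ plus the proportionality relation), we get $g(x-t) - g(x) = \log c(t)$ a.e.\ $x$, for every $t$. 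Set $\psi(t) = \log c(t)$; then $\psi$ is additive: $\psi(s+t) = \psi(s)+\psi(t)$, which follows by composing the relations for $s$ and $t$.

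\textbf{Solving the cocycle equation.} The relation $g(x+t) = g(x) + \psi(t)$ (a.e.\ $x$, all $t$) together with local integrability of $g$ forces $g$ to agree a.e.\ with an affine function. Concretely: fix a Lebesgue point argument — integrate the relation in $x$ over $[0,1]$ to get $\psi(t) = \int_0^1 g(x+t)\,dx - \int_0^1 g(x)\,dx =: G(t) - G(0)$ where $G(t) = \int_t^{t+1} g$; this shows $\psi$ is continuous (even locally absolutely continuous), and an additive continuous function on $\R$ is linear, $\psi(t) = \alpha t$ for some $\alpha\in\R$. Feeding $\psi(t) = \alpha t$ back, $g(x+t) - g(x) = \alpha t$ a.e., so $x\mapsto g(x) - \alpha x$ is translation-invariant a.e., hence a.e.\ equal to a constant $\log C$. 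Thus $\rho(x) = C e^{\alpha x}$ a.e., which is the desired conclusion (matching the statement up to the harmless variable name).

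\textbf{Main obstacle.} The genuinely delicate point is the regularity bootstrap: a priori $\rho$ is only measurable and $\psi = \log c$ is only a measurable additive function, and there exist pathological (non-measurable) additive functions on $\R$. The resolution is that local finiteness of $\nu$ pins down $\rho$ to be locally bounded between positive constants (apply the proportionality relation to a unit interval: $\nu([0,1]) = c(t)^{-1}\nu([t,t+1])$, and local finiteness makes the right side finite, while the no-atoms/positivity argument makes it bounded below), hence $g=\log\rho \in L^1_{loc}$, and the averaging trick above then upgrades $\psi$ to a continuous — in fact locally Lipschitz — additive function, which must be linear. Handling this measurability-to-continuity step carefully (and verifying the a.e.\ qualifiers compose correctly over the uncountable family of $t$'s, which is fine since everything reduces to a single identity $g(x+t)-g(x)=\psi(t)$ that one can test against $L^1$ functions) is where the real work lies; the rest is routine.
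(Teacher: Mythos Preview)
Your approach is correct and has the same two-step structure as the paper's: show $\nu\sim m$, then solve the functional equation $\rho(x-t)=c(t)\rho(x)$. The paper handles the first step by forward-referencing Proposition~\ref{lem:abscon}, which uses Lebesgue--Besicovitch differentiation (if the density were infinite at one point, $G(\nu)=\R$ would force it to be infinite everywhere, contradicting that the singular set is $m$-null); your atoms-then-null-sets sketch can be made rigorous via a Fubini argument but is vaguer as written. For the second step, the paper simply notes $c_{s+t}=c_sc_t$ with $t\mapsto c_t$ measurable and invokes the classical fact that measurable homomorphisms $\R\to\R$ are linear; your averaging trick actually \emph{proves} that fact in context, which makes your argument more self-contained. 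The local boundedness of $\rho$ you need follows cleanly once you observe $c(-s)=\nu([s,s+1])/\nu([0,1])$ is continuous in $s$ and that $\rho(y)=\rho(x_0)\,c(x_0-y)$ for a.e.\ $y$ and a Fubini-good $x_0$. There is a harmless sign slip when you pass from $g(x-t)-g(x)=\psi(t)$ to $g(x+t)=g(x)+\psi(t)$.
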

%\begin{exercise}
%Find all locally  finite Borel measures $\mu$ on $\R$ that are absolutely continuous with respect to the  Lebesgue measure and satisfy $G(\mu) = \R$.   
%\end{exercise}
Indeed, we show in the proof of \cref{lem:abscon} below that,  if $G(\nu)= \R$, then $\nu $ is equivalent to $m$.  The density function $\rho$ is then a measurable function $\rho\colon \R\to (0,\infty)$ such that for each $t\in \R$, the function $$x\mapsto \frac {\rho(x)}{\rho(x-t)}$$ is a constant (in $x$) function $c_t$.  As $c_{s+t}= c_s c_t$ and as $t\mapsto c_t$ is measurable, the claim follows.

\begin{example}
	Consider   the measure $\nu$ on $\R$ given by  $$\nu = \sum _{n\in \Z} e^n \delta_{n}.$$
	For $t\in \R$ we have $$((T_t)_*\nu )(B) = \nu (\{ x-t : x\in B\}) =
	\sum _{n\in \Z} e^{n} \delta_{n+t} (B)=e^{-t}\sum _{n\in \Z} e^{n+t} \delta_{n+t} (B).$$
	Thus 
	\begin{enumerate}
		\item $(T_t)_*\nu $ is mutually singular with $\nu$ if $t\not \in \Z$, and 
		\item  if $t\in \Z$ then 
		$(T_t)_*\nu = e^{-t}\nu \,\propto\, \nu.$
	\end{enumerate}
	We thus have that $G(\nu) = \Z$ is a discrete subgroup of $\R$. 
\end{example}

We have the following elementary claim.  
\begin{claim}\label{claim:closedgroup} %\note{I don't particularly want to give a name to $G(\nu)$}
	$G(\nu)$ is a  {closed} subgroup of $\R$. 
\end{claim}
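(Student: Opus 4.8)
The plan is to show $G(\nu)$ is closed by a limiting argument on the proportionality constants. First I would recall the setup: for each $t \in G(\nu)$ there is a unique constant $c(t) > 0$ with $(T_t)_*\nu = c(t)\,\nu$; uniqueness of $c(t)$ follows since $\nu$ is locally finite and nonzero (pick a bounded set of positive finite $\nu$-measure). One checks immediately that $t \mapsto c(t)$ is a homomorphism from $G(\nu)$ to the multiplicative group $(0,\infty)$: indeed $(T_{s+t})_*\nu = (T_s)_*(T_t)_*\nu = c(t)(T_s)_*\nu = c(s)c(t)\,\nu$, so $s, t \in G(\nu)$ implies $s+t \in G(\nu)$ with $c(s+t) = c(s)c(t)$, and $c(-t) = c(t)^{-1}$ since $(T_{-t})_*(T_t)_*\nu = \nu$. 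Hence $G(\nu)$ is a subgroup, which is part of what we want (though the claim only asks for closedness).

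Next I would take a sequence $t_n \in G(\nu)$ with $t_n \to t_\infty \in \R$ and show $t_\infty \in G(\nu)$. Fix a bounded open interval $U \subset \R$ with $0 < \nu(\bar U) < \infty$ and $\nu(\partial U) = 0$; such $U$ exists since $\nu$ is locally finite and has at most countably many atoms. The key analytic input is weak-$*$ convergence: for any continuous compactly supported $\varphi \colon \R \to \R$,
\[
\int \varphi \, d(T_{t_n})_*\nu = \int \varphi(x + t_n)\, d\nu(x) \longrightarrow \int \varphi(x + t_\infty)\, d\nu(x) = \int \varphi\, d(T_{t_\infty})_*\nu
\]
by dominated convergence (all the relevant mass sits in a fixed compact set, on which $\nu$ is finite, and $\varphi$ is uniformly continuous). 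So $(T_{t_n})_*\nu \to (T_{t_\infty})_*\nu$ weakly-$*$ on compact sets. On the other hand $(T_{t_n})_*\nu = c(t_n)\,\nu$. Testing against a fixed $\varphi \ge 0$ with $\int \varphi\, d\nu > 0$ shows $c(t_n) = \frac{\int \varphi\, d(T_{t_n})_*\nu}{\int \varphi\, d\nu}$ converges to a finite limit $c_\infty := \frac{\int \varphi\, d(T_{t_\infty})_*\nu}{\int \varphi\, d\nu} \ge 0$; testing against another $\psi$ with $\int \psi \, d(T_{t_\infty})_*\nu > 0$ (which exists since $(T_{t_\infty})_*\nu \ne 0$) forces $c_\infty > 0$, and independence of the limit from the test function follows because the limiting measure is literally $(T_{t_\infty})_*\nu$. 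Passing to the limit in $(T_{t_n})_*\nu = c(t_n)\,\nu$ gives $(T_{t_\infty})_*\nu = c_\infty\,\nu$ with $c_\infty > 0$, i.e. $t_\infty \in G(\nu)$.

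I expect the only subtlety — and the thing to state carefully rather than wave at — is the bookkeeping with $\sigma$-finite-but-infinite measures: all the convergence must be localized to a fixed compact set on which $\nu$ has finite mass, and one must make sure the chosen test functions detect positive mass of both $\nu$ and the translated limit $(T_{t_\infty})_*\nu$ so that $c_\infty \in (0,\infty)$ rather than $0$ or $\infty$. There is no real obstacle here; it is a routine weak-$*$ compactness argument. (As a remark one could note this argument also shows $c \colon G(\nu) \to (0,\infty)$ is continuous, hence — $G(\nu)$ being a closed subgroup of $\R$, so either $\{0\}$, a lattice $r\Z$, or all of $\R$ — of the form $c(t) = e^{\alpha t}$ in the last case, which is the mechanism behind Claim~\ref{claim:bowl}.)
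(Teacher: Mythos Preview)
Your argument is correct. The paper does not actually prove this claim; it is stated as an ``elementary claim'' and left to the reader, so there is nothing to compare against beyond noting that your weak-$*$ limiting argument is exactly the sort of routine verification the author had in mind.
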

%Indeed, that $G(\nu)$ is a subgroup is clear.  Too see that $G(\nu)$ is closed

%\begin{exercise}
%Prove Claim \ref{claim:closedgroup}.
%\end{exercise}

%\note{index? why support?}
Recall the \emph{support}\index{measure(s)!support} of a measure $\nu$, written $\supp (\nu)$,  is the smallest closed subset of full measure.  Note that 
$G(\nu)$ restricts to a continuous action on $\supp (\nu)$.  In particular, as $G(\nu)$ is a closed subgroup, if $G(\nu)$ has a dense orbit in $\supp(\nu)$ then $G(\nu)$ acts transitively on $\supp(\nu)$.  

We state our  first key proposition of this section.
\begin{proposition}\label{lem:abscon}
	Suppose $G(\nu)$ acts transitively  on the support of $\nu$. Then either 
	\begin{enumcount}
		\item \label{gg1}the support of $\nu$ is a countable set and $G(\nu)$ is discrete, or 
		\item \label{gg2}$\nu$ is equivalent to the Lebesgue measure $m$ and $G= \R$.  
	\end{enumcount}
\end{proposition}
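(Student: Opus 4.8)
The plan is to analyze the group $G(\nu)$ as a closed subgroup of $\R$ (Claim \ref{claim:closedgroup}), so it is either all of $\R$, a discrete cyclic group $a\Z$ for some $a>0$, or the trivial group $\{0\}$. The transitivity hypothesis will force the trivial case to be degenerate (support is a single point, which falls under case \ref{gg1} with the convention that $\{0\}$ is discrete), so the real work is to distinguish the two nontrivial cases by looking at how $G(\nu)$ acts on $\supp(\nu)$.

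First I would handle the discrete case: if $G(\nu) = a\Z$ with $a>0$, then since $G(\nu)$ acts transitively on $\supp(\nu)$ and the orbit of any single point under $a\Z$ is a countable set of the form $x_0 + a\Z$, the support is countable. This immediately gives conclusion \ref{gg1}. (One should also dispatch the case $G(\nu) = \{0\}$: transitivity then means $\supp(\nu)$ is a single point, again countable and with $G(\nu)$ discrete.)

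Next, the substantive case $G(\nu) = \R$. Here transitivity is automatic and tells us nothing new; instead I would argue directly that $\nu$ is equivalent to Lebesgue measure $m$. The key point is that $(T_t)_*\nu \propto \nu$ for every $t$ means $\nu$ and $m$ have the same null sets: if $B$ is a Borel set with $m(B) = 0$, then $m(B+t) = 0$ for all $t$, and $\int_\R \nu(B+t)\,dt = \int_\R \int_\R \mathbbm{1}_B(x-t)\,d\nu(x)\,dt = \int_\R m(B)\,d\nu(x) = 0$ by Fubini (using that $\nu$ is locally finite, after restricting to a bounded window and exhausting), so $\nu(B+t) = 0$ for a.e.\ $t$; since $(T_t)_*\nu$ is either proportional to $\nu$ or — no: here it is always proportional, so $\nu(B+t) = c_t \nu(B)$ with $c_t > 0$, forcing $\nu(B) = 0$. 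Running the symmetric argument with the roles of $\nu$ and $m$ reversed (using $(T_{-t})_*\nu \propto \nu$) shows conversely that $\nu(B)=0 \implies m(B) = 0$ on bounded sets. Hence $\nu \ll m$ and $m \ll \nu$ locally, so by Radon--Nikodym $\nu = \rho\, m$ with $0 < \rho(x) < \infty$ for $m$-a.e.\ $x$, i.e.\ $\nu$ is equivalent to $m$, giving conclusion \ref{gg2}.

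The main obstacle I anticipate is the Fubini/measure-theoretic bookkeeping in the $G(\nu) = \R$ case: $\nu$ is only locally finite, not finite, so one must localize (work on $[-R,R]$, say, and let $R \to \infty$) and be careful that the proportionality constants $c_t$ are genuinely positive and measurable in $t$ — this last point is where one invokes that $t \mapsto (T_t)_*\nu$ is weak-$*$ continuous, or equivalently uses the cocycle relation $c_{s+t} = c_s c_t$ together with measurability to conclude. Everything else is soft: the classification of closed subgroups of $\R$ and the orbit-counting in the discrete case are routine.
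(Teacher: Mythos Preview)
Your argument is correct and takes a genuinely different route from the paper. The paper invokes the Lebesgue--Besicovitch differentiation theorem (Proposition~\ref{prop:lebdecom}): assuming $G(\nu)=\R$, if $\nu$ had a singular part relative to $m$ then the pointwise density $\rho(x)=\lim_{r\to 0}\nu(B(x,r))/m(B(x,r))$ would equal $\infty$ at some $x$, and the relation $(T_t)_*\nu=c_t\nu$ then forces $\rho(y)=\infty$ at \emph{every} $y$, contradicting that the singular set is $m$-null. Your Fubini/Tonelli argument bypasses differentiation entirely and goes straight to the null sets. Both approaches are short; yours is arguably more elementary since it avoids the Besicovitch machinery, while the paper's makes the local structure of $\nu$ (constant density ratio along translates) more visible and connects naturally to Claim~\ref{claim:bowl}. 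One small remark: your anticipated obstacle is milder than you suggest---Tonelli for nonnegative integrands applies directly since both $m$ and $\nu$ are $\sigma$-finite, and you never actually need measurability of $t\mapsto c_t$, only that $c_{t_0}>0$ for the single $t_0$ you pick after concluding $\nu(B+t)=0$ for $m$-a.e.\ $t$. The reverse implication $\nu(B)=0\Rightarrow m(B)=0$ is cleanest not by ``swapping roles'' but by first noting $\nu(B+t)=0$ for \emph{all} $t$ (from $G(\nu)=\R$) and then integrating against $dm(t)$ to get $m(B)\cdot\nu(\R)=0$.
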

The assumption that $G(\nu)$ acts transitively  on the support of $\nu$ in  \cref{lem:abscon}    is a very strong hypothesis; for a typical measure on $\R$, we expect $G(\nu)= \{ 0\}$.

To prove  \cref{lem:abscon}, we recall the Lebesgue--Besicovitch differentiation and decomposition theorems:
\begin{proposition}[{c.f.~ \cite[Theorems 2.12, 2.17]{MR1333890}}]\label{prop:lebdecom}
	Let $\nu_1$ and $\nu_2$ be two locally finite Borel measures on $\R$.  Let 
	$$\rho(x):= \lim_{r\to 0} \dfrac{ \nu_1(B(x,r))}{\nu_2(B(x,r))}.$$
	Then 
	\begin{enumcount}
		\item the limit $\rho(x)$ exists $\nu_2$-\ae and defines a $\nu_2$-measurable %\note{the definition of a  measurable function is standard}
		 function  $$\rho\colon \R \to [0,\infty);$$ 
		\item the set $$S= \{ x: \rho(x) = \infty\}$$ is  $\nu_1$-measurable  and $\nu_2$-null;
		\item $\nu_1$ decomposes as $$\nu_1 = \rho\, \nu_2 + \restrict{\nu_1} S.$$
	\end{enumcount}
\end{proposition}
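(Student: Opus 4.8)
The proposition is the Besicovitch differentiation theorem together with the Lebesgue decomposition of $\nu_1$ relative to $\nu_2$, for arbitrary locally finite (hence Radon) Borel measures on $\R$, and the plan is to run the classical argument specialized to $\R$, where the underlying covering lemma is elementary because balls are intervals (a reader content to cite \cite{MR1333890} directly may of course skip the argument). First I would establish a Vitali-type covering lemma: if $\mu\in\{\nu_1,\nu_2\}$, $A\subset\R$ is bounded and Borel, and $\calI$ is a family of closed balls centered at points of $A$ that finely covers $A$ (for each $x\in A$ and each $\delta>0$ some $B(x,r)\in\calI$ has $r<\delta$), then $\calI$ contains a countable pairwise disjoint subfamily $\{B_k\}$ with $\mu\bigl(A\setminus\bigcup_kB_k\bigr)=0$. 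On $\R$ this follows from the elementary Besicovitch covering lemma for intervals — every cover of a bounded set by centered closed intervals admits a countable subcover of bounded multiplicity — together with the standard greedy iteration that extracts a disjoint subfamily exhausting $\mu$-almost all of $A$; the same covering argument also yields the weak maximal inequality for $\nu_2$ used in the last step below.

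Next I would construct $\rho$. Working on $\supp\nu_2$, whose complement is $\nu_2$-null, set $\bar D(x)=\limsup_{r\to0}\nu_1(B(x,r))/\nu_2(B(x,r))$ and $\underline D(x)=\liminf_{r\to0}\nu_1(B(x,r))/\nu_2(B(x,r))$; since $x\mapsto\nu_i(B(x,r))$ is semicontinuous and $r\mapsto\nu_i(B(x,r))$ is monotone and right-continuous, these may be computed along $r\in\Q$, so $\bar D,\underline D$ are Borel. The crux is the pair of comparison inequalities: for Borel $A\subset\R$ with $\nu_1(A)<\infty$, if $\underline D\le t$ on $A$ then $\nu_1(A)\le t\,\nu_2(A)$, and if $\bar D\ge s$ on $A$ then $\nu_1(A)\ge s\,\nu_2(A)$. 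For the first, given $\epsilon>0$ I would pick a bounded open $U\supset A$ with $\nu_2(U)<\nu_2(A)+\epsilon$, observe that the balls $B\subset U$ centered in $A$ with $\nu_1(B)\le(t+\epsilon)\nu_2(B)$ finely cover $A$, extract from the covering lemma a disjoint subfamily exhausting $\nu_1$-a.e.\ of $A$, and sum; the second is symmetric, using $U$ with $\nu_1(U)<\nu_1(A)+\epsilon$ and a disjoint subfamily exhausting $\nu_2$-a.e.\ of $A$. Applying both inequalities on $[-m,m]$ to the set $\{\,\underline D<p<q<\bar D\,\}$ with rationals $p<q$ forces that set to be $\nu_2$-null (it would satisfy $q\,\nu_2\le\nu_1\le p\,\nu_2$), and a countable union gives $\bar D=\underline D$ $\nu_2$-a.e.; letting $s\to\infty$ in the second inequality shows likewise that $\{\bar D=\infty\}$ is $\nu_2$-null. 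Hence $\rho(x)=\lim_{r\to0}\nu_1(B(x,r))/\nu_2(B(x,r))$ exists in $[0,\infty)$ for $\nu_2$-a.e.\ $x$ and is $\nu_2$-measurable, and $S=\{\rho=\infty\}$ is a Borel $\nu_2$-null set — this is (1) and the measurability and nullity assertions in (2).

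Finally I would prove the decomposition. By the Lebesgue--Radon--Nikodym theorem (standard for $\sigma$-finite measures) write $\nu_1=g\,\nu_2+\sigma$ with $g\colon\R\to[0,\infty]$ $\nu_2$-measurable, $\sigma\perp\nu_2$, and $\sigma$ concentrated on a Borel set $N$ with $\nu_2(N)=0$; I then claim $g=\rho$ $\nu_2$-a.e.\ and $\sigma=\restrict{\nu_1}{S}$. A Lebesgue-point argument — approximate $g$ in $L^1_{\loc}(\nu_2)$ by continuous functions and absorb the error into the weak maximal inequality from Step 1 — shows that the $\nu_2$-derivative of $g\,\nu_2$ equals $g$ $\nu_2$-a.e. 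Applying the comparison inequalities to $\sigma$ in place of $\nu_1$, and using that $\sigma$ lives on the $\nu_2$-null set $N$ to kill the right-hand sides, gives $\bar D_{\nu_2}\sigma=0$ $\nu_2$-a.e.\ and $\sigma\bigl(\{\bar D_{\nu_2}\sigma<\infty\}\bigr)=0$. Since $g\,\nu_2\le\nu_1$ and $\sigma\le\nu_1$, monotonicity and subadditivity of the upper and lower derivatives give $\rho=\bar D_{\nu_2}\nu_1=g$ $\nu_2$-a.e., so $\rho\,\nu_2=g\,\nu_2$ and $\nu_1=\rho\,\nu_2+\sigma$. Moreover $\nu_2(S)=0$ gives $(\rho\,\nu_2)(S)=0$, hence $\restrict{\nu_1}{S}=\restrict{\sigma}{S}$; and $\{\rho<\infty\}\subset\{\bar D_{\nu_2}\sigma<\infty\}$ (because $\sigma\le\nu_1$) is $\sigma$-null, so $\sigma$ is concentrated on $S$. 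Therefore $\sigma=\restrict{\sigma}{S}=\restrict{\nu_1}{S}$ and $\nu_1=\rho\,\nu_2+\restrict{\nu_1}{S}$, proving (3).

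I expect the real work to be in the covering lemma and the bookkeeping around the comparison inequalities: every use of them requires first restricting to a set of finite $\nu_1$-measure before dividing or subtracting, and one must check that the $\limsup$ and $\liminf$ over all radii agree with those over rational radii so that $\bar D$ and $\underline D$ are genuinely Borel. On $\R$ none of this is deep — it is where the details live rather than the ideas.
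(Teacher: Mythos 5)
Your proof is correct, but note that the paper itself does not prove this proposition at all: it is stated with a pointer to Mattila's book (Theorems 2.12 and 2.17 of the cited reference), and what you have written is essentially a reconstruction of that argument specialized to $\R$ — a Besicovitch/Vitali covering lemma for centered intervals, the two comparison inequalities between $\nu_1$, $\nu_2$ and the upper/lower derivatives, and the identification of the density and the singular set. The one genuine structural difference from Mattila's route is in part (3): you first invoke the Lebesgue--Radon--Nikodym decomposition $\nu_1 = g\,\nu_2+\sigma$ (legitimate, since locally finite Borel measures on $\R$ are $\sigma$-finite) and then identify $g=\rho$ $\nu_2$-\ae and $\sigma=\restrict{\nu_1}{S}$ via a Lebesgue-point argument and the comparison inequalities applied to $\sigma$, whereas the standard proof derives the decomposition directly from the comparison inequalities by integrating over level sets $\{t^k\le \rho<t^{k+1}\}$, with no appeal to Radon--Nikodym and no maximal function. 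Your route buys familiarity at the cost of the extra weak-type maximal inequality; the direct route is slightly leaner and proves Radon--Nikodym along the way.

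One small point to tighten in your last step: as stated, ``$\{\rho<\infty\}$ is $\sigma$-null, so $\sigma$ is concentrated on $S$'' presupposes that $\{\rho<\infty\}\cup S=\R$, which fails if $\rho$ is only defined where the limit exists, since the exceptional set where the limit fails is $\nu_2$-null but need not be $\nu_1$-null a priori. Either adopt the convention $\rho=\lims_{r\to 0}\nu_1(B(x,r))/\nu_2(B(x,r))$ everywhere (then your argument is complete as written), or observe that your own first comparison inequality, applied on the carrier $N$ of $\sigma$ to the sets $N\cap\{\liminf_{r\to 0}\nu_1(B(x,r))/\nu_2(B(x,r))\le t\}\cap[-m,m]$, gives $\nu_1$-measure zero for each $t$, so the limit exists and equals $+\infty$ at $\sigma$-\ae point; with either fix, $\sigma$ is indeed concentrated on $S$ and the decomposition follows.
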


\begin{proof}[Proof of  \cref{lem:abscon}]
	Since $G(\nu)\subset \R$ is a closed subgroup, there are only two options: either
	\begin{enumerate}
		\item  $G(\nu)$ is discrete in which case  \cref{lem:abscon}\ref{gg1} follows, or
		\item  $G(\nu) = \R$.  % and the second conclusion   \ref{gg2} follows.  
	\end{enumerate}
	We show that if $G(\nu) = \R$ then $\nu$ is equivalent to the Lebesgue measure $m$. 
	
	We first consider the assertion that   $\nu\ll m$.   Let $\rho$ and $S$ be as in Proposition \ref{prop:lebdecom} with $\nu_1= \nu$ and $\nu_2= m$.  If $\nu$ is not absolutely continuous with respect to $m$ then the singular set $S$ has positive $\nu$-measure.  Fix $x\in S$.  Then $\rho(x) = \infty$.  
	
	Consider   any $y\in \R$.   Let $t = y-x$.  By the definition of $G(\nu)$ we have $$(T_t)_* \nu = c \nu$$ for some $c>0$ so 
	\begin{align*} \lim_{r\to 0} \dfrac{   \nu  (B(y,r))}{m(B(y,r))} &=
		\lim_{r\to 0} \dfrac{ \left(c\inv (T_t)_* \nu\right) (B(y,r))}{m(B(y,r))}\\& = 
		% \lim_{r\to 0} \dfrac{  c\inv \nu (B(x,r))}{m(B(y,r))} \\&= 
		\lim_{r\to 0}  c\inv \dfrac{  \nu (B(x,r))}{m(B(x,r))} \\&=  \infty.\end{align*}
	It follows that $\rho(y) = \infty$ for {every} $y\in \R$.   This contradicts that $S$ is a $m$-null set.  It follows that  $\nu\ll m$. 
	
	The reverse absolute continuity  $\nu\gg m$ follows in the same manner and is left to the reader.  \end{proof}

\subsection{Invariance from  entropy considerations} %\note{New section with proof of invariance via entropy calculation}
We return to the setting and notation of \cref{thm:KS}.

For each $x\in \T^3$,  recall that $I_x^i$ denotes the unit ball (i.e.\ interval) in $W^i(x)$ centered at $x$.  Let $m^i_x$ denote the locally finite Lebesgue measure on the leaf $W^i(x)$ normalized so that $m^i_x(I_x) = 1$.  Note that for $x'\in W^i(x)$ we have $m^i_x = m^i_{x'}$ since $m^i_x$ is invariant under translations by $E^i$.  

Our second key  proposition of this section  shows that if the leaf-wise measures  $\mu^i_x$ are absolutely continuous with respect to $m^i_x$, then $\mu$ is automatically invariant under translations by $E^i$.  % invariant.    %if the leaf-wise measures $\mu^i_x$ are absolutely continuous with respect to $m^i_x$.   %The absolute continuity is the main step in the proof and is  shown in Section \ref{sec:proofKS}.  
\begin{proposition}\label{prop:easyLed} %\note{added proof of easy version of Ledrappier}
	For any $i\in \{1,2,3\}$, fix  $\vecn \in \Z^2 $ such that 
	\begin{enumerate}
		\item $\lambda^i(\vecn)>0$, and 
		\item $\lambda^j(\vecn)<0$ for both $j\neq i$.
	\end{enumerate}
	Then, the following are equivalent:
	\begin{enumlemma}
		\item\label{suck1}  $h_\mu(\alpha(\vecn))= \lambda^i$;
		\item \label{suck2} for $\mu$-\ae $x$, the measure $\mu^i_x$ is absolutely continuous with respect to $m^i_x$;
		\item \label{suck22} for $\mu$-\ae $x$, the measure $\mu^i_x$ is equivalent to  $m^i_x$;
		\item \label{suck3} for $\mu$-\ae $x$, we have equality of measures $\mu^i_x=m^i_x$;
		\item \label{suck4} $\nu_x^i$ is the Lebesgue (Haar) measure on $E^i$ for $\mu$-a.e.\ $x$.  
	\end{enumlemma}
\end{proposition}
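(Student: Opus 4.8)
The plan is to prove the chain of equivalences by establishing a cycle of implications, with the analytic heart being the passage between the entropy hypothesis \ref{suck1} and the absolute continuity statement \ref{suck2}; everything else is comparatively soft. First I would observe that \ref{suck3} $\Leftrightarrow$ \ref{suck4} is immediate from the definition $\nu^i_x = (\Phi_x^{-1})_*\mu^i_x$ together with the fact that $\Phi_x$ conjugates translation on $E^i$ to translation along $W^i(x)$, so $m^i_x$ pulls back to (normalized) Lebesgue measure on $E^i$. The implications \ref{suck3} $\Rightarrow$ \ref{suck22} $\Rightarrow$ \ref{suck2} are trivial. So the real content is to close the loop: \ref{suck2} $\Rightarrow$ \ref{suck1} and \ref{suck1} $\Rightarrow$ \ref{suck3} (equivalently \ref{suck2} $\Rightarrow$ \ref{suck3}, once we know \ref{suck1}).

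For \ref{suck1} $\Leftrightarrow$ \ref{suck2} I would invoke the Ledrappier--Young machinery in the one-dimensional-fibers form stated earlier. Since $\vecn$ is chosen so that $E^u_\vecn = E^i$ is one-dimensional (by the sign conditions on the $\lambda^j$'s), the unstable manifold of $\alpha(\vecn)$ through $x$ is exactly $W^i(x)$, and there is only one positive Lyapunov exponent, namely $\lambda^i(\vecn)$. The Margulis--Ruelle inequality then reads $h_\mu(\alpha(\vecn))\le \lambda^i(\vecn)$, and Ledrappier's theorem (Theorem \ref{thm:led}, i.e.\ the characterization of measures satisfying Pesin's entropy formula / SRB measures) says equality holds if and only if the conditional measures of $\mu$ along unstable manifolds — here, the leaf-wise measures $\mu^i_x$ restricted to a subordinate partition, equivalently the measures $\mu^i_x$ up to normalization — are absolutely continuous with respect to Riemannian volume on $W^i(x)$, which is $m^i_x$. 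This gives \ref{suck1} $\Leftrightarrow$ \ref{suck2} directly. (One should note $h_\mu(\alpha(\vecn)) = \lambda^i$ in \ref{suck1} is shorthand for $h_\mu(\alpha(\vecn)) = \lambda^i(\vecn)$.)

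The remaining step, upgrading absolute continuity to the exact equality $\mu^i_x = m^i_x$, is where I would again use Ledrappier's density computation — the finer statement referenced in the text as "Ledrappier explicitly computes the density function." The point is that $\alpha(\vecn)$ acts on $W^i(x)$ by an affine map which, in the $\Phi_x$-coordinates on $E^i\simeq\R$, is exactly the linear dilation $t\mapsto e^{\lambda^i(\vecn)}t$ — that is, the dynamics along the leaf is \emph{homogeneous}. The transformation rule $\alpha(\vecn)_*\mu^i_x \propto \mu^i_{\alpha(\vecn)(x)}$ combined with absolute continuity forces the density $\rho_x$ (with $\mu^i_x = \rho_x\, m^i_x$) to satisfy a cocycle-type functional equation along orbits; because the leafwise map is a pure dilation fixing the basepoint and $m^i_x$ is dilation-quasi-invariant, Ledrappier's formula for the density reduces to a constant, so $\mu^i_x \propto m^i_x$, and the normalization $\mu^i_x(I^i_x) = m^i_x(I^i_x) = 1$ pins down the constant to be $1$. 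This yields \ref{suck2} $\Rightarrow$ \ref{suck3}, closing the cycle.

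The main obstacle I anticipate is the last step: carefully invoking the exact form of Ledrappier's density function and verifying that, in the homogeneous setting at hand, it collapses to a constant. One must be slightly careful that the density is measured against the correct reference (Lebesgue on the leaf, consistently normalized along the orbit) and that the measurability/integrability hypotheses in Ledrappier's theorem are met; but since the foliation $W^i$ is a translation-invariant linear foliation and $\alpha(\vecn)$ is affine, all the regularity is automatic and no Pesin-theoretic subtleties (absolute continuity of stable/unstable holonomies, tempered distortion) cause real trouble. Once \ref{prop:easyLed} is in hand, combined with \cref{lem:abscon} and Lemmas \ref{lem:popo}, \ref{lem:popo2}, it reduces \cref{thm:KS} to showing the geometric transitivity hypothesis of \cref{lem:abscon} holds for some $\nu^j_x$, which is carried out in Section \ref{sec:proofKS}.
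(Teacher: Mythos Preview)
Your proposal is correct and the logical skeleton matches the paper's: the trivial implications \ref{suck3}$\Leftrightarrow$\ref{suck4} and \ref{suck3}$\Rightarrow$\ref{suck22}$\Rightarrow$\ref{suck2} are noted, and the content lies in \ref{suck1}$\Leftrightarrow$\ref{suck2} and the upgrade to \ref{suck3}. The difference is one of packaging. You invoke Theorem~\ref{thm:led} as a black box for \ref{suck1}$\Leftrightarrow$\ref{suck2} and then appeal to Ledrappier's explicit density formula to force the density to be constant in the homogeneous setting. The paper instead gives a direct, self-contained proof of \ref{suck1}$\Rightarrow$\ref{suck3}: it fixes an increasing partition $\xi$ subordinate to $W^i$, computes $\int\log\bigl(m^\xi_x(f^{-1}\xi(x))/\mu^\xi_x(f^{-1}\xi(x))\bigr)\,d\mu = -\lambda^i(\vecn)+h_\mu(f)=0$ using that the unstable Jacobian is the constant $e^{\lambda^i(\vecn)}$, and then uses strict concavity of $\log$ (Jensen) to conclude $\mu^\xi_x(f^{-1}\xi(x))=m^\xi_x(f^{-1}\xi(x))$ a.e., iterating over $f^{-n}\xi$ to get $\mu^\xi_x=m^\xi_x$. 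This is exactly Ledrappier's argument stripped down to the affine case, so your route and the paper's are the same argument at different levels of abstraction: yours is more modular and makes clear which general theorem is operating, while the paper's is elementary and exposes why the affine/homogeneous structure kills the density (no dynamical density correction is needed because $J^u$ is constant). The paper explicitly defers \ref{suck2}$\Rightarrow$\ref{suck1} to \cite{MR693976}, just as you defer to Theorem~\ref{thm:led}.
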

\begin{remark}
	We only prove the  implication   \ref{suck1} $\implies $ \ref{suck3} of the proposition.  Given \ref{suck1} $\implies $ \ref{suck3}, the   only other non-trivial implication is \ref{suck2} $\implies $ \ref{suck1}.  This implication  follows, for instance, from \cite{MR693976} (see \cref{entropyfacts}\ref{EFF3} below) and can be shown using  calculations similar to those in the following proof.  
\end{remark}

Our proof essentially  follows  \cites{MR743818,MR819556} though we make certain simplifications using that the dynamics along $W^i$-manifolds is affine.
\begin{proof}[Proof that \ref{suck1} $\implies $ \ref{suck3}]
	We introduce some notation.  Fix $f= \alpha(\vecn)$. Then $f$ is a linear Anosov diffeomorphism of $\T^3$ such that for every $x\in \T^3$, the unstable manifold through $x$ is $W^i(x)$.  
	
	We may assume $\mu$ is ergodic for $f$.  Indeed, from the Margulis--Ruelle inequality (see Theorem \ref{entropyfacts}\ref{EFF1} below) we have that $$h_{\mu'}(f)\le \lambda^i(\vecn)$$ for any $f$-invariant probability measure $\mu'$.  As entropy is convex (see \eqref{eq:convex}, page \pageref{eq:convex}), it follows that $h_{\mu'}(f)= \lambda^i(\vecn)$ for almost every ergodic component  $\mu'$ of $\mu$ (see \cref{def:ergdecom} and Appendix \ref{B:secErgDecomposition}).

	Given a measurable partition $\xi$ of $\T^n$,  write $f\inv \xi$ for the partition $$f\inv \xi :=\{f\inv (C)\mid C\in \xi\}.$$  Then the atom of the  partition  $f\inv \xi$ containing $x$ is $$f\inv \xi(x) = f\inv ( \xi (f(x))).$$

	Recall  that the $W^i$-manifolds are the unstable manifolds for $f$.  Let $$J^u(x)= \left|\frac{\partial f }{\partial E^i}(x)\right|$$ be the unstable Jacobian of $f$: for any precompact, $m^i_x$-measurable subset  $C\subset W^i(x)$  we have $$m^i_{f(x)}(f(C)) = \int _{C} J^u(x) \ d m^i_{x}.$$
	As the dynamics of $f$ is affine along $W^i$-leaves, we have that $J^u(x) $ is constant in $x$.  Explicitly, we have $$J^u(x) = \chi^i(\vecn) = e^{\lambda^i(\vecn)}.$$
	
	For the remainder, fix $\xi$ to be a measurable partition of $(\T^3, \mu)$ such that
	\begin{enumerate} 
		\item $\xi$ is subordinate to the partition into $W^i$-manifolds (see \cref{sec:condi} and Definition \ref{def:sub} below), and 
		\item  $\xi$ is \emph{increasing} under  $f$:  for a.e.\   $x$ we have $f\inv\xi(x)\subset \xi(x)$.  \index{partition!increasing} 
	\end{enumerate}
	Using that there exists $\vecn\in \Z^2$ such that the  $W^i$ leaves are the unstable manifolds for the Anosov diffeomorphism $\alpha(\vecn)\colon \T^3\to \T^3$, a   partition $\xi$ with the above properties can be constructed, for instance,  by taking $\xi$ to be the unstable plaques  of a Markov partition (see \cites{Sinai1968,MR0442989,MR1326374}). See also the construction in \cite{MR693976} which holds for general $C^{1+\beta}$ diffeomorphisms.  
	
	Let $\{\mu^\xi_x\}$ be a family of conditional measures for this partition.  
	Also let $$m^\xi_x = \frac{1}{m^i_x(\xi(x))} \restrict {m^i_x}{\xi(x)}$$
	denote the normalized restriction of the Lebesgue measure $m^i_x$ to the atom $\xi(x)$ of this partition. 
	Note that we have ${m^i_x(\xi(x))} >0$  for $\mu$-\ae $x$ since each atom $\xi(x)$ contains a neighborhood of $x$ in $W^i(x)$;  in particular, the measure $m^\xi_x$ is well-defined for $\mu$-\ae $x$.

	We have  
	\begin{equation}
		\log \left(\int\frac { m^\xi_x (f\inv \xi(x) )}{ \mu^\xi_x (f\inv \xi(x) )}\ d \mu (x) \right) \le 0.
		\label{eq:rabitfood}\end{equation}
	Indeed,
	\begin{align*}
		\log & \left(\int\frac { m^\xi_x (f\inv \xi(x) )}{ \mu^\xi_x (f\inv \xi(x) )}\ d \mu (x) \right) \\
		&=  \log \left(\int \int _{\xi(x)} \frac{ m^\xi_x (f\inv \xi(y) )}{ \mu^\xi_x (f\inv \xi(y) )} \ d \mu^\xi_x (y) d \mu(x) \right) \\
		& \le \log 1 = 0\end{align*}
	where the inequality follows as   $$ \int _{\xi(x)} \frac{ m^\xi_x (f\inv \xi(y) )}{ \mu^\xi_x (f\inv \xi(y) )} \ d \mu^\xi_x (y) = \sum _{\substack{C\in {f\inv \xi }\\  \mu^\xi_x(C)>0 }}  m^\xi_x  (C) \le m^\xi_x (\xi(x))=1.$$

	We claim that \begin{align}\label{eq:rabitspit}\int \log &\left(\frac { m^\xi_x (f\inv \xi(x) )}{ \mu^\xi_x (f\inv \xi(x) )}\right) \ d \mu (x)  
		= 0.\end{align}
	Indeed, write \begin{align*} \int \log &\left(\frac { m^\xi_x (f\inv \xi(x) )}{ \mu^\xi_x (f\inv \xi(x) )}\right) \ d \mu (x) \\&=
		\int  \log\left( { m^\xi_x (f\inv \xi(x) )}\right)  \ d \mu (x)   -\int \log\left({ \mu^\xi_x (f\inv \xi(x) )}\right) \ d \mu (x).  
	\end{align*}
	From the properties of $\xi$, we  have   (see \cref{unsentropy} below)   $$ -\int   \log (\mu^\xi_x (f\inv \xi(x) ) ) \ d \mu(x)= h_\mu (f\inv \xi \mid \xi)=h_\mu(f).$$

	On the other hand, we claim that 
	\begin{equation}\label{eqhard}
		\int  \log \left( { m^\xi_x (f\inv \xi(x) )}\right) \ d \mu (x) =-\lambda (\vecn).
	\end{equation}
	To establish \eqref{eqhard}, let $$q(x) := m^i_x ( \xi(x) ).$$  As $f\inv\xi(x)\subset \xi(x)\subset f\xi(x)$ we have $$\frac {q(f(x))}{q(x)}
	= \frac { m^i_{f(x) }(\xi(f(x)) )} { m^i_x ( \xi(x) )}\le \frac { m^i_{f(x) }(f(\xi(x)) )} { m^i_x ( \xi(x) )}= 
	\frac { \int_{\xi(x)} J^u(x) \ dm^i_{x }} { m^i_x ( \xi(x) )}
	= \chi^i(\vecn)$$
	and $$\frac {q(f(x))}{q(x)}
	= \frac { m^i_{f(x) }(\xi(f(x)) )} { m^i_x ( \xi(x) )}\ge \frac { m^i_{f(x) }(\xi(f(x)) )} { m^i_x ( f\inv(\xi(f(x)) ))} =\frac 1{ \chi^i(\vecn)}.$$
	It follows that the function $$\log \frac {q \circ f }{q}$$ is $L^\infty(\mu)$ (in particular $L^1(\mu)$);    from \cite[Proposition 2.2]{MR693976} we have that $$\int \log \frac {q \circ f }{q} \ d \mu = 0.$$  
	We then have that 
	\begin{align*}
		\int    \log \left( { m^\xi_x (f\inv \xi(x) )}\right)  \, d \mu (x)
		&= \int  \log\left(\frac { m^i_x (f\inv \xi(x) )} { m^i_x ( \xi(x) )} \right) \, d \mu (x)\\
		&= \int  \log\left(\frac {\frac{1}{\chi^i(\vecn) } \, m^i_{f(x)} (\xi(f(x)) )} { m^i_x ( \xi(x) )} \right) \, d \mu (x)\\
		&= \int - \log \chi^i(\vecn) + \log \frac {q\circ f}{q}  \, d \mu\\
		&= -\lambda^i (\vecn)
	\end{align*}
	and \eqref{eqhard} follows.

	As we assumed $h_\mu(f) =  \lambda (\vecn)$, equation \eqref{eq:rabitspit}  follows.  
	From  the strict concavity of $\log$ we have 
	$$\int \log \left(\frac { m^\xi_x (f\inv \xi(x) )}{ \mu^\xi_x (f\inv \xi(x) )}  \right)\ d \mu (x) \le 
	\log \left(\int\frac { m^\xi_x (f\inv \xi(x) )}{ \mu^\xi_x (f\inv \xi(x) )}\ d \mu (x) \right)   $$ with equality if and only if the function $$x\mapsto \frac { m^\xi_x (f\inv \xi(x) )} { \mu^\xi_x (f\inv \xi(x) )}$$ is constant  off a $\mu$-null set. %-\ae $x$. 
	From  \eqref{eq:rabitfood} and  \eqref{eq:rabitspit}, it thus follows   that $${\mu^\xi_x (f\inv \xi(x) )} ={ m^\xi_x (f\inv \xi(x) )}$$ for $\mu$-almost every $x$.  In particular, if $C\subset \xi(x)$ is a union of elements of $f\inv \xi$, then $\mu^\xi_x(C) = m^\xi_x(C).$
	
	We may repeat the above calculations with $f$ replaced by $f^n$ for $n\ge 1$ and obtain that $${\mu^\xi_x (f^{-n} \xi(x) )} ={ m^\xi_x (f^{-n} \xi(x) )}$$ for $\mu$-\ae $x$.  As the partitions $\{  f^{-n} \xi(x')\mid x'\in \xi(x)\}$ generate the point partition on each $\xi(x)$, it follows for $\mu$-\ae $x$  that $$\mu^\xi_x = m^\xi_x.$$
	
	Replacing $\xi$ with $f^n(\xi)$ for each $n\ge 1$, we obtain $\mu^{f^n(\xi)}_x = m^{f^n(\xi)}_x$
	and the equality   $\mu^i_x = m^i_x$  for $\mu$-\ae $x$ follows.   \end{proof}

\begin{remark}
	When $f\colon M\to M$ is an Anosov diffeomorphism or, more generally, a non-uniformly hyperbolic $C^{1+\beta}$ diffeomorphism we still have equivalence of   \ref{suck1}, \ref{suck2}, and \ref{suck22} in \cref{prop:easyLed} when the right-hand side of \ref{suck1} is replaced by the sum of all positive Lyapunov exponents counted with multiplicity and the measures are conditional  measures along unstable manifolds.   See \cref{thm:led} below and  Appendix \ref{App:LY}, especially Section \ref{D:Idea_Proof}, for details.   The proof is nearly identical to the above except for the analogue of   computation \eqref{eqhard}.  Multiplying the measures $m^\xi_x$ with an appropriate dynamically defined density (see \eqref{dyn_densities} in Appendix \ref{App:LY}), a  computation analogous to \eqref{eqhard} still holds.   See  \cite[Lemma 6.1.2]{MR819556}.

	The extra conclusion \ref{suck3}  in \cref{prop:easyLed}  follows in our   setting  from the fact that the $W^i$-manifolds are orbits of a group action (namely translation by $E^i$ on $\T^3$) and that  $f$ acts homogeneously between orbits.  The density  function guaranteed by \ref{suck22} is then constant and    equality in  \ref{suck3} holds by choice of normalization.   See \cref{thm:led'} below for a more general framework in which the extra invariance in  \ref{suck3} follows.  
\end{remark}

\begin{remark}%\note{Gave more context for  this remark.}
	In our proof of Theorem \ref{thm:KS} below, we  will apply  \cref{lem:abscon} above to conclude for some $j\in \{1,2,3\}$ that the leaf-wise measures $\mu^j_x$ along leaves of the $W^j$-foliation are absolutely continuous by showing the group $G(\nu_x^j)$ is not discrete and hence $G(\nu_x^j)= \R$ for \ae $x.$
	We will then apply \cref{prop:easyLed}    to conclude that the measures $\mu^j_x$ are Lebesgue along the $W^j$-foliation and conclude that $\mu$ is invariant under translation by $E^j$.   This approach heavily uses that the foliations $W^j$ are 1-dimensional.  % as in Example \ref{ex:key} and that the dynamics is homogeneous.  

	Alternatively, one may follow \cite{MR1406432} (and many related  arguments including those in  \cites{MR2261075,MR2811602,1506.06826}) any apply dynamical arguments to  show that the density function in \cref{claim:bowl} is constant.  For instance, using that the dynamics expands unstable manifolds, one may show the curvature $\alpha^2$ of the density function $\rho$ in  \cref{claim:bowl} decreases and obtain a contradiction by Poincar\'e recurrence unless $\alpha=0$ and $\rho$ is constant.  
	This approach can be adapted when leaves of $W^j$-foliations are higher dimensional.  In this case, the group $G(\nu_x^j)$ is a closed subgroup of isometries (of some $\R^n$) that preserve the $\nu_x^j$ up to proportionality.  By classifying orbits of subgroups of the isometry group of $\R^n$ and using  the dynamics along $W^j$-leaves, one then argues that  $G(\nu_x^j)$ consists of translations that preserve $\nu_x^j$.   
\end{remark}

\section{Proof of Theorem \ref{thm:KS}}\label{sec:proofKS}
\subsection{Inducing from  a $\Z^2$-action to a $\R^2$-action}\label{ss:suspspace}
To prove  Theorem \ref{thm:KS},
it is convenient to induce from the $\Z^2$-action on the 3-manifold $\T^3$ to a  $\R^2$-action on a certain $5$-dimensional manifold which we denote by $N$.  
\index{suspension}
%\note{new general construction of general suspension space}
We first outline a general construction of $N$. (See \cref{ssec:susp} below for details of a related construction.) 
Let $\td N = \R^2\times \T^3$ and let $\Z^2$ act on $\td N$ on the right by $$(\vecs, x)\cdot \vecn = (\vecs+ \vecn, \alpha(-\vecn)(x)).$$
Let $\R^2$ act on $\td N$ on the left by $$\vect\cdot (\vecs, x) = (\vect +\vecs,x).$$
Let $N = \td N/\Z^2$ be the quotient manifold under the right $\Z^2$-action.  As the left $\R^2$-action and right $\Z^2$-action commute, we obtain a $\R^2$-action on $N$.  The manifold $N$ has a structure of a fiber-bundle with fibers diffeomorphic to $\T^3$.  The $\R^2$-action on $N$ permutes the 3-dimensional fibers and fibers over the natural $\R^2$-action on $\T^2= \R^2/\Z^2$ by translations.  
For each $j\in \{1,2,3\}$, there is a foliation $W^j$ of $N$ by injectively immersed curves; each curve $W^j(x)$ is contained in the $\T^3$-fiber through $x$.   
Moreover, there is a Riemannian metric on $N$ such   that, if $d^j_x(\cdot, \cdot)$ denotes the induced distance in $W^j(x)$, then for any $y,z\in W^j(x)$ and $\vecs \in \R^2$
$$   d^j_{\vecs \cdot x}(\vecs \cdot y, \vecs\cdot z) = e^{\lambda^j(\vecs)}d^j_x(y, z).$$
Given any Riemannian metric on $N$ and any $\R^2$-invariant probability measure $\td \mu$ on $N$ we may define fiberwise Lyapunov exponents for the action of $\R^2$ restricted to the fibers of $N$ (see \cref{sec:lyap} and \cref{ss:fibLyap}).  For any  $\R^2$-invariant measure $\td \mu$, these exponents coincide with the exponents $\lambda^1, \lambda^2$, and $\lambda^3$ above.

The above construction of $N$ works in full generality.   However,  in the context of \cref{thm:KS}, using that $A$ and $B$ are diagonalizable over $\R$ and have positive eigenvalues it is possible to give a more algebraic construction of the suspension manifold $N$.  
The algebraic construction has the advantage that the  dynamical properties outlined above follow immediately. We state the properties of the suspension space and induced $\R^2$-action and outline the construction in \cref{sec:susp}.  

Given  $\vecs=(s_1,s_2)\in \R^2$ define subspaces of $\R^3$ %\note{this was not defined for $\vecs \in \R^2$}
$$\text{$E^s_{(s_1,s_2)} = \bigoplus _{\lambda^j(s_1,s_2)<0} E^j$ \quad and \quad $E^u_{(s_1,s_2)} = \bigoplus _{\lambda^j(s_1,s_2)>0} E^j$.}$$
Note that if $(s_1,s_2)\in \ker \lambda^j\sm \{0\}$ for some $j\in \{1,2,3\}$  then both $E^s_{(s_1,s_2)}$ and $E^u_{(s_1,s_2)}$ are 1-dimensional.  

\begin{proposition}[See  \cref{sec:susp}]\label{prop:susp}
	There is a $5$-dimensional manifold $N$ and an $\R^2$-action $\td \alpha\colon \R^2 \to \diff(N)$ with the following properties:
	\begin{enumcount}
		\item $N$ is a fiber-bundle  over $\T^2$ with fibers diffeomorphic to $\T^3$; moreover the action $\td \alpha$ permutes the fibers.
		%\item  $\alpha$-invariant measures $\mu$ are in one-to-one correspondence with $\td \alpha$-invariant measures $\td \mu$ on $N$. An $\alpha$-invariant measure $\mu$ has positive entropy for $\alpha(n_1,n_2)\in \Z^2)$ if and only if the corresponding   $\td \alpha$-invariant   measure $\td \mu$ has positive entropy for $\td \alpha(n_1,n_2)$. Similarly, an $\alpha$-invariant measure $\mu$  is ergodic if and only if the corresponding   $\td \alpha$-invariant  measure $\td \mu$ is ergodic. 
		\item For every $1\le j\le 3$ the vector space $E^j$ acts by addition on $N$. For every $x\in N$,  the  orbit $W^j(x) = \{ x+ v: v\in E^i\}$ is contained in the $\T^3$ fiber containing $x$ and the leaves $W^j(x)$ form a  smooth foliation  of $N$.  
		The $W^j$-leaves are permuted by the action  $\td \alpha$.  
		\item For every $0\neq (s_1, s_2)\in \R^2$ the vector spaces $E^s_{(s_1, s_2)}$ and $E^u_{(s_1, s_2)}$ similarly act by addition on $N$.  The orbits $W^s(x) $ and $W^u(x)$ are contained in the fiber through $x$ and correspond to the stable and unstable manifolds, respectively,  for the partially hyperbolic diffeomorphism $\td \alpha(s_1, s_2)\colon N\to N$.  
		\item\label{55} For all $(t_1, t_2)\in \R^2$ and $x\in  N$, the map $$\td \alpha (t_1, t_2)\colon W^j(x)\to W^j(\td \alpha (t_1, t_2)(x))$$ dilates  distances in $W^j$ by exactly $e^{\lambda^j(t_1, t_2)}$.  That is, for $y\in  W^j(x) $ writing $y =x + v$ for some $v\in E^j$, we have $$
		\td \alpha (t_1, t_2)(y) \in W^j( \td \alpha (t_1, t_2)(x))$$ and 
		$$ \td \alpha (t_1, t_2)(y)=  \td \alpha (t_1, t_2)(x) + e^{\lambda^j(t_1, t_2)} v.$$
	\end{enumcount} 
\end{proposition}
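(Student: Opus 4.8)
The plan is to build $N$ as a compact homogeneous space of a solvable Lie group, so that properties \ref{55} and the others become immediate consequences of the group law. Since $A$ and $B$ are real-diagonalizable with \emph{positive} eigenvalues (\cref{claim:trivial}\ref{trivial:2}--\ref{trivial:3}) and commute, they are jointly diagonalizable by some $Q$ as in \eqref{eq:jointdiag}, and I set $\log A = Q\,\diag(\lambda^1_A,\lambda^2_A,\lambda^3_A)\,Q\inv$ and $\log B = Q\,\diag(\lambda^1_B,\lambda^2_B,\lambda^3_B)\,Q\inv$; these commute, and I define
$$\phi\colon \R^2\to \Gl(3,\R),\qquad \phi(s_1,s_2) = \exp(s_1\log A + s_2\log B) = A^{s_1}B^{s_2}.$$
Then $\phi(1,0)=A$, $\phi(0,1)=B$, $\phi(n_1,n_2)=A^{n_1}B^{n_2}$ for integers, each $E^j$ is $\phi$-invariant with $\phi(s)|_{E^j}=e^{\lambda^j(s)}\,\Id$, and more generally $\phi(s)$ preserves $E^s_{(s_1,s_2)}$ and $E^u_{(s_1,s_2)}$, contracting the former and expanding the latter. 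I then form the solvable Lie group $G = \R^3\rtimes_\phi \R^2$ with product $(v,s)(w,t)=(v+\phi(s)w,\ s+t)$.

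Next I check that $\Gamma := \Z^3\rtimes \Z^2$ is a cocompact lattice in $G$: it is a subgroup because $A,B\in\Gl(3,\Z)$ preserve $\Z^3$ (\cref{claim:trivial}\ref{trivial:1}), it is visibly discrete, and $\{(v,s): v\in[0,1]^3,\ s\in[0,1]^2\}$ surjects onto $G/\Gamma$. Hence $N := G/\Gamma$ is a closed $5$-manifold. I define $\td\alpha\colon \R^2\to\diff(N)$ by left translation by the subgroup $\{0\}\times\R^2\le G$, i.e.\ $\td\alpha(t)(g\Gamma)=\bigl((0,t)g\bigr)\Gamma$, which is a smooth action since left and right translations on $G$ commute. (If desired one can identify this $N$ with the topological suspension $\td N/\Z^2$ discussed near \cref{ssec:susp}, but this is unnecessary: only properties \ref{55} and its siblings are used later.)

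The five properties then follow. For the fiber-bundle claim: the projection $G\to G/\R^3\cong\R^2$ descends to a bundle $\pi\colon N\to\T^2$ whose fibers are the $\R^3$-orbits, each diffeomorphic to $\T^3$; since $\td\alpha(t)$ covers the translation $s\mapsto s+t$ on $\T^2$, it permutes the fibers. For the foliation claims: each of $E^j$, $E^s_{(s_1,s_2)}$, $E^u_{(s_1,s_2)}$ lies in the abelian subgroup $\R^3\times\{0\}\le G$, hence acts on $N$ by left translation — this is the ``addition'' $x\mapsto x+v$ — and since these subgroups project to $0$ in $\R^2$, every orbit lies in a single fiber; the resulting foliations are smooth, being orbit foliations of subgroup actions. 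The dynamical statements rest on the single conjugation identity $(0,t)(v,0)(0,t)\inv=(\phi(t)v,0)$ in $G$, which yields $\td\alpha(t)(x+v)=\td\alpha(t)(x)+\phi(t)v$ for $v\in\R^3$; restricting to $v\in E^j$ and using $\phi(t)|_{E^j}=e^{\lambda^j(t)}\Id$ gives $\td\alpha(t)(W^j(x))=W^j(\td\alpha(t)(x))$ and the exact formula in \ref{55}. Equipping the $\R^3$-factor with a fixed inner product making the $E^j$ mutually orthogonal, transporting it left-invariantly along the fibers and extending smoothly to $N$, the distance along $W^j(x)$ identifies isometrically with the norm on $E^j$ via $\Phi_x$, so it scales by exactly $e^{\lambda^j(t)}$. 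The same conjugation identity applied to $E^s_{(s_1,s_2)}$ and $E^u_{(s_1,s_2)}$, together with the fact that the derivative of $\td\alpha(s_1,s_2)$ along these directions is the constant scalar $e^{\lambda^j(s_1,s_2)}$, shows $\td\alpha(s_1,s_2)$ is partially hyperbolic with stable and unstable manifolds $W^s_{(s_1,s_2)}$ and $W^u_{(s_1,s_2)}$, the center direction being the $2$-dimensional base direction together with any $E^j\subset\ker\lambda^j$, using \cref{claim:plp}\ref{chickenheadman4}.

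I do not expect a genuine obstacle: this is a verification that the homogeneous model has the advertised structure. The points requiring the most care are (i) pinning down the semidirect-product conventions so that left translation by $\{0\}\times\R^2$ produces the dilation factor $e^{+\lambda^j(t)}$ with the right sign, (ii) confirming that $\Gamma$ is a cocompact lattice and that $N\to\T^2$ is a genuine bundle, and (iii) producing a global smooth Riemannian metric on $N$ realizing the exact scaling along each one-dimensional leaf $W^j$. All three are routine for algebraic actions, and the positivity of the eigenvalues of $A$ and $B$ is precisely what makes the homomorphism $\phi$ — and hence this clean construction — available.
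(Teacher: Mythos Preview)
Your solvmanifold $N=G/\Gamma$ is exactly the paper's construction in \cref{sec:susp} under the identification $M^{\vect}=\phi(\vect)$: the paper's iterated quotient of $\R^2\times\R^3$ by the twisted $\Z^3$- and $\Z^2$-actions is precisely the right-coset space $(\R^3\rtimes_\phi\R^2)/(\Z^3\rtimes\Z^2)$, and your left translations by $\{0\}\times\R^2$ and by $E^j\times\{0\}$ agree with the paper's $\td\alpha$ and ``$+\,v$'', so the verification of \ref{55} via the conjugation identity $(0,t)(v,0)(0,t)^{-1}=(\phi(t)v,0)$ is the same computation the paper does in coordinates. One small slip: the box $[0,1]^3\times[0,1]^2$ need not surject onto $G/\Gamma$, since the fiber lattice over $s\in[0,1]^2$ is $\phi(s)\Z^3$ rather than $\Z^3$; cocompactness follows instead from the $\T^3$-bundle-over-$\T^2$ structure you establish anyway.
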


Given $\vecs\in \R^2$ let $W^u_\vecs(x)=E^u_{\vecs}(x)$ be the unstable manifold through $x\in N$ for the 1-parameter flow $\alpha(t\vecs)$.
Similarly let $W^s_\vecs(x) = x + E^s_{\vecs}(x)$ be the stable manifold through $x\in N$ for the 1-parameter flow $\alpha(t\vecs)$. The stable and unstable manifolds $W^s_\vecs(x)$ and $W^u_\vecs(x)$ are contained in the $\T^3$-fiber of $N$ through $x$. %\footnote{Note that the stable and unstable manifolds $W^s_\vecs(x)$ and $W^u_\vecs(x)$ are contained in the $\T^3$ fiber through $x$.}
Note that for $\vecs\neq (0,0)$, the leaves $W^u_\vecs(x)$  and $W^s_\vecs(x)$ are at least one-dimensional.  Moreover, 
if $\vecs $ is not in a kernel of any $\lambda^j$ (see Figure \ref{kernels}) then $W^u_\vecs(x)$  and $W^s_\vecs(x)$ are of complementary dimension in the $\T^3$-fiber through $x$.  If  $\vecs \neq (0,0)$ is contained  in the  kernel of  $\lambda^j$ then $W^u_\vecs(x)$  and $W^s_\vecs(x)$ are both 1-dimensional.  
%\note{ I  don't know what referee wants a reference for.  Everything is trivial}

To begin the proof of \cref{thm:KS}, fix an ergodic, $\alpha$-invariant probability measure $\mu$  on $\T^3$ with positive entropy $h_\mu(\alpha(n_1, n_2))>0$ for some  $(n_1,n_2)\in \Z^2$.  %Using Lemma \ref{lem:invariance}, our goal is to prove that $\mu$ is the Lebesgue measure by showing $\mu$ is invariant under translations by some $E^j$, $1\le j\le 3.$  
Note that $\alpha(n_1, n_2)$ has either   2-dimensional unstable or 1-dimensional unstable manifolds.  Replacing $(n_1,n_2) $ with $(-n_1,-n_2)$ if necessary and recalling that $$h_\mu(\alpha(n_1, n_2)) = h_\mu(\alpha(n_1, n_2) \inv) = h_\mu(\alpha(-n_1, -n_2))$$ we can assume that $\alpha(n_1, n_2)$ has 1-dimensional unstable manifolds.  Fix $1\le i\le 3$ for the remainder of the proof such that $E^i = E^u_{(n_1, n_2)}$.  We will   show that $\mu$ is invariant under translations by $E^i$.  

We write  $\td \mu$ for the ergodic,  $\td \alpha$-invariant measure on $N$ corresponding to $\mu$.  
To construct the measure $\td \mu$, first let $m^2$ denote the Lebesgue measure on $\R^2$.   Then $m^2\times \mu$ is a locally finite Borel measure on $\td N = \R^2\times \T^3$  that is invariant under the actions of both $\R^2$ and $\Z^2$.  Let $\td \mu$ denote the image of $m^2\times \mu$ restricted to the fundamental domain $[0,1]^2\times \T^3.$
From Lemma \ref{entropyvatoms}  the leaf-wise measures $\mu_x^i$ of $\mu$ along $W^i$-leaves in $\T^3$ are nonatomic.  This holds if and only if  the leaf-wise measures $\td \mu_x^i$ of $\td \mu$ along $W^i$-manifolds in $N$ are nonatomic.   
Moreover, ergodicity of $\mu$ for the $\Z^2$-action on $\T^3$ implies that  $\td \mu$ is ergodic for the $\R^2$-action on $N$.  
%Also, the leaf-wise measures $\td \mu_x^i$ are absolutely continuous with respect to volume along $W^i$-manifolds in $N$ if and only if  the conditional measures $\mu_x^i$
%are absolutely continuous with respect to volume along $W^i$-manifolds in $\T^3$.  

For $x\in N$ we again parameterize $W^i(x)$ by $E^i$ via the map $$\Phi_x \colon E^i \to W^i(x),\quad \Phi_x (v) = x+ v$$    and let $\td \nu^i_x$ given by  $$\td \nu^i_x = (\Phi_x\inv)_* \td \mu_x^i$$
be the corresponding locally finite measure on $E^i\simeq \R$. Recall we fix a normalization of each $\td \nu^i_x$ so that each $\td \nu^i_x$ gives mass 1 to the unit ball (i.e.\ interval) in $E^i$.  From the choice of $E^i$, the measures $\td \nu^i_x$ are nonatomic for \ae $x\in N$. 

To prove   \cref{thm:KS}, we will show that $G(\td \nu^i_x) =\R$ for $\td \mu$-almost every $x\in N$.  This will imply that $G( \nu^i_x) =\R$ and thus $\nu^i_x$  is absolutely continuous with respect to Lebesgue for $ \mu$-almost every $x\in \T^3$ by \cref{lem:abscon}.  Applying \cref{prop:easyLed} then implies that $\mu$ is invariant under translations by $E^i$, showing that $\mu$ is the Lebesgue  measure on $\T^3$.

\subsection{Restriction to the kernel of $\lambda^i$}%along  isometric directions}
We now heavily use that our acting group $\R^2$ is  {higher-rank}.  Recall that    $\lambda^1, \lambda^2, \lambda^3$ are linear functionals on $\R^2$; moreover, none of the  functionals $\lambda^1, \lambda^2, \lambda^3$ is the zero function and no pair is proportional.  It follows that each  functional $\lambda^1, \lambda^2, \lambda^3$ has a 1-dimensional kernel and that all kernels are distinct.  See Figure \ref{kernels}.  %\note{explained signs in figure}

\psset{unit=0.5cm}
\begin{figure}[h]
	\includegraphics{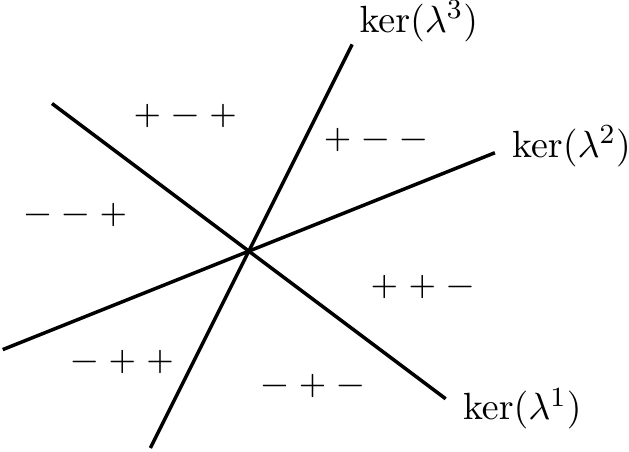}
%	\begin{pspicture}(-5,-5)(5,5)
%	\psset{linewidth=.01pt}
%	%\psaxes[Oy=0, Dy=1, dy=1,labels=none, ticks=none,linestyle=dashed, linecolor=lightgray]{<->}(0,0)(-5,-5)(5,5)
%	\psset{linewidth=1pt}
%	\psline(-5,-2)(5,2)
%	\psline(-4,3)(4,-3)
%	\psline(-2,-4)(2.1,4.2)
%	\uput[10](5,2){$\ker(\lambda^2)$}
%	\uput[-15](4,-3){$\ker(\lambda^1)$}
%	\uput[45](2,4){$\ker (\lambda^3)$}
%	
%	\uput{2.5}[-15](0,0){$++-$}
%	\uput{2.5}[50](0,0){$+--$}
%	\uput{2.5}[110](0,0){$+-+$}
%	\uput{2.5}[165](0,0){$--+$}
%	\uput{2.5}[230](0,0){$-++$}
%	\uput{2.5}[290](0,0){$-+-$}
%	\end{pspicture}
	\caption{Kernels of the Lyapunov exponent  functionals in the acting group $\R^2$. Signs indicate the signs of the Lyapunov exponents in corresponding half-cones. For instance, $++-$ indicates the open half-cone of $\mathbf{s}\in \R^2$ such that $\lambda^1(\mathbf s)>0, \lambda^2(\mathbf s)>0$, and $\lambda^3(\mathbf s)<0$. (Recall that the sum of the $\lambda^i$ is zero.)}
	\label{kernels}
\end{figure} %\note{remark about kernels}
As the exponents $\lambda^i$ are not proportional to functionals defined over $\Q$, none of the kernels is defined over $\Q$.  In particular, there is no $0\neq \vecn\in \Z^2$ with $\vecn\in \ker \lambda^j$ for any $j\in \{1,2,3\}$.  This is the primary reason why we  induce to an $\R^2$-action on $N$ rather than studying the $\Z^2$-action on $\T^3$.  

Recall our fixed $i\in \{1,2,3\} $ above such that the leaf-wise measure  $\td \mu^i_x$ is non-atomic  for almost every $x\in N$.    As the kernel $ \ker \lambda^i$ is 1-dimensional, we may fix   $\vecs_0= (s_1, s_2) \in \ker \lambda^i\sm\{0\}$.  For the remainder of the proof we will (almost) exclusively study the  1-parameter  flow $\phi_t$ inside  $\ker \lambda^i$:  $$\phi_t\colon N\to N,\quad \quad \phi_t(x) = \td \alpha(t\vecs_0)(x) =\td \alpha(ts_1, ts_2)(x). $$

From the fact that $\td \mu$ is $\td \alpha$-invariant, the choice of $\vecs_0\in \ker \lambda^i$,  and the choice of normalization of the family of leaf-wise measures $\{\td \mu^i_x\}$, we immediately obtain the following.  
\begin{lemma}\label{lem:transinv} For every $t\in \R$ and almost every $x\in N$
	\begin{enumcount}
		\item $\phi_t\colon W^i(x)\to W^i(\phi_t(x))$ is an isometry;\label{aa}
		\item $(\phi_t)_* \td \mu^i_x = \td \mu^i_{\phi_t(x)}$;\label{bb} 
		\item $\td \nu^i_x = \td \nu^i_{\phi_t(x)}$.\label{Hugely important}\label{cc}
	\end{enumcount}
\end{lemma}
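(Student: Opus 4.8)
The statement to prove is \cref{lem:transinv}: for $\vecs_0 \in \ker\lambda^i\smallsetminus\{0\}$ and the flow $\phi_t = \td\alpha(t\vecs_0)$, we have (a) $\phi_t\colon W^i(x)\to W^i(\phi_t(x))$ is an isometry, (b) $(\phi_t)_*\td\mu^i_x \propto \td\mu^i_{\phi_t(x)}$ — actually equality — and (c) $\td\nu^i_x = \td\nu^i_{\phi_t(x)}$.

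\textbf{Plan.} The proof is essentially a bookkeeping exercise combining \cref{prop:susp}\ref{55} with the normalization conventions built into the leaf-wise measures. First I would prove (a): by \cref{prop:susp}\ref{55}, for $y = x+v \in W^i(x)$ with $v\in E^i$, the map $\td\alpha(t\vecs_0)$ sends $y$ to $\td\alpha(t\vecs_0)(x) + e^{\lambda^i(t\vecs_0)}v$. Since $\vecs_0 \in \ker\lambda^i$ we have $\lambda^i(t\vecs_0) = t\lambda^i(\vecs_0) = 0$, so $e^{\lambda^i(t\vecs_0)} = 1$ and $\phi_t$ acts on $W^i(x)$ by the identification $v \mapsto v$ in the $E^i$-coordinates; in particular it preserves the induced Riemannian distance $d^i_x$, which is (a). Equivalently, in the $\Phi$-coordinates, $\Phi_{\phi_t(x)}^{-1}\circ \phi_t \circ \Phi_x = \mathrm{id}_{E^i}$.

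Next, (b): the leaf-wise measure $\td\mu^i_x$ transforms under the action by $(\phi_t)_*\td\mu^i_x \propto \td\mu^i_{\phi_t(x)}$; this is exactly Property \ref{rere4} of the leaf-wise measures (the analogue in $N$ of the displayed proportionality $\alpha(n_1,n_2)_*\mu^j_x \propto \mu^j_{\alpha(n_1,n_2)(x)}$), using that $\td\mu$ is $\td\alpha$-invariant. So we get proportionality for free; to upgrade to equality I would check the normalization. Both $(\phi_t)_*\td\mu^i_x$ and $\td\mu^i_{\phi_t(x)}$ are normalized to give mass $1$ to the unit ball $I^i_{\phi_t(x)}$ in $W^i(\phi_t(x))$. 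By part (a), $\phi_t$ maps $I^i_x$ isometrically onto $I^i_{\phi_t(x)}$ (the unit ball around $x$ maps to the unit ball around $\phi_t(x)$ because $\phi_t$ is an isometry of leaves fixing the basepoint-correspondence $x \mapsto \phi_t(x)$). Hence $\big((\phi_t)_*\td\mu^i_x\big)(I^i_{\phi_t(x)}) = \td\mu^i_x(I^i_x) = 1 = \td\mu^i_{\phi_t(x)}(I^i_{\phi_t(x)})$, so the proportionality constant is $1$ and (b) holds with equality.

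Finally, (c) is a translation of (b) into the $E^i$-coordinate picture. We have $\td\nu^i_x = (\Phi_x^{-1})_*\td\mu^i_x$ and $\td\nu^i_{\phi_t(x)} = (\Phi_{\phi_t(x)}^{-1})_*\td\mu^i_{\phi_t(x)}$. Using $\Phi_{\phi_t(x)}^{-1}\circ\phi_t\circ\Phi_x = \mathrm{id}_{E^i}$ from (a), we compute
\[
\td\nu^i_{\phi_t(x)} = (\Phi_{\phi_t(x)}^{-1})_*\td\mu^i_{\phi_t(x)} = (\Phi_{\phi_t(x)}^{-1})_*(\phi_t)_*\td\mu^i_x = (\Phi_{\phi_t(x)}^{-1}\circ\phi_t\circ\Phi_x)_*(\Phi_x^{-1})_*\td\mu^i_x = (\Phi_x^{-1})_*\td\mu^i_x = \td\nu^i_x,
\]
which is (c). The only mild subtlety — the ``main obstacle'' if there is one — is verifying that the unit ball in $W^i(x)$ really does map to the unit ball in $W^i(\phi_t(x))$, i.e.\ that the metric referred to in \cref{prop:susp}\ref{55} is the one defining $I^i_x$; this is guaranteed by the compatible choice of Riemannian metric on $N$ asserted in \cref{prop:susp}, together with the dilation factor being exactly $e^{\lambda^i(t\vecs_0)} = 1$ on $\ker\lambda^i$. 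Everything else is automatic from invariance of $\td\mu$ and the construction of the leaf-wise measures, so no essential difficulty arises once the normalizations are tracked carefully.
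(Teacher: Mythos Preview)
Your proof is correct and follows essentially the same approach as the paper: you use \cref{prop:susp}\ref{55} with $\lambda^i(\vecs_0)=0$ for (a), the equivariance property \ref{rere4} of leaf-wise measures together with $\phi_t(I^i_x)=I^i_{\phi_t(x)}$ to upgrade proportionality to equality in (b), and then unwind the definition of $\td\nu^i_x$ for (c). The paper's proof is identical in structure, only slightly terser in part (c) where it simply says the conclusion follows from (a), (b), and the definition of $\Phi_x$.
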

\begin{proof}
	Conclusion \ref{aa}  follows from Proposition \ref{prop:susp}\ref{55} and  the choice of $\vecs_0\in \ker \lambda^i$ so that $\td \alpha (\vecs_0)$ dilates distances in $W^i$ by $e^{\lambda^i(\vecs_0)} = 1.$
	
	For Conclusion \ref{bb}, from the invariance of $\td \mu$ we have  $$(\phi_t)_* \td \mu^i_x=(\alpha(\vecs_0))_* \td \mu^i_x \,\propto\, \td \mu^i_{\phi_t(x)}.$$ 
	On the other hand, since $\phi_t$ is an isometry along $W^i$-leaves, we have $$\phi_t(I^i_x) = I^i_{\phi_t(x)}$$ where $I^i_x$ is the unit ball (i.e.\ interval) in $W^i(x)$.  
	It follows that $$(\phi_t)_* \td \mu^i_x (I^i_{\phi_t(x)}) = \td \mu^i_x (I^i_{x}) = 1$$ and thus, from our choice of normalization, $$(\phi_t)_* \td \mu^i_x=  \td \mu^i_{\phi_t(x)}.$$
	
	Conclusion \ref{cc} then follows from \ref{aa}, \ref{bb}, and definition of $\Phi^i_x$.  
\end{proof}
Recalling Remark \ref{rem:kklloopp}, conclusion  \ref{Hugely important} of Lemma \ref{lem:transinv} is quite strong: the family of  measures  $\{\td \nu^i_x\} $ on $E^i$ is {constant} along orbits of the flow $\phi_t$.  

\subsection{Conclusion of the proof assuming ergodicity of $\phi_t$}\label{ss:prooff}
Note that while $\td \mu$ is assumed to be $\R^2$-ergodic, there is no reason that $\td \mu$ is ergodic for the 1-parameter flow $\phi_t$.  
We complete the proof of Theorem \ref{thm:KS} assuming the measure $\td \mu$ is  ergodic for the 1-parameter flow $\phi_t$.  Although this may not hold in general, we  will explain how to correct this in the next section.  

Recall the notation and conclusion of  \cref{lem:abscon}.  The next lemma verifies that the  measures $\td \nu_x^i$ satisfy the hypotheses of   \cref{lem:abscon}.
\begin{lemma}\label{lem:trans}
	Assume the 1-parameter flow $\phi_t$ acts ergodically on $(N,\td \mu)$.  Then for $\mu$-\ae $x\in N$,  the group $G(\td \nu_x^i)$ acts transitively on the support of $\td \nu_x^i$.  
\end{lemma}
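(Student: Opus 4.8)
The plan is to exploit the key invariance property from \cref{lem:transinv}\ref{cc}: the family $\{\td\nu^i_x\}$ is constant along $\phi_t$-orbits. First I would show that the group $G(\td\nu^i_x)$ itself is essentially constant along $\phi_t$-orbits for $\td\mu$-a.e.\ $x$, which is immediate since $G(\cdot)$ depends only on the measure $\td\nu^i_x$, and by \ref{cc} we have $\td\nu^i_x = \td\nu^i_{\phi_t(x)}$. Hence $x\mapsto G(\td\nu^i_x)$ is a $\phi_t$-invariant (measurable, closed-subgroup-valued) function; since $\phi_t$ acts ergodically on $(N,\td\mu)$, this function is a.e.\ equal to a fixed closed subgroup $G_0\subseteq\R$. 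By \cref{claim:closedgroup} either $G_0$ is discrete or $G_0=\R$.

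The main point is then to connect the translations inside $W^i$-leaves to the flow $\phi_t$, i.e.\ to show that $G(\td\nu^i_x)$ contains translations realized \emph{by moving along a $W^i$-leaf in a way that tracks a different piece of dynamics}. The idea: for $x$ and $x'=\Phi_x(v)\in W^i(x)$ in the same leaf, the change-of-origin formula from Remark \ref{rem:kklloopp} gives $\Phi_x(t)=\Phi_{x'}(t-v)$, so $\td\nu^i_{x'} = (T_{-v})_*\td\nu^i_x$ up to the proportionality coming from the normalization (recall $\mu^j_x\propto\mu^j_{x'}$ by Proposition (Leaf-wise measures)\ref{rere3}, so the underlying measures on $W^i(x)=W^i(x')$ agree up to a constant). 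Therefore $v\in G(\td\nu^i_x)$ precisely when $\td\nu^i_{x'}\propto\td\nu^i_x$, i.e.\ when $x'$ and $x$ have ``the same'' leaf-wise measure up to scale. So it suffices to produce, for a.e.\ $x$ and a.e.\ $x'\in W^i(x)$ (with respect to $\td\nu^i_x$, equivalently $\td\mu^i_x$), the relation $\td\nu^i_{x'}\propto\td\nu^i_x$ — this is exactly the ``key step'' flagged in Remark \ref{rem:kklloopp}. The mechanism is a Hopf-type / Fubini argument: the conditional measure $\td\mu^i_x$ disintegrates $\td\mu$ along $W^i$-leaves, and $\phi_t$-ergodicity together with the constancy of $\{\td\nu^i_\bullet\}$ along $\phi_t$-orbits forces the $\td\nu^i$-typical points of a leaf to lie in a single $\phi_t$-ergodic class on which $\td\nu^i$ is constant. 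Concretely: let $E$ be a full-$\td\mu$-measure $\phi_t$-invariant set on which $x\mapsto\td\nu^i_x$ takes a fixed (measurably parametrized) value in the sense of \ref{cc}; then for a.e.\ $x\in E$, $\td\mu^i_x$-a.e.\ point $x'$ of $W^i(x)$ also lies in $E$ (by the standard saturation property of conditional measures under a flow whose orbits lie transverse to / interact with the leaves — here one uses that $\phi_t$ preserves the $W^i$-foliation by Proposition \ref{prop:susp}\ref{55} and acts isometrically along it by \ref{lem:transinv}\ref{aa}), and for such $x'$ one has $\td\nu^i_{x'}=\td\nu^i_x$, hence $x'-x\in G(\td\nu^i_x)$ via the change-of-origin identity above.

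Granting that, the conclusion is short: we have shown that for a.e.\ $x$, the set $\{v\in E^i : \Phi_x(v)\in E\}$ — which has full $\td\nu^i_x$-measure, hence is dense in $\supp(\td\nu^i_x)$ — is contained in $G(\td\nu^i_x)$. Since $G(\td\nu^i_x)$ is closed (\cref{claim:closedgroup}), it contains $\overline{\{v:\Phi_x(v)\in E\}}\supseteq\supp(\td\nu^i_x)$. Thus $G(\td\nu^i_x)$ has a dense orbit in $\supp(\td\nu^i_x)$, and as noted in the text immediately before \cref{lem:abscon} (a closed subgroup of $\R$ with a dense orbit in a closed subset acts transitively on it), $G(\td\nu^i_x)$ acts transitively on $\supp(\td\nu^i_x)$, as required.

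The step I expect to be the main obstacle is the saturation claim: showing that $\td\mu^i_x$-a.e.\ point of $W^i(x)$ lies in the same full-measure $\phi_t$-invariant set $E$ as $x$ itself. This is where the higher-rank structure really enters — one needs that moving along a $W^i$-leaf can be ``seen'' by the flow $\phi_t=\td\alpha(t\vecs_0)$, $\vecs_0\in\ker\lambda^i$, for instance by combining the isometric action of $\phi_t$ on $W^i$-leaves (\cref{lem:transinv}\ref{aa}) with a recurrence/density argument, or by invoking the relevant ergodic-theoretic disintegration lemma (in the spirit of Proposition \ref{prop:hopf}) applied to the flow and the measurable partition subordinate to $W^i$. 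I would model this on the corresponding argument in \cite{MR1406432}, adapted to the present one-dimensional, affine setting where it simplifies considerably.
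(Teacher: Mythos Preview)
Your approach is essentially the same as the paper's and the argument is correct, but you have misidentified where the difficulty lies. The ``saturation claim'' you flag as the main obstacle is in fact trivial: once you know that the measurable function $x\mapsto\td\nu^i_x$ is $\phi_t$-invariant (Lemma~\ref{lem:transinv}\ref{cc}) and $\phi_t$ is assumed ergodic, this function is constant $\td\mu$-a.e.; call the full-measure set where it takes its constant value $E$. Then $\td\mu(E)=1$, and by the basic disintegration identity $\td\mu(E^c)=\int\td\mu^\xi_x(E^c)\,d\td\mu(x)$ for any partition $\xi$ subordinate to $W^i$, we get $\td\mu^\xi_x(E^c)=0$ for $\td\mu$-a.e.\ $x$. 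No Hopf argument, no recurrence, and no interaction between the $\phi_t$-orbits and the $W^i$-leaves is needed here---just Fubini. The higher-rank structure already entered earlier, in the existence of a nonzero $\vecs_0\in\ker\lambda^i$; within this lemma (where ergodicity of $\phi_t$ is \emph{assumed}) nothing further is required.

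The paper's proof is exactly your middle steps stated more directly: $x\mapsto\td\nu^i_x$ is constant a.e.\ by ergodicity, so for a.e.\ $x$ and $\td\mu^i_x$-a.e.\ $x'=x+v$ one has $\td\nu^i_{x'}=\td\nu^i_x$, whence (via the change-of-origin identity you wrote) $v\in G(\td\nu^i_x)$, giving density and hence transitivity. Your opening paragraph about the constancy of $x\mapsto G(\td\nu^i_x)$ is correct but unnecessary, since the stronger statement that $\td\nu^i_x$ itself is constant already follows from the same ergodicity argument. The genuine difficulty---what happens when $\phi_t$ is \emph{not} ergodic---is deferred to Section~\ref{ss:pipart} and lies outside the scope of this lemma.
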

\begin{proof}[Proof of Theorem \ref{thm:KS} assuming that $\phi_t$ is ergodic.] %\note{cleaner version of argument using \cref{prop:easyLed}}
	Assume that Lemma \ref{lem:trans} holds, from    \cref{lem:abscon} we conclude that either  $\td \nu^i_x$ is supported on a countable set (and  thus the measure  $\td \nu^i_x$ has atoms) or the measure  $\td \nu^i_x$ is absolutely continuous with respect to Lebesgue measure on $E^i\simeq \R$.  From  our entropy assumptions (recall Lemma \ref{entropyvatoms}), the  measures  $\td \mu^i_x$ and thus $\td \nu^i_x$ have  no atoms for almost every $x\in N$  and thus from  \cref{lem:abscon} we conclude that 
	$\td \nu^i_x$ is absolutely continuous with respect to Lebesgue measure on $E^i$.  
	
	It follows that the leaf-wise measures $\td \mu^i_x$ are absolutely continuous with respect to Lebesgue measure on $W^i(x)$ for almost ever $x\in N$.  From the construction of $\td \mu$, it follows that  the leaf-wise measures  $\mu^i_x$ are absolutely continuous with respect to Lebesgue measure on $W^i(x)$ for almost every $x\in \T^3$.  (Alternatively, an analogue of \ref{suck2} $\implies$ \ref{suck1} in  \cref{prop:easyLed} implies that $h_{\td \mu}(\td \alpha(n_1, n_2)) =\lambda^i(n_1,n_2)$ which, from the structure of $\td \mu$, implies  $h_{ \mu}( \alpha(n_1, n_2)) =\lambda^i(n_1,n_2)$ and thus $\mu^i_x$ is absolutely continuous for $\mu$-almost every $x$.)
	%(This follows, for instance,or by noting that)
	From  \cref{prop:easyLed}\ref{suck3},    it follows for  \ae $x\in \T^3$ that $\mu^i_x$   coincides with the Lebesgue measure on $W^i(x)$ normalized on $I^i_x$.  From  \cref{lem:invariance}, it follows that $\mu$ is invariant under translations by $E^i$ and is hence the Lebesgue measure on $\T^3$ by \cref{lem:popo}.    % by the unique ergodicity of translations by $E^i$.  
\end{proof}

We give the proof of Lemma \ref{lem:trans} (still   assuming   that $\phi_t$ is ergodic.)
\begin{proof}[Proof of  Lemma \ref{lem:trans} assuming ergodicity of $\phi_t$]
	Recall from Lemma \ref{lem:transinv}\ref{cc} that the parameterized collection
	of measures $x\mapsto \td \nu^i_x$ forms a $\phi_t$-invariant, measurable function.\footnote{There is a minor technical point we ignored here.  Namely, we are using that  the space of locally finite Borel measure on $\R$ is a reasonable topological space (with the topology dual to compactly supported continuous functions) and that   $x\mapsto \td \nu^i_x$  is a measurable function from $(N, \td \mu)$ to the space of locally finite Borel measure on $\R$.}  As we assume ergodicity of the flow $\phi_t$, it  follows that the assignment $x\mapsto \td \nu^i_x$ is  constant $\td \mu$-a.s.
	In particular, for $\mu$-almost every $x\in N$ and $ \td \mu^i_{x}$-almost every $x'\in W^i(x)$  we have 
	\begin{equation} \td \nu^i_{x'} =  \td \nu^i_{x}.\label{eq:lplp}\end{equation}

	Take such $x$ and $x'$.  Recall the parametrization $\Phi_x\colon E^i\to W^i(x)$.  Let $v\in E^i$ be such that $x' = x+ v$.  
	We observe (see Figures \ref{fig:2a} and \ref{fig:2b}) that
	$$\Phi_{x}\inv \circ \Phi_{x'} \colon E^i\to E^i$$ is the map $$\Phi_{x}\inv \circ \Phi_{x'} \colon t \mapsto t+v.$$
	Recall that $\td \nu^i_{x'}, \td \mu^i_{x'}, \td \mu^i_{x}$, and $\td \nu^i_{x}$ are canonically defined  {by our choice of normalization.}  Since  $x$ and $x'$ are in the same unstable manifold, we have $ \td \mu^i_{x'} \,\propto\,  \td \mu^i_{x}$ so  
	$$ (\Phi_{x'})_* \td \nu^i_{x'}= \td \mu^i_{x'} \,\propto\, \mu^i_{x} =  (\Phi_{x})_* \td \nu^i_{x}.$$
	%From the three observations above, it follows that
	and  $$(\Phi_{x}\inv \circ \Phi_{x'})_*  \td \nu^i_{x'}\,\propto\, \td \nu^i_{x}.$$
	It follows that 
	$$(T_v)_*  \td \nu^i_{x}= (T_v)_*\td \nu^i_{x'} \,\propto\, \td \nu^i_{x}. $$
	Thus $v\in G(\td \nu^i_{x})$.  Since $x'$ was a $\td \mu^i_{x}$-typical point of $W^i(x)$ it follows that $G(\td \nu^i_{x})$ has a dense orbit in the support of $\td \nu^i_{x}$ and  thus acts transitively on the support of $\td \nu^i_{x}$.  
\end{proof}

\begin{figure}[h]
	\begin{center}
		\psset{unit=0.7cm}
		\small
		\begin{subfigure}[t]{0.49\textwidth}
			\includegraphics{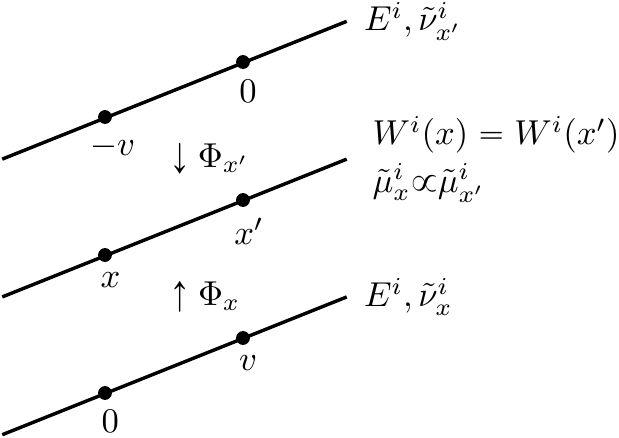}
%			\begin{pspicture}(-1,0)(7,7)
%			\psset{linewidth=1pt}
%			\psline(0,0)(5,2)
%			\psline(0,2)(5,4)
%			\psline(0,4)(5,6)
%			\uput[0](5,2){$E^i, \td \nu^i _x $}
%			\uput[0](5,6){$E^i,\td \nu^i _{x'} $}
%			\uput[0](5,4){ \shortstack[l]{$W^i(x) = W^i(x')$\\ $\td \mu^i _x \,\propto\, \td \mu^i_{x'} $}} %{\Centerstack{$W^i(x) = W^i(x')$ \\ $kkll$}}}
%			\psdots(1.5,.6)(3.5,1.4)
%			\psdots(1.5,2.6)(3.5,3.4)
%			\psdots(1.5,4.6)(3.5,5.4)
%			
%			\uput[280]  (1.5,.6){$0$}
%			\uput[280] (3.5,1.4){$v$}
%			\uput[280]  (1.5,2.6) {$x$} 
%			\uput[280] (3.5,3.4){$x'$} 
%			\uput[280]  (1.5,4.6){$-v$}
%			\uput[280] (3.5,5.4){$0$}
%			
%			
%			\uput[0]  (2.2,2){$\uparrow\Phi_{x}$}
%			\uput[0]  (2.2,4){$\downarrow\Phi_{x'}$}
%			%\uput{2.5}[-15](0,0){$++-$}
%			
%			\end{pspicture}
			\subcaption{Parametrizations $\Phi_x$ and $\Phi_{x'}$ } \label{fig:2a}
		\end{subfigure}
		\begin{subfigure}[t]{0.49\textwidth}
			\small
			\includegraphics{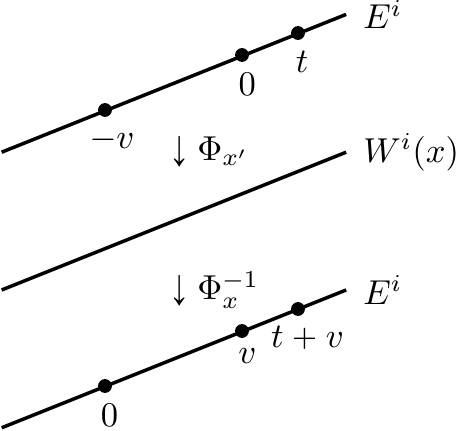}
%			\begin{pspicture}(-2,0)(4,7)
%			\psset{linewidth=1pt}
%			\psline(0,0)(5,2)
%			\psline(0,2)(5,4)
%			\psline(0,4)(5,6)
%			\uput[0](5,2){$E^i$}
%			\uput[0](5,6){$E^i$}
%			\uput[0](5,4){$W^i(x) $}
%			\psdots(1.5,.6)(3.5,1.4)(4.3,1.72)
%			%\psdots(1.5,2.6)(3.5,3.4)
%			\psdots(1.5,4.6)(3.5,5.4)(4.3,5.72)
%			
%			\uput[280]  (1.5,.6){$0$}
%			\uput[280] (3.5,1.4){$v$}
%			\uput[280] (4.3,5.72){$t$}
%			%\uput[280]  (1.5,2.6) {$x$} 
%			%\uput[280] (3.5,3.4){$x'$} 
%			\uput[280]  (1.5,4.6){$-v$}
%			\uput[280] (3.5,5.4){$0$}
%			\uput[280] (4.3,1.72){$t+v$}
%			
%			\uput[0]  (2.2,2){$\downarrow\Phi_{x}\inv$}
%			\uput[0]  (2.2,4){$\downarrow\Phi_{x'}$}
%			%\uput{2.5}[-15](0,0){$++-$}
%			
%			\end{pspicture}
			\subcaption{ $\Phi_{x}\inv \circ \Phi_{x'} \colon t \mapsto t+v$ } \label{fig:2b}

		\end{subfigure}
		\caption{Proof of Lemma \ref{lem:trans}}
		\label{fig:2}
		\label{pgF2}
	\end{center}
\end{figure}

\begin{remark}
	Above, we showed that   $\td \nu^i_{x'} = \td \nu^i_{x}$ for $x' = x+ v$ with $v\in E^i$ but  only obtained $(T_v)_*\td \nu^i_x \,\propto\, \td \nu^i_x $ rather than  $(T_v)_*\td \nu^i_x = \td \nu^i_x $.  %in the proof of Lemma \ref{lem:trans}?  It is a subtle point but is the heart of the   argument.  
	The coefficient of proportionality is due to the  choice of normalization on  $\td \nu^i_{x}$  which is chosen so that  $\td \nu^i_{x}  \left(B^{E^i}_1(0)\right)=1$ where $B^{E^i}_1(0)\subset E^i$ is the unit   ball (interval) in $E^i$ centered at $0$.  We have $$(T_v)_* \td \nu^i_{x} \left(B^{E^i}_1(0)\right)
	=\td \nu^i_{x} \left(B^{E^i}_1(v)\right)$$ but 
	do not (yet) know that   $ \td \nu^i_{x} \left(B^{E^i}_1(v)\right)=1.$  However, we do know that 
	\begin{align*}
		\left((T_v)_*\td \nu^i_{x} \right) \left(B^{E^i}_1(v)\right)&=\left( (\Phi_{x}\inv \circ \Phi_{x'})_* \td \nu^i_{x'}\right)\left (B^{E^i}_1(v)\right)
		\\& = \td \nu^i_{x'}\left(  \Phi_{x'}\inv  (\Phi_{x}   ( B^{E^i}_1(v) ) )\right)  \\&=\td \nu^i_{x'} \left(B^{E^i}_1(0)\right) =1.\end{align*}
	In particular, we have that $(T_v)_*\td \nu^i_x = (T_v)_*\td \nu^i_{x'}= (\Phi_{x}\inv \circ \Phi_{x'})_* \td \nu^i_{x'} \,\propto\,  \td \nu^i_{x}$
	%Thus $(T_v)_*\td \nu^i_x\,\propto\,  \td \nu^i_x$ and we know the exact 
	with explicit coefficient of proportionality:  
	$\displaystyle (T_v)_*\td \nu^i_x=   \dfrac{1}{\td \nu^i_{x}  (B^{E^i}_1(v) )}\td \nu^i_x.$

\end{remark}

\subsection{Overcoming lack of ergodicity: the $\pi$-partition trick}\label{ss:pipart}
The proof of Lemma \ref{lem:trans} seems to  fail if the measure $\td \mu$ is not $\phi_t$-ergodic.  Indeed, we used that the assignment $x\mapsto \td \nu^i_x$ was $\phi_t$-invariant to conclude that the assignment $x\mapsto \td \nu^i_x$ was constant  in order to  conclude that   $ \td \nu^i_x=  \td \nu^i_{x'}$ for $\td \mu^i_x$-typical  $x' \in W^i(x)$ in \eqref{eq:lplp}.  % in the proof of Lemma \ref{lem:trans}.   

We recall the following constructions and definitions.  See also  Theorem \ref{t.decomposicaoergodica} in  Appendix \ref{B:secErgDecomposition} and \cite[Section 3.5]{MR1086631}.    % \cites{MR0047744,OV}.
\begin{definition} \label{def:ergdecom}\index{ergodic decomposition}
	Let $f\colon X\to X$ be a Borel map of a metric space $X$ preserving a Borel probability measure $\mu$.  Then,  there exists a   measurable partition $\erg$  of $(X,\mu)$ such that---writing $\{\mu^\erg_x\} $ for a family of conditional measures of $\mu$ relative to $\erg$ (see \cref{def:condmeas})---for $\mu$-\ae $x$ the measure $\mu^\calE_x$ is an ergodic, $f$-invariant Borel probability measure.  The partition $\erg$ is called the \emph{ergodic decomposition} or the \emph{partition into ergodic components}  of $\mu$ with respect to $f$.  The measures $\{\mu^\erg_x\}$ are called the \emph{ergodic components} of $\mu$. 
\end{definition}

To illustrate the most extreme defect when ergodicity fails, for a typical $x$,  it could be that  the conditional measure along $W^i(x)$ of the $\phi_t$-ergodic component $\td \mu^\erg_x$  of $\td \mu$ containing $x$ is an atom at the point $x$.  Then, the only point $x'\in W^i(x)$  for which  one could conclude that $ \td \nu^i_x=  \td \nu^i_{x'}$ would be $x'= x$.  %for $\td \mu^i_x$-typical $x'\in W^i(x)$.  

We now correct the proof of  Lemma \ref{lem:trans}.    This requires  tools  and notation discussed in Section \ref{sec:abstractergodictheory} below which we encourage the reader to read first.  

Examining the proof of Lemma \ref{lem:trans}, we did not fully use that the assignment $x\mapsto \td \nu^i_x$ was constant.  Rather, we used that the  assignment $x\mapsto \td \nu^i_x$ was constant   along the support of $\td \mu^i_x$ in $W^i(x)$.  From this we concluded that $ \td \nu^i_x=  \td \nu^i_{x'}$  for $\td \mu$-typical $x$ and  $\td \mu^i_x$-typical $x'$ in $W^i(x)$.

Recall that if  $f\colon N\to \R$ is a $\phi_t$-invariant, measurable function then  $f$ is constant on almost every $\phi_t$-ergodic component of $\td \mu$. 
Thus, the proof of  Lemma \ref{lem:trans}  above works if  we establish that almost every $\phi_t$-ergodic component of $\td \mu$ is ``saturated''  %\note{clarified ``saturated''}
by  full $W^i$-manifolds.  The precise statement appears in the following lemma, known as the ``$\pi$-partition trick.''   %lets us conclude that    $\phi_t$-ergodic components contain full 
%$W^i$-leaves.   
We refer to \cref{sec:abstractergodictheory} and Appendix \ref{App:Pinsker} for details of the $\pi$-partition and measurable hulls.  
From entropy considerations in Lemma \ref{entropyvatoms}, we have that the partition of $N$ into full $W^i$-leaves is not measurable.  We let $\Xi^i$ denote the measurable hull of the partition of $(N,\td \mu)$ into full $W^i$-leaves (see Section \ref{ss:meashull}). Also see  \cref{sec:parord} for the definition of the partial order on the space of partitions.

\begin{lemma}[$\pi$-partition trick]\label{lem:Pipart}\index{partition!Pinsker!$\pi$-partition trick}
	$\Xi^i$  is finer than the partition of $(N, \td \mu)$ into $\phi_t$-ergodic components.
	%That is,  if $A\subset N$ is a measurable set containing if $\erg$ is the partition of $(N, \td \mu)$ into $\phi_t$-ergodic components then there is a full measure set $Y\subset N$ such that for almost every $x\in N$ $$\erg(x)\cap Y \subset \Xi^i(x) \cap Y.$$
\end{lemma}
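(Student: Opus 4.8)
The plan is to recast the statement as a concentration property of the leaf-wise measures $\td\mu^i_x$ along $W^i$, and then to exploit the commuting partially hyperbolic element of the $\R^2$-action fixed in \cref{sec:proofKS}.

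Write $\mathcal A$ for the $\td\mu$-complete sub-$\sigma$-algebra of $\phi_t$-invariant Borel subsets of $N$; its atoms are the $\phi_t$-ergodic components, and we write $\erg(x)$ for the one containing $x$. Since $\Xi^i$ is the measurable hull of the partition into full $W^i$-leaves, the statement ``$\Xi^i$ is finer than the partition into $\phi_t$-ergodic components'' is equivalent to ``$\mathcal A$ is contained, mod $\td\mu$, in the $\sigma$-algebra generated by $\Xi^i$'', which in turn is equivalent to: for $\td\mu$-a.e.\ $x$,
\[
\td\mu^i_x\big(W^i(x)\sm\erg(x)\big)=0 .
\]
So the plan is to prove this identity.

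Let $g:=\td\alpha(n_1,n_2)$ be the element fixed at the start of \cref{sec:proofKS} — the one whose fiberwise unstable foliation is $W^i$ — and set $R:=e^{\lambda^i(n_1,n_2)}>1$, the factor by which $g$ dilates $W^i$-leaves. Since $g$ commutes with $\phi_t$ it preserves $\mathcal A$, hence permutes ergodic components: $g(\erg(x))=\erg(g(x))$. For a.e.\ $x$ put $q(x):=\td\mu^i_x(I^i_x\sm\erg(x))\in[0,1]$, where $I^i_x$ is the unit ball in $W^i(x)$ on which $\td\mu^i_x$ has mass $1$. From \cref{lem:transinv} ($\phi_t$ is an isometry of $W^i$-leaves taking $I^i_x$ to $I^i_{\phi_t(x)}$ and pushing $\td\mu^i_x$ to $\td\mu^i_{\phi_t(x)}$, while $\phi_t$ fixes $\erg(x)$) one gets $q\circ\phi_t=q$, so $q$ is $\mathcal A$-measurable; and from $g_*\td\mu^i_x\propto\td\mu^i_{g(x)}$ together with the $R$-dilation and $g(\erg(x))=\erg(g(x))$ one gets, by pushing forward and comparing ratios,
\[
q\big(g^{n}(x)\big)=\frac{\td\mu^i_x\big(I^i_x(R^{-n})\sm\erg(x)\big)}{\td\mu^i_x\big(I^i_x(R^{-n})\big)}\qquad(n\ge1),
\]
where $I^i_x(r)\subset W^i(x)$ is the ball of radius $r$ about $x$. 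Because $x\in\erg(x)$, the Lebesgue--Besicovitch differentiation theorem (\cref{prop:lebdecom}, applied to $\td\mu^i_x$ and the set $\erg(x)\cap W^i(x)$ on the line $W^i(x)\simeq\R$) makes the right-hand side tend to $1$ as $n\to\infty$ at every $\td\mu^i_x$-density point $x$ of $\erg(x)$. Granting that $\td\mu$-a.e.\ $x$ is such a density point, the proof finishes: $q$ is bounded and $g$ preserves $\td\mu$, so $\int q\,d\td\mu=\int (q\circ g^{n})\,d\td\mu\to0$ by dominated convergence (since $q\circ g^{n}\to0$ a.e.), hence $q=0$ a.e.; applying this at $g^{-n}(x)$ and pushing forward by $g^{n}$ gives $\td\mu^i_x(I^i_x(R^{n})\sm\erg(x))=0$ for every $n$, and letting $n\to\infty$ yields $\td\mu^i_x(W^i(x)\sm\erg(x))=0$.

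The crux — and the real content of the ``$\pi$-partition trick'' — is the density-point claim: that for a.e.\ $x$ the ergodic component $\erg(x)$ is non-$\td\mu^i_x$-null in every $W^i$-neighborhood of $x$. This is where the considerations of \cref{sec:abstractergodictheory} enter. Since $\vecs_0\in\ker\lambda^i$ and $\sum_j\lambda^j\equiv0$, the remaining exponents $\lambda^j(\vecs_0)$, $j\ne i$, are nonzero of opposite sign, so $\phi_t$ is itself fiberwise partially hyperbolic; by the Hopf-type argument (\cref{prop:hopf}) every $\phi_t$-ergodic component is saturated by the $\phi_t$-strong stable and strong unstable leaves and by $\phi_t$-orbits. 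Combining this partial saturation with the $g$-expansion along $W^i$ and the non-atomicity of $\td\mu^i_x$ (guaranteed by \cref{entropyvatoms} and the positive-entropy hypothesis) is what should force $\erg(x)$ to be substantial at $x$ along $W^i$. An alternative route: $\phi_1$ acts trivially on $\mathcal A$, so $\mathcal A\subseteq\Pi(\phi_1)$, and one invokes the description of the Pinsker $\sigma$-algebra of a partially hyperbolic map in terms of its invariant foliations (\cref{prop:pinsker}, Appendix \ref{App:Pinsker}) to place $\mathcal A$ directly inside $\Xi^i$.
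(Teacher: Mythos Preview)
Your reduction to the ``density-point claim'' is circular: that claim is not a consequence of Lebesgue--Besicovitch, it is essentially the lemma itself. Lebesgue differentiation on the leaf $W^i(x)$ tells you that $\td\mu^i_x$-a.e.\ $y$ is a density point of any \emph{fixed} Borel set; it does \emph{not} tell you that the specific base point $x$ is a density point of the $x$-dependent set $\erg(x)$. To see that this can genuinely fail, take $[0,1]^2$ with Lebesgue measure, horizontal leaves as ``$W^i$,'' and vertical lines as ``ergodic components'': each leaf meets each component in a single point, so $\td\mu^i_x(\erg(x)\cap I^i_x(r))/\td\mu^i_x(I^i_x(r))\equiv 0$, the density claim fails, and indeed $\Xi^i\not\succ\erg$. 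So the bulk of your argument (the $q\circ g^n$ computation, dominated convergence) only repackages the problem; it does no work. (Incidentally, the limit you wrote as ``$1$'' should be ``$0$''.)

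Your route (b) points in the right direction but stops one step short of the actual trick. From \cref{prop:hopf} and \cref{prop:pinsker} one gets $\erg_{\vecs_0}\prec\Xi^s_{\vecs_0}=\pi_{\vecs_0}=\Xi^u_{\vecs_0}$, but since $\vecs_0\in\ker\lambda^i$ neither $W^s_{\vecs_0}$ nor $W^u_{\vecs_0}$ contains $W^i$-leaves, so this alone says nothing about $\Xi^i$. The content of the $\pi$-partition trick is to \emph{switch elements}: pick $\vecs_1$ close to $\vecs_0$ with $\lambda^i(\vecs_1)<0$ and with the sign of each $\lambda^j(\vecs_1)$, $j\ne i$, unchanged from $\lambda^j(\vecs_0)$. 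Then $W^u_{\vecs_1}=W^u_{\vecs_0}$ (so $\Xi^u_{\vecs_0}=\Xi^u_{\vecs_1}$) while $W^i\subset W^s_{\vecs_1}$ (so $\Xi^s_{\vecs_1}\prec\Xi^i$), and chaining through $\pi_{\vecs_1}$ gives
\[
\erg_{\vecs_0}\prec\Xi^s_{\vecs_0}=\pi_{\vecs_0}=\Xi^u_{\vecs_0}=\Xi^u_{\vecs_1}=\pi_{\vecs_1}=\Xi^s_{\vecs_1}\prec\Xi^i.
\]
This choice of $\vecs_1$ is exactly where one uses that no $\lambda^j$ is negatively proportional to $\lambda^i$; without that structural fact there is no bridge from $\Xi^u_{\vecs_0}$ to $\Xi^i$, and your route (a) does not supply one either.
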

 Note that if the $W^i$-leaves were expanded (or contracted) by $\phi_t$, then the conclusion of \cref{lem:Pipart} would follow from Proposition \ref{prop:hopf} 
below.   However, we chose $\phi_t$ precisely  so that it neither expands nor contracts $W^i$-leaves.

It follows from Lemma \ref{lem:Pipart} that almost every $\phi_t$-ergodic component contains full $W^i$-leaves and hence the proof of  Lemma \ref{lem:trans} works   by replacing $\td \mu$ with a $\phi_t$-ergodic component of $\td \mu$.

We  complete the proof of Theorem \ref{thm:KS}  by giving the proof of Lemma \ref{lem:Pipart}.  
For $\vecs\in \R^2$ and  $x\in N$ recall that $W^s_\vecs(x)$ and   $W^u_\vecs(x)$ denote the stable and  unstable manifolds, respectively, for the 1-parameter flow $\alpha(t\vecs)$.
We let  $\Xi^u_\vecs$ denote the measurable hull (see \ref{ss:meashull} below) of the partition of $(N,\td \mu)$ into full $W^u_{\vecs}$-leaves.  Similarly 
$\Xi^s_\vecs$ denotes the measurable hull of the partition of $(N,\td \mu)$ into full $W^s_{\vecs}$-leaves.

Given $\vecs\in \R^2$, let $\erg_\vecs$ denote the measurable partition of $(N,\td \mu)$ into ergodic components of the 1-parameter flow $\alpha(t\vecs)$.  Similarly, let $\pi_\vecs$ denote the Pinsker partition ({see \ref{sss:pinsker} below}) for the 1-parameter flow $\alpha(t\vecs)$ on $(N,\td \mu)$.  
As stated in Proposition \ref{prop:hopf} below,    for any $\vecs\in \R^2$ we have  $$\erg_\vecs\prec \Xi^s_\vecs.$$ 
From Proposition \ref{prop:pinsker} below,  we have %\ref{prop:trivialing}\ref{prop:trivialing1}  that
for any $\vecs\in \R^2$  that $$\Xi^u_\vecs = \pi_\vecs = \Xi^s_\vecs.$$
(See \ref{sec:parord} for definition of the partial order on space of partitions.)

With the above notation, the conclusion of  Lemma \ref{lem:Pipart} states,  for our fixed $\vecs_0$, that $$\erg_{\vecs_0} \prec \Xi^i.$$
Given the abstract ergodic theoretic facts   above, the proof of  Lemma \ref{lem:Pipart} is remarkably straightforward. 
\psset{unit=0.5cm}
\begin{figure}[h]
%	\begin{pspicture}(-5,-5)(5,5)
%	\psset{linewidth=.01pt}
%	%\psaxes[Oy=0, Dy=1, dy=1,labels=none, ticks=none,linestyle=dashed, linecolor=lightgray]{<->}(0,0)(-5,-5)(5,5)
%	\psset{linewidth=1pt}
%	
%	\psline[linecolor=lightgray](-4,3)(4,-3)
%	\psline[linecolor=lightgray](-2,-4)(2,4)
%	
%	\psline(-5,-2)(5,2)
%	\uput[10](5,2){$\lambda^i(\vecs)=0$}
%	%\uput[0](4,-3){$\mathrm{Ker}(\lambda^1)$}
%	%\uput[0](2,4){$\mathrm{Ker}(\lambda^3)$}
%	\psdots(3,1.2)
%	
%	\psdots(3,2.2)
%	\uput[280](3,1.2){$\vecs_0$}
%	\uput[90](3,2.2){$\vecs_1$}
%	
%	%\uput{2.5}[-15](0,0){$++-$}
%	%\uput{2.5}[50](0,0){$+--$}
%	\uput{2.5}[110](0,0){$\lambda^i(\vecs)<0$}% {$+-+$}
%	%\uput{2.5}[165](0,0){$--+$}
%	%\uput{2.5}[230](0,0){$-++$}
%	\uput{2.5}[290](0,0){$\lambda^i(\vecs)>0$}
%	
%	\end{pspicture}
	\includegraphics{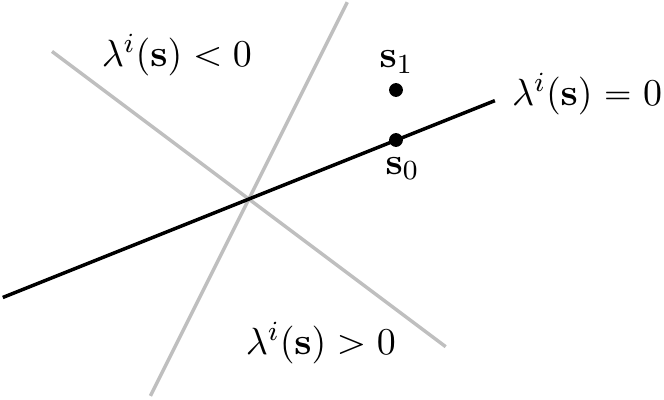}
	\caption{Choice of $s_1$} 
	\label{kernels2}
\end{figure}

\begin{proof}[Proof of Lemma \ref{lem:Pipart}]
	
	Recall we chose $\vecs_0\neq(0,0)$ so that  $\lambda^i(\vecs_0)=0$ and $\lambda^j(\vecs_0)\neq 0$ for each $j\neq i$.  Pick $\vecs_1\in \R^2$ close to $\vecs_0$ with the following properties (see Figure \ref{kernels2}):
	\begin{enumerate}
		\item $\lambda^j(\vecs_1)\neq 0$ for every $1\le j\le 3$; 
		\item $\lambda^i(\vecs_1)<0$;
		\item \label{334433} for each  $j\neq i$, the numbers $\lambda^j(\vecs_1) $ and  $\lambda^j(\vecs_0)$ have the same sign. %\footnote{Here we heavily use the fact that there is no $\lambda^j$ with  $\lambda^j = -c \lambda^i$ for any $c>0$.  That is, we are using that the action has no Lyapunov exponents that are negatively proportional to $\lambda^i$.} % {\red say even more?}}  
	\end{enumerate}

	Since $\lambda^i(\vecs_1)<0$, it follows that $W^i(x)\subset W^s_{\vecs_1}(x)$ for every $x\in N$.  This immediately implies $$\Xi^s_{\vecs_1} \prec \Xi^i.$$  Also, note that $W^u_{\vecs_1}(x)= W^u_{\vecs_0}(x)$ for all $x\in N$ whence $ \Xi^u_{\vecs_0} = \Xi^u_{\vecs_1}$.  
	We then obtain the following string of refinements and equalities: 
	$$\erg_{\vecs_0} \prec \Xi^s_{\vecs_0} = \pi _ {\vecs_0}= \Xi^u_{\vecs_0} = \Xi^u_{\vecs_1} = \pi _ {\vecs_1}=\Xi^s_{\vecs_1} \prec \Xi^i.$$
	In symbols, this is exactly what we needed to prove.  
\end{proof}

\cref{lem:Pipart} completes the proof of \cref{thm:KS}.  We end this section with some technical remarks on the proof of  \cref{lem:Pipart}.

\begin{remark}
	
	In the  proof of  \cref{lem:Pipart}, to choose $\vecs_1$ satisfying condition \eqref{334433}, we heavily used the fact that there is no $\lambda^j$ with  $\lambda^j = -c \lambda^i$ for any $c>0$; that is, we are using that the action has no Lyapunov exponents that are negatively proportional to $\lambda^i$.  Indeed, if $\lambda^j = -c\lambda^i$ then, for any choice of  $\vecs_1$  such that $\lambda^i(\vecs_1)<0$, the sign of  $\lambda^j$ changes from zero at $\vecs_0$ to positive at $\vecs_1$.  On the other hand, if  $\lambda^j$ and $\lambda^i$ were positively proportional, we can adapt the proof by grouping all  exponents positively proportional to $\lambda^i$ together into a single \emph{coarse Lyapunov exponent} (see \cref{sss:cle}) and then study the leaf-wise measures along higher-dimensional \emph{coarse Lyapunov  manifolds} (see \cref{sss:clm}.)

	%\note{warning moved to here} 
	%\begin{warning}
	For an explicit example of a higher-rank action where the proof of  \cref{lem:Pipart} (and consequently  \cref{thm:KS}) fails, let $$ A= \left(\begin{array}{cc}2 & 1 \\1 & 1\end{array}\right) \quad \text{and} \quad B= \left(\begin{array}{cc}2 & 3 \\3 & 5\end{array}\right).$$  We have eigenvalues $$\chi^1_A>1>\chi^2_A, \quad \quad  \chi^1_B>1>\chi^2_B>0$$ and both  $L_A\colon \T^2\to \T^2$ and 
	$L_B\colon \T^2\to \T^2$ are Anosov.  Consider 
	$$L_A\times \id \colon \T^4\to \T^4, \quad \quad \id\times   L_B \colon \T^4\to \T^4.$$
	Clearly $ L_A\times \id $ and $\id\times   L_B$ commute and so generate a $\Z^2$-action by automorphisms of $\T^4$. Note that while the generators   $ L_A\times \id $ and  $ \id \times L_B$  are not Anosov, the $\Z^2$-action contains Anosov diffeomorphisms such as  $ L_A\times L_B $.
	
	Let $\mu_1$ be any ergodic, $L_A$-invariant measure on $\T^2$  and  let $\mu_2$ be any ergodic, $L_B$-invariant measure on $\T^2$.  Note that we can pick $\mu_1$ different from Lebesgue with $h_{\mu_1}(L_A)>0$. 
	Then $\mu_1\times \mu_2$ is   an ergodic,  $\Z^2$-invariant measure on $\T^4$ that is not Lebesgue and has $$h_{\mu_1\times \mu_2} (L_A\times \id)>0.$$
	
	The conclusion of Theorem  \ref{thm:KS} thus fails for this $\Z^2$-action on $\T^4$.  The  proof degenerates in a number of places. % in this example:  
	\begin{enumerate}
		\item The $\Z^2$-action on $\T^4$ has four Lyapunov exponent functionals: $$(n_1, n_2)\mapsto n_1 \lambda^1_A  , \quad n_1 \lambda^2_A ,\quad n_2 \lambda^1_B ,\quad n_2 \lambda^2_B .$$
		We note that the functionals  $$(n_1, n_2)\mapsto n_1 \lambda^1_A, \quad \quad(n_1, n_2)\mapsto n_1 \lambda^2_A$$ are {negatively proportional}.  Similarly $$(n_1, n_2)\mapsto n_2 \lambda^1_B, \quad \quad(n_1, n_2)\mapsto n_2 \lambda^2_B$$  are {negatively proportional}.\footnote{In the literature,  negatively proportional exponents such as $(n_1, n_2)\mapsto n_1 \lambda^1_A$ and $(n_1, n_2)\mapsto n_1 \lambda^2_A$ are often referred to as a \emph{symplectic pair}.}
		The presence of negatively proportional Lyapunov exponents makes it impossible  to choose  $\vecs_1$ in the proof of  Lemma \ref{lem:Pipart} above with the desired properties.   In fact, the conclusion of \ref{lem:Pipart} is   false for this example.
		\item Consider the map $h\colon \T^4= \T^2\times \T^2 \to \T^2$ given by  $h(x,y) = x$.  Then $h$ semiconjugates the $\Z^2$-action generated by $ L_A\times \id $ and $\id\times   L_B$ with the $\Z$-action generated by $L_A$.  For any $L_A$-invariant measure $\mu$ on $\T^2$,  we can find a $\Z^2$-invariant measure on $\T^4$ projecting to $\mu$ under $h$.  In the language of \cite{MR1406432}, $L_A$ is a \emph{rank-1 factor}.
	\end{enumerate}
	To state the general version (\cite[Theorem 5.1]{MR1406432})  of Theorem \ref{thm:KS},  Katok and Spatzier   impose additional hypotheses on the  $\Z^2$-action to rule out the defects discussed above. For the action in Example \ref{ex:Key}, neither of these defects occurs.   
	
	The primary obstruction to the rigidity in this example is the presence of rank-1 factors.  For genuinely higher-rank actions with  negatively proportional   pairs,   other tools developed in  \cite{MR2029471} can be used to overcome the failure of Lemma \ref{lem:Pipart}.  % in the presence of negatively proportional Lyapunov exponents.  
	%\end{warning}
	
\end{remark}

\starsubsection{Algebraic construction of the suspension space $N$} \label{sec:susp}\index{suspension}
%\subsection{$\ast$Construction of the suspension space}
%\label{sec:susp}
We outline the construction in Proposition \ref{prop:susp}.  Recall our fixed commuting matrices $A$ and $B$ are jointly diagonalizable: there is  $Q\in \Gl(3,\R)$ with 
$$Q\inv AQ= \left(\begin{array}{ccc}e^{\lambda^1_A} & 0 & 0 \\0 & e^{\lambda^2_A} & 0 \\0 & 0 &e^{\lambda^3_A}\end{array}\right), \quad \quad 
Q\inv BQ= \left(\begin{array}{ccc}e^{\lambda^1_B} & 0 & 0 \\0 & e^{\lambda^2_B} & 0 \\0 & 0 &e^{\lambda^3_B}\end{array}\right).$$ 
Also recall our Lyapunov exponent functionals $\lambda^j\colon \R^2\to \R$, given by $$\lambda^j(t_1,  t_2) = t_1 \lambda^j_A + t_2  \lambda^j_B.$$

Given $\vect= (t_1,t_2)\in \R^2$ let $M^\vect\in \Gl(3,\R)$ be the interpolation matrix
$$M^\vect = Q \left(\begin{array}{ccc}e^{\lambda^1(t_1, t_2)} & 0 & 0 \\0 & e^{\lambda^2(t_1, t_2)} & 0 \\0 & 0 &e^{\lambda^3(t_1, t_2)} \end{array}\right) Q\inv.$$
Note that for $\vect = (n_1,n_2)\in \Z^n$, $M^\vect = A^{n_1} B^{n_2}\in \Gl(3,\Z)$.  However, for $\vect\notin \Z^3$, we expect $M^\vect\notin \GL(3,\Z)$; in particular, $M^\vect$ does not define an action on the torus $\T^3$.  

For $\vect\in \R^2$ define the ``twisted'' lattice subgroup $\Lambda_\vect \subset \R^3$ by 
$$\Lambda_\vect  = M^\vect \Z^3.$$  Note that  if $\vecm\in \Z^2$ then $\Lambda_\vecm $ is the standard integer lattice $ \Z^3$.  
For  $\vect\in \R^2$ define a ``twisted torus'' $T_\vect$ by $$T_\vect = \R^3/ \Lambda_\vect .$$ Note that  if $\vecm\in \Z^2$ then $T_\vecm$ is the standard torus $\T^3$; also if $\vect'= \vect + \vecm$ where $\vecm\in \Z^2$ then $\Lambda_\vect = \Lambda_{\vect'}$ whence  $T_\vect = T_{\vect'}$.  

Consider $\R^2\times \R^3$.  Let $\Z^3$ act on $\R^2\times \R^3$ as follows: given $(\vect ,x)\in \R^2\times \R^3$ and $\vecn \in \Z^3$ define $$(\vect,x) \cdot \vecn = (\vect, x + M^\vect \vecn).$$
Let $\td N$ be the quotient of  $\R^2\times \R^3$ by this action.  Note that $\td N$ is a fiber-bundle over $\R^2$ whose fiber over $\vect$ is exactly the twisted torus  $T_\vect.$

Consider the following $\Z^2$-action on $\td N$: given  $\vecm\in \Z^2$ and $(\vect, x+ \Lambda_\vect )\in \td N$
$$(\vect, x+\Lambda_\vect ) \cdot \vecm = (\vect + \vecm, x+\Lambda_\vect).$$
Also consider the following $\R^2 $-action on $\td N$:  given $\vecs\in \R^2$ and $(\vect, x+\Lambda_\vect )\in \td N$ 
$$\vecs\cdot (\vect, x+\Lambda_\vect  ) = (\vecs+ \vect, M^\vecs x+\Lambda_{\vect  +\vecs}  ).$$

Let $N$ be the quotient manifold $\td N/\Z^2 $ where the quotient is by the $\Z^2 $-action  described above.  As the $\R^2$- and $\Z^2 $-actions on $\td N$ commute, the $\R^2$-action on $\td N$ descends to an $\R^2$-action on $N$ which we denote by $\td \alpha$. 

Note that $N$ is a $5$-dimensional manifold which fibers over the torus $\T^2= \R^2/\Z^2$  and has $3$-dimensional fibers  where the fiber over $\vect +\Z^2\in \T^2$ is $T_\vect.$
Also  note that  $N$ inherits a Riemannian metric from $\R^2\times \R^3$.  
Given $x\in  N$ the tangent space $T_xN$ decomposes as 
$$T_xN=\R^2 \oplus E^1 \oplus E^2 \oplus E^3 $$
where $E^i$ are the joint eigenspaces of $A$ and $B$ enumerated as before.  

Each $E^i$ acts on $N$ as follows: given $p = (\vect + \Z^2, x + \Lambda_\vect)\in N$ and $v\in E^i$
$$p+ v= (\vect + \Z^2, x + \Lambda_\vect) + v  = (\vect + \Z^2, x + v+ \Lambda_\vect).$$
Note that the  orbit $W^i(p) = \{ p+v : v\in E^i\}$ is contained in the fiber through $p$. 
Moreover,  for any $p =  (\vect + \Z^2, x + \Lambda_\vect)\in  N$,  any  $p' = p + v$ for $v\in E^i$,  and any $\vecs= (s_1, s_2) \in \R^2$ we have 
$$\td \alpha(\vecs)(p') = \td \alpha(\vecs)(p) + e^{s_1\lambda^i_A + s_2 \lambda^i_B} v= \td \alpha(\vecs)(p) + e^{\lambda^i(\vecs) } v.$$

\part{Primer: smooth ergodic theory for $\Z^d$-actions}\label{part:II}
We  present   background material and a number of tools from the theory of nonuniformly hyperbolic dynamics that will be used in Part \ref{part:III} and explain a number of facts and constructions that were used in Part \ref{part:I}.  We will be particularly interested in  the relation between entropy, conditional measures, and Lyapunov exponents for single diffeomorphisms and for actions of higher-rank abelian groups.

\section{Lyapunov exponents and Pesin manifolds} \label{sec:koko}
%  \section{Interlude: A crash course in nonuniform hyperbolicity}   

\subsection{Lyapunov exponents for  diffeomorphisms} \label{ss:lyapexp}
Let $f\colon M\to M$ be a $C^{1}$ diffeomorphism of a compact manifold $M$.  Let $\mu$ be an ergodic, $f$-invariant Borel probability measure. We recall Oseledec's Theorem \cite{MR0240280}; see also \cites{MR1073779,MR0571089}.  % for alternative approaches.  

%\subsubsection{Lyapunov exponents and unstable manifolds}
\label{sss:osel}
\begin{theorem}[Oseledec \cite{MR0240280}]  %; see also \cite{MR1073779,MR0571089} for alternative proofs]  %and \cite[]{MR0458490} for statment ]
	\label{thm:oseled}\index{theorem!Oseledec}
	There are
	\begin{enumerate}
		\item a measurable set $\Lambda$ with $\mu(\Lambda)=1$;
		\item  numbers $\lambda^1> \lambda^2>\dots >\lambda^p$;
		\item    a $\mu$-measurable, $Df$-invariant  splitting $T_xM = \bigoplus_{i=1}^p E^i(x)$ defined for $x\in \Lambda$
	\end{enumerate}
	such that for every $x\in \Lambda$
	\begin{enumlemma}
		\item for every $v\in E^i(x)\sm \{0\}$ $$\displaystyle \lim_{n\to \pm \infty} \frac 1 n \log \|D_xf^n(v)\|= \lambda^i;$$
		\item if $Jf$ denotes the Jacobian determinant of $f$ then $$\displaystyle\lim_{n\to \pm \infty} \frac 1 n \log | {J}f^n|= \sum_{i=1}^p m_i \lambda^i$$
		where $m_i = \dim E^i(x)$;
		\item for every $i\neq j$
		we have $$\displaystyle \lim_{n\to \pm  \infty} \frac{1}{n} \log \Big(\sin \angle \Big(E^i(f^n(x)), E^j(f^n(x))\Big) \Big)= 0. $$
	\end{enumlemma}
\end{theorem}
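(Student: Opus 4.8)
The plan is to derive Theorem~\ref{thm:oseled} from the Kingman subadditive ergodic theorem applied to the exterior powers of the derivative cocycle. Write $A(x) = D_x f$ and $A^n(x) = D_x f^n = A(f^{n-1}x)\cdots A(fx)A(x)$, a multiplicative cocycle over $(M,\mu,f)$; since $M$ is compact and $f$ is $C^1$, the functions $\log^+\|A^{\pm 1}\|$ are bounded, hence lie in $L^1(\mu)$. For each $k\in\{1,\dots,\dim M\}$ the quantities $a^k_n(x) := \log\|\Lambda^k A^n(x)\|$ are subadditive along orbits, $a^k_{n+m}(x)\le a^k_m(f^n x) + a^k_n(x)$, with $a^k_1\in L^1(\mu)$ and $a^k_n(x)\ge -nC_k$; Kingman's theorem together with the ergodicity of $\mu$ therefore gives $\mu$-a.e.\ constant limits $\Lambda_k := \lim_{n\to\infty}\tfrac1n a^k_n(x)$. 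Setting $\Lambda_0 = 0$, the slopes $\Lambda_k-\Lambda_{k-1}$ are nonincreasing in $k$; collecting their distinct values yields the numbers $\lambda^1>\dots>\lambda^p$ and the multiplicities $m_i$. Conclusion~(b) is then immediate: with $k=\dim M$ and $|Jf^n(x)| = |\det A^n(x)| = \|\Lambda^{\dim M}A^n(x)\|$, we obtain $\tfrac1n\log|Jf^n|\to\Lambda_{\dim M} = \sum_i m_i\lambda^i$.

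Next I would build the forward and backward Oseledec filtrations. Let $U^n_k(x)\subset T_xM$ denote the span of the top $k$ right singular vectors of $A^n(x)$, that is, the $k$-plane most expanded by $A^n(x)$. The analytic heart of the proof is that, when $k$ equals a jump dimension $m_1+\dots+m_j$ (for $1\le j\le p-1$), the ratio $\tfrac1n\big(\log\sigma_k(A^n(x))-\log\sigma_{k+1}(A^n(x))\big)$ converges to the positive number $\lambda^j-\lambda^{j+1}$, so a standard spectral-gap estimate shows that $U^n_{m_1+\dots+m_j}(x)$ is Cauchy in the Grassmannian as $n\to+\infty$; its limit $F^j(x)$ is $Df$-invariant, has dimension $m_1+\dots+m_j$, and satisfies: $v\in F^j(x)\setminus F^{j-1}(x)$ if and only if $\lim_{n\to+\infty}\tfrac1n\log\|A^n(x)v\| = \lambda^j$. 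Running the same construction for $f^{-1}$ produces an $Df$-invariant filtration $\hat F^\bullet(x)$ encoding the forward growth rates as seen under backward iteration: $v\in\hat F^j(x)\setminus\hat F^{j-1}(x)$ if and only if $\lim_{n\to-\infty}\tfrac1n\log\|A^n(x)v\| = \lambda^{p-j+1}$.

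With both filtrations at hand I would define $E^i(x) := F^i(x)\cap\hat F^{p-i+1}(x)$ and prove that $\dim E^i(x) = m_i$, that $F^j(x) = \bigoplus_{i\le j}E^i(x)$ and $\hat F^j(x) = \bigoplus_{i\ge p-j+1}E^i(x)$, so that $T_xM = \bigoplus_{i=1}^p E^i(x)$ with the $E^i$ being $Df$-invariant. That no dimension is lost in these intersections is forced by conclusion~(b), since the sum of the forward growth rates counted with multiplicity is prescribed. For $v\in E^i(x)\setminus\{0\}$, the forward estimate on $F^i(x)$ and the backward estimate on $\hat F^{p-i+1}(x)$ combine to give $\lim_{n\to\pm\infty}\tfrac1n\log\|A^n(x)v\| = \lambda^i$, which is conclusion~(a). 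Finally, conclusion~(c) is a temperedness statement for the measurable, $\mu$-a.e.\ finite function $x\mapsto\big(\sin\angle(E^i(x),E^j(x))\big)^{-1}$: the quantitative control on the splitting coming from the Grassmannian convergence above shows that this function grows subexponentially along $\mu$-a.e.\ orbit, which is precisely $\tfrac1n\log\sin\angle\big(E^i(f^n x),E^j(f^n x)\big)\to 0$.

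The step I expect to be the main obstacle is the passage from the purely averaged data produced by Kingman's theorem --- the constants $\Lambda_k$ --- to a genuine pointwise, exponentially split decomposition: namely, the Grassmannian convergence of the singular subspaces $U^n_k(x)$ and, above all, the transversality of the forward filtration $F^\bullet$ and the backward filtration $\hat F^\bullet$, which is what turns the $E^i(x)$ into an honest direct sum carrying uniform two-sided growth. By comparison, the integrability reductions, the determinant identity of conclusion~(b), and the temperedness estimate of conclusion~(c) are routine once this core is in place.
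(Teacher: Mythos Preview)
The paper does not prove Theorem~\ref{thm:oseled}; it merely recalls Oseledec's theorem as background and cites \cite{MR0240280} together with the alternative proofs \cites{MR1073779,MR0571089}. Your sketch is precisely the Raghunathan approach \cite{MR0571089}: Kingman's subadditive ergodic theorem on the exterior powers $\Lambda^k A^n$ to extract the numbers $\Lambda_k$, then convergence of the top-singular $k$-planes at each spectral gap to produce the forward and backward filtrations, and finally intersection to get the splitting. This is one of the two standard modern proofs, and the outline is correct, including your identification of the transversality of $F^\bullet$ and $\hat F^\bullet$ as the substantive step; there is nothing to compare against in the paper itself.
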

The numbers $\lambda^i$ are called the \emph{Lyapunov exponents}\index{Lyapunov exponent} of $f$ with respect to $\mu$ and the subspaces $E^i(x)$ are called the \emph{Oseledec's subspaces}.  Above,  $m^i$ denotes the almost-surely constant value of $\dim E^i(x)$, called  the \emph{multiplicity} of $\lambda^i$.

Given any $f$-invariant measure  $\mu$ on $M$ (which may be nonergodic) the \emph{average top  Lyapunov exponent} \index{Lyapunov exponent!top} of $f$ with respect to $\mu$ is 
\begin{equation}\label{topex1}\lambda_\av(f,\mu)  =  \inf _{n\ge 1} \frac 1 n  \int \log \| {D_xf^n} \| \ d \mu (x).\end{equation}
Since $\mu$ is $f$-invariant, the sequence $n\mapsto \int \log \| {D_xf^n} \| \ d \mu (x)$ is subadditive and the infimum in \eqref{topex1} can be replaced by  a limit.

\index{theorem!subadditive ergodic}
By the subadditive ergodic theorem, the functions $$x\mapsto \frac 1 n  \log \| {D_xf^n} \|$$  converge \ae to an invariant, integrable function with  integral  $\lambda_\av(f,\mu)$; see \cites{MR0121828,MR0254907} and \cite[Chapter 3]{MR3289050}. 
If $\mu$ is ergodic, we  have in the notation of \cref{thm:oseled} that     $\lambda_\av(f,\mu) = \lambda^1$.  If $\mu$ is not ergodic, let $\{\mu^\erg_x\}$ denote the ergodic decomposition (see \cref{def:ergdecom}) of  $\mu$ and let $\lambda^1_x> \lambda_x^2>\dots >\lambda^{p(x)}_x$ denote the Lyapunov exponents of $f$ with respect to the ergodic invariant measure $\mu^\erg_x$.  Then  we have  $$\lambda_\av(f,\mu) = \int \lambda^1_x \ d \mu(x).$$
%{\red We refer again to \cite[Chapter 3]{MR3289050}.}\note{garbage}

\subsection{Lyapunov exponents and (sub)exponential growth of derivatives}   
Let $M$ be a compact manifold and equip $TM$ with a background Riemannian metric and associated norm.  Let $f\colon M\to M$ be a $C^1$ diffeomorphism.  We say $f\colon M\to M$ has \emph{uniform subexponential growth of derivatives}\index{uniform subexponential growth of derivatives} if for all $\epsilon>0$ there is a $C_\epsilon>0$ such that $$\|Df^n\| := \sup_{x\in M } \|D_x f^n\| < C_\epsilon e^{\epsilon |n|}\quad \quad \text{for all   $n\in \Z.$}$$

Note that we allow  that $C_\epsilon\to \infty$ as $\epsilon \to 0$.

\begin{proposition}\label{prop:ZLEUSEGOD}
	A diffeomorphism $f\colon M\to M$ has  {uniform subexponential growth of derivatives} if and only if for any $f$-invariant Borel probability measure $\mu$, all Lyapunov exponents of $f$ with respect to $\mu$ are zero.
	
	That is, $f\colon M\to M$ has  {uniform subexponential growth of derivatives} if and only if $\lambda_\av(f,\mu)= \lambda_\av(f\inv,\mu)=0$ for every  $f$-invariant Borel probability measure  $\mu$.  
\end{proposition}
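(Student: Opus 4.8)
The plan is to route the argument through the ``maximal Lyapunov exponent'' of the derivative cocycle and identify it with $\sup_\mu \lambda_\av(f,\mu)$. First I would set $\Psi_n(x):=\log\|D_xf^n\|$ for $n\in\Z$; this is continuous on $M$, and $D_xf^{m+n}=D_{f^nx}f^m\circ D_xf^n$ gives the subadditivity $\Psi_{m+n}(x)\le\Psi_m(f^nx)+\Psi_n(x)$ for all $m,n\in\Z$. Putting $a_n:=\sup_{x\in M}\Psi_n(x)$, compactness of $M$ makes each $a_n$ finite, the sequence $(a_n)_{n\in\Z}$ is subadditive, and $a_0=0$, so by Fekete's lemma the limits $L^+:=\lim_{n\to+\infty}\tfrac{1}{n}a_n=\inf_{n>0}\tfrac{1}{n}a_n$ and $L^-:=\lim_{n\to+\infty}\tfrac{1}{n}a_{-n}=\inf_{n>0}\tfrac{1}{n}a_{-n}$ exist and are finite. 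The first elementary observation is that $f$ has uniform subexponential growth of derivatives if and only if $L^+\le0$ and $L^-\le0$: if $L^+>0$ then $a_n\ge L^+n$ for every $n>0$, so no constant $C_\epsilon$ can work for $\epsilon<L^+$; conversely if $L^+\le0$ and $L^-\le0$ then for every $\epsilon>0$ we have $a_n-\epsilon|n|\to-\infty$ as $n\to\pm\infty$, whence $\sup_{n\in\Z}(a_n-\epsilon|n|)<\infty$.

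The core step is the identity $L^+=\sup_\mu\lambda_\av(f,\mu)$, the supremum over all $f$-invariant Borel probability measures (and likewise $L^-=\sup_\mu\lambda_\av(f^{-1},\mu)$, applying this to $f^{-1}$). The inequality ``$\ge$'' is soft: for any invariant $\mu$ and each $n\ge1$, $\tfrac{1}{n}\int\Psi_n\,d\mu\le\tfrac{1}{n}a_n$, and letting $n\to\infty$ gives $\lambda_\av(f,\mu)=\lim_n\tfrac{1}{n}\int\Psi_n\,d\mu\le L^+$. For ``$\le$'' I would run a Krylov--Bogolyubov construction: for each $k$ pick $x_k$ realizing $a_k=\Psi_k(x_k)$, set $\mu_k=\tfrac{1}{k}\sum_{j=0}^{k-1}\delta_{f^jx_k}$, and take a weak-$*$ limit point $\mu$ of $(\mu_k)$, which is $f$-invariant. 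Fixing $p\ge1$ and writing $k=a+q_ap+r_a$ with $0\le a,r_a<p$, iterated subadditivity yields $\Psi_k(x_k)\le\Psi_a(x_k)+\sum_{i=0}^{q_a-1}\Psi_p(f^{a+ip}x_k)+\Psi_{r_a}(f^{a+q_ap}x_k)$; averaging over the $p$ offsets $a$, noting the middle sums reassemble into $\sum_{\ell=0}^{k-p}\Psi_p(f^\ell x_k)$, and bounding the $\mathrm{O}(p)$ remaining short-block terms by a constant $C_p$ depending only on $p$ and $f$, one gets $a_k\le\tfrac{k}{p}\int\Psi_p\,d\mu_k+C_p$. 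Dividing by $k$ and letting $k\to\infty$ along the subsequence with $\mu_k\to\mu$ gives $L^+\le\tfrac{1}{p}\int\Psi_p\,d\mu$, and since $p$ is arbitrary, $L^+\le\inf_p\tfrac{1}{p}\int\Psi_p\,d\mu=\lambda_\av(f,\mu)$. I expect this ``$\le$'' inequality — specifically the combinatorial bookkeeping of the block decomposition and the boundary terms — to be the only genuine obstacle; the rest is routine.

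With the identity in hand both directions close quickly. If $f$ has uniform subexponential growth, then $L^+\le0$ and $L^-\le0$, so $\lambda_\av(f,\mu)\le0$ and $\lambda_\av(f^{-1},\mu)\le0$ for every invariant $\mu$; for ergodic $\mu$, \cref{thm:oseled} identifies $\lambda_\av(f,\mu)$ with the top exponent $\lambda^1$ and $\lambda_\av(f^{-1},\mu)$ with $-\lambda^p$ (minus the bottom exponent of $f$), so the chain $0\ge\lambda^1\ge\lambda^p\ge0$ forces all exponents to vanish, and the non-ergodic case follows by integrating over the ergodic decomposition via $\lambda_\av(f,\mu)=\int\lambda_\av(f,\mu^\erg_x)\,d\mu(x)$ (cf.\ \cref{def:ergdecom}). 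Conversely, if every invariant $\mu$ satisfies $\lambda_\av(f,\mu)=\lambda_\av(f^{-1},\mu)=0$, then $L^+=\sup_\mu\lambda_\av(f,\mu)\le0$ and $L^-\le0$, so $f$ has uniform subexponential growth. Finally, the equivalence between ``all Lyapunov exponents of $f$ with respect to $\mu$ are zero'' and ``$\lambda_\av(f,\mu)=\lambda_\av(f^{-1},\mu)=0$'' is exactly the same Oseledec identification applied to $\mu$ together with its ergodic components.
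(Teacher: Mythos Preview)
Your proof is correct and takes a genuinely different route from the paper. The paper lifts to the unit sphere bundle $UM$, where the renormalized derivative $Uf$ acts and $\log\|D_xf^n(v)\|$ becomes an honest Birkhoff sum of the continuous function $\Phi(x,v)=\log\|D_xf(v)\|$; it then runs Krylov--Bogolyubov on $UM$ to produce a $Uf$-invariant measure with $\int\Phi\,d\nu\ge\epsilon$, projects to $M$, and reads off a positive top exponent. You instead stay on $M$ and work directly with the subadditive cocycle $\Psi_n=\log\|Df^n\|$, proving the clean identity $L^+=\sup_\mu\lambda_\av(f,\mu)$ via Fekete together with the offset-averaging block decomposition. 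Your approach is more elementary in that it avoids the auxiliary bundle and isolates a reusable variational formula; the paper's approach trades the block combinatorics for the additivity of $\Phi$, and---more importantly for the text---it is the template that gets recycled verbatim in the fiber-bundle setting of \cref{eq6677}, where one needs empirical measures on a sphere bundle over the suspension $M^\alpha$. Your bookkeeping remark is accurate: the middle sums do not reassemble to exactly $\sum_{\ell=0}^{k-p}\Psi_p(f^\ell x_k)$, but the discrepancy is at most $O(p)$ terms each bounded by $\max_{0\le j<p}\sup_x|\Psi_j(x)|$, which is precisely what your constant $C_p$ absorbs.
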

% \subsection{Proof of Proposition \ref{prop:goodmeas}}
\begin{proof}
	We show that vanishing of all Lyapunov exponents for all $f$-invariant probability measures implies that $f$ has uniform subexponential growth of derivatives; the converse is clear. %left to the reader. 
	
	Suppose that $f\colon M\to M$ fails to have uniform subexponential growth of derivatives.  Then there is an $\epsilon>0$ and   sequences of iterates $n_j\in \Z$ with   $|n_j|\to\infty$, base points $x_j\in M$, and unit vectors $v_j \in T_{x_j}M$  such that 
	\begin{equation}\label{eq:growth} \|D_{x_j} f^{n_j} v_j\| \ge  e^{\epsilon |n_j|}. \end{equation}
	Replacing $f$ with $f\inv$, we may assume without loss of generality that $n_j\to \infty.$
	
	Let $UM\subset TM$ denote the unit-sphere bundle.  We represent an element of $UM$ by a pair $(x,v)$ where $v\in T_xM$ with $\|v\|= 1$.  Note that $UM$ is compact.  Note also that $Df\colon TM\to TM$ induces a map $Uf\colon U\to U$ given by the renormalized  derivative: $$Uf(x,v) := \left(f(x), \frac{D_xf(v)}{\|D_xf(v)\|}\right).$$
	Define $\Phi\colon UM\to \R$ as follows: given $(x,v)\in UM$, let $$\Phi(x,v) := \log \|D_xf(v)\|.$$
	By the chain rule, we have 
	$$\log \|D_xf^n(v)\|  = \sum_{j=0}^{n-1} \Phi(Uf^j(x,v)).$$
	
	For each $j$, let $\nu^j$ denote the empirical measure along the orbit segment $$(x_j,v_j),Uf(x_j,v_j), \dots,  Uf^{n_j-1}(x_j,v_j)$$ in $UM$ given by $$\nu^j = \frac{1}{n_j} \sum_{k=0}^{n_j-1}  \delta_{Uf^k(x_j,v_j)}.$$
	From \eqref{eq:growth} we have   for every $j$ that  $$\int \Phi \ d \nu^j \ge \epsilon.$$  %Consider any weak-$*$ subsequential limit $\nu$ of $\{\nu^j\}$.  We claim
	\begin{claim}\label{booboob}\
		Let $\nu$ be any weak-$*$ subsequential limit  of $\{\nu^j\}$.  Then 
		\begin{enumlemma}
			\item \label{peepee}$\nu$ is $Uf$-invariant;
			\item\label{poopoo} $\displaystyle \int \Phi \ d \nu \ge \epsilon$.
		\end{enumlemma}
	\end{claim}
	\begin{proof}Conclusion \ref{peepee} follows as in the proof of the Krylov-Bogolyubov theorem:  if $\phi\colon M\to \R$ is any (bounded) continuous function then  \begin{equation}\label{eq:KB} \lim _{j\to \infty} \left |\int \phi \ d \nu^j - \int \phi \circ f \ d \nu^j\right | \le \lim _{j\to \infty}\frac{2 \|\phi\|_{C^0}}{n_j} = 0\end{equation}
		showing that $\nu$ is $f$-invariant. 
		Conclusion \ref{poopoo} follows from continuity of $\Phi$ and   weak-$*$ convergence.  \end{proof}
	
	From \cref{booboob}\ref{poopoo}, we may replace $\nu$ with an ergodic component (see \cref{def:ergdecom})   $\nu'$ of $\nu$ such that $\int \Phi \ d \nu' \ge \epsilon$.  
	
	Take $\mu$ to be the   push-forward of $\nu'$ under the natural projection $UM\to M$.  Then $\mu$ is an $f$-invariant, ergodic measure on $M$.
	Let $\{\nu'_x\}$ denote a family of conditional measures of $\nu'$ for the partition of $UM$ into fibers over $M$.  
	By the pointwise ergodic theorem, for $\mu$-\ae $x\in M$  and $\nu'_x$-\ae $v\in UM(x)$ we have 
	$$\lim_{n\to  \infty} \frac{1}{n}\log \|D_xf^n(v)\| = \lim_{n\to  \infty} \frac{1}{n} \sum_{j=0}^{n-1} \Phi(Uf^j(x,v)) = \int \Phi \ d \nu' \ge \epsilon.$$
	
	On the other hand,
	\begin{align*}
		\lambda_\av(f,\mu)  &=  \lim _{n\to \infty} \frac 1 n  \int \log \| {D_xf^n} \| \ d \mu (x)\\
		&=  \lim _{n\to \infty} \frac 1 n  \int  
		\sup_{v\in UM(x)} \sum_{j=0}^{n-1} \Phi(Uf^j(x,v))
		\ d \mu (x)\\
		&\ge  \lim _{n\to \infty}  \int \int   \frac 1 n 
		\sum_{j=0}^{n-1} \Phi(Uf^j(x,v)) \ d \nu_x(v)
		\ d \mu (x)\\
		&= \int \Phi \ d \nu' \ge \epsilon
		.\end{align*}
	Above, the inequality follows from comparing the maximal  growth  with the  average growth (averaged by   $ \nu_x'$.)
	It follows that the largest Lyapunov exponent of  $f$  with respect to  $\mu$ is at least $ \epsilon>0$.  
\end{proof}

\subsection{Lyapunov exponents for nonuniformly hyperbolic $\Z^d$-actions}\label{sec:lyap}
How does the theory of Lyapunov exponents change for actions of more general abelian groups?  We state a version of Oseledec's theorem for actions of $\Z^d$ which can easily be extended to actions of $\R^\ell\times \Z^k$.  One should think of the following as a non-stationary version of the joint-diagonalizability or joint-Jordan-normal-form for commuting matrices as exploited in Section \ref{sec:penisinthesalsa}.

Let $M$ be a compact manifold, let $\alpha\colon \Z^d \to \Diff^{1} (M)$ be a  $\Z^d$-action, and let $\mu$ be an ergodic, $\alpha$-invariant measure.  
\begin{theorem}[Higher-rank Oseledec's theorem (see \cite{AWB-GLY-P1})]\label{thm:oscHR}\index{theorem!Oseledec!higher-rank}
	There are
	\begin{enumerate}
		\item a measurable set $\Lambda$ with $\mu(\Lambda)=1$;
		\item  linear functionals  $\lambda^1, \lambda^2,\dots,\lambda^p\colon \R^d\to \R$;
		\item    a $\mu$-measurable, $D\alpha$-invariant  splitting $T_xM = \bigoplus_{i=1}^p E^i(x)$ defined for $x\in \Lambda$
	\end{enumerate}
	such that for every $x\in \Lambda$
	\begin{enumlemma}
		\item \label{kk} for every $v\in E^i(x)\sm \{0\}$ $$\lim_{|n|\to   \infty} \dfrac { \log \|D_x\alpha(n)(v)\| -  \lambda^i(n)}{|n|} = 0;$$
		\item \label{klol} if $Jf$ denotes the Jacobian determinant of $f$ then $$\displaystyle\lim_{|n|\to   \infty} \frac { \log | {J}\alpha(n)| - \sum_{i=1}^p m^i \lambda^i(n)}{|n|};$$
		\item for every $i\neq j$
		$$ \lim_{n\to \infty} \frac{1}{|n|} \log \Big(\sin \angle \Big(E^i(\alpha(n)(x)), E^j(\alpha(n)(x))\Big) \Big)= 0. $$
	\end{enumlemma}
\end{theorem}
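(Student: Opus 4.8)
The plan is to reduce the statement to the classical Oseledec theorem (Theorem~\ref{thm:oseled}) applied to the generators $f_j := \alpha(e_j)$, $1 \le j \le d$, of $\Z^d$, and then to upgrade the resulting ray-wise asymptotics to the uniform linear-functional asymptotics over all of $\Z^d$.

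First I would run an inductive refinement. Apply Theorem~\ref{thm:oseled} to $f_1$ with respect to $\mu$, obtaining a measurable $Df_1$-invariant Oseledec splitting $\mathcal E_1$. Because $f_2$ commutes with $f_1$ and preserves $\mu$, the image of $\mathcal E_1(x)$ under $Df_2$ is a $Df_1$-invariant splitting at $f_2 x$ realizing the same exponents; by the a.e.\ uniqueness of the Oseledec splitting it must coincide with $\mathcal E_1(f_2 x)$, so $\mathcal E_1$ is $\alpha$-invariant. Restricting the derivative cocycle of $f_2$ to each $\alpha$-invariant subbundle of $\mathcal E_1$ and applying Theorem~\ref{thm:oseled} again produces a refinement $\mathcal E_2$ of $\mathcal E_1$ that is still $\alpha$-invariant; iterating over $j = 2, \dots, d$ yields a single measurable, $D\alpha$-invariant splitting $T_xM = \bigoplus_i E^i(x)$ on each summand $E^i$ of which every generator cocycle $v \mapsto D_x\alpha(ne_j)(v)$ has a single Lyapunov exponent $\chi^i_j$. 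Since each $f_j$ commutes with the entire action, the functions $x \mapsto \chi^i_j$ are $\alpha$-invariant, hence $\mu$-a.e.\ constant by ergodicity; set $\lambda^i(t) := \sum_{j=1}^d t_j \chi^i_j$, a linear functional on $\R^d$, and let $\Lambda$ be the full-measure set on which the above and the conclusions of Theorem~\ref{thm:oseled} for each $f_j$ hold.

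Next I would establish (a). Fix $x \in \Lambda$, an index $i$, and a unit vector $v \in E^i(x)$, and set $\varphi(n,x) := \log \|D_x\alpha(n)(v)\|$; when $\dim E^i = 1$ the cocycle identity shows $\varphi$ is an \emph{additive} $\Z^d$-cocycle over $\alpha$ with $\int \varphi(e_j,\cdot)\,d\mu = \chi^i_j$ (by Birkhoff) and bounded increments (since $M$ is compact and each $f_j \in C^1$). Thus $\bar\varphi := \varphi - \lambda^i$ is an $L^1$ additive $\Z^d$-cocycle with zero drift, and (a) is precisely the assertion $\bar\varphi(n,x)/|n| \to 0$ as $|n| \to \infty$. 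I would deduce this from a $\Z^d$-version of Pesin's tempering lemma: the pointwise convergence along each coordinate ray, combined with the uniform bound $|\varphi(n,x) - \varphi(m,x)| \le K|n - m|$ coming from $\sup_M \log\|Df_j^{\pm 1}\| < \infty$, yields for every $\epsilon > 0$ a measurable $C_\epsilon \colon M \to (0,\infty)$, finite a.e., with $C_\epsilon(f_j x) \le e^{\epsilon} C_\epsilon(x)$ for all $j$ and $|\bar\varphi(n,x)| \le \log C_\epsilon(x) + \epsilon |n|$ for every $n \in \Z^d$; letting $\epsilon \downarrow 0$ gives the claim. When $\dim E^i > 1$ one runs the same argument with the operator norm of $D_x\alpha(n)|_{E^i(x)}$, which is subadditive rather than additive, using the subadditive $\Z^d$-ergodic theorem in place of Birkhoff together with a matching lower bound obtained by applying the one-dimensional case to $\bigwedge^{\dim E^i} E^i$.

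Finally, (b) follows by applying (a) to the one-dimensional cocycle on $\bigwedge^{\dim M} TM \cong \bigotimes_i \bigwedge^{\dim E^i} E^i$, whose single exponent is $\sum_i m^i \lambda^i$ with $m^i = \dim E^i$; and (c) follows from measurability of the splitting by the standard Luzin/tempering argument, since the angle $\sin\angle(E^i(x), E^j(x))$ is positive and measurable, hence tempered along each generator, which forces subexponential decay along every $n \in \Z^d$ by the same Lipschitz-plus-tempering estimate. I expect this third step --- passing from ray-wise control to genuinely uniform control as $|n| \to \infty$ in \emph{arbitrary} directions --- to be the main obstacle, since direction-by-direction estimates lose uniformity near irrational rays; the multiparameter tempering lemma (equivalently, the full $\Z^d$ (sub)additive ergodic theorem) is exactly what repairs this.
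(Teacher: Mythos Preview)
The paper does not give a proof of this theorem: it is stated with a citation to \cite{AWB-GLY-P1} and used as a black box, so there is no ``paper's own proof'' to compare against. Your outline is the standard reduction one would expect, and the inductive refinement of the Oseledec splitting via commutativity (your first paragraph) is correct and is how the joint splitting is obtained in the cited reference.

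The substantive content of the theorem, as you correctly identify, is the passage from ray-wise asymptotics \eqref{rays} to the uniform statement~\ref{kk}, and here your sketch is too thin to count as a proof. In the one-dimensional case you write the zero-drift additive cocycle as a telescoping sum $\bar\varphi(n,x) = \sum_j S^{f_j}_{n_j}\psi_j(\alpha(\text{partial})x)$ along a lattice path; each summand is controlled by Birkhoff for $f_j$, but at a point \emph{shifted by the other generators}. So you need, for each $j$, a function $h_j$ dominating $|S^{f_j}_n\psi_j|e^{-\epsilon|n|}$ that is simultaneously tempered under \emph{all} $f_k$, not just $f_j$; the one-parameter Pesin tempering lemma only gives you $f_j$-tempering. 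Producing a single $C_\epsilon$ tempered for the whole $\Z^d$-action is exactly the multiparameter tempering lemma you invoke by name, and it is the actual work in \cite{AWB-GLY-P1}; your sentence ``pointwise convergence along each coordinate ray, combined with the uniform bound\ldots\ yields $C_\epsilon$'' asserts the conclusion rather than proving it. The higher-dimensional case via the subadditive $\Z^d$ ergodic theorem has the same issue. So the skeleton is right, but the step you flag as the main obstacle is in fact a gap in what you have written, not merely a difficulty.
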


The linear functionals $\lambda^1, \lambda^2,\dots,\lambda^p\colon \R^d\to \R$ are called the \emph{Lyapunov exponent functionals}  or simply the \emph{Lyapunov exponents} of $\mu$.\index{Lyapunov exponent!functionals}\index{Lyapunov exponent}
In \ref{klol}, $m^i $ is the almost-surely constant value of $\dim E^i(x)$, called the \emph{multiplicity} of $\lambda^i$.  
Note that \ref{kk} implies convergence along rays: for any $n\in \Z^d$  and $v\in E^i(x)\sm \{0\}$
\begin{equation}\label{rays} \lim_{k\to   \infty} \dfrac { 1}k \log \|D_x\alpha(k n)(v)\|  =   \lambda^i(n).\end{equation}
The convergence in \ref{kk} is taken along \emph{any} sequence $n\to \infty$; this is  stronger than \eqref{rays} and is typically needed in applications.

\subsection{Unstable manifolds  and coarse Lyapunov manifolds} \label{sec:unstmanifold}% for nonuniformly hyperbolic actions}

\newcommand{\cA}{{\mathcal A}}
\newcommand{\cF}{{\mathcal F}}
\newcommand{\cW}{{\mathcal W}}

\subsubsection{Unstable subspaces and unstable manifolds for a single diffeomorphism} \label{unstmanifold11}
Let $f\colon M\to M$ be a $C^{1}$ diffeomorphism of $M$ and let $\mu$ be an ergodic, $f$-invariant measure.  Let $\lambda^i$ be the Lyapunov exponents for $f$ with respect to $\mu$.  For $x\in \Lambda\subset M$, where $\Lambda$ is as in \cref{thm:oseled}, define $$E^u(x):= \bigoplus_{\lambda^i>0} E^i(x)$$
to be the \emph{unstable subspace} through $x$.  
We have that $$E^u(x):= \{ v\in T_xM : \limsup_{n\to \infty} \frac 1 n \log \| D_x f^{-n} (v)\| < 0\}.$$
We may similarly define \emph{stable} and \emph{neutral} (or center) subspaces through $x$, respectively, by $$E^s(x):= \bigoplus_{\lambda^i<0} E^i(x)$$ and $$E^c(x):= \bigoplus_{\lambda^i=0} E^i(x).$$

We now assume that $f\colon M\to M$ is $C^{1+\beta}$ for $\beta>0$.  
Through $\mu$-almost every point $x$ the set $$W^u(x) := \left\{y : \limsup_{n\to \infty} \frac 1 n \log (d(f^{-n}(x), f^{-n}(y)))<0 \right\}$$
is a connected $C^{1+\beta}$ injectively immersed manifold with $T_x W^u(x) = E^u(x)$ (see \cite{MR0458490}) called the \emph{(global) unstable Pesin  manifold of $f$ through $x$}\index{Pesin!unstable manifold}.  The collection of all $W^u(x)$ forms a partition of (a full measure subset of) $M$; in general, this partition does not have the structure of a nice foliation.  However, restricted to sets of large measure the partition into local unstable manifolds has the structure of a continuous lamination. 
%\note{Brief explanation of continuous lamination}   
That is, for almost every $x\in M$  and  any $\epsilon>0$ there is a neighborhood $U$ of $x$ such that, on a set $\Omega$ of  relative measure $(1-\epsilon)$ in   $U$,  the local leaves of $W^u$-manifolds form a partition of $\Omega$ by embedded $\dim(E^u)$-dimensional balls that vary continuously in the $C^{1+\beta}$-topology.  

Given the Lyapunov exponents $\lambda^1> \lambda^2>\dots >\lambda^p$ of $\mu$, fix $j\in \{1,\cdots, p\}$ such that $\lambda^j>0$.  Then, for almost every $x$, the set
$$W^j(x) := \left\{y : \limsup_{n\to \infty} \frac 1 n \log (d(f^{-n}(x), f^{-n}(y)))\le -\lambda^j \right\}$$
is again a connected, $C^{1+\beta}$ injectively immersed manifold with $$T_x W^j(x) = \bigoplus_{\lambda^i\ge \lambda^j} E^i(x)$$ called the \emph{(global) $j$th unstable manifold} through $x$.   %\note{added  remark on non-integrability}
We remark that, in general, the intermediate unstable distributions, $E^i(x)$ for $\lambda^i>0$, do not integrate to  invariant family of immersed submanifolds.

\subsubsection{Coarse Lyapunov exponents and subspaces}\label{sss:cle}
Let $\alpha\colon \Z^d\to \diff^1 (M)$ be an action and let $\mu$ be an ergodic, $\alpha$-invariant probability measure.    We introduce objects that  play the role of unstable subspaces and unstable manifolds for the $\Z^d$-action  $\alpha$.

Given Lyapunov exponents $\lambda^1, \lambda^2,\dots,\lambda^p\colon \R^d\to \R$ we say $\lambda^i$ and $\lambda^j$ are \emph{positively proportional} if there is a $c>0$ with $$\lambda^i = c\lambda^j. $$
Note that this defines an equivalence relation on the linear functionals $$\lambda^1, \lambda^2,\dots,\lambda^p\colon \R^d\to \R.$$  
The positive proportionality  classes are called \emph{coarse Lyapunov exponents}.\index{coarse Lyapunov!exponent}\index{Lyapunov exponent!coarse} 
For a $\Z$-action generated by  a single diffeomorphism $f$, the coarse Lyapunov exponents are simply the collections of positive, zero, and negative Lyapunov exponents.  

Let $\chi= \{ \lambda^i\}$ be a coarse Lyapunov exponent.  While the size of $\chi(n)$ is not well defined, the sign of $\chi(n)$ is well defined.    Write $$E^\chi(x) = \oplus _{\lambda^i\in \chi}E^i(x)$$
called the corresponding \emph{coarse Lyapunov subspace.}%\index{coarse Lyapunov subspace!Lyapunov exponent
\index{coarse Lyapunov!subspace}

\subsubsection{Coarse Lyapunov manifolds for  $\Z^d$-actions}\label{sss:clm}
Analogous to the existence and properties of unstable Pesin manifolds for nonuniformly hyperbolic diffeomorphisms we have the following for actions of higher-rank abelian groups.  

Let $\alpha\colon \Z^d\to \diff^{1+\beta} (M)$ be an action and let $\mu$ be an ergodic, $\alpha$-invariant probability measure. Let $\Lambda$ be as in Theorem \ref{thm:oscHR}.
\begin{proposition}
	For almost every $x\in \Lambda$ and for every coarse Lyapunov exponent $\chi$ there is a connected, $C^{1+\beta}$, injectively immersed manifold $W^\chi(x)$ satisfying the following:
	\begin{enumerate}
		\item  $T_x W^\chi(x) = E^\chi(x)$;
		\item $\alpha(n) W^\chi(x) = W^\chi(\alpha(n)(x))$ for all $n\in \Z^d$;
		\item $W^\chi(x)$ is the set of all $y\in M$ satisfying $$\limsup_{k\to \infty} \frac 1 k \log d(\alpha(-kn) (y) , \alpha(-kn)(x)) <0 \text{ for all $n\in \Z^d$ with $\chi(n)>0$}.$$
		%\item restricted to sets of large measure, the partition into $W^\chi$-manifolds has the structure of a measurable lamination.  
	\end{enumerate}
\end{proposition}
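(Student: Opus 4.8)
The plan is to realize each coarse Lyapunov manifold $W^\chi(x)$ as a finite intersection of unstable Pesin manifolds of carefully chosen elements of $\alpha$, and then to read off its properties. Fix the coarse Lyapunov exponent $\chi$ and a representative functional $\lambda$ of $\chi$, so that every $\lambda^i\in\chi$ has the form $\lambda^i=c_i\lambda$ with $c_i>0$. Call $\lambda^j$ a \emph{transverse} exponent for $\chi$ if $\lambda^j\not\equiv 0$ and $\lambda^j$ is not a positive multiple of $\lambda$; for such $\lambda^j$ one has $\ker\lambda^j\neq\ker\lambda$, so the open cone $\{v\in\mathbb R^d:\lambda(v)>0,\ \lambda^j(v)<0\}$ is nonempty (a hyperplane through the interior of a half-space meets both of its sides). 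Since $\mathbb Q^d$ is dense and the $p$ conditions $\lambda^i(v)\neq 0$ are open, we may choose an integer vector $n_j$ in this cone with $\lambda^i(n_j)\neq 0$ for all $i$; we also fix one Weyl-regular $n_0\in\mathbb Z^d$ with $\lambda(n_0)>0$. Put $f_k:=\alpha(n_k)$, where $k$ ranges over $0$ and the finitely many transverse indices $j$. By \cref{thm:oscHR}, each $f_k$ has with respect to $\mu$ only the nonzero Lyapunov exponents $\lambda^i(n_k)$, with Oseledec subspaces obtained by grouping the $E^i(x)$ that share a common value $\lambda^i(n_k)$; so by the Pesin theory of \cref{unstmanifold11} (which uses only this Oseledec data, not ergodicity of $\mu$ for $f_k$) each $f_k$ admits, for $\mu$-a.e.\ $x$, an unstable Pesin manifold $W^u_{n_k}(x)$, a $C^{1+\beta}$ injectively immersed manifold with $T_xW^u_{n_k}(x)=E^u_{n_k}(x):=\bigoplus_{\lambda^i(n_k)>0}E^i(x)$.

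Next I would record the elementary linear-algebra lemma $\bigcap_k E^u_{n_k}(x)=E^\chi(x)$. If $\lambda^i\in\chi$ then $\lambda^i(n_k)=c_i\lambda(n_k)>0$ for every $k$ (since $\lambda(n_k)>0$ in all cases), so $E^i(x)\subseteq E^u_{n_k}(x)$ and hence $E^\chi(x)\subseteq\bigcap_k E^u_{n_k}(x)$. Conversely, if $\lambda^i\notin\chi$ then either $\lambda^i\equiv 0$, or $\lambda^i$ is a negative multiple of $\lambda$ (so $\lambda^i(n_k)<0$ for all $k$), or $\lambda^i$ is transverse (so $\lambda^i(n_i)<0$ by the choice of $n_i$); in each of these cases $E^i(x)\not\subseteq E^u_{n_k}(x)$ for some $k$. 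Since every $E^u_{n_k}(x)$ is a sub-sum of the Oseledec splitting $T_xM=\bigoplus_i E^i(x)$, the intersection of these sub-sums is exactly $\bigoplus_{\lambda^i\in\chi}E^i(x)=E^\chi(x)$.

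I would then \emph{define} $W^\chi(x):=\bigcap_k W^u_{n_k}(x)$ and claim it is a connected $C^{1+\beta}$ injectively immersed submanifold with $T_xW^\chi(x)=E^\chi(x)$, satisfying (1)--(3); the common full-measure set is the finite intersection of those on which the various $W^u_{n_k}$ are defined (and then one intersects over the finitely many coarse exponents). Granting the manifold structure, properties (2) and (3) come out cheaply. For (2): commutativity $\alpha(n)\circ\alpha(n_k)=\alpha(n_k)\circ\alpha(n)$ shows that $\alpha(n)$ carries $W^u_{n_k}(x)$ onto $W^u_{n_k}(\alpha(n)(x))$, since $W^u_{n_k}$ is characterized by exponential backward contraction under $\alpha(n_k)$, a condition preserved by conjugation; intersecting over $k$ gives $\alpha(n)W^\chi(x)=W^\chi(\alpha(n)(x))$. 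For (3): $y\in W^u_{n_k}(x)$ iff $\limsup_{m\to\infty}\tfrac1m\log d(\alpha(-mn_k)(y),\alpha(-mn_k)(x))<0$, so $y\in W^\chi(x)$ iff this holds for every chosen $n_k$; since each $n_k$ satisfies $\chi(n_k)>0$, the condition ``for all $n$ with $\chi(n)>0$'' is a priori stronger, and it is in fact equivalent, because along the leaf $T_yW^\chi(x)=E^\chi(y)$, along which \cref{thm:oscHR}\ref{kk} gives $\|D\alpha(-mn)|_{E^\chi}\|\asymp e^{-m\lambda^i(n)}\to 0$ whenever $\chi(n)>0$, and this infinitesimal contraction integrates (via the uniform tempered Pesin estimates on a block, together with $W^\chi(x)\subseteq W^u_{n_0}(x)$) to exponential contraction of $W^\chi(x)$ under $\alpha(-mn)$.

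The main obstacle is to show that this finite intersection really is a $C^{1+\beta}$ injectively immersed manifold with tangent distribution exactly $E^\chi$ along its leaves — this is the coarse Lyapunov manifold theorem itself — and one cannot simply invoke the implicit function theorem, since the tangent spaces $E^u_{n_k}(x)$ of the pieces being intersected need not be pairwise transverse in $T_xM$. I would handle it by induction on $k$. Start with the embedded disk $W^u_{n_0}(x)$ on a large Pesin block. Given that $D_K:=\bigcap_{k\in K}W^u_{n_k}(x)$ has been shown to be a $C^{1+\beta}$ embedded disk through $x$ with tangent distribution $\bigcap_{k\in K}E^u_{n_k}$, and given a further transverse index $j$, the intersection $D_K\cap W^u_{n_j}(x)$ is precisely the unstable manifold, \emph{inside} $D_K$, of the orbit of $x$ under $\alpha(n_j)$ (which permutes the disks $D_K$ attached to the $\alpha(n_j)$-orbit of $x$, by commutativity); it exists by the relative Pesin unstable manifold theorem for the restricted dynamics and is again a $C^{1+\beta}$ embedded disk, now tangent to $\bigl(\bigcap_{k\in K}E^u_{n_k}(x)\bigr)\cap E^u_{n_j}(x)$. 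After finitely many steps the tangent distribution is $\bigcap_k E^u_{n_k}=E^\chi$; saturating the resulting local disks under the dynamics of $\alpha(n_0)$, which uniformly expands $E^\chi$, and using the invariance proved above, produces the global injectively immersed leaf $W^\chi(x)$. The relative unstable manifold step is standard nonuniform hyperbolic theory; for the full execution of the construction of coarse Lyapunov manifolds in this generality I would appeal to the work of Kalinin--Spatzier and Kalinin--Katok--Rodriguez Hertz cited in the text.
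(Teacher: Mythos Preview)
Your approach is essentially the same as the paper's: the paper does not give a proof but only a one-sentence construction, defining $W^\chi(x)$ as the path component containing $x$ of $\bigcap_{n\in\Z^d,\ \chi(n)>0} W^u_{\alpha(n)}(x)$. You flesh this out considerably by selecting finitely many Weyl-regular elements $n_k$ whose unstable subspaces already intersect down to $E^\chi$, and by giving an inductive scheme (relative Pesin theory inside the previously constructed disk) to obtain the manifold structure; this is exactly the standard route in the references you cite. The only point to flag is that the paper explicitly takes the \emph{path component} of the intersection through $x$, whereas you take the full intersection; your local-disk-then-saturate argument does produce a connected leaf, so this is consistent, but it is worth noting that a priori $\bigcap_k W^u_{n_k}(x)$ need not be connected and the proposition asserts connectedness.
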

\noindent The manifold $W^\chi(x) $ is called the \emph{coarse Lyapunov manifold} through $x$ associated with the coarse Lyapunov exponent $\chi$.\index{coarse Lyapunov!manifold}\index{Lyapunov manifold!coarse}

To construct  $W^\chi$-manifolds, given $n\in \Z^d$ with $\chi(n)>0$ let $W_{\alpha(n)}^u(x)$ denote the unstable manifold for the diffeomorphism $\alpha(n)\colon M\to M$ through $x$.  Then, for almost every $x\in M$ the manifold  $W^\chi(x)$ is the %\note{``path component'' is a well-accepted term.}
path component of the intersection $$\bigcap_{n\in \Z^d, \chi(n)>0}W_{\alpha(n)}^u(x)$$ containing $x$.

\section{Metric entropy}

\subsection{Metric entropy}\label{sec:ME}  General references for this subsection include \cite{MR02172581,MR1086631}.  Throughout, we take $(X,\mu)$ to be a \emph{standard probability space}.  That is, $(X,\mu)$ equipped with the $\sigma$-algebra of $\mu$-measurable sets is measurably isomorphic to an interval equipped with the Lebesgue measure and a countable  number of point masses;  see for instance \cite[Chapter 2]{MR1086631}. 

\subsubsection{Measurable partitions and conditional measures}\label{ss:measpart}
Recall that  a partition $\xi$ of $(X,\mu)$ is   \emph{measurable}\index{partition!measurable} if the quotient $(Y ,\hat \mu ) :=(X,\mu)/\xi$ is a {standard probability space}.  See also Definition \ref{B:meas_partition} in Appendix \ref{B:secMeasurablePart}.  % that is, if $(Y ,\hat \mu) $ is measurably isomorphic to the unit interval (with Lebesgue measure) and a countable number of atoms.  
This is a technical but crucial condition.   For  more discussion and other characterizations of measurability see \cite{1208.4550}, \cite{MR0047744}, and   Appendices \ref{App:rokhlin} and \ref{App:Pinsker}.  

A key property of measurable partitions is the existence and uniqueness of a family of conditional measures  (or a \emph{disintegration})  of $\mu$ relative to this partition.   Given a partition $\xi$ of $X$, for $x\in X$ we write $\xi(x)$ for the element of $\xi$ containing $x$.
%\note{defn added}
\begin{definition}\label{def:condmeas} Let $\xi$ be a measurable partition of $(X, \mu)$.  Then there is family of Borel probability measure $\{\mu^\xi_x\}_{x\in X}$, called a \emph{family of conditional measures}\index{measure(s)!conditional} of $\mu$ relative to $\xi$, with the following properties: For almost every $x$
	\begin{enumerate}
		\item $\mu_x^\xi$ is a Borel probability measure on $X$ with $\mu^\xi_x(\xi(x)) = 1$;
		\item if $y\in \xi(x)$ then $\mu_y^\xi= \mu_x^\xi$.
	\end{enumerate}
	Moreover, if $D\subset X$ is a Borel subset then 
	\begin{enumerate}[resume]
		\item $x\mapsto \mu_x^\xi(D)$ is measurable and 
		\item $\mu(D) = \int \mu_x^\xi(D) \ d \mu(x)$.
	\end{enumerate}
	Such a family is unique modulo $\mu$-null sets.  
\end{definition}
For construction and properties of $\{\mu^\xi_x\}$ see for instance \cite{MR0047744}.  See also Appendix \ref{B:secDisintegration} for further discussion.  
%Such a family $\{\mu_x^\xi\}$ of probability measures exists and is unique modulo $\mu$-null sets (c.f.\ \cite{MR0047744}.)  

\subsubsection{Conditional information and conditional entropy}\label{ss:condentropy}
Given a measurable partition  $\xi$ of a standard probability space $(X,\mu)$, write $\{\mu_x^\xi\} $ for a family of conditional measures of $\mu$ with respect to the partition $\xi$.  % (which may be uncountable) 
Given two measurable partitions $\eta, \xi$ of $(X,\mu)$ 
the \emph{conditional information} of $\eta$ relative to $\xi$  is   $$I_\mu(\eta \mid \xi) (x)= -  \log (\mu^\xi_x(\eta(x)))$$  and  
%\note{``conditional entropy'' seems more standard than ``mean conditional entropy''} 
the \emph{conditional entropy}\index{entropy of a partition!conditional} of $\eta$ relative to $\xi$ is $$H_\mu(\eta \mid \xi) = \int I_\mu(\eta \mid \xi) (x) \ d \mu(x).$$ %where $\{\mu^\xi_x\}$ is a family of conditional probability measures relative to the partition $\xi. $  
The \emph{join}\index{partition!join} $\eta\vee \xi$ of two partitions $\eta$ and $\xi$ is $$\eta\vee \xi = \{A\cap B\mid A \in \eta, B\in \xi\}.$$
The \emph{entropy} of $\eta$ is  $H_\mu(\eta) = H_\mu(\eta \mid \{\emptyset, X\})$.  Note that if $H_\mu(\eta)<\infty$ then $\eta$ is necessarily countable (mod zero) and $H_\mu(\eta) = -\sum_{C\in \eta} \log (\mu(C)) \mu (C).$
\index{entropy of a partition}

\subsubsection{Metric entropy of a transformation}
Let $f\colon (X,\mu)\to (X,\mu)$ be an invertible, measurable, measure-preserving transformation.  %Then $f\inv\colon X\to X$ is also a measurable, measure-preserving transformation.
Let $\eta$ be an arbitrary measurable partition of $(X,\mu)$.
We define 
$$\eta^+ := \bigvee_{i=0}^\infty f^i \eta, \quad \quad \quad \eta^f:= \bigvee_{i\in \Z}^\infty f^i \eta.$$
We define the %(``unstable'' or ``future\entfoot'') 
\emph{entropy of $f$ given the partition $\eta$} to be $$h_\mu(f, \eta):= H_\mu( \eta  \mid f \eta^+) =  H_\mu( \eta^+ \mid f \eta^+) = H_\mu( f\inv \eta^+ \mid \eta^+).$$ 
We define the \emph{$\mu$-metric entropy of $f$}\index{entropy of a transformation} to be  $h_\mu(f)= \sup \{ h_\mu(f, \eta)\}$ 
where the supremum is taken over  {all measurable} partitions of $(X,\mu)$. 
If $$\mu = \alpha \mu_1 + \beta \mu_2$$ where $\alpha, \beta\in [0,1]$ satisfy  $\alpha+\beta =1$  and $\mu_1$ and $\mu_2$ are $f$-invariant Borel probability measures then \begin{equation}\label{eq:convex}
	h_\mu(f) = \alpha h_{\mu_1}(f) + \beta h_{\mu_2}(f).
\end{equation}

\subsection{Entropy under factor maps}
Let $(X,\mu)$ and $(Y,\nu)$ be standard probability spaces.  Let $f\colon X\to X$ and $g\colon Y\to Y$ be measure-preserving transformations.   Suppose there is a measurable map $\psi\colon X\to Y$ with $$\psi_*\mu = \nu$$ and $$\psi\circ f = g\circ \psi.$$
In this case, we say that $g\colon (Y,\nu) \to (Y,\nu)$ is a \emph{measurable factor} of $f\colon (X,\mu) \to (X,\mu)$.  

We note   that entropy only decreases under measurable factors: if $ g\colon (Y,\nu) \to (Y,\nu)$ is a {measurable factor} of $f\colon (X,\mu) \to (X,\mu)$ then
$$h_\nu(g) \le h_\mu(f).$$
The difference between the entropies $h_\nu(g) $ and $ h_\mu(f)$ is captured by the {Abramov--Rokhlin theorem}.  Let $\zeta$ be the measurable partition of $(X, \mu)$ into level sets of $\psi\colon X\to Y$.  Note that $\zeta$ is an $f$-invariant partition: $\zeta= \zeta^f$.  Define the conditional entropy $h_\mu(f\mid \zeta)$ of $f$ relative to $\zeta$ to be 
$$h_\mu(f\mid \zeta)= \sup _\xi h_\mu(f, \xi \vee \zeta)$$
where, as usual, the supremum is over all measurable partitions $\xi$ of $(X, \mu)$.  
%When $\zeta$ is the partition  into level sets of $\psi\colon X\to Y$, w
We call $h_\mu(f\mid \zeta)$ the \emph{fiberwise entropy}\index{entropy of a transformation!fiberwise} of $f$.  The Abramov--Rokhlin theorem\index{theorem!Abramov--Rokhlin} (see  \cites{MR0140660,MR1179170,MR0476995}) states the following:\label{AR}
\begin{equation}\label{eq:AR}
	h_\mu(f) =  h_\nu(g) + h_\mu(f\mid \zeta). 
\end{equation}

\subsection{Unstable entropy of a diffeomorphism}\label{unsentropy}
Let $f\colon M\to M$ be a $C^{1+\beta}$ diffeomorphism and let $\mu$ be an ergodic, $f$-invariant measure.  

\subsubsection{Partitions subordinate to a foliation}
%\note{discussion of terminology added} 
For the following discussion and in most applications considered in this text, we may take  $\Fol$ to be an $f$-invariant foliation of $M$ with $C^{1+\beta}$ leaves.  More generally, we may take $\Fol$ to be, in the terminology introduced in \cite{AWB-GLY-P1}, an $f$-invariant, \emph{tame measurable foliation}; that is, $\Fol$ a partition of a full measure set by $C^{1+\beta}$ manifolds with the property that locally, restricting to sets of large measure, $\Fol$ has the structure of a continuous family of $C^{1+\beta}$ discs.  The primary examples of such measurable foliations include the partition into global $j$th unstable Pesin manifolds and the partition into global  coarse Lyapunov manifolds in the setting of $\Z^d$-actions.  Note that the partition into  global leaves of a measurable foliation is not necessarily a measurable partition; rather locally the partition looks like a measurable family of $C^{1+\beta}$ discs.  % transverse structure of the foliation is measurable.  

Write $\Fol(x)$ for the leaf of $\Fol$ through $x$.   We say $\Fol$ is \emph{expanding} (for $f$) if  $\Fol(x)\subset W^u(x)$, i.e.\ if $\Fol(x)$ is a subset of the global unstable manifold through $x$ for $f$ discussed in Section \ref{sec:unstmanifold}. 
As a key example,   one should consider $\Fol^u$, the partition of $M$ into  full global unstable  manifolds.  
\begin{definition}\label{def:sub} We say a measurable partition $\xi$ is \emph{subordinate to $\Fol$}\index{partition!subordinate} if 
	\begin{enumerate}
		%  \item $\xi$ is a measurable partition of the measure space ($\T^3, \mu)$; 
		\item $\xi(x)\subset\Fol(x)$ for $\mu$-\ae $x$;
		\item $\xi(x)$ contains an open (in the immersed topology) neighborhood of $x$ in $\Fol(x)$ for $\mu$-\ae $x$;  
		\item $\xi(x)$ is precompact in (the immersed topology of) $\Fol(x)$ for $\mu$-\ae $x$;
	\end{enumerate}
\end{definition}
%  Above, we write $\xi(x)$ for the element of $\xi$ containing $x$.

\subsubsection{Partial ordering on the set of partitions}\label{sec:parord}
\index{partition!partial order}
We recall the partial order on partitions of $(M,\mu)$.      Let $\xi$ and $\eta$ be   partitions of the probability space $(M,\mu)$.  We write   $$\eta\prec\xi$$ 
and say that $\xi$ is  \emph{finer} than  $\eta$ (or that $\eta$ is  \emph{coarser} than $\xi$) if there is a subset $X\subset M$ with $\mu(X)=1$ such that %\sigmafoot 
for almost every $x$, $$\xi(x)\cap X \subset \eta(x) \cap X.$$
We say $\eta= \xi$ if $\eta\prec\xi$ and $\xi\prec\eta$.   %\note{all $\circeq$ removed and replaced with $=$}

\subsubsection{Entropy conditioned on a foliation}  
%\note{this is standard terminology despite being backwards \\ \\ I see no reason to move def earlier}
We say that  a partition $\xi$ is \emph{increasing} if $f\xi \prec\xi$ where $f \xi $ denotes the partition $f\xi = \{ f(C) \mid C\in \xi\}$.  \index{partition!increasing}
\begin{definition}\label{def:entsub}
	Given an expanding, $f$-invariant foliation $\Fol$ we define the \emph{entropy of $f$ conditioned on $\Fol$}\index{entropy of a transformation!conditioned on a foliation}  to be $$h_\mu (f\mid \Fol)= h_\mu (f, \xi)$$ where $\xi$ is any increasing, measurable partition subordinate to $\Fol$.  
\end{definition}
There are two small claim in Definition \ref{def:entsub}: First we have that $h_\mu (f, \xi_1) =h_\mu (f, \xi_2)$ for any two increasing partitions $\xi_1 $ and $\xi_2$ subordinate to $\Fol$;  see for example \cite[Lemma 3.1.2]{MR819556}.  Second, such a partition $\xi$ always exists.  This was shown when $\Fol= \Fol^u$ is the partition into global  unstable Pesin manifolds for a $C^{1+\beta}$ diffeomorphism in \cite{MR693976} (see also discussion in \cite[(3.1)]{MR819556}) extending a construction due to Sinai for uniformly hyperbolic dynamics \cites{MR0197684,Sinai1968}; the proof in \cite{MR693976} can be adapted for general invariant expanding $\Fol$.  

When $\Fol= \Fol^u$ is the partition into full unstable manifolds, define the \emph{unstable metric entropy of $f$}\index{entropy of a transformation!unstable} to be $$h^u_\mu(f):= h_\mu(f\mid \Fol^u).$$ 
The principal result  (Corollary 5.3)  of \cite{MR819556} shows that for   $C^2$ diffeomorphisms %\regularfoot 
we have equality of the metric entropy of $f$ and the  unstable metric {entropy} of $f$: \begin{equation}\label{eq:LY} h_\mu(f) = h_\mu^u(f).\end{equation} 
For $C^{1+\beta}$-diffeomorphism without zero Lyapunov exponents  equality \eqref{eq:LY} was shown by Ledrappier in  \cite{MR743818}; for the general case of $C^{1+\beta}$-diffeomorphisms, \eqref{eq:LY} holds from \cite{1608.05886}.
%The main result of the first Ledrappier--Young paper \cite{MR819556}.  

\subsection{Entropy, exponents, and geometry of conditional measures.}  (See Appendix \ref{App:LY} for further details).
%  We recall the following definition.  
In this section, we consider the relationships between metric entropy $h_\mu(f)$, Lyapunov exponents, and the geometry of conditional measures along unstable manifolds.

Let $f\colon M\to M$ be a $C^{1+\beta}$ diffeomorphism and let $\mu$ be an ergodic, $f$-invariant measure.
At one extreme we have the following generalization of  Lemma \ref{entropyvatoms} characterizing invariant measures with zero entropy. 
\begin{lemma}\label{entropyvatoms2}
	Let  $\mu$ be an ergodic, $f$-invariant measure on $M$ and let $\xi$ be a measurable partition of $(M,\mu)$ subordinate  to the partition into unstable manifolds.   
	The following are equivalent:
	\begin{enumcount}
		\item\label{truckyou1} $h_\mu(f) = 0$;
		\item\label{truckyou2} %for any measurable partition $\xi$ subordinate to the partition $\Fol^u$ into unstable leaves, 
		for   $\mu$-\ae $x$, the conditional measure $\mu^\xi_{x}$ has at least one atom;
		\item \label{truckyou3} %for any measurable partition $\xi$ subordinate to the partition $\Fol^u$ into unstable leaves, 
		for   $\mu$-\ae $x$, the conditional measure $\mu^\xi_{x}$ is a single atom supported at $x$;
		%\item $\mu^u_{(n_1,n_2),x}= \delta_x$ for $\mu$-\ae $x$;  %s a single  atom supported at $x$. 
		\item\label{truckyou4} the partition of $(M, \mu)$ into full $W^u$-manifolds  is a measurable partition.
	\end{enumcount}
\end{lemma}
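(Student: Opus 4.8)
The plan is to prove the cyclic chain of implications \ref{truckyou1} $\implies$ \ref{truckyou3} $\implies$ \ref{truckyou2} $\implies$ \ref{truckyou1}, and separately \ref{truckyou3} $\iff$ \ref{truckyou4}, using the unstable-entropy formula \eqref{eq:LY}, i.e.\ $h_\mu(f)=h_\mu(f\mid\Fol^u)=h_\mu(f,\xi)$ for any increasing measurable partition $\xi$ subordinate to the unstable foliation. The implication \ref{truckyou3} $\implies$ \ref{truckyou2} is trivial. First I would set up the machinery: pick an increasing measurable partition $\xi$ subordinate to $\Fol^u$ (which exists by \cite{MR693976}, as recalled in Section \ref{unsentropy}), so that $f\xi\prec\xi$ and $\xi(x)$ is a precompact neighborhood of $x$ in $W^u(x)$. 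Then $h_\mu(f)=h_\mu(f,\xi)=H_\mu(\xi\mid f\xi^+)$ where $\xi^+=\bigvee_{i\ge0}f^i\xi$; since $\xi$ is increasing, $\xi^+=\xi$, so $h_\mu(f)=H_\mu(\xi\mid f\xi)=\int -\log\mu^{f\xi}_x(\xi(x))\,d\mu(x)$.

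For \ref{truckyou1} $\implies$ \ref{truckyou3}: Assume $h_\mu(f)=0$. Then $-\log\mu^{f\xi}_x(\xi(x))=0$ a.e., i.e.\ $\mu^{f\xi}_x(\xi(x))=1$ a.e., so the conditional measures of $f\xi$ restricted to an atom of $f\xi$ are already supported on the sub-atom $\xi(x)$. Iterating, $\mu^{f^n\xi}_x(\xi(x))=1$ for all $n\ge1$, hence $\mu^{f^n\xi}_x=\mu^\xi_x$ for all $n$. Now the key geometric input: because $f^{-1}$ contracts $W^u$-leaves, the partitions $f^n\xi$, $n\ge 0$, restricted to a fixed leaf $W^u(x)$, have atoms shrinking to points (one uses that $\xi(x)$ is precompact in the leaf and $f^{-n}$ is uniformly contracting on $W^u$ along the Pesin set, via the Lyapunov structure of Theorem \ref{thm:oseled} and absolute continuity of the unstable lamination on sets of large measure). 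Therefore $\bigvee_{n\ge0}f^n\xi$ separates points within $W^u(x)$, i.e.\ its restriction to $W^u(x)$ is the point partition mod $\mu^\xi_x$. Since $\mu^{f^n\xi}_x=\mu^\xi_x$ for all $n$, passing to the limit forces $\mu^\xi_x$ to coincide with its own conditional on an arbitrarily small neighborhood of $x$, hence $\mu^\xi_x=\delta_x$; by the transitivity/$f$-invariance of leaf-wise measures (as in the Proposition on leaf-wise measures in Section \ref{sec:condi}, or directly since $\xi(x)$ was an arbitrary choice of subordinate partition) this gives $\mu^\xi_x$ is a single atom at $x$ for $\mu$-a.e.\ $x$, which is \ref{truckyou3}.

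For \ref{truckyou2} $\implies$ \ref{truckyou1}: Suppose $\mu^\xi_x$ has an atom for a.e.\ $x$. By ergodicity of $\mu$ and $f$-equivariance of the leaf-wise measures ($\alpha(n)_*\mu^u_x\propto\mu^u_{\alpha(n)x}$, Proposition in Section \ref{sec:condi}), the maximal atomic weight $w(x):=\max_y\mu^\xi_x(\{y\})/\mu^\xi_x(\xi(x))$ — properly normalized as a leaf-wise quantity — is an $f$-invariant function, hence a.s.\ constant, equal to some $c>0$; moreover the atom is located at $x$ itself (one shows the set of atoms of maximal weight is finite on each leaf and $f$-equivariant, forcing it to be $\{x\}$ by recurrence, exactly the argument behind \ref{truckyou2} $\iff$ \ref{truckyou3}). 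Then $\mu^{f\xi}_x(\xi(x))\ge c$ uniformly, so $h_\mu(f)=\int-\log\mu^{f\xi}_x(\xi(x))\,d\mu\le-\log c$; but then running the same estimate with $f$ replaced by $f^n$ and using $h_\mu(f^n)=n\,h_\mu(f)$ (subadditivity/the power rule for entropy) gives $n\,h_\mu(f)\le -\log c$ for all $n$, forcing $h_\mu(f)=0$. Finally, \ref{truckyou3} $\iff$ \ref{truckyou4}: if every $\mu^\xi_x=\delta_x$ then the conditional measures of $\mu$ on full $W^u$-leaves are atomic, so the partition into full leaves is (mod $\mu$) countably separated, hence measurable; conversely if the full-leaf partition is measurable its conditional measures $\mu^u_x$ are finite Borel measures on $W^u(x)$, and $f$-equivariance plus $\alpha(n)$ expanding the leaves forces these to be atomic (otherwise pushing forward contradicts finiteness), which localizes to $\mu^\xi_x=\delta_x$.

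\textbf{Main obstacle.} The delicate point is the step \ref{truckyou1} $\implies$ \ref{truckyou3}: showing that zero entropy forces the conditional measures to be \emph{single} atoms (not merely atomic), which requires controlling how the refining partitions $f^n\xi$ exhaust the point partition along unstable leaves. This needs the (standard but technical) facts that $\xi$ can be chosen with $\bigcap_n f^n\xi(x)=\{x\}$ on a.e.\ leaf and that leaf-wise measures transform equivariantly; for nonuniformly hyperbolic $f$ this relies on Pesin theory and the absolute continuity of the unstable lamination on Pesin sets, which is where the $C^{1+\beta}$ hypothesis enters.
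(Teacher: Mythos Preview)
Your overall strategy is sound and in fact more elementary than the paper's, but there is a genuine directional error in your argument for \ref{truckyou1} $\implies$ \ref{truckyou3}. You correctly deduce from $H_\mu(\xi\mid f\xi)=0$ that $\mu^{f\xi}_x(\xi(x))=1$ and hence $\mu^{f^n\xi}_x=\mu^\xi_x$ for all $n\ge 0$. But then you claim that the atoms of $f^n\xi$ \emph{shrink to points} because $f^{-1}$ contracts $W^u$-leaves. This is backwards: since $\xi$ is increasing ($f\xi\prec\xi$), the atoms of $f^n\xi$ are \emph{growing} and exhaust $W^u(x)$ as $n\to\infty$; it is the atoms of $f^{-n}\xi$ that shrink to points (this is the generating property $\bigvee_{n\ge 0}f^{-n}\xi=$ points). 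The information you have extracted, $\mu^{f^n\xi}_x=\mu^\xi_x$, only tells you that the full leaf-wise measure is supported on the single atom $\xi(x)$; it does not yet localize to $\{x\}$. The fix is straightforward: use instead $h_\mu(f)=H_\mu(f^{-1}\xi\mid\xi)=0$, which gives $\mu^\xi_x(f^{-1}\xi(x))=1$, hence by iteration $\mu^\xi_x(f^{-n}\xi(x))=1$ for all $n$, and since $\bigcap_n f^{-n}\xi(x)=\{x\}$ (generating), you get $\mu^\xi_x=\delta_x$.

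With this correction your route differs from the paper's. The paper derives \ref{truckyou1} $\implies$ \ref{truckyou3} and \ref{truckyou1} $\implies$ \ref{truckyou4} by invoking \cite[Theorem B]{MR819556}: zero entropy means the Pinsker partition is the point partition, and Theorem B identifies the Pinsker partition with the measurable hull of the unstable partition, forcing the unstable partition to equal the point partition mod $\mu$. Your (corrected) argument bypasses the Pinsker machinery and works directly with the conditional entropy formula $h_\mu(f)=H_\mu(f^{-1}\xi\mid\xi)$, which is more self-contained. Similarly, for \ref{truckyou2} $\implies$ \ref{truckyou1} the paper goes via \ref{truckyou2} $\implies$ \ref{truckyou3} using a recurrence argument (analogous to its detailed proof of \ref{truckyou4} $\implies$ \ref{truckyou3} via the sets $G_{\delta,R}$), whereas your entropy-bound argument $n\,h_\mu(f)\le -\log c$ is a nice alternative, though it still implicitly relies on the same recurrence step to pin the atom at $x$ with constant weight $c$. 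Your sketches for \ref{truckyou3}$\iff$\ref{truckyou4} are correct in spirit; the paper's argument for \ref{truckyou4}$\implies$\ref{truckyou3} is essentially the recurrence argument you allude to, made explicit.
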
 %\note{sketch of proof added.}
\begin{proof}[Proof sketch]
	The implications \ref{truckyou1} $\implies$ \ref{truckyou4}  and \ref{truckyou1} $\implies$ \ref{truckyou3} are  a consequence of \cite{MR819556}*{Theorem B} (see also  \cite{1608.05886} for $C^{1+\beta}$ setting.)  Indeed, if $h_\mu(f) = 0$, then the Pinsker partition (see Section \ref{sss:pinsker} below) is the point partition.  From \cite[Theorem B]{MR819556}  we have that the Pinsker partition is the measurable hull of (and in particular is coarser than) the partition into full unstable manifolds.  As the point partition is the finest partition, it follows that the partition into full unstable manifolds is measurably equivalent to the point partition and \ref{truckyou3}  and \ref{truckyou4}  follow.  
	
	The implications \ref{truckyou4} $\implies$ \ref{truckyou3}   and    \ref{truckyou2} $\implies$ \ref{truckyou3}   follow from the dynamics on unstable manifolds and ergodicity of the measure.   For instance,  to see \ref{truckyou4} $\implies$ \ref{truckyou3}, assume  the partition of $(M,\mu)$ into full $W^u$-manifolds  is measurable and let $\{\mu^u_x\}$ denote a family of conditional probability measures for this partition.  As $\mu$ is $f$-invariant and as the partition into full unstable leaves is $f$-invariant, we have $f_* \mu^u_x = \mu^u_{f(x)}$ for almost every $x$.  
	
	Given $x\in M$, let $W^u(x,R)$ denote the metric ball of radius $R$ centered at $x$ in the internal metric of $W^u(x)$.
	Given   $\delta>0$ and  $R>0$, define the set $G_{\delta, R}$ of $(\delta,R)$-good points to be  $$G_{\delta, R}:= \{x\in M\mid \mu^u_x (W^u(x,R))\ge 1-\delta\}.$$
	Fix $R>0$ such that $\mu(G_{\delta, R})>0$.  Take a subset $G'\subset G_{\delta, R}$ with $\mu(G')>0$  such that the function $$x'\mapsto \diam^u_{f^{-n}(x')} (f^{-n}(W^u(x', R)))$$ converges to $0$ uniformly on $G'$ as $n\to \infty$ where $\diam^u_x(B)$ denotes the diameter of $B\subset W^u(x)$ with respect to the internal metric on $W^u(x)$.  
	For almost every $x$, we have $f^n(x)\in G'$ for infinitely many $n\in \N$.  For such $x$ and any $\epsilon>0$, there is $n_0\in \N$ such that for all $n\ge n_0$ with $f^n(x)\in G'$ we have 
	$$f^{-n}(W^u(f^n(x),R))\subset W^u(x, \epsilon)$$whence 
	$$\mu^u_x (W^u(x,\epsilon)) \ge  \mu^u_{f^n(x)} (W^u(f^n(x),R))   \ge 1-\delta.$$
	Taking $ \epsilon\to 0$ we have   $\mu^u_{x}(\{x\})\ge 1-\delta$ and, as $\delta$ was arbitrary, \ref{truckyou3} follows.  
	
	Finally, the implication \ref{truckyou3} $\implies$ \ref{truckyou1} follows from Corollary 5.3  of \cite{MR819556} (see \eqref{eq:LY} below) and the computation of unstable entropy in  \cref{def:entsub}.  
\end{proof}
%The equivalences are will known and follow from \eqref{eq:LY} above, \cite[Theorem B]{MR819556}, and the definition of entropy.  See for example the  discussion in \cite[Section 3]{1208.4550}.  

At the other extreme, we have the following definition.  
\begin{definition}\label{def:SRB}\index{measure(s)!SRB}
	We say $\mu$ is an \emph{SRB} measure (or satisfies the \emph{ SRB property}) if, for any measurable partition $\xi$ of $(M,\mu)$ subordinate  to the partition into unstable manifolds, for almost every $x$ the conditional measure $\mu^\xi_x$ is absolutely continuous with respect to Riemannian volume on $W^u(x)$.  
\end{definition}

We have the following summary of a number of important results.  
\begin{theorem}\label{entropyfacts}
	Let $f\colon M\to M$ be a $C^{1+\beta}$ diffeomorphism and let $\mu$ be an ergodic, $f$-invariant measure.  Then 
	\begin{enumcount}
		\item \label{EFF1}$h_\mu(f)\le   \sum_{\lambda^i>0}m^i \lambda^i$;
		\item \label{EFF2}if $\mu$ is absolutely continuous with respect to volume then  $$h_\mu(f) = \sum_{\lambda^i>0}m^i \lambda^i;$$
		\item \label{EFF3}if $\mu$ is SRB then $h_\mu(f) = \sum_{\lambda^i>0}m^i \lambda^i$.
	\end{enumcount}
\end{theorem}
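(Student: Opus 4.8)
The plan is to prove \ref{EFF1} (the Margulis--Ruelle inequality) by a covering estimate, and then to deduce \ref{EFF3}, and from it \ref{EFF2}, by computing the unstable metric entropy $h^u_\mu(f)$. For \ref{EFF1}: since $h_\mu(f)=\sup_\xi h_\mu(f,\xi)$ it suffices to bound $h_\mu(f,\xi)$ for a partition $\xi$ of small mesh $\epsilon$ with controlled geometry (built, say, from a smooth triangulation). From the standard inequality $h_\mu(f,\xi)\le H_\mu(f^{-1}\xi\mid\xi)=\int H_{\mu^\xi_x}(f^{-1}\xi)\,d\mu(x)\le\int\log N(x)\,d\mu(x)$, where $N(x)$ counts the atoms of $f^{-1}\xi$ meeting $\xi(x)$, the geometric core is the uniform estimate
\[
N(x)\ \le\ K_{\dim M}\ \prod_{i=1}^{p}\max\bigl\{1,\ \|D_xf|_{E^i(x)}\|\bigr\}^{m^i},
\]
obtained because such atoms correspond to elements of $\xi$ meeting $f(\xi(x))$, and the ellipsoid $f(B(x,\epsilon))$ meets that many $\epsilon$-balls when one works in charts adapted to the Oseledec splitting of \cref{thm:oseled}. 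Applying this with $f^n$ in place of $f$ (the Oseledec subspaces are unchanged), dividing by $n$, and using $h_\mu(f)=\tfrac1n h_\mu(f^n)$: the constant term $\tfrac1n\log K_{\dim M}\to0$, while by \cref{thm:oseled} and dominated convergence (the integrands are uniformly bounded) the average $\int\sum_i m^i\,\tfrac1n\log^+\|D_xf^n|_{E^i(x)}\|\,d\mu$ converges to $\sum_{\lambda^i>0}m^i\lambda^i$, giving \ref{EFF1}.

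For \ref{EFF3}, fix an increasing measurable partition $\xi$ subordinate to the foliation $\Fol^u$ into unstable manifolds, so that $h^u_\mu(f)=h_\mu(f,\xi)=H_\mu(f^{-1}\xi\mid\xi)=-\int\log\mu^\xi_x\bigl(f^{-1}\xi(x)\bigr)\,d\mu(x)$. Let $\{\mu^u_x\}$ be the leaf-wise measures along $\Fol^u$ (constructed as in Part~\ref{part:I}, with $f_*\mu^u_x\,\propto\,\mu^u_{f(x)}$ for $\mu$-a.e.\ $x$), let $m^u_x$ be the leaf-wise Riemannian volume on $W^u(x)$, and write $J^u(y)=|\det(D_yf|_{E^u(y)})|$, so that $m^u_{f(x)}(f(A))=\int_A J^u\,dm^u_x$. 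The SRB property is precisely $\mu^u_x\ll m^u_x$; put $\psi_x=d\mu^u_x/dm^u_x$ (for SRB measures this density is positive and continuous along leaves) and normalize the scalar-ambiguous $\mu^u_x$ so that $\psi_x(x)=1$. Invariance $f_*\mu^u_x\,\propto\,\mu^u_{f(x)}$ then forces the leaf-wise functional equation $\psi_x(y)=c(x)\,J^u(y)\,\psi_{f(x)}(f(y))$, and $y=x$ gives $c(x)=J^u(x)^{-1}$. Since $\xi$ is increasing, $f^{-1}\xi(x)=f^{-1}(\xi(f(x)))\subset\xi(x)$, and a leaf-wise change of variables together with the functional equation yields
\[
\mu^\xi_x\bigl(f^{-1}\xi(x)\bigr)=\frac{\mu^u_x\bigl(f^{-1}(\xi(f(x)))\bigr)}{\mu^u_x(\xi(x))}=\frac{c(x)\,\mu^u_{f(x)}(\xi(f(x)))}{\mu^u_x(\xi(x))}.
\]
Taking $-\log$ and integrating, the terms $\log\mu^u_x(\xi(x))$ telescope against the $f$-invariant $\mu$ (they are $\mu$-integrable since $\xi(x)$ is precompact, contains a neighbourhood of $x$, and $\psi_x$ is continuous with $\psi_x(x)=1$), leaving $h^u_\mu(f)=\int\log J^u\,d\mu=\sum_{\lambda^i>0}m^i\lambda^i$ by \cref{thm:oseled}. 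Together with \eqref{eq:LY}, or equivalently with \ref{EFF1} and the trivial bound $h^u_\mu(f)\le h_\mu(f)$, this gives \ref{EFF3}; it also recovers Ledrappier's theorem, cf.\ \cref{thm:led}.

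For \ref{EFF2}, it suffices to show that $\mu\ll\vol$ is SRB and invoke \ref{EFF3}: writing $\mu=\rho\,\vol$ with $\rho\ge0$, the absolute continuity of the unstable Pesin lamination for $C^{1+\beta}$ diffeomorphisms says the conditional measures of $\vol$ along a partition subordinate to $\Fol^u$ are absolutely continuous with respect to the leaf-wise Riemannian volumes, and multiplying by $\rho$ preserves this. The main obstacle throughout is the leaf-wise analysis in \ref{EFF3}: one must know the SRB density is genuinely positive and regular enough (continuous, indeed H\"older) for the normalization $\psi_x(x)=1$ and the pointwise functional equation to be legitimate, and that $x\mapsto\log\mu^u_x(\xi(x))$ is $\mu$-integrable --- both requiring $C^{1+\beta}$ regularity and a bounded-distortion estimate for $J^u$ along unstable leaves; by comparison the covering bound in \ref{EFF1} is routine once made uniform in the iterate $n$.
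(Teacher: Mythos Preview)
The paper does not give a proof of this theorem: it simply cites Ruelle \cite{MR516310} for \ref{EFF1}, Pesin \cite{MR0466791} for \ref{EFF2}, and Ledrappier--Strelcyn \cite{MR693976} for \ref{EFF3}. Your sketch is essentially the content of those references, and the overall strategy---covering estimate for \ref{EFF1}, direct computation of $h^u_\mu(f)=\int\log J^u\,d\mu$ for \ref{EFF3} via the leaf-wise change of variables and a telescoping argument, then absolute continuity of the unstable lamination to reduce \ref{EFF2} to \ref{EFF3}---is correct. Your use of the sandwich $h^u_\mu(f)\le h_\mu(f)\le\sum_{\lambda^i>0}m^i\lambda^i$ (rather than invoking \eqref{eq:LY}) is the right way to avoid circularity, since the Ledrappier--Young identity $h_\mu(f)=h^u_\mu(f)$ is proved downstream of this theorem.

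One point in \ref{EFF1} is not quite right as written. The covering bound
\[
N(x)\le K_{\dim M}\prod_i\max\{1,\|D_xf^n|_{E^i(x)}\|\}^{m^i}
\]
does not hold with a constant $K_{\dim M}$ independent of $x$ and $n$: the Oseledec subspaces $E^i(x)$ are only measurably defined and the angles between them are not uniformly bounded below, so passing from ``charts adapted to the Oseledec splitting'' back to the Riemannian metric costs a factor controlled by those angles. Ruelle's original argument uses the singular values $\sigma_1(D_xf^n)\ge\cdots\ge\sigma_d(D_xf^n)$ instead, for which the ellipsoid covering bound $N(x)\le K\prod_j\max\{1,\sigma_j(D_xf^n)\}$ is genuinely uniform, and then $\tfrac1n\log\sigma_j(D_xf^n)\to\lambda^{i(j)}$ by Oseledec. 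Alternatively you can keep your formulation and invoke part (c) of \cref{thm:oseled} (subexponential angle decay) to absorb the extra factor when dividing by $n$; either way the fix is routine, but the bound as you stated it needs one of these amendments.
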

Theorem \ref{entropyfacts}\ref{EFF1},  known as the \emph{Margulis--Ruelle inequality}\index{Margulis--Ruelle inequality}, is  proven in  \cite{MR516310}. 
Theorem \ref{entropyfacts}\ref{EFF2},  known as the \emph{Pesin entropy formula}\index{Pesin!entropy formula}, is shown in \cite{MR0466791}.  
Theorem \ref{entropyfacts}\ref{EFF3} was established by Ledrappier and Strelcyn\index{theorem!Ledrappier--Strelcyn} in \cite{MR693976}.  
In the next section, we will complete Theorem \ref{entropyfacts} with Ledrappier's Theorem, Theorem \ref{thm:led}, which provides a converse to Theorem \ref{entropyfacts}\ref{EFF3}.

For general measures invariant under a $C^2$-diffeomorphism  (for the case of $C^{1+\beta}$-diffeomorphisms, see \cite{1608.05886}), Ledrappier and Young  explain explicitly the defect from equality in Theorem \ref{entropyfacts}\ref{EFF1}. This captures the intermediate geometry of measures with positive entropy (and hence non-atomic unstable conditional measures) but entropy strictly smaller than the sum of positive Lyapunov exponents.
%We won't explain all the terminology here. 

Let $\delta^i$ denote the (almost-surely constant value of the)  pointwise  dimension of $\mu$ along the $i$th unstable manifolds; see Section \ref{S:LY2} in Appendix \ref{App:LY} for definition.    With $\delta^0=0$, let $$\gamma^i = \delta^{i} - \delta^{i-1}.$$  The  coefficients $\gamma^i$ reflect the transverse geometry (in particular the transverse dimension) of the measure $\mu$ inside of the $i$th unstable manifold transverse to the collection of  $(i-1)$th unstable manifolds.  In particular, we  have $\gamma^i\le m^i$ (see \cite[Proposition 7.3.2]{MR819557}.)
\begin{theorem}[\cite{MR819557}]\label{thm:LYII}\index{theorem!Ledrappier--Young}
	Let $f\colon M\to M$ be a $C^{1+\beta}$ diffeomorphism and let $\mu$ be an ergodic, $f$-invariant measure.  Then
	$$h_\mu(f)=    \sum_{\lambda^i>0}\gamma^i \lambda^i.$$
\end{theorem}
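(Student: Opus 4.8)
The plan is to prove the formula by a telescoping argument along the flag of unstable manifolds. Write the positive Lyapunov exponents without multiplicity as $\lambda^1 > \lambda^2 > \cdots > \lambda^u > 0$ and let $W^1(x) \subset W^2(x) \subset \cdots \subset W^u(x)$ be the corresponding nested unstable manifolds from Section \ref{sec:unstmanifold}, so that $W^u$ is the full unstable Pesin foliation and $T_x W^i(x) = \bigoplus_{j \le i} E^j(x)$. For each $0 \le i \le u$ fix an increasing measurable partition $\xi_i$ subordinate to the foliation $W^i$ (these exist by the Ledrappier--Strelcyn construction recalled after Definition \ref{def:entsub}; take $\xi_0$ to be the point partition), and set
\begin{equation*}
	h_i := h_\mu(f \mid W^i) = h_\mu(f,\xi_i).
\end{equation*}
Then $h_0 = 0$ and, by the Ledrappier--Young identity \eqref{eq:LY}, $h_u = h^u_\mu(f) = h_\mu(f)$. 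Hence the theorem reduces to the single increment identity
\begin{equation}\label{eq:LYincr}
	h_i - h_{i-1} = \gamma^i \lambda^i, \qquad 1 \le i \le u,
\end{equation}
since summing \eqref{eq:LYincr} over $i$ and telescoping yields $h_\mu(f) = \sum_{i=1}^u \gamma^i\lambda^i = \sum_{\lambda^i > 0}\gamma^i\lambda^i$.

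A preliminary step is to make sense of the coefficients $\gamma^i = \delta^i - \delta^{i-1}$: one must show that for each $i$ the transverse pointwise dimension
\begin{equation*}
	\delta^i = \lim_{r \to 0} \frac{\log \mu^{W^i}_x\bigl(B^{W^i}(x,r)\bigr)}{\log r}
\end{equation*}
exists and is $\mu$-a.e.\ equal to a constant, where $\mu^{W^i}_x$ is the leaf-wise measure of $\mu$ along $W^i(x)$ and $B^{W^i}(x,r)$ the intrinsic ball. This is the dimension half of the Ledrappier--Young machinery (carried out in Appendix \ref{App:LY}): the argument pairs a Borel--Cantelli / covering estimate with the subadditive ergodic theorem applied to the log-Jacobian of $f$ along $W^i$, using the uniform (tempered) estimates in Oseledec's theorem (Theorem \ref{thm:oseled}) to pass between metric balls and the dynamical cylinders $f^{-n}\xi_i(x)$. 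One records along the way that $\gamma^i \le m^i$ (cf.\ \cite[Proposition 7.3.2]{MR819557}), which also yields the Margulis--Ruelle inequality (Theorem \ref{entropyfacts}\ref{EFF1}) and identifies equality with the SRB case (Ledrappier's theorem, Theorem \ref{thm:led}).

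The heart of the matter is \eqref{eq:LYincr}, which I would prove as two matching inequalities. Both sides concern the dynamics of $f$ transverse to the $W^{i-1}$-foliation inside $W^i$-leaves: disintegrating $\mu^{W^i}_x$ over the partition of $W^i(x)$ into $W^{i-1}$-plaques produces transversal conditional measures that are $\gamma^i$-dimensional, while $f$ expands the $E^i$-direction at rate $e^{\lambda^i}$. For the bound $h_i - h_{i-1} \le \gamma^i\lambda^i$ one estimates, via the $\gamma^i$-dimensionality of these transversals, that $f^{-n}\xi_i(x)$ meets at most $e^{n\gamma^i\lambda^i + o(n)}$ atoms of $\xi_{i-1}$, and converts this count into an entropy bound by the Shannon--McMillan--Breiman theorem; the reverse inequality runs the same argument backwards, using a Besicovitch-type covering of the transversal to exhibit at least $e^{n\gamma^i\lambda^i - o(n)}$ distinct $\xi_{i-1}$-atoms in each $f^{-n}\xi_i$-cell. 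The main obstacle is precisely this matching: since the sub-bundle $E^i$ is only measurable and is not tangent to any invariant foliation (only the sums $W^i$ are), one cannot work along $E^i$ directly but must control the conditional measures of $\mu$ along $W^i$ modulo $W^{i-1}$ together with the non-conformality of $Df$ within the $E^i$-block, and must build the partitions $\xi_i$ with boundaries of small measure so that the SMB counts survive --- this is where essentially all the technical weight of Ledrappier--Young lies. Finally, for $f$ merely $C^{1+\beta}$ one substitutes \cite{1608.05886} for \cite{MR819556,MR819557} at the two places where $C^2$ was used, namely in \eqref{eq:LY} and in the construction of $\delta^i$.
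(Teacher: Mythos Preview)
Your outline is correct and matches the paper's approach (Appendix~\ref{App:LY}, Section~\ref{S:LY2}): the same telescoping via $h^i-h^{i-1}=\gamma^i\lambda^i$, the same reduction to a ``conformal'' situation on the quotient $W^i/W^{i-1}$, and the same appeal to \eqref{eq:LY} for $h_u=h_\mu(f)$.

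One correction on where the difficulty lies: the main technical obstacle is not ``non-conformality of $Df$ within the $E^i$-block'' (on the quotient there is a single exponent $\lambda^i$, so the situation \emph{is} essentially conformal), but rather that there is no canonical transverse metric on $W^i/W^{i-1}$.  What makes your counting/SMB argument go through is the Lipschitz regularity of the $W^{i-1}$-holonomies inside $W^i$-leaves; this is precisely the step where the $C^2$ hypothesis was used in \cite{MR819557} and where one substitutes \cite{1608.05886} (and, for the dimension part, \cite[Appendix]{MR1709302}) in the $C^{1+\beta}$ case.
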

(Note that the proof in \cite{MR819557} required $f$ to be $C^2$; following {\cite{1608.05886} and \cite{MR1709302}*{Appendix}}, the theorem holds when $f\in C^{1+\beta}.)$

\starsubsection{Coarse-Lyapunov entropy  and entropy product structure}\label{sec:entprod}
Consider now   $\alpha\colon \Z^d \to \Diff^{1+\beta} (M)$ a smooth $\Z^d$-action on a compact manifold $M$.  Let $\mu$ be an ergodic, $\alpha$-invariant measure.  Recall that a coarse Lyapunov exponent $\chi$ is a positive-proportionality class of Lyapunov exponents of $\alpha$.  For almost every $x\in M$ there is a coarse Lyapunov subspace $E^\chi(x)\subset T_x M$ and a coarse Lyapunov manifold $W^\chi(x) $ tangent to $ E^\chi(x) $ at $x$.  

Let $\Fol^\chi$ denote the partition of $M$ into full $W^\chi$-manifolds. Given $n\in \Z^d$ with $\chi(n)>0$, following the construction from \cite{MR693976}  we can find a measurable partition $\xi$ of $(M,\mu)$ that is subordinate to $\Fol^\chi$ and increasing for $\alpha(n)$.  We then define the $\chi$-entropy of $\alpha(n)$ to be $$h_\mu^\chi (\alpha(n)  )= h_\mu (\alpha(n)  \mid \chi) := h_\mu(\alpha(n) \mid \Fol^\chi) = h_\mu(\alpha(n) , \xi) .$$

The main result of \cite{AWB-GLY-P3} is the following ``product structure of entropy'' for $\Z^d$-actions.
\begin{theorem}[{\cite[Corollary 13.2]{AWB-GLY-P3}}]\label{thm:entprod}
	Let $\alpha\colon \Z^d \to \Diff^{1+\beta} (M)$ be a smooth $\Z^d$-action on a compact manifold $M$ and let $\mu$ be an ergodic, $\alpha$-invariant measure. Then for any $n\in \Z^d$
	$$h_\mu(\alpha(n)) = \sum_{\chi(n)>0} h_\mu(\alpha(n)\mid \chi).$$
\end{theorem}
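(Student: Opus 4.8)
The plan is to reduce the identity to a product structure for the conditional measures of $\mu$ along unstable manifolds of $\alpha(n)$, taken with respect to the coarse Lyapunov subfoliations. Fix $n\in\Z^d$ and set $f=\alpha(n)$. By \eqref{eq:LY} (and \cite{1608.05886} in the $C^{1+\beta}$ case) we have $h_\mu(\alpha(n))=h_\mu(f\mid\Fol^u)$, where $\Fol^u$ is the partition of $M$ into global unstable Pesin manifolds of $f$. Grouping the Lyapunov functionals of the $\Z^d$-action (\cref{thm:oscHR}) into coarse classes, one has $E^u_f(x)=\bigoplus_{\chi(n)>0}E^\chi(x)$; since $\chi(n)>0$ forces $W^\chi(x)\subset W^u_{\alpha(n)}(x)$ (\cref{sss:clm}), the leaf $W^u_f(x)$ carries through each point the transverse family of coarse Lyapunov leaves $\{W^\chi(x):\chi(n)>0\}$, whose tangent spaces direct-sum to $E^u_f(x)$. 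So it suffices to compare the conditional measure of $\mu$ along $W^u_f$ with the conditionals $\mu^\chi_x$ along the $W^\chi$.

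\textbf{Product structure of the unstable conditionals (the main step).} I would establish: for $\mu$-\ae $x$ there is an iterated holonomy identification $W^u_f(x)\simeq\prod_{\chi(n)>0}W^\chi(x)$ under which $\mu^u_x$, the conditional measure of $\mu$ along $W^u_f(x)$, is equivalent to the product $\bigotimes_{\chi(n)>0}\mu^\chi_x$, with a density whose logarithm is an $L^1$-coboundary for $f$. The argument is induction on the number $k$ of coarse classes with $\chi(n)>0$; for $k=1$ there is nothing to prove since then $\Fol^\chi=\Fol^u$. For $k\ge 2$, using that distinct coarse Lyapunov functionals are pairwise non-proportional one picks $m\in\Z^d$ with all $\chi_j(m)\ne 0$ and not all of one sign, which splits $\{\chi_1,\dots,\chi_k\}$ into nonempty groups $S_+=\{\chi_j(m)>0\}$ and $S_-=\{\chi_j(m)<0\}$. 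Inside the smooth manifold $W^u_f(x)$ the distributions $\bigoplus_{S_+}E^{\chi_j}$ and $\bigoplus_{S_-}E^{\chi_j}$ integrate to transverse subfoliations that are, respectively, partial unstable and partial stable for $\alpha(m)$; the absolute continuity of their holonomies within $W^u_f$ (Pesin theory, cf.\ Ledrappier--Strelcyn \cite{MR693976} and \cite{AWB-GLY-P1}) together with the expansion/contraction of $\alpha(m)$ along them---slide a conditional along the contracted subfoliation, iterate, and apply Poincar\'e recurrence---yields $\mu^u_x\propto\mu^{S_+}_x\otimes\mu^{S_-}_x$ up to an $L^1$-controlled density. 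Applying the inductive hypothesis to $S_+$ and $S_-$ produces the full product over the individual $\chi_j$. Equivalently, via \cref{thm:LYII}, this step amounts to the additivity of the transverse pointwise dimensions of $\mu$ along the coarse Lyapunov leaves inside $W^u_f$.

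\textbf{Deducing the entropy identity.} Given the product structure, for each $\chi$ with $\chi(n)>0$ pick an increasing measurable partition $\xi_\chi$ subordinate to $\Fol^\chi$ for $f$ (these exist by the construction of \cite{MR693976} adapted to invariant expanding foliations, as noted after \cref{def:entsub}), and assemble from them---using the holonomy identification---an increasing measurable partition $\xi$ subordinate to $\Fol^u$ whose atoms are the holonomy images of the boxes $\prod_\chi\xi_\chi(x)$. Then $f\inv\xi(x)$ corresponds to $\prod_\chi f\inv\xi_\chi(x)$ and, by the product structure, $\mu^\xi_x\big(f\inv\xi(x)\big)=\rho(x)\prod_\chi\mu^{\xi_\chi}_x\big(f\inv\xi_\chi(x)\big)$ with $\int\log\rho\,d\mu=0$, the correction being a holonomy Jacobian whose log integrates to zero by an argument as in the proof of \cref{prop:easyLed} (cf.\ \cite[Proposition~2.2]{MR693976}). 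Taking $-\log$, integrating, and using $h_\mu(f\mid\Fol)=-\int\log\mu^\xi_x(f\inv\xi(x))\,d\mu(x)$ for increasing subordinate partitions, we obtain
$$h_\mu(\alpha(n))=h_\mu(f\mid\Fol^u)=\sum_{\chi(n)>0}\Big(-\int\log\mu^{\xi_\chi}_x\big(f\inv\xi_\chi(x)\big)\,d\mu(x)\Big)=\sum_{\chi(n)>0}h_\mu(\alpha(n)\mid\chi).$$

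\textbf{Where the difficulty lies.} The heart of the proof is the product-structure step, and within it the upgrade from ``$\mu^u_x$ absolutely continuous with respect to a product'' to ``$\mu^u_x$ equivalent to a product with a tame density'': the holonomies between coarse sub-leaves are only measurable a priori, and bounding their Jacobians requires the full strength of Pesin-theoretic absolute continuity together with a renormalization driven by an element $\alpha(m)$ that is hyperbolic in precisely the right directions. This is exactly where the higher-rank hypothesis enters---pairwise non-proportional coarse Lyapunov functionals provide the separating elements $m$---while for $d=1$, where there is a single positive coarse class, the statement degenerates to \eqref{eq:LY}.
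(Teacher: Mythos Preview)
The paper does not prove this theorem in full but sketches the argument of \cite{AWB-GLY-P3}, which proceeds entirely through the Ledrappier--Young dimension formalism rather than through any product structure of conditional measures. One first establishes (in \cite{AWB-GLY-P2}) an analogue of \cref{thm:LYII} for each coarse-Lyapunov conditional entropy, $h_\mu(\alpha(n)\mid\chi)=\sum_{\lambda^i\in\chi}\gamma^{\chi,i}_n\,\lambda^i(n)$, with a priori $n$-dependent transverse dimensions $\gamma^{\chi,i}_n$; the crux, following Hu \cite{MR1213080}, is then to use the linearity of $n\mapsto h_\mu(\alpha(n)\mid\chi)$ on each cone where no coarse exponent changes sign to force $\gamma^{\chi,i}_n=\gamma^i$, after which summing over $\chi$ with $\chi(n)>0$ recovers \cref{thm:LYII} for $f=\alpha(n)$ and hence the identity.

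Your route is genuinely different and the gap is at its heart: the assertion that $\mu^u_x$ is equivalent to $\bigotimes_{\chi(n)>0}\mu^\chi_x$ with a density whose logarithm is an $L^1$-coboundary. The sliding/recurrence sketch does not deliver this. What absolute continuity of the $W^{S_\pm}$-holonomies, contraction under $\alpha(m)$, and Poincar\'e recurrence \emph{can} give---via a $\pi$-partition argument in the spirit of \cref{prop:pinsker} and \cref{lem:Pipart}---is an equality of measurable hulls, not a local product decomposition of $\mu^u_x$ itself. A measure-theoretic product of conditional measures along coarse Lyapunov sub-leaves is not known for general $C^{1+\beta}$ $\Z^d$-actions; it \emph{is} available in homogeneous settings (where it underlies the high-entropy method of \cite{MR2191228}), but those proofs rely on the group structure of the leaves. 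Your parenthetical that this step is ``equivalently'' the additivity of transverse pointwise dimensions also has the implication backwards: dimension additivity is strictly weaker than a product of measures, and it is exactly this weaker statement that the paper's approach isolates and proves, bypassing the measure-product question entirely.
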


%\note{added discussion of  \cref{thm:LYII} vs \cref{thm:entprod}  } 
Fix $n\in \Z^d$ and let $f= \alpha(n)$.
The formulas in  \cref{thm:LYII} and \cref{thm:entprod} then look quite similar.  However, the contribution of each Lyapunov exponent $\lambda^i$ to the total entropy in \cref{thm:LYII} is a ``transverse entropy'' (the coefficient $\gamma^i$ is a measure of ``transverse dimension'').  In  \cref{thm:entprod}, the entropy of each coarse Lyapunov exponent $\chi$ is a ``tangential entropy'' $ h_\mu(\alpha(n)\mid \chi)$   obtained by conditioning along $W^\chi$-manifolds.  Thus,  \cref{thm:LYII} does not immediately imply  \cref{thm:entprod}.  To show \cref{thm:entprod}, one first shows that the total ``transverse entropy'' in \cref{thm:LYII} contributed by all $\lambda^i\in \chi$ is equal to the total  conditional entropy   $ h_\mu(\alpha(n)\mid \chi)$.  This is done in \cite{AWB-GLY-P3}.  The idea is to first establish an analogue of  \cref{thm:LYII} for the conditional entropy $h_\mu(f\mid \chi)$; this is done in  \cite{AWB-GLY-P2} where a formula  of the form 
$$h_\mu(f \mid \chi)= h_\mu(\alpha(n) \mid \chi) = \sum _{\lambda^i\in \chi} \gamma^{\chi,i}_n \lambda^i(n)$$ is shown.   Then (following \cite{MR1213080}) one uses that  $n\mapsto h_\mu(\alpha(n)\mid \chi)$ is linear on any half-cone where no coarse Lyapunov exponent $\chi'$ changes sign to show that the transverse dimensions $\gamma^{\chi,i}_n$ of each $\lambda^i\in \chi$ are independent of $n$  and  coincide with the transverse dimensions $\gamma^{i}$ appearing in  \cref{thm:LYII} for $f= \alpha(n).  $%\note{I don't know what referee thinks isn't precise}

\subsection{{Abstract ergodic theoretic constructions in smooth dynamics}}(See Appendices \ref{App:rokhlin} and \ref{App:Pinsker} for further details.)\label{sec:abstractergodictheory}
Let $f\colon M\to M$ be a $C^{1+\beta}$ diffeomorphism and let $\mu$ be an $f$-invariant probability measure.  We do not assume $\mu$ to be ergodic.  
We   introduce here a number of measurable partitions of the measure space $(M,\mu)$ associated with the dynamics $f$:

\def\erg{\mathcal E}
\begin{enumerate}
	\item $\erg$, the  ergodic decomposition (see \cref{def:ergdecom} and 
	Theorem \ref{t.decomposicaoergodica} in  Appendix \ref{B:secErgDecomposition});
	\item \label{popolo2} $\pi$, the Pinsker partition;
	\item $\Xi^u$, the measurable hull of the partition into unstable manifolds;
	\item \label{popolo4} $\Xi^s$, the measurable hull of the partition into stable manifolds.
\end{enumerate}

 In this section, we will define  objects \eqref{popolo2}--\eqref{popolo4} above %\note{adding the article ``the'' is incorrect.}
 and  explain the following two assertions:
%\begin{proposition}\label{prop:trivialing}
\begin{enumerate}
	\item \label{prop:trivialing1} $\erg \prec \Xi^s$   %\hfill  {(Theorem C.3)} 
	\item \label{prop:trivialing2} $ \Xi^s= \pi = \Xi^u.$ %\hfill   {(\cite[Theorem B]{MR819556})} 
\end{enumerate}
%\end{proposition}
The first assertion is a standard fact in hyperbolic dynamics (which forms the first step in the Hopf argument for ergodicity) and is discussed in detail in Theorem \ref{Hopf} in Appendix \ref{App:Pinsker}.  The second is \cite[Theorem B]{MR819556}.  % which forms the first step in the Hopf argument for ergodicity of Anosov systems.  

\def\sigmafoot{\footnote{At first glance, the order $\eta\prec\xi$ convention might seem backwards.  It is more consistent  thinking via $\sigma$-algebras: if $\sigma(\eta)$ denotes the $\sigma$-algebra of $\eta$-saturated sets then $\eta\prec\xi$ if and only if $\sigma(\eta)\subset \sigma (\xi).$}
}

\subsubsection{Measurable  hull of a partition}\label{ss:meashull}
Given a (possibly nonmeasurable)  partition $\xi$  of $(M,\mu)$ we write $\Xi(\xi)$ for the \emph{measurable hull}\index{partition!measurable hull} of $\xi$; that is, $\Xi(\xi)$ is  the finest  measurable partition with $\Xi(\xi)\prec \xi$.  If $\xi$ is measurable, then we have $\Xi(\xi)= \xi$ but in general $\Xi(\xi)$ is strictly coarser than $ \xi$.
We  illustrate this concept with a few examples.  
\begin{example}\label{ex:partition_orbits}
	Suppose that $\mu$ is $f$-invariant and ergodic.  Let $\mathcal {O}$ be the partition into orbits of $f$.  Then $\mathcal {O}$ is  {not} measurable (see \cref{ex.circle} in \cref{App:rokhlin}).  The measurable hull of $\mathcal {O}$ is the trivial partition $\Xi(\mathcal {O}) = \{M, \emptyset\}$.  For example, given a totally irrational flow on the torus $\T^2$, the partition into flow lines is not measurable and the measurable hull is the trivial partition.

	More generally, if $\mu$ is not ergodic then the measurable hull of $\mathcal {O}$ is $\Xi(\mathcal {O})= \erg$, the {ergodic decomposition}  $(M,\mu)$.  (See  \cref{def:ergdecom} and \cref{Bex:partition_orbits} in  Appendix \ref{App:Pinsker}.)
\end{example}

For the following two examples, recall \cref{entropyvatoms} and \cref{entropyvatoms2}.
\begin{example}\label{ex:truck}
	Let $f$ be a $C^{1+\beta}$ volume-preserving Anosov diffeomorphism of a connected manifold $M$.  Let $\xi^u$ denote the partition of $M$ into  unstable manifolds.  Then $\xi^u$ is  {not} measurable (for the invariant volume).  In fact, it is known that the measurable hull of $\xi^u$ is again the trivial partition $\Xi(\xi^u) = \{M, \emptyset\}$.  
\end{example}

\begin{example}\label{ex:unstable_partition_measurable}
	Let $f\colon M\to M$ be a $C^{1+\beta}$ diffeomorphism and let $\mu$ be any ergodic, $f$-invariant probability measure.  Let $\xi^u$ denote the partition of $M$ into  (full) unstable manifolds.  Then $\xi^u$ is measurable if and only if  $h_\mu(f) = 0$.  In particular, if $h_\mu(f) > 0$ then the measurable hull of $\xi^u$  is strictly coarser than $\xi^u$.  
\end{example}

In general, given a $C^{1+\beta}$ diffeomorphism $f\colon M\to M$ and an ergodic, $f$-invariant probability measure  $\mu$ 
we let  $\Xi^u$ and $\Xi^s$ denote, respectively,  the measurable hulls of the partition of $(M,  \mu)$ into full unstable and stable manifolds. 

We state the first relationship between the above objects   in the following proposition whose proof follows immediately from the pointwise ergodic theorem.   (See Theorem \ref{Hopf}, Appendix \ref{App:Pinsker}.)
\begin{proposition}\label{prop:hopf}
	Let $f\colon M\to M$ be a $C^{1+\beta}$ diffeomorphism and let $\mu$ be any   $f$-invariant probability measure.  Then $\erg\prec \Xi^s$ and $\erg\prec \Xi^u$. 
\end{proposition}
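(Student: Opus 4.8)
\textbf{Proof proposal for Proposition \ref{prop:hopf}.}

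The plan is to prove the single assertion $\erg \prec \Xi^s$ and then note that the statement for $\Xi^u$ follows by replacing $f$ with $f\inv$ (this swaps stable and unstable manifolds, while leaving the ergodic decomposition $\erg$ unchanged, since $\mu$ is $f$-invariant iff it is $f\inv$-invariant and the ergodic components are the same). So everything reduces to: \emph{every $\Xi^s$-measurable function is constant along ergodic components}, or equivalently, every function that is constant on full stable manifolds (mod $\mu$) is $f$-invariant mod $\mu$, hence $\erg$-measurable.

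The key step is the classical first half of the Hopf argument. Let $\varphi\colon M\to\R$ be bounded and measurable. By the Birkhoff ergodic theorem applied to $f$, the forward Birkhoff average
$$
\varphi^+(x)=\lim_{n\to\infty}\frac1n\sum_{k=0}^{n-1}\varphi(f^k(x))
$$
exists for $\mu$-a.e.\ $x$ and is an $f$-invariant function, hence $\erg$-measurable. The point is that $\varphi^+$ is also $\Xi^s$-measurable. Indeed, if $\varphi$ is in addition \emph{uniformly continuous} (it suffices to take $\varphi$ continuous on the compact manifold $M$), then for $y\in W^s(x)$ we have $d(f^k(x),f^k(y))\to 0$, so the Cesàro averages of $\varphi\circ f^k$ along the orbits of $x$ and $y$ have the same limit whenever both exist; thus $\varphi^+(x)=\varphi^+(y)$ for $\mu$-a.e.\ $x$ and every $y\in W^s(x)$ at which $\varphi^+$ is defined. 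Hence $\varphi^+$ is (mod $\mu$) constant on full stable manifolds, so it is measurable with respect to the partition into full stable leaves, and therefore with respect to its measurable hull $\Xi^s$ (since $\Xi^s$ is the finest measurable partition coarser than that partition, any measurable function constant on the leaves factors through $\Xi^s$). Running the same argument with backward averages $\varphi^-$ gives an $f$-invariant, $\Xi^u$-measurable function, and one has $\varphi^+=\varphi^-$ $\mu$-a.e.; but for the present statement only the $\Xi^s$ side is needed.

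To conclude $\erg\prec\Xi^s$: the ergodic decomposition $\erg$ is generated, as a measurable partition, by a countable family of $f$-invariant functions; by a standard approximation one may take this family of the form $\{\varphi_m^+\}_{m\in\N}$ for a countable dense set $\{\varphi_m\}$ of continuous functions on $M$ (two ergodic components are distinguished by the integral of some continuous function, equivalently by the value of some $\varphi_m^+$). Each $\varphi_m^+$ is $\Xi^s$-measurable by the previous paragraph, so the partition they generate, namely $\erg$, satisfies $\erg\prec\Xi^s$. Applying this to $f\inv$ yields $\erg\prec\Xi^u$ as well, completing the proof.

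The only mildly delicate point—really the main obstacle to a fully rigorous write-up—is the bookkeeping around measurable hulls and mod-$\mu$ statements: one must be careful that ``constant along $W^s$-leaves off a null set'' genuinely implies $\Xi^s$-measurability, which is exactly the defining property of the measurable hull as the finest measurable partition refined by the stable partition, and that the exceptional null sets (from Birkhoff's theorem and from the stable-manifold identifications) can be chosen $f$-invariant so that no leaf-wise argument is spoiled. These are routine but are the substance of Theorem \ref{Hopf} in Appendix \ref{App:Pinsker}, to which we defer the details.
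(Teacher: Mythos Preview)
Your proof is correct and follows essentially the same approach as the paper: both use that forward Birkhoff averages $\varphi^+$ of continuous functions are simultaneously $f$-invariant and constant along stable leaves, and that the ergodic decomposition is generated by such averages over a countable dense set in $C^0(M)$. The only cosmetic difference is that the paper uses $\limsup$ (so $\varphi^+$ is defined everywhere) and phrases the conclusion via $\sigma$-algebras, while you use the Birkhoff limit and partitions---these are equivalent formulations of the same Hopf argument.
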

\begin{proof}
	Let $\sigma(\erg)$ and $\sigma(\Xi^s)$ denote the $\sigma$-algebras of $\erg$-saturated and $\Xi^s$-saturated sets, respectively.  
	
	Consider any continuous function  $\phi\colon M\to \R$.  Then   $\phi^+\colon M\to \R$   defined by $$\phi^+(x):= \limsup_{n\to \infty} \frac 1 n \sum _{k=0}^{n-1} \phi(f^k(x))$$ is an $f$-invariant function that is constant along $W^s$-leaves.  
	In particular,  the function  $\phi^+$ is measurable with respect to $\sigma(\erg)$ and $\sigma(\Xi^s)$.  
	Moreover, using that $C^0(M)$ is separable and dense in $L^1(\mu)$ and applying the pointwise ergodic theorem, it follows that the $\sigma$-algebra $\sigma(\erg)$ is the minimal $\sigma$-algebra for which $\phi^+$ is measurable for all continuous  $\phi\colon M\to \R$.  It follows that  $ \sigma(\erg)\subset \sigma(\Xi^s)$    whence $\erg\prec \Xi^s$.  
\end{proof}
\def\pifoot{\footnote{Another characterization of $\pi$ is  the following: $\pi$ is the unique $f$-invariant partition such that, if $(g,Y,\nu)$ is a measurable factor of  $(f,X, \mu)$ with zero entropy, then $(g,Y,\nu)$ is a factor of the factor system $(f, X,\mu)/\pi$.  
	}} 
	\subsubsection{The Pinsker partition}\label{sss:pinsker}
	Let $f\colon (X,\mu)\to (X,\mu)$ be a  measure-preserving transformation of a standard probability space $(X,\mu)$.  The \emph{Pinsker partition}\index{partition!Pinsker}  $\pi$ of $f\colon (X,\mu)\to (X,\mu)$  is the finest   measurable partition $(X,\mu)$ with the following property: for any measurable partition $\xi\prec \pi$, we have  $$h_\mu(f, \xi) = 0.$$  Another characterization of $\pi$ is  the following: $\pi$ is the unique $f$-invariant partition such that, if $(g,Y,\nu)$ is a measurable factor of  $(f,X, \mu)$ with zero entropy, then $(g,Y,\nu)$ is  also a factor of the factor system $(f, X,\mu)/\pi$.

	%The second relationship we state between above objects is due to Ledrappier and Young.  %, which follows a consequence of their main technical result (Corollary 5.3) of \cite{MR819556},   that    
	%This result, 
	Our second relationship,  stated as \cite[Theorem B]{MR819556},  characterizes the Pinsker partition in smooth dynamics.  
	\begin{proposition}[{\cite[Theorem B]{MR819556}}]\label{prop:pinsker}
		Let $f\colon M\to M$ be a $C^{1+\beta}$ diffeomorphism and let $\mu$ be any $f$-invariant Borel probability measure.  Then we have  equality of partitions $$\Xi^u =  \pi = \Xi^s.$$
		
	\end{proposition}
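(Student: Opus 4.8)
The plan is to reconstruct \cite[Theorem B]{MR819556}, organizing the argument around a single increasing measurable partition subordinate to the unstable foliation, with the entropy equality \eqref{eq:LY} ($h_\mu(f)=h_\mu(f\mid\Fol^u)$) as the essential analytic input. \textbf{Reductions.} Since $\erg\prec\Xi^u$ and $\erg\prec\Xi^s$ by Proposition~\ref{prop:hopf}, and $\erg\prec\pi$ (the map induced on $(M,\mu)/\erg$ is the identity, hence of zero entropy), I may disintegrate over the ergodic decomposition and assume $\mu$ ergodic. Next, the unstable manifolds of $f^{-1}$ are the stable manifolds of $f$, so $\Xi^s(f)=\Xi^u(f^{-1})$; and $h_\mu(f,\eta)=h_\mu(f^{-1},\eta)$ for every measurable partition $\eta$ (apply a power of $f$ and use $f$-invariance of $\mu$), so the Pinsker partition of $f$ equals that of $f^{-1}$. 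Hence it suffices to prove the single identity $\Xi^u=\pi$ and then apply it to $f^{-1}$ to obtain $\Xi^s=\pi$.

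\textbf{The unstable filtration.} Fix an increasing measurable partition $\xi$ subordinate to $\Fol^u$ (its existence is the Sinai-type construction of \cite{MR693976}; cf.\ Definition~\ref{def:entsub}). Then $\{\sigma(f^n\xi)\}_{n\ge0}$ is a decreasing sequence of $\sigma$-algebras, and the first claim is that its tail is the $\sigma$-algebra of $\Xi^u$:
$$\sigma(\Xi^u)=\bigcap_{n\ge0}\sigma(f^n\xi).$$
Indeed, because $f$ permutes unstable manifolds, any measurable set that is a union of entire $W^u$-leaves is, for every $n$, a union of atoms of $f^n\xi$, giving one inclusion; conversely $f^n\xi(x)=f^n\big(\xi(f^{-n}x)\big)$ exhausts $W^u(x)$ as $n\to\infty$, so a set that is $f^n\xi$-measurable for all $n$ must be saturated by full $W^u$-leaves; passing to measurable hulls yields the claim. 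A companion observation is that $\xi$ is a one-sided generator: since each $\xi$-atom lies inside a single unstable Pesin manifold it already separates all points not on a common unstable leaf, and backward iteration $f^{-n}$ contracts the plaques $\xi(\cdot)$ to points within each leaf, so $\bigvee_{n\ge0}f^{-n}\xi$ is the point partition.

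\textbf{Identifying the tail with the Pinsker partition.} Here I would invoke a Rokhlin--Sinai-type theorem: for an increasing partition $\xi$ (i.e.\ $f\xi\prec\xi$) that is a generator in the above sense, $\bigcap_{n\ge0}\sigma(f^n\xi)$ coincides with the Pinsker $\sigma$-algebra. The inclusion $\sigma(\pi)\subseteq\bigcap_n\sigma(f^n\xi)$ is the ``generator'' half (the remote future contains no zero-entropy-forbidden information once $\xi$ generates). The reverse inclusion, that the tail factor has zero entropy, I would obtain concretely from the Abramov--Rokhlin formula \eqref{eq:AR}: writing $\zeta:=\bigcap_n\sigma(f^n\xi)$, which is $f$-invariant with $\zeta\prec\xi$, one has $h_\mu(f)=h\big((M,\mu)/\zeta\big)+h_\mu(f\mid\zeta)$ while $h_\mu(f\mid\zeta)\ge h_\mu(f,\xi\vee\zeta)=h_\mu(f,\xi)=h_\mu(f)$, the last equality being \eqref{eq:LY}; hence the $\zeta$-factor has zero entropy, i.e.\ $\zeta\prec\pi$. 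Combining the two inclusions gives $\Xi^u=\pi$, and applying the same to $f^{-1}$ gives $\Xi^s=\pi$.

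\textbf{Main obstacle.} Everything rests on the entropy equality \eqref{eq:LY} (the ``$h_\mu(f)=h^u_\mu(f)$'' theorem of \cite{MR819556,1608.05886}), which is assumed here but is the genuinely deep point. Within the plan itself, the delicate part is making the geometric identification $\sigma(\Xi^u)=\bigcap_n\sigma(f^n\xi)$ and the generator property rigorous at the level of measurable hulls and of Pesin unstable manifolds, which are only defined $\mu$-almost everywhere and do not form an honest foliation: one must control that the increasing partition $\xi$ exists, that conditioning along its plaques behaves well (cf.\ the conditional-measure analysis underlying Lemma~\ref{entropyvatoms2}), and that the filtration $\{\sigma(f^n\xi)\}$ really has the claimed tail. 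This is precisely the bookkeeping carried out in \cite{MR819556}; the reductions and the abstract Rokhlin--Sinai input are routine by comparison.
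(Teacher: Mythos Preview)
The paper does not prove this proposition; it is quoted from \cite[Theorem~B]{MR819556} (and restated without proof in Appendix~\ref{App:Pinsker}). Your outline is essentially the Ledrappier--Young argument: reduce to $\Xi^u=\pi$, identify $\sigma(\Xi^u)$ with the tail $\bigcap_{n\ge 0}\sigma(f^n\xi)$ of an increasing generating partition $\xi$ subordinate to $\Fol^u$, and then match the tail with $\sigma(\pi)$ using a Rokhlin--Sinai inclusion on one side and \eqref{eq:LY} together with Abramov--Rokhlin on the other. You have also located the depth correctly: ``increasing and generating'' alone does \emph{not} force $h_\mu(f,\xi)=h_\mu(f)$ (e.g.\ take $\xi=\alpha^+\times\{\text{points}\}$ on a product $S\times S$, which is increasing and generating but has $h(f,\xi)=h(S)<2h(S)=h(f)$), so the Abramov--Rokhlin step genuinely requires the geometric input of \cite{MR819556}; the ``generator half'' $\sigma(\pi)\subset\bigcap_n\sigma(f^n\xi)$, by contrast, is a general fact for increasing generating measurable partitions with $H_\mu(\xi\mid f\xi)<\infty$ and needs no smoothness.
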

	
	\begin{remark} 
		We say that a measure-preserving transformation $f\colon (X,\mu)\to (X,\mu)$ has the  \emph{K-property} (or the \emph{Kolmogorov property}) if the Pinsker partition $\pi$ is the trivial partition $\pi = \{\emptyset, X\}$.  For such systems, every non-trivial factor has positive entropy.

		Let $f\colon M\to M$ be a $C^{1+\beta}$ volume-preserving Anosov diffeomorphism.  Anosov first showed that such maps are ergodic with respect to the invariant volume in \cite{MR0224110}.   In this setting, the analogue of \cref{prop:pinsker} is established in \cite{MR0197684}; that is any set $A\in \pi$ is equal modulo $ 0$ to a set fully saturated by stable manifolds and also equal modulo $ 0$ to a (possibly different set) that is fully saturated by unstable manifolds.  
		Using the absolute continuity of the stable and unstable foliations established by Anosov in his proof of ergodicity, one may show that any $A\in \pi$  is equal modulo $ 0$ to a set that is both  fully saturated by stable manifolds and unstable manifolds.    It follows that any $A\in \pi$ is null or conull.  In particular, this shows that volume-preserving Anosov diffeomorphisms have the K-property.  This explains the  conclusion in \cref{ex:truck} that $\Xi^u$ is the trivial partition.  See, for example, \cite{MR2630044} for a modern discussion of absolute continuity and the $K$-property in uniformly (partially) hyperbolic settings.
	\end{remark}

	\section{Entropy, invariance, and the SRB property} \label{sec:entropyetc}
	
	In dissipative (i.e.\ non-volume-preserving) dynamical systems, ergodic SRB measures $\mu$ without zero Lyapunov exponents provide examples of \emph{physical measures}\index{measure(s)!physical}: there is a set $B$ of positive Lebesgue measure such that for any continuous function $\phi$, the forward time average of $\phi$ along the orbit of points in $B$ converges to $\int \phi \ d \mu$.   
	% A key problem i
	In applications and specific examples, a recurring problem is to establish the existence of physical and  SRB measures.   We pose a related question that arises naturally in the settings considered in this text:
	\begin{question}\label{Q1}Given a diffeomorphism $f\colon M\to M$
		and an $f$-invariant measure $\mu$, how do you verify that $\mu$ is an SRB measure?
	\end{question}
	Seemingly unrelated, consider a group $G$ acting smoothly on a manifold $M$.  We pose the following:  
	\begin{question}\label{Q2}
		Given a  Borel probability measure $\mu$ on $M$ and a subgroup $H\subset G$, how do you verify that $\mu$ is $H$-invariant?  
	\end{question}
	One method to answer both of these questions is given in \cref{thm:led} and \cref{thm:led'} below.

	\subsection{Ledrappier's theorem}  (See Appendix \ref{App:LY} for further details.) \label{sec:led}
	We outline one approach that solves both Question \ref{Q1} and \ref{Q2} in a number of settings.  We discuss other approaches towards verifying the existence of  SRB measures below.
	
	We recall \cref{unsentropy} where the notion of unstable entropy was introduced.   The main  result  (Corollary 5.3)  of \cite{MR819556} shows  for a   $C^{2}$ (see  \cite{1608.05886} for the $C^{1+\beta}$ case) diffeomorphism  $f\colon M\to M$ preserving an ergodic probability measure $\mu$ that   the metric entropy of $f$ and the  unstable metric {entropy} of $f$ coincide: $$h_\mu(f) = h_\mu^u(f).$$ 
	Using this fact, Ledrappier gave a geometric characterization of all measures satisfying equality $h_\mu(f) = \sum_{\lambda^i>0} m^i\lambda^i$ in the Margulis--Ruelle inequality, giving a converse of Theorem \ref{entropyfacts}\ref{EFF3}.  
	
	\begin{theorem}[Ledrappier's Theorem \cite{MR743818}]\label{thm:led}\index{theorem!Ledrappier}
		Let $f$ be a $C^{1+\beta}$ diffeomorphism and let $\mu$ be an ergodic, $f$-invariant, Borel probability measure.  
		Then $\mu$ is SRB if and only if \begin{equation}h^u_\mu(f) = \sum_{\lambda^i>0}m^i \lambda^i. \label{eq:entequal} \end{equation}
	\end{theorem}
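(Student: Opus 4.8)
The plan is to establish the two implications separately. The forward direction is immediate from results recalled above: if $\mu$ is SRB then $h_\mu(f)=\sum_{\lambda^i>0}m^i\lambda^i$ by the Ledrappier--Strelcyn theorem (\cref{entropyfacts}\ref{EFF3}), and since $h_\mu(f)=h^u_\mu(f)$ by \eqref{eq:LY}, the equality \eqref{eq:entequal} follows at once. So the content is the converse, and here I would adapt the entropy computation from the proof of \cref{prop:easyLed} to the nonuniformly hyperbolic $C^{1+\beta}$ setting, following \cites{MR743818,MR819556}.

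First I would fix a measurable partition $\xi$ of $(M,\mu)$ subordinate to the partition $\Fol^u$ into global unstable manifolds and increasing for $f$; such a $\xi$ exists by the Ledrappier--Strelcyn construction (see \cref{unsentropy}), and one may arrange in addition that $\bigvee_{n\ge 0}f^{-n}\xi$ generates the point partition on $\mu$-a.e.\ leaf $\xi(x)$, using that $f$ contracts unstable leaves under backward iteration. Since the unstable Jacobian is not constant along leaves, in place of the bare Riemannian volume $m^u_x$ on $W^u(x)$ I would work with the reference measures $\hat m_x$ given by
$$d\hat m_x(y)=\Delta(x,y)\,dm^u_x(y),\qquad \Delta(x,y)=\prod_{n=1}^{\infty}\frac{\Jac\!\big(D_{f^{-n}x}f|_{E^u}\big)}{\Jac\!\big(D_{f^{-n}y}f|_{E^u}\big)}\ \ (y\in W^u(x)),$$
checking that the product converges to a positive, finite, locally bounded density; this is where the $C^{1+\beta}$ hypothesis enters, through H\"older continuity of $x\mapsto E^u(x)$ and of $Df|_{E^u}$ along unstable leaves. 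By construction $\Delta$ is a multiplicative cocycle, the family $\{\hat m_x\}$ transforms equivariantly under $f$ (so $f_*\hat m_x$ is proportional to $\hat m_{f(x)}$ on each leaf), and $\hat m_x\ll m^u_x$.

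With $\hat m^\xi_x=\hat m_x(\xi(x))^{-1}\,\hat m_x|_{\xi(x)}$ the normalized restriction, the crux is the identity
$$\int\log\!\left(\frac{\hat m^\xi_x\big(f^{-1}\xi(x)\big)}{\mu^\xi_x\big(f^{-1}\xi(x)\big)}\right)\,d\mu(x)=h^u_\mu(f)-\sum_{\lambda^i>0}m^i\lambda^i,$$
proved exactly as in \cref{prop:easyLed}: the term $-\int\log\mu^\xi_x(f^{-1}\xi(x))\,d\mu$ equals $H_\mu(f^{-1}\xi\mid\xi)=h_\mu(f,\xi)=h^u_\mu(f)$ by the defining property of $\xi$, while the term $\int\log\hat m^\xi_x(f^{-1}\xi(x))\,d\mu$ equals $-\sum_{\lambda^i>0}m^i\lambda^i$; the latter is the analogue of \eqref{eqhard}, where the transformation rule for $\{\hat m_x\}$ replaces the constant-Jacobian step, the boundary-distortion term $\log(q\circ f/q)$ with $q(x)=\hat m_x(\xi(x))$ is shown to lie in $L^1(\mu)$ and to integrate to $0$ via \cite[Proposition 2.2]{MR693976}, and the principal term contributes $-\int\log\Jac(D_xf|_{E^u})\,d\mu(x)$. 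Granting $h^u_\mu(f)=\sum_{\lambda^i>0}m^i\lambda^i$, the left side vanishes; together with the normalization estimate of \eqref{eq:rabitfood} and strict concavity of $\log$, this forces $\mu^\xi_x(f^{-1}\xi(x))=\hat m^\xi_x(f^{-1}\xi(x))$ for $\mu$-a.e.\ $x$, so $\mu^\xi_x$ and $\hat m^\xi_x$ agree on every union of atoms of $f^{-1}\xi$. Replacing $f$ by $f^n$ and using that $\{f^{-n}\xi\}$ generates the point partition on $\xi(x)$, one obtains $\mu^\xi_x=\hat m^\xi_x$ for $\mu$-a.e.\ $x$; as $\hat m^\xi_x\ll m^u_x$, the conditional measures of $\mu$ along unstable manifolds are absolutely continuous, i.e.\ $\mu$ is SRB (absolute continuity being a local property, the particular choice of subordinate partition is immaterial).

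The main obstacle, compared with the affine toral situation of \cref{prop:easyLed} where $E^u$, the Jacobian, and Markov partitions are all uniform, is the nonuniform bookkeeping: constructing the density $\Delta$ and the increasing, generating, subordinate partition $\xi$ simultaneously, and controlling $\log(q\circ f/q)$ to secure integrability. These are dealt with by working over Pesin sets on which the unstable manifolds and the Lyapunov metric vary uniformly, together with the constructions of \cite{MR693976} and \cite{MR819556}, and I expect nearly all the technical effort to concentrate there.
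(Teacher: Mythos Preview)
Your proposal is correct and follows essentially the same approach as the paper: the forward direction is dispatched via \cref{entropyfacts}\ref{EFF3} and \eqref{eq:LY}, and for the converse you construct the dynamically prescribed density $\Delta$ (the paper's $\rho$ in Appendix~\ref{App:LY}, equation \eqref{dyn_densities}), compare the reference conditional measures $\hat m^\xi_x$ to $\mu^\xi_x$ via the Jensen/strict-concavity argument of \eqref{eq:rabitfood}--\eqref{eq:rabitspit}, and iterate through the generating sequence $f^{-n}\xi$. The paper's Appendix~\ref{App:LY} packages the same computation slightly differently---defining a candidate measure $\nu$ with conditionals $\rho\,dm^u_x$ and proving inductively (Lemma~\ref{l:induction}) that $\mu=\nu$ on each $\calB(f^{-n}\xi)$---but the underlying mechanism and the technical ingredients you flag (convergence of $\Delta$ from H\"older regularity, $L^1$ control of $\log(q\circ f/q)$ via \cite[Proposition 2.2]{MR693976}, Pesin-set estimates) are the same.
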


	In the  proof of Theorem \ref{thm:led}, Ledrappier actually proves something much stronger than the SRB property:  if $h^u_\mu(f) = \sum_{\lambda^i>0}m^i \lambda^i$ then the leaf-wise measures $\mu^u_x$ of $\mu$ along unstable manifolds are {\it equivalent} to the Riemannian volume with a H\"older continuous density.  That is, if $m^u_x$ the Riemannian volume along $W^u(x)$ then for \ae $x$  there is a H\"older continuous, nowhere vanishing  function $\rho\colon W^u(x) \to (0, \infty)$ with \begin{equation}\label{eq:SRB}\mu^u_x = \rho \ m^u_x.\end{equation} In particular, the leaf-wise measure $\mu^u_x$ has full support in $W^u(x)$.  
	%In particular, the unstable conditional measures $\mu^u_x $ cannot have ``holes.'' 
	Moreover, Ledrappier explicitly computes the density function $\rho$; see \eqref{dyn_densities} in Appendix \ref{App:LY} and \cite[Corollary 6.1.4]{MR819556}.

	We make use of the explicit formula for the density $\rho$ in the following setup.  
	Consider a Lie group $G$ and a  smooth, locally free action of $G$ on a manifold $M$.  We denote the action by $g\cdot x$ for $g\in G$ and $x\in M$.  Consider a Lie subgroup $H\subset G$ and $s\in G$ that normalizes $H$.   Let $f\colon M\to M$ be the diffeomorphism given by $s$; that is $f(x) = s\cdot x$.  Let $\mu$ be an ergodic, $f$-invariant Borel probability measure and suppose that the orbit $H\cdot x$ is contained in the unstable manifold $W^u(x)$ for $\mu$-almost every $x$.  
	
	Since $s$ normalizes $H$, the partition of $M$ into $H$-orbits is preserved by $f$; in particular,  the partition into $H$-orbits is a subfoliation of the  partition %``foliation'' (or really, a measurable lamination) 
	into unstable manifolds.  Given a Borel probability measure $\mu$ on $M$ and  a measurable partition $\xi$ subordinate to the partition into $H$-orbits we can define conditional measures $\mu^\xi_x$ of $\mu$.    %As before, $\mu^H_x$ is a locally finite, Borel probability on the orbit $H\cdot x$.  
	Given $x\in M$ (using that the action is locally free) we can push forward the left-Haar measure on $H$ onto the orbit $H\cdot x$ via the parametrization $H\cdot x = \{ h\cdot x: h\in H\}.$  %We implicitly use this identification in the following lemma.
	
	\begin{lemma}
		$\mu$ is $H$-invariant if and only if for any measurable partition $\xi$ subordinate to the partition into $H$-orbits and $\mu$-\ae $x$ the conditional measure $\mu^\xi_x$ coincides---up to normalization---with the restriction of the left-Haar measure on $H\cdot x$ to $\xi(x)$. 
	\end{lemma}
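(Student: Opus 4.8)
The plan is to work with the family of leaf-wise measures of $\mu$ along the foliation $\mathcal{W}^H$ of $M$ into $H$-orbits. Since $s$ normalizes $H$, this foliation is $f$-invariant, and since each $H$-orbit lies inside an unstable manifold $W^u(x)$, it is an \emph{expanding} subfoliation; hence the construction of leaf-wise measures sketched in Part~\ref{part:I} — which, as noted, goes through for any invariant expanding foliation — produces locally finite Borel measures $\mu_x^H$ on $H\cdot x$, normalized to have mass $1$ on a fixed unit ball, satisfying the usual properties: the conditional measures of $\mu$ for a subordinate partition $\xi$ are the normalized restrictions $\mu_x^\xi \propto \restrict{\mu_x^H}{\xi(x)}$; leaf-wise measures along a single leaf are mutually proportional; and $f_*\mu_x^H \propto \mu_{f(x)}^H$. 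Pushing $\mu_x^H$ to $H$ by the locally bijective parametrization $P_x\colon H\to H\cdot x$, $P_x(h)=h\cdot x$, gives a locally finite measure $\nu_x$ on $H$. Because the conditional measures determine $\mu_x^H$ up to proportionality, the lemma is equivalent to: $\mu$ is $H$-invariant if and only if $\nu_x$ is proportional to the left-Haar measure $m_H$ for $\mu$-\ae $x$.

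For the ``if'' direction, suppose $\nu_x \propto m_H$ a.e., so $\mu_x^H \propto (P_x)_* m_H$. This measure is invariant under the restriction of each left translation $L_{h_0}$ to the leaf $H\cdot x$, since $L_{h_0}\circ P_x = P_x\circ L_{h_0}^H$ with $L_{h_0}^H$ left translation on $H$ and $m_H$ left-invariant. Consequently, in a flow box for $\mathcal{W}^H$ — available on sets of large measure because $\mathcal{W}^H\subset W^u$ carries the structure of a continuous lamination there — the disintegration of $\mu$ has the local product form ``normalized $m_H$ in the leaf direction'' $\otimes$ ``a transverse measure.'' Since $L_{h_0}$ acts on such a box by left translation in the leaf coordinate, preserving $m_H$, and fixes each leaf setwise, it preserves $\mu$ restricted to the box; a partition-of-unity argument over a countable family of flow boxes exhausting a full-measure set then yields $(L_{h_0})_*\mu=\mu$ for every $h_0\in H$.

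For the ``only if'' direction, assume $\mu$ is $H$-invariant. As $L_{h_0}$ is $\mu$-preserving and maps each $\mathcal{W}^H$-leaf to itself, $(L_{h_0})_*\mu_x^H \propto \mu_{h_0\cdot x}^H$, and combining this with $\mu_{h_0\cdot x}^H \propto \mu_x^H$ gives $(L_{h_0}^H)_*\nu_x = c_x(h_0)\,\nu_x$ for some constant $c_x(h_0)>0$, where $h_0\mapsto c_x(h_0)$ is a measurable, hence continuous, homomorphism $H\to\R_{>0}$. It remains to rule out a nontrivial character $c_x$, which would put $\nu_x$ in the Haar class without being proportional to $m_H$. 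Here one can argue locally — in a flow box the plaque measures of the probability measure $\mu$ are honestly $L_{h_0}$-invariant probability measures, and a Radon measure on an open subset of $H$ invariant under left translation by a neighborhood of $e$ is proportional to $m_H$, so $c_x\equiv 1$ — or dynamically: writing $\psi(h)=shs\inv$ for the automorphism of $H$ induced by $s$, one has $f\circ P_x = P_{f(x)}\circ\psi$, so $f_*\mu_x^H\propto\mu_{f(x)}^H$ gives $\psi_*\nu_x\propto\nu_{f(x)}$ and the cocycle identity $c_{f(x)} = c_x\circ\psi\inv$. The hypothesis $H\cdot x\subset W^u(x)$ forces $\mathrm{Ad}(s)|_{\mathrm{Lie}(H)}$ to have all eigenvalues of modulus $>1$, whence the induced map $c\mapsto c\circ\psi\inv$ on $\mathrm{Hom}(H,\R_{>0})\cong\R^{k}$ is a linear contraction; thus $c_{f^n(x)}\to 1$, and Poincar\'e recurrence together with Lusin regularity of $x\mapsto c_x$ forces $c_x\equiv 1$ for \ae $x$. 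Either way $\nu_x\propto m_H$.

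The step I expect to be the main obstacle is precisely this last one: promoting ``$\nu_x$ lies in the Haar measure class'' to ``$\nu_x$ is proportional to $m_H$,'' i.e.\ excluding the modular character. This is the only place where one genuinely uses structure beyond formal bookkeeping with leaf-wise measures — either the honest product structure of the invariant probability measure in a flow box, or the expanding $f$-dynamics together with recurrence. A secondary technical nuisance is that $\mathcal{W}^H$ is only a measurable lamination, with possibly trivial global leaf space when $H$-orbits are dense, so the flow-box arguments must be performed on large-measure sets and reassembled rather than through an honest quotient.
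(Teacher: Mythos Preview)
The paper states this lemma without proof, treating it as a standard fact about disintegrations of $H$-invariant measures along orbit foliations (compare the similarly unproven Lemma~\ref{lem:invariance}). So there is no ``paper's proof'' to compare against; I will simply assess your argument.

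Your overall architecture via leaf-wise measures $\mu_x^H$ and their pullbacks $\nu_x$ to $H$ is correct, and the ``if'' direction is fine. For the ``only if'' direction you correctly derive $(L_{h_0})_*\nu_x = c_x(h_0)\,\nu_x$ and correctly identify that killing the character $c_x$ is the only real content. Your \emph{dynamical} argument for this is valid: from $f\circ P_x = P_{f(x)}\circ\psi$ one gets $c_{f(x)} = c_x\circ\psi^{-1}$; since $H\cdot x\subset W^u(x)$ forces $\mathrm{Ad}(s)|_{\mathrm{Lie}(H)}$ to have spectrum outside the unit circle, the induced map on $\mathrm{Hom}(H,\R_{>0})$ is a strict linear contraction, and Poincar\'e recurrence to a Lusin set for $x\mapsto c_x$ then forces $c_x\equiv 1$. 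This is exactly the mechanism the paper sketches (in a different context) in the remark following \cref{prop:easyLed}, and it is the argument that actually closes the proof here.

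Your \emph{local} argument, however, has a gap. The assertion that ``in a flow box the plaque measures of the probability measure $\mu$ are honestly $L_{h_0}$-invariant'' does not follow. If you disintegrate $\mu$ in two overlapping flow boxes $V\times T$ and $(h_0V)\times T$ and use $H$-invariance of $\mu$, you do get $(L_{h_0})_*\mu_t=\mu_t'$ as \emph{probability} measures on $V$ and $h_0V$; but since $\mu_t$ and $\mu_t'$ are restrictions of the same leaf-wise measure with \emph{different} normalizations, equating them only reproduces the relation $(L_{h_0})_*\nu_x = c_x(h_0)\,\nu_x$ --- the normalization constants absorb the character and the computation is self-consistent for any $c_x$. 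Concretely, if $\nu_x=e^{\chi}m_H$ then $\mu_t\propto e^{\chi}|_V$ and $\mu_t'\propto e^{\chi}|_{h_0V}$, and $(L_{h_0})_*\mu_t=\mu_t'$ holds identically. So the flow-box argument alone cannot rule out a nontrivial $\chi$; you genuinely need the dynamical contraction (or some other extra input such as an ergodic-averaging argument for the $H$-action) to finish. Since your dynamical argument is correct, the proof stands --- but you should drop or repair the local alternative.
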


	Similar to the definition of metric  {entropy} of $f$ {conditioned on unstable manifolds}, we can define  the metric \emph{entropy} of $f$ \emph{conditioned on $H$-orbits}\index{entropy of a transformation!conditioned on an orbit}, written  $h_\mu(f\mid H)$, by  $$h_\mu(f\mid H)  :=  h_\mu(f,\xi)$$ where $\xi$ is  any increasing, measurable partition  $\xi$  {subordinate to $H$-orbits.} 
	Let $\lambda^i$, $E^i(x)$, and $m^i$ be as in \ref{sss:osel} for the dynamics of $f$ and the measure $\mu$.  We define the \emph{multiplicity of $\lambda^i$ relative to $H$} to be (the almost surely constant value of)  $$m^{i,H} = \dim (E^i(x) \cap T_x (H\cdot x)).$$
	Generalizing Theorem \ref{entropyfacts}\ref{EFF1} we have (see for instance \cite{AWB-GLY-P2})
	\begin{equation}\label{eq:popopopo} h_\mu(f\mid H) \le  \sum_{\lambda^i>0} \lambda^i m^{i,H}.\end{equation}
	From the proof of Theorem \ref{thm:led}, (in particular, the explicit formula for the density function $\rho$ in \eqref{eq:SRB}; see \eqref{dyn_densities} in Appendix \ref{App:LY} and proof of \cref{prop:easyLed}) we have the following.
	
	\begin{theorem}\label{thm:led'}
		With the above setup, the following are equivalent:
		\begin{enumcount} 
			\item\label{11111} $h_\mu(f\mid H) = \sum_{\lambda^i>0} \lambda^i m^{i,H}$;
			\item for any measurable partition $\xi$ subordinate to the partition into $H$-orbits and almost every $x$, $\mu^\xi_x$ is absolutely continuous with respect to the Riemannian volume on the $H$-orbit $H\cdot x$;
			\item\label{3333333333} $\mu$ is $H$-invariant.  
		\end{enumcount}
	\end{theorem}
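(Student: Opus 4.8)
The plan is to deduce Theorem \ref{thm:led'} from Ledrappier's Theorem \ref{thm:led} (and its quantitative refinement computing the density \eqref{eq:SRB}) applied not to the full unstable foliation but to the $H$-orbit subfoliation. The structure is a three-way equivalence, so I would prove the cycle \ref{11111} $\implies$ \ref{3333333333} $\implies$ (middle) $\implies$ \ref{11111}, the last two implications being the routine directions.

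First, for \ref{3333333333} $\implies$ (middle): if $\mu$ is $H$-invariant then for a measurable partition $\xi$ subordinate to the $H$-orbit foliation, the conditional measures $\mu^\xi_x$ must be (proportional to the restriction of) the pushforward of Haar measure on $H$ — this is exactly the cited Lemma preceding the theorem, and absolute continuity with respect to Riemannian volume on $H\cdot x$ is immediate since the Haar measure on a locally free smooth orbit is equivalent to Riemannian volume with smooth density. Then (middle) $\implies$ \ref{11111}: this is an ``entropy-from-absolute-continuity'' statement. I would run the argument of \cref{prop:easyLed} verbatim, replacing the homogeneous $W^i$-foliation by the $H$-orbit foliation and the flat Lebesgue measure $m^i_x$ by the Haar/Riemannian measure $m^H_x$ on $H\cdot x$. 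The only nontrivial input is the analogue of computation \eqref{eqhard}: the logarithmic Jacobian of $f$ along $H$-orbits is no longer constant, but since $s$ normalizes $H$, the diffeomorphism $f$ maps $H$-orbits to $H$-orbits homogeneously (it intertwines the $H$-action with its conjugate), so the unstable Jacobian along $H\cdot x$ is cohomologous (via the density $\rho$ appearing in \eqref{dyn_densities}) to the constant $\sum_{\lambda^i>0}\lambda^i m^{i,H}$; then the convexity-of-$\log$ argument gives $h_\mu(f\mid H) = \sum_{\lambda^i>0}\lambda^i m^{i,H}$, using \eqref{eq:popopopo} as the a priori upper bound. As in the remark after \cref{prop:easyLed}, absolute continuity plus the homogeneity of the $H$-action upgrades to $\mu^\xi_x$ being exactly Haar.

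For the main implication \ref{11111} $\implies$ \ref{3333333333}: equality $h_\mu(f\mid H) = \sum_{\lambda^i>0}\lambda^i m^{i,H}$ puts us in the setting of Ledrappier's refinement of Theorem \ref{thm:led} restricted to the $H$-subfoliation — one shows (exactly as in \cite[Corollary 6.1.4]{MR819556}, or as the explicit-density half of the proof of \cref{prop:easyLed}) that the leaf-wise measures $\mu^H_x$ of $\mu$ along $H\cdot x$ are equivalent to Riemannian volume on $H\cdot x$ with a H\"older density $\rho_x$, and that $\rho_x$ is given by the explicit convergent product \eqref{dyn_densities} of ratios of unstable Jacobians along $H$-orbits. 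Because $s$ normalizes $H$, each factor in that product is $H$-equivariant, and a direct computation shows the density $\rho_x$ is itself left-$H$-invariant along $H\cdot x$ (the same mechanism flagged in the first remark after \cref{prop:easyLed}: ``$f$ acts homogeneously between orbits'' makes the Ledrappier density constant). An $H$-invariant density against Riemannian = Haar volume on the orbit means $\mu^H_x$ is (proportional to) Haar on $H\cdot x$ for $\mu$-a.e.\ $x$; feeding this back through the disintegration $\mu(D)=\int \mu^\xi_x(D)\,d\mu(x)$ and the fact that pushing $\mu$ forward by $h\in H$ preserves each conditional (now Haar, hence $h$-invariant) yields $h_*\mu=\mu$.

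The main obstacle I expect is the bookkeeping for the non-constant unstable Jacobian along $H$-orbits in the analogue of \eqref{eqhard}: unlike in \cref{prop:easyLed}, where the $W^i$-dynamics is affine and the Jacobian is literally a constant eigenvalue, here one must carry the Ledrappier density $\rho$ explicitly and verify it is a genuine coboundary (that $\log(\rho\circ f/\rho)$ is $L^1$ and integrates to zero, via \cite[Proposition 2.2]{MR693976}), and then verify its $H$-equivariance. This is precisely the content of \cite[Lemma 6.1.2]{MR819556} and \cite{AWB-GLY-P2}, which I would invoke rather than reprove; once those are cited, the rest is the convexity argument and the disintegration manipulation, both routine. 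I would also need the elementary fact that a locally free smooth $H$-action has orbits along which Haar and Riemannian volume are mutually absolutely continuous with smooth (hence H\"older) density, so that ``equivalent to Riemannian volume'' and ``equivalent to Haar'' are interchangeable throughout.
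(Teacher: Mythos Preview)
Your proposal is correct and follows the paper's approach: the paper gives no detailed proof, simply saying the argument is ``a slightly more complicated version of the proof of \cref{prop:easyLed},'' and your outline is exactly that adaptation. Two small points. First, your ``main obstacle'' largely dissolves: since $s$ normalizes $H$, the map $f$ acts between $H$-orbits via the group automorphism $h\mapsto shs^{-1}$, so with respect to left-Haar measure the Jacobian along $H$-orbits is the \emph{constant} $|\det\mathrm{Ad}(s)|_{\lieh}|=\exp\sum_{\lambda^i>0}\lambda^i m^{i,H}$, and the computation \eqref{eqhard} carries over verbatim without needing the general Ledrappier density \eqref{dyn_densities}. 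Second, the convexity-of-$\log$ step in \cref{prop:easyLed} runs in the direction \ref{11111}$\implies$\ref{3333333333}, not (2)$\implies$\ref{11111}; for the latter you want either the Ledrappier--Strelcyn direction (Theorem~\ref{entropyfacts}\ref{EFF3}) or, once the conditionals are known to equal Haar, the direct identity $h_\mu(f\mid H)=-\int\log m^H_x(f^{-1}\xi(x))\,d\mu$ evaluated via the analogue of \eqref{eqhard}.
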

	The proof is only a slightly more complicated version of the proof of \cref{prop:easyLed}.  Note that as \cref{thm:led'} only concerns the entropy and dynamics inside $H$-orbits, the result holds for $C^1$ or even $C^0$ actions since the dynamics permuting $H$-orbits is affine and hence $C^\infty$.  See for instance \cite{MR2191228} where related entropy results are shown for $C^0$ actions of Lie groups.  
	
	A possible critique of Theorem \ref{thm:led} is that in examples it seems nearly impossible to verify equality in \eqref{eq:entequal} without first knowing that the measure is SRB.  However, in a number of settings of  group actions on manifolds, it turns out one can, in fact, verify equality in \eqref{eq:entequal} (or typically, equality in Theorem \ref{thm:led'}\ref{11111}) and thus derive the SRB property  or gain additional invariance of the measure only from entropy considerations.  This is one key idea in this text, the papers \cites{AWBFRHZW-latticemeasure,1608.04995}, and also appears as a main tool in \cites{MR3814652, MR1253197}.

	\begin{remark}\label{rem:invariance_principle}\index{invariance principle!Ledrappier}\index{invariance principle!Avila--Viana}
		The statement and proof of  Theorem \ref{thm:led}, especially the reformulation in Theorem \ref{thm:led'}, is very similar to the  {\it invariance principle} for fiberwise disintegrations of measures invariant under skew products.  
		The earliest version of this invariance principle is due to Ledrappier \cite{MR850070} for projectivized linear cocycles.   Avila--Viana  extended this to  cocycles taking values in the group of $C^1$ diffeomorphisms in \cite{MR2651382}. See Proposition \ref{prop:nonresinv} for a related invariance principle in the setting of actions of lattices on manifolds.  
	\end{remark}

	\subsection{Approaches to  Questions \ref{Q1} and \ref{Q2}} 
	Although not the main focus of this text, we summarize a number of alternative approaches towards approaching Questions \ref{Q1} and \ref{Q2} that arise in various dynamical settings.

	% \subsubsection{Hard core analysis} 
	\fakeSS{SRB property from  dynamical hypotheses} In the setting of uniformly hyperbolic dynamics, SRB measures are known to  exist for Anosov diffeomorphisms, Anosov flows,  and Axiom A attractors.    See \cites{MR0442989,MR0380889}.  In the setting of  partially hyperbolic diffeomorphisms, under suitable conditions on the central dynamics SRB measures are known to exist; related results hold for dynamics with a dominated splitting.    See for example \cites{MR1743717,MR1749677,MR1757000,MR3712997}.   %and \cite{MR3535895} for related results

	\fakeSS{SRB measures via detailed analysis} For specific families of examples exhibiting nonuniform hyperbolicity, tools of parameter exclusion,  normal forms, and detailed analysis can be used to show the existence of an SRB measure.  See for example,  \cites{MR1701385,MR1087346,MR799250,MR630331,MR1218323,MR1835392,MR3098967,MR2005855}.  General hypotheses that can be verified in a number of examples are given in \cites{MR1637655,MR1824198} which guarantee    the existence of SRB measures.  See the survey article \cite{MR1933431} for more background.  
	%\subsubsection{Entropy verification} 
	%\cite{MR3535895}

	\fakeSS{Verifying equality in the entropy formula} As discussed above, the culmination of the results of \cites{MR693976, MR743818, MR819556} characterizes  SRB measures exactly as those for which 
	the equality $h_\mu(f) = \sum_{\lambda^i>0} m^i \lambda^i$ holds.  
	Similarly, equality in Theorem \ref{thm:led'}\ref{11111}   holds if and only if  the   measure $\mu$  is  invariant under the action of the subgroup $H$.  
	This approach---verifying equality in the entropy formula to obtain invariance of a measure---has been exploited in particular in \cites{MR1253197,MR3814652,AWBFRHZW-latticemeasure,1608.04995}.
	
	%where $\lambda_i$ are the Lyapunov exponents of $\mu$ and $m_i$ is the multiplicity of $\lambda_i$.  
	
	%\subsubsection{Shearing and Invariance in homogeneous structure}
	
	\fakeSS{Shearing and translation invariance in a homogeneous structure}
	A common tool to obtain invariance or absolute continuity properties of leaf-wise measures is to manufacture a shear of leaf-wise measures along leaves of a foliation.  That is, given an invariant measure $\mu$ and an affine foliation $\Fol$ with family of normalized leave-wise measures $\{\mu^\Fol_x\}$, for a $\mu$-typical $x$ one may be able to use the dynamics to construct approximations of translations along the  support of $\mu^\Fol_x$ in the  leaf $\Fol(x)$ that preserve the measure $\mu^\Fol_x$ up to normalization. Taking a limit, one has that $\mu^\Fol_x$ is preserved up to normalization under some translations which gives strong information (see \cref{lem:abscon}) on the geometry  of $\mu^\Fol_x$.  
	Additional dynamical arguments can then often establish translation invariance of the leaf-wise measures $\mu^\Fol_x$.   
	Manufacturing translation invariance of leaf-wise measures along their support in an affine foliation $\Fol$ is a main tool used to establish  Ratner's measure classification results in  \cites{MR1075042,MR1262705}  and \cite{MR1253197}.   
	This was also one of the main steps (see \cref{lem:trans}) in the proof of Theorem \ref{thm:KS}.   
	
	%Establishing translation invariance properties of leaf-wise measures  along an affine foliation $W^i$ was a  key idea 
	In a setting similar to that of Theorem \ref{thm:KS}, for  higher-rank  diagonal actions on semisimple homogeneous spaces (see \cref{sec:diag}), the {\it high entropy method} \cites{MR1989231,MR2191228} and {\it low entropy} method \cites{MR2195133, MR2366231} provide mechanisms to  obtain translation invariance of leaf-wise measures, culminating in the landmark paper \cite{MR2247967}. 
	Another mechanism to obtain translation invariance of leaf-wise measures appears in \cites{MR2831114, MR3037785} and is used to establish   measure rigidity results for stationary measure for  affine random walks.  In \cite{MR3814652},  a  mechanism inspired by \cite{MR2831114} is used to obtain invariance for certain affine actions of $\Sl(2,\R)$.

	This approach, and specifically the method  presented in   \cref{part:I} from \cite{MR1406432}, has been adapted to establish measure rigidity in a number of nonlinear settings including \cite{MR2261075} and \cite{MR2811602}.  In non-linear settings, unstable manifolds $W^u(x)$ are $C^2$ injectively immersed copies of $\R^k$ for some $k$.  Although there might be no natural notion of translation, relative to certain coordinate systems $H_x \colon \R^k\to W^u(x)$  obtained from normal forms of the dynamics along unstable manifolds, leaf-wise measures $\mu^u_x$ are absolutely continuous if and only if their images  $(H_x\inv)_*\mu^u_x$  in these coordinates  are {translation invariant} in $\R^k$.  In a number of non-linear settings including  \cites{MR2261075, MR2811602, 1506.06826} absolute continuity properties of a measure $\mu$ along unstable foliations is shown by  establishing translation invariance of the leaf-wise measures $(H_x\inv)_*\mu^u_x$  in these coordinates.

	\part{Smooth lattice actions and new results in the Zimmer program}\label{part:III}
	
	The main goal of this part will be to understand properties and to classify smooth actions of certain countable groups $\Gamma$ on compact manifolds.  The main results of this section are \cref{thm:invmeas} and \cref{slnr}.  We give their proofs after introducing some terminology and motivation.

	\section{Smooth lattice actions} 
	We give some background on lattices in semisimple Lie groups and a number of examples of smooth actions of  lattices on manifolds. 
	References with additional details for this and the next section include \cites{MR3307755, MR2655311,MR1920389,MR1648087,MR1090825,MR2807830}.
	\subsection{Lattices in semisimple Lie groups}\label{ss:latices}\index{Lie group}
	Recall that a Lie algebra $\lieg$ is \emph{simple} if it is non-abelian and has no non-trivial ideal.  A Lie algebra $\lieg$ is \emph{semisimple} \index{Lie group!(semi-)simple} %\index{Lie group!simple} 
	  if it is the direct sum $\lieg= \oplus_{i=1}^\ell \lieg_i$ of simple Lie algebras $\lieg_i$; this is equivalent to the fact that $[\lieg,\lieg]=\lieg.$
	We say a  Lie group $G$ is \emph{simple}  (resp.\  \emph{semisimple}) if its Lie algebra $\lieg$ is   {simple}  (resp.\   {semisimple}).  
	The main example for this text is the simple Lie group $G = \Sl(n,\R)$.  
	
	Let $G$ be a connected semisimple Lie group with finite center.  Semisimple Lie groups are unimodular and hence admit a bi-invariant measure, called the \emph{Haar measure}, which is unique up to normalization.  
	A \emph{lattice}\index{Lie group!lattice} in $G$ is a discrete subgroup $\Gamma\subset G$ with finite co-volume.  That is, if $D$ is a measurable fundamental domain for the right-action of $\Gamma$ on $G$ then $D$ has finite volume.  
	If the quotient $G/\Gamma$ is compact, we say that $\Gamma$ is a \emph{cocompact} lattice.  If $G/\Gamma$ has finite volume but is not compact we say that $\Gamma$ is \emph{nonuniform}.  \index{Lie group!lattice!cocompact}  \index{Lie group!lattice!nonuniform}
	The quotient manifold $G/\Gamma$ by the right action of $\Gamma$  admits a left-action by $G$ and the Haar measure on $G$ descends to a finite,  $G$-invariant measure on $G/\Gamma$ which we normalize to be a probability measure.  
	
	%For concreteness, we will primarily consider the case that $G = \Sl(n,\R)$.  
	\begin{example}
		The standard  example of a lattice in $G=\Sl(n,\R)$ is $\Gamma=\Sl(n,\Z)$.  Note that $\Sl(n,\Z)$ is not cocompact in $\Sl(n,\R)$. However,  $\Sl(n,\R)$ and more general simple and semisimple Lie groups  possess both nonuniform and cocompact lattices.   (See for example \cite[Sections 6.7, 6.8]{MR3307755} for examples and constructions.)
	\end{example}

	\begin{example}
		In the case  $G=\Sl(2,\R)$, the fundamental group of any finite area hyperbolic surface is a lattice in $G$.  In particular, the fundamental group of a compact hyperbolic surface is a cocompact lattice in $G$.  
		This can be seen by identifying the fundamental group of $S$ with the deck group of the hyperbolic plane $\mathbb{H}=\So(2,\R)\bs \Sl(2,\R)$.  For instance, the free group $\Gamma=F_2$  on two generators is a lattice in $G$ as can be seen by giving the punctured torus  $S= \T^2\sm\{\mathrm{pt}\}$ a hyperbolic metric.  
	\end{example}
	
	See \cite{MR3307755} for further details on  constructions and properties of lattices in Lie groups.  
	
	\subsection{Rank of $G$}\label{sec:rank}\index{Lie group!rank}\index{Lie group!Iwasawa decomposition}
	Every semisimple matrix group admits an Iwasawa decomposition $G= KAN$ where $K$ is compact, $A$ is a simply connected free abelian group of $\R$-diagonalizable elements, and $N$ is unipotent.    
	For general semisimple Lie groups with finite center, we  have a similarly defined Iwasawa decomposition $G= KAN$ where the images of $A$ and $N$ under the adjoint representation are, respectively,  $\R$-diagonalizable and unipotent.  %and the image of $N$ under the adjoint representation is unipotent.  
	%\note{clarified rank for non-matrix groups}
	See for instance \cite{MR1920389} for details.
	The  dimension of $A$ is the  \emph{rank} of $G$.  %; that is, the \emph{rank} of $G$ is the dimension of a maximal $\R$-diagonalizable Abelian subgroup $A\subset G$.  
	We call such a group $A$ a maximal split Cartan subgroup.  
	
	In the case of $G= \Sl(n,\R)$, the standard choice of $K$, $A$, and $N$ are
	$$K = \So(n,\R),\quad \quad A = \left\{ \diag(e^{t_1}, e^{t_2},\dots, e^{t_n}) : t_1+\dots +t_n = 0\right\},$$ 
	and  $N$ the group of upper-triangular matrices with all diagonal entries equal to $1$.  
	Note that, as elements in $\Sl(n,\R)$ have  determinant $1$, we have $$\diag(e^{t_1}, e^{t_2},\dots, e^{t_n})\in \Sl(n,\R)$$ if and only if $t_1 + \dots + t_n = 0$.  Thus $A\simeq \R^{n-1}$ and the rank of $\Sl(n,\R)$ is $n-1$.  
	
	We say that a simple Lie group $G$ is \emph{higher-rank}\index{Lie group!higher-rank} if its rank is at least $2$.  We will say that a lattice $\Gamma$ in a higher-rank simple Lie group $G$ is a \emph{higher-rank lattice.}  
	In particular, $G= \Sl(n,\R)$ and its lattices are higher-rank when   $n\ge 3$.  
	
	%In general, the \emph{rank} of a simple Lie group $G$ is the dimension of a maximal $\R$-diagonalizable Abelian subgroup $A\subset G$.  
	In   \cref{ex:isom} below,  we present  an example of a cocompact lattice $\Gamma$ in the group $G= \So(n,n)$ when $n\ge 4$.  The group $\So(n,n)$ has rank $n$ and thus $\Gamma$ is a higher-rank, cocompact lattice.  
	
	For further examples, see Table \ref{tab:stupid} for calculations of rank for various matrix groups and \cite[VI.4]{MR1920389} for examples of Iwasawa decompositions for various matrix groups.

	\subsection{Standard actions of  lattices in Lie groups}\label{sec:ex}
	We present a number of standard examples of ``algebraic''  actions of lattices in Lie groups.  We also discuss in \cref{ex:exotic} some modifications of algebraic actions and constructions of more exotic actions.  
	\begin{example}[Finite actions] %\note{renamed to finite actions}
		Let $\Gamma'$ be a finite-index normal subgroup of $\Gamma$.  Then $F= \Gamma/\Gamma'$ is finite.    Suppose the finite group $F$ acts on a   manifold $M$.  Since $F$ is a quotient of $\Gamma$ we naturally obtain  a $\Gamma$-action on $M$.  %Such actions are said to be \emph{trivial}.  
		
		Note that an action of a finite group  {preserves a volume} simply by averaging any volume form by the action.  % so trivial actions \emph{always preserve a volume}.  
	\end{example}
	\begin{definition} \label{def:dumb} \index{action!finite}
		An action $\alpha\colon \Gamma\to \diff(M)$ is \emph{finite} or \emph{almost trivial} if it factors through the action of a finite group.  That is, $\alpha$ is finite  if there is a finite-index normal subgroup $\Gamma'\subset \Gamma$ such that $\restrict \alpha {\Gamma'}$ is the identity.  
	\end{definition}
	We remark that by a theorem of Margulis \cite{MR515630}, if $\Gamma$ is a lattice in a higher-rank, simple Lie group with finite center then  all normal subgroups of $\Gamma$ are either finite or of finite-index.

	\begin{example}[Affine actions] \label{ex:standard}\index{action!affine}
		Let $\Gamma= \Sl(n,\Z)$ (or any finite-index subgroup of $\Sl(n,\Z)$).  Let $M= \T^n= \R^n/\Z^n$ be the $n$-dimensional torus.   We have a natural action $\alpha \colon \Gamma\to \diff(\T^n)$ given by 
		$$\alpha(\gamma)(x +\Z^n) = \gamma\cdot  x + \Z^n$$ for any matrix  $\gamma \in \Sl(n,\Z)$.  
		
		To generalize this example to other lattices, let $\Gamma\subset \Sl(n,\R)$ be any lattice and let $\rho\colon \Gamma\to \Sl(d,\Z)$ be any representation.   %\footnote{Of course there are some restrictions on what sort of representations are allowed coming from MArgulis's Superrigidity Theorem}. 
		Then we have a natural action $\alpha \colon \Gamma\to \diff(\T^d)$ given  by $$\alpha(\gamma) (x+ \Z^d) = \rho(\gamma) \cdot x + \Z^d.$$
		%$\Gamma$.
		Note that these examples  {preserve a volume} form, namely, the Lebesgue measure on $\T^d$.   Also note that these actions are non-isometric.  
		%Note that given any representation $\rho\colon \Gamma \to \Gl(k,\Z)$ we obtain an induced action of $\Gamma$ on $\T^k$.  
	\end{example}
	
	\begin{remark}\label{rem:affineAnosov}
		Both constructions in \cref{ex:standard} give actions $\alpha\colon \Gamma\to  \diff(\T^d)$ that have global fixed points.  That is, the coset of $0$ in $\T^d$ is  a fixed point of $\alpha(\gamma)$ for every $\gamma\in \Gamma$.

		The construction can be modified further to obtain genuinely affine actions without global fixed points.  Given a lattice $\Gamma\subset \Sl(n,\R)$   and a representation $\rho\colon \Gamma\to \Sl(d,\Z)$,  there may exist  non-trivial elements  $c\in H^1_\rho(\Gamma, \T^d)$; that is, $c\colon \Gamma\to \T^d$ is a function with \begin{equation} c(\gamma_1\gamma_2) = \rho(\gamma_1)c(\gamma_2) + c(\gamma_1)\label{eq:coho1}\end{equation} and such that there does not exist any $\eta\in \T^d$ with \begin{equation}c(\gamma) =\rho(\gamma) \eta -\eta  \label{eq:coho2}\end{equation} for all $\gamma\in \Gamma$.  (Equation \eqref{eq:coho1} says that $c$ is a cocycle with coefficients in the $\Gamma$-module $\T^d$; \eqref{eq:coho2} says $c$ is not a coboundary.)
		We  may then define $\td \alpha\colon \Gamma\to \Diff(\T^d)$ by 
		$$\td \alpha(\gamma) (x+ \Z^d) = \rho(\gamma) \cdot x +  c(\gamma) + \Z^d.$$
		Equation \eqref{eq:coho1} ensures that $\td \alpha$ is an action and \eqref{eq:coho2} ensures that $\td \alpha$ is not conjugate to the action $\alpha$.

		In the above construction, any cocycle  $c\colon \Gamma \to \T^d$ is necessarily cohomologous to a torsion-valued (that is,  $\Q^d/\Z^d$-valued) cocycle.  This follows from Margulis's result (see  \cite[Theorem 3 (iii)]{MR1090825}) on the vanishing of $H^1_\rho(\Gamma, \R^d).$   In particular, $\td \alpha$ and $\alpha$ are conjugate when restricted to a  finite-index subgroup of $\Gamma$.  See \cite{MR1236179} for more details.  
	\end{remark}
	
	\begin{example}[Projective actions]\label{ex:sphere}\index{action!projective}
		Let $\Gamma\subset  \Sl(n,\R)$ be any lattice.  Then $\Gamma$ has a natural linear action on $\R^n$.  % by matrix multiplication.  %: for $\gamma\in \Gamma$, $$\gamma\cdot x = \gamma x.$$
		The linear action of $\Gamma$ on $\R^n$ induces an action of $\Gamma$ on the sphere $S^{n-1}$ thought of as the {set of unit vectors} in $\R^n$:    we have $\alpha\colon \Gamma\to \diff(S^{n-1})$    given by $$\alpha(\gamma)(x) = \frac{\gamma\cdot x}{\|\gamma\cdot x\|}.$$
		
		Alternatively  we could act on the {space of lines} in $\R^n$ and obtain  an action of $\Gamma$ on the $(n-1)$-dimensional real projective space $\R P^{n-1}$. 
		This action  {does not preserve} a volume; in fact there is no invariant probability measure for this action.  Additionally, these actions are not isometric for any Riemannian metric.  
	\end{example}
	\begin{remark}[Actions on boundaries]\label{rem:parabolic}
		Example \ref{ex:sphere} generalizes to  actions   of lattices $\Gamma $ in $G$ acting on   boundaries of $G$. 
		%\note{added defn} 
		Given a semisimple  Lie group $G$ with Iwasawa decomposition $G= KAN$, let $M = K\cap C_G(A)$ be the centralizer of $A$ in $K$.  A closed subgroup $Q\subset G$ is \emph{parabolic}\index{Lie group!parabolic subgroup} if it is conjugate to a group  containing  $MAN$.  When $G= \SL(n,\R)$ we have that $M$ is a finite group and any parabolic subgroup $Q$ is conjugate to a group containing all upper triangular matrices.  See \cite[Section VII.7]{MR1920389} for further discussion on the structure of parabolic subgroups.  
		
		Given a semisimple Lie group $G$, a (finite-index subgroup of a) proper parabolic subgroup $Q\subset G$, and a lattice $\Gamma\subset G$, the coset space $M= G/Q$ is compact and $\Gamma$ acts on $M$ naturally as $$\alpha(\gamma) (xQ) = \gamma xQ.$$
		These actions   never preserve a volume form or any Borel probability measure and are not isometric.    
		
		In \cref{ex:sphere}, the action on the projective space $\R P^{n-1}$ can be seen as the action on $\Sl(n,\R)/Q$ where $Q$ is the parabolic subgroup
		$$Q=\left \{ \left(\begin{array}{cccc}* & * & \cdots & * \\0 & * & \cdots & *  \\ \vdots & \vdots  & \ddots & \vdots \\0 & * & \cdots & *\end{array}\right)\right\}.$$
	\end{remark}

	%\note{Added isometric actions}
	\begin{example}[Isometric actions]\index{action!isometric} Another important family of algebraic actions are isometric actions obtained from embeddings of  cocompact lattices in Lie groups into compact groups.  
		\label{ex:isom}%\note{The word "cocompact" appears 6 times here.}%
		\subsubsection*{Isometric actions of cocompact lattices in split orthogonal groups of type $D_n$ ($n\ge 4$)}For $n\ge 4$, consider the quadratic form in ${2n}$ variables $$Q(x_1, \dots, x_n, y_1, \dots, y_n) = x_1^2 + \dots x_n ^2 -\sqrt 2 (y_1 ^2 + \dots + y_n ^2).$$ Let $$B=\diag  \left (1,\dots, 1, -\sqrt 2, \dots , - \sqrt 2 \right) \in \Gl(2n,\R)$$ be the matrix such that   $Q(x) = x^TBx$ for all $x\in \R^{2n}$ and let $$G= \SO(Q)= \{g\in \Sl(2n,\R) \mid g^TBg = B\}$$ be the special orthogonal group associated with $Q$.  We have that $$\SO(Q)\simeq \SO(n,n)$$ is a Lie group of rank $n$ with restricted root system of type $D_n$ when $n\ge 4$.\footnote{For $n= 1$, $\So(1,1)$ is a one-parameter group and for $n= 2$, $\So(2,2)$ is not simple (it is double covered by $\Sl(2,\R) \times \Sl(2,\R)$). For $n= 3$, $\So(3,3)$ is  double covered by $\Sl(4,\R)$.}
		
		\def\K{\mathbb{K}}
		Let $\K= \Q[\sqrt 2] $ and let $\Z[\sqrt 2]$ be the ring of integers in $\K$.  
		Let $$\Gamma = \{ g\in \Sl(2n,\Z[\sqrt 2])  \mid g^TBg = B\}.$$
		Then $\Gamma$ is a cocompact lattice in $G$.  (See for example \cite{MR3307755}, Proposition 5.5.8 and Corollary 5.5.10.)

		Let $\tau\colon \K\to \K$ be the nontrivial Galois automorphism, $\tau(\sqrt{2})=-\sqrt{2}$, and let $\tau$ act coordinate-wise on matrices with entries in $\K$. 
		Given $\gamma\in \Gamma$ we have $\tau(\gamma) = \id$ if and only if $\gamma=\id$.  Moreover, as $\tau^2= \id$ we have $$\tau(\gamma)\in \SO(\tau(Q)):= \{g\in \Sl(2n,\R) \mid g^T \tau (B) g = \tau (B)\} \simeq \SO(2n).$$
		In particular, the map $\gamma\to \tau(\gamma)$ gives a representation $\Gamma\to \So(2n)$ with infinite image into the compact group $\So(2n)$.
		
		As $\So(2n)$ is the isometry group of the sphere $S^{2n-1}= \So(2n)/\So(2n-1)$ we  obtain an action of $\Gamma$ by isometries on a manifold of dimension $2n-1$.

		\subsubsection*{Isometric actions of cocompact lattices in $\Sl(n,\R)$}
		A more complicated construction can be used to build cocompact lattices $\Gamma\subset \Sl(n,\R)$ that possess  infinite-image representations $\pi\colon \Gamma \to \SU(n)$ (see discussion in  \cite[Sections 6.7, 6.8]{MR3307755} as well as \cite[Warning 16.4.3]{MR3307755}.)  In this case, one obtains isometric actions of certain cocompact lattices $\Gamma$ in $\Sl(n,\R)$ on the $(2n-2)$-dimensional homogeneous space $$\SU(n)/\mathrm{S}(\mathrm{U}(1)\times \mathrm{U}(n-1)).$$
	\end{example}

	\begin{example}[Modifications of standard examples and exotic actions]\index{action!exotic} %\note{added: modified action, exotic actions}  
		\label{ex:exotic}
		Beyond the  ``algebraic actions'' discussed  in Examples  \ref{ex:standard}--\ref{ex:isom},  
		it is possible to modify certain algebraic constructions to construct genuinely new actions; these actions might not be conjugate to algebraic actions and may exhibit much weaker  rigidity properties.   One such construction starts with the standard action of (finite-index subgroups of) $\Sl(n,\Z)$ on $\T^n$ and creates a non-volume-preserving  action by blowing-up fixed points or finite orbits of the action.  In \cite[Section 4]{MR1380646},  Katok and Lewis showed this example can be modified to obtain volume-preserving, real-analytic actions of $\Sl(n,\Z)$  that are not $C^0$ conjugate to an affine action; moreover, these   actions   are not locally $C^1$-rigid.  
		In \cites{MR2716615,MR2154667,MR2342012}, constructions of non-locally $C^1$-rigid, ergodic, volume-preserving actions of any lattice in a simple Lie group are constructed by more general blow-up constructions.
		
		Another example due to Stuck \cite{MR1406436}  demonstrates that it is impossible to fully  classify all lattice actions. Let $P\subset \Sl(n,\R)$ be the group of upper triangular matrices.  There is a non-trivial homomorphism $\rho\colon P\to \R$.  Now consider any flow (i.e.\ $\R$-action) on a manifold $M$ and view the flow as a $P$-action via the image of $\rho$.  Then $G$ acts on the induced space $N= (G\times M)/P$ and the restriction induces a non-volume-preserving,  non-finite action of $\Gamma$.  This example shows---particularly in the non-volume-preserving-case---that care is needed in order to formulate any precise conjectures that assert that every  action should be ``of an algebraic origin.''  Note, however, that we obtain a natural map $N\to G/P$ that intertwines $\Gamma$-actions; in particular, this action has an ``algebraic action'' as a factor.  
		
		We refer to \cite[Sections 9 and 10]{MR2807830} for more detailed discussion and references to modifications of algebraic actions and  exotic actions.  
	\end{example}

	\subsection{Actions of lattices in rank-1 groups}
	Actions by lattices in higher-rank Lie groups are expected to be rather constrained.  Although 
	\cref{ex:exotic} shows there exists exotic, genuinely ``non-algebraic'' actions of such groups, these actions are built from modifying algebraic constructions or factor over algebraic actions.  
	For lattices in rank-one Lie groups such as $\SL(2,\R)$, the situation is very different.  % from the previous subsection.  
	There exist natural actions that have no algebraic origin and the algebraic actions  of such groups seem to exhibit far less rigidity (for example  \cref{ex:hurder} which is not locally rigid) than those above. 
	\begin{example}[Actions of free groups]\label{ex:free}
		Let $G= \Sl(2,\R)$.  The free group $\Gamma = F_2$ is isomorphic to a lattice in $G$.  (For instance, the fundamental group of the punctured torus is isomorphic to $F_2$; more explicitly, $\Sl(2,\Z)$ contains a copy of $F_2$ as an index 12 subgroup.)   Let $M$ be any manifold and let $f,g\in \Diff(M)$.  Then $f$ and $g$ generate an action of $\Gamma$ on $M$ which in general is not of an algebraic origin and does not exhibit any local rigidity. %s no local.  %{\color{red} (consider for instance the case where $f$ and $g$ are supported on a small ball in $M$)}.  
In particular, there is no expectation that any rigidity phenomena should hold for  actions of all lattices in $\Sl(2,\R)$.  
	\end{example}

	For the next example, recall  Definitions \ref{def:anosov} and \ref{def:anosov2} of  Anosov actions.
	\begin{example}[Non-standard Anosov actions of $\Sl(2,\Z)$]\label{ex:hurder}\index{action!exotic}
		Consider the standard action $\alpha_0$ of $\Sl(2,\Z)$ on the 2 torus $\T^2$  as constructed in Example \ref{ex:standard}.  In \cite[Example 7.21]{MR1154597}, Hurder presents an example of a 1-parameter family of deformations $\alpha_t\colon \Sl(2,\Z)\to \Diff(\T^2)$ of $\alpha_0$ with the following properties:
		\begin{enumerate}
			\item Each $\alpha_t$ is a real-analytic, volume-preserving action;
			\item For $t>0$, $\alpha_t$ is not topologically conjugate to $\alpha_0$, (even when restricted to a finite-index subgroup of $\Sl(2,\Z)$.)
		\end{enumerate}
		Moreover, since $\alpha_0$ is an Anosov action and since the Anosov property is an open property we have that 
		\begin{enumerate}[resume]
			\item  each $\alpha_t$ is an Anosov action. 
			%\item For $t>0$, $\alpha_t$ is not (semi-)conjugate to $\alpha_0$, (and even when restricted to a finite index subgroup of $\Sl(2,\Z)$.
		\end{enumerate}
		This shows that even affine Anosov actions of $\Sl(2,\Z)$ fail to exhibit local rigidity properties and that there exist genuinely exotic Anosov actions of $\Sl(2,\Z)$. This is in stark contrast to the affine Anosov actions of higher-rank lattices which are known to be locally rigid by  \cite[Theorem 15]{MR1632177}. 
	\end{example}
	%One can similarly produce exotic non-volume-preserving actions of $\Sl(2,\Z)$ on the circle by deforming the standard projective action.  
	In contrast, it is expected that all Anosov actions of higher-rank lattices are smoothly conjugate to affine actions as in Example \ref{ex:standard} or \cref{rem:affineAnosov} (or analogous constructions in infra-nilmanifolds).   See Question \ref{Q:qq}\ref{555} below. Recent progress towards this conjecture appears  in \cite{BRHW1}.  
	
	\begin{remark}%\note{new remark}
		There are a number of rank-1 Lie groups whose lattices are known to exhibit some  rigidity properties relative to linear representations.  For instance, Corlette established   superrigidity and arithmeticity of lattices in certain  rank-1 simple Lie groups  in \cite{MR1147961}.  In particular, Corlette establishes  superrigidity  for lattices in $\Sp(n,1)$ and $F_4^{-20}$, the isometry groups of quaternionic hyperbolic space and the Cayley plane.  It seems plausible that  lattices in certain rank-1 Lie groups would  exhibit some rigidity properties for actions by diffeomorphisms;  currently,  there do not seem to be any results in this direction.  
	\end{remark}
	
	\section{Actions in low dimension and Zimmer's conjecture} %, and  recent results}
	\subsection{Motivating questions} % and main conjectures} 
	For actions by lattices in rank-1 groups, we have seen that it is  easy to construct exotic actions of free groups and Example \ref{ex:hurder} shows there are exotic Anosov actions of $\Sl(2,\Z)$ on tori.  
	
	However, for actions of lattices in higher-rank, simple Lie groups, the situation is expected to be far more rigid.  
	In particular, the  examples from the previous section  lead to a number of more precise questions and conjectures.  For concreteness,  fix $n\ge 3$ and let $G= \Sl(n,\R)$.  Let  $\Gamma\subset G$ be a lattice. 
	Recall the  action of $\Gamma$ on $S^{n-1}$  and the volume-preserving Anosov action of $\Gamma = \Sl(n,\Z)$ on $\T^n$.

	\begin{questions}\label{Q:qq}
		Consider the following questions:
		\begin{enumcount}
			\item \label{111} Is there a non-finite action of $\Gamma$ on  a manifold of dimension  at most $ n-2$?
			\item \label{222}  If the answer to \ref{111} is unknown, does every  action of $\Gamma$ on  a manifold of dimension at most  $n-2$ preserve a volume form?
			\item \label{333} Is there a non-finite, volume-preserving  action of $\Gamma$ on  a manifold of dimension at most  $n-1$?
			\item \label{444spit} Is every   non-finite  action of $\Gamma$ on  an $n$-torus of the type considered in Example \ref{ex:standard}?  What about volume-preserving actions? That is, if $\alpha\colon \Gamma\to \Diff(\T^n)$  is a non-finite action %is an Anosov action is $M$ a torus $\T^n$ and 
			is $\alpha$ smoothly conjugate to an affine   action as in Example \ref{ex:standard} (or as in \cref{rem:affineAnosov})?
			\item \label{666} Are the only   non-finite  actions  of $\Gamma$ on  a connected $(n-1)$-manifold  those considered in Example \ref{ex:sphere}?    That is, if $\alpha\colon \Gamma\to \Diff^\infty(M)$  is a non-finite action is $M$ either $S^{n-1}$ or $\R P^{n-1}$ and is $\alpha$ smoothly conjugate to the projective action? 
		\end{enumcount}

		%\subsection{cutitout}
		Motivated by various conjectures on the classification of Anosov diffeomorphisms and Question \ref{Q:qq}\ref{444spit}, we also pose the following.
		\begin{enumcount}[resume]
			
			\item\label{555}
			Is every (volume-preserving) Anosov action of $\Gamma$ of the type considered in Example \ref{ex:standard}?  That is, if $\alpha\colon \Gamma\to \Diff(M)$ is an Anosov action is $M$ a (infra-)nilmanifold and is $\alpha$ smoothly conjugate to an affine action as in Example \ref{ex:standard} 
			(or as in \cref{rem:affineAnosov})?
			%(when restricted to a finite index subgroup $\Gamma'\subset \Gamma$)?
			%What about if it preserves a volume?
			%\end{question}
		\end{enumcount}
		
	\end{questions}
	
	\cref{Q:qq}\ref{111} and \ref{333} are referred to as \emph{Zimmer's conjecture}, discussed in the next section.  
	Question \ref{Q:qq}\ref{222} is irrelevant given a negative answer to Question \ref{Q:qq}\ref{111} but motivated the result stated in \cref{thm:invmeas} below and was natural to conjecture before an   answer to  Question \ref{Q:qq}\ref{111} was known.  It may be that answering Question \ref{Q:qq}\ref{222}  is possible in dimension ranges where Conjecture \ref{conjecture:zimmergne}\ref{gnea}  below is expected to hold but is not yet known.

	\subsection{Zimmer's conjecture for actions by lattices in $\Sl(n,\R)$} \label{ss:ZimmerSLNR} \index{Zimmer!conjecture}\index{conjecture!Zimmer}
	Recall  \cref{ex:standard} and \cref{ex:sphere}.  For lattices in $\Sl(n,\R)$, \emph{Zimmer's conjecture} asserts that these are the minimal dimensions in which non-finite actions  can occur.  We have the following precise formulation.
	\begin{conjecture}%\note{ regularity cut and paste removed}
		\label{conj:slnr}
		For $n\ge 3$, let $\Gamma\subset \Sl(n,\mathbb R)$ be a  lattice. Let $M$ be a compact manifold. \begin{enumcount}
			\item \label{conj:1} If $\dim(M)< n-1$ 
			then any homomorphism $\Gamma\rightarrow \Diff(M)$ has finite image.  
			\item \label{conj:2} In addition, if $\vol$ is a volume form on $M$
			and if $\dim(M)=n-1$ then any homomorphism $\Gamma \rightarrow \Diff_\vol(M)$ has finite image.
		\end{enumcount}
	\end{conjecture}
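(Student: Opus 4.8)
The plan is to prove both parts at once by reducing, for a $C^2$ action $\alpha\colon\Gamma\to\Diff^2(M)$, to the following dichotomy: either $\alpha$ has \emph{uniform subexponential growth of derivatives}, in which case strong property (T) of $\Gamma$ forces $\alpha$ to preserve a continuous Riemannian metric and Margulis superrigidity then makes the image finite; or some invariant measure of the induced $G$-action carries a nonzero fiberwise Lyapunov exponent, which the low dimension of $M$ will rule out. First I would form the suspension $M^\alpha=(G\times M)/\Gamma$, a fiber bundle over $G/\Gamma$ with compact fiber $M$ (compactness of $M^\alpha$ is where cocompactness of $\Gamma$ is convenient), carrying a smooth $G$-action $\td\alpha$ that permutes the fibers; restricting to the maximal split Cartan $A\cong\R^{n-1}$, any $A$-invariant probability measure on $M^\alpha$ projecting to Haar on $G/\Gamma$ has, by the higher-rank Oseledec theorem (\cref{thm:oscHR}), \emph{fiberwise Lyapunov exponent functionals} $\lambda^1,\dots,\lambda^p\colon\liea\to\R$.

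\noindent\textbf{Step 1 (uniform subexponential growth).} Suppose $\alpha$ fails to have uniform subexponential growth of derivatives. By the fiberwise analogue of \cref{prop:ZLEUSEGOD} one obtains an $A$-invariant measure $\mu$ on $M^\alpha$ with some $\lambda^i\neq0$. The higher-rank trick now upgrades $\mu$: if $\lambda^i$ is not proportional to a root $\beta$ of $(\lieg,\liea)$, choose $\vecs_0\in\ker\beta$ with $\lambda^i(\vecs_0)>0$; then $\vecs_0$ is centralized by a unipotent root subgroup $U_\beta$, and averaging $\mu$ over $U_\beta$ yields a measure invariant under $\exp(\R\vecs_0)$ whose fiberwise spectrum still contains a functional positive on $\vecs_0$ (this is the content of the averaging claim and the non-resonance invariance principle, \cref{prop:nonresinv}); iterating over all roots produces a $G$-invariant measure with a nonzero fiberwise exponent. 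Zimmer cocycle superrigidity (\cref{thm:ZCSR}) then says the derivative cocycle of $\td\alpha$ over this measure is measurably cohomologous to a compact extension of a linear representation $\pi\colon G\to\GL(\dim M,\R)$, so each fiberwise Lyapunov exponent is the restriction to $\liea$ of a weight of $\pi$. But $\dim M\le n-1$ while every nontrivial representation of $\Sl(n,\R)$ has dimension at least $n$; hence $\pi$ is trivial, every weight vanishes on $\liea$, and every fiberwise exponent is $0$ --- contradicting $\lambda^i\neq0$. Therefore $\alpha$ has uniform subexponential growth of derivatives. In case \ref{conj:2} the invariant volume on $M$ supplies the needed $G$-invariant measure directly; in case \ref{conj:1} one must instead pass to Grassmann or flag bundles of $TM$ to produce it.

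\noindent\textbf{Step 2 (averaging and conclusion).} Since $\Gamma$ has strong property (T), the averaging operator built from a Kazhdan projection, acting on the Banach space of continuous sections of the bundle of Riemannian metrics on $M$ twisted by the derivative cocycle, converges --- the uniform subexponential growth from Step 1 being exactly the estimate that matches the exponential decay in the Lafforgue / de Laat--de la Salle inequality. Its fixed point is a $\Gamma$-invariant continuous Riemannian metric $g$, so $\alpha(\Gamma)\subset\mathrm{Isom}(M,g)$, a compact Lie group $L$ because $M$ is compact. As $L$ is linear and $\Sl(n,\R)$ has no nontrivial continuous homomorphism into a compact group, Margulis superrigidity (\cref{thm:MSR}) forces $\Gamma\to L$ to be trivial on a finite-index subgroup; by Margulis's normal subgroup theorem \cite{MR515630} the kernel then has finite index, i.e.\ $\alpha$ is finite (\cref{def:dumb}). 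The hard part will be Step 1 in case \ref{conj:1}, where no invariant volume is available: constructing a $G$-invariant auxiliary measure carrying a nonzero fiberwise exponent on the correct bundle over $G/\Gamma$, while keeping the dimension bookkeeping tight enough that $\dim M<n-1$ still forces $\pi$ trivial, is where measure rigidity, the non-resonance principle, and the root-theoretic averaging must be combined delicately; a secondary, quantitative difficulty --- and the reason the theorem one can actually prove, \cref{slnr}, carries extra hypotheses --- is calibrating the uniform subexponential rate from Step 1 against the Banach-geometric constants in strong property (T) so that the averaging in Step 2 genuinely converges.
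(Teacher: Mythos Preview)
The statement is a \emph{conjecture} in the paper; what is actually proved is the cocompact $C^2$ case, \cref{slnr}, and your outline is really a sketch of that proof (as you acknowledge at the end). Your three-step architecture---subexponential growth of derivatives, then strong property (T) averaging to an invariant metric, then Margulis superrigidity with compact codomain---matches the paper's \cref{sec:outline3steps} exactly, and Step 2 is essentially correct.

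Your Step 1, however, conflates two distinct ingredients. The averaging over unipotent root subgroups (\cref{prop:goodmeas}) upgrades an $A$-invariant measure with a nonzero fiberwise exponent to one whose \emph{projection} to $G/\Gamma$ is Haar, while preserving a nonzero exponent; ``iterating over all roots'' does not by itself make the measure on $M^\alpha$ $G$-invariant. A separate argument---the invariance principle of \cref{thm:invmsr}---then shows that any $A$-invariant measure projecting to Haar is $G$-invariant under the dimension hypothesis, and only then does Zimmer cocycle superrigidity yield the contradiction. Your citation of \cref{prop:nonresinv} is misplaced (that proposition feeds into the proof of \cref{thm:invmsr}, not the averaging step), and your description of its hypothesis is backwards: it concerns roots nonresonant with the fiberwise exponents, not fiberwise exponents nonproportional to roots. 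Your handling of the two cases is also off. The invariant volume does \emph{not} ``directly supply'' a $G$-invariant measure carrying a nonzero fiberwise exponent; both cases \ref{conj:1} and \ref{conj:2} run through the same route (\cref{eq6677}, then \cref{prop:goodmeas}, then \cref{thm:invmsr}), and the only role of the volume is to force the fiberwise exponents to sum to zero (\cref{cor:stupidcor}), which is precisely the extra linear relation needed in \cref{thm:invmsr} to find a nonzero $s_0\in A$ in the joint kernel of all fiberwise exponents when $\dim M=n-1$. The suggestion to pass to Grassmann or flag bundles in case \ref{conj:1} has no counterpart in the paper and is not needed.
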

	
%\note{not sure what was ``strange'' about phrasing}
	We are intentionally vague about the regularity in \cref{conj:slnr} (and \cref{conjecture:zimmergne} below). 
Zimmer originally stated \cref{conj:slnr}\ref{conj:2} for the case of  $C^{\infty}$ volume-preserving actions; see \cites{MR682830,MR934329,MR900826}.   \cref{conj:slnr}\ref{conj:1} for $C^\infty$ actions first appears in \cite{MR1666834}.  Most evidence for the  conjecture  requires the action to be at least  $C^1$. It is possible  the conjecture holds for actions by homeomorphisms; see for instance  \cites{MR1198459,MR2807834, MR3150210} for a partial list of results in this directions.  
	The results we discuss below require the action to be at least $C^{1+\beta}$ as we use tools nonuniformly hyperbolic dynamics though some of our results still hold for actions by $C^1$ diffeomorphisms (see \cref{slnrC1} below.)

	\def\C{\mathbb C}
	\def\rep{\mathrm{rep}}
	\def\cpt{\mathrm{cmt}}
	\starsubsection{Zimmer's conjecture for actions by lattices in other Lie groups} \label{sec:ZC2}
	To formulate Zimmer's  conjecture for lattices in general Lie groups, to each simple, non-compact Lie group $G$ we associate 3 positive integers $d_0(G), d_\rep(G) , d_\cpt(G) $  defined roughly as follows:
	\begin{enumerate}
		\item $d_0(G) $ is the minimal dimension of $G/H$ as $H$ varies over proper closed subgroups $H\subset  G$.  (We remark that $H$ is necessarily  a parabolic subgroup in this case.)
		\item $d_\rep(G) $ is the minimal dimension of a non-trivial linear representation of (the Lie algebra) of $G$.  
		\item $d_\cpt(G)$ is the minimal  dimension of a non-trivial homogeneous space of a compact real form of $G$.
	\end{enumerate}
	See Table \ref{tab:stupid} where we compute the above numbers for a number of  matrix groups, (split) real forms of exceptional Lie algebras, and complex matrix groups.  We also include another number $r(G)$ which is defined in \cites{AWBFRHZW-latticemeasure,1608.04995} and arises from certain dynamical arguments\footnote{\label{foot:r}A precise definition that is equivalent to that in \cites{AWBFRHZW-latticemeasure,1608.04995} is that  $r(G)$ is $d_0(G')$ where $G'$ is the largest $\R$-split simple subgroup in $G$.}; this number  gives the bounds appearing in the most general result, Theorem \ref{slnr:popop} below, towards solving Conjecture \ref{conjecture:zimmergne}.   For complete tables of values of $d_\rep(G)$, $d_\cpt(G)$, and $d_0(G)$, we refer to  \cite{Cantat}. 
 %\note{$d_{cmp}$ was defined above. Maybe $d_{isom}$ would be better?\\ 
 
%   There are  26 letters in the roman alphabet.  Some, like $r$, are used more than once.}
   	\begin{table}[h]\label{page:table}
		\begin{center}
			\begin{tabular}{ | c | c | c | c |c |c |c|}
				\hline
				$G$ & \makecell{\small restricted \\ \small root  system} & rank& $d_\rep(G)$& $d_\cpt(G)$& $d_0(G)$ &$r(G)$\\ \hline %\hline
				$\Sl(n,\R)$ &$A_{n-1}$ & $n-1$ & $n$& $2n-2 $& $n-1$&$n-1$ \\ \hline
				$\So(n,n+1)$ &$B_n$& $n$ & $2n+1$& $ 2n $& $2n-1$ &$2n-1$\\ \hline
				
				$\Sp(2n,\R)$&$C_n$ & $n$ & $2n$& $ 4n-4 $& $2n-1$ &$2n-1$\\ \hline
				$\So(n,n)$ &$D_n$& $n$ & $2n$& $ 2n-1 $& $2n-2$ &$2n-2$\\ \hline
				$E_{I}$ &$E_6$& $6$ & $27$ & $26$ & $16$ & $16$  \\ \hline
				$E_{V}$ &$E_7$& $7$ & $56$ & $54$ & $27$ & $27$  \\ \hline
				$E_{VIII}$ &$E_8$& $8$ & $248$ & $112$ & $57$ & $57$  \\ \hline
				$F_{1}$ &$F_4$& $4$ & $26$ & $16$ & $15$ & $15$  \\ \hline
				$G$ &$G_2$& $2$ & $7$ & $6$ & $5$ & $5$  \\ \hline
				$\Sl(n,\C)$ &$A_{n-1}$ & $n-1$ & $2n$& $2n-2 $& $2n-2$&$n-1$ \\ \hline
				$\SO(2n,\C)$ &$D_{n}$ & $n$ & $4n$& $2n-1 $& $4n-4$&$2n-2$ \\ \hline
				%            $\So(n,m)$ &$D_n$& $n$ & $2n$& $ 2n-1 $& $2n-2$ &$2n-2$\\ \hline
				$\SO(2n+1,\C)$ &$B_{n}$ & $n$ & $4n+2$& $2n $& $4n-2$&$2n-1$ \\ \hline
				$\Sp(2n,\C)$ &$C_{n}$ & $n$ & $4n$& $4n-4 $& $4n-2$&$2n-1$ \\ \hline
				\makecell{$\So(p,q)$ \\$ p<q$} &$B_p$& $p$ & $p+q$& $ p+q-1 $& $p+q-2$ &$2p-1$\\ \hline
				%    \specialcell{$\mathrm{SU}(p,q)$\\ $ p<q$ }&$BC_p$& $p$ & ${2(p+q)}$& $ 2(p+q)-2 $& $2(p+q)+1$ &$2p-1$\\ \hline
				%\specialcell{$\Sp(2p,2q)$ \\ $ p\le q$} &$BC_p$& $p$ & $4(p+q)$& $ 4(p+q)-4 $& $4(p+q)-5$ &$2p-1$\\ \hline
				
				%                        $\So^*(2n)= \So(n,\mathbb{H}),$ $n$ odd  &$BC_{(n-1)/2}$ & $(n-1)/2$ & $4n$& $2n-1 $& $4n-7$&$2n-1$ \\ \hline
				%                        $\So^*(2n)= \So(n,\mathbb{H}),$ $n$ even  &$C_{n/2}$ & $n/2$ & $4n$& $2n-1 $& $4n-7$&$2n-1$ \\ \hline
				%    $E_{I}$ &$E_6$& $6$ & $27$ & $26$ & $16$ & $$  \\ \hline
				%    $E_{V}$ &$E_7$& $7$ & $56$ & $54$ & $27$ & $$  \\ \hline
				%    $E_{VIII}$ &$E_8$& $8$ & $248$ & $112$ & $57$ & $$  \\ \hline
				%    $F_{1}$ &$F_4$& $4$ & $26$ & $16$ & $15$ & $$  \\ \hline
				%    $G_{2}$ &$G_2$& $2$ & $7$ & $6$ & $5$ & $$  \\ \hline
				%    \hline
			\end{tabular}
		\end{center}
		\caption{Numerology appearing in Zimmer's conjecture for various   groups.  See also \cite{Cantat} for more complete tables.  See Theorem \ref{slnr:popop}  where the number $r(G)$ appears and \cites{AWBFRHZW-latticemeasure,1608.04995}  or \cref{foot:r} for definition.}\label{tab:stupid}
		\bigskip 
	\end{table}
	
	Given the examples in Section \ref{sec:ex} and the integers $d_\rep(G), d_\cpt(G)$, and $d_0(G)$ defined above, it is natural to conjecture the following.
	
	\begin{conjecture}[Zimmer's Conjecture]\label{conjecture:zimmergne}\index{Zimmer!conjecture} \index{conjecture!Zimmer}
		Let $G$ be a  connected, simple Lie group with finite center.  Let $\Gamma\subset G$ be
		a lattice.  Let $M$ be a compact manifold and $\vol$ a volume form on $M$.  Then
		\begin{enumcount}
			\item \label{gnea}if $\dim(M) < \min \{d_\rep(G), d_\cpt(G), d_0(G)\}$ then any  homomorphism $\alpha\colon \Gamma \rightarrow \Diff(M)$ has finite image;
			\item \label{gneb} if $\dim(M) < \min \{d_\rep(G), d_\cpt(G)\}$ then any homomorphism $\alpha\colon \Gamma \rightarrow \Diff_\vol(M)$ has finite image;
			\item if $\dim(M) < \min\{d_0(G),d_\rep(G)\}$ then for any homomorphism $\alpha\colon \Gamma \rightarrow \Diff(M)$, the image $\alpha(\Gamma)$ preserves a Riemannian metric;
			\item if $\dim(M) < d_\rep(G)$ then for any homomorphism $\alpha\colon \Gamma \rightarrow \Diff_\vol(M)$, the image $\alpha(\Gamma)$ preserves a Riemannian metric.
		\end{enumcount}
	\end{conjecture}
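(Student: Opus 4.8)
The plan is to follow the strategy of Brown--Fisher--Hurtado \cites{1608.04995,AWBFRHZW-latticemeasure}, which establishes the conjecture under the stronger dimension bound $\dim M < r(G)$ (see Table \ref{tab:stupid} and Theorem \ref{slnr:popop}); I will indicate at the end where the general statement requires genuinely new input. By Margulis's normal subgroup theorem it suffices, for the finiteness statements \ref{gnea} and \ref{gneb}, to assume $\alpha\colon\Gamma\to\Diff(M)$ has infinite image and derive a contradiction; for the two metric-preservation statements one argues directly. First I would \emph{induce} the action: form the compact fiber bundle $M^\alpha=(G\times M)/\Gamma$ over $G/\Gamma$ with fiber $M$, carrying a $G$-action $\td\alpha$ permuting the fibers. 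Restricting $\td\alpha$ to a maximal split Cartan subgroup $A\simeq\R^{\rank(G)}$ gives an $\R^k$-action with $k=\rank(G)\ge 2$, and to any $\td\alpha(A)$-invariant Borel probability measure $\mu$ one associates the fiberwise Lyapunov exponent functionals $\lambda^1,\dots,\lambda^p\colon\mathfrak a\to\R$ of \cref{sec:lyap}, with $\sum_i m^{i,\mathrm{fib}}=\dim M$. If $\alpha$ has infinite image, at least one fiberwise exponent is nonzero for some such $\mu$.

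The core dynamical step is the higher-rank averaging argument underlying \cref{thm:invmeas} (and parallel to the $\pi$-partition trick in the proof of \cref{thm:KS}). Given a nonzero fiberwise exponent $\lambda$ and a root $\beta$ of $G$ not proportional to $\lambda$, choose $\vecs_0\in\ker\beta$ with $\lambda(\vecs_0)>0$; then $\vecs_0$ is centralized by the unipotent root subgroup $U_\beta$, so averaging $\mu$ over $U_\beta$ (cf.\ Claim \ref{average} and the proof of Proposition \ref{prop:goodmeas} in \cref{ss:sl3}) produces a new $\td\alpha(A)$-invariant measure one of whose fiberwise exponents is positive on $\vecs_0$. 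Iterating through the root system forces the number of distinct fiberwise exponents, hence $\dim M$, to be at least $r(G)$; equivalently, if $\dim M<r(G)$ then \emph{every} $\td\alpha(A)$-invariant measure has all fiberwise exponents zero. Applying Proposition \ref{prop:ZLEUSEGOD} fiberwise, together with compactness of $M^\alpha$, yields uniform subexponential growth of the fiberwise derivative cocycle along every one-parameter subgroup of $A$, hence along all of $G$ and along $\Gamma$.

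Next I would invoke \emph{strong property (T)}: since $\Gamma$ is a lattice in a higher-rank simple Lie group it has strong property (T) \cites{Lafforgue,dldlS}. The subexponential growth of $\gamma\mapsto\|D\alpha(\gamma)\|$ places the induced $\Gamma$-representation on (a Sobolev completion of) sections of the bundle of Finsler/Riemannian metrics on $M$ in the class to which strong property (T) applies, producing a nonzero $\Gamma$-invariant vector, i.e.\ a $\Gamma$-invariant Finsler metric, which one smooths and averages to an invariant Riemannian metric. This gives the two metric-preservation statements, once Zimmer cocycle superrigidity (Theorem \ref{thm:ZCSR}) together with $\dim M<d_\rep(G)$ is used to see that the only potential obstruction, a nontrivial linear part of the derivative cocycle, cannot occur. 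Finally, an infinite-image isometric action embeds $\Gamma$ into the isometry group of a compact $\dim M$-manifold, whose identity component is a compact Lie group admitting no nontrivial homogeneous space of dimension $<d_\cpt(G)$; combined with Margulis superrigidity (Theorem \ref{thm:MSR}) this is impossible when $\dim M<d_\cpt(G)$, giving \ref{gnea} and \ref{gneb}. In the non-volume-preserving cases one first replaces $\mu$ by a Furstenberg stationary measure on $M^\alpha$ and runs the same analysis; the bound $\dim M<d_0(G)$ is what rules out the remaining obstruction, a nontrivial projective factor $G/Q$.

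The main obstacle is the gap between $r(G)$ and $d_0(G)$: the averaging-over-unipotents argument only detects exponents seen by the largest $\R$-split simple subgroup of $G$, so as written it proves the conjecture only in the range $\dim M<r(G)$, which is strictly smaller than $\min\{d_\rep,d_\cpt,d_0\}$ for groups such as $\Sl(n,\C)$ or $\SO(p,q)$ with $p<q$. Closing this gap — recovering the full geometry of the flag variety $G/Q$ from the fiberwise dynamics, presumably via a sharper measure-rigidity theorem that exploits the non-split root directions and the structure of parabolic subgroups — is precisely the content of the still-open part of Conjecture \ref{conjecture:zimmergne}.
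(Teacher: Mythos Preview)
The statement is labeled a \emph{conjecture} in the paper and the paper offers no proof of it; the text only establishes the partial result Theorem~\ref{slnr:popop} under the weaker bound $\dim M<r(G)$ and explicitly leaves open the gap between $r(G)$ and $\min\{d_0(G),d_\rep(G),d_\cpt(G)\}$ for non-split groups. Your final paragraph correctly identifies exactly this gap, so what you have written is not a proof of the conjecture but an outline of the known strategy together with an honest statement of what is missing---which is the right thing to say.

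A few steps in your sketch of the known argument are misordered. The claim ``if $\alpha$ has infinite image then some fiberwise exponent is nonzero'' is not how the proof runs and is not something one proves directly; rather, one argues (Propositions~\ref{eq6677} and~\ref{prop:goodmeas}) that failure of uniform subexponential growth produces an $A$-invariant measure with a nonzero fiberwise exponent \emph{and} projecting to Haar, which the invariance principle then promotes to a $G$-invariant measure, after which Zimmer cocycle superrigidity forces the exponents to vanish---contradiction. So cocycle superrigidity lives inside the proof of subexponential growth, not after the strong-(T) step. Also, the paper's treatment of the non-volume-preserving case does not pass through Furstenberg stationary measures; the same $A$-invariant-measure argument applies, one merely loses the relation $\sum\lambda^i=0$ and hence one unit in the dimension count.
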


	\subsection{Recent results in the Zimmer program}\label{fullspit}
	The following  two recent results address \cref{Q:qq}\ref{111}--\ref{333} above.  % Questions \ref{Q:Z1}--\ref{Q:Z3} above.  
	In the remainder of this part, we  outline their proofs (at times, specializing to the case of $C^\infty$ actions and the case of  $G= \Sl(3,\R)$.)   We also refer the reader to the excellent  article by Serge Cantat \cite{Cantat} that presents (in French) a complete proof of Theorem \ref{slnr}.

	Before an answer to \cref{Q:qq}\ref{111} and \ref{333} were known, the author together with Federico Rodriguez Hertz and Zhiren Wang studied Question \ref{Q:qq}\ref{222}  and were able to show that all such actions preserve some probability measure.   
	
	\begin{theorem}[{\cite[Theorem 1.6]{AWBFRHZW-latticemeasure}}]\label{thm:invmeas}
		For $n\ge 3$, let $\Gamma\subset \Sl(n,\R)$ be a lattice.  
		Let $M$ be a manifold with $\dim(M)< n-1$.  Then, for any $C^{1+\beta}$ action $\alpha\colon \Gamma\to \diff^{1+\beta}(M)$, there exists an $\alpha$-invariant Borel probability measure.  
	\end{theorem}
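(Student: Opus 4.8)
\emph{Proof proposal.} I will argue by contraposition: assume $\alpha$ preserves no Borel probability measure on $M$, and derive a contradiction with $\dim(M) < n-1$. Set $G = \SL(n,\R)$ and fix a minimal parabolic subgroup $P\subset G$ containing a maximal split Cartan subgroup $A\cong\R^{n-1}$ and the unipotent radical $N$ of $P$, so $\operatorname{rank}(G)=n-1$. Form the induced $G$-space $M^\alpha := (G\times M)/\Gamma$, a bundle over $G/\Gamma$ with fiber $M$ on which $G$ acts by left translation in the first coordinate; a $\Gamma$-invariant probability measure on $M$ corresponds exactly to a $G$-invariant probability measure on $M^\alpha$ projecting to Haar on $G/\Gamma$. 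Choose an admissible probability measure $\nu$ on $G$. Since $M$ is compact and the action is $C^0$, there is a $\nu$-stationary measure on $M$, and the standard correspondence from Furstenberg's boundary theory produces a $P$-invariant probability measure $\hat\mu$ on $M^\alpha$ lying over Haar on $G/\Gamma$. By the standing assumption $\hat\mu$ is \emph{not} $G$-invariant; note also that classical cocycle superrigidity is unavailable here precisely because we have no $G$-invariant measure, which is why the stationary-measure formalism is forced on us.

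Since $A\subset P$, the measure $\hat\mu$ is $A$-invariant, so I may apply the higher-rank Oseledec theorem (\cref{thm:oscHR}) to the fiberwise derivative cocycle of the $A\cong\R^{n-1}$-action on $(M^\alpha,\hat\mu)$. This yields fiberwise Lyapunov exponent functionals $\lambda^1,\dots,\lambda^p\colon\liea\to\R$ with multiplicities $m^1,\dots,m^p$ satisfying $\sum_i m^i=\dim(M)<n-1=\operatorname{rank}(G)$. The plan is to extract from these functionals a \emph{resonance} with the restricted root system of $G$ and then to show that any such resonance is incompatible with this dimension bound.

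The dichotomy is the heart of the argument. First, a non-resonance/invariance principle in the style of Ledrappier and Avila--Viana --- here the version of \cref{prop:nonresinv} --- shows that if none of the nonzero $\lambda^i$ is positively proportional to a restricted root of $G$ (in particular, if all $\lambda^i$ vanish), then the conditional measures of $\hat\mu$ along the coarse Lyapunov leaves of the $G$-action on $M^\alpha$ are invariant under the corresponding unipotent holonomies; running this over all root directions forces $\hat\mu$ to be $G$-invariant, a contradiction. Hence some $\lambda^i\neq 0$. Next I invoke the averaging machinery over unipotent root subgroups used in \cref{ss:sl3} (\cref{average} and the construction of good measures in \cref{prop:goodmeas}): for a restricted root $\beta$ and a nonzero fiberwise exponent $\lambda$ not proportional to $\beta$, one picks $s_0\in\ker\beta$ with $\lambda(s_0)>0$; since $s_0$ is centralized by the root subgroup $U^\beta$, one averages $\hat\mu$ over $U^\beta$ to obtain a new parabolic-invariant (still non--$G$-invariant) measure whose fiberwise Lyapunov data is adapted to $\beta$. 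Iterating this, I arrange a parabolic-invariant measure possessing a nonzero fiberwise Lyapunov exponent positively proportional to a restricted root $e_p-e_q$.

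Finally, a resonant fiberwise exponent couples the fiber dynamics along a root direction $U^\beta$ in $G/\Gamma$ to the base dynamics at a matched rate; combined with Poincar\'e recurrence and the Ledrappier--Young description of conditional measures along unstable leaves (Part~\ref{part:II}), this coupling yields a nontrivial measurable $\Gamma$-equivariant structure in the fiber --- concretely, a measurable projective factor of the action onto a flag variety $G/Q$ for a proper parabolic $Q$, equivalently a measurable reduction of the fiberwise derivative cocycle to a proper parabolic. The Weyl-group orbit of the resonant root, which the averaging construction forces the fiberwise bundle to carry, then makes that bundle at least $(n-1)$-dimensional, i.e.\ $\dim(M)\ge n-1 = d_0(\SL(n,\R)) = r(\SL(n,\R))$ (see \cref{tab:stupid}); this is the desired contradiction and proves \cref{thm:invmeas}. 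I expect the dichotomy of the third paragraph to be the main obstacle: establishing the invariance principle (non-resonance $\Rightarrow$ $G$-invariance) in this parabolic, not-a-priori-invariant setting requires a delicate marriage of the entropy and conditional-measure theory of Part~\ref{part:II} with the homogeneous structure of the base, and showing that the root-subgroup averaging can be iterated to a genuine resonance while preserving parabolic invariance of the measure is the other technically demanding point; the final dimension count, though combinatorial, also rests on the sharp value $r(\SL(n,\R))=n-1$.
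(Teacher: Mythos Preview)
Your proposal has a genuine gap, and the overall architecture is inverted relative to what actually works.

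The core problem is in your final step. You claim that once you have a single nonzero fiberwise Lyapunov exponent resonant with a restricted root $\beta$, the ``Weyl-group orbit of the resonant root, which the averaging construction forces the fiberwise bundle to carry'' makes the fiber at least $(n-1)$-dimensional. This is not true. A single fiberwise exponent on a one-dimensional fiber can perfectly well be positively proportional to a single root; nothing in the averaging over $U^\beta$ (which only preserves the value of an exponent at points of $\ker\beta$, cf.\ \cref{average}) creates new exponents or forces any Weyl symmetry on the fiberwise spectrum. The vaguely invoked ``measurable projective factor onto $G/Q$'' does not follow from a single resonance, and no mechanism you describe yields the bound $\dim(M)\ge n-1$.

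The logic of steps 5--6 is also tangled: your dichotomy in step 5 already gives that some fiberwise exponent \emph{is} resonant with some root (that is the contrapositive of ``no resonance $\Rightarrow$ $G$-invariant''), so the subsequent averaging to ``arrange'' a resonance is redundant. And the detour through stationary measures and Furstenberg boundary theory to obtain a $P$-invariant measure is unnecessary: $A$ is amenable, so Krylov--Bogolyubov already gives an $A$-invariant, $A$-ergodic probability on $M^\alpha$ projecting to Haar, which is all \cref{prop:nonresinv} needs.

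The paper's argument runs the counting in the opposite direction, and this is the idea you are missing. There are at most $\dim(M)\le n-2$ fiberwise Lyapunov functionals on $A\cong\R^{n-1}$, and no two roots of $\SL(n,\R)$ are positively proportional, so at most $n-2$ roots can be resonant. By \cref{prop:nonresinv} the measure is invariant under all remaining root subgroups; its stabilizer $H$ therefore has codimension at most $n-2$ in $G$. But $H$ contains $A$ and all but finitely many root subgroups, hence is parabolic, and every proper parabolic of $\SL(n,\R)$ has codimension at least $n-1$. Thus $H=G$. (Even more directly: the common kernel of at most $n-2$ functionals on $\R^{n-1}$ contains a nonzero $s_0$; then the unstable foliation for $s_0$ is exactly the $N_+$-orbit foliation, and the entropy equality $h_\mu(s_0)=h_{\mathrm{Haar}}(s_0)=\sum_{\beta(s_0)>0}\beta(s_0)$ forces $N_\pm$-invariance by \cref{thm:led'}.) The moral: the hypothesis $\dim(M)<n-1$ bounds the number of \emph{possible} resonances from above, which forces invariance under a \emph{large} subgroup --- not the other way around.
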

	For actions on the circle, an analogue of \cref{thm:invmeas} is shown in \cite[Theorem 3.1]{MR1703323} for actions by homeomorphisms.  
	
	%\note{new remarks about critical dim'n}
	In the critical dimension, $\dim(M)=n-1$,   the projective action on $\R P^{n-1}$  discussed in \cref{ex:sphere} gives an example of an action  that does not preserve any Borel probability measure. 
	If $\alpha$ is an action of $\Gamma$   on a space $X$, we say that a Borel probability measure $\mu$ is \emph{nonsingular} for $\alpha$ if the measure class of $\mu$ is preserved by the action.  In particular, any smooth volume on  $\R P^{n-1}$ is nonsingular for the projective action.     
	In \cite[Theorem 1.7]{AWBFRHZW-latticemeasure},  it is shown that all non-measure-preserving actions on manifolds of the critical dimension $(n-1)$ have  the projective action on $\R P^{n-1}$ equipped with a smooth volume     as a measurable factor.  Precisely, for any action $\alpha\colon \Gamma\to \diff^{1+\beta}(M)$  where $\dim(M)=n-1$ it is shown that either
	\begin{enumerate}
		\item there exists an $\alpha$-invariant Borel probability measure $\mu$ on $M$; or 
		\item there exists a Borel probability measure $\mu$ on $M$ that is nonsingular for the action $\alpha$; moreover the action $\alpha$ on $(M, \mu)$ is measurably isomorphic to a finite extension of the projective action in \cref{ex:sphere} and the image of $\mu$ factors to a smooth volume form on $\R P^{n-1}$.  
	\end{enumerate}
	
	This  gives strong evidence for a positive answer to Question \ref{Q:qq}\ref{666} which we pose as a formal conjecture.  
	\begin{conjecture} \label{conj:koko} For $n\ge 3$, let $\Gamma\subset \Sl(n,\R)$ be a lattice, let $M$ be a closed $(n-1)$-dimensional manifold, and let $\alpha\colon \Gamma\to \Diff^\infty(M)$ be an action with infinite image.  Then, either $M= S^{n-1}$ or $M= \R P^{n-1}$ and the action $\alpha$ is $C^\infty$ conjugate  to the projective action on either $S^{n-1}$ or $\RP^{n-1}$ in  \cref{ex:sphere}.\end{conjecture}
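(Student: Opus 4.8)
\emph{Step 1: reduce to the projective model.} By \cite[Theorem~1.7]{AWBFRHZW-latticemeasure}, for the given $C^{1+\beta}$ action on the $(n-1)$-manifold $M$, either there is an $\alpha$-invariant Borel probability measure, or there is a nonsingular probability measure $\mu$ for which the $\Gamma$-action on $(M,\mu)$ is measurably isomorphic to a finite extension of the projective action on $(\RP^{n-1},m)$ from \cref{ex:sphere}, with $\mu$ projecting to the smooth volume $m$. When $\alpha$ has infinite image, the first alternative should be excluded: applying Zimmer cocycle superrigidity (Theorem~\ref{thm:ZCSR}) to the derivative cocycle $D\alpha\colon\Gamma\times M\to\GL(n-1,\R)$, and using that $n-1<n=d_\rep(\Sl(n,\R))$, so that every linear representation $\Gamma\to\GL(n-1,\R)$ has finite image (Theorem~\ref{thm:MSR}), one gets that $D\alpha$ is measurably cohomologous to a cocycle into a compact group; hence all fiberwise Lyapunov exponents vanish and $\alpha(\gamma)$ has zero metric entropy with respect to $\mu$ for every $\gamma$. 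Combining this with the volume-preserving case of Zimmer's conjecture (\cref{conjecture:zimmergne}, established in the relevant cases in \cite{1608.04995} via strong property~(T)) and the fact that isometric actions of higher-rank lattices have finite image, one should conclude that $\alpha$ has finite image, a contradiction. So we may assume the second alternative: there is a $\Gamma$-equivariant measurable map $\phi\colon(M,\mu)\to\RP^{n-1}$ which becomes a measurable isomorphism after passing to the relevant finite cover.

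\emph{Step 2: upgrade to a smooth conjugacy.} The heart of the argument is to promote $\phi$ to a covering map $M\to\RP^{n-1}$ that is a diffeomorphism onto a finite cover. One first makes $\phi$ continuous: the projective action is a boundary action and is uniformly proximal---there are $\gamma_k\in\Gamma$ contracting all of $\RP^{n-1}$ off a hyperplane to a point---so, using the uniqueness of boundary maps and the nonsingularity of $\mu$ as in Margulis's rigidity arguments, one expects to realize $\phi$ as an everywhere-defined limit of the form $\lim_k\alpha(\gamma_k)^{-1}(p_k)$. Continuity, compactness of $M$, and finiteness of the generic fibre then make $\phi$ a finite covering map; since $\pi_1(\RP^{n-1})=\Z/2$ for $n\ge 3$, $M$ is $\RP^{n-1}$ or $S^{n-1}$ and $\alpha$ is topologically conjugate to the projective action or to its lift. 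To obtain smoothness one works along the dynamical foliations of a partially hyperbolic element of the suspended $\Sl(n,\R)$-action $(\Sl(n,\R)\times M)/\Gamma$: using nonstationary normal forms along the coarse Lyapunov foliations, the invariance principle of \cref{thm:led'}, and the homogeneity of the model---precisely the mechanism that turns \ref{suck1} into \ref{suck3} in \cref{prop:easyLed}---the conjugacy becomes smooth along each foliation with uniform transversal estimates, and a Journ\'e-type regularity lemma assembles these into global smoothness. This is structurally the same scheme used for the global rigidity of Anosov actions on tori in \cite{BRHW1} and for the local rigidity of affine Anosov actions in \cite[Theorem~15]{MR1632177}, transplanted to the non-Anosov projective geometry $\Sl(n,\R)/Q$.

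\emph{Main obstacle.} The step I expect to be genuinely hard is the measurable-to-topological part of Step~2. Unlike the Anosov-on-tori case, $M$ is not a priori a homogeneous space, so there is no algebraic candidate against which to match $\phi$, and the contracting-limit construction must be shown to converge at \emph{every} point of $M$---not merely $\mu$-almost everywhere---which requires ruling out wandering dynamics off $\supp\mu$ and controlling the topological dynamics of the whole group, not just of a single hyperbolic element. Once continuity is in hand, the smoothing along foliations should follow the by-now standard entropy and normal-forms template, but even this presupposes H\"older and transversal regularity of the coarse Lyapunov foliations for a general $C^\infty$ action rather than for a perturbation of the model, which is itself a substantial input; securing both is why \cref{conj:koko} remains open.
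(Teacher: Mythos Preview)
The statement you are addressing is \cref{conj:koko}, which the paper poses explicitly as an open \emph{conjecture}; there is no proof in the paper to compare your attempt against. The paper merely records the evidence for it coming from \cite[Theorem 1.7]{AWBFRHZW-latticemeasure} and then moves on. So the relevant question is not whether your sketch matches the paper's argument, but whether the outline you give is a plausible route to a proof.

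Your Step~1 contains a genuine gap that is in fact circular. You propose to exclude the first alternative in \cite[Theorem 1.7]{AWBFRHZW-latticemeasure} (existence of an $\alpha$-invariant Borel probability measure $\mu$) by invoking cocycle superrigidity and then ``the volume-preserving case of Zimmer's conjecture.'' But the invariant measure $\mu$ produced in that alternative is an arbitrary Borel probability measure, not a smooth volume, so the volume-preserving results of \cite{1608.04995} do not apply. The paper addresses exactly this point in \cref{ss:ZimmerSLNR}: the statement ``if $\mu$ is any fully supported Borel probability measure on a closed $(n-1)$-manifold then any $\Gamma\to\Diff_\mu(M)$ has finite image'' is formulated there as a separate open variant of Zimmer's conjecture, and the text says explicitly that their methods \emph{do not} establish it, and that it would in fact \emph{follow from} \cref{conj:koko}. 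So your Step~1 assumes something at least as strong as what you are trying to prove. Vanishing of Lyapunov exponents and zero entropy for an arbitrary invariant measure are not enough to force finite image in dimension $n-1$; strong property~(T) needs uniform subexponential growth of derivatives, which is not a consequence of zero exponents for a single (possibly singular) invariant measure.

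Your Step~2, by contrast, is a reasonable informal outline of the kind of measurable-to-continuous-to-smooth upgrading one would want, and you correctly identify the hard point. But note that even granting Step~2, you would only have handled the second alternative of \cite[Theorem 1.7]{AWBFRHZW-latticemeasure}; the first alternative remains, and that is where the real difficulty lies.
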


	Returning to actions on manifolds below the critical dimensions in Zimmer's conjecture,  the author with David Fisher and Sebastian Hurtado recently  answered   \cref{Q:qq}\ref{111} and \ref{333} for actions by cocompact lattices in $\Sl(n,\R)$ in \cite{1608.04995}.  %They also announced the result for actions by $\Sl(n,\Z)$ in  \cite{1710.02735} and for actions by general lattices  in joint work \cite{BFHWM}.   	
	
	\begin{theorem}[{\cite[Theorem 1.1]{1608.04995}}]\label{slnr}
		For $n\ge 3$, let $\Gamma\subset \Sl(n,\mathbb R)$ be a cocompact lattice. Let $M$ be a compact manifold. \begin{enumcount}
			\item If $\dim(M)< n-1$ 
			then any homomorphism $\Gamma\rightarrow \Diff^{2}(M)$ has finite image.  
			\item In addition, if $\vol$ is a volume form on $M$
			and if $\dim(M)=n-1$ then any homomorphism $\Gamma \rightarrow \Diff^{2}_\vol(M)$ has finite image.
		\end{enumcount}
	\end{theorem}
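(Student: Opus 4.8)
The plan is to prove the contrapositive-flavored statement that, in these dimension ranges, $\alpha$ preserves a continuous Riemannian metric $g$ on $M$; granting this, $\alpha(\Gamma)$ lies in the compact Lie group $\mathrm{Isom}(M,g)$, and since $\dim M<n-1<n=d_\rep(\SL(n,\R))$, Margulis superrigidity (Theorem~\ref{thm:MSR}) forces $\alpha(\Gamma)$ to be finite: an isometric action of a higher-rank lattice on a manifold below the critical dimension must be finite. So the whole difficulty is to manufacture the invariant metric, and the strategy has three stages: (a) if $\alpha$ has uniform subexponential growth of derivatives, strong property (T) produces the metric; (b) uniform subexponential growth holds provided no invariant measure carries hyperbolicity; (c) absence of hyperbolicity is forced by a measure-rigidity argument on a suspension, exploiting that $\SL(n,\R)$ has real rank $\ge2$ and that $\dim M$ is small.

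For stage (a): equip the space $\mathcal B=W^{k,2}\big(\mathrm{Sym}^2T^*M\big)$ of Sobolev Riemannian metrics on $M$ with the $\Gamma$-representation $\gamma\mapsto(\alpha(\gamma)^{-1})^*$; a chain-rule estimate bounds its operator norm by a fixed power of $\|D\alpha(\gamma)\|_{C^{k-1}}$. If $\alpha$ has uniform subexponential growth of derivatives, this power grows subexponentially in the word length of $\gamma$; choosing the growth rate below the Lafforgue--de Laat--de la Salle threshold for strong property (T) of $\Gamma$ \cites{Lafforgue,dldlS}, the representation acquires a nonzero invariant vector, and averaging a positive-definite orbit yields a $\Gamma$-invariant Sobolev metric. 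A bootstrapping argument, matching the strong property (T) constants against the Sobolev exponents and the $C^2$-regularity of $\alpha$, promotes this to a continuous invariant metric. Stage (b) is then the group-action analogue of \cref{prop:ZLEUSEGOD}: if every $\alpha(\gamma)$-invariant measure on $M$ ($\gamma\in\Gamma$) has vanishing Lyapunov exponents then $\alpha$ has uniform subexponential growth of derivatives.

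Stage (c) is where the real work lies. By \cref{thm:invmeas} (for $\dim M<n-1$; in the critical dimension one uses the given volume) the action admits a $\Gamma$-invariant Borel probability measure $\mu$. Form the suspension $X^\alpha=(G\times M)/\Gamma$, a fiber bundle over $G/\Gamma$ with fiber $M$, carrying the induced $G$-action and the $G$-invariant measure $\hat\mu$ built from $(\mathrm{Haar})\times\mu$; restricting the maximal split Cartan $A\simeq\R^{n-1}$ to the fibers, \cref{thm:oscHR} supplies fiberwise Lyapunov exponent functionals $\lambda^1,\dots,\lambda^\ell\colon\liea\to\R$ with $\ell\le\dim M$. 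I would then argue that failure of uniform subexponential growth produces (by a variational principle relating word-length growth of $\alpha$ to fiberwise growth over $A$-invariant measures, followed by averaging over the unipotent radical of a minimal parabolic $P$) a $P$-invariant measure on $X^\alpha$ with some nonzero fiberwise exponent $\lambda$. Now invoke the higher-rank root trick of the Introduction: given a restricted root $\beta$ of $\mathfrak{sl}(n,\R)$ not proportional to $\lambda$, there is $s_0\in\ker\beta\setminus\{0\}$ with $\lambda(s_0)>0$; the unipotent root group $U_\beta$ centralizes $\exp(s_0)$, so averaging the measure over $U_\beta$ and passing to a weak-$*$ limit produces a $P$-invariant measure whose fiberwise spectrum still has an exponent positive on $s_0$ but now vanishing on a larger subspace. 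Iterating over the proportionality classes of roots, and using that $\ell\le\dim M<n-1=\mathrm{rank}(\SL(n,\R))$ leaves the fiberwise spectrum too thin to be compatible with the root system, I would reach a $P$-invariant measure with a fiberwise exponent positive on $\bigcap_\beta\ker\beta=\{0\}$, a contradiction. Entropy rigidity supports the averaging steps: vanishing of fiberwise exponents transverse to some $\exp(s_0)$ together with the Ledrappier--Young formula \cref{thm:LYII} and Ledrappier's theorem in the form \cref{thm:led'} forces the relevant conditional measures to be Haar along $U_\beta$-orbits, i.e.\ the measure to be $U_\beta$-invariant, which is what makes the limiting procedure legitimate. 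Zimmer cocycle superrigidity (Theorem~\ref{thm:ZCSR}) enters to identify the algebraic hull of the fiberwise derivative cocycle, which---being driven by a linear representation of $\SL(n,\R)$ in dimension $\le\dim M<d_\rep$---has trivial noncompact part, pinning down the possible fiberwise exponents.

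The hard part will be stage (c): controlling exactly how the fiberwise Lyapunov spectrum transforms under averaging over the various unipotent subgroups and weak-$*$ limits, and proving rigorously that $\dim M<\mathrm{rank}(\SL(n,\R))$ obstructs every spectrum the restricted root system would otherwise permit. This is the technical core of \cites{1608.04995,AWBFRHZW-latticemeasure} (see also Cantat's exposition \cite{Cantat}), where the interplay of Zimmer cocycle superrigidity, the Ledrappier--Young entropy theory, and the combinatorics of roots must be orchestrated carefully; a secondary difficulty is the regularity upgrade in stage (a), since the quantitative strong property (T) parameters, the Sobolev smoothing, and the mere $C^2$-hypothesis on $\alpha$ are all in tension.
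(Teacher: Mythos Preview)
Your three-stage architecture is exactly right and matches the paper: (a) uniform subexponential growth of derivatives, (b) strong property~(T) produces an invariant continuous metric, (c) Margulis superrigidity with compact codomain finishes. Stages (a) and (b) are essentially as in the paper. The problems are all in how you propose to establish uniform subexponential growth.

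First, the $\Gamma$-invariant measure $\mu$ on $M$ from \cref{thm:invmeas} and the resulting $G$-invariant $\hat\mu$ on the suspension play no role here; for $\hat\mu$, Zimmer cocycle superrigidity already forces the fiberwise exponents to vanish, so this measure carries no information about subexponential growth. The measure you must analyze is the one produced \emph{from} the failure of subexponential growth: an $A$-invariant (not $P$-invariant) ergodic measure $\mu'$ on $M^\alpha$ with a nonzero fiberwise exponent (this is Proposition~\ref{eq6677}, the suspension analogue of \cref{prop:ZLEUSEGOD}). Your stage (b) formulation in terms of $\alpha(\gamma)$-invariant measures on $M$ is not obviously equivalent; the paper transfers the growth question to $G$ acting on the fibers of $M^\alpha$ via \cref{claim:gammavG}.

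Second, the averaging argument is missing its key input: Ratner's measure classification and equidistribution theorems. The purpose of averaging $\mu'$ over root subgroups is not to drive a fiberwise exponent into $\bigcap_\beta\ker\beta$ (your proposed contradiction does not parse: every functional vanishes on $\{0\}$), but to force the \emph{projection} of the measure to $G/\Gamma$ to become Haar while retaining a nonzero fiberwise exponent (Proposition~\ref{prop:goodmeas}). Ratner's theorems are what guarantee the limit exists and that the projected measure picks up invariance under enough root subgroups to be Haar; upper semicontinuity of the top exponent preserves positivity of $\lambda^F_{\mathrm{top}}(s_0,\cdot)$ through the averaging. Once the projection is Haar, the entropy/Ledrappier argument enters---but as the proof of \cref{thm:invmsr} (the invariance principle), showing this $A$-invariant measure is actually $G$-invariant. \emph{Then} Zimmer cocycle superrigidity applies and forces the fiberwise cocycle to be compact-valued, contradicting the nonzero fiberwise exponent. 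So the logical order is: failure of subexp growth $\Rightarrow$ $A$-invariant $\mu'$ with nonzero fiberwise exponent $\Rightarrow$ (average + Ratner) measure projecting to Haar with nonzero exponent $\Rightarrow$ (\cref{thm:invmsr}, via \cref{thm:led'}) $G$-invariant $\Rightarrow$ (Zimmer) contradiction. You have the ingredients but in the wrong places, and Ratner is absent.
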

	
	%\note{new discussion} 
	The proof of \cref{slnr} uses  ideas and results from \cite{AWBFRHZW-latticemeasure}, particularly the proof of Theorem \ref{thm:invmeas}, as  ingredients.  Thus, while \cref{thm:invmeas} follows trivially from \cref{slnr},  we include the proof of  \cref{thm:invmeas} below as   key ideas (namely, \cref{thm:invmsr}, \cref{thm:potatosalad}, and \cref{prop:nonresinv}) will be needed in the proof of \cref{slnr}.
	
	%\subsection*
	\fakeSS{Remarks on Theorem \ref{slnr}} %\note{new remarks}
	We give a number of remarks on extensions of Theorem \ref{slnr}.  See also the discussion in  Section \ref{sec:othergps}.  
	\begin{enumerate}
		\item Recently, the author, together with David Fisher and Sebastian Hurtado,  announced in \cite{1710.02735} that the conclusion of Theorem \ref{slnr} holds for actions of $\Sl(n,\Z)$ for $n\ge 3$.  The result for general lattices in $\Sl(n,\Z)$ as well as  analogous results for lattices in other higher-rank simple Lie groups,  has  been announced  \cite{BFHWM}.   % by the same authors in collaboration with D. Witte Morris.  
		This establishes \cref{conj:slnr} for actions by $C^2$ (and even $C^{1+\beta}$ diffeomorphisms).  See   Theorem \ref{slnr:popop}.  
		
		The results for actions of $\Sl(n,\Z)$ and of general nonuniform lattices use many of the ideas  presented in this text but also  require a number of new techniques (including the structure of arithmetic groups, reduction theory, and ideas from \cite{MR1828742}) which will not be discussed in this text.  
		\item We state Theorem \ref{slnr} for actions by $C^2$ diffeomorphisms though the proof can be adapted for actions by $C^{1+\beta}$ actions.  Our proof below will assume the action is by $C^\infty$ diffeomorphisms to simplify certain  Sobolev space arguments.  
		\item The result for actions by lattices in  general Lie groups is stated in \cref{slnr:popop} below.   In particular,  %by \cref{slnr:popop},
parts \ref{gnea} and \ref{gneb} of Conjecture \ref{conjecture:zimmergne} hold for $C^{1+\beta}$ actions by lattices in   simple Lie groups  that are  non-exceptional, split real forms.   For $C^{1+\beta}$ actions by lattices in simple Lie groups  that are exceptional, split real forms, Conjecture \ref{conjecture:zimmergne}\ref{gnea} holds. %is known to hold by \cref{slnr:popop}.
		\item  D. Damjanovich and Z. Zhang observed that the proof of Theorem \ref{slnr} can be adapted to the setting of actions by $C^1$-diffeomorphisms.  Together with the author, they have announced the following theorem.  % in collaboration with the author.  
		\begin{theorem}[\cite{BDZ}]%Brown--Damjanovich--Zhang, announced]
		\label{slnrC1}
			Let $\Gamma\subset G$ be a  lattice in a higher-rank simple Lie group $G$ with finite center. Let $M$ be a compact manifold. \begin{enumerate}
				\item If $\dim(M)< \rank(G)$ 
				then any homomorphism $\Gamma\rightarrow \Diff^{1}(M)$ has finite image.  
				\item In addition, if $\vol$ is a volume form on $M$
				and if $\dim(M)=\rank(G)$ then any homomorphism $\Gamma \rightarrow \Diff^{1}_\vol(M)$ has finite image.
			\end{enumerate}
		\end{theorem}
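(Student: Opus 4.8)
The plan is to follow the strategy of \cite{1608.04995} used to prove \cref{slnr}, but to arrange each step so that it never invokes the nonuniformly hyperbolic (Pesin / Ledrappier--Young) machinery---which is only available for $C^{1+\beta}$ diffeomorphisms---and then to feed uniform subexponential growth of derivatives into strong property (T) on a Banach space adapted to $C^1$ regularity. Given $\alpha\colon \Gamma\to \Diff^1(M)$ with $d:=\dim M\le \rank(G)$ (and $\alpha$ volume-preserving if $d=\rank(G)$), form the suspension $N=(G\times M)/\Gamma$ with its fiberwise $C^1$ structure and fiberwise derivative cocycle $A\colon G\times N\to \GL(d,\R)$, and recall that $\rank(G)<d_\rep(G)$ and $\rank(G)<d_\cpt(G)$ for every higher-rank simple $G$ (compare \cref{tab:stupid}). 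As in \cite{1608.04995}, it suffices to produce an $\alpha$-invariant \emph{continuous} Riemannian metric $g$ on $M$: then $\alpha(\Gamma)\subset\mathrm{Isom}(M,g)$, a compact Lie group, and a homomorphism from a higher-rank lattice into a compact Lie group acting effectively on a manifold of dimension $d<d_\cpt(G)$ has finite image by Margulis superrigidity (\cref{thm:MSR}) together with the structure theory of compact group actions. This endgame is identical to the one in \cite{1608.04995} and uses no dynamics.

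\emph{Step 1: uniform subexponential growth of the derivative cocycle.} Apply Zimmer's cocycle superrigidity theorem (\cref{thm:ZCSR})---in its stationary version over a $\Gamma$-stationary measure on $M$ in case (a), or classically over the invariant volume in case (b)---to the continuous (hence measurable) fiberwise cocycle $A$. It follows that $A$ is cohomologous to $(\text{compact-valued})\cdot(\pi|_\Gamma)$ for some representation $\pi\colon G\to\GL(d,\R)$; since $d\le\rank(G)<d_\rep(G)$, no nontrivial $d$-dimensional representation of $\mathfrak g$ exists, so $\pi$ is trivial. Hence no fiberwise Lyapunov (Furstenberg) functional of $A$ is positive, for any invariant or stationary measure. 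A compactness argument in the spirit of \cref{prop:ZLEUSEGOD}---Krylov--Bogolyubov together with the subadditive ergodic theorem, here for random matrix products on the compact bundle $N$---then upgrades this to uniform subexponential growth: for each $\epsilon>0$ there is $C_\epsilon$ with $\|D_x\alpha(\gamma)\|\le C_\epsilon e^{\epsilon|\gamma|}$ for all $x\in M$ and $\gamma\in\Gamma$. This step uses only Oseledec's theorem for continuous cocycles and never Pesin theory, which is exactly why $C^1$ suffices; in the critical case $d=\rank(G)$ the volume hypothesis is used here to exclude the projective-type action, which violates subexponential growth.

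\emph{Step 2: averaging to a continuous invariant metric.} Cocycle superrigidity in Step 1 already furnishes a \emph{measurable} $\alpha$-invariant Riemannian metric (equivalently, a measurable reduction of the structure group of $TM$ to $O(d)$); the remaining task is to upgrade measurable invariance to continuous invariance, and for this one uses strong property (T) of $\Gamma$ \cites{Lafforgue,dldlS,dlS}. Concretely, $\Gamma$ acts by bounded operators by pullback on an $L^p$-space of sections of $\mathrm{Sym}^2 T^*M$ (taken with respect to the invariant volume in the critical case, or with respect to a suitably chosen measure otherwise): crucially, $L^p$ sections carry no derivatives, so a $C^1$ diffeomorphism acts on them---unlike on the $W^{k,p}$ ($k\ge1$) spaces used for $C^2$ actions in \cite{1608.04995}---and the operator norms grow like $\|D\alpha(\gamma)\|^2$, hence subexponentially by Step 1. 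Strong property (T) then supplies a $\Gamma$-equivariant averaging projection producing a nonzero invariant element of the closed convex cone of positive-semidefinite forms, and a further positivity/regularity argument (again driven by the uniform subexponential bound) promotes it to a continuous $\alpha$-invariant Riemannian metric.

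\emph{Main obstacle.} Step 1 and the endgame are mild adaptations of \cite{1608.04995}; the heart of the matter is Step 2, namely running strong property (T) in a regularity class so low that essentially no $\Gamma$-invariant Banach space of tensor fields with positive differentiability is available, while still extracting a \emph{continuous} (not merely measurable or $L^p$) invariant metric. The two delicate points are (i) controlling the operator-norm growth of the $\Gamma$-representation on the chosen $L^p$-space so that strong property (T) applies---this is where the volume hypothesis in the critical dimension is essential, and where in the subcritical case one must first secure a suitably well-behaved invariant or stationary measure (as in the analysis underlying \cref{thm:invmeas})---and (ii) the bootstrap from the $L^p$/measurable invariant metric produced by the abstract averaging to an honest continuous one. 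A secondary, routine point is checking that cocycle superrigidity and the compactness argument of Step 1 survive for a merely continuous cocycle, which is immediate since they rest only on measurability and the multiplicative ergodic theorem.
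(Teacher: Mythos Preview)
Your Step 1 inverts the logical order of the argument and has a real gap. Cocycle superrigidity (\cref{thm:ZCSR}) requires a measure-preserving action; in case (a) you have none a priori, and even in case (b) the conclusion only says the derivative cocycle is cohomologous to a compact cocycle \emph{over the volume}---this gives vanishing exponents $\vol$-a.e., not for ``any invariant or stationary measure'' as you assert. The compactness mechanism behind \cref{prop:ZLEUSEGOD} runs in the opposite direction: failure of uniform subexponential growth manufactures \emph{some} measure---via \cref{eq6677}, an $A$-invariant, $A$-ergodic measure on the suspension $M^\alpha$---with a positive fiberwise exponent. That measure need not project to any $\Gamma$-invariant or $\Gamma$-stationary measure on $M$, so cocycle superrigidity for $\Gamma$ simply does not apply to it, and your shortcut collapses.

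The route the paper indicates (see the remarks preceding the proof of \cref{thm:invmsr} and surrounding \cref{eq6677}, \cref{prop:goodmeas}) is exactly the one in \cref{sec:pp}: assume failure of subexponential growth; \cref{eq6677} and \cref{prop:goodmeas} (both valid in $C^1$) produce an ergodic $A$-invariant $\mu$ on $M^\alpha$ projecting to Haar with a nonzero fiberwise exponent; the crucial step is then to show $\mu$ is $G$-invariant. In $C^{1+\beta}$ this is \cref{thm:invmsr} via the entropy arguments of \cref{sec:IP}, but for $C^1$ the key replacement you are missing is the Avila--Viana invariance principle \cite{MR2651382} (see \cref{rem:invariance_principle}), which yields \cref{thm:invmsr'} without Pesin theory. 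Only after $\mu$ is $G$-invariant does cocycle superrigidity (now for the $G$-action on $M^\alpha$) give the contradiction. This is also why the critical dimension drops from $r(G)$ to $\rank(G)$: the advanced invariance principle \cref{prop:nonresinv}, which delivers $r(G)$, genuinely needs the coarse-Lyapunov entropy formalism and hence $C^{1+\beta}$, while the Avila--Viana argument only recovers the weaker bound of \cref{thm:invmsr'}.
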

		For actions by lattices in other higher-rank groups, there is a gap between what is known for $C^1$ versus $C^{1+\beta}$-actions.  Indeed, our number $r(G)$  in \cref{slnr:popop} always satisfies  $r(G)\ge \rank(G)$ and is a strict inequality unless $G$ has restricted root system of type $A_n$.  
	\end{enumerate}

	\section{Superrigidity and heuristics   for Conjecture \ref{conj:slnr}}
	The original conjecture (for actions by lattices in $\Sl(n,\R)$)  posed by Zimmer was Conjecture \ref{conj:slnr}\ref{conj:2} (see for example \cite[Conjecture II]{MR934329}).
	Conjecture  \ref{conj:slnr}\ref{conj:1} was formulated later and first appears in print in \cite[Conjecture I]{MR1666834}.   The reason Zimmer posed his conjecture as Conjecture \ref{conj:slnr}\ref{conj:2} is that the strongest evidence for the    conjecture---Zimmer's {\it cocycle superrigidity theorem}---requires  the action  to preserve some Borel probability  measure.  Zimmer's cocycle superrigidity theorem also provides strong evidence for local and global rigidity conjectures related to Questions \ref{Q:qq}\ref{444spit}--\ref{555} and is typically used in proofs of results towards solving such conjectures.  
	
	In this section, we state a version of Zimmer's cocycle superrigidity theorem and some consequences.  We also state a version of Margulis's superrigidity theorem (for linear representations).   We also give some heuristics for Zimmer's conjecture that follow from the superrigidity theorems.  
	General references for this section include \cites{MR1090825,MR776417,MR3307755}.
	\subsection{Cocycles over group actions}
	Consider a standard probability space $(X, \mu)$.  Let $G$ be a locally compact topological group and let $\alpha\colon G\times X\to X$ be a measurable action of $G$ by   $\mu$-preserving transformations.  In particular,  $\alpha(g)$ is a $\mu$-preserving, measurable transformation of $X$ for each $g\in G$.  Below,  we will always assume the measure $\mu$ is ergodic for this action.

	%\begin{definition}
	\index{linear cocycle!measurable}
	A $d$-dimensional \emph{measurable linear cocycle} over $\alpha$ is a measurable map $$\calA\colon G\times X\to \Gl(d,\R)$$ satisfying for a.e.\ $x\in X$ the cocycle condition: for all $g_1, g_2\in G$, \begin{equation}\label{cocycl2} \calA(g_1g_2, x)= \calA\left(g_1, \alpha(g_2)(x)\right ) \calA (g_2, x).\end{equation}
	If $e$ is the identity element of $G$, then   \eqref{cocycl2} implies  that $$\calA(e,x) = \calA(e,x) \calA(e,x)$$  whence  $\calA(e, x) = \id$ for a.e.\ $x$.
	
	We say two cocycles $\calA,\calB\colon G\times X\to \Gl(d,\R)$ are (measurably) \emph{cohomologous} if there is a measurable map $\Phi\colon X\to \Gl(d,\R)$ such that for a.e.\ $x$ and every $g\in G$,\index{linear cocycle!cohomologous}
	\begin{equation}\label{eq:everynowandthenIgetalittlebitnervousthatthebestofallmyyearshavegoneby}\calB(g,x) = \Phi(\alpha(g)(x)) \inv \calA(g,x) \Phi(x).\end{equation}
	We say a cocycle $\calA\colon G\times X\to \Gl(d,\R)$ is \emph{constant} if  $\calA(g,x)$   is independent of $x$, that is, if $\calA\colon G\times X\to \Gl(d,\R)$ coincides  with a representation $\pi\colon G\to \Gl(d,\R)$ on a set of full measure.  	\index{linear cocycle!constant}

	As a primary example, let $\alpha\colon G\to \Diff^1_\mu(M)$ be an action of $G$ by $C^1$ diffeomorphisms of a compact manifold $M$ preserving some Borel probability measure $\mu$.    Although the tangent bundle $TM$ may not be a trivial bundle, we may choose a   Borel measurable trivialization  $\Psi\colon TM\to M\times \R^d$ of the vector-bundle $TM$ where $d = \dim(M)$.  We have that $\Psi$ factors over the identity map on $M$ and, writing $\Psi_x \colon T_x \to \R^d$ for the identification of the fiber over $x$ with $\R^d$, we moreover assume that $\|\Psi_x\|$ and $\|\Psi_x\inv\|$ are uniformly bounded in $x$.

	Fix such a trivialization $\Psi$  and define $\calA$ to be the derivative cocycle relative to this trivialization: $$\calA(g,x) = D_x \alpha(g)$$ where, we view 
	$  D_x \alpha(g)$ as an element of $\Gl(d,\R)$ transferring the fiber $\{x\} \times \R^d $ to $\{\alpha(g)(x)\}\times  \R^d$ via the measurable trivialization $\Psi$.  To be precise, if $\Psi\colon TM\to M\times \R^d$ is the measurable vector-bundle trivialization then  $$\calA(g,x) := \Psi(\alpha(g)(x))  D_x \alpha(g) \Psi(x) \inv .$$
	In this case, the cocycle relation \eqref{cocycl2} is simply the chain rule.   Note that if we choose another Borel measurable trivialization  $\Psi'\colon TM\to M\times \R^d$ then we obtain a cohomologous  cocycle $\calA'$.   Indeed, we have $$\calA'(g,x) =  \Psi'(\alpha(g)(x))  \Psi(\alpha(g)(x))\inv   \calA(g,x) \Psi(x) \Psi'(x) \inv $$
	so we may take $\Phi(x) =  \Psi(x) \Psi'(x) \inv $ in \eqref{eq:everynowandthenIgetalittlebitnervousthatthebestofallmyyearshavegoneby}.
	
	We have the following elementary fact which we frequently use in the case of volume-preserving actions.  
	\begin{claim}\label{cor:stupidcor}
		Let $\alpha\colon G\to \diff^1_\vol(M)$ be an action by volume-preserving diffeomorphisms.  Then, for any $\alpha$-invariant measure $\mu$, the derivative cocycle $\calA$ is cohomologous to a $\Sl^{\pm}(d,\R)$-valued cocycle.  
	\end{claim}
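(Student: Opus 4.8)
The plan is to reduce the claim to the fact that a volume form gives a section of the top exterior power of the cotangent bundle, so the Jacobian of each diffeomorphism in the action is controlled. First I would fix a Borel measurable trivialization $\Psi\colon TM \to M\times \R^d$ of the tangent bundle with $\|\Psi_x\|, \|\Psi_x^{-1}\|$ uniformly bounded in $x$, as in the construction preceding the claim, and let $\calA(g,x) = \Psi(\alpha(g)(x)) D_x\alpha(g) \Psi(x)^{-1}$ be the associated derivative cocycle. The goal is to find a measurable $\Phi\colon M\to \Gl(d,\R)$ so that the conjugated cocycle $\calB(g,x) = \Phi(\alpha(g)(x))^{-1}\calA(g,x)\Phi(x)$ takes values in $\Sl^{\pm}(d,\R)$, i.e. $|\det \calB(g,x)| = 1$ for a.e.\ $x$ and all $g$.

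The key computation is this: since $\alpha(g)$ preserves $\vol$, for each $g$ the pullback $\alpha(g)^*\vol = \vol$, which says that the Jacobian determinant of $\alpha(g)$ relative to $\vol$ is $\pm 1$ everywhere. On the other hand, $\det D_x\alpha(g)$ computed in the Riemannian (or background) coordinates differs from the $\vol$-Jacobian by the ratio of the density of $\vol$ to the background volume at $x$ versus at $\alpha(g)(x)$. Concretely, write $\vol = \psi \cdot \vol_0$ where $\vol_0$ is the background volume form associated to the trivialization $\Psi$ and $\psi\colon M\to (0,\infty)$ is smooth and bounded away from $0$ and $\infty$ by compactness. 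Then $|\det \calA(g,x)| = |\det D_x\alpha(g)| \cdot c(x) = \dfrac{\psi(x)}{\psi(\alpha(g)(x))} \cdot c(x)$ for a fixed measurable function $c(x)$ recording the distortion of $\Psi$ against $\vol_0$ (which can be absorbed at the outset by choosing $\Psi$ compatibly, or carried along). Thus $|\det \calA(g,x)|$ is, up to this fixed coboundary, the coboundary $\psi(x)/\psi(\alpha(g)(x))$.

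Therefore I would take $\Phi(x) = t(x)\cdot \Id$ where $t(x)>0$ is chosen so that $|\det \Phi(x)|^{-1}$ cancels the factor $\psi(x)\cdot c_0(x)$ — explicitly $t(x) = \bigl(\psi(x) c_0(x)\bigr)^{1/d}$ for the appropriate fixed positive measurable function $c_0$ coming from the trivialization (bounded above and below, hence $t$ is too, so $\Phi$ and $\Phi^{-1}$ are measurable and essentially bounded). Then $|\det \calB(g,x)| = \dfrac{|\det \Phi(x)|}{|\det \Phi(\alpha(g)(x))|}\, |\det\calA(g,x)| = 1$, as desired, using the chain-rule cocycle identity and the $\vol$-preservation.

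The only mild subtlety — and the step I'd expect to require the most care — is bookkeeping the background normalization: one must verify that the distortion function $c(x)$ relating $|\det D_x\alpha(g)|$ in the trivialization $\Psi$ to the true $\vol$-Jacobian really is an exact coboundary of the form $x\mapsto \omega(x)/\omega(\alpha(g)(x))$ for a single bounded positive measurable $\omega$, with no leftover cocycle. This follows because the discrepancy comes entirely from the pointwise identification $\Psi_x$ of $T_xM$ with $\R^d$, which is a function of the base point alone (not of $g$), so it automatically assembles into a coboundary; invariance of $\vol$ handles the rest. Once this is checked, choosing $\Phi(x)$ to be the corresponding scalar matrix completes the proof, and the same argument applies verbatim to any $\alpha$-invariant measure $\mu$ since we never used anything about $\mu$ beyond $\alpha$-invariance.
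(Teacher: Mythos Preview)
Your argument is correct, and in fact the paper does not prove this claim at all: it is stated as ``an elementary fact'' and left to the reader. Your approach---observing that $|\det\calA(g,x)|$ is a multiplicative coboundary $\omega(x)/\omega(\alpha(g)(x))$ for a positive measurable $\omega$ recording the discrepancy between $\vol$ and the pullback of the standard volume on $\R^d$ under $\Psi_x$, then conjugating by the scalar matrix $\omega(x)^{\pm 1/d}\,\Id$---is exactly the standard argument.

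Two minor comments on exposition. First, the intermediate function $c(x)$ in your formula $|\det\calA(g,x)| = |\det D_x\alpha(g)|\cdot c(x)$ is not well-defined as a function of $x$ alone (the trivialization contributes at both $x$ and $\alpha(g)(x)$); you correct this in the next paragraph, but it would be cleaner to write directly $|\det\calA(g,x)| = \omega(x)/\omega(\alpha(g)(x))$ where $\omega(x)>0$ is determined by $(\Psi_x)_*\vol_x = \omega(x)^{-1}\,dx_1\wedge\cdots\wedge dx_d$. Second, an equivalent and slightly slicker route is to choose the trivialization $\Psi$ from the outset so that each $\Psi_x$ carries $\vol_x$ to the standard volume on $\R^d$ (up to sign); then $|\det\calA(g,x)|=1$ holds identically with no conjugation needed. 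Finally, note that the argument is pointwise and uses nothing about $\mu$---not even invariance---so the conclusion holds for every measure, not just $\alpha$-invariant ones.
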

	Above, $\Sl^{\pm}(d,\R)$ is the subgroup of $\Gl(d,\R)$ defined by $\det(A) = \pm 1$.  
	%\begin{proof}
	%Let $\Psi  \colon TM\to M\times \R^d$ be the measurable fiber-bundle identification.  The 
	%\end{proof}
	
	\subsection{Cocycle superrigidity}\index{Zimmer!cocycle superrigidity}\index{linear cocycle!superrigidity}\index{theorem!Zimmer cocycle superrigidity}
	We  formulate the statement of Zimmer's  cocycle superrigidity theorem when $G$ is either $\Sl(n,\R)$ or a lattice subgroup of $\Sl(n,\R)$ for $n\ge 3$. Note that the version formulated by Zimmer (see \cite{MR776417}) had a slightly weaker conclusion.  We state the stronger version formulated and proved in \cite{MR2039990}.  
	\begin{theorem}[Cocycle superrigidity \cites{MR776417,MR2039990}]\label{thm:ZCSR}
		For $n\ge 3$, let $G $ be either $G=  \SL(n,\R)$ or let $G$ be a lattice in $\SL(n,\R)$.  Let $\alpha\colon G\to \Aut(X, \mu)$ be an ergodic, measurable action of $G$ by  $\mu$-preserving transformations of a standard probability space $(X, \mu)$.  Let $\calA \colon G\times X\to \Gl(d,\R)$ be a bounded,\footnote{Here, \emph{bounded} means that for every compact $K\subset G$, the map $K\times X\to \Gl(d,\R)$ given by $(g,x)\mapsto \calA(g,x)$ is bounded.   More  generally, we may replace the boundedness hypothesis with the hypothesis  that the function $x\mapsto\sup_{g\in K} \log \|\calA(g,x)\|$  is $L^1(\mu)$.  See  \cite{MR2039990}.} measurable 
		linear cocycle over  $\alpha$.  
		
		Then there exist 
		\begin{enumerate}
			\item a linear representation $\rho\colon\Sl(n,\R)\to \Sl(d,\R)$;
			\item a compact subgroup $K\subset \Gl(d,\R)$ that commutes with the image of $\rho$;  
			\item a $K$-valued cocycle $\mathcal C\colon G\times X\to K$;
			\item and a measurable function $\Phi\colon X\to \Gl(d,\R)$
		\end{enumerate}
		such that for \ae $x\in X$ and every $g\in G$
		\begin{equation}\label{ez:ZCS}\calA(g,x)= \Phi(\alpha(g)( x) ) \inv \rho(g)  \mathcal C(g,x) \Phi(x).\end{equation}
		% for some $k\in K$. 
	\end{theorem}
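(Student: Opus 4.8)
The plan is to deduce this from Margulis's superrigidity theorem for homomorphisms (Theorem~\ref{thm:MSR}) via the now-standard ``boundary map'' argument, following \cites{MR776417,MR2039990}. \textbf{Reduction to the ambient group.} If $G$ is a lattice $\Gamma\subset\SL(n,\R)$, I would first \emph{induce} $\calA$ to a cocycle over an $\SL(n,\R)$-action: form the suspension $\hat X=(X\times\SL(n,\R))/\Gamma$, which carries an $\SL(n,\R)$-action preserving a finite measure $\hat\mu$ (ergodic since $\mu$ is), and extend $\calA$ to a bounded measurable $\Gl(d,\R)$-valued cocycle $\hat\calA$ over this action. A cohomology of $\hat\calA$ of the asserted form restricts along $\Gamma\hookrightarrow\SL(n,\R)$ to one of $\calA$, so it suffices to treat $G=\SL(n,\R)$, which I assume henceforth.

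\textbf{Algebraic hull and reductivity.} Let $L\subset\Gl(d,\R)$ be the \emph{algebraic hull} of $\calA$: the (conjugacy class of the) smallest real algebraic subgroup such that $\calA$ is measurably cohomologous to an $L$-valued cocycle; replacing $\calA$ by a cohomologous cocycle I assume it is $L(\R)$-valued. The first substantive point is to show that the unipotent radical of $L$ is trivial, i.e.\ that $L$ is reductive. If it were not, projecting $\calA$ to the reductive quotient $L/U$ and using that the $G$-action preserves a finite measure together with the boundedness of $\calA$, a Furstenberg-type averaging / cocycle-reduction argument produces a cohomology of $\calA$ into a proper algebraic subgroup, contradicting minimality of $L$.

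\textbf{Boundary maps.} Let $P\subset G$ be a minimal parabolic subgroup, which is amenable. Amenability of $P$, applied to the skew-product $G$-action on $X\times(L/Q_0)$ for $Q_0\subset L$ a minimal parabolic, yields a $G$-equivariant measurable map $G/P\times X\to\mathrm{Prob}(L/Q_0)$; a standard lemma on equivariant maps into spaces of probability measures on flag varieties of reductive groups then upgrades this to a measurable $G$-equivariant \emph{boundary map} $\phi\colon G/P\times X\to L/Q$ for some parabolic $Q\subseteq L$ (after replacing $L$ by a smaller hull if necessary).

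\textbf{Rigidity of $\phi$ and conclusion.} This is the heart of the argument and the step I expect to be the main obstacle. By the Howe--Moore / Moore ergodicity theorem the diagonal $G$-action on $G/P\times G/P\times X$ is ergodic; feeding $\phi$ together with a companion map built from the opposite parabolic into Margulis's argument (Theorem~\ref{thm:MSR}, \cite{MR1090825}) forces the rational map of flag varieties determined by $\phi$ over a.e.\ $x$ to be essentially independent of $x$, producing an algebraic homomorphism which, through the structure theory of the reductive group $L$, yields a representation $\rho\colon\SL(n,\R)\to\Sl(d,\R)$ absorbing the noncompact part of $\calA$. Comparing $\calA$ with $\rho$, the corrected cocycle $(g,x)\mapsto\rho(g)\inv\calA(g,x)$ takes values in the centralizer $Z_L(\rho(G))$; since the noncompact semisimple directions of $L$ have been absorbed into $\rho(G)$ and the residual directions are precompact (a final appeal to higher-rank superrigidity, $\SL(n,\R)$ having no noncompact simple quotient other than itself), one extracts a compact subgroup $K\subset\Gl(d,\R)$ commuting with $\rho(G)$, a $K$-valued cocycle $\mathcal C$, and a measurable $\Phi\colon X\to\Gl(d,\R)$ with $\calA(g,x)=\Phi(\alpha(g)x)\inv\rho(g)\,\mathcal C(g,x)\,\Phi(x)$. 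Restricting back along $\Gamma\hookrightarrow\SL(n,\R)$ (undoing the induction) gives the theorem.
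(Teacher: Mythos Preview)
The paper does not prove Theorem~\ref{thm:ZCSR}; it is stated as a black box with citations to Zimmer's book \cite{MR776417} and the Fisher--Margulis refinement \cite{MR2039990}, and then applied as a tool (notably in Corollary~\ref{cor:cor} and in the proof of Theorem~\ref{thm:USEGOD}). So there is no proof in the paper to compare your proposal against.

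That said, your outline is broadly in the spirit of Zimmer's original argument, but a few points deserve correction. First, your opening framing is misleading: cocycle superrigidity is not \emph{deduced from} Margulis's superrigidity for homomorphisms (Theorem~\ref{thm:MSR}); rather, Zimmer's proof runs parallel to Margulis's, adapting the boundary-map machinery to the cocycle setting. The homomorphism theorem is a special case (take $X$ a point), not an input. Second, your reductivity step is too vague: the actual argument that the algebraic hull is reductive uses that a cocycle into an amenable (e.g.\ solvable) algebraic group over an action preserving a finite measure is cohomologous to a compact-group-valued cocycle (Zimmer's ``amenable reduction''), which then contradicts minimality of the hull unless the unipotent radical is trivial. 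Third, the passage from the equivariant boundary map to the representation $\rho$ is the genuinely hard step and your sketch elides the key mechanism: one shows the boundary map is essentially \emph{algebraic} fiberwise, and the essential constancy in $x$ comes from ergodicity together with the rigidity of algebraic maps between flag varieties, not from invoking Theorem~\ref{thm:MSR} as a lemma. Finally, the stronger conclusion stated here (with $K$ commuting with $\rho(G)$ rather than merely normalized by it) is the Fisher--Margulis improvement \cite{MR2039990}, which requires additional work beyond Zimmer's original version.
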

	% In \eqref{ez:ZCS} one can verify that $$(g,x)\mapsto \rho(g)  \inv \Phi(\alpha(g)( x) ) \calA(g,x) \Phi(x)\inv \in K$$ is a cocycle.  
	
	In particular, Theorem \ref{thm:ZCSR} states that any bounded measurable linear cocycle $\calA \colon G\times X\to \Gl(d,\R)$ over the action $\alpha$ 
	is cohomologous to the product of constant cocycle $\rho\colon G\to \Sl(d,\R)$ and a compact-valued cocycle $\mathcal C\colon G\times X\to K\subset \Gl(d,\R)$.

	\subsection{Superrigidity for linear representations}\index{Margulis superrigidity}\index{theorem!Margulis superrigidity}
	Zimmer's  cocycle superrigidity theorem is an extension of Margulis's superrigidity theorem for linear representations.  We formulate a version of this theorem for linear representations of lattices in $\Sl(n,\R)$.  
	\begin{theorem}[Margulis superrigidity \cite{MR1090825}]\label{thm:MSR}
		For $n\ge 3$, let $\Gamma $ be a lattice in $ \SL(n,\R)$.  Given a representation $\rho\colon \Gamma \to \Gl(d,\R)$ there are 
		\begin{enumerate}
			\item a linear representation $\hat \rho\colon\Sl(n,\R)\to \Sl(d,\R)$;
			\item a compact subgroup $K\subset \Gl(d,\R)$ that commutes with the image of $\hat \rho$
		\end{enumerate}
		such that $$\hat \rho(\gamma) \rho(\gamma)\inv\in K$$ for all $\gamma\in \Gamma$.  
		
		That is, $ \rho  = \hat \rho\cdot  c  $ is the product of  the restriction of a representation $$\hat \rho\colon\Sl(n,\R)\to \Sl(d,\R)$$  to $\Gamma$ and a compact-valued representation $c\colon \Gamma \to K$.   Moreover the image of $\hat \rho$ and $c$ commute.
	\end{theorem}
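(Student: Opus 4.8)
The statement to be proved is Margulis's superrigidity theorem (Theorem~\ref{thm:MSR}). Since a full proof is a substantial undertaking, I will outline the standard route, which proceeds via cocycle superrigidity (Theorem~\ref{thm:ZCSR}) applied to a constant cocycle, together with the analysis of Zariski closures and algebraic hulls. The plan is as follows. First I would reduce to the case where $\rho(\Gamma)$ is Zariski dense in a semisimple algebraic group: replacing $\Gl(d,\R)$ by the Zariski closure $H$ of $\rho(\Gamma)$, one uses that $\Gamma$ has no finite-index subgroup with nontrivial abelian quotient (a consequence of property (T), or of Margulis's normal subgroup theorem as cited in Remark following Definition~\ref{def:dumb}) to kill the unipotent radical and the central torus up to compact and finite error; this is a purely algebraic-group manipulation. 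So assume $H$ is semisimple and $\rho(\Gamma)$ is Zariski dense in $H$.

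Next I would invoke the dynamical heart of the argument. View $\rho$ as a constant linear cocycle $\calA\colon \Gamma\times X\to \Gl(d,\R)$ over a suitable ergodic action; the natural choice is the action of $\Gamma$ on $X = G/P$ or on $G/\Gamma'$ for an auxiliary lattice, but the cleanest is to take $X$ with an ergodic $G$-action extending the $\Gamma$-action --- for instance $X = (G\times V)/\Gamma$ type constructions, or simply apply the form of cocycle superrigidity that takes the $\Gamma$-action on a $G$-space. Concretely: let $G = \Sl(n,\R)$, let $\mu$ be the $G$-invariant probability measure on $G/\Gamma$, and consider the cocycle $\alpha(g,x\Gamma) \mapsto \rho$ evaluated appropriately --- more precisely, one forms the induced cocycle of $\rho$ to $G$ over the $G$-action on $G/\Gamma$ (the induction procedure of Zimmer). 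Applying Theorem~\ref{thm:ZCSR} to this bounded measurable cocycle produces a representation $\hat\rho\colon \Sl(n,\R)\to \Sl(d,\R)$, a compact group $K$ commuting with its image, a compact-valued cocycle $\mathcal C$, and a measurable transfer function $\Phi$ with $\calA(g,x) = \Phi(\alpha(g)(x))^{-1}\hat\rho(g)\mathcal C(g,x)\Phi(x)$. The task then is to promote this measurable statement back down to $\Gamma$ and to remove the cocycle $\mathcal C$ and the $x$-dependence.

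The main obstacle, and the step I expect to be hardest, is precisely this descent: showing that $\Phi$ can be taken constant (equivalently, that the measurable conjugacy is actually algebraic) and that $\mathcal C$ is cohomologous to a homomorphism into $K$. This is where Zariski density of $\rho(\Gamma)$ is essential: one considers the essentially unique $\Gamma$-equivariant measurable map from $X$ into the variety of such transfer functions (a homogeneous space for the relevant algebraic group), and uses a rigidity/algebraicity theorem --- the fact that a measurable equivariant map from an ergodic $G$-space with $G$ semisimple higher-rank into an algebraic variety, equivariant under a Zariski-dense subgroup, is essentially constant (or rational) --- to conclude $\Phi$ is constant a.e. This gives, for $\gamma\in\Gamma$, $\rho(\gamma) = \Phi_0^{-1}\hat\rho(\gamma)\,\mathcal C(\gamma)\,\Phi_0$ with $\mathcal C(\gamma)\in K$; absorbing $\Phi_0$ into $\hat\rho$ (conjugation does not affect that $\hat\rho$ extends to $\Sl(n,\R)$, possibly after re-choosing) and setting $c(\gamma) = \hat\rho(\gamma)\rho(\gamma)^{-1}$, one verifies $c$ takes values in the compact group $K$, that $K$ commutes with $\hat\rho(\Sl(n,\R))$, and that $\hat\rho(\gamma)\rho(\gamma)^{-1}\in K$ for all $\gamma\in\Gamma$, which is exactly the conclusion. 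The remaining bookkeeping --- checking $\det \hat\rho = 1$ (replace $\hat\rho$ by its image in $\Sl(d,\R)$, pushing the scalar part, which is a homomorphism $\Sl(n,\R)\to\R^\times$ hence trivial since $\Sl(n,\R)$ is perfect, into $K$), and checking the commutation relations --- is routine representation-theoretic verification that I would not spell out in detail.
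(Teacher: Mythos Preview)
The paper does not prove Theorem~\ref{thm:MSR}; it is stated with attribution to Margulis's monograph \cite{MR1090825} and used as a black box throughout. There is nothing in the paper's own argument to compare your proposal against.

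That said, a brief comment on your route: deriving Margulis's superrigidity from Zimmer's cocycle superrigidity (Theorem~\ref{thm:ZCSR}) is logically valid---a homomorphism is a constant cocycle, and the induced cocycle over $G/\Gamma$ is the standard device---but in this text it is somewhat circular, since Theorem~\ref{thm:ZCSR} is itself quoted without proof from \cites{MR776417,MR2039990}, and historically and technically Zimmer's result is an \emph{extension} of Margulis's, built on the same machinery (boundary theory, the Furstenberg map, algebraic hulls). Your sketch correctly identifies the hard step as the descent from the measurable conjugacy to an algebraic one via Zariski density, but you have not actually carried it out; the ``rigidity/algebraicity theorem'' you invoke for the measurable equivariant map is doing essentially all of the work, and is itself of the same depth as the theorem you are trying to prove. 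So as a proof this is more of a reduction to an equally hard statement than a genuine argument, and the honest thing to do---as the paper does---is simply to cite \cite{MR1090825}.
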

	
	%\note{explained nonuniform} 
	In the case that $\Gamma$ is nonuniform, one can show that all compact-valued representations $c\colon \Gamma \to K$ have finite image.    See for instance the discussion in \cite[Section 16.4]{MR3307755}, especially  \cite[Exercise 16.4.1]{MR3307755}. 
%	Indeed, a nonuniform lattice necessarily contains non-trivial unipotent elements  (see \cites{MR0147566,MR0141672} and \cite[Proposition 5.3.1,Theorem 5.3.3]{MR3307755}.)  If $c$ had infinite image, the image  $c(\Gamma)$ would contain a   unipotent element with infinite orbit; however, compact groups never contain such unipotent elements.  See also \cite[Corollary 16.4.1]{MR3307755} for more general criteria    which guarantees that the image of $c$ is finite.
	
	%   \note{better refs and discussion of  compact codomain version} 
	For certain cocompact $\Gamma\subset \Sl(n,\R)$, there exists compact-valued representations $c\colon \Gamma \to \SU(n)$ with infinite image. (See   discussion in Example \ref{ex:isom}.)\index{Margulis superrigidity!compact codomain}
	The next theorem,  characterizing all homomorphisms from lattices in $\Sl(n,\R)$ into compact Lie groups,   shows that representations into $ \SU(n)$  are   more-or-less the only such examples.  The proof uses  the $p$-adic version of  Margulis's superrigidity theorem and some algebra.  See \cite[ Theorem VII.6.5]{MR1090825} and  \cite[Corollary 16.4.2]{MR3307755}.  
	
	%theorem as well as Margulis's arithmeticity theorem  \cite{MR1090825}.  
	
	\begin{theorem}\label{thm:MSRcompactcodomain}
		For $n\ge 3$, let $\Gamma\subset \Sl(n,\R)$ be a lattice.  Let $K$ be a compact Lie group and $\pi\colon \Gamma\to K$ a homomorphism.
		\begin{enumcount}
			\item If $\Gamma$ is nonuniform then  $\pi(\Gamma)$ is finite.
			\item \label{MSR2} If $\Gamma$ is cocompact and  $\pi(\Gamma)$ is infinite then there is a closed subgroup $K'\subset K$ with 
			$$\pi(\Gamma)\subset K'\subset K$$
			and the Lie algebra of $K'$ is of the form $\lie(K') = \su(n)\times \dots \times \su(n)$.
		\end{enumcount}
	\end{theorem}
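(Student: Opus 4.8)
The plan is to reduce the statement to a question about compact \emph{semisimple} quotients with dense image, and then to feed the arithmeticity of $\Gamma$ together with Margulis's superrigidity \emph{at every place} (archimedean and non-archimedean) into the problem; Theorem~\ref{thm:MSR} is the archimedean shadow of what is needed, but the non-archimedean version must be invoked as an external input.

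\emph{Step 1 (reduction to a dense compact semisimple image).} If $\pi(\Gamma)$ is finite there is nothing to prove, so assume it is infinite and set $L:=\overline{\pi(\Gamma)}\subset K$; then $L$ is a compact Lie group of positive dimension. Passing to a finite-index subgroup of $\Gamma$ we may assume $L$ is connected. Write $L=T\cdot S$ with $T$ the maximal central torus and $S=[L,L]$ compact semisimple. Composing $\pi$ with the projection $L\to L/S\cong T/(T\cap S)$ gives a homomorphism of $\Gamma$ into an abelian group, hence of finite image, since higher-rank lattices have finite abelianization (Kazhdan's property (T) \cite{BHV}, or \cite{MR515630}; for $\Sl(n,\Z)$, $n\ge 3$, the group is even perfect). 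Thus $\pi(\Gamma)\subset S\cdot F$ for a finite set $F$, and density forces $L^\circ=S$. So, up to a bounded error, we are studying a \emph{dense} homomorphism $\pi\colon\Gamma\to L$ into a compact connected semisimple Lie group.

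\emph{Step 2 (part (1), the nonuniform case).} By Margulis's arithmeticity theorem $\Gamma$ is, up to commensurability and conjugation, the $\mathcal O_S$-points of a $\Q$-form $\mathbf G$ of $\Sl_n$ with $\mathbf G(\R)\cong\Sl(n,\R)$, and nonuniformity of $\Gamma$ means $\mathbf G$ is $\Q$-isotropic. The essential non-archimedean input — Margulis's superrigidity for $S$-arithmetic groups, cf.\ \cite[Thm.~VII.6.5]{MR1090825} and \cite[Cor.~16.4.2]{MR3307755} — shows that, after conjugating, the matrix entries of $\pi(\Gamma)$ generate a number field and that $L$, together with its Galois conjugates, is assembled out of the groups $\mathbf G(k_v)$ at those places $v$ of that field where $\mathbf G$ becomes \emph{anisotropic}. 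An isotropic $\Q$-group of type $A_{n-1}$ remains isotropic at every completion, so no such $v$ exists; hence $L$ is trivial and $\pi(\Gamma)$ is finite. (Equivalently and more concretely: a nonuniform $\Gamma$ contains a nontrivial unipotent $u$, and the same analysis would realize $\pi(u)$ as a nontrivial unipotent element of a compact group, which is impossible; so every unipotent is killed, and since the unipotents of $\Gamma$ generate a finite-index subgroup, $\pi$ has finite image.)

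\emph{Step 3 (part (2), the cocompact case, and the main difficulty).} For cocompact $\Gamma$ the same place-by-place analysis instead \emph{identifies} which compact groups can occur: the anisotropic completions of the relevant inner form of type $A_{n-1}$ (arising from a degree-$n$ division algebra, resp.\ from a Hermitian form) are precisely those whose compact form has Lie algebra $\su(n)$, and running over all such places produces a product. Taking $K'$ to be $\overline{\pi(\Gamma)}$ together with the finite error from Step~1 then yields $\pi(\Gamma)\subset K'\subset K$ with $\lie(K')\cong\su(n)\times\cdots\times\su(n)$. I expect the substantive obstacle to be exactly the descent performed in Step~2: showing that the a priori abstract compact Lie group $L$ is governed by $\mathbf G$ and its places — i.e.\ that $\pi(\Gamma)$ can be conjugated to have algebraic entries and that every Galois twist is accounted for — which is where the full strength of superrigidity at non-archimedean places is needed, together with the Galois-cohomological classification of $\Q$-forms of $\Sl_n$. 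The reductions in Step~1 and the unipotent bookkeeping are routine; this descent and the classification of forms are the crux.
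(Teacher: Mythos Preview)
The paper does not prove this theorem; it only states that ``the proof uses the $p$-adic version of Margulis's superrigidity theorem and some algebra'' and refers to \cite[Theorem~VII.6.5]{MR1090825} and \cite[Corollary~16.4.2]{MR3307755}. Your outline follows precisely this indicated approach, cites the same sources, and correctly identifies both the key inputs (arithmeticity, superrigidity at all places, the Godement compactness criterion, and the classification of forms of type $A_{n-1}$) and the location of the real work.
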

	The  appearance of $\su(n)$ in  \ref{MSR2} of Theorem  \ref{thm:MSRcompactcodomain} is due to the fact that $\su(n)$ is the compact real form of $\mathfrak{sl}(n,\R)$, the Lie algebra of $\Sl(n,\R)$.  
	For a cocompact lattice $\Gamma$ in $\So(n,n)$ as in \cref{ex:isom}, the analogue of \cref{thm:MSRcompactcodomain} states that $$\lie(K') = \mathfrak{so}(2n)\times \dots\times \mathfrak{so}(2n).$$

	\subsection{Heuristic evidence for Conjecture \ref{conj:slnr}}
	We present a number of heuristics that motivate  the conclusions of  Conjectures \ref{conj:slnr} and \ref{conjecture:zimmergne}.  
	\subsubsection{Analogy with linear representations}
	Note that if $d< n$, there is no non-trivial representation $\hat \rho\colon \Sl(n,\R)\to\Sl(d,\R)$; moreover, by a dimension count, there is no embedding of $\su(n)$ in $\mathfrak{sl}(d,\R)$.   We thus immediately obtain as corollaries of Theorems \ref{thm:MSR} and  \ref{thm:MSRcompactcodomain} the following. 
	\begin{corollary}
		For $n\ge 3$, let $\Gamma $ be a lattice in $G=  \SL(n,\R)$.  Then, for $d< n$,  the image of any representation $\rho\colon \Gamma \to \Gl(d,\R)$ is finite.  
	\end{corollary}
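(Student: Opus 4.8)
The plan is to combine Margulis's superrigidity theorem (\cref{thm:MSR}) with the classification of homomorphisms of $\Gamma$ into compact Lie groups (\cref{thm:MSRcompactcodomain}), feeding in the elementary observations that $\Sl(n,\R)$ has no nontrivial linear representation of dimension below $n$ and that $\su(n)$ does not embed as a Lie subalgebra of $\mathfrak{gl}(d,\R)$ when $d<n$.

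First I would apply \cref{thm:MSR} to the given representation $\rho\colon\Gamma\to\Gl(d,\R)$: there are a representation $\hat\rho\colon\Sl(n,\R)\to\Sl(d,\R)$ and a compact subgroup $K\subset\Gl(d,\R)$ commuting with $\hat\rho(\Sl(n,\R))$ such that $\hat\rho(\gamma)\rho(\gamma)\inv\in K$ for all $\gamma\in\Gamma$. Since $d<n$, the differential $d\hat\rho\colon\mathfrak{sl}(n,\R)\to\mathfrak{sl}(d,\R)$ is a homomorphism of the simple Lie algebra $\mathfrak{sl}(n,\R)$ into a Lie algebra of strictly smaller dimension, hence is zero; as $\Sl(n,\R)$ is connected, $\hat\rho$ is trivial. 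Consequently $\rho(\gamma)=\bigl(\hat\rho(\gamma)\rho(\gamma)\inv\bigr)\inv\in K$ for every $\gamma\in\Gamma$, so $\rho$ is a homomorphism of $\Gamma$ into the compact group $K$, and in particular $\overline{\rho(\Gamma)}$ is a compact Lie group.

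Next I would apply \cref{thm:MSRcompactcodomain} to $\rho$ regarded as a homomorphism into $K$ (equivalently into $\overline{\rho(\Gamma)}$). If $\Gamma$ is nonuniform, that theorem immediately gives that $\rho(\Gamma)$ is finite. If $\Gamma$ is cocompact and we assume, for contradiction, that $\rho(\Gamma)$ is infinite, then there is a closed subgroup $K'$ with $\rho(\Gamma)\subset K'\subset K$ whose Lie algebra is $\lie(K')=\su(n)\times\cdots\times\su(n)$, a nonzero product since $K'$ is infinite. Hence $\su(n)$ embeds as a Lie subalgebra of $\mathfrak{gl}(d,\R)$; comparing dimensions, $n^2-1=\dim\su(n)\le\dim\mathfrak{gl}(d,\R)=d^2$, which is impossible when $d\le n-1$ and $n\ge 3$. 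Therefore $\rho(\Gamma)$ is finite in every case.

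Since each step is a direct invocation of one of the quoted theorems together with a trivial dimension count, I do not expect any genuine obstacle; the only mildly nontrivial ingredients—simplicity of $\mathfrak{sl}(n,\R)$ and the dimension of $\su(n)$—are standard. The statement is, as indicated in the text, an immediate corollary of \cref{thm:MSR} and \cref{thm:MSRcompactcodomain}.
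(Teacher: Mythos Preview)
Your proof is correct and follows exactly the approach indicated in the paper: apply \cref{thm:MSR} to see that $\hat\rho$ must be trivial (since $d<n$), so $\rho$ lands in a compact group, then apply \cref{thm:MSRcompactcodomain} together with the dimension count $\dim\su(n)=n^2-1>d^2$ to rule out an infinite image. The paper states precisely these two observations in the sentence preceding the corollary and declares the result immediate.
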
 
	
	Conjecture \ref{conj:slnr} can be seen  as a ``nonlinear'' analogue  of this corollary.   That is, we aim to prove the same result when the linear group $\Gl(d,\R)$   is  replaced by    certain diffeomorphism groups $\diff(M)$.  
	
	\subsubsection{Invariant measurable metrics}
	For $n\ge 3$, let $\Gamma $ be a lattice in $G=  \SL(n,\R)$ and consider a measure-preserving action  $\alpha \colon \Gamma \to \Diff^1_\mu(M)$ where $M$ is a compact manifold of dimension at most $d\le n-1$ and $\mu$ is an arbitrary Borel probability measure on $M$ preserved by $\alpha$.  The derivative cocycle of the action  $\alpha$ is then $\Gl(d, \R)$-valued.  Since there are no representations $\rho\colon \Sl(n,\R)\to \Sl(d,\R)$  for $d<n$, Theorem \ref{thm:ZCSR} implies that the derivative cocycle is cohomologous to a compact-valued cocycle.  In particular, we have the following:
	
	\begin{corollary}\label{cor:cor} For $\Gamma, M, \mu$ and $\alpha \colon \Gamma \to \Diff^1_\mu(M)$  as above
		\begin{enumcount}
			\item \label{corcor1} $\alpha$ preserves a `$\mu$-measurable Riemannian metric,' namely there is a $\mu$-measurable, $\alpha$-invariant,  positive-definite symmetric two-form on $TM$;
			\item \label{corcor2} for any $\epsilon>0$  and $\gamma\in \Gamma$, the set of $x\in M$ such that 
			$$ \liminf _{n\to \infty} \frac 1 n \log \|D_x \alpha(\gamma^n)\| \ge \epsilon $$%Ce^{\epsilon n}$$ 
			%	$$ \|D_x \alpha(\gamma^n)\| \ge Ce^{\epsilon n}$$ 
			%	for some $C>0$ and all $n\ge 0$
			has zero $\mu$-measure.  
		\end{enumcount}
	\end{corollary}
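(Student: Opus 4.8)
\textbf{Proof plan for Corollary \ref{cor:cor}.}

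The plan is to deduce both conclusions directly from Zimmer's cocycle superrigidity theorem (Theorem \ref{thm:ZCSR}) applied to the derivative cocycle $\calA\colon \Gamma\times M\to \Gl(d,\R)$ of the action $\alpha$. First I would reduce to the ergodic case: decompose $\mu$ into ergodic components for the $\Gamma$-action, establish each conclusion on almost every ergodic component, and then integrate (for \ref{corcor1}, a measurable choice of invariant inner product on each component glues to a global measurable invariant two-form; for \ref{corcor2}, a set that is $\mu$-null is null on almost every component and conversely). So assume $\mu$ is ergodic. Choosing a bounded measurable trivialization $\Psi\colon TM\to M\times\R^d$ as in the discussion preceding the statement, the derivative cocycle $\calA$ is a bounded measurable linear cocycle over the $\mu$-preserving ergodic action of $\Gamma$, so Theorem \ref{thm:ZCSR} applies.

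The key point is the dimension obstruction: since $d\le n-1<n$, there is no non-trivial representation $\hat\rho\colon\Sl(n,\R)\to\Sl(d,\R)$ (any such representation, restricted to a simple group, would be either faithful — impossible by a dimension count since $\dim\mathfrak{sl}(n,\R)>\dim\mathfrak{sl}(d,\R)$ — or trivial, as $\Sl(n,\R)$ is simple). Hence the representation $\rho$ produced by Theorem \ref{thm:ZCSR} is trivial, and \eqref{ez:ZCS} reads
\begin{equation*}
\calA(\gamma,x)=\Phi(\alpha(\gamma)(x))\inv\,\mathcal C(\gamma,x)\,\Phi(x)
\end{equation*}
for a.e.\ $x$ and all $\gamma\in\Gamma$, where $\mathcal C$ takes values in a compact subgroup $K\subset\Gl(d,\R)$ and $\Phi\colon M\to\Gl(d,\R)$ is measurable. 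Thus the derivative cocycle is measurably cohomologous to a compact-valued cocycle.

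For \ref{corcor1}: fix any $K$-invariant inner product $\langle\cdot,\cdot\rangle_0$ on $\R^d$ (average the standard one over Haar measure of $K$), and define a measurable two-form on $TM$ by pulling back $\langle\cdot,\cdot\rangle_0$ through $\Phi(x)\circ\Psi_x$, i.e.\ $\langle u,v\rangle_x:=\langle \Phi(x)\Psi_x u,\ \Phi(x)\Psi_x v\rangle_0$ for $u,v\in T_xM$. This is positive-definite and symmetric for a.e.\ $x$, depends measurably on $x$, and the cohomology equation together with $K$-invariance of $\langle\cdot,\cdot\rangle_0$ shows it is $\alpha$-invariant. For \ref{corcor2}: compactness of $K$ gives a uniform bound $\|\mathcal C(\gamma,x)\|,\|\mathcal C(\gamma,x)\inv\|\le C_\gamma$ for $\mu$-a.e.\ $x$ (for each fixed $\gamma$, using boundedness in the trivialization), hence by the cocycle relation $\|\mathcal C(\gamma^n,x)\|\le C_\gamma^{\,n}$ is still only exponential — the correct statement is that $\|\mathcal C(\gamma^n,x)\|$ grows subexponentially, which follows since all Lyapunov exponents of the compact-valued cocycle $\mathcal C$ vanish by Proposition \ref{prop:ZLEUSEGOD} (a compact-valued cocycle has uniformly subexponential growth). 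Then $\|D_x\alpha(\gamma^n)\|=\|\Psi_{\alpha(\gamma^n)(x)}\inv\Phi(\alpha(\gamma^n)(x))\inv\mathcal C(\gamma^n,x)\Phi(x)\Psi_x\|$, and outside a set of small $\mu$-measure the factors $\Phi$ and $\Phi\inv$ are uniformly bounded; applying the Poincaré recurrence / Lusin argument (choose a compact set of large measure on which $\|\Phi^{\pm1}\|$ is bounded, and use that $\alpha(\gamma^n)(x)$ returns to it infinitely often for a.e.\ $x$) shows $\liminf_n\frac1n\log\|D_x\alpha(\gamma^n)\|\le 0<\epsilon$ for $\mu$-a.e.\ $x$, which is exactly \ref{corcor2}.

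\textbf{Main obstacle.} The only genuinely delicate point is \ref{corcor2}: the measurable conjugacy $\Phi$ is \emph{not} bounded, so one cannot directly transfer the subexponential bound on $\|\mathcal C(\gamma^n,x)\|$ to $\|D_x\alpha(\gamma^n)\|$ pointwise. The fix is standard but must be done carefully — restrict $\Phi$ to a large-measure set where it is bounded, invoke Poincaré recurrence for $\alpha(\gamma)$ (using $\mu$-invariance) so that the forward orbit of a typical $x$ hits this set along a density-one sequence of times, and read off the $\liminf$ along that subsequence. Everything else is a formal consequence of Theorem \ref{thm:ZCSR}, the dimension count killing $\rho$, compactness of $K$, and Proposition \ref{prop:ZLEUSEGOD}.
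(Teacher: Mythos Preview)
Your approach is essentially the paper's own: apply Theorem~\ref{thm:ZCSR}, kill the representation $\rho$ by the dimension count $d<n$, pull back a $K$-invariant inner product through $\Phi$ for \ref{corcor1}, and use Poincar\'e recurrence to sets where $\|\Phi^{\pm1}\|$ is bounded for \ref{corcor2}. Your explicit reduction to ergodic components is a point the paper glosses over but which is indeed needed for Theorem~\ref{thm:ZCSR} to apply.

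One small simplification you missed: since $\mathcal C$ takes values in the compact group $K$, the element $\mathcal C(\gamma^n,x)$ lies in $K$ for \emph{every} $n$, so $\|\mathcal C(\gamma^n,x)\|\le\sup_{k\in K}\|k\|$ is uniformly bounded---not merely subexponential. There is no need for the cocycle relation or for Proposition~\ref{prop:ZLEUSEGOD} (which in any case concerns single diffeomorphisms, not cocycles over group actions). With this observation your Poincar\'e recurrence argument for \ref{corcor2} goes through cleanly and matches the paper's one-line justification.
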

	For \ref{corcor1}, suppose the derivative cocycle is cohomologous to a $K$-valued cocycle for some compact group $K\subset \Gl(d,\R)$.  One may then pull-back any $K$-invariant inner product on $\R^d$ to $T_xM$ via the map $\Phi(x)$ in Theorem \ref{thm:ZCSR} to an $\alpha (\Gamma)$-invariant inner product. 
	Conclusion \ref{corcor2} follows from Poincar\'e recurrence to sets on which the function $\Phi\colon M\to \Gl(d,\R)$ in Theorem \ref{thm:ZCSR}  has bounded norm and conorm. Note   from \ref{corcor2} that all Lyapunov exponents for individual elements of the action must vanish.  
	
	From Corollary \ref{cor:cor},  given $n\ge 3$ and   a lattice $\Gamma $ in $G=  \SL(n,\R)$,  we have that every action  $\alpha \colon \Gamma \to \Diff^1_\vol(M)$ preserves a Lebesgue-measurable Riemannian metric $g$ whenever $M$ is a compact manifold of dimension at most $n-1$.  Suppose one could show that $g$ was continuous or $C^\ell$.  As we discuss in Step 3 of Section \ref{sec:outline3steps} below, this combined with Theorem   \ref{thm:MSRcompactcodomain} implies the image $\alpha(\Gamma)$ is finite.    Thus, Conjecture \ref{conj:slnr}\ref{conj:2} follows if one can promote the measurable invariant metric $g$ guaranteed by Corollary \ref{cor:cor} of Theorem \ref{thm:ZCSR} to a continuous  Riemannian metric.
	
	%\note{new discussion}
	The discussion in the previous paragraphs suggests the following variant of  Conjecture \ref{conj:slnr}\ref{conj:2} might hold: \begin{quote} {\it For $n\ge 3$, if $\Gamma\subset \Sl(n,\R)$ is a lattice and if $\mu$ is  any fully supported Borel probability measure on a compact manifold $M$ of dimension at most $(n-1)$ then any    homomorphism $$\Gamma \rightarrow \Diff_\mu(M)$$ has finite image.}\end{quote}
	Our method of proof of Conjecture \ref{conj:slnr}\ref{conj:2} does not establish  this conjecture.  However, the conjecture would follow (even allowing for $\mu$ to have partial support) if the  global rigidity result in \cref{conj:koko} holds.  
	%conjecture (see also  Question \ref{Q:qq}\ref{666}) holds:  \begin{quote} {\it For $n\ge 3$, every non-finite action of a lattice $\Gamma\subset \Sl(n,\R)$ on a connected compact manifold $M$ of dimension $(n-1)$ is  conjugate to the projective action on either $S^{n-1}$ or $\RP^{n-1}$ in  \cref{ex:sphere}.}\end{quote}
	%
	%  This provides very strong evidence for the volume-preserving case of Conjecture \ref{conj:slnr}.  
	
	\subsubsection{Actions with discrete spectrum}%\note{new discussion}
	Upgrading the measurable invariant Riemannian metric in \cref{cor:cor} to a continuous Riemannian metric in the above heuristic seems   quite difficult  and is not the approach we take in the proof of \cref{slnr}.   In \cite{MR743815}, Zimmer was able to upgrade the measurable metric to a continuous metric  for volume-preserving actions that are very close to isometries.  This result now follows from the local rigidity of isometric actions in \cites{MR1779610,MR2198325}.  
	
	Zimmer later established a  much stronger result  in \cite{MR1147291} which  provides very strong evidence for the volume-preserving cases in  \cref{conjecture:zimmergne}.  Using   the   invariant, measurable metric discussed above and that higher-rank lattices have Property (T), Zimmer showed  that any volume-preserving action appearing  in \cref{conjecture:zimmergne} has discrete spectrum.   In particular, this result  implies that (the ergodic components of) all volume-preserving actions appearing  in  \cref{conjecture:zimmergne}  are measurably isomorphic to isometric actions.  %where as   they are conjectured to be isometric,

	\section{Structure theory of $\Sl(n,\R)$ and Cartan flows on $\Sl(n,\R)/\Gamma$}\label{sec:slstructure}
	Let $G = \Sl(n,\R)$ and let  $\Gamma\subset G$ be a lattice.  Recall we write   $G= KAN$ for  the Iwasawa decomposition 
	where  
	$$K = \So(n,\R),\quad \quad A = \{ \diag(e^{t_1}, e^{t_2},\dots, e^{t_n}) : t_1+\dots + t_n = 0\},$$ 
	and $N $ is the group of  upper triangular matrices with 1s on the diagonal.  
	
	We will be interested in certain subgroups of $G$ and how they capture dynamical information of the action of the Cartan subgroup $A$ on the homogeneous space $G/\Gamma$.  
	
	\subsection{Roots and root subgroups}\label{ss:roots}\index{Lie group!roots}
	We consider the following  linear functionals    $$\beta^{i,j}\colon A\to \R$$ given as follows: %\rootfoot\ 
	for $i\neq j$, 
	$$\beta^{i,j} \left(\diag(e^{t_1}, e^{t_2}, \dots, e^{t_n}) \right)= t_i - t_j.$$
	The linear functionals $\beta^{i,j}$ are the \emph{roots} of $G$.
	
	Associated to each root $\beta^{i,j}$ is a 1-parameter unipotent subgroup $U^{i,j}\subset G$.  \index{Lie group!root subgroup}
	For instance, in $G= \Sl(3, \R)$ we have the following 1-parameter flows
	$$u^{1,2}(t)=   \left(\begin{array}{ccc}1 & t & 0 \\0 & 1 & 0 \\0 & 0 & 1\end{array}\right),\hspace{.7em} 
	u^{1,3}(t)=   \left(\begin{array}{ccc}1 & 0 & t \\0 & 1 & 0 \\0 & 0 & 1\end{array}\right),\hspace{.7em} 
	u^{2,3}(t)=   \left(\begin{array}{ccc}1 & 0 & 0 \\0 & 1 & t \\0 & 0 & 1\end{array}\right),
	$$
	$$
	u^{2,1}(t)=  \left(\begin{array}{ccc}1 & 0 & 0 \\t & 1 & 0 \\0 & 0 & 1\end{array}\right),\hspace{.7em} 
	u^{3,1}(t)=   \left(\begin{array}{ccc}1 & 0 & 0 \\0 & 1 & 0 \\t & 0 & 1\end{array}\right),\hspace{.7em} 
	u^{3,2}(t)=   \left(\begin{array}{ccc}1 & 0 & 0 \\0 & 1 &0\\0 & t & 1\end{array}\right).$$

	We let $U^{i,j}$ denote  the associated 1-parameter  unipotent subgroups of $G$: \begin{equation}\label{eq:rootgroup}U^{i,j}:= \{ u^{i,j}(t): t\in \R\}.\end{equation}
	The groups $U^{i,j}$ have the property that conjugation by $s\in A$  dilates their parametrization by $e^{\beta^{i,j}(s)}$: \begin{equation}\label{kokololo} s u^{i,j}(t) s\inv = u^{i,j}(e^{\beta^{i,j}(s)} t).\end{equation}   In particular, if $g'= u^{i,j}(t) \cdot g$ is in the $U^{i,j}$-orbit of $g$ and $s\in A$ then \begin{equation*}\label{eq:kokololo}s\cdot g' = u^{i,j}(e^{\beta^{i,j}(s)} t) \cdot s\cdot g.\end{equation*}
	
	%One important fact that we will implicitly use in the sequel is that the subgroups $U^{i,j}$ generate  $G$.  
	
	\subsection{Cartan flows}\label{sec:diag}\index{Cartan flow}
	For concreteness, consider $G= \Sl(3, \R)$ and let $\Gamma $ be a lattice in $\SL(3,\R) $ such as $ \Sl(3,\Z)$.
	Let $X$ denote the coset space $X= G/\Gamma$.  This is an $8$ dimensional  manifold (which is noncompact when  $\Gamma $ is a nonuniform lattice such as $\Sl(3,\Z)$.) 
	$G$ acts on $X$ on the left:  given $g\in G$ and $x = g'\Gamma \in X$ we have $$g\cdot x= gg'\Gamma\in X.$$

	The Cartan subgroup  $A\subset G$ is the subgroup of  diagonal matrices with positive entries $$A := \left \{ \left(\begin{array}{ccc}e^{t_1} & 0 & 0 \\0 & e^{t_2} & 0 \\0 & 0 & e^{t_3}\end{array}\right): t_1+ t_2+ t_3= 0 \right\}.$$
	The group $A$ is isomorphic to $\R^2$, for instance, via the embedding 
	\begin{equation*}\label{eq:LLOOP}(s,t)\mapsto \diag(e^s , e^t, e^{-s-t}).\end{equation*}%= % \left(\begin{array}{ccc}e^s & 0 & 0 \\0 & e^t & 0 \\0 & 0 & e^{-s-t}\end{array}\right)
	We consider  the action 
	$\alpha \colon A\times X\to X$ of $A$ on $X$  given by $$\alpha (s)(x) = s  x.$$
	%$$\alpha(s,t)(x) = \diag(e^s , e^t, e^{-s-t}) x.$$

	For $x\in X$ let $W^{i,j}(x)$ be the orbit of $x$ under the 1-parameter group $U^{i,j}$:
	$$W^{i,j}(x) = \{ u^{i,j}(t) x: t\in \R\}\}.$$
	For $s\in A$, we claim that the $s$-action on $X$ dilates the natural parametrization of each $W^{i,j}(x)$  by exactly $\beta^{i,j}(s)$.  Indeed, if $x\in X$ and if $x' = u^{i,j}(v)\cdot x \in W^{i,j}(x)$ then for  $s \in  A$ we have 
	\begin{align*} \alpha(s)(x') %= s\cdot x'\\
		%&=   \diag(e^{t_2} , e^{t_2}, e^{t_3}) x' \\
		&=   s u ^{i,j}(v)   x \\
		&=   s u ^{i,j}(v)  s\inv s x \\
		%	&=   \diag(e^{t_2} , e^{t_2}, e^{t_3})u^{i,j}(v)x \\
		%	&=  u^{i,j}(v') \diag(e^{t_2} , e^{t_2}, e^{t_3}) x\\
		&=  u^{i,j}(v') \alpha(s)(x)
	\end{align*}
	where, using \eqref{kokololo}, we have that have  $$v' = e^{\beta^{i,j}(s)} v.$$   In particular,  we interpret the functionals $\beta^{i,j}$ as the (non-zero) Lyapunov exponents for the $A$-action on $X$ (with respect to any $A$-invariant measure).  Note that the zero functional is a Lyapunov exponent of multiplicity two corresponding to the $A$-orbits.   The tangent spaces to each $W^{i,j}(x)$ as well as the tangent space to the orbit $A\cdot x$ gives the $A$-invariant splitting guaranteed by Theorem \ref{thm:oscHR}.
	Note that no two roots $\beta^{i,j}$ are positively proportional and hence are their own coarse Lyapunov exponents for the action (see Section \ref{sss:cle}).

	\section{Suspension space and fiberwise exponents}
	\label{sec:cons} 
	We now begin the proofs of \cref{thm:invmeas} and \cref{slnr} with a technical but crucial construction.  Here, we induce from an action $\alpha$ of a lattice $\Gamma$ on a manifold $M$ to an action of $G=\Sl(n,\R)$ on an auxiliary manifold denoted by $M^\alpha$.  The properties of the $G$-action on $M^\alpha$ mimic the properties of the $\Gamma$-action on $M$.  However, for a number of reasons it is much more convenient to study the $G$-action on $M^\alpha$.  The construction is parallel to the construction described in    \cref{ss:suspspace}.  %Proposition \ref{prop:susp} and  Section \ref{sec:susp}.  

	\subsection{Suspension space and   induced $G$-action} \label{ssec:susp}\index{suspension} Fix  $G= \Sl(n,\R)$ and let $\Gamma\subset G$ be a lattice.  
	Let $M$ be a compact manifold and let $\alpha\colon \Gamma\to \diff(M)$ be an action.

	On the product $G\times M$ consider the right $\Gamma$-action
	$$ (g,x)\cdot \gamma= (g\gamma , \alpha(\gamma\inv)(x))$$ and the left $G$-action $$a\cdot (g,x) = (ag, x).$$
	Define  the quotient manifold $M^\alpha:= (G\times M)/\Gamma $.  As the  $G$-action on $G\times M$ commutes with the $\Gamma$-action, we have an induced left $G$-action  on $M^\alpha$.  For $g\in G$ and $x\in M^\alpha$ we denote this action by $g\cdot x$ and denote the derivative of the diffeomorphism $x\mapsto g\cdot x$ at $x\in M^\alpha$  $D_xg\colon T_xM^\alpha\to T_{g\cdot x}M^\alpha$. 
	%Note that $M^\alpha$ is a circle bundle over $G/\Gamma$ and that the $G$-action on $M^\alpha$ perumutes the circle fibers.  
	
	We write $$\pi\colon M^\alpha\to \Sl(n,\R)/\Gamma$$ for the natural projection map.  Note that $M^\alpha$ has the structure of a fiber-bundle over $\Sl(n,\R)/\Gamma$ induced by the map $\pi$ with fibers diffeomorphic to $M$.   The  $G$-action permutes the $M$-fibers of $M^\alpha$.
	We let $F= \ker(D\pi)$ be the \emph{fiberwise tangent bundle:} for $x\in M^\alpha$, $F(x)\subset T_xM^\alpha$ is the $\dim(M)$-dimensional subspace tangent to the   fiber through $x$.  
	
	Equip $M^\alpha$ with a continuous Riemannian metric. For convenience, we  moreover  assume the  restriction of the metric to $G$-orbits coincides under push-forward by  the  projection $\pi\colon M^\alpha \to \Sl(n,\R)/\Gamma$ with  the metric on $\Sl(n,\R)/\Gamma$ induced by a  right-invariant (and left $K$-invariant) metric on $G$. (We note that if $\Gamma$ is cocompact, $M^\alpha$ is compact and all metrics are equivalent. In the case that $\Gamma$ is not cocompact, some additional care is needed to ensure the metric is well behaved in the fibers. We will not discuss the technicalities of this case here.)

	%\note{Added a long construction of metric.  I don't know how to make a short argument}
	We outline the  construction of such a metric.  Fix  a $C^\infty$ Riemannian metric $\langle \cdot , \cdot \rangle$ on $TM$. Passing to a finite index subgroup, we may assume that $K\bs G/\Gamma$ is a manifold. When $\Gamma$ is cocompact in $G$, this manifold is compact.  (More generally, when $\Gamma$ is nonuniform,  $K\bs G/\Gamma$ has a compactification as real-analytic manifold with corners; see \cite{MR2126641}.) 
  Let  $\{\hat \psi_i, i=1,\dots, m\}$ be a finite,  $C^\infty$  partition of unity  of the locally symmetric space $K\bs G/\Gamma$ subordinate to finitely many coordinate charts. Lift each $\hat \psi_i$ to a $K$-invariant function defined on $G/\Gamma$.  For each $i$, we select at compactly supported  $\psi_i\colon G\to [0,1]$ such that $\psi_i(g) = \hat \psi_i(g\Gamma)$, the map $g\mapsto g\Gamma$ is injective on the support of $\psi_i$, and  the support of each $ \psi_i$ intersects a fixed  compact fundamental domain containing the identity.  Write $\psi_{i,\gamma}\colon  G\to [0,1]$ for the function $$\psi_{i,\gamma}(g) = \psi_i(g\gamma\inv).$$
	The supports satisfy $\supp(\psi_{i,\gamma})\cap \supp(\psi_{i,\gamma'})= \emptyset$ whenever $\gamma\neq \gamma'$
	and the collection $\{\psi_{i,\gamma}\mid i\in \{1,\dots, m\}, \gamma\in \Gamma\}$ is a partition of  unity on $G$.
	Given $v,w\in \{g\} \times T_xM$ set $$\langle v,w\rangle_{g,x} := \sum_{i=1}^m\sum_{\gamma\in \Gamma} \phi_{i,\gamma}(g)  \langle D_x\alpha(\gamma)(v),D_x\alpha(\gamma)(w)\rangle_{x}.$$
	Equip $T_{(g,x)}(G\times M)= T_gG\times T_xM$ with the product of the left $K$-invariant, right $\Gamma$-invariant metric on $G$  and $\langle v,w\rangle_{g,x}.$  Note that this metric is $\beta$-H\"older continuous if $\alpha$ is an action by $C^{1+\beta}$ diffeomorphisms.  
	We then verify that    $\Gamma$ acts by isometries and thus the metric  descends to a metric on $M^\alpha$.  Indeed, writing $\|\cdot \|_{g,x}$ for the  norm associated to  $\langle \cdot,\cdot \rangle_{g,x}$, for $v \in \{g\hat \gamma \} \times T_xM$ we have
	\begin{align*}
		\|v\|_{g\hat \gamma,x}^2
		&=\sum_{i=1}^m\sum_{\gamma\in \Gamma} \phi_{i,\gamma}(g\hat\gamma) \| D_x\alpha(\gamma)(v)\|_0 ^2\\
		&=\sum_{i=1}^m\sum_{\gamma\in \Gamma} \phi_{i,\gamma\hat\gamma\inv}(g) \| D_x\alpha(\gamma)(v)\|_0^2\\
		&=\sum_{i=1}^m\sum_{\gamma\in \Gamma} \phi_{i,\gamma\hat\gamma\inv}(g) \| D_x\alpha(\gamma\hat\gamma\inv\hat\gamma)(v)\|_0^2\\
		&=\sum_{i=1}^m\sum_{\gamma\in \Gamma} \phi_{i,\gamma\hat\gamma\inv}(g) \| D_{\alpha(\hat \gamma)(x)}\alpha(\gamma\hat\gamma\inv,\alpha(\hat\gamma)(x))D_x\alpha(\hat\gamma,x)(v)\|_0^2\\
		&=\|D_x\alpha(\hat \gamma) v\|_{g, \alpha(\hat \gamma)(x)}^2.
	\end{align*}

	\subsection{Fiberwise Lyapunov exponents}\label{ss:fibLyap} \index{Lyapunov exponent!fiberwise}
	Recall  that $A\subset G$ is the subgroup  $$A= \{ \diag(e^{t_1}, e^{t_2}, \dots, e^{t_n})\}\simeq \R^{n-1}.$$
	The $G$-action on $M^\alpha$  restricts to  an  $A$-action on $M^\alpha$.  Let $\mu$ be any ergodic, $A$-invariant Borel probability measure on $M^\alpha$.  The $G$-action (and hence the $A$-action) permutes the  fibers of $M^\alpha$ and hence the derivatives of the $G$- and $A$-actions  preserve the   fiberwise tangent subbundle $F\subset TM^\alpha$. %tangent to the circle fibers. % That is,  for each $x\in M^\alpha$, $F(x)\subset T_xM^\alpha$ is the 1-dimensional subspace tangent to the circle fiber through $x$.  
	
	We equip $A\simeq \R^{n-1}$   with a norm $|\cdot|$.  
	We may restrict Theorem \ref{thm:oscHR} to the  $A$-invariant subbundle $F \subset T M^\alpha$ and obtain Lyapunov exponent functionals for the fiberwise derivative cocycle.    We thus obtain 
	\begin{enumerate}
		\item an $A$-invariant set $\Lambda\subset M^\alpha$ with $\mu(\Lambda)=1$;
		\item  linear functionals  $\lambda^F_{1,\mu}, \lambda^F_{2,\mu},\dots,\lambda^F_{p,\mu}\colon A\to \R$; and
		\item    a $\mu$-measurable, $A$-invariant  splitting $F(x) = \bigoplus_{i=1}^p E_i^F(x)$ defined for $x\in \Lambda$
	\end{enumerate}
	such that for every $x\in \Lambda$ and $v\in E^F_i(x)\sm \{0\}$ 
	$$\lim_{|a|\to   \infty} \dfrac { \log \|D_x a (v)\| -  \lambda^F_{i,\mu}(a)}{|a|} = 0.$$
	In particular, for any $a\in A$ and $v\in F(x)\sm \{0\}$ we have 
	$$\lim_{k\to   \infty} \dfrac { 1}k \log \|D_xa^k(v)\|  =   \lambda^F_{i,\mu}(a).$$

	%\begin{enumlemma}
	%\item \label{kk} for every $v\in E^i(x)\sm \{0\}$ 
	%$$\lim_{|n|\to   \infty} \dfrac { \log \|D_x\alpha(n)(v)\| -  \lambda^i(n)}{|n|} = 0;$$
	%\item if $Jf$ denotes the Jacobian determinant of $f$ then $\displaystyle\lim_{|n|\to   \infty} \frac { \log | {J}\alpha(n)| - \sum_{i=1}^p \lambda^i(n)}{|n|};$
	%\item for every $i\neq j$
	%	$ \lim_{n\to \infty} \frac{1}{|n|} \log \Big(\sin \angle \Big(E^i(\alpha(n)(x)), E^j(\alpha(n)(x))\Big) \Big)= 0. $
	%\end{enumlemma}

	%Since $F$ is a 1-dimensional vector bundle, we obtain a single \emph{fiberwise Lyapunov exponent functional } $$\lambda^F= \lambda^F_{A, \mu}\colon A\to \R$$ such that for $\mu$-\ae $x\in M^\alpha$ and  $v\in F(x)\sm \{0\}$
	
	%We will use the notation $ \lambda^F_{A, \mu}$ later on to emphasize that  $\lambda^F_{A, \mu}$ is an exponent for the $A$-action which depends on the measure  $\mu$. 
	
	A \emph{coarse fiberwise Lyapunov exponent}  $\chi^F_{\mu}$ is a positive proportionality class of fiberwise Lyapunov exponents.  \index{coarse Lyapunov!exponent!fiberwise}

	\section{Invariance principle and Proof of Theorem \ref{thm:invmeas}}
	\label{sec:IP}
	\subsection {Proof of Theorem \ref{thm:invmeas}}\label{ss:IP1}
	Given the constructions in Section \ref{sec:cons} and Ledrappier's theorem as formulated in  Theorem \ref{thm:led'} (see also  \cref{prop:easyLed}), we are now in a position to prove Theorem  \ref{thm:invmeas}.  In fact, we prove the following invariance principle: 
	
	\begin{theorem}\label{thm:invmsr}\index{invariance principle!low dimensional fibers}
		Let $\Gamma\subset \Sl(n,\R)$ be a lattice.   Let  $\alpha \colon \Gamma\to \diff^{1+\beta}(M)$ be an action and let $M^\alpha$ denote the suspension space with induced $G$-action.
		Let $\mu$ be an ergodic, $A$-invariant Borel probability measure on $M^\alpha$ whose projection to $\Sl(n,\R)/\Gamma$ is the Haar measure.  
		
		Then, if $\dim(M)\le n-2$ the measure $\mu$ is $G$-invariant.  Moreover, if $\alpha$ preserves a volume form $\vol$ and if   $\dim(M)\le n-1$ then the measure $\mu$ is $G$-invariant.
	\end{theorem}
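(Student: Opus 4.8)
The plan is to run the higher-rank/entropy argument sketched in the introduction, using the low-dimensionality of the fiber to kill all fiberwise Lyapunov exponents along some direction $\vecs_0 \in A$, and then to feed the resulting isometric behavior along a coarse fiberwise Lyapunov foliation into Ledrappier's theorem in the form \cref{thm:led'}. First I would apply \cref{thm:oscHR} to the fiberwise cocycle on $F \subset TM^\alpha$ with respect to $\mu$, producing the fiberwise Lyapunov functionals $\lambda^F_{1},\dots,\lambda^F_{p}\colon A \to \R$ with multiplicities $m_1,\dots,m_p$ summing to $\dim(M)$. In the volume-preserving case, \cref{cor:stupidcor} gives $\sum_i m_i \lambda^F_i = 0$, so the number of \emph{distinct nonzero} fiberwise exponent functionals is at most $\dim(M) \le n-1$; in the non-volume-preserving case one has $\dim(M)\le n-2$ so at most $n-2$ of them. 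Since $A \simeq \R^{n-1}$ is $(n-1)$-dimensional, in either case the (at most $n-2$, resp.\ $n-1$) kernels of the nonzero $\lambda^F_i$ cannot cover $A$ — more precisely, in the volume-preserving case $\dim M \le n-1$ the codimension count still works because the $m_i\lambda^F_i$ satisfy a linear relation, so the nonzero distinct functionals number at most $n-1$ but their common refinement of kernels is still a proper subspace unless all vanish; this is the elementary linear-algebra fact flagged at \eqref{eq:jazz}. Hence there exists $0 \ne \vecs_0 \in A$ with $\lambda^F_i(\vecs_0) = 0$ for \emph{every} $i$, i.e.\ all fiberwise Lyapunov exponents of the one-parameter subgroup $\{t\vecs_0\}$ vanish.

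Next I would use the structure theory of $G = \Sl(n,\R)$: because the restricted root system is $A_{n-1}$, every root $\beta^{i,j}$ has a one-dimensional kernel, and given our $\vecs_0$ I can pick a root $\beta = \beta^{i,j}$ with $\beta(\vecs_0) \ne 0$. The associated unipotent subgroup $U = U^{i,j}$ satisfies $\vecs_0 u(t)\vecs_0^{-1} = u(e^{\beta(\vecs_0)}t)\vecs_0$, so $U$ lies in either the stable or unstable horospherical subgroup of $\vecs_0$ inside $G$; since $\pi_*\mu$ is Haar on $G/\Gamma$, the $G$-orbit directions in $M^\alpha$ contribute exponent $\beta(\vecs_0) \ne 0$ for the $\vecs_0$-action along the $U$-orbit, while the fiber contributes $0$. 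Thus the orbit $U\cdot x$ is contained in the unstable (or stable, after replacing $\vecs_0$ by $-\vecs_0$) manifold of $f := $ translation by $\vecs_0$ for $\mu$-a.e.\ $x$, and $\vecs_0$ normalizes $U$. This is exactly the setup preceding \cref{thm:led'}, with $H = U$ one-dimensional. Now the key entropy input: because $U$-orbits are \emph{one-dimensional} and lie inside the homogeneous direction, and because the fiberwise exponents along $\vecs_0$ all vanish, the multiplicity $m^{i,U}$ of each positive Lyapunov exponent of $f$ relative to $U$ is either $1$ (for the single exponent $\beta(\vecs_0) > 0$, coming from the base) or $0$. One then argues $h_\mu(f \mid U) = \beta(\vecs_0) = \sum_{\lambda^i>0}\lambda^i m^{i,U}$: the inequality $\le$ is \eqref{eq:popopopo}, and the reverse follows because $h_{\pi_*\mu}$ of the $\vecs_0$-translation on $G/\Gamma$ conditioned along $U$ equals $\beta(\vecs_0)$ (Haar measure is the measure of maximal entropy / satisfies Pesin's formula for the homogeneous flow), and the Abramov–Rokhlin formula \eqref{eq:AR} together with vanishing of fiberwise exponents forces the fiberwise contribution to be zero, so no entropy is lost in the fiber. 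By \cref{thm:led'}\ref{11111}$\iff$\ref{3333333333}, $\mu$ is $U$-invariant.

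Finally I would bootstrap from invariance under one unipotent root subgroup to full $G$-invariance. Having $\mu$ invariant under $U^{i,j}$, I repeat the argument with other choices: the set of $\vecs_0 \in \ker(\text{all }\lambda^F_i)$ is a nontrivial subspace, and ranging over roots $\beta$ non-vanishing on it (and over $\pm\vecs_0$ to get both the stable and unstable horospherical pieces) produces invariance under a collection of root subgroups $U^{i,j}$ whose union generates a nontrivial unipotent, hence (since $\Sl(n,\R)$ is simple and any nontrivial normal-closure of unipotents is everything) generates $G$. Concretely: invariance under the stable and unstable horospherical subgroups of a single $A_{n-1}$-regular-enough element, together with the $A$-invariance we started with, already forces $G$-invariance by a standard Mautner/generation argument. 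Alternatively, one invokes the invariance principle machinery (\cref{prop:nonresinv}, cited in the excerpt) once enough unipotent invariance is established. \textbf{The main obstacle} I anticipate is the bookkeeping in the dimension count in the volume-preserving critical case $\dim(M) = n-1$: one must use the trace-zero relation $\sum_i m_i\lambda^F_i = 0$ carefully to conclude that the kernels of the nonzero fiberwise exponents still fail to cover the $(n-1)$-dimensional $A$ — equivalently, that not all $n-1$ "coordinate hyperplanes" are in general position forcing a single regular point — and then to ensure that the chosen $\vecs_0$ can be taken with $\beta^{i,j}(\vecs_0) \ne 0$ for a root $\beta^{i,j}$ whose unipotent $U^{i,j}$ we can actually exploit. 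Getting the entropy equality $h_\mu(f\mid U) = \beta(\vecs_0)$ honestly — i.e.\ verifying no entropy leaks into the fiber — is the other delicate point, and is where the hypothesis that all fiberwise exponents along $\vecs_0$ vanish does the real work, via \eqref{eq:popopopo} and \eqref{eq:AR}.
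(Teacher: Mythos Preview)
Your overall architecture is right---find $\vecs_0\in A$ annihilating all fiberwise exponents, then use an entropy argument and \cref{thm:led'} to get unipotent invariance---but the execution has two issues, one cosmetic and one substantive.

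The cosmetic one: your dimension count is phrased backwards. You write that the kernels of the $\lambda^F_i$ ``cannot cover $A$,'' but what you need is a nonzero point \emph{inside the intersection} of all these kernels, not a point avoiding them. The correct count is that at most $n-2$ linear functionals (or at most $n-1$ functionals satisfying one linear relation, in the volume-preserving case) span a subspace of $A^*\simeq\R^{n-1}$ of dimension at most $n-2$, so their common kernel has dimension at least $1$; this gives $\vecs_0$.

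The substantive gap is in your entropy step. You want $h_\mu(\vecs_0\mid U)=\beta(\vecs_0)$ for a \emph{single} root subgroup $U=U^{i,j}$, and you justify the lower bound by invoking the classical Abramov--Rokhlin formula \eqref{eq:AR}. But \eqref{eq:AR} decomposes the \emph{total} entropy $h_\mu(\vecs_0)$, not the entropy conditioned along a one-dimensional subfoliation of the unstable; getting $h_\mu(\vecs_0\mid U)\ge h_{\mathrm{Haar}}(\vecs_0\mid U)$ requires the coarse-Lyapunov Abramov--Rokhlin formula (\cref{thm:ARsuspension}), which in turn rests on the product structure of entropy (\cref{thm:entprod}). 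This is exactly the machinery behind \cref{prop:nonresinv}, which you cite at the end as an alternative---so the tool exists, but it is not the classical \eqref{eq:AR}.

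The paper sidesteps this entirely by working with the \emph{full} horospherical subgroup $N_+$ generated by all $U^\beta$ with $\beta(\vecs_0)>0$, rather than one root subgroup at a time. Because all fiberwise exponents vanish at $\vecs_0$, the unstable manifold for $\vecs_0$ in $M^\alpha$ is precisely the $N_+$-orbit, so $h_\mu(\vecs_0\mid N_+)=h_\mu(\vecs_0)$ by \eqref{eq:LY}. The trivial factor inequality gives $h_\mu(\vecs_0)\ge h_{\mathrm{Haar}}(\vecs_0)=\sum_{\beta(\vecs_0)>0}\beta(\vecs_0)$, Margulis--Ruelle gives the reverse, and \cref{thm:led'} yields $N_+$-invariance (and symmetrically $N_-$-invariance). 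Since $N_+$ and $N_-$ generate $\Sl(n,\R)$---a short Lie-algebra computation using simplicity---one is done in a single stroke, with no product-structure-of-entropy input and no bootstrap.
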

	Note that Theorem \ref{thm:invmsr} does not require that $\Gamma$  be cocompact.\footnote{However, in the case that $\Gamma$ is nonuniform, the space $M^\alpha$ is not compact and some care is needed to define Lyapunov exponents; in particular, we must specify a Riemannian metric on $M^\alpha$.  A  Riemannian metric on $M^\alpha$ adapted to this setting is constructed  in \cite{AWBFRHZW-latticemeasure}.}
	Theorem \ref{thm:invmeas} follows immediately from Theorem \ref{thm:invmsr}:
	since $A$ is abelian (in particular amenable) and the space of probability measures on $M^\alpha$  projecting to the Haar measure on $\Sl(n,\R)/\Gamma$ is nonempty,  $A$-invariant, and weak-$*$ compact, the Krylov-Bogolyubov theorem implies there is an  $A$-invariant Borel probability measure $\mu$ on $M^\alpha$  projecting to the Haar measure on $\Sl(n,\R)/\Gamma$.  
	Since the Haar measure on  $\Sl(n,\R)/\Gamma$ is $A$-ergodic, we may moreover assume that $\mu$ is $A$-ergodic.  
	%  by fixing an ergodic $A$-invariant measure $\mu$  on $M^\alpha$ whose projection to $\Sl(n,\R)/\Gamma$ is the Haar measure 
	Theorem \ref{thm:invmsr} implies $\mu$ is $G$-invariant and  Theorem \ref{thm:invmeas} then follows from the following elementary claim.
	\begin{claim}\label{claim:oolong}
		The $\Gamma$-action $\alpha$ on $M$ preserves a Borel probability measure if and only if the induced $G$-action on $M^\alpha$ preserves a Borel probability measure (which necessarily projects to the Haar measure on $G/\Gamma$).  
	\end{claim}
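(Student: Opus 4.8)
\textbf{Proof proposal for Claim \ref{claim:oolong}.}

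The plan is to exhibit an explicit correspondence between $\Gamma$-invariant Borel probability measures on $M$ and $G$-invariant Borel probability measures on $M^\alpha$ that project to the Haar measure on $G/\Gamma$. Recall $M^\alpha = (G\times M)/\Gamma$ with $G$ acting on the left. First I would fix a Borel fundamental domain $D\subset G$ for the right $\Gamma$-action on $G$, so that the projection $G\times M\to M^\alpha$ restricts to a Borel isomorphism $D\times M\to M^\alpha$. This identifies $M^\alpha$ measurably with $D\times M$, and under this identification the fiber bundle projection $\pi\colon M^\alpha\to G/\Gamma$ becomes the first coordinate projection $D\times M\to D\cong G/\Gamma$. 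The $G$-invariant measures on $M^\alpha$ projecting to Haar are exactly those of the form ``Haar on the base, with a $G$-equivariant family of fiber measures.''

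For the forward direction, suppose $\nu$ is a $\Gamma$-invariant Borel probability measure on $M$. Let $m_{G/\Gamma}$ denote the Haar probability measure on $G/\Gamma$. I would define $\mu$ on $M^\alpha$ to be the image of $m_{G/\Gamma}\times \nu$ under the identification $D\times M\cong M^\alpha$ above (equivalently, $\mu$ is the measure whose conditional measures along the $M$-fibers are all equal to the pushforward of $\nu$ under the fiber identifications, which is well-defined precisely because $\nu$ is $\Gamma$-invariant and the transition maps between fiber identifications are given by $\alpha(\gamma)$). One then checks $G$-invariance directly: for $g\in G$, the action on $M^\alpha$ reads, in $D\times M$ coordinates, as $(h,x)\mapsto (\overline{gh}, \alpha(c(g,h))(x))$ where $\overline{gh}\in D$ is the representative of $gh\Gamma$ and $c(g,h)\in\Gamma$ is the corresponding cocycle correction; invariance of $m_{G/\Gamma}$ under left translation together with $\Gamma$-invariance of $\nu$ gives $g_*\mu=\mu$. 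By construction $\pi_*\mu = m_{G/\Gamma}$.

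For the reverse direction, suppose $\mu$ is a $G$-invariant probability measure on $M^\alpha$. Since $\pi$ intertwines the $G$-action with the transitive $G$-action on $G/\Gamma$, the pushforward $\pi_*\mu$ is a $G$-invariant probability measure on $G/\Gamma$, hence equals $m_{G/\Gamma}$ by uniqueness of Haar — this is why the parenthetical assertion in the claim holds automatically. Now disintegrate $\mu = \int_{G/\Gamma} \mu_{h\Gamma}\, dm_{G/\Gamma}(h\Gamma)$ over $\pi$, where $\mu_{h\Gamma}$ is a probability measure supported on the fiber $\pi^{-1}(h\Gamma)\cong M$. The $G$-invariance of $\mu$ together with the $G$-invariance of $m_{G/\Gamma}$ forces the family $\{\mu_{h\Gamma}\}$ to be $G$-equivariant: $g_*\mu_{h\Gamma} = \mu_{gh\Gamma}$ for $m_{G/\Gamma}$-a.e.\ $h\Gamma$ and all $g$ (up to the usual almost-everywhere caveats, which one upgrades to an everywhere statement by choosing a Borel version and using continuity of the $G$-action, or by a Fubini argument). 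Restricting to the identity coset and to $g\in\Gamma$, which stabilizes $e\Gamma$, shows the fiber measure $\mu_{e\Gamma}$, transported to $M$ via the canonical identification $\pi^{-1}(e\Gamma)\cong M$, is $\Gamma$-invariant under $\alpha$. Setting $\nu$ to be this measure completes the correspondence, and one verifies the two constructions are mutually inverse.

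The only genuine subtlety — and the step I would be most careful with — is the measure-theoretic bookkeeping in the reverse direction: passing from the a.e.-defined equivariance of the disintegration to an honest $\Gamma$-invariance statement for a single fiber measure, and handling the noncompact case where $M^\alpha$ is not compact (so one works with the adapted Riemannian metric and fundamental domain from \cite{AWBFRHZW-latticemeasure} but the measure-theoretic argument is unchanged, since probability measures are still inner regular on these standard Borel spaces). Everything else is routine: the cocycle identities defining the $G$-action on $(G\times M)/\Gamma$ and the uniqueness of Haar measure on $G/\Gamma$.
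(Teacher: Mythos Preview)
Your proposal is correct and follows essentially the same approach as the paper: for the forward direction you push forward $m_{G/\Gamma}\times\nu$ (the paper equivalently descends $m_G\times\hat\mu$ from $G\times M$), and for the reverse direction you disintegrate over $\pi$ and read off the fiber measure at $e\Gamma$, exactly as the paper does. Your write-up is in fact more careful than the paper's one-paragraph sketch, in particular in flagging the a.e.-to-everywhere issue for the equivariant disintegration, which the paper leaves implicit.
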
 %\note{Brief justification given}
	Indeed, if $\mu$ is a $G$-invariant measure on $M^\alpha$ then conditioning on the fiber of $M^\alpha$ over $e\Gamma\in G/\Gamma$ gives an $\alpha$-invariant measure on $M$ viewed as the fiber of $M^\alpha$ over $e\Gamma$.  On the other hand, if $\hat \mu$ is an $\alpha$-invariant measure on $M$ then, writing $m_G$ for the Haar measure on $G$, we have $m_G\times \hat \mu$ is a (right) $\Gamma$-invariant and (left) $G$-invariant measure on $G\times M$ and hence descends to a (finite) $G$-invariant measure on $M^\alpha$.

	%{\blue [Why this truck is it true]}
	\begin{remark}
		For more general semisimple Lie groups $G$ we have the following theorem which follows from the proof of Theorem \ref{thm:invmsr}.  In this setting, we take $A$ to be a maximal split Cartan subgroup; that is, $A$ is a maximal, connected, abelian subgroup of  $\R$-diagonalizable elements.   
		
		\begin{customthm}{\ref{thm:invmsr}$'$}\label{thm:potatosalad}\label{thm:invmsr'}
			Let $G$ be a simple Lie group and let  $\Gamma\subset G$ be any lattice.  Let  $\alpha \colon \Gamma\to \diff^{1+\beta}(M)$ be an action and let $M^\alpha$ denote the suspension space with induced $G$-action.  
			Let $\mu$ be an ergodic, $A$-invariant Borel probability measure on $M^\alpha$ whose projection to $G/\Gamma$ is the Haar measure.  
			
			Then, if $\dim(M)<\rank(G)$ then the measure $\mu$ is $G$-invariant.  Moreover, if $\alpha$ preserves a volume form $\vol$ and if   $\dim(M)\le \rank(G)$ then the measure $\mu$ is $G$-invariant.
		\end{customthm}
	\end{remark}
	
	\begin{remark}
		In fact, Theorem \ref{thm:invmsr} and \ref{thm:potatosalad} hold for actions by $C^1$-diffeo\-morphisms.  This can be shown by the {\it invariance principle}\index{invariance principle!Avila--Viana} of Avila and Viana \cite{MR2651382} (see  the discussion in \cref{rem:invariance_principle}).  We present below a proof that uses (mildly) the $C^{1+\beta}$ hypotheses as this motivates the proof of Proposition \ref{prop:nonresinv} (which allows us to establish an analogue of Theorem  \ref{thm:potatosalad} for manifolds of higher critical dimension) in the next section which requires the higher regularity of the action.  
	\end{remark}
	
	We proceed with the proof of  \cref{thm:invmsr} which is adapted  from  \cite{Cantat}.  This argument is somewhat simpler than the argument in \cites{1608.04995,AWBFRHZW-latticemeasure} (though is special for the case $\Sl(n,\R)$).  The main simplification was observed by  S. Hurtado.  This argument is exploited in \cite{BDZ} to obtain results for actions by $C^1$ diffeomorphisms.  
	\begin{proof}[Proof of Theorem \ref{thm:invmsr}]
		Let $\mu$ be an ergodic, $A$-invariant Borel probability measure on $M^\alpha$ whose projection to $\Sl(n,\R)/\Gamma$ is the Haar measure.  
		
		Recall that $A\simeq \R^{n-1}$.  In the non-volume-preserving case, since $\dim(M)\le n-2$ there are at most $n-2$ fiberwise Lyapunov exponents.  In particular, the intersection of the kernels of the fiberwise Lyapunov exponents is a subspace of $A$ whose dimension is at least $1$.  
		In the volume-preserving case, there are at most $(n-1)$ fiberwise Lyapunov exponents; however, these satisfy the linear relation  they necessarily sum to zero since the cocycle is cohomologous to an $\Sl^{\pm}(n-1, \R)$-valued cocycle (recall \cref{cor:stupidcor}) whence for every $g\in G$, $$0 = \int \log |\det (\restrict{Dg} F)| \ d \mu= \sum \lambda_{i,\mu}^F.$$  Thus, if $\dim(M)\le n-1$ and if $\alpha$ is a volume-preserving action, then the intersection  of the kernels of all fiberwise Lyapunov exponents again has dimension at least 1.  In particular, in either case we may find a %\note{In Part 1, elements of $\R^2$ were given  bold  notation to emphasize they are vectors.  Here they are not.} %.  If referee objects, please withdraw from consideration for publication}  
		nonzero $s_0\in A$ such that \begin{equation} \label{eq:jazz} \lambda^F_{i,\mu}(s_0) = 0 \text{ for every fiberwise Lyapunov exponent $\lambda^F_{i,\mu}$.}\end{equation} 
		
		%then if $s_0\in A$ is such that $\lambda^F_{i,\mu}(s_0) = 0$ for some choice of $(n-2)$ fiberwise Lyapunov exponents $\lambda^F_{i,\mu}$, then necessarily $\lambda^F_{i,\mu}(s_0) = 0$ for all fiberwise Lyapunov exponents.  
		%
		%
		%
		%In the volume preserving case, there are at most $n-1$ fiberwise Lyapunov exponents; however, these satisfy the linear relation that their sum is zero and thus the intersections of their kernels again has dimension at least 1.    I

		Recall that  entropy can only decrease under a factor.  Thus 
		$$h_\mu(s_0 ) \ge   h_{\mathrm{Haar}}(s_0) $$
		where $h_{\mathrm{Haar}}(s_0)$ denotes the entropy of translating by $s_0$ on $\Sl(n,\R)/\Gamma$ with respect to the Haar measure.  
		
		Recall we  interpret the roots $\beta$ of $\Sl(n,\R)$ as the (non-zero) Lyapunov exponents for the $A$-action on $\Sl(n,\R)/\Gamma$ with respect to any $A$-invariant measure and hence also as Lyapunov exponents for the $A$-action on the fiber bundle $M^\alpha$ transverse to the fibers and tangential to the local $G$-orbits.     See discussion in \cref{sec:diag}. Let $N_+ \subset G$ be the subgroup generated by all root subgroups $U^\beta$ with  $\beta (s_0)>0$. 
		%Then the $N_+$-orbit of any $x\in M^\alpha$ is the unstable manifold through $x$ for the action of $x_0$.  
		Similarly, let $N_- \subset G$ be the subgroup generated by all root subgroups $U^\beta$ with $\beta (s_0)<0$.  The orbits of $N_+$ and $N_-$  in $\Sl(n,\R)/\Gamma$ correspond, respectively, to the unstable and stable manifolds for the action of translation by $s_0$ on $G/\Gamma$.  Since $s_0$ is in the kernel of  all fiberwise Lyapunov exponents,  each tangent space $F(x)$ to the fibers of $M^\alpha$ is contained in the neutral Lyapunov subspace $E^c_{s_0}(x)$ for the action of $s_0$ on $(M^\alpha,\mu)$ for almost every $x$. Thus, the orbits of $N_+$ and $N_-$  in $M^\alpha$ also correspond, respectively, to the unstable and stable manifolds for the action of    $s_0$ on $M^\alpha$.
		
		We have that 
		$$h_{\mathrm{Haar}}(s_0) = \sum_{\beta(s_0)>0}\beta(s_0) = h_{\mathrm{Haar}}(s_0\inv) = \sum_{\beta(s_0)<0} (- \beta(s_0)).$$
		In particular,  from the choice of $s_0$, the Margulis--Ruelle inequality (Theorem \ref{entropyfacts}\ref{EFF1}), and the Ledrappier--Young Theorem \eqref{eq:LY} (page \pageref{eq:LY}) %, and Abramov--Rokhlin theorem \eqref{eq:AR} we have 
		$$\sum_{\beta(s_0)>0}\beta(s_0)= h_{\mathrm{Haar}}(s_0)\le h_\mu(s_0 ) = h_\mu(s_0\mid N_+) \le \sum_{\beta(s_0)>0}\beta(s_0).$$
		It follows that $$ h_\mu(s_0\mid N_+) = \sum_{\beta(s_0)>0}\beta(s_0).$$
		By Theorem \ref{thm:led'}, it follows that $\mu$ is $N_+$-invariant.  Similarly we have that $\mu$ is $N_-$-invariant.  
		
		In particular, $\mu$ is invariant by the subgroups $N_-$, $N_+$, and $A$ of $G$.   To end the proof, we  claim the following standard fact: the subgroups  $N_-$ and  $N_+$  generate all of $\Sl(n,\R)$.  It follows from the claim that the measure $\mu$ is $G$-invariant.  
		
		To prove the claim, it is best to work with Lie algebras.  Let $\lien_+,$  $\lien_-$, and $\liea$ be the Lie algebras of $N_-$ and $N_+$, and $A$, respectively.  Let $\lieh$ be the Lie algebra generated by $\lien_+$ and $\lien_-$.  For any $X\in \liea$ we have $$[X, \lieh] = \lieh$$ since $\liea$ normalizes each root space $\lieg^\beta$.  
		For roots $\beta, \hat \beta$ with $\beta(s_0) \neq0$ and  $\hat \beta(s_0) \neq0$ we have $$[\lieg^{\hat \beta}, \lieg^\beta] \subset \lieh$$
		by definition.  
		For roots $\beta, \hat \beta$ with $\beta(s_0) >0$ and $\hat \beta(s_0)= 0$ we have %$, given any  $X\in \lieg^{\hat \beta}$ we have 
		$$[\lieg^{\hat \beta}, \lieg^\beta] = \lieg^{\beta+ \hat \beta} \subset \lieh$$
		since either  $\lieg^{\beta+ \hat \beta}=0$ (if $\beta+ \hat \beta$ is not a root) or  $(\hat \beta+ \beta)(s_0) = \beta(s_0)>0$ (if $\beta+ \hat \beta$ is  a root).  
		Similarly,  for roots $\beta, \hat \beta$ with $\beta(s_0) <0$  and $\hat \beta(s_0) =0$ we have 
		$$[\lieg^{\hat \beta}, \lieg^\beta]  \subset \lieh.$$
		It follows that $\lieh$ is an ideal of the Lie algebra $\lieg = \mathfrak{sl}(n,\R)$ of $\Sl(n,\R)$.  But $\mathfrak{sl}(n,\R)$ is simple (i.e.\ has no nontrivial ideals).  Since $\lieh\neq \{0\}$, it follows that $\lieh= \mathfrak{sl}(n,\R)$ and the claim follows.  
		%There are a few ways to see the claim.  For instance,  the subgroup of $G$ generated by $N_-$, $N_+$, and $A$ contains all root-groups $U^{\pm \beta}$ for all but at most $(n-2)$-pairs of roots $\pm \beta$.  From the structure theory of $\Sl(n,\R)$, it follows that any such subgroup is all of $G$.  Thus $\mu$ is $G$-invariant.  
	\end{proof}

	\starsubsection{Advanced invariance principle: nonresonance implies invariance}  
	\label{sec:advancedIP}\index{invariance principle!nonresonant roots}
	Theorem \ref{thm:invmsr} gives the optimal dimension count in Theorem \ref{thm:invmeas} for actions by lattices $\Gamma$ in $\Sl(n,\R)$.  However, for lattices in other simple Lie groups, the critical dimension in Theorem \ref{thm:potatosalad} falls below the critical dimension expected for the analogous versions of   Theorem \ref{thm:invmeas} and Theorem \ref{slnr}.
	For instance, the group $G= \Sp(2n,\R)$, the group of $(2n)\times (2n)$ symplectic matrices over $\R$, has rank $n$.  Theorem \ref{thm:potatosalad}  implies that for any lattice $\Gamma\subset G$ and any compact manifold $M$ with $\dim(M)\le n-1$, any action $\alpha\colon \Gamma\to \Diff^{1+\beta}(M)$ preserves a Borel probability measure.  However, the main result of \cite{AWBFRHZW-latticemeasure} shows for  a lattice $\Gamma $ in $\Sp(2n,\R)$ that any action $\alpha\colon \Gamma\to \Diff^2(M)$ preserves a Borel probability measure when $\dim(M)\le 2n-2$.  To obtain the optimal critical dimensions, it is necessary to use a more advanced invariance principle  developed in \cite{AWBFRHZW-latticemeasure} and based on 
	key ideas from \cite{AWB-GLY-P3}.

	Recall that    we interpret  roots  $\beta^{i,j}\colon A\to \R$ as the nonzero Lyapunov exponents for the action of $A\simeq \R^{n-1}$ on $\Sl(n,\R)/\Gamma$   (for any $A$-invariant measure on $G/\Gamma$.)  Each root $\beta^{i,j}$ has a corresponding root subgroup $U^{i,j}\subset \Sl(n,\R)$.  Given an ergodic,  $A$-invariant measure $\mu$ on $M^\alpha$ we also have fiberwise Lyapunov exponents $\lambda^F_{1,\mu}, \lambda^F_{2,\mu},\dots,\lambda^F_{p,\mu}\colon A\to \R$  for the restriction of the derivative of the  $A$-action on $(M^\alpha, \mu)$ to the fiberwise tangent bundle $F\subset TM^\alpha$ in $M^\alpha$.  
	Then, the roots $\beta^{i,j}$ and fiberwise Lyapunov exponents $\lambda^F_{i,\mu}$ are linear functions on the common vector space $A\simeq \R^{n-1}$.
	We   say that a root  $\beta^{i,j}$ is \emph{resonant}\index{Lie group!roots!resonant} with a fiberwise Lyapunov exponent $\lambda^F_{i,\mu}$ of $\mu$ if they are positively proportional; that is   $\beta^{i,j}$ is  {resonant}  with $\lambda^F_{i,\mu}$  if there is a $c>0$  with $$\beta^{i,j}= c\lambda^F_{i,\mu}.$$
	Otherwise we say that $\beta^{i,j}$ is \emph{not resonant with} $\lambda^F_{i,\mu}$.
	We say that a root $\beta^{i,j}$ of $G$ is \emph{nonresonant}  \index{Lie group!roots!nonresonant}if it is not resonant with any fiberwise Lyapunov exponent $\lambda^F_{i,\mu}$ for the ergodic, $A$-invariant measure $\mu$.  
	
	The following is the key proposition from \cite{AWBFRHZW-latticemeasure}. 
	\begin{proposition}[{\cite[Proposition 5.1]{AWBFRHZW-latticemeasure}}]\label{prop:nonresinv}
		Suppose  $\mu$ is an  ergodic,  $A$-invariant measure on $M^\alpha$  projecting to the Haar measure on $\Sl(n,\R)/\Gamma$ under the projection $\pi\colon M^\alpha\to \Sl(n,\R)/\Gamma$.  
		
		Then, for every nonresonant root $\beta^{i,j}$, the measure $\mu$ is $U^{i,j}$-invariant.  
	\end{proposition}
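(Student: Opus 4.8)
The plan is to mimic the proof of Theorem \ref{thm:invmsr}, but instead of selecting a single $s_0$ killing all fiberwise exponents, to work with a cleverly chosen one-parameter subgroup adapted to the fixed nonresonant root $\beta^{i,j}$. Fix the nonresonant root $\beta = \beta^{i,j}$ and its root subgroup $U = U^{i,j}$. The higher-rank trick to invoke is the elementary geometric fact: since $\beta$ is not positively proportional to any of the finitely many fiberwise Lyapunov exponents $\lambda^F_{k,\mu}$, and since $A \simeq \R^{n-1}$ with $n-1 \ge 2$, I can choose $s_0 \in A$ with $\beta(s_0) > 0$ but $\lambda^F_{k,\mu}(s_0) \le 0$ for every fiberwise exponent $\lambda^F_{k,\mu}$ that is \emph{not} negatively proportional to $\beta$ — more precisely, I want $s_0$ lying in (or on the boundary of) the cone where $\beta > 0$ and every fiberwise exponent resonant-in-sign with $-\beta$ is controlled. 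Actually the cleaner choice, following \cite{AWBFRHZW-latticemeasure}, is to pick $s_0$ with $\beta(s_0)>0$ and such that the only coarse Lyapunov exponents of the $A$-action on $(M^\alpha,\mu)$ that are positive on $s_0$ and whose manifolds meet the $U$-orbit direction are exactly the ones we can account for. The point is to arrange that the unstable manifold $W^u_{s_0}(x)$ for translation by $s_0$ on $M^\alpha$, which is generated by root subgroups $U^\beta$ with $\beta(s_0)>0$ together with possibly some fiberwise directions, decomposes as a product of homogeneous (root-group) directions and fiberwise directions, with $U = U^{i,j}$ appearing as one homogeneous factor that is \emph{not} entangled with the fiber by any resonance.

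The core of the argument is then an entropy computation. Using the product structure of entropy for $\Z^d$- (here $\R^d$-) actions, Theorem \ref{thm:entprod}, I would split $h_\mu(s_0 \mid W^u_{s_0})$ into a sum of coarse-Lyapunov contributions: a ``base'' part coming from the root subgroups, which by projecting to $\Sl(n,\R)/\Gamma$ and using $h_\mu(s_0) \ge h_{\mathrm{Haar}}(s_0)$ together with the Margulis--Ruelle inequality (Theorem \ref{entropyfacts}\ref{EFF1}) is pinned to equal $\sum_{\beta'(s_0)>0}\beta'(s_0)$, forcing the fiberwise contributions along resonant directions to be maximal too; and crucially, because $\beta$ is nonresonant, the $U$-direction contributes to the base entropy exactly $\beta(s_0)$ with no defect absorbed by the fiber. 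Invoking Theorem \ref{thm:led'} (the homogeneous version of Ledrappier's theorem, with $G$ the ambient Lie group acting on $M^\alpha$, $H = U^{i,j}$, and $f$ = translation by $s_0$, which normalizes $U^{i,j}$ by \eqref{kokololo}), equality in the entropy bound $h_\mu(s_0 \mid U^{i,j}) = \sum_{\lambda^i>0}\lambda^i m^{i,U}$ — here the right side is just $\beta(s_0)$ since $U^{i,j}$ is one-dimensional and lies in the unstable direction — is equivalent to $\mu$ being $U^{i,j}$-invariant. So the whole proof reduces to extracting from the global entropy identity the \emph{local} equality along the single nonresonant factor $U^{i,j}$; this is where the product structure of entropy is indispensable, since it lets me isolate one coarse-Lyapunov slot.

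The main obstacle, and the reason the naive argument does not immediately work, is bookkeeping the interaction between fiberwise exponents and roots that are resonant with them, while still extracting a clean equality in the $U^{i,j}$-slot. If some fiberwise exponent $\lambda^F_{k,\mu}$ is positively proportional to another root $\beta'$ (with $\beta'(s_0)>0$), then the unstable manifold $W^u_{s_0}$ has a coarse-Lyapunov leaf that mixes the homogeneous $U^{\beta'}$-direction with a fiberwise direction, and the entropy contribution of that leaf can exceed $\beta'(s_0)$ or be split between tangential and transverse dimensions in a way governed by Theorem \ref{thm:LYII}; controlling this requires the transverse-dimension analysis of \cite{AWB-GLY-P2, AWB-GLY-P3}. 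The way \cite{AWBFRHZW-latticemeasure} handles this is precisely to choose $s_0$ generic in a suitable half-cone so that no two coarse exponents of the combined system degenerate, apply the product-structure formula to get $h_\mu(s_0) = \sum_\chi h_\mu(s_0 \mid \chi)$, compare with the Haar entropy from below, and then argue slot-by-slot that every homogeneous slot not resonant with a fiber exponent must individually achieve its maximum $\beta'(s_0)$. Once that slot-by-slot maximality is established for the $U^{i,j}$-slot, $U^{i,j}$-invariance of $\mu$ follows from Theorem \ref{thm:led'}. I would present the choice of $s_0$ and the slot decomposition carefully, then quote Theorems \ref{thm:entprod}, \ref{thm:LYII}, and \ref{thm:led'} to close the argument; the technical heart is the combinatorial/convexity lemma that a sum of terms each bounded above by a target, summing to the target, forces termwise equality, applied in the genuinely higher-rank setting where the linear functionals involved are in general position.
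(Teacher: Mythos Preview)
Your overall strategy---product structure of entropy, comparison with the Haar entropy in the base, and a pigeonhole argument forcing slot-by-slot equality---is exactly the shape of the paper's proof. But there is a genuine gap in how you close the pigeonhole, and the complication you introduce with a carefully chosen $s_0$ is a red herring.

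The paper's argument works for \emph{any} $a\in A$ with $\beta^{i,j}(a)>0$; no genericity or sign conditions on the other exponents are needed. The key technical input you are missing is the \emph{coarse-Lyapunov Abramov--Rokhlin formula} (Theorem~\ref{thm:ARsuspension}): for each coarse exponent $\chi^{i,j}$ containing a root $\beta^{i,j}$,
\[
h_\mu(a\mid\chi^{i,j}) \;=\; h_{\mathrm{Haar}}(a\mid\beta^{i,j}) \;+\; h_\mu(a\mid\chi^{i,j,F}).
\]
Your per-slot upper bound appears to be Margulis--Ruelle, i.e.\ $h_\mu(a\mid\chi)\le\sum_{\lambda\in\chi}m_\lambda\lambda(a)$. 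Summed over all $\chi$, those bounds add up to $h_{\mathrm{Haar}}(a)+\sum_{\lambda^F(a)>0}m_{\lambda^F}\lambda^F(a)$, which \emph{strictly exceeds} $h_\mu(a)$ unless $\mu$ happens to be SRB along the fibers---so the pigeonhole never closes and you cannot conclude termwise equality. (Relatedly, your claim that the argument ``forc[es] the fiberwise contributions along resonant directions to be maximal too'' is simply false: nothing forces $h_\mu(a\mid\chi^{i,j,F})$ to hit its Margulis--Ruelle ceiling.) The correct per-slot \emph{upper} bound is instead the Abramov--Rokhlin-type inequality $h_\mu(a\mid\chi^{i,j})\le h_{\mathrm{Haar}}(a\mid\beta^{i,j})+h_\mu(a\mid\chi^{i,j,F})$; summing these, together with the classical Abramov--Rokhlin identity $h_\mu(a)=h_{\mathrm{Haar}}(a)+h_\mu(a\mid\zeta)$ and the fiberwise product structure $h_\mu(a\mid\zeta)=\sum_{\chi^F(a)>0}h_\mu(a\mid\chi^F)$, yields $h_\mu(a)\le h_\mu(a)$ and hence equality in every slot. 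For a nonresonant $\beta^{i,j}$ the fiberwise term vanishes, so $h_\mu(a\mid U^{i,j})=h_\mu(a\mid\chi^{i,j})=\beta^{i,j}(a)$, and Theorem~\ref{thm:led'} finishes.
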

	\begin{remark}
		Since each  root $\beta^{i,j}$ is a nonzero functional on $A$, if a fiberwise exponent $\lambda^F_{i,\mu}$ is zero, then  {every} root $\beta^{i,j}$ is not resonant with  $\lambda^F_{i,\mu}$.  Since no roots of $\Sl(n,\R)$ are positively proportional, if there are $p$ fiberwise Lyapunov exponents $\{\lambda^F_{i,\mu}, 1\le i\le p\}$ or, more generally, $p'\le p$ coarse fiberwise Lyapunov exponents $\{\chi^F_{i,\mu}, 1\le i\le p'\}$   then Proposition \ref{prop:nonresinv} implies that $\mu$ is  invariant under all-but-$p'$ root subgroups $U^{i,j}$.  Moreover,    if every fiberwise Lyapunov exponent $\lambda^F_{i,\mu}$ is in general position with respect to every root  $\beta^{i,j}$ then from \cref{prop:nonresinv}, $\mu$ is automatically $G$-invariant.  
		
		%Note that there is {\it at most one} root $\beta^{i,j}$ that is resonant with $\lambda^F$ since no pair of the  6 roots $\beta^{i,j}$ are positively proportional.  
	\end{remark}
	
	\starsubsection{Coarse-Lyapunov Abramov--Rokhlin Theorem and Proof of  Proposition \ref{prop:nonresinv}}
	The proof of Proposition \ref{prop:nonresinv} follows from a version of the Abramov--Rokhlin theorem  (see equation \eqref{eq:AR}, page \pageref{AR}) for entropies subordinated to coarse-Lyapunov foliations.  We outline these ideas  and the proof of  Proposition \ref{prop:nonresinv} in this section.

	Each root $\beta^{i,j}$ of $\Sl(n,\R)$  is  a Lyapunov exponent for the $A$-action on $(M^\alpha, \mu)$ (corresponding to vectors tangent to $U^{i,j}$ orbits in $M^\alpha$.)  Let $\chi^{i,j}$ denote the {coarse Lyapunov exponent} for the $A$-action on $(M^\alpha, \mu)$ containing $\beta^{i,j}$; that is, $\chi^{i,j}$  is the equivalence class of all Lyapunov exponents for the $A$-action on $(M^\alpha, \mu)$ that are positively proportional to $\beta^{i,j}$.
	Let  $\{\lambda^F_{i,\mu}, 1\le i\le p\}$ denote the collection of  fiberwise Lyapunov exponents.  
	We have that   $$ \text{$\chi^{i,j} = \{ \beta^{i,j}\}$ if $\beta^{i,j}$  is not resonant with any  $\lambda^F_{i,\mu}$}.$$ 
	Otherwise, $\chi^{i,j} $ contains   $ \beta^{i,j}$ and all fiberwise Lyapunov exponents $\lambda^F_{i,\mu} \colon A\to \R$ that are positively proportional to $\beta^{i,j}$.  
	
	For $\mu$-\ae $x\in M^\alpha$ there is a {coarse Lyapunov manifold} $W^{\chi^{i,j}}(x)$ through $x$ (see \cref{sss:clm}).  If $\chi^{i,j} = \{ \beta^{i,j}\}$ then for $x\in M^\alpha$, $W^{\chi^{i,j}}(x)$ is simply the $U^{i,j}$-orbit of $x$.  
	Otherwise,   $W^{\chi^{i,j}}(x)$ is a higher-dimensional manifold which intersects the fibers of $M^\alpha$ nontrivially.  
	The partition of $(M^\alpha, \mu)$ into $W^{\chi^{i,j}}$-manifolds forms an $A$-invariant  partition $\Fol^{\chi^{i,j}}$ with $C^{1+\beta}$-leaves.  
	
	If $\beta^{i,j}$ is resonant with some fiberwise Lyapunov exponent, let  $\chi^{i,j, F}$ denote the corresponding  {coarse fiberwise Lyapunov exponent}; that is, $\chi^{i,j, F}$ is the equivalence class of  fiberwise Lyapunov exponents that are positively proportional to $\beta^{i,j}$.  If $\beta^{i,j}$ is not resonant with any fiberwise Lyapunov exponent, let $\chi^{i,j, F}$  denote the zero functional.  If  $\chi^{i,j, F}$  is nonzero,  for $\mu$-\ae $x\in M^\alpha$ there is  a  \emph{coarse fiberwise Lyapunov manifold}\index{coarse Lyapunov!manifold!fiberwise} $W^{\chi^{i,j,F}}(x)$ through $x$.
	(To construct fiberwise coarse Lyapunov manifolds $W^{\chi^{i,j,F}}(x)$, recall that the fibers of $M^\alpha$ are permuted by the dynamics of $A$; all constructions in \cref{sec:koko} may be carried out fiberwise in the setting of a skew-product of diffeomorphisms over a measurable base  if the $C^{1+\beta}$ norms of the fibers are uniformly bounded.) 
	If  $\chi^{i,j, F}$  is zero, we simply define $W^{\chi^{i,j,F}}(x) = \{x\}$.  
	We have that $W^{\chi^{i,j,F}}(x)$ is contained in the fiber through $x$ and that $W^{\chi^{i,j}}(x)$  is the $U^{i,j}$-orbit of  $W^{\chi^{i,j,F}}(x)$.

	For each $\chi^{i,j}$ and  $a\in A$ with $\beta^{i,j}(a)>0$  we    define a   {conditional entropy} of $a$ conditioned on  $\chi^{i,j}$-manifolds, denoted by $h_\mu(a\mid \chi^{i,j} )$   as in Section \ref{sec:entprod}.
	Similarly,   we  can define a   {conditional entropy} of $a$ conditioned on the fiberwise coarse Lyapunov manifolds associated to  $\chi^{i,j,F}$, denoted by $h_\mu(a\mid \chi^{i,j,F})$.  %From \cite[Theorem 13.6]{AWB-GLY-P3} 
	In this setting, we have the following ``coarse-Lyapunov Abramov--Rokhlin formula.''  %which follows a specific case of Proposition \ref{prop:coarseAR}. 
	\begin{theorem}\label{thm:ARsuspension}\index{theorem!Abramov--Rokhlin}
		Let $\mu$ be an ergodic, $A$-invariant measure on $M^\alpha$ that projects to the Haar measure on $\Sl(n,\R)/\Gamma$.  For any $a\in A$ with $\beta^{i,j}(a)>0$,   
		\begin{equation}\label{eq:CLAR}h_\mu(a\mid \chi^{i,j})=  h_{\mathrm{Haar}}(a\mid \beta^{i,j}) +  h_{ {\mu}}(a\mid \chi^{i,j,F}).\end{equation}
		%\begin{cases} h_{\mathrm{Haar}}(a\mid \beta^{i,j})+ h_\mu(a\mid \lambda^F)&  \text{ if $\lambda^F\in  \chi^{i,j}$}\\
		%h_{\mathrm{Haar}}(a\mid \beta^{i,j})&  \text{ if $\lambda^F\notin  \chi^{i,j}$} 
		%\end{cases} $$
	\end{theorem}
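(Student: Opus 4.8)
The plan is to prove a foliated analogue of the Abramov--Rokhlin addition formula \eqref{eq:AR}, mirroring the classical argument but carrying the coarse Lyapunov foliations along with the factor map $\pi\colon M^\alpha\to \Sl(n,\R)/\Gamma$. The geometric input that makes this possible is a compatibility between $\pi$ and the three foliations involved. Fix $a\in A$ with $\beta^{i,j}(a)>0$ and let $f$ denote translation by $a$ on both $M^\alpha$ and $\Sl(n,\R)/\Gamma$, so $\pi\circ f=f\circ\pi$; since $\beta^{i,j}(a)>0$ forces every exponent in $\chi^{i,j}$ and in $\chi^{i,j,F}$ to be positive at $a$, the foliations $\Fol^{\beta^{i,j}}$, $\Fol^{\chi^{i,j,F}}$ and $\Fol^{\chi^{i,j}}$ are all $f$-expanding. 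Because $A$ normalizes $U^{i,j}$ and the $G$-action permutes the fibers of $\pi$, the map $\pi$ sends each leaf $W^{\chi^{i,j}}(x)$ onto the $U^{i,j}$-orbit $W^{\beta^{i,j}}(\pi(x))$, and the fibers of this restricted map are exactly the fiberwise leaves $W^{\chi^{i,j,F}}(x)$; moreover the $U^{i,j}$-action on $M^\alpha$ furnishes a section, so $(t,z)\mapsto u^{i,j}(t)\cdot z$ identifies $W^{\chi^{i,j}}(x)$ locally with $W^{\beta^{i,j}}(\pi(x))\times W^{\chi^{i,j,F}}(x)$. By \eqref{kokololo} the map $f$ is \emph{affine} in these coordinates: it preserves the vertical $W^{\chi^{i,j,F}}$-leaves and rescales the horizontal $U^{i,j}$-parameter by $e^{\beta^{i,j}(a)}$. (When $\chi^{i,j,F}$ is the zero functional the second factor is a point, $h_\mu(a\mid\chi^{i,j,F})=0$, and the claim reduces to $h_\mu(a\mid\chi^{i,j})=h_{\mathrm{Haar}}(a\mid\beta^{i,j})$; the argument below covers this degenerate case.)

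Next I would build an adapted family of increasing measurable partitions. Start with an increasing partition $\eta$ of $(\Sl(n,\R)/\Gamma,\mathrm{Haar})$ subordinate to $\Fol^{\beta^{i,j}}$, from the Sinai/Ledrappier--Strelcyn construction (cf.\ \cite{MR693976}). Choose a fiberwise increasing measurable partition $\zeta$ subordinate to $\Fol^{\chi^{i,j,F}}$; such a partition exists because the fibers of $M^\alpha$ are permuted by $A$ with uniformly bounded $C^{1+\beta}$-geometry, so the construction of \cite{MR693976} applies fiberwise, as in \cite{AWB-GLY-P3}. Finally, using the product charts above, assemble a partition $\xi$ of $(M^\alpha,\mu)$ subordinate to $\Fol^{\chi^{i,j}}$ whose atoms are ``boxes'' $\{u^{i,j}(t)\cdot z : u^{i,j}(t)\cdot\pi(x)\in\eta(\pi(x)),\ z\in\zeta(x)\}$; because $f$ is affine in the product coordinates, $\xi$ can be taken increasing for $f$, so that $h_\mu(a\mid\chi^{i,j})=h_\mu(f,\xi)$, $h_{\mathrm{Haar}}(a\mid\beta^{i,j})=h_{\mathrm{Haar}}(f,\eta)$, and $h_\mu(a\mid\chi^{i,j,F})=h_\mu(f,\zeta)$. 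One also records the induced disintegration: since $\pi_*\mu=\mathrm{Haar}$, the conditional measures of $\mu$ along $\xi$-atoms form a skew product built from the conditional measures of Haar along $\eta$-atoms (in the $U^{i,j}$-direction) and those of $\mu$ along $\zeta$-atoms (in the fiber direction).

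Then I would carry out the entropy computation. Write $\tilde\eta:=\pi^{-1}\eta$, which is coarser than $\xi$, so $\tilde\eta^+=\pi^{-1}(\eta^+)\prec\xi^+$. From $h_\mu(f,\xi)=H_\mu(f\inv\xi^+\mid\xi^+)=H_\mu(f\inv\xi\mid\xi^+)$ and the chain rule for conditional entropy, $H_\mu(f\inv\xi\mid\xi^+)=H_\mu(f\inv\tilde\eta\mid\xi^+)+H_\mu(f\inv\xi\mid f\inv\tilde\eta\vee\xi^+)$. For the first term, $H_\mu(f\inv\tilde\eta\mid\xi^+)\le H_\mu(f\inv\tilde\eta\mid\tilde\eta^+)=H_{\mathrm{Haar}}(f\inv\eta\mid\eta^+)=h_{\mathrm{Haar}}(f,\eta)$, and equality holds because, by the product structure of the conditional measures, the extra information in $\xi^+$ beyond $\tilde\eta^+=\pi^{-1}(\eta^+)$ is purely vertical and hence conditionally independent of the pulled-back partition $f\inv\tilde\eta$ given $\tilde\eta^+$. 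For the second term, the same product structure identifies it, fiber by fiber, with the fiberwise quantity $h_\mu(f,\zeta)=h_\mu(a\mid\chi^{i,j,F})$. Adding the two contributions yields \eqref{eq:CLAR}.

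The step I expect to be the main obstacle is the construction of $\xi$ together with the verification that its conditional measures genuinely disintegrate as the claimed skew product compatible with $\pi$: the leaves $W^{\chi^{i,j}}(x)$ are only \emph{locally} products $W^{\beta^{i,j}}\times W^{\chi^{i,j,F}}$, so one must patch local product charts, control the measurability of the resulting conditional measures, and check the $f$-equivariance precisely enough to keep $\xi$ increasing. A further technical point, already flagged in the text, is the nonuniform case: when $\Gamma$ is not cocompact, $M^\alpha$ is noncompact and one must work with the adapted Riemannian metric of \cite{AWBFRHZW-latticemeasure} and check integrability of the relevant Jacobians and densities, so that all the conditional entropies above are finite and the chain-rule manipulations are legitimate.
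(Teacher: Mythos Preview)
Your chain-rule decomposition correctly gives the \emph{inequality}
\[
h_\mu(a\mid\chi^{i,j})\;\le\;h_{\mathrm{Haar}}(a\mid\beta^{i,j})+h_\mu(a\mid\chi^{i,j,F}),
\]
and this half coincides with the paper's first step. The gap is in your claimed equality for the term $H_\mu(f\inv\tilde\eta\mid\xi^+)$. The ``product structure of the conditional measures'' you invoke does not follow from $\pi_*\mu=\mathrm{Haar}$: that hypothesis only forces the conditionals $\mu_x^{\tilde\eta^+}$ (for the pulled-back partition $\tilde\eta^+=\pi^{-1}\eta^+$) to project to the Haar conditionals on $\eta^+$-atoms. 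Once you refine further to $\xi^+$, the conditional $\mu_x^{\xi^+}$ need not project to $\mathrm{Haar}_{\pi(x)}^{\eta^+}$ at all; the horizontal marginal of $\mu_x^{\xi^+}$ can depend nontrivially on the fiberwise coordinate. Concretely, the ``conditional independence of horizontal and vertical information given $\tilde\eta^+$'' you assert is equivalent to a form of $U^{i,j}$-invariance of the leafwise measures of $\mu$ along $W^{\chi^{i,j}}$-leaves, which is precisely what Proposition~\ref{prop:nonresinv} deduces \emph{from} this theorem and hence cannot be assumed here. So the direct leaf-by-leaf argument does not close.

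The paper avoids this by a global trick: it proves only the inequality \eqref{eq:moorepedophilesplease} directly (your chain-rule step), and then obtains equality by summing over \emph{all} coarse exponents $\chi$ with $\chi(a)>0$. Using the product structure of entropy (Theorem~\ref{thm:entprod}) on both $M^\alpha$ and fiberwise, together with the classical Abramov--Rokhlin formula \eqref{eq:AR} for the factor $\pi$, the sum of the right-hand sides equals $h_{\mathrm{Haar}}(a)+h_\mu(a\mid\zeta)=h_\mu(a)$, which is also the sum of the left-hand sides. Since each summand satisfies $\le$ and the totals agree, every inequality is an equality. This bypasses any need to understand the internal structure of $\mu_x^{\xi^+}$.
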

	Above, $$ h_{\mathrm{Haar}}(a\mid \beta^{i,j})$$ denotes the  conditional entropy of  translation by $a$ in $\Sl(n,\R)/\Gamma$ conditioned along $U^{i,j}$-orbits in $\Sl(n,\R)/\Gamma$.  % $h_\mu(a\mid \lambda^F)$ is the conditional entropy of  translation by $a$ in  $M^\alpha$ conditioned along the fiberwise Lyapunov manifolds corresponding to $\lambda^F$. 

	\begin{proof}[Proof of Theorem \ref{thm:ARsuspension}] 
		We first show the upper bound 
		\begin{equation}\label{eq:moorepedophilesplease}h_\mu(a\mid \chi^{i,j})\le  h_{\mathrm{Haar}}(a\mid \beta^{i,j}) +  h_{ {\mu}}(a\mid \chi^{i,j,F}).\end{equation}
		This is a standard estimate  in abstract ergodic theory whose proof  we include  for completeness.

		Fix $a\in A$ with $\beta^{i,j}(a)>0$.  Let $\hat \eta$ be an increasing measurable partition of $G/\Gamma$ subordinate to the partition into $U^{i,j}$-orbits.  Let $\pi\colon M^\alpha\to G/\Gamma$ be the natural projection and let $\eta= \pi\inv \hat \eta$.  Let $\xi \succ \eta$ be an increasing measurable partition of $(M^\alpha,\mu)$ subordinate to the partition into $W^{\chi^{i,j}}$-manifolds.  Let 
		$\zeta$ be the partition of $(M^\alpha,\mu)$ into the level sets of $\pi\colon M^\alpha\to G/\Gamma$; that is, $\zeta$ is the partition of $M^\alpha$ into fibers of the fibration $\pi\colon M^\alpha \to G/\Gamma$.  Let $\xi^F:= \xi \vee\zeta$ be the join of $\xi$ and $\zeta$.  
		The partitions $\hat\eta$, $\xi$, and $\xi^F$ satisfy
		\begin{enumerate}
			\item  $ h_\mu(a, \eta) = h_{\mathrm{Haar}}(a, \hat\eta) = h_{\mathrm{Haar}}(a\mid \beta^{i,j})$,     
			\item  $h_\mu(a, \xi)= h_\mu(a\mid \chi^{i,j}),$ and 
			\item    $h_\mu(a, \xi^F)= h_\mu(a\mid \chi^{i,j,F}).$
		\end{enumerate}

		We have the following    computation (see for example \cite[Lemma 6.1]{MR2729332}):
		\begin{align*}h_\mu(a \mid  \chi^{i,j}) &:= h_\mu(a, \xi )\\
			&=h_\mu(a, \eta \vee \xi) \\& \le  h_\mu(a,   \eta) + h_\mu\left (a,  \xi\vee \bigvee_{n\in \Z}a^n(\eta)\right) \\
			&= h_{\mathrm{Haar}}(a ,\hat\eta) + h_\mu(a,  \xi \vee \zeta ) \\
			& = h_{\mathrm{Haar}}(a\mid \beta^{i,j}) +  h_\mu(a\mid \chi^{i,j,F})\end{align*}
		and \eqref{eq:moorepedophilesplease} follows.  
		%Above, the last equality holds as any partitions $\hat \eta$  and $\zeta$ as above satisfy 
		
		On the other hand, summing over all roots $\beta$ with $\beta(a)>0$ we have from the classical Abramov--Rokhlin theorem \eqref{eq:AR}, the product structure of entropy in Theorem \ref{thm:entprod}, and an analogous version of Theorem \ref{thm:entprod} for the fiberwise entropy $h_\mu(a\mid \zeta)$ appearing in \eqref{eq:AR} that
		\begin{align*}h_\mu(a ) &=
			\sum_{\chi(a)>0}
			h_\mu(a \mid \chi)  \\
			&=  \sum_{\beta^{i,j}(a)>0} 
			h_\mu(a \mid \chi^{i,j})
			+ \sum_{\substack{\text{$\chi^F$ nonres.} \\ \chi^F(a)>0} } h_\mu(a\mid  \chi^F)
			\\
			&\le \sum_{\beta^{i,j}(a)>0}\left( h_{\mathrm{Haar}}(a\mid  \beta^{i,j}) +h_\mu(a\mid \chi^{i,j,F})\right) +   \sum_{\substack{\text{$\chi^F$ nonres.} \\ \chi^F(a)>0} } h_\mu(a\mid  \chi^F)\\
			&= \sum_{\beta^{i,j}(a)>0} h_{\mathrm{Haar}}(a\mid  \beta^{i,j})   + \sum_{ \chi^F(a)>0 } h_\mu(  a\mid \chi^F)\\
			& =  h_{\mathrm{Haar}}(a) + h_\mu(a\mid \zeta)\\
			&= h_\mu(a).
		\end{align*}
		In the second and third lines, the second sum is over all fiberwise coarse Lyapunov exponents that are not resonant with any root $\beta$ of $G$. 
		Since entropies are non-negative quantities, it follows that 
		$$h_\mu(a \mid \chi^{i,j}) =  h_{\mathrm{Haar}}(a\mid  \beta^{i,j}) + h_\mu(a\mid \chi^{i,j,F})  $$
		for all $ \beta^{i,j}$ with  $ \beta^{i,j}(a)>0$.  
	\end{proof}
	\begin{remark}
		A more general version of Theorem \ref{thm:ARsuspension} appears in \cite[Theorem 13.6]{AWB-GLY-P3} where the factor map $\pi$ is allowed to be measurable and the measure $\pi_*(\mu)$ on the factor system is an arbitrary ergodic, $A$-invariant measure.
	\end{remark}
	
	The proof of Proposition \ref{prop:nonresinv} is a straightforward consequence of Theorem \ref{thm:ARsuspension}.
	\begin{proof}[Proof of Proposition \ref{prop:nonresinv}]
		Given a root $\beta^{i,j}$ and $a\in A$ such that $\beta^{i,j}(a)>0$ we have  defined the conditional entropy $h_\mu(a\mid \beta^{i,j})$ for the translation by $a$ conditioned on $U^{i,j}$-orbits in $M^\alpha$.  
		From an appropriate version of the Margulis--Ruelle inequality (see Theorem \ref{entropyfacts}\ref{EFF1} and \eqref{eq:popopopo}), for $a\in A$ with $\beta^{i,j}(a)>0$ we have that
		\begin{equation}h_\mu(a\mid \beta^{i,j})\le \beta^{i,j}(a).\label{eqll}\end{equation}
		On the other hand, if $\beta^{i,j}$ is  nonresonant then $\chi^{i,j,F}$ is the zero functional whence the coarse Lyapunov manifold  $W^{\chi^{i,j}}(x)$ associated to $\chi^{i,j}$ is simply the $U^{i,j}$-orbit of $x$ for every $x\in M^\alpha$ and the term $h_{\mathrm{\mu}}(a\mid \chi^{i,j,F})$ in \eqref{eq:CLAR} of Theorem \ref{thm:ARsuspension} vanishes.  
		Hence, by Theorem \ref{thm:ARsuspension},
		%$$h_\mu(a\mid \beta^{i,j}) = h_\mu(a\mid \chi^{i,j})=  h_{\mathrm{Haar}}(a\mid \beta^{i,j}) .$$
		\begin{equation}h_\mu(a\mid \beta^{i,j}) = 
			h_\mu(a\mid \chi^{i,j}) = h_{\mathrm{Haar}}(a\mid \beta^{i,j}) + 0 = \beta^{i,j}(a).\label{pp}\end{equation}
		From \eqref{eqll} and \eqref{pp}, we have that the conditional entropy $h_\mu(a\mid \beta^{i,j})$ attains its maximal possible value.  
		In particular, from the invariance principle in Theorem \ref{thm:led'}\ref{3333333333}, it follows that  $\mu$ is $U^{i,j}$-invariant.  
	\end{proof}

	\starsubsection{Proof of Theorem \ref{thm:invmeas} using the advanced invariance principle}\label{ss:proveinv}%\index{invariance principle}
	We outline another proof of Theorem \ref{thm:invmeas} based on  Proposition \ref{prop:nonresinv}.  
	This more closely mimics the arguments in \cite{1608.04995}.

	\begin{proof}[Proof  of Theorem \ref{thm:invmeas} using \cref{prop:nonresinv}]  %\label{pg:truckyou}% in higher dimensions]
		From Claim \ref{claim:oolong},  it is sufficient to construct a $G$-invariant probability measure on $M^\alpha$.  
		Note that $A\simeq \R^{n-1}$ is abelian (and in particular amenable, see \cref{rem:folner}) and that the space of probability measures on $M^\alpha$  projecting to the Haar measure on $\Sl(n,\R)/\Gamma$ is nonempty,  $A$-invariant, and {weak-$*$} compact.  The Krylov-Bogolyubov theorem thus  gives an  $A$-invariant probability measure $\mu$ on $M^\alpha$  projecting to the Haar measure on $\Sl(n,\R)/\Gamma$.  Moreover, since the Haar measure on $\Sl(n,\R)/\Gamma$ is $A$-ergodic, we may assume $\mu$ is $A$-ergodic.

		%Steps (1) and (2) of the above outline continue to hold  when $\alpha$ is a $\Gamma$-action on a $d$-dimensional manifold $M$.  In particular, we may find an ergodic, $A$-invariant measure  $\mu$ projecting to the Haar measure on $\Sl(n,\R)/\Gamma$.  
		
		Let $\dim (M) = d\le n-2$.   The  fiberwise tangent bundle $F$  of $M^\alpha$ is $d$-dimensional and therefore there are  at most $d$ fiberwise Lyapunov exponents $$\lambda^F_{1,\mu},\cdots,\lambda^F_{k, \mu},\quad  k\le d.$$  As no pair of roots of $\Sl(n,\R)$ is positively proportional, there are at most $d$ roots that are {resonant} with the  fiberwise Lyapunov exponent $\lambda^F_{j,\mu}$.  All other roots $\beta^{i,j}$ are nonresonant.   By Proposition \ref{prop:nonresinv}, if $\beta^{i,j}$ is not resonant with any  $\lambda^F_{j, \mu}$, then $\mu$ is $U^{i,j}$-invariant.   
		
		Let $H\subset \Sl(n,\R)$ be the subgroup that preserves $\mu$. We claim $H=G$ completing the proof.  
		As $d\le n-2$, $\mu$ is invariant under $A$ and all-but-at-most-$(n-2)$ root subgroups $U^{i,j}$.    Then $H$ has codimension at most $(n-2)$.  
		From \cite[Lemma 2.5]{1608.04995}, we have that  $H$ is parabolic; that is, $H$ is conjugate to a group of block-upper-triangular matrices (see \cref{rem:parabolic}). However, the proper closed parabolic subgroups  
		of $\Sl(n,\R)$ of maximal codimension are   conjugate to the     codimension  $(n-1)$ subgroup \begin{equation}\label{libel}\left \{ \left(\begin{array}{cccc}* & * & \cdots & * \\0 & * & \cdots & *  \\ \vdots & \vdots  & \ddots & \vdots \\0 & * & \cdots & *\end{array}\right)\right\}.\end{equation}    (See Section VII.7, especially Proposition 7.76 of \cite{MR1920389} for discussion on the structure of parabolic subgroups.)  
		%But   $\dim(M)=d\le n-2$  and hence
		As  $H$ has codimension at most $n-2$, it thus follows that $H=G$ as there are no proper parabolic subgroups of $G$  with codimension less than $(n-1)$. \end{proof}

	%If $\dim(M)\le n-2$ then $d\le n-2$ and hence $H$ has codimension at most $n-2$.  It thus follows that $H=G$ as there are no proper parabolic subgroups of $G$  with codimension less than $(n-1)$.
	%If $\dim(M)= n-1$  and at least one fiberwise Lyapunov exponent is not positively proportional with any root then, again, $H$ has codimension at most $n-2$.  
	%For the remaining case, if $\dim(M)= n-1$ and $H\neq G$ then $H$ has codimension $(n-1)$, $H$ is conjugate to the group in \eqref{libel}, and every fiberwise Lyapunov exponent is positively proportional to a root $\beta^{j,1}$, $2\le j\le n$, corresponding to the first column of  \eqref{libel}.  
	%As there is some  $s\in A$ such that $\beta^{j,1}(s)<0 $ for each root $\beta^{j,1}$ for $2\le j\le n$,  it follows that every fiberwise Lyapunov exponent is negative for the action of $s$ on $M^\alpha$.  This contradicts that we assume that $\alpha$ preserves a volume form when $\dim (M) = n-1$ (which would imply the $G$-action on $M^\alpha$ preserves a volume form in the fibers.)
	%
	%Thus in both cases of the theorem, we conclude that $\mu$ is $G$-invariant.
	%\end{proof}

	%thm:invmsr}

	\begin{remark}  %\note{new}
		The above proof has the advantage that it generalizes to give invariance of measures in the optimal critical dimension for actions by lattices in other Lie groups including $\Sp(2n,\R), $ $\SO(n,n), $ or $\So(n,n+1)$ on manifolds of the optimal dimension.  
		As discussed in \cref{sec:advancedIP} for a lattice $\Gamma$ in a group such as  $G=\Sp(2n, \R)$, the proof in \cref{ss:IP1} yields that any $C^{1+\beta}$ action of $\Gamma$ on a manifold of dimension at most $ \mathrm{rank}(G)-1$, any $A$-invariant measure on $M^\alpha$ that projects to Haar on $G/\Gamma$ is $G$-invariant.  However, the above proof establishes this result  for  manifolds $M$ where the critical  dimension is  $r(G)$, the number in the last column of \cref{tab:stupid}  (page \pageref{page:table}) defined in \cites{AWBFRHZW-latticemeasure,1608.04995} (see also   \cref{foot:r}.)  For $\R$-split groups $G$ we have  $r(G) = d_0(G)$.  
		In particular,  the above proof can be adapted to show  the following: 
		\begin{theorem} \label{thm:lplpl}Let $G$ be  a  higher-rank simple Lie group $G$ with finite center, let  $\Gamma$  be a lattice in $G$,  let $M$ be a closed manifold, and let $\alpha\colon \Gamma\to \diff^{1+\beta}(M)$ be an action.   Then
			\begin{enumerate}
				\item if  $\dim(M)\le r(G)-1$, every $A$-invariant probability measure on $M^\alpha$ that projects to the Haar measure  on $G/\Gamma$ is $G$-invariant;
				\item if  $\dim(M)\le r(G)$ and $\alpha$ is volume-preserving, every $A$-invariant probability measure on $M^\alpha$ that projects to the Haar measure  on $G/\Gamma$ is $G$-invariant. 
			\end{enumerate}
			In particular, if  $\dim(M)\le r(G)-1$, every action $\alpha\colon \Gamma\to \diff^{1+\beta}(M)$ preserves a Borel probability measure.
		\end{theorem}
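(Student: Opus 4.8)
The plan is to reduce, via Claim \ref{claim:oolong}, to producing a $G$-invariant probability measure on the suspension $M^\alpha$, and then to run the argument of Section \ref{ss:proveinv} (the proof of Theorem \ref{thm:invmeas} via the advanced invariance principle) with the structure-theoretic endgame replaced by the sharper analysis underlying the definition of $r(G)$. First I would note that $A$ is abelian, hence amenable, and that the set of Borel probability measures on $M^\alpha$ projecting to the Haar measure on $G/\Gamma$ is nonempty, weak-$*$ compact, and $A$-invariant; the Krylov--Bogolyubov theorem then produces an $A$-invariant such measure, and restricting to an ergodic component over the (Haar) measure on $G/\Gamma$, which is $A$-ergodic, we may take $\mu$ to be $A$-ergodic. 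It therefore suffices to prove assertions (1) and (2): that such a $\mu$ is automatically $G$-invariant under the stated dimension hypotheses.

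Next I would count resonances. The fiberwise tangent bundle $F\subset TM^\alpha$ has rank $d=\dim(M)$, so $\mu$ has at most $d$ fiberwise Lyapunov exponents, grouped into some number $p'\le d$ of coarse fiberwise Lyapunov exponents. Since no two distinct restricted roots of $G$ are positively proportional, each restricted root is resonant with at most one coarse fiberwise exponent, and distinct roots cannot be resonant with the same one; hence at most $p'\le d$ roots are resonant. By the general form of Proposition \ref{prop:nonresinv}---which rests on the coarse-Lyapunov Abramov--Rokhlin identity of Theorem \ref{thm:ARsuspension} together with the invariance principle Theorem \ref{thm:led'}\ref{3333333333}, and which holds verbatim for any higher-rank simple $G$---the measure $\mu$ is invariant under every nonresonant root subgroup $U^\beta$. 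Thus the stabilizer $H=\{g\in G:g_*\mu=\mu\}$ is a closed subgroup containing $A$ together with all but at most $d$ root subgroups. In the volume-preserving case, Claim \ref{cor:stupidcor} forces $\sum_j \lambda^F_{j,\mu}=0$, and a direct check of the borderline case below will use this relation to exclude the single extremal possibility, effectively gaining one extra dimension.

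The structure-theoretic endgame then proceeds as follows. A closed subgroup of $G$ containing $A$ and all but finitely many root subgroups is parabolic (the general-$G$ analogue of \cite[Lemma 2.5]{1608.04995}), so $H$ is a proper parabolic unless $H=G$. For $\R$-split $G$ one has $r(G)=d_0(G)$, the minimal codimension of a proper parabolic, and the argument closes exactly as in the $\Sl(n,\R)$ case: $\operatorname{codim}H\le d\le r(G)-1$ in case (1), and $\operatorname{codim}H\le r(G)-1$ in case (2) once the volume constraint rules out $H$ being a proper parabolic of minimal codimension (there the omitted root subgroups form a specific set whose roots cannot all be positively proportional to fiberwise exponents of vanishing multiplicity-weighted sum). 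For non-split $G$ the naive bound would only give $d_0(G)>r(G)$; the improvement to $r(G)$ relies on Zimmer's cocycle superrigidity (Theorem \ref{thm:ZCSR}): the fiberwise derivative cocycle is cohomologous to $\rho\cdot(\text{compact-valued})$ for a representation $\rho\colon G\to\Sl(d,\R)$, so the fiberwise Lyapunov exponents are restrictions to $A$ of weights of $\rho$, and for $d<d_\rep(G)$ a bookkeeping argument (carried out in \cite{AWBFRHZW-latticemeasure}) forces the omitted root subgroups to lie inside the largest $\R$-split simple subgroup $G'\subset G$, whose proper parabolics have minimal codimension $d_0(G')=r(G)$. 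A final Lie-algebra computation---the ideal generated by $\liea$ together with all root spaces contained in $\mathrm{Lie}(H)$, which must be all of $\lieg$ by simplicity---then yields $H=G$, so $\mu$ is $G$-invariant; the last assertion about existence of an $\alpha$-invariant probability measure follows from Claim \ref{claim:oolong}.

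The main obstacle I expect is precisely the passage from the crude bound $d_0(G)$ to the sharp bound $r(G)$ in the non-split case: one must combine the superrigidity constraint on the fiberwise exponents with a careful analysis of which root subgroups can be resonant and verify that the stabilizer can fail to be all of $G$ only if it contains a proper parabolic of codimension at least $r(G)$. A secondary but genuine technical point is the volume-preserving threshold $\dim(M)=r(G)$, where the relation $\sum_j\lambda^F_{j,\mu}=0$ must be leveraged to exclude the single borderline minimal parabolic. Both are established in \cite{AWBFRHZW-latticemeasure}; the rest of the proof is a direct transcription of the argument for Theorem \ref{thm:invmeas} given above.
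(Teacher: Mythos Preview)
Your overall architecture matches the paper's sketch exactly: produce an $A$-ergodic measure projecting to Haar, count resonances, invoke Proposition \ref{prop:nonresinv} to get invariance under all nonresonant root subgroups, and then argue that the stabilizer $H\supset A$ must be all of $G$ because proper parabolics have codimension at least $r(G)$. The paper itself gives no further detail than this, deferring to \cite{AWBFRHZW-latticemeasure} for the structure-theoretic endgame.

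There is, however, a genuine gap in your proposed mechanism for passing from the naive bound $d_0(G)$ to the sharp bound $r(G)$ in the non-split case. You invoke Zimmer's cocycle superrigidity (Theorem \ref{thm:ZCSR}) to conclude that the fiberwise derivative cocycle is cohomologous to $\rho\cdot(\text{compact})$ and hence that the fiberwise exponents are restrictions of weights of a representation of $G$. But Theorem \ref{thm:ZCSR} applies to a cocycle over a \emph{measure-preserving} action of $G$ (or $\Gamma$); at this stage $\mu$ is only $A$-invariant, and the existence of any $G$- or $\Gamma$-invariant measure is precisely what you are trying to establish. So this step is circular. (Compare Section \ref{sec:pp}, where cocycle superrigidity is used only \emph{after} $G$-invariance of $\mu$ has been secured.) A minor related slip: your claim that ``no two distinct restricted roots of $G$ are positively proportional'' fails for non-reduced restricted root systems of type $BC_n$, though this is easily repaired by working with coarse roots.

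The actual argument in \cite{AWBFRHZW-latticemeasure} that yields the bound $r(G)$ is purely Lie-theoretic and combinatorial: one shows directly, by analyzing how a collection of at most $d$ coarse roots can fail to generate $\lieg$ together with $\liea$ and the remaining root spaces, that the minimal number of coarse roots whose removal can leave a proper (parabolic) subalgebra is $r(G)=d_0(G')$, where $G'$ is the maximal $\R$-split simple subgroup. No appeal to superrigidity is needed (or available) at this point. Your identification of the volume-preserving borderline as requiring the relation $\sum_j\lambda^F_{j,\mu}=0$ is correct in spirit, but again the exclusion of the extremal parabolic is carried out by root combinatorics rather than by the mechanism you sketch.
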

		
		%%As a corollary, in the setting of \cref{thm:lplpl} we obtain the following generalization of \cref{thm:invmeas}:
		%%%\begin{enumerate}
		%%\begin{quote} {\it  }
		%%%\item if  $\dim(M)\le r(G)$, every volume-preserving action $\alpha\colon \Gamma\to \diff^{1+\beta}_\vol(M)$ preserves a Borel probability measure.
		%%\end{quote}
		
	\end{remark}
	
	\section{Proof outline of Theorem \ref{slnr}} %   for $G=\Sl(3,\R)$}  
	\label{sec:outline3steps}
	We outline the proof of Theorem \ref{slnr}  for the case of $C^\infty$ actions of cocompact lattices in $\Sl(n,\R)$.  
	That is, for $n\ge 3$, we consider a  cocompact lattice    $\Gamma$ in $\Sl(n,\R)$ and show that every homomorphism $\alpha\colon \Gamma\to \diff^\infty(M)$ has finite image when 
	\begin{enumerate}
		\item $M$ is a compact manifold of dimension at most $(n-2)$, or 
		\item  $M$ is a compact manifold of dimension at most $(n-1)$ and $\alpha$ preserves a volume form $\vol$.
	\end{enumerate}

	The broad outline of the proof consists of 3 steps.

	\subsection{Step 1: Subexponential growth} In the case that $\Gamma\subset \Sl(n,\R)$ is cocompact, using its action on $\Sl(n,\R)$ and that $\Sl(n,\R)$ is a proper length space one may show that $\Gamma$ is  finitely generated (see for example  \cite[Theorem 8.2]{MR2850125}).  More generally, it is a  classical fact that all lattices $\Gamma$ in semisimple Lie groups are finitely generated.  
	
	Fix a finite  symmetric generating set $S$ for $\Gamma$.  Given $\gamma\in \Gamma$, let $|\gamma|= |\gamma|_S$ denote the word-length of $\gamma$ relative to this generating set; that is, $$|\gamma|= \min\{ k: \gamma = s_k \cdots s_1, s_i\in S\}.$$
	Note that if we replace the finite   generating set $S$ with another finite generating set $S'$, there is a uniform  constant $C$ such that  the word-lengths are uniformly distorted: $$|\gamma|_{S'}\le C | \gamma|_{S}.$$ 
	Thus all definitions below will be independent of the choice of $S$.

	Equip $TM$ with a Riemannian metric and corresponding norm.  
	\begin{definition}\label{def:USEGOD}
		We say that an action $\alpha\colon \Gamma\to \Diff^1(M)$ has \emph{uniform subexponential growth of derivatives}\index{uniform subexponential growth of derivatives} if  for every $\epsilon>0$ there is a $C= C_\epsilon$ such that for every $\gamma\in \Gamma$,
		$$\sup_{x\in M} \|D_x \alpha(\gamma)\|\le Ce^{\epsilon |\gamma|}.$$
	\end{definition}
	
	Note that if $\alpha\colon \Gamma\to \Diff^1(M)$ has {uniform subexponential growth of derivatives} it follows for every $\epsilon>0$ that there is a $C= C_\epsilon$ such that for every $\gamma\in \Gamma$, 
	$$\sup_{x\in M} \|D_x \alpha(\gamma)\|\ge Ce^{-\epsilon |\gamma|}.$$
	
	The following is the main result  of  \cite{1608.04995} in the case of cocompact lattices in $\Sl(n,\R)$.  
	\begin{theorem}[{\cite[Theorem 2.8]{1608.04995}}]\label{thm:USEGOD}
		For $n\ge 3$, let $\Gamma\subset \Sl(n,\R)$ be a cocompact lattice.  Let $\alpha\colon \Gamma\to \Diff^2(M)$ be an action.    Suppose that either
		\begin{enumcount}
			\item \label{USEGOD1} $\dim(M)\le n-2$, or
			\item \label{USEGOD2} $\dim(M)= n-1$ and $\alpha$ preserves a smooth volume.
		\end{enumcount}
		Then $\alpha$ has uniform subexponential growth of derivatives.
	\end{theorem}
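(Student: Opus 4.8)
\textbf{Setup and reduction.} Following \cref{sec:cons}, the plan is to pass to the suspension $M^\alpha$ with its induced $G$-action, $G=\Sl(n,\R)$. Since $\Gamma$ is cocompact, $M^\alpha$ is compact; fix a continuous Riemannian metric on it as in \cref{ss:fibLyap}, and let $\mathcal A\colon G\times M^\alpha\to \Gl(d,\R)$ ($d=\dim M$) be the fiberwise derivative cocycle in a bounded measurable trivialization of $F=\ker D\pi$. Put $\mathfrak l(g):=\log\sup_{x\in M^\alpha}\|\mathcal A(g,x)\|$. As $\Gamma$ is finitely generated and $\alpha$ is $C^1$, $\mathfrak l$ is finite, subadditive, bounded on compacta, and at most linear in $d_G(e,\cdot)$, so its asymptotic slope $\beta_0:=\limsup_{d_G(e,g)\to\infty}\mathfrak l(g)/d_G(e,g)$ lies in $[0,\infty)$. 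Because the word metric of the cocompact lattice $\Gamma$ is bi-Lipschitz to the restriction of $d_G$, the theorem amounts to $\beta_0=0$ (the bound for $\mathcal A(g^{-1},\cdot)$ being symmetric), and I would argue by contradiction, assuming $\beta_0>0$.

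\textbf{The engine: strong property (T).} The decisive external input is that $G$, and hence $\Gamma$ (de la Salle, after Lafforgue and de Laat--de la Salle), has strong property (T): there are symmetric probability measures $\mu_t$ on $G$ supported in $\{d_G(e,\cdot)\le t\}$ such that, for every continuous representation $\rho$ of $G$ on an $L^p$-type Banach space with $\|\rho(g)\|\le Ce^{s\,d_G(e,g)}$ and $s$ below a threshold $s_0(G)$, the averages $\rho(\mu_t)$ converge in operator norm, exponentially fast, to the projection onto $\rho(G)$-invariant vectors. I would apply this to the pullback representation of $G$ on $L^p$-sections of $\mathrm{Sym}^2 F^*$ over $M^\alpha$ (equivalently, on the cone of measurable fiberwise Riemannian metrics inside it), whose operator-norm growth is a fixed power of $e^{\mathfrak l}$, hence of exponential rate a fixed multiple of $\beta_0$. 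Applied successfully, the $\mu_t$-averages of a background fiberwise metric would converge to a $G$-invariant measurable fiberwise metric, and this — together with the a priori exponential control on $\mathfrak l$ and a Lusin/Egorov argument — would force $\beta_0=0$. The obstruction is that invoking strong property (T) requires a preliminary bound placing the growth rate below $s_0(G)$, which is a weak form of the conclusion itself.

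\textbf{Breaking the circularity.} To supply that preliminary bound I would localize the growth and feed in the invariance results already at hand. By the Cartan decomposition $G=K\overline{A^+}K$ and boundedness of $\mathfrak l$ on the compact $K$, the slope $\beta_0$ is attained along $\overline{A^+}\subset A\cong\R^{n-1}$, where, by the subadditive ergodic theorem plus a compactness/semicontinuity argument, it equals the supremum over $A$-invariant ergodic measures $\mu$ on $M^\alpha$ of the top fiberwise Lyapunov exponent (in the sense of \cref{thm:oscHR}) on $\overline{A^+}$. For any such $\mu$ whose projection to $G/\Gamma$ is the Haar measure, the dimension hypothesis places us in the scope of the invariance principle \cref{thm:invmsr} (using \cref{cor:stupidcor} in the volume-preserving case to reach $\dim M=n-1$), so $\mu$ is $G$-invariant; its fiberwise derivative cocycle over $(M^\alpha,\mu)$ is $\Gl(d,\R)$-valued with $d<n$, hence by Zimmer cocycle superrigidity (\cref{thm:ZCSR}) measurably cohomologous to a compact-valued cocycle, so all its Lyapunov exponents — in particular all fiberwise exponents of $\mu$ — vanish. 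The only remaining route to $\beta_0>0$ is through $A$-invariant measures on $M^\alpha$ that do \emph{not} project to Haar, whose projections to $G/\Gamma$ form a class not classified in general (the Margulis conjecture); strong property (T), through the robust averaging of the previous paragraph, is exactly the device that bypasses them once the sub-threshold bound is secured by a bootstrap off the Haar case. Making this bootstrap precise — reconciling the measure-rigidity bound, the strong-(T) threshold, and the passage from measurable to pointwise invariance — is where I expect the real work to lie, and it is the conceptual heart of \cite{1608.04995}.
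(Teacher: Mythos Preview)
Your proposal misidentifies the key tool. In the paper, strong property (T) plays no role in the proof of \cref{thm:USEGOD}; it enters only afterwards, in Step~2 of \cref{sec:outline3steps}, to promote uniform subexponential growth of derivatives to an invariant Riemannian metric. The proof of \cref{thm:USEGOD} itself is purely by contradiction through measure rigidity: if the conclusion fails, \cref{eq6677} produces an ergodic $A$-invariant measure $\mu'$ on $M^\alpha$ with a nonzero fiberwise Lyapunov exponent; \cref{prop:goodmeas} then upgrades $\mu'$ to an ergodic $A$-invariant measure $\mu$ projecting to the Haar measure on $G/\Gamma$ \emph{while retaining a nonzero fiberwise exponent}; finally \cref{thm:invmsr} and \cref{thm:ZCSR} yield the contradiction exactly as you describe.

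The step you are missing is \cref{prop:goodmeas}, and your proposed substitute---strong property (T) as a device to ``bypass'' $A$-invariant measures not projecting to Haar---is circular, as you yourself note, and your ``bootstrap off the Haar case'' is not an argument. The actual mechanism is concrete and different: one chooses $s_0\in A$ with $\lambda^F_{j,\mu'}(s_0)>0$, picks a root $\beta$ with $\beta(s_0)=0$ so that the unipotent root subgroup $U^\beta$ centralizes $s_0$, and averages $\mu'$ along $U^\beta$. By \cref{average}, the averaged measure remains $s_0$-invariant and the top fiberwise exponent $\lambda^F_\av(s_0,\cdot)$ does not decrease (upper semicontinuity). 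Ratner's theorems (\cref{claim:Ratner}) control the projection to $G/\Gamma$: after a further averaging over $A$ and passage to an ergodic component, the projection gains invariance under $U^\beta$ and, by \cref{claim:Ratner}\ref{polk4}, under $U^{-\beta}$ as well. Iterating this with a second, carefully chosen root forces the projection to be invariant under subgroups generating all of $\Sl(n,\R)$, hence to be Haar. The positive exponent survives throughout, and the contradiction follows. This unipotent-averaging/Ratner argument is the conceptual heart of \cite{1608.04995} for this step, not strong property (T).
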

	\begin{remark}
		The proof of Theorem \ref{thm:USEGOD} is the only place in the proof of Theorem \ref{slnr} where cocompactness of $\Gamma$ is used.  It is not required for Steps 2 or 3 below.    For $\Gamma = \Sl(m,\Z)$, the analogue of Theorem \ref{thm:USEGOD} is established in \cite{1710.02735} and has been announced for general lattices in \cite{BFHWM}.
	\end{remark}
	
	\subsection{Step 2: Strong property (T) and averaging Riemannian metrics} 
	Assume $\alpha\colon \Gamma\to \Diff^\infty(M)$ is an action by $C^\infty$ diffeomorphisms.\footnote{For   $C^2$ actions, one replaces the Hilbert Sobolev  spaces $W^{2,k}(S^2(T^*M)))$ below with appropriate Banach Sobolev spaces $W^{p,1}(S^2(T^*M)))$ and verifies such spaces are of the type $\mathcal {E}_{10}$ considered in \cite{MR3407190}.}  %\note{removed ``opposite action"}
	The action  $\alpha$ of $\Gamma$ on $M$ induces an action $\alpha_\#$ of $\Gamma$ on tensor powers of the cotangent bundle of $M$ by pull-back:  
	Given $\omega\in (T^*M)^{\otimes k}$ write $$\alpha_\#(\gamma)\omega= \alpha(\gamma\inv)^*\omega;$$ that is, 
	if $v_1,\dots v_k\in T_xM$ then $$\alpha_\#(\gamma)\omega(x)(v_1, \dots, v_k) = \omega(x)(D_x \alpha(\gamma\inv) v_1, \dots, D_x \alpha(\gamma\inv)  v_k).$$
	In particular, we obtain  an action of $\Gamma$ on the set of   Riemannian metrics which naturally sits as a half-cone inside $S^2(T^*M)$,  the vector space of all symmetric 2-forms on $M$. 
	\def\H{\mathcal H}
	Note that $\alpha_\#$ preserves $C^\ell(S^2(T^*(M)))$, the subspace of all $C^\ell$ sections of $S^2(T^*M)$ for any $\ell \in \N$.

	Fix a volume form $\vol$ on $M$.  The norm on $TM$ induced by the  background Riemannian metric induces a norm on each fiber of $S^2(T^*M)$.  We then obtain a natural notion of  measurable and integrable sections of $S^2(T^*M)$ with respect to $\vol$.  Let $\H^k = W^{2,k}(S^2(T^*M))$  be the Sobolev space of symmetric 2-forms whose weak derivatives  of order $\ell$ are bounded with respect to the $L^2(\vol)$-norm for $0\le \ell\le k$.   Then $\H^k$ is a Hilbert space.  Let $\| \cdot\| _{\H^k}$ denote the corresponding Sobolev norm on $\H^k$ as well as the induced operator norm on the space $B(\H^k) $ of bounded operators on $\H^k$.   Working in local coordinates, the Sobolev embedding theorem implies   that $$\H^k\subset C^\ell(S^2(T^*(M)))$$ as long as $$\ell< k- \dim(M)/2.$$  
	In particular, for $k$ sufficiently large, an element $\omega$ of $\H^k$   is a $C^\ell$ section of $S^2(T^*M)$  which will be a $C^\ell$
	Riemannian metric on $M$ if it is positive definite.  
	%relative to integration against $m$.  Choosing    the Sobolev space   appropriately, the Sobolev embedding theorem will ensure that $\H$ embeds in the space of continuous symmetric 2-forms on $M$.  
	
	The   action $\alpha_\#$ is a representation of $\Gamma$ by bounded operators on $\H^k$.  
	From Theorem \ref{thm:USEGOD}, we obtain strong control on the norm growth of the induced representation $\alpha_\#$.  In particular, we obtain that the representation $\alpha_\#\colon \Gamma\to B(\H^k)$ has  \emph{subexponential norm growth:}
	\begin{lemma} \label{lem:normgrowth} Let $\alpha\colon \Gamma\to \diff^{\infty}(M)$  have uniform subexponential growth of derivatives.  Then, 
		for all $\epsilon'>0$ there is $C>0$ such that $$\|\alpha_\#(\gamma)\|_{\H^k} \le Ce^{\epsilon' |\gamma|}$$ for all $\gamma\in \Gamma$.
	\end{lemma}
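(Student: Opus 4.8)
\textbf{Proof plan for Lemma \ref{lem:normgrowth}.}

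The plan is to bound the Sobolev operator norm $\|\alpha_\#(\gamma)\|_{\H^k}$ in terms of the $C^k$-size of the diffeomorphism $\alpha(\gamma)$, and then feed in the uniform subexponential growth of derivatives from Theorem \ref{thm:USEGOD}. The key point is that pulling back a symmetric $2$-tensor by $\alpha(\gamma\inv)$ is, in local coordinates, an operation that involves the first $k$ derivatives of the coordinate expression of $\alpha(\gamma\inv)$ (and of its inverse, the chart transition maps), applied by the chain and Leibniz rules. So differentiating $\alpha_\#(\gamma)\omega$ up to order $k$ produces a sum of terms, each a product of derivatives of $\alpha(\gamma\inv)$ of order $\le k$ with derivatives of $\omega$ of order $\le k$. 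Taking $L^2(\vol)$ norms and changing variables by the (volume-distortion-bounded, since $M$ is compact) diffeomorphism $\alpha(\gamma)$, one obtains an estimate of the shape
\begin{equation*}
\|\alpha_\#(\gamma)\omega\|_{\H^k} \le P\big(\|\alpha(\gamma\inv)\|_{C^k},\ \|\alpha(\gamma)\|_{C^k}\big)\, \|\omega\|_{\H^k}
\end{equation*}
for some fixed polynomial $P$ (the degree depending only on $k$ and $\dim M$), where $\|f\|_{C^k}$ denotes the maximum over $M$ of the norms of the derivatives of $f$ up to order $k$ in fixed finite coordinate charts.

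The second step is to control $\|\alpha(\gamma)\|_{C^k}$ and $\|\alpha(\gamma\inv)\|_{C^k}$ subexponentially in $|\gamma|$. Theorem \ref{thm:USEGOD} gives this for the first derivative: for every $\epsilon>0$ there is $C_\epsilon$ with $\sup_x\|D_x\alpha(\gamma)\|\le C_\epsilon e^{\epsilon|\gamma|}$, and (as remarked after Definition \ref{def:USEGOD}) also $\sup_x\|D_x\alpha(\gamma)\|\ge C_\epsilon\inv e^{-\epsilon|\gamma|}$, hence $\|\alpha(\gamma\inv)\|_{C^1}$ is subexponential as well. To pass from first-order to $k$-th order control, I would use the standard chain-rule bootstrapping: writing $\gamma = s_m\cdots s_1$ with $m=|\gamma|$ and each $s_i$ in the fixed finite generating set $S$, the higher derivatives of the composition $\alpha(s_m)\circ\cdots\circ\alpha(s_1)$ are, by the Fa\`a di Bruno / higher chain rule, polynomial expressions in the derivatives (up to order $k$) of the individual factors $\alpha(s_i)$ evaluated along the orbit, with the top-order terms linear in each factor's $k$-th derivative and the lower-order corrections multiplied by products of first derivatives. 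Since there are only finitely many $s_i\in S$, the factors $\|\alpha(s_i)^{\pm1}\|_{C^k}$ are bounded by a single constant $L=L(S,k)$. A telescoping/inductive estimate then yields $\|\alpha(\gamma)\|_{C^k}\le \big(\prod_{i}\|\text{derivatives along the orbit}\|\big)$-type bounds that are controlled by $\big(\sup_x\|D_x\alpha(\gamma')\|\big)$ over the $m$ prefixes $\gamma'$ of $\gamma$; each such factor is $\le C_\epsilon e^{\epsilon|\gamma'|}\le C_\epsilon e^{\epsilon m}$, and the number of factors is a fixed power of $m$, so the whole product is bounded by a subexponential function $C'_{\epsilon'}e^{\epsilon' m}$ after absorbing the polynomial-in-$m$ prefactor into the exponential (choosing $\epsilon'$ slightly larger than a suitable multiple of $\epsilon$). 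The same argument applies to $\alpha(\gamma\inv)$.

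Combining the two steps: given $\epsilon'>0$, choose $\epsilon>0$ small enough that the polynomial $P$ composed with the $C^k$-bounds produces total exponential rate below $\epsilon'$; then
\begin{equation*}
\|\alpha_\#(\gamma)\|_{\H^k}\le P\big(\|\alpha(\gamma\inv)\|_{C^k},\|\alpha(\gamma)\|_{C^k}\big)\le C e^{\epsilon'|\gamma|}
\end{equation*}
for a constant $C=C(\epsilon',k,S,M)$, which is the claim. I expect the main obstacle to be the bookkeeping in the higher-order chain-rule estimate of Step 2 --- making precise that the $k$-th derivatives of an $m$-fold composition grow only subexponentially once the first derivatives do, uniformly in the base point along the orbit. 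This is a routine but slightly tedious induction on $k$ (and on $m$); the conceptual input is entirely contained in Theorem \ref{thm:USEGOD}, and no new dynamics is needed beyond compactness of $M$, finiteness of $S$, and the fixed finite atlas used to define the Sobolev and $C^k$ norms.
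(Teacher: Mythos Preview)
Your proposal is correct and follows essentially the same approach as the paper, which gives only a one-line sketch (``chain rule, Leibniz rule, and computations that bound the growth of higher-order derivatives by polynomial functions in the growth of the first derivative'') and defers to \cite[Lemma 6.4]{MR2198325} and \cite[Section 6.3]{1608.04995} for details. One minor correction: the volume distortion under $\alpha(\gamma)$ is not uniformly bounded merely by compactness of $M$ (unless $\alpha$ is volume-preserving), but it is bounded by $\|D\alpha(\gamma)\|^{\dim M}$ and hence subexponentially, which is all your polynomial estimate needs.
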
 
	The proof of Lemma \ref{lem:normgrowth} follows   from the chain rule, Leibniz rule,  and    computations  that bound  the growth of higher-order derivatives by  polynomial functions in the growth of the first derivative.   See \cite[Lemma 6.4]{MR2198325} and discussion in \cite[Section 6.3]{1608.04995}.

\index{strong property (T)} 
	We use the main result from \cites{MR3407190,MR2423763}: cocompact lattices $\Gamma$ in higher-rank simple Lie groups (such as $\Sl(n,\R)$ for $n\ge 3$) satisfy Lafforgue's \emph{strong Banach property (T)} first introduced in \cite{MR2423763}.  The result for  $\Sl(n,\R)$ and its cocompact lattices (as well as other higher-rank simple Lie groups containing a subgroup isogenous to $\Sl(3,\R)$) is established by Lafforgue in Corollary  4.1 and Proposition 4.3 of \cite{MR2423763}; for cocompact lattices in certain other higher-rank Lie groups (containing a subgroup isogenous to $\Sp(4,\R)$), the results of \cite{MR3407190} are needed.  
	See also \cite{delaSallenonuniform} for the case of nonuniform lattices.   Strong Banach property (T) considers representations $\pi$  of $\Gamma$ by bounded operators on certain Banach spaces $E$ (of type $\mathcal E_{10}$).  If such representations have sufficiently slow exponential norm growth, then there exists a  sequence of  averaging operators $p_n$ converging to a projection $p_\infty$ such that for any vector $v\in E$, the limit  $p_\infty( v)$ is $\pi$-invariant.  
	In the case that $E$ is a Hilbert space (which we may assume when $\alpha$ is an action by $C^\infty$ diffeomorphisms)   
	we have the following formulation.  Note that  Lemma \ref{lem:normgrowth} (which follows from   Theorem \ref{thm:USEGOD}) ensures our representation $\alpha_\#$ satisfies the hypotheses of the theorem.  
	\begin{theorem}[\cites{MR3407190, delaSallenonuniform,MR2423763}]\label{lafforgue1} Let $\mathcal{H}$ be a Hilbert space and for $n\ge 3$, let $\Gamma$ be a  lattice in $\Sl(n,\R)$.  %, n\ge 3$.  %as in Theorem \ref{slnr}.  
		
		There exists $\epsilon>0$ such that for any representation $\pi\colon \Gamma \to B(\mathcal{H})$, if there exists $C_\epsilon>0$ such that
		\begin{align*}
			\|\pi(\gamma)\| \leq C_{\epsilon}e^{\epsilon |\gamma|}
		\end{align*} for all $\gamma\in \Gamma$
		then there exists a sequence of averaging operators $p_n = \sum w_i \pi(\gamma_i)$ in $B(\mathcal{H})$---where $w_i\ge 0$, $\sum w_i= 1$, and $w_i =0$ for every $\gamma_i \in \Gamma$ of word-length larger than  $n$---such that for any vector $v\in \mathcal{H}$, the sequence $v_n = p_n(v) \in \mathcal{H}$ converges to an invariant vector $v^{*} = p_\infty(v)$. 
		
		Moreover the convergence is exponentially fast: there exist  $0< \lambda <1$   and $C=C_\lambda$ such  that  $\|v_n-v_*\| \leq C \lambda^n \|v\|$.  % for a constant $0< \lambda <1$ depending only on $\Gamma$ and the choice of $p_n$.
	\end{theorem}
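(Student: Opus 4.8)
The statement is Lafforgue's strong property (T) specialized to Hilbert-space representations of slow exponential growth, and the plan is to deduce it from the Banach-space version of strong property (T) established in \cites{MR2423763,MR3407190,delaSallenonuniform}. First I would recall the precise form of that property. Strong property (T) with respect to a class $\mathcal{E}$ of Banach spaces asserts the existence of a constant $\epsilon = \epsilon(\mathcal{E}) > 0$ and a sequence of compactly supported probability measures $\{m_n\}$ on $G = \Sl(n,\R)$---in the cocompact case obtained by pushing forward bi-$K$-invariant probability measures concentrated on metric spheres $S_t \subset G$---such that for every representation $\pi\colon G \to B(E)$ with $E \in \mathcal{E}$ satisfying $\|\pi(g)\| \le C e^{\epsilon \ell(g)}$, where $\ell(g)$ is the Riemannian length of $g$, the operators $\pi(m_n)$ converge in operator norm, exponentially fast, to an idempotent $p_\infty \in B(E)$ whose range is the space $E^{\pi(G)}$ of $\pi(G)$-invariant vectors. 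The point I would stress is that the class $\mathcal{E}_{10}$ for which this is proved in \cite{MR3407190} contains all Hilbert spaces, so the machinery applies to $E = \mathcal{H}$.

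Second, I would transfer the statement from $G$ to a lattice $\Gamma \subset G$. In the cocompact case this is routine: given $\pi\colon \Gamma \to B(\mathcal{H})$ with $\|\pi(\gamma)\| \le C_\epsilon e^{\epsilon |\gamma|}$, one induces it to a representation of $G$ on the Hilbert space $\mathcal{H}_G = L^2(G/\Gamma;\mathcal{H})$; since word-length on $\Gamma$ and restricted Riemannian length on $G$ are bi-Lipschitz comparable (using that $\Gamma$ acts properly and cocompactly on the associated symmetric space), the induced representation still has slow exponential norm growth, so strong property (T) applies and descends to $\Gamma$. For nonuniform lattices one instead cites the more delicate argument of de la Salle \cite{delaSallenonuniform}. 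Either way one obtains averaging measures $m_n$ supported on the ball of radius $\sim n$ in $\Gamma$, and I would set $p_n = \pi(m_n) = \sum w_i \pi(\gamma_i)$ with $w_i \ge 0$, $\sum w_i = 1$, and $w_i = 0$ for $|\gamma_i| > n$.

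Third, the conclusion is then immediate: for $v \in \mathcal{H}$ put $v_n = p_n(v)$ and $v^* := p_\infty(v)$; exponential convergence of $p_n$ to $p_\infty$ gives $\|v_n - v^*\| = \|(p_n - p_\infty)(v)\| \le C\lambda^n \|v\|$ for some $\lambda \in (0,1)$, and $v^*$ lies in the range of $p_\infty$, hence is $\pi(\Gamma)$-invariant.

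Finally, the main obstacle. The reduction above is formal; the substantive content---and the step that is genuinely hard---is establishing strong property (T) for the relevant groups, which rests on sharp estimates for the decay of bi-$K$-invariant matrix coefficients (equivalently, for $\|\pi(m_t)\|$ as $t \to \infty$) on rank-$\ge 2$ building blocks: for $\Sl(n,\R)$ with $n \ge 3$ this exploits a subgroup isogenous to $\Sl(3,\R)$ as in \cite{MR2423763}, while groups containing only a subgroup isogenous to $\Sp(4,\R)$ require the additional harmonic analysis of \cite{MR3407190}. Reproducing those estimates lies well outside the scope of this text, so in practice the ``proof'' of Theorem \ref{lafforgue1} amounts to checking that the hypothesis $\|\pi(\gamma)\| \le C_\epsilon e^{\epsilon|\gamma|}$---supplied here by Lemma \ref{lem:normgrowth}, itself a consequence of Theorem \ref{thm:USEGOD}---is exactly the slow-growth condition under which the cited machinery produces the averaging operators $p_n$ and the invariant vector $v^*$.
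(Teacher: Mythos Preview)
Your outline is correct and, in fact, more detailed than what the paper itself provides: the paper does not prove Theorem~\ref{lafforgue1} at all but simply cites it as a black-box consequence of strong property~(T) from \cites{MR2423763,MR3407190,delaSallenonuniform}, noting only that Hilbert spaces lie in the relevant class $\mathcal{E}_{10}$, that the cocompact case is Lafforgue's Corollary~4.1 and Proposition~4.3, that the nonuniform case is de~la~Salle, and that the exponential rate is implicit in those proofs. Your sketch of the induction argument and the reduction to matrix-coefficient estimates on rank-$\ge 2$ subgroups accurately reflects the structure of the cited references, so there is nothing to correct.
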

	Theorem \ref{lafforgue1} as stated in \cites{MR3407190,MR2423763} requires that $\Gamma$ be cocompact.  The extension to nonuniform lattices is announced in \cite{delaSallenonuniform}.  The exponential convergence in Theorem \ref{lafforgue1} is often not explicitly stated  in the definition of strong property (T) or  in statements of theorems establishing that the property holds for lattices in higher-rank simple Lie groups; however, the exponential convergence   follows from the proofs.
	
	We complete Step 2 with the following computation.
	\begin{proposition}
		For $n\ge 3$, let $\Gamma \subset \Sl(n,\R)$ be a lattice and let $\alpha\colon \Gamma\to \Diff^\infty(M)$ be an action with uniform subexponential growth of derivatives.  Then for any $\ell$, there is a $C^\ell$ Riemannian metric $g$ on $M$ such that $$\alpha(\Gamma)\subset \mathrm{Isom}_g(M).$$
	\end{proposition}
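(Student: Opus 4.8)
The plan is to combine Theorem~\ref{thm:USEGOD} (uniform subexponential growth of derivatives), Lemma~\ref{lem:normgrowth} (subexponential norm growth of the induced Sobolev representation $\alpha_\#$ on $\H^k$), and the strong property (T) statement in Theorem~\ref{lafforgue1} to produce a genuine, finitely differentiable, $\alpha(\Gamma)$-invariant Riemannian metric. First I would fix $k$ large enough that the Sobolev embedding $\H^k \subset C^\ell(S^2(T^*M))$ holds, i.e.\ $\ell < k - \dim(M)/2$, so that any positive-definite element of $\H^k$ is a $C^\ell$ Riemannian metric. The representation $\alpha_\#$ of $\Gamma$ on the Hilbert space $\H^k$ has, by Lemma~\ref{lem:normgrowth}, norm growth $\|\alpha_\#(\gamma)\|_{\H^k} \le C_{\epsilon'} e^{\epsilon' |\gamma|}$ for every $\epsilon' > 0$; in particular we may take $\epsilon'$ below the threshold $\epsilon$ supplied by Theorem~\ref{lafforgue1}.

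Next I would apply Theorem~\ref{lafforgue1} to the representation $\pi = \alpha_\#$ and to the specific vector $v = g_0 \in \H^k$, where $g_0$ is a fixed smooth background Riemannian metric on $M$ (which indeed lies in $\H^k$ since $M$ is compact). The theorem yields averaging operators $p_n = \sum_i w_i\, \alpha_\#(\gamma_i)$ with $w_i \ge 0$, $\sum_i w_i = 1$, such that $g_n := p_n(g_0)$ converges in $\H^k$ to an $\alpha_\#$-invariant vector $g := p_\infty(g_0)$. Invariance $\alpha_\#(\gamma) g = g$ for all $\gamma \in \Gamma$ is exactly the statement that $\alpha(\gamma)^* g = g$, i.e.\ $\alpha(\Gamma) \subset \mathrm{Isom}_g(M)$, provided $g$ is in fact a Riemannian metric, i.e.\ positive definite.

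The one point requiring a small argument is positive-definiteness of the limit $g$. I would argue as follows. Each $g_n = \sum_i w_i\, \alpha(\gamma_i^{-1})^* g_0$ is a convex combination of pull-backs of the positive-definite form $g_0$, hence is itself positive definite at every point of $M$; moreover by compactness of $M$ and the fact that $\alpha_\#$ preserves $C^\ell$ sections, each $g_n \in C^\ell(S^2(T^*M))$. By the Sobolev embedding $\H^k \hookrightarrow C^\ell(S^2(T^*M))$ the convergence $g_n \to g$ in $\H^k$ implies $C^\ell$-convergence, hence uniform convergence of the forms and their derivatives. Thus $g$ is a $C^\ell$ symmetric $2$-form and is a pointwise limit of positive-definite forms, so $g(x)$ is positive semidefinite for every $x$. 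To rule out degeneracy I would use the uniform lower bound on the growth of the inverse: since $\alpha$ has uniform subexponential growth of derivatives, $\|D_x\alpha(\gamma)^{-1}\| \le C_\epsilon e^{\epsilon|\gamma|}$, so for a unit vector $v$, $g_0(D_x\alpha(\gamma_i^{-1}) v, D_x\alpha(\gamma_i^{-1}) v) \ge c\, \|D_x\alpha(\gamma_i^{-1})v\|^2 \ge c\, \|D_{\alpha(\gamma_i^{-1})x}\alpha(\gamma_i)\|^{-2} \gtrsim e^{-2\epsilon|\gamma_i|}$; combined with the exponential convergence rate $\|g_n - g\|_{\H^k} \le C\lambda^n \|g_0\|$ and the fact that the weights $w_i$ in $p_n$ are supported on $|\gamma_i| \le n$, one sees that the contributions do not collapse, and $g$ is uniformly positive definite. (Alternatively, and more cleanly: apply the same averaging argument simultaneously to $g_0$ and to the ``inverse metric'' viewed as a section of $S^2(TM)$, obtain invariant limits of both, and note that the pairing passes to the limit, forcing nondegeneracy.)

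The main obstacle is precisely this positivity/nondegeneracy step: strong property (T) gives an invariant \emph{vector} in $\H^k$, but a priori nothing prevents the averaging procedure from converging to the zero form or to a degenerate form. The resolution is that the uniform \emph{two-sided} subexponential control on derivatives coming from Theorem~\ref{thm:USEGOD}---both $\|D\alpha(\gamma)\|$ and $\|D\alpha(\gamma)^{-1}\|$ grow subexponentially---keeps the averaged metrics $g_n$ uniformly comparable to $g_0$ on a sequence of word-length balls whose radius grows faster than the (exponential) rate at which $g_n$ approaches $g$, so no collapse can occur in the limit. Everything else is routine: the chain and Leibniz rules for Lemma~\ref{lem:normgrowth}, the Sobolev embedding for the passage from $\H^k$ to $C^\ell$, and the fact that $\alpha_\#$ is a continuous linear representation on $\H^k$.
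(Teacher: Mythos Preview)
Your proposal is correct and follows essentially the same approach as the paper: average a smooth metric via the strong property~(T) operators $p_n$ on $\H^k$, use Sobolev embedding to get $C^\ell$ regularity, and establish positive-definiteness of the limit by playing the subexponential lower bound $p_n(g_0)(\xi,\xi)\gtrsim e^{-2\epsilon n}$ (from the two-sided derivative control and the support condition $|\gamma_i|\le n$) against the exponential convergence $\|g_n-g_\infty\|\le C\lambda^n$. The paper's write-up of the positivity step is slightly more explicit---it literally writes $p_\infty(g)(\xi,\xi)\ge C_\epsilon^{-2}e^{-2\epsilon n}-C_\lambda\lambda^n$ and then chooses $\epsilon$ small enough that the first term dominates for large $n$---but this is exactly the mechanism you describe. (Your parenthetical ``alternative'' via the inverse metric is shakier, since the averaging operators on $S^2(TM)$ are not the same as those on $S^2(T^*M)$ and there is no reason the pairing survives the two limits; stick with your main argument.)
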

	
	\begin{proof}
		Consider an arbitrary  $C^\infty$ Riemannian metric $g$.  For any $k$, we have $g\in \H^k$.  We apply \cref{lafforgue1} and its notation to the representation $\alpha_\#\colon \Gamma\to B(\H^k)$ with the $g$ the initial vector $v$. As averages of finitely many Riemannian metrics are still Riemannian metrics we have that $g_n := p_n(g)$ is positive definite for every $n$.  In particular, the limit $g_\infty = p_\infty(g)$ is in the closed cone of positive (possibly indefinite) symmetric 2-tensors in $\H^k$. Having taken $k$ sufficiently large we have that $g_\infty$ is $C^\ell$; in particular, $g_\infty$ is continuous, everywhere defined, and positive everywhere.    We  need only confirm that  $g_\infty$ is non-degenerate, i.e.\ is positive {definite} on $T_xM$ for every $x\in M$.

		Given any $x\in M$ and   unit vector $\xi\in T_xM$, for any $\epsilon>0$ we have from Definition \ref{def:USEGOD} that there is a  $C_\epsilon>0$ such that 
		\begin{align*}
			p_n(g)(\xi,\xi)&=  \left(\sum w_i \alpha_\#(\gamma_i) g\right)(\xi,\xi)\\
			&=  \sum w_i  g (D_x\alpha(\gamma_i\inv) \xi,D_x\alpha(\gamma_i\inv)  \xi)\\
			&\ge \frac{1}{C_\epsilon^2 } e^{-2 \epsilon n}
		\end{align*}
		where we use that $w_i > 0$ only when  $\gamma_i$  has  word-length at most $n$.  
		
		On the other hand, from the exponential convergence in Theorem \ref{lafforgue1} we have 
		\begin{align*}
			|p_n(g)(\xi,\xi)- p_\infty(g)(\xi,\xi)| &\le C_\lambda \lambda^n.
		\end{align*}
		Thus $$p_\infty(g)(\xi,\xi) \ge \frac{1}{C_\epsilon^2 } e^{-2 \epsilon n}- C_\lambda \lambda^{n}$$
		for all $n\ge 0$.  Taking $\epsilon>0$ sufficiently small we can ensure that $$ {C_\epsilon^2 } e^{2 \epsilon n}< \frac{1}{C_\lambda }\lambda^{-n}$$
		for all sufficiently large $n$.  Then, for all sufficiently large $n$ we have  $$\frac{1}{C_\epsilon^2 } e^{-2 \epsilon n}>  C_\lambda \lambda^n$$ and thus $p_\infty(g)(\xi,\xi)>0.$
	\end{proof}

	\subsection{Step 3: Margulis superrigidity with compact codomain}\label{ssec:MSRcompact} %\index{Margulis superrigidity}
	From Steps 1 and 2 we have that any action $\alpha\colon \Gamma\to \diff^\infty(M)$ as in  Theorem \ref{slnr} preserves a $C^\ell$ Riemannian metric $g$. 
	In the general case of $C^2$-actions (or even $C^{1+\beta}$-actions), we have that any action  $\alpha\colon \Gamma\to \diff^2(M)$ preserves a continuous Riemannian metric $g$.  See   \cite[Theorem 2.7]{1608.04995}.
	We thus have $$\alpha\colon \Gamma\to \mathrm{Isom}_g(M).$$

	The group $\mathrm{Isom}_g(M) $ is compact.  When $g$ is at least $C^1$, it follows from the classical result of Myers and Steenrod \cite{MR1503467} that $\mathrm{Isom}_g(M) $ is a Lie group; for continuous $g$ we use the solution to the Hilbert--Smith conjecture for actions by bi-Lipschitz maps \cite{MR1464908} to conclude that $\mathrm{Isom}_g(M) $ is a Lie group.  
	Let $\dim(M) = m$.   Then	
%	{\red After a classical result by Myers and Steenrod \cite{MR1503467}, the group $\mathrm{Isom}_g(M) $   of isometries of a continuous Riemannian metric is a compact Lie group with }\note{note quite correct when $g$ is $C^0$}
	\begin{equation}\label{eq4545}\dim(\mathrm{Isom}_g(M))\le \dfrac{m(m+1)}{2}.\end{equation}
	Indeed, the orbit of any point $p\in M$ under $\mathrm{Isom}_g(M)$ has dimension at most $m$ and the dimension of the stabilizer of a point is at most $\frac{m(m-1)}{2}$, the dimension of $\SO(m)$; thus $$\dim(\mathrm{Isom}_g(M)) \le m+ \frac{m(m-1)}{2}.$$

	With  $K=\mathrm{Isom}_g(M)$ we thus obtain  a  compact-valued representation $\alpha\colon \Gamma\to K$. By equation 
	\eqref{eq4545}, if $m< \frac{1}{2} \sqrt{8 n^2-7}-\frac{1}{2}$
	then    $\dim  (\mathfrak{su}(n) )= n^2-1 > \dim (K)$; by conclusion \ref{MSR2} of   Theorem  \ref{thm:MSRcompactcodomain}, $\alpha(\Gamma)$ is  thus contained in a  0-dimensional subgroup of $K$.  
	This holds in particular  if $m\le n-1$. % then conclusion \ref{MSR2} of Theorem  \ref{thm:MSRcompactcodomain}  implies that $K$ is 0-dimensional 
	We thus conclude that the image $$\alpha(\Gamma)\subset K = \mathrm{Isom}_g(M)$$ is finite.
	%of $\alpha\colon \Gamma\to K=\mathrm{Isom}^2_g(M)\subset \Diff^2(M)$ is 

	Summarizing the  arguments from Steps 2 and 3, we obtain the following.  
	\begin{theorem}\label{thm:MSRcompact}
		For $n\ge 3$,  let $\Gamma\subset \Sl(n,\R)$ be a  lattice.  Let $\alpha\colon \Gamma\to \Diff^2(M)$ be an action with uniform subexponential growth of derivatives.  
		
		Then, if  $$\dim(M)< \frac{1}{2} \sqrt{8 n^2-7}-\frac{1}{2},$$
		the image $\alpha(\Gamma)$ is finite.  
	\end{theorem}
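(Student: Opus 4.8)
The plan is to assemble Theorem~\ref{thm:MSRcompact} directly from Steps~2 and 3 outlined above, treating Step~1 (Theorem~\ref{thm:USEGOD}) as a black box that has already supplied the hypothesis of uniform subexponential growth of derivatives. So let $\Gamma\subset\Sl(n,\R)$ be a lattice and let $\alpha\colon\Gamma\to\Diff^2(M)$ be an action with uniform subexponential growth of derivatives, with $\dim(M)=m$. The goal is to show $\alpha(\Gamma)$ is finite whenever $m<\tfrac12\sqrt{8n^2-7}-\tfrac12$.

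First I would produce an $\alpha(\Gamma)$-invariant continuous Riemannian metric on $M$. For $C^\infty$ actions this is exactly the Proposition proved in Step~2: one considers the induced representation $\alpha_\#$ on a Hilbert Sobolev space $\H^k=W^{2,k}(S^2(T^*M))$, invokes Lemma~\ref{lem:normgrowth} to see that $\alpha_\#$ has subexponential norm growth, applies strong property~(T) (Theorem~\ref{lafforgue1}) to an arbitrary smooth metric $g$ to obtain a $\Gamma$-invariant limit $g_\infty=p_\infty(g)$, and uses the Sobolev embedding together with the quantitative lower bound $p_n(g)(\xi,\xi)\ge C_\epsilon^{-2}e^{-2\epsilon n}$ and the exponential convergence $\lVert g_n-g_\infty\rVert\le C_\lambda\lambda^n$ to conclude $g_\infty$ is positive definite, hence a genuine $C^\ell$ metric for $k$ large. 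For the general $C^2$ (or $C^{1+\beta}$) case I would instead cite \cite[Theorem 2.7]{1608.04995}, which gives a continuous invariant Riemannian metric under the same hypothesis; the rest of the argument only needs continuity.

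Next I would pass to the isometry group $K=\mathrm{Isom}_g(M)$, into which $\alpha$ now factors: $\alpha\colon\Gamma\to K$. I would note $K$ is a compact Lie group---by Myers--Steenrod \cite{MR1503467} when $g$ is $C^1$, and by the bi-Lipschitz Hilbert--Smith result \cite{MR1464908} for continuous $g$---and bound its dimension by the orbit-plus-stabilizer estimate $\dim K\le m+\tfrac{m(m-1)}{2}=\tfrac{m(m+1)}{2}$. Finally I would apply Theorem~\ref{thm:MSRcompactcodomain}\ref{MSR2}: if $\alpha(\Gamma)$ were infinite, $K$ would contain a subgroup whose Lie algebra is a nontrivial product of copies of $\mathfrak{su}(n)$, forcing $\dim K\ge n^2-1$. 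The inequality $\tfrac{m(m+1)}{2}<n^2-1$, which is equivalent to $m<\tfrac12\sqrt{8n^2-7}-\tfrac12$, then gives a contradiction, so $\alpha(\Gamma)$ is finite. (In particular the bound holds when $m\le n-1$, recovering the statements of Theorem~\ref{slnr}.)

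There is no serious obstacle here beyond careful bookkeeping, since all the analytic heavy lifting---uniform subexponential growth (Theorem~\ref{thm:USEGOD}) and strong property~(T) (Theorem~\ref{lafforgue1})---is already available; the only delicate point is the non-degeneracy of the averaged metric $g_\infty$, which is precisely where the \emph{exponential} rate of convergence in strong property~(T) must be played against the subexponential derivative growth, and which I would handle exactly as in the Step~2 Proposition above. For the $C^2$ case, the one genuine dependency I am importing wholesale is \cite[Theorem 2.7]{1608.04995}, whose proof replaces the Hilbert Sobolev space with a Banach Sobolev space $W^{p,1}(S^2(T^*M))$ of type $\mathcal E_{10}$ and uses the Banach version of strong property~(T); I will not reprove that here.
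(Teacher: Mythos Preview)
Your proposal is correct and follows essentially the same approach as the paper: Theorem~\ref{thm:MSRcompact} is stated in the paper precisely as a summary of Steps~2 and~3, and your write-up recapitulates those steps accurately, including the invocation of \cite[Theorem 2.7]{1608.04995} for the $C^2$ case, the Myers--Steenrod/Hilbert--Smith argument that $\mathrm{Isom}_g(M)$ is a compact Lie group, the dimension bound $\dim K\le m(m+1)/2$, and the application of Theorem~\ref{thm:MSRcompactcodomain}\ref{MSR2} together with the equivalence $m(m+1)/2<n^2-1\iff m<\tfrac12\sqrt{8n^2-7}-\tfrac12$.
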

	
	\section{Proof outline of Theorem \ref{thm:USEGOD}}
	To establish Theorem \ref{slnr}, from the discussion in Section \ref{sec:outline3steps} it is enough to establish
	Theorem \ref{thm:USEGOD}: the action $\alpha$ has  uniform subexponential growth of derivatives.  % for the action $\alpha$.  
	We outline the  proof of Theorem \ref{thm:USEGOD}.  %\ref{USEGOD1}.  Theorem \ref{thm:USEGOD}\ref{USEGOD2} requires only minor modifications.  The essential differences between the two cases is discussed in the proof of Theorem \ref{thm:invmsr}.  

	\subsection{Setup for proof}
	For $n\ge 3$, let $\Gamma\subset \Sl(n,\R)$ be a cocompact  lattice.  Let $M$ be a compact manifold and let  $\alpha\colon \Gamma\to \Diff^2(M)$ be an action.  Assume either that  $\dim(M)\le n-2$ or that   $\dim(M)\le n-1$ and that $\alpha$ preserves a volume form.  
	We recall the following constructions from the proof of Theorem \ref{thm:invmeas}:
	\begin{enumerate}
		\item The manifold $M^\alpha= (\Sl(n,\R)\times M)/\Gamma$ is the suspension space introduced in Section \ref{ssec:susp}.  $M^\alpha$ is a fiber bundle over $\Sl(n,\R)/\Gamma$ with fibers diffeomorphic to $M$.  Moreover, $M^\alpha$ and $\Sl(n,\R)/\Gamma$ have  natural (left) $\Sl(n,\R)$-actions and the projection $\pi\colon M^\alpha \to \Sl(n,\R)/\Gamma$ intertwines these $G$-actions.  
		\item  $A\subset \Sl(n,\R)$ denotes the subgroup of diagonal matrices with positive entries.  We have $A\simeq \R^{n-1}$ which is a higher-rank, free abelian group if $n\ge 3$.  
		\item Given an ergodic, $A$-invariant Borel probability measure $\mu$ on $M^\alpha$ we have \emph{fiberwise Lyapunov exponents}\index{Lyapunov exponent!fiberwise}.
		$$\lambda^F_{1,\mu}, \dots, \lambda^F_{p,\mu}\colon A\to \R$$
		for the restriction of the  derivative of the $A$-action on $M^\alpha$ to the fibers of $M^\alpha$ introduced in Section \ref{ss:fibLyap}.
		\item $\beta^{i,j}\colon A\to \R$ are the \emph{roots} of $\Sl(n,\R)$ and $U^{i,j}$ are the corresponding \emph{root subgroups} introduced in Section \ref{ss:roots}.
	\end{enumerate}
	\subsection{Two key propositions}
	The proof of Theorem \ref{thm:USEGOD} is by contradiction and follows from the following two propositions.  Our first key proposition is an analogue of  Proposition \ref{prop:ZLEUSEGOD}.  
	\begin{proposition}\label{eq6677}
		Suppose that $\alpha\colon \Gamma\to \Diff^1(M)$ fails to have uniform subexponential growth of derivatives.  Then there exists a Borel probability measure $\mu'$ on $M^\alpha$ such that  
		\begin{enumcount}
			\item $\mu'$ is  $A$-invariant and ergodic;
			\item  there exists a nonzero fiberwise Lyapunov exponent $\lambda^F_{j, \mu'}\colon A\to \R$.  
		\end{enumcount}
	\end{proposition}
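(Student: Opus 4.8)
\textbf{Proof proposal for Proposition \ref{eq6677}.}

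The plan is to mimic the proof of Proposition \ref{prop:ZLEUSEGOD}, but carried out in the suspension space $M^\alpha$ and using the fiberwise derivative cocycle instead of the full derivative. First I would negate the conclusion of Theorem \ref{thm:USEGOD}: if $\alpha$ fails to have uniform subexponential growth of derivatives, then there exist $\epsilon>0$, group elements $\gamma_j\in\Gamma$ with $|\gamma_j|\to\infty$, points $x_j\in M$, and unit vectors $v_j\in T_{x_j}M$ with $\|D_{x_j}\alpha(\gamma_j)v_j\|\ge e^{\epsilon|\gamma_j|}$. The key geometric input, which I would establish next, is that the word-length $|\gamma|$ is comparable to the distance $d_{G/\Gamma}(e\Gamma,\gamma\inv e\Gamma)$ in the locally symmetric space (using cocompactness and the \v Svarc--Milnor lemma, as in \cite[Theorem 8.2]{MR2850125}); consequently there is $a_j\in A$ with $|a_j|\asymp|\gamma_j|$ and a uniformly bounded $k_j\in K$, $n_j\in N$ such that $\gamma_j\inv$ differs from $k_j a_j n_j$ by a bounded amount, via a Cartan (or Iwasawa) decomposition of $\gamma_j\inv$. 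This lets me transfer the fiberwise growth of $D\alpha(\gamma_j)$ along $M$ to fiberwise growth of the $A$-action on $M^\alpha$: lifting $x_j$ to a point $\hat x_j\in M^\alpha$ in the fiber over $e\Gamma$ and pushing by $k_jn_j$, I obtain $\hat y_j\in M^\alpha$ and fiberwise unit vectors $\hat v_j\in F(\hat y_j)$ with $\|D_{\hat y_j}(a_j)\hat v_j\|\ge C^{-1}e^{\epsilon|a_j|}$ for a uniform constant $C$ absorbing the bounded $K$- and $N$-contributions and the bounded distortion of the metric on $M^\alpha$.

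With this in hand, the argument is the Krylov--Bogolyubov/empirical-measure construction on the fiberwise unit-sphere bundle $UF\subset UM^\alpha$. Choosing a ray direction $s_0\in A$ with $a_j$ asymptotically along $\R_{>0}s_0$ (passing to a subsequence so that $a_j/|a_j|$ converges), I would form the empirical measures $\nu^j$ on $UF$ along the orbit segments of the renormalized fiberwise derivative cocycle $Us_0^k$ for $k=0,\dots,\lfloor|a_j|/|s_0|\rfloor-1$, starting at $(\hat y_j,\hat v_j)$. Exactly as in Claim \ref{booboob}, any weak-$*$ subsequential limit $\nu$ is invariant under the time-one map $Us_0$ and satisfies $\int\Phi\,d\nu\ge\epsilon'>0$, where $\Phi(\hat x,\hat v)=\log\|D_{\hat x}s_0(\hat v)\|$ on $UF$. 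Replacing $\nu$ by an ergodic component $\nu'$ with $\int\Phi\,d\nu'\ge\epsilon'$, and then projecting $\nu'$ down to $M^\alpha$ and taking an ergodic component $\mu'$ of the projection under the full $A$-action (using that $A$ is abelian hence amenable, and that $\int\Phi\,d\nu'>0$ is preserved under passing to a fiberwise-conditional ergodic component via the pointwise ergodic theorem as in the proof of Proposition \ref{prop:ZLEUSEGOD}), I conclude that $\mu'$ is an ergodic, $A$-invariant Borel probability measure for which the top fiberwise Lyapunov exponent in the direction $s_0$ is at least $\epsilon'>0$; in particular some fiberwise Lyapunov functional $\lambda^F_{j,\mu'}$ is nonzero. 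This gives conclusions (1) and (2).

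The main obstacle I anticipate is the bookkeeping in transferring growth of the abstract derivative cocycle over $\Gamma$ to fiberwise growth of the $A$-action on $M^\alpha$ with uniform constants: one must control the metric on $M^\alpha$ (which was built by a partition-of-unity averaging in \cref{ssec:susp}, and is only $\beta$-H\"older, though uniformly bi-Lipschitz comparable to any smooth metric since $M^\alpha$ is compact), verify that the $KN$-part of the Cartan/Iwasawa decomposition of $\gamma_j\inv$ stays in a fixed compact set up to the lattice action, and check that conditioning on fibers in the last step does not kill the positivity of $\int\Phi$. Each of these is routine given compactness of $\Gamma\backslash G$ and of $M$, but assembling them carefully is where the real work lies; the ergodic-theoretic core is identical to Proposition \ref{prop:ZLEUSEGOD}. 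A secondary point to handle is that $\mu'$ need not project to the Haar measure on $G/\Gamma$ — that is why the proposition only asserts existence of a nonzero fiberwise exponent and is later combined with a separate argument (the second key proposition, using higher-rank tricks and the invariance principle) to derive the contradiction.
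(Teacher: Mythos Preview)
Your strategy is essentially the paper's (see \cref{Sec:spitproof}): transfer the growth to the $G$-action on $M^\alpha$, strip off compact factors so that all growth lies in $A$, run the empirical-measure argument on the fiberwise unit sphere bundle $UF$, and finish by averaging and taking an ergodic component. Two points in your write-up need correction.

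First, the decomposition must be the Cartan decomposition $G=KAK$ (polar/singular-value decomposition), not Iwasawa $KAN$. You write ``a uniformly bounded $k_j\in K$, $n_j\in N$'' and later speak of ``the $KN$-part of the Cartan/Iwasawa decomposition''; this conflates the two. In Iwasawa the $N$-component of a sequence in $G$ (or in $\Gamma$) is \emph{not} bounded, so you cannot absorb it into a uniform constant when estimating fiberwise derivatives. With $g_{m_n}=k_n a_n k_n'$ and $K$ compact, $\|\restrict{D_xk}{F}\|$ is uniformly bounded over $k\in K$, $x\in M^\alpha$, and all the exponential growth is carried by $a_n\in A$; this is exactly Claim~\ref{claim:gammavG} together with the $KAK$ step in the paper.

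Second, after the weak-$*$ limit you obtain a measure $\bar\mu$ on $M^\alpha$ that is only invariant under the single element $s\in A$ (the limit of the normalized $a_n$), not under all of $A$. You cannot immediately take an ``$A$-ergodic component'' of an $s$-invariant measure. The paper first averages $\bar\mu$ against a F{\o}lner sequence in $A$; by the analogue of Claim~\ref{olpo} (upper semicontinuity of $\lambda^F_\av(s,\cdot)$ and the fact that $A$ centralizes $s$), any weak-$*$ limit $\td\mu$ is $A$-invariant with $\lambda^F_\av(s,\td\mu)\ge\epsilon$. Only then does one pass to an $A$-ergodic component $\mu'$ with $\lambda^F_\av(s,\mu')\ge\epsilon$, giving the nonzero fiberwise exponent. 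Your parenthetical ``using that $A$ is abelian hence amenable'' gestures at this, but the F{\o}lner averaging over $A$ is an explicit step, not implicit in taking an ergodic decomposition.
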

	The proof of Proposition \ref{eq6677} is very similar  to the proof of Proposition \ref{prop:ZLEUSEGOD} with some minor modifications and notational differences.   We include an outline of the proof in \cref{Sec:spitproof}; see also  \cite[Section 4]{1608.04995} for complete details.  %\note{new sentence... this is a very standard trick so added lots of references} 
	Obtaining  uniform control on the growth of a cocycle from bounds on the top Lyapunov exponent over all invariant measures is a standard technique in dynamical systems; see for instance  \cite{MR1986306,MR1643183,Hurtado_Burnside,MR2776369}.

%	   {\color{red} and \cite{Hurtado_Burnside} where this was first used for a different application (on torsion groups acting on the 2-sphere)}.

	The measure $\mu'$ in Proposition \ref{eq6677} projects to an ergodic, $A$-invariant measure on $\Sl(n,\R)/\Gamma$.  If $\mu'$ projected to the Haar measure on $\Sl(n,\R)/\Gamma$ then, from Theorem \ref{thm:invmsr} and the bounds on the dimension $M$, the measure  $\mu'$ would be $G$-invariant and, as explained below, the proof of Theorem \ref{thm:USEGOD} would be complete.  However, there may exist ergodic  $A$-invariant measures on 
	$\Sl(n,\R)/\Gamma$ that are not the Haar measure.\footnote{In fact, for certain lattices $\Gamma$ there exist  ergodic  $A$-invariant measures on  $\Sl(n,\R)/\Gamma$ that have positive entropy for some element of $A$ as shown by M. Rees; see \cite[Section 8]{MR1989231}.}  
	
	%need not necessarily project to the Haar measure on $\Sl(3,\R)/\Gamma$.  However, b
	By carefully averaging the measure $\mu'$ along root subgroups $U^{i,j}$ and applying Ratner's measure classification theorem \cite{MR1262705} to the projected measure on  $\Sl(n,\R)/\Gamma$ we obtain the following.  
	
	\begin{proposition}\label{prop:goodmeas}
		Let $\alpha\colon \Gamma\to \Diff^{1}(M)$ be an action.  
		Suppose   there exists an ergodic, $A$-invariant measure $\mu'$ on the suspension space $M^\alpha$  with a nonzero   fiberwise Lyapunov exponent $\lambda^F_{j', \mu'}\colon A\to \R$.   Then there exists a Borel probability measure $\mu$ on $M^\alpha$ such that  
		\begin{enumcount}
			\item $\mu$ is  $A$-invariant and ergodic;
			\item  there exists a nonzero fiberwise Lyapunov exponent $\lambda^F_{j, \mu}\colon A\to \R$; %
			\item $\mu$ projects to the Haar measure on  $\Sl(n,\R)/\Gamma$.  
		\end{enumcount}
		%Then there is an  $A$-invariant measure $\mu$ on $M^\alpha$  with nonzero   fiberwise Lyapunov exponent $\lambda^F_{A, \mu}$ and whose projection to $\Sl(3,\R)/\Gamma$ is the Haar measure.  
	\end{proposition}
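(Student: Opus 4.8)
\textbf{Proof proposal for Proposition \ref{prop:goodmeas}.}

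The plan is to start from the ergodic $A$-invariant measure $\mu'$ on $M^\alpha$ with a nonzero fiberwise Lyapunov exponent $\lambda^F_{j',\mu'}$, and to repair the projected measure $\pi_*\mu'$ on $\Sl(n,\R)/\Gamma$ by a sequence of averaging operations over unipotent root subgroups, all the while preserving the existence of some nonzero fiberwise exponent. The key engine is the higher-rank trick mentioned in the introduction: if $\lambda^F_{j,\mu}$ is a nonzero fiberwise exponent and $\beta=\beta^{i,j}$ is a root not proportional to it, then, because $A\simeq \R^{n-1}$ with $n\ge 3$, we may choose $s_0\in \ker\beta$ with $\lambda^F_{j,\mu}(s_0)>0$. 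The element $s_0$ is centralized by $U^{i,j}$ (by \eqref{kokololo}, since $\beta(s_0)=0$), so averaging $\mu$ over a \Folner sequence in $U^{i,j}$ produces a new $A$-invariant (indeed $s_0$-invariant, hence after decomposing, $A$-invariant) measure $\bar\mu$ which is now $U^{i,j}$-invariant; and since $U^{i,j}$ centralizes $s_0$ and $\lambda^F_{j,\mu}(s_0)>0$, the averaged measure still has a fiberwise direction with positive $s_0$-exponent, so some nonzero fiberwise exponent $\lambda^F_{j',\bar\mu}$ survives. This is the content of Claim \ref{average} referenced in the introduction, and I would isolate it as a lemma first.

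With that lemma in hand, the strategy is: repeatedly average over root subgroups to push $\pi_*\mu'$ towards a measure invariant under a large unipotent subgroup of $\Sl(n,\R)$, then invoke Ratner's measure classification theorem \cite{MR1262705} for the unipotent-invariant projected measure. Concretely, I would order the roots and average successively; at each stage one must check that the newly produced measure is still ergodic (pass to an ergodic component, which by the product/relation structure of fiberwise exponents still carries a nonzero fiberwise exponent — here one uses $\dim M\le n-1$ or the volume-preserving trace relation $\sum\lambda^F_{i,\mu}=0$ to guarantee the fiberwise exponents cannot all be proportional to a single root) and that its projection to $\Sl(n,\R)/\Gamma$ has grown its unipotent invariance group. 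The output of this averaging procedure is an $A$-invariant ergodic $\mu$, still with a nonzero fiberwise exponent, whose projection $\nu=\pi_*\mu$ on $\Sl(n,\R)/\Gamma$ is invariant under a unipotent one-parameter subgroup. Ratner's theorem then says $\nu$ is homogeneous: it is the $H$-invariant volume on a closed orbit $Hx\subset \Sl(n,\R)/\Gamma$ for some closed connected subgroup $H\supseteq A\cdot U^{i,j}$. The final step is to show that $H$ must be all of $\Sl(n,\R)$ (so $\nu$ is Haar): this follows because $H$ contains $A$ and enough root subgroups, and an $A$-normalized subgroup of $\Sl(n,\R)$ containing $A$ and a unipotent root group, which is \emph{not} all of $G$, would be a proper parabolic; one then argues (using the surviving nonzero fiberwise exponent together with the dimension bound, exactly as in the proof of Theorem \ref{thm:invmeas} via \cref{prop:nonresinv} or the simple-Lie-algebra argument at the end of the proof of \cref{thm:invmsr}) that the measure $\mu$ would then be invariant under still more root subgroups, forcing $\nu$ to be invariant under a subgroup with no proper parabolic containing it, hence $H=G$.

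The main obstacle I anticipate is the bookkeeping in the averaging step: one must choose the roots to average over in the right order so that (a) each chosen root is genuinely non-proportional to the \emph{current} fiberwise exponents of the \emph{current} measure, which may change at each stage, (b) invariance under previously-installed root subgroups is not destroyed by the new averaging (this requires the new $s_0$ to lie in the relevant kernels, which is again a higher-rank dimension count in $A$), and (c) ergodicity can be restored at each stage without losing the nonzero fiberwise exponent. Handling (c) cleanly is where the hypotheses $\dim M\le n-2$ (resp.\ $\dim M\le n-1$ with a volume form) get used a second time, via the entropy product structure (\cref{thm:entprod}) or the trace relation on fiberwise exponents, to guarantee that an ergodic component of an averaged measure with a nonzero fiberwise exponent still has one. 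I would also need to be slightly careful that the \Folner-averaging over a noncompact unipotent group converges to an $A$-invariant probability measure on the compact space $M^\alpha$ (compactness of $M^\alpha$ here uses cocompactness of $\Gamma$), which is routine but worth stating.
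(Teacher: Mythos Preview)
Your overall strategy---average over unipotent root subgroups centralizing an element $s_0$ on which a fiberwise exponent is positive, use Claim~\ref{average} to preserve $\lambda^F_\av(s_0,\cdot)>0$, and invoke Ratner---is the right one and matches the paper. But there are two genuine gaps. First, the dimension hypotheses on $M$ play \emph{no role} in this proposition (the paper remarks this explicitly just after the statement): the survival of a nonzero fiberwise exponent through averaging and through passage to an ergodic component follows purely from Claim~\ref{average}\ref{olk3}--\ref{olk4} and the fact that $\lambda^F_\av(s_0,\mu_2)>0$ forces some ergodic component to satisfy the same inequality. Your proposed use of the trace relation or \cref{prop:nonresinv} in step (c) and in the final paragraph is both unnecessary and, in the case of \cref{prop:nonresinv}, circular, since that proposition already assumes the projection is Haar.

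Second, and more seriously, your bookkeeping plan in (b)---maintaining $U^{i,j}$-invariance of the measure $\mu$ on $M^\alpha$ by constraining successive choices of $s_0$---does not work and is not what the paper does. After you average over $U^{i,j}$ (getting $\mu_1$, which is $s_0$- and $U^{i,j}$-invariant but not $A$-invariant) you must then average over a F{\o}lner sequence in $A$ to restore $A$-invariance; the resulting $\mu_2$ will typically \emph{lose} $U^{i,j}$-invariance on $M^\alpha$. The paper's key observation is that one should track unipotent invariance only at the level of the \emph{projected} measure $\hat\mu=\pi_*\mu$ on $G/\Gamma$: Ratner's theorems (packaged as Proposition~\ref{claim:Ratner}) guarantee that $\hat\mu_1=U^{i,j}\ast\hat\mu_0$ is already $A$-invariant and $A$-ergodic, so $\hat\mu_2=\hat\mu_1$ and nothing is lost downstairs. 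Moreover---and this is what makes the scheme terminate in just two rounds rather than many---Proposition~\ref{claim:Ratner}\ref{polk4} gives for free that an $A$-invariant, $U^{i,j}$-invariant measure on $G/\Gamma$ is automatically $U^{j,i}$-invariant, so each averaging step doubles the unipotent invariance of $\hat\mu$. After two carefully chosen rounds the projection is invariant under two block-$\Sl(2)$ subgroups that generate $\Sl(n,\R)$, hence is Haar; no parabolic-subgroup or dimension argument is needed at the end.
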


	\begin{remark}\
		\begin{enumerate}
			\item  Propositions \ref{eq6677} and  \ref{prop:goodmeas} hold in full generality; they do not depend on the comparison between the dimension of $M$ and the rank of $\Sl(n,\R)$.  The constraint on the dimension of  $M$ is used to obtain a contradiction in the  proof of Theorem \ref{thm:USEGOD} by applying Theorem \ref{thm:invmsr} and   Zimmer's cocycle superrigidity to the fiberwise derivative cocycle.  % and using that the dimension of the fiber (which is the dimension of $M$) is small compared to the rank of $G$. 
			\item Propositions \ref{eq6677} and  \ref{prop:goodmeas}  heavily use the fact that $\Gamma$ is cocompact in $\Sl(n,\R)$ so that the manifold $M^\alpha$ is compact.  For instance, if $M^\alpha$ is not compact then  the   proof of Proposition \ref{eq6677} (compare with proof of Proposition \ref{prop:ZLEUSEGOD}) fails as there may be escape of mass into the cusp of $G/\Gamma$.  Thus,  more subtle arguments are required to establish the analogue of \cref{thm:USEGOD}    in the case that $\Gamma$ is nonuniform.  In the case that $\Gamma = \Sl(n,\Z)$, such  arguments appear in \cite{1710.02735}.
			\item Both \cref{eq6677} and \cref{prop:goodmeas}  hold for $C^1$ actions.  The $C^{1+\beta}$ hypotheses is  later used (along with the dimension bounds) to conclude that the $A$-invariant measure $\mu$ obtained in  \cref{prop:goodmeas}  is, in fact, $G$-invariant by applying \cref{thm:invmsr}.  
		\end{enumerate}
	\end{remark}

	\subsection{Proof of Theorem \ref{thm:USEGOD}}\label{sec:pp}
	
	We deduce  Theorem \ref{thm:USEGOD} %\ref{USEGOD1} 
	from Proposition \ref{eq6677}, Proposition  \ref{prop:goodmeas}, Theorem   \ref{thm:invmsr}, and  Theorem \ref{thm:ZCSR}.   %Theorem \ref{thm:USEGOD}\ref{USEGOD2}  is similar.  % argument using that the sum of fiberwise Lyapunov exponents is zero.  % (c.f.\ Corollary \ref{cor:stupidcor}).
	
	% we  uses that the fibers of $M^\alpha$ have small dimension relative to the rank of $G$. 
	\begin{proof}[Proof of Theorem \ref{thm:USEGOD}]%\ref{USEGOD1}]
		Let $\alpha\colon \Gamma\to \Diff^2(M)$ be as in Theorem \ref{thm:USEGOD}.    For the sake of contradiction, assume that $$\alpha\colon \Gamma\to \Diff^2(M)$$ fails to have uniform subexponential growth of derivatives.  
		Let $\mu'$ be the measure guaranteed by Proposition \ref{eq6677}.  We then apply Proposition \ref{prop:goodmeas} to obtain an ergodic, $A$-invariant Borel probability  measure $\mu$ on $M^\alpha$ that projects to the Haar measure on $G/\Gamma$ and has a non-zero fiberwise Lyapunov exponent.  
		% Since the measure $\mu$ in Proposition \ref{prop:goodmeas} projects to the Haar measure on $\Sl(n,\R)/\Gamma$ we may 
		In either case considered in   \cref{thm:USEGOD}, it follows from   Theorem \ref{thm:invmsr} that $\mu$ is $G$-invariant. 
		
		%   Alternatively, we may  argue as in the proof  of Theorem \ref{thm:invmeas} using \cref{prop:nonresinv} in \cref{ss:proveinv}.
		%   If $\dim(M)\le n-2$ then  there are at most $n-2 $ roots $\beta^{i,j}$ that are resonant with the fiberwise Lyapunov exponents  $\{\lambda^F_{i,\mu}\}$.  It follows from  Proposition \ref{prop:nonresinv} that $\mu$ is invariant by a subgroup $H\subset \Sl(n,\R)$ that contains $A$ and all-but-$(n-2)$ root subgroups $U^{i,j}$ and thus $H$ has codimension at most $n-2$.  As no such proper subgroup exists, we conclude $H= G$ and that $\mu$ is $G$-invariant.  

		Recall that we write $\pi\colon M^\alpha \to \Sl(n,\R)/\Gamma$ for the natural projection and let $F$ be the fiberwise tangent bundle; that is, $F$ is  sub-vector-bundle of $TM^\alpha$ given by $F = \ker D\pi$.  
		As $F$ is $G$-invariant, we may  apply Zimmer's cocycle superrigidity theorem, Theorem \ref{thm:ZCSR},  to the fiberwise derivative cocycle $\calA(g,x) = \restrict{D_x g}{F(x)}$ of the $\mu$-preserving $\Sl(n,\R)$-action on $M^\alpha$.
		Since the fibers have dimension at most $n-1$  and since there are no non-trivial representations $\rho\colon \Sl(n,\R)\to \Sl(d,\R)$ for $d<n$, it follows from Theorem \ref{thm:ZCSR} that the fiberwise derivative cocycle $ \calA(g,x) = \restrict{D_x g}{F(x)}$  is cohomologous to a compact-valued cocycle: there is a compact group $K\subset \Sl(d,\R)$ and a measurable map $\Phi\colon M^\alpha \to \Gl(d,\R)$ such that $$\Phi(g\cdot x) \restrict{D_x g}{F(x)}\Phi(x)\inv \in K.$$
		By Poincar\'e recurrence to sets on which the norm and conorm of $\Phi$ are bounded, it follows for any $g\in G$ and $\epsilon>0$  that the set of  $x\in M^\alpha$ such that $$\liminf _{n\to \infty} \frac 1 n \log\|\restrict{D_x g^n}{F(x)}\|\ge {\epsilon }$$ has $\mu$-measure zero.  This   contradicts the existence of  nonzero fiberwise Lyapunov exponent for $\mu$. This contradiction completes the proof of Theorem \ref{thm:USEGOD}. 
	\end{proof}
	%The contradiction now follows  from Zimmer's cocycle superrigidity which we explain in the next section.  

	\section{Discussion of the proof of Propositions  \ref{eq6677} and  \ref{prop:goodmeas}}\label{sec:owlpate}
	% The proof of Proposition \ref{prop:goodmeas} we have the following tech
	We outline the main steps in the proof of Propositions  \ref{eq6677} and  \ref{prop:goodmeas}.  %The proof follows from averaging the measures $\mu'$ appearing in the hypotheses of \cref{prop:goodmeas}  (which we obtained from Proposition \ref{eq6677})  along certain unipotent root subgroups $U^{i,j}$ while maintaining a positive fiberwise Lyapunov exponents for the action of some  element $s\in A$.  We remark again that the dimension of $M$ is never used in the proof   of Proposition \ref{prop:goodmeas}.  

	\subsection{Averaging measures on $M^\alpha$} Let $H= \{h^t: t\in \R\}$ be a 1-parameter subgroup of $\Sl(n,\R)$.  Given a measure $\mu$ on $M^\alpha$ and $T\ge 0$ we define
	$$H^T\ast \mu := \frac{1}{T}\int _0^T (h^t)_*\mu \ d t$$
	to be the  measure obtained by averaging  the translates of $\mu$ over the interval  $[0, T]$.

	Let $s\in A$.  Given any $s$-invariant measure $\mu$ on $M^\alpha$, the \emph{average top fiberwise Lyapunov exponent} of $s$ with respect to $\mu$ is \index{Lyapunov exponent!top!fiberwise}
	\begin{equation}\label{topex}\lambda^F_\av(s,\mu)  =  \inf_{n\ge 1}  \frac 1 n  \int \log \|\restrict{D_x(s^n)}{F} \| \ d \mu (x).\end{equation}
	%Since $F$ is a 1-dimensional bundle, there is only one Lyapunov exponent; for general bundles, the formula in \eqref{topex} gives the largest Lyapunov exponent.  
	Note that if  $\mu$ is moreover $A$-invariant and $A$-ergodic with fiberwise Lyapunov exponents  $\lambda^F_{1,\mu}, \dots, \lambda^F_{p,\mu}\colon A\to \R$ then  $$\lambda^F_\av(s,\mu) =\max _{1\le i\le p} \lambda^F_{i, \mu}(s).$$
	
	We have the following  facts which we invoke throughout our averaging procedures.  
	\begin{claim} \label{olpo} Let $s\in A$ and let $\mu$ be an $s$-invariant measure on $M^\alpha$.  \label{average}
		Let $H= \{h^t, t\in \R\}$  be a one-parameter group  contained in the centralizer of $s$  in $\Sl(n,\R)$.
		\begin{enumcount}
			\item \label{olk1}The measure $H^T\ast \mu$
			is $s$-invariant for every $T\ge 0$.
			\item \label{olk2} Any weak-$*$ limit point   of 
			$\{H^T\ast \mu\}$ as $T\to \infty$ 
			is $s$-invariant.
			\item \label{olk5} Any weak-$*$ limit point   of 
			$\{H^T\ast \mu\}$ as $T\to \infty$ 
			is $H$-invariant.
			\item \label{olk4} $\lambda^F_\av(s,H^T\ast \mu)= \lambda^F_\av(s,\mu)$ for every $T\ge 0$.  
			
			\item \label{olk3} If $\mu'$ is a weak-$*$ limit point   of 
			$\{H^T\ast \mu\}$ as $T\to \infty$ 
			then $$\lambda^F_\av(s,\mu')\ge \lambda^F_\av(s,\mu).$$
		\end{enumcount}
	\end{claim}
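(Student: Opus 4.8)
The proof of Claim~\ref{average} rests on two facts: that $H$ lies in the centralizer of $s$, which makes both $s$-invariance and the fiberwise Lyapunov data compatible with the averaging operation, and that $M^\alpha$ is compact (we are in the cocompact case), which provides weak-$*$ compactness and the uniform estimates used below. Throughout I write $\calA(g,x)=\restrict{D_xg}{F(x)}$ for the fiberwise derivative cocycle.

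Parts \ref{olk1}, \ref{olk2}, \ref{olk5} are soft. For \ref{olk1}: since $h^t$ commutes with $s$ and $\mu$ is $s$-invariant, $(s)_*(h^t)_*\mu=(h^ts)_*\mu=(sh^t)_*\mu=(h^t)_*\mu$, and averaging over $t\in[0,T]$ and using linearity of push-forward gives $(s)_*(H^T\ast\mu)=H^T\ast\mu$. Part \ref{olk2} follows because the set of $s$-invariant Borel probability measures on the compact space $M^\alpha$ is weak-$*$ closed. Part \ref{olk5} is the Krylov--Bogolyubov estimate: for $\phi\in C^0(M^\alpha)$ and $r\in\R$, a change of variables in $t$ gives $\bigl|\int\phi\circ h^r\,d(H^T\ast\mu)-\int\phi\,d(H^T\ast\mu)\bigr|\le \tfrac{2|r|\,\|\phi\|_{C^0}}{T}\to 0$, so every weak-$*$ limit point is $h^r$-invariant for all $r$, i.e.\ $H$-invariant.

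The substantive point is \ref{olk4}. One cannot argue directly, since $x\mapsto\log\|\calA(s^n,x)\|$ is \emph{not} $h^t$-invariant. Instead I would exploit the cocycle identity and $s^nh^t=h^ts^n$ to write $\calA(s^n,h^tx)=\calA(h^t,s^nx)\,\calA(s^n,x)\,\calA(h^t,x)\inv$ (and the mirror identity with the roles of $x$ and $h^tx$ exchanged). Because $M^\alpha$ is compact and the $G$-action is $C^1$, for each $T$ there is a constant $C_T$ with $\log^+\|\calA(h^t,y)\|+\log^+\|\calA(h^t,y)\inv\|\le C_T$ for all $y\in M^\alpha$ and $t\in[0,T]$; submultiplicativity of the operator norm then yields $\bigl|\log\|\calA(s^n,h^tx)\|-\log\|\calA(s^n,x)\|\bigr|\le 2C_T$ for all such $t$. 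Recalling that $\lambda^F_\av(s,\nu)=\lim_n\tfrac1n\int\log\|\calA(s^n,x)\|\,d\nu(x)$ (the infimum in \eqref{topex} is a limit by subadditivity), integrating this inequality against $\mu$, averaging over $t\in[0,T]$, dividing by $n$ and letting $n\to\infty$ makes the $C_T/n$ terms vanish and gives both inequalities $\lambda^F_\av(s,H^T\ast\mu)\le\lambda^F_\av(s,\mu)$ and $\lambda^F_\av(s,\mu)\le\lambda^F_\av(s,H^T\ast\mu)$, hence equality.

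Finally, \ref{olk3} follows from \ref{olk4} together with upper semicontinuity of $\nu\mapsto\lambda^F_\av(s,\nu)$ in the weak-$*$ topology: each map $\nu\mapsto\tfrac1n\int\log\|\calA(s^n,x)\|\,d\nu$ is weak-$*$ continuous (the integrand is continuous on the compact $M^\alpha$), so their infimum over $n$ is upper semicontinuous. Thus if $\mu'$ is the weak-$*$ limit of $H^{T_k}\ast\mu$, then $\lambda^F_\av(s,\mu')\ge\limsup_k\lambda^F_\av(s,H^{T_k}\ast\mu)=\lambda^F_\av(s,\mu)$ by \ref{olk4}. I expect \ref{olk4}---specifically the bookkeeping of the non-$h^t$-invariance of the un-normalized logarithmic growth, compensated by uniform compactness bounds on $\calA(h^t,\cdot)$ for $t$ in a compact set---to be the only genuinely delicate step; the remaining parts are routine functional analysis on a compact space.
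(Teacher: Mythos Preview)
Your proof is correct and follows essentially the same approach as the paper. In particular, for the only substantive part~\ref{olk4}, the paper does exactly what you do: use $s^nh^t=h^ts^n$ and the chain rule to write $\restrict{D_{h^tx}(s^n)}{F}=\restrict{D_{h^tx}(h^ts^nh^{-t})}{F}$, bound the two stray $h^{\pm t}$-derivative factors by a constant $K=\sup\{\log\|\restrict{D_x(h^t)}{F}\|:x\in M^\alpha,\,t\in[-T,T]\}$ coming from compactness, and let the $1/n$ kill the error; your cocycle-identity formulation is just a slightly more explicit packaging of the same computation.
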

	
	\ref{olk1}  is clear from definition and \ref{olk2} follows since     the set of $s$-invariant measures is closed. 
	\ref{olk5} follows from (the proof of) the Krylov-Bogolyubov theorem (see \cref{booboob}). 
	\ref{olk4} is a standard computation which follows from  the compactness of $M^\alpha$ and hence boundedness of the cocycle.  
	Indeed we have 
	\begin{align*}
		\lambda^F_\av&(s,H^T\ast \mu)
		= \inf _{n\to \infty} \frac 1 n  \int \log \|\restrict{D_x(s^n)}{F} \| \ d ( H^T\ast \mu) (x)\\
		&= \inf _{n\to \infty} \frac 1 n \frac{1}{T} \int \int_{t=0}^T\log \|\restrict{D_{h^t\cdot x}(s^n)}{F} \| \ d t   \, d\mu (x)\\
		&= \inf _{n\to \infty} \frac 1 n \frac{1}{T} \int \int_{t=0}^T\log \|\restrict{D_{h^t\cdot x}(h^ts^nh^{-t})}{F} \| \ d t   \, d\mu (x)\\
		&\le  \inf _{n\to \infty} \frac 1 n \frac{1}{T} \int \int_{t=0}^T
		\log \|\restrict{D_{h^t\cdot x}(h^{-t})}{F} \|\\& \quad \quad \quad \quad \quad \quad \quad \quad \quad 
		+  \log \|\restrict{D_{ x}(s^n)}{F} \| 
		+ \log \|\restrict{D_{s\cdot x}(h^{t})}{F} \| 
		\ d t \, d \mu (x)\\
		&\le  \inf _{n\to \infty} \frac 1 n \left( \int  
		\log \|\restrict{D_{ x}(s^n)}{F} \| 
		\ d \mu (x) + 2K\right)\\
	\end{align*}
	where $$K = \sup\left\{ \log \|\restrict{D_{ x}(h^{t})}{F} \| : x\in M, t\in [-T,T]\right\}.$$
	\ref{olk3} follows from the well-known fact that the   average top Lyapunov exponent is upper-semicontinuous on the set of $s$-invariant measures (see for example \cite{MR3289050} or  \cite[Lemma 3.2(b)]{1608.04995}).  Indeed,  in the weak-$*$ topology,  for each $n$ the function $$\mu\mapsto  \frac 1 n  \int \log \|\restrict{D_x(s^n)}{F} \| \ d  \mu (x)$$ is continuous.  The pointwise infimum of a family of continuous functions is  upper-semicontinuous.

	\begin{remark}\label{rem:folner}
		Recall that a \emph{\Folner sequence} \index{F{\o}lner sequence} in a Lie group $H$ equipped with a left-Haar measure $m_H$ is a sequence $\{F_n\}$  of Borel subsets $F_n\subset H$, with $0<m_H(F_n)<\infty$, such that for every compact subset $Q\subset H$ one has
		$$\lim_{n\to\infty } \sup_{h\in Q}\dfrac{m_H\big ((h\cdot  F_n) \triangle F_n\big)} {m_H(F_n)} = 0.$$
		If $H$ admits a \Folner sequence then $H$ is said to be \emph{amenable}\index{group!amenable}.  
		When $H=\R$,  a \Folner sequence is given by $F_n = [0,n]$.   %\note{$F_n=[0, n]$ is more instructive example}   
		Examples of amenable  groups include abelian groups, nilpotent groups, solvable groups, and compact groups.  See \cite{BdHV} for more details.  %,MR3307755} for detailed discussions.
		
		Consider  $H$ to be an amenable Lie subgroup of $G= \Sl(n,\R)$.
		Given a Borel probability measure $\mu$ on $M^\alpha$ and a  \Folner sequence $\{F_n\}$ in $ H$ we define $$F_n \ast \mu:= \dfrac 1 {m_H(F_n)}\int _{F_n} h_* \mu \ d m_H(h).$$
		By a computation analogous to \eqref{eq:KB} in the proof of \cref{booboob}, any weak-$*$ limit point $\hat \mu$ of the sequence $\{F_n \ast \mu\}$ as $n\to \infty$ is an $H$-invariant measure on $M^\alpha$.  Moreover,  properties analogous to those in Claim \ref{average} hold when averaging  an $s$-invariant measure $\mu$ against a \Folner sequence $\{F_n\}$ in an amenable subgroup $H$ contained in the centralizer $C_G(s)$ of $s$.  %in higher-rank abelian groups, nilpotent groups, and  more general amenable groups $H$ contained in the centralizer of $s$.    
		See \cite[Lemma 3.2]{1608.04995} for precise formulations.  
	\end{remark}

	\subsection{Averaging measures on $\Sl(n,\R)/\Gamma$}
	When averaging probability measures on $\Sl(n,\R)/\Gamma$ along 1-parameter unipotent subgroups we obtain additional properties of the limiting measures.  The results stated in the following proposition  are   consequences of \index{theorem!Ratner measure classification and equidistribution} 
	Ratner's measure classification and equidistribution theorems for unipotent flows  \cites{MR1262705,MR1054166,MR1075042}.   See also \cite{MR2158954}.  We do not formulate Ratner's theorems here but only  the consequences we use in the remainder.  
	
	\begin{proposition}%[{Ratner; see \cite{MR1262705,MR1054166,MR1075042}.}] 
	%\note{I prefer this as a proposition since it summarizes many results}
		\label{claim:Ratner} Let $\hat \mu $ be a Borel probability measure on $\Sl(n,\R)/\Gamma$.   
		For each 1-parameter  root subgroup $U^{i,j}$
		\begin{enumcount}
			\item \label{polk1}the weak-$*$ limit 
			$$U^{i,j}\ast \hat \mu  := \lim_{T\to \infty}\{(U^{i,j})^T\ast \hat   \mu  : T\ge 0 \}$$
			exists; 
			\item \label{polk2} if $\hat \mu $ is $A$-invariant, so is $U^{i,j}\ast \hat \mu $;
			
			\item \label{polk3} if $\hat \mu $ is $A$-invariant and $A$-ergodic, the measure $U^{i,j}\ast \hat \mu $ is $A$-ergodic;
			\item \label{polk4} if $\hat \mu $ is $A$-invariant and  $U^{i,j}$-invariant then $\hat \mu $ is $U^{j,i}$-invariant.  
		\end{enumcount}
	\end{proposition}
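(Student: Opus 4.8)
\textbf{Proof proposal for Proposition \ref{claim:Ratner}.}

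The plan is to reduce each assertion to a consequence of Ratner's measure classification theorem and Mozes--Shah/Ratner equidistribution. First I would recall that $U^{i,j}$ is a one-parameter unipotent subgroup normalized by $A$, with the dilation relation \eqref{kokololo}. For \ref{polk1}, I would apply the Ratner--Mozes--Shah equidistribution theorem: for any $x \in \Sl(n,\R)/\Gamma$, the orbit averages $\frac1T\int_0^T \delta_{u^{i,j}(t)x}\, dt$ converge as $T\to\infty$ to an algebraic (homogeneous) measure $\mu_x$, and the map $x \mapsto \mu_x$ is Borel. Integrating this against $\hat\mu$ and using dominated convergence (the space $\Sl(n,\R)/\Gamma$ is a fixed finite-volume homogeneous space, and weak-$*$ convergence is tested against $C_c$ functions, so no escape of mass occurs by properness of the limiting measures) gives that $(U^{i,j})^T \ast \hat\mu$ converges weak-$*$; one should be slightly careful in the nonuniform case, but since all limit measures are probability measures supported on closed orbits of subgroups, standard non-divergence estimates for unipotent flows (Dani--Margulis) rule out loss of mass. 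This yields \ref{polk1}.

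For \ref{polk2}, I would use that $A$ normalizes $U^{i,j}$: conjugating by $a \in A$ sends $(U^{i,j})^T \ast \hat\mu$ to a reparametrized average $(U^{i,j})^{e^{\beta^{i,j}(a)}T}\ast(a_*\hat\mu)$ up to a bounded change of the averaging interval, and if $a_*\hat\mu = \hat\mu$ then taking $T \to \infty$ and using \ref{polk1} shows $a_*(U^{i,j}\ast\hat\mu) = U^{i,j}\ast\hat\mu$. For \ref{polk4}, the statement is that an $A$-invariant, $U^{i,j}$-invariant measure is automatically $U^{j,i}$-invariant. I would invoke Ratner's theorem to decompose $\hat\mu$ into ergodic components, each of which is the homogeneous measure on a closed orbit $Hx$ of some closed subgroup $H$ containing $A$ and $U^{i,j}$. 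Since $H \supseteq \langle A, U^{i,j}\rangle$ and $A, U^{i,j}$ generate a subgroup whose Lie algebra, together with $A$-invariance and the structure of $\mathfrak{sl}(n,\R)$, forces $H$ to contain $U^{j,i}$ as well (here one uses the standard $\mathfrak{sl}_2$-triple spanned by the root spaces $\mathfrak g^{\beta^{i,j}}$, $\mathfrak g^{\beta^{j,i}}$ and the corresponding coroot in $\mathfrak a$: a closed subgroup containing $A$ and the positive root group $U^{i,j}$, being a real algebraic subgroup by Ratner, must be $A$-stable under the adjoint action and hence contain the opposite root group), each ergodic component is $U^{j,i}$-invariant, hence so is $\hat\mu$.

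The remaining and most delicate point is \ref{polk3}: preservation of $A$-ergodicity under the averaging operation $\hat\mu \mapsto U^{i,j}\ast\hat\mu$. The obstacle is that averaging can a priori destroy ergodicity. I would argue as follows: suppose $\hat\mu$ is $A$-invariant and $A$-ergodic, and let $\nu = U^{i,j}\ast\hat\mu$, which is $A$-invariant and $U^{i,j}$-invariant by \ref{polk2} and \ref{polk1}. By the equidistribution theorem, for $\hat\mu$-a.e.\ $x$ the measure $\mu_x$ (the orbit average along $U^{i,j}$ through $x$) is the homogeneous measure on the closure of the $U^{i,j}$-orbit, which by Ratner is a single $A$-invariant ergodic piece determined measurably by $x$; the assignment $x \mapsto \mu_x$ is $A$-equivariant in the sense that $\mu_{ax}$ is the corresponding translate, and $\nu = \int \mu_x \, d\hat\mu(x)$. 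An $A$-invariant function $\phi$ on $\Sl(n,\R)/\Gamma$ pulls back to an $A$-invariant function $x \mapsto \int \phi\, d\mu_x$ which, by $A$-ergodicity of $\hat\mu$, is $\hat\mu$-a.s.\ constant; combined with the fact that the ergodic decomposition of $\nu$ with respect to $A$ is realized through the $\mu_x$ (each of which, being $U^{i,j}$-invariant and $A$-invariant and homogeneous, has a well-understood ergodic decomposition built from translates inside the ambient homogeneous orbit), one deduces that $\nu$ has no nontrivial $A$-invariant sets. I expect this last step — making precise that the $A$-ergodic decomposition of the averaged measure is governed by the single ergodic type of the generic fiber $\mu_x$ — to be the main technical obstacle, and I would handle it by passing to the homogeneous description: after Ratner, $\hat\mu$ is supported on finitely many (or a measurable family of) closed $H$-orbits with $H \supseteq A$, averaging along $U^{i,j}$ enlarges $H$ to $H' = \langle H, U^{i,j}\rangle$ in a measurable and $A$-equivariant way, and $A$-ergodicity of the $H$-homogeneous measure implies $A$-ergodicity of the $H'$-homogeneous measure since $H'/H$-averaging of an $A$-ergodic measure on $H/(H\cap\Gamma)$ stays ergodic by a standard Mautner-type argument for the larger homogeneous space.
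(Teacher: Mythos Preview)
Your treatment of \ref{polk1} and \ref{polk2} is fine and matches the paper's approach (Ratner/Mozes--Shah equidistribution for the limit, normalization by $A$ for the invariance).

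For \ref{polk3} you have overcomplicated matters and missed the clean mechanism the paper uses. The key observation is that $U^{i,j}$-orbits are contained in stable manifolds for some $s\in A$ (choose $s$ with $\beta^{i,j}(s)<0$). By the Hopf argument (Proposition~\ref{prop:hopf}), any $A$-invariant measurable function is, modulo any $A$-invariant measure, constant along $U^{i,j}$-orbits. Since each equidistribution measure $\mu_x$ is $U^{i,j}$-invariant and $U^{i,j}$-ergodic, an $A$-invariant $\phi$ is $\mu_x$-a.e.\ constant; hence $\int \phi\,d\mu_x$ equals $\phi(x)$ for $\hat\mu$-a.e.\ $x$, and so for any $A$-invariant set $B$ one has $(U^{i,j}\ast\hat\mu)(B)=\int \mu_x(B)\,d\hat\mu(x)=\hat\mu(B)\in\{0,1\}$. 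No Mautner phenomenon, no enlarging of $H$ to $H'$, no analysis of the ergodic decomposition of $\nu$ is needed. Your proposed route through ``$A$-ergodicity of the $H$-homogeneous measure implies $A$-ergodicity of the $H'$-homogeneous measure'' is both harder and not obviously correct as stated.

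For \ref{polk4} your argument has a genuine gap. You write that each ergodic component is homogeneous on some $Hx$ with $H\supseteq\langle A,U^{i,j}\rangle$, and then assert that such an $H$ must contain $U^{j,i}$. This last step is false: the solvable group $\langle A,U^{i,j}\rangle$ itself is a closed subgroup containing $A$ and $U^{i,j}$ but not $U^{j,i}$, and ``$A$-stable under the adjoint action'' does not force inclusion of the opposite root space. The real issue is that you cannot simultaneously arrange $U^{i,j}$-ergodicity (needed to invoke Ratner's classification) and $A$-invariance of the components; $U^{i,j}$-ergodic components of an $A$-invariant measure need not be $A$-invariant. The paper does not attempt this decomposition argument but instead cites Ratner directly (\cite[Theorem 9]{MR1262705} or \cite[Proposition 2.1]{MR1054166}), where the passage from $U^{i,j}$- and $A$-invariance to $U^{j,i}$-invariance is proved by an entropy argument internal to Ratner's measure classification, not by the algebraic inclusion you sketch.
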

	
	 \cref{claim:Ratner}\ref{polk1}  follows from Ratner's   measure classification and equidistribution theorems for unipotent flows.  When $U$ is higher-dimensional, we use an analogue of  \cref{claim:Ratner}\ref{polk1}  due to Shah  \cite[Corollary 1.3]{MR1291701}.
	 \cref{claim:Ratner}\ref{polk2} follows from the fact that $A$ normalizes  $U^{i,j}$ and that the limit in  \cref{claim:Ratner}\ref{polk1} exists and is hence unique.  
	 \cref{claim:Ratner}\ref{polk4} is a consequence of Theorem 9 in   \cite {MR1262705} or Proposition 2.1 in \cite{MR1054166}.

	%\note{New explanation}
	 \cref{claim:Ratner}\ref{polk3} is a short argument that uses the $A$-invariance of $\hat \mu$ and the pointwise ergodic theorem: Since there is $s\in A$ such that $U^{i,j}$-orbits are contracted by $s$,   by the pointwise ergodic theorem, the measurable hull of the partition into $U^{i,j}$-orbits refines the ergodic decomposition for $A$. (See \cref{prop:hopf} and Theorem \ref{Hopf} in Appendix \ref{App:Pinsker}.)  Let $\eta$ be the  measurable hull of the partition into $U^{i,j}$-orbits and let $\{\hat \mu^\eta_x\}$ be a family of conditional measures of $\hat \mu$ for this partition. (Note that  from Ratner's equidistribution theorem, we have that $\hat \mu^\eta_x$ is a homogeneous measure on a closed homogeneous submanifold.)   If $\phi$ is a bounded, $A$-invariant measurable function then  for $\hat \mu$-a.e.\ $x$, $\phi$ is constant $\hat \mu^\eta_x$-almost surely; in particular, $$\phi(x) = \int \phi \ d \hat \mu^\eta_x$$ for $\hat \mu$-a.e.\ $x$.   Then $x\mapsto \int \phi \ d \hat \mu^\eta_x$ is a $\mu$-almost everywhere defined, $A$-invariant function.  In particular, $x\mapsto \int \phi \ d \hat \mu^\eta_x$ is constant $\mu$ a.s.\ by ergodicity of $\mu$.  It follows that $\phi$ is constant $\hat \mu$-a.s.\ and ergodicity follows.

	\subsection{Proof of Proposition \ref{eq6677}}\label{Sec:spitproof}
	%\note{new section with proof  of \cref{eq6677}  added}   
	We outline the proof of \cref{eq6677}. 
	Recall the notation  introduced in Section \ref{ssec:susp}.
	In particular,  $\pi\colon M^\alpha\to G/\Gamma$  is the canonical  projection and $F = \ker (D\pi)$ is  the fiberwise tangent bundle of $M^\alpha$.
	We write the derivative of translation by $g$ in $M^\alpha$ as $Dg$ and the restriction to the fiber of $F$ through $x\in M^\alpha$ by $\restrict{D_xg} {F(x)}.$  Equip $M^\alpha$ with any Riemannian metric and write  $$\|\restrict{Dg} {F}\|=\sup_{x\in M^{\alpha}}\|\restrict{D_xg}  {F(x)}\|.$$
	
	Let $K=\So(n)$.  We equip $G$ with a right-invariant, left-$K$-invariant Riemannian metric and   induced distance function $d(\cdot, \cdot)$.   We note that relative to such a metric, all $A$-orbits are geodesically embedded in $G$. %\note{added remark that $A$-orbits  are geo. embedded} 
	We have the following elementary claim  which allows us to transfer exponential growth properties between  the $\Gamma$-action on $M$  and the  $G$-action on the  fibers of  $M^\alpha$.  
	\begin{claim}\label{claim:gammavG}
		If $\Gamma\subset \Sl(n,\R)$ is cocompact and if $M$ is compact, then any action $$\alpha\colon \Gamma\to \diff^1(M)$$  has uniform subexponential growth of derivatives if and only if for every $\epsilon>0$ there is a $C$ such that for all $g\in \Sl(n,\R)$, $$\|\restrict{Dg} {F}\|\le Ce^{\epsilon d(e, g)}.$$
	\end{claim}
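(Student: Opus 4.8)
\textbf{Proof proposal for Claim \ref{claim:gammavG}.}

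The plan is to exploit the precise structure of the suspension space $M^\alpha = (G\times M)/\Gamma$ together with cocompactness of $\Gamma$, which forces $M^\alpha$ to be a compact manifold on which the $G$-action and the fiberwise derivative cocycle are well behaved. First I would fix a compact fundamental domain $D\subset G$ for the right $\Gamma$-action containing the identity $e$, so that every point of $M^\alpha$ has a representative $(h,x)$ with $h\in D$, $x\in M$. The map $g\mapsto \restrict{D_x g}{F(x)}$ on the fiber through $[(h,x)]$ is, after identifying $F(x)$ with $T_xM$ via the trivialization used in \cref{ssec:susp}, literally the derivative $D_x\alpha(\gamma)$ for the unique $\gamma\in\Gamma$ such that $gh\gamma\inv\in D$, pre- and post-composed with the fixed identifications. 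The core of the argument is therefore a quasi-isometry dictionary: because $\Gamma$ is cocompact in the proper geodesic metric space $G$ (with the right-invariant, left-$K$-invariant metric), the \v{S}varc–Milnor lemma gives constants $L\ge 1$, $C_0\ge 0$ with
\begin{equation*}
L\inv \, |\gamma| - C_0 \le d(e,\gamma) \le L\,|\gamma| + C_0 \qquad \text{for all }\gamma\in\Gamma,
\end{equation*}
where $|\gamma|$ is word length with respect to a fixed finite generating set; moreover, for $g\in G$ with $gh\gamma\inv\in D$, we get $d(e,g)$ and $d(e,\gamma)$ differ by at most $\diam(D)+\diam(D) =: C_1$ since $h,h\gamma\inv$-translates stay within bounded distance.

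Next I would bound the derivative quantities. On one side, suppose $\alpha$ has uniform subexponential growth: for every $\epsilon>0$ there is $C$ with $\sup_x\|D_x\alpha(\gamma)\|\le Ce^{\epsilon|\gamma|}$. Given $g\in G$ and a point $[(h,x)]\in M^\alpha$ with $h\in D$, write $gh = h'\gamma$ with $h'\in D$, $\gamma\in\Gamma$; then $\restrict{D g}{F}$ at that point equals (up to the fixed, uniformly bounded change-of-trivialization factors coming from $h$ and $h'$ ranging in the compact set $D$, together with the continuity of the $G$-action restricted to the compact set $D$) a bounded multiple of $D_x\alpha(\gamma)$. Hence $\|\restrict{Dg}{F}\|\le C' e^{\epsilon|\gamma|}\le C'' e^{\epsilon L(d(e,g)+C_1)}$, and absorbing constants and replacing $\epsilon$ by $\epsilon/(L)$ at the outset yields $\|\restrict{Dg}{F}\|\le \widetilde C e^{\epsilon d(e,g)}$ for all $g$. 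The converse is the same computation run backwards: given the bound on $\|\restrict{Dg}{F}\|$ in terms of $d(e,g)$, specialize to $g=\gamma\in\Gamma$ (so $\gamma h\gamma\inv$ stays in a bounded neighborhood for $h\in D$), use $d(e,\gamma)\le L|\gamma|+C_0$, and again use compactness of $D$ to pass between $\|\restrict{D\gamma}{F}\|$ and $\sup_x\|D_x\alpha(\gamma)\|$ up to a uniform multiplicative constant.

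The step I expect to be the main (though mild) obstacle is the careful bookkeeping of the change-of-trivialization factors: $F$ is identified with $M\times\R^d$ only measurably/continuously via the chosen trivialization $\Psi$ of \cref{ssec:susp}, and the Riemannian metric on $M^\alpha$ restricted to fibers differs from the pulled-back metric $\langle\cdot,\cdot\rangle$ on $M$ by a factor depending on the $D$-representative; one must check this factor is bounded above and below uniformly, which follows because $D$ is compact, the partition-of-unity construction of the metric is continuous, and the $C^{1+\beta}$ (or merely $C^1$) norms of $\alpha(\gamma)$ for $\gamma$ in the finite set $\{\gamma : D\gamma\cap D\cdot S\neq\emptyset\}$ are finite. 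Once this uniform two-sided bound is in hand, the \v{S}varc–Milnor quasi-isometry and the elementary exponential-bound manipulations above close the argument. I would present it as two short lemmas (the quasi-isometry dictionary and the uniform boundedness of trivialization factors) followed by the two implications, each a two-line estimate.
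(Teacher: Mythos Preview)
The paper does not actually supply a proof of this claim: it is stated as ``elementary'' and the text moves on to the proof of Proposition~\ref{eq6677}. Your proposal is a correct fleshing-out of the claim, and the ingredients you identify---the \v{S}varc--Milnor quasi-isometry between word length on $\Gamma$ and the restriction of $d$ to $\Gamma$, the bounded discrepancy between $d(e,g)$ and $d(e,\gamma)$ when $gh=h'\gamma$ with $h,h'\in D$, and the uniform boundedness of the fiber-metric distortion over the compact fundamental domain $D$---are exactly the right ones.

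One small cleanup for the converse direction: your parenthetical ``so $\gamma h\gamma^{-1}$ stays in a bounded neighbourhood for $h\in D$'' is not quite the point and does not hold in general (conjugation by $\gamma$ is not an isometry). The clean way to run the converse is to evaluate the fiberwise derivative at points of the form $[(e,x)]$ (arranging $e\in D$): since $(\gamma,x)\sim(e,\alpha(\gamma)(x))$ under the right $\Gamma$-action, the fiberwise derivative of $\gamma$ at $[(e,x)]$ is, up to the bounded metric-identification factor at the fiber over $[e]$, exactly $D_x\alpha(\gamma)$. Then $\sup_x\|D_x\alpha(\gamma)\|\le C'\,\|\restrict{D\gamma}{F}\|\le C'C\,e^{\epsilon d(e,\gamma)}\le C''e^{\epsilon L|\gamma|}$, and rescaling $\epsilon$ finishes it. With this adjustment your two-lemma structure is fine.
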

	With the above claim, we outline the main steps in the proof of \cref{eq6677}.  
	
	\begin{proof}[Proof of \cref{eq6677}]
		We assume $\alpha\colon \Gamma\to \diff^1(M)$ fails to have uniform subexponential growth of derivatives.  Then, by \cref{claim:gammavG}, there exist $\epsilon>0$, integers $m_n\in \N$ with ${m_n}\to \infty$,  elements   $g_{m_n}\in G$ with $d(g_{m_n}, e)= {m_n}$, points $ x_{m_n}\in M_\alpha,$ and unit vectors $v_{m_n}\in T_{x_{m_n}}M_\alpha$  such that 
		$$\|D_{x_{m_n}}g_{m_n}(v_{m_n})\|\ge e^{\epsilon {m_n}}.$$

		Let $UF$ denote the unit sphere bundle in $F$ and, given $g\in G$, let $UDg$ denote the induced action on $UF$: given $x\in M^\alpha$ and $v\in UF(x)$  write $$UD_xg(v) = \frac{D_x g(v)}{\|D_x g(v)\|}$$ and 
		$$UDg(x,v) = \left(g\cdot x, UD_xg(v)\right).$$
		
		By the singular value decomposition of matrices, the group  $G= \Sl(n,\R)$ can be written as $G= KAK$ where $K= \SO(n)$. (For general simple Lie groups $G$ we use the  Cartan decomposition).   We can thus write each $g_{m_n}\in G$ as $$g_{m_n} = k_n a_n k'_n$$ where $k_n,k'_n\in K$ and $a_n \in A$.  Write 
		$$x'_n = k'_n\cdot x_{m_n}, \quad x''_{n} = a_nk'_n\cdot x_{m_n} , $$ $$  v'_n = UD_{x_{m_n}} k'_n (v_{m_n}), \quad v''_n = UD_{x_{m_n}} (a_nk'_n) (v_{m_n}).$$
		Then 
		\begin{align*} \|&D_{x_{m_n}}g_{m_n}(v_{m_n})\| 
			=  \|D_{x_n''}k_n(v_n'') \| \cdot \| D_{x_n'}a_n(v_n') \|  \cdot \|D_{x_{m_n}}k'_n(v_{m_n})\| 
		\end{align*}
		and so $$\epsilon \le \lim _{n\to \infty } \frac 1 {{m_n}} \log \|D_{x_{m_n}}g_{m_n}(v_{m_n})\|=
		\lim _{n\to \infty } \frac 1 {{m_n}}  \log \| D_{x'_n}a_n(v'_n)\|$$
		as $\|\restrict{D_x k}{F}\|$ is uniformly bounded over all $k\in K$ and $x\in M^\alpha$.  
		
		Note that  $$ |{m_n}-d(a_n,e)|= |d(g_{m_n},e)-d(a_n,e)| \le  d(k_{n},e)+ d(k'_n,e) $$
		is uniformly bounded in $n$.    Thus ${m_n}\inv d(a_n,e)\to 1$.  As $A\simeq \R^{n-1}$, for each $n$ there is a unique $\td a_n$ with   $a_n = (\td a_n) ^{m_n}$;  moreover,  as $A$ is geodesically embedded in $G$, we have   $d(\td a_n ,e) \to 1$.
		
		For each $n$, let $\nu_n$ be the empirical measure on $UF$ given by
		$$\nu_n = \frac{1}{{m_n}}\sum _{j=0}^{{m_n}-1}(\td a_n )^j _*\delta_{(x_n', v_n')}.$$
		Taking a  subsequence  $\{n_j\}$, we may assume that $\nu_{n_j}$ converges to some $\nu_\infty$ and that $\td a_{n_j} $ converges to some $ s\in A$.  Note that  $d(s, e) =1$.     Let $\bar \mu$ denote the image of $\nu_\infty$ under the natural projection  $UF\to M^\alpha$.  Adapting the proofs of \cref{booboob} and \cref{prop:ZLEUSEGOD} one can show that 
		\begin{enumerate}
			\item $\nu_\infty$ is $UD{s}$-invariant whence $\bar\mu$ is $s$-invariant;
			\item $\lambda^F_\av(s,\bar \mu) \ge \epsilon >0$.
		\end{enumerate} 
		
		Take a \Folner sequence $\{F_n\}$ in $A$ and let $\td \mu$ be any weak-$*$ limit point of {$\{F_n\ast \bar \mu\}$} as $n\to \infty$.  Then, from  analogues of the properties in \cref{olpo} for averaging over \Folner sequences, we have that 
		\begin{enumerate}
			\item $\td \mu$ is $A$-invariant;
			\item $\lambda^F_\av(s,\td\mu) \ge \epsilon >0$.
		\end{enumerate}
		We  take $\mu'$ to be an $A$-ergodic component of $\td \mu$ with  $\lambda^F_\av(s,\mu')\ge \epsilon  >0$.
	\end{proof}

	\subsection{Proof   of Proposition \ref{prop:goodmeas} for $\Sl(3,\R)$}\label{ss:sl3}
	To simplify ideas, we outline the proof  of Proposition \ref{prop:goodmeas} assuming that $\Gamma$ is a cocompact lattice in $\Sl(3,\R)$.  We perform two averaging procedures on the measure $\mu'$ from the hypotheses of  Proposition \ref{prop:goodmeas}  to obtain the measure $\mu$  in the conclusion of Proposition \ref{prop:goodmeas}.

	\begin{proof}[Proof   of Proposition \ref{prop:goodmeas} for $\Gamma\subset \Sl(3,\R)$]
		Take $\mu_0= \mu'$  to be the ergodic, $A$-invariant probability measure in the hypotheses of  Proposition \ref{prop:goodmeas}  with nonzero fiberwise exponent $$\text{$\lambda^F_{j, \mu_0}\colon A\to \R$, \quad $\lambda^F_{j, \mu_0}\ \neq 0$.}$$  
		
		\fakeSS{First averaging}
		Consider the elements $$\text{$s=\diag(\tfrac 1 {4},2, 2)$ \quad and \quad $\bar s = \diag(2, 2, \tfrac 1 {4})$}$$ of $A\subset \Sl(3,\R)$.  Note that $s$ and $\bar s$ are linearly independent and hence form a basis for $A\simeq \R^{2}$.  As the linear functional $ \lambda^F_{j, \mu_0} $ is nonzero,  either $$\text{$\lambda^F_{j, \mu_0} (s) \neq 0$ \quad or \quad  $\lambda^F_{j, \mu_0}(\bar s) \neq 0$. }$$  
		%Take $s_0$ to be either $s, s\inv, \bar s,$  or $\bar s \inv$ so that $\lambda^F_{j, \mu_0}( s_0) > 0$.  
		Without loss of generality we may assume that 
		$$\lambda^F_{j, \mu_0}( s) \neq 0.$$  
		%that $s_0$ is either $s$ or $s\inv$ 
		Take $s_0$ to be either $s$ or $s\inv$ so that  $\lambda^F_{j, \mu_0}( s_0) > 0$.

		Consider the 1-parameter subgroup $$U^{2,3} = \left \{ \left(\begin{array}{ccc}1 & 0 & 0 \\0 & 1 & t \\0 & 0 & 1\end{array}\right): t\in \R\right\}.$$
		Note  that $U^{2,3}$ commutes with $s_0$.  
		Let $\mu_1$ be any weak-$*$ limit point of $\{(U^{2,3})^T\ast \mu \}$ as $T\to \infty$.  From Claim \ref{average}, $\mu_1$ is $s_0$-invariant and $\lambda^F_\av(s_0,\mu_1)\ge \lambda^F_\av(s_0,\mu_0).$
		
		We now average $\mu_1$ over a \Folner sequence in $A$:  identifying $A$ with $ \R^2$ let  $A^T= [0,T]\times [0,T]$ define a \Folner sequence $\{A^T\}$ in $A$.  Then 
		$$  A^T\ast \mu_1:= \frac{1}{T^2} \int_0^T \int _0^T (t_1, t_2)_*\mu_1 \ d (t_1, t_2).$$
		Let $\mu_2$ be any weak-$*$ limit point of $\{A^T\ast \mu_1\}$ as $T\to \infty$.  Then, from facts analogous to those in Claim \ref{average}, $\mu_2$ is $A$-invariant and 
		$$%\lambda^F_{A, \mu_2}(s_0) = 
		\lambda^F_\av(s_0,\mu_2)
		\ge \lambda^F_\av(s_0,\mu_1) >0.$$
		Note that $\mu_2$ might no longer be $U^{2,3}$-invariant.  
		
		We investigate properties of the projection of each measure $\mu_0, \mu_1, $ and $\mu_2$  to $\Sl(3,\R)/\Gamma$.  For each $j$,  we denote by $\hat \mu_j = \pi_* (\mu_j)$   the image of $\mu_j$ under the projection $\pi\colon M^\alpha \to \Sl(3,\R)/\Gamma$.  
		
		Observe that  $\hat \mu_1= U^{2,3}\ast \hat \mu_0 $ is   $U^{2,3}$-invariant.  Since $\hat \mu_0$ was $A$-invariant, from  \cref{claim:Ratner}\ref{polk2} we have that $\hat \mu_1$ is  $A$-invariant and it follows that   $\hat \mu_1= \hat \mu_2$ so $\hat \mu_2$ is $U^{2,3}$-invariant and $A$-invariant. 
		From  \cref{claim:Ratner}\ref{polk4}, 
		$\hat \mu_2$ is invariant under the subgroup 
		$$\left \{ \left(\begin{array}{ccc}\ast & 0 & 0 \\0 & \ast & \ast\\0 & \ast & \ast\end{array}\right)\right\}\subset \Sl(3,\R)$$
		generated by  $A$, $U^{2,3}$ and $U^{3,2}$ in $\Sl(3,\R)$.    
		Moreover, since  $\hat \mu_0$ was $A$-ergodic, from  \cref{claim:Ratner}\ref{polk3} the measure $\hat \mu_1 = \hat \mu_2$ is $A$-ergodic.  
		
		Returning to $M^\alpha$, as $ \lambda^F_\av(s_0,\mu_2)>0$ and as $\hat \mu_2$ is $A$-ergodic, we may replace $\mu_2$ with an $A$-ergodic component $\mu_2'$ of $\mu_2$ such that  
		\begin{enumerate}
			\item  $\lambda^F_\av(s_0,\mu_2')>0$, and 
			\item  the projection of  $\mu_2'$ to $\Sl(3,\R)/\Gamma$ is   $\hat \mu_2$.
		\end{enumerate}
		Let   $\lambda^F_{1,\mu_2'}, \dots, \lambda^F_{p',\mu_2'}\colon A\to \R$ denote the fiberwise Lyapunov exponents for the $A$-invariant, $A$-ergodic measure $\mu_2'$.  
		Then     $0<  \lambda^F_{j', \mu_2'} (s_0)= \lambda^F_\av(s_0,\mu_2')  $ for some $1\le j'\le p'$  whence some fiberwise Lyapunov exponent  $ \lambda^F_{j', \mu_2'}\colon A\to \R $ is a nonzero linear functional.

		\fakeSS{Second averaging}
		Consider now  the elements $s = (2,2, \frac 1 4)$ and $\bar s = (2, \frac 1 4, 2)$ in $A$.
		Again, either  $$\text{$\lambda^F_{j', \mu_2'} (s)   \neq 0$  \quad or \quad $\lambda^F_{j', \mu_2'}(\bar s)  \neq 0$.}$$
		
		\fakeSS{Case 1: $\lambda^F_{j', \mu_2'} (s) \neq 0$}
		Take $s_1= s$ or $s_1= s\inv $  so that $\lambda^F_{j', \mu_2'} (s_1) > 0$.
		Consider the one-parameter group $U^{1,2}$ which commutes with $s_1$.  As above, any weak-$*$ limit point $\mu_3$ of 
		$\{(U^{1,2})^T\ast \mu_2' \}$ as $T\to \infty$ is $s_1$-invariant, with 
		$$\lambda^F_\av(s_1, \mu_3)\ge \lambda^F_\av(s_1, \mu_2')>0.$$
		Let $\mu_4$ be any weak-$*$ limit point of $\{A^T\ast \mu_3 \}$ as $T\to \infty$ (where $A^T\ast \mu_3$ is as in the first averaging).  Then $\mu_4$ is $A$-invariant and 
		$$ \lambda^F_\av(s_1,\mu_4)   \ge \lambda^F_\av(s_1,\mu_3) >0.$$
		
		We claim  that the projection $\hat \mu_4$  of $\mu_4$ to $\Sl(3,\R)/\Gamma$ is the Haar measure.  
		Since the groups $U^{1,2}$ and $U^{3,2}$ commute and since $\hat \mu_2$ was $U^{3,2}$-invariant, it follows that $\hat \mu_3= U^{1,2}\ast \hat \mu_2$ is $U^{3,2}$-invariant.  Also, since $\hat \mu_2$ was $A$-invariant,   \cref{claim:Ratner}\ref{polk2} shows that $\hat \mu_3$ is $A$-invariant.  Thus   $\hat \mu_3 = \hat \mu_4$ and    $\hat \mu_4$ is also invariant under the actions of $A$, $U^{1,2}$, and $U^{3,2}.$
		%Also, 
		%$\hat \mu_4$  is $U^{1,2}$-,  $U^{3,2}$-, and $A$-invariant.  
		By   \cref{claim:Ratner}\ref{polk4} it follows that 
		$\hat \mu_4$ is invariant under the groups $U^{2,1}$  and $U^{2,3}$; in particular $\hat \mu_4$ is invariant under the following subgroups of $\Sl(3,\R)$: 
		$$\left \{ \left(\begin{array}{ccc}\ast & 0 & 0 \\0 & \ast & \ast\\0 & \ast & \ast\end{array}\right)\right\}, 
		\quad \quad 
		\left \{ \left(\begin{array}{ccc}\ast & \ast & 0 \\ \ast & \ast & 0\\0 & 0 & \ast\end{array}\right)\right\}.$$
		These two groups generate all of $\Sl(3,\R)$, and hence $\hat \mu_4$ is the Haar measure.

		\fakeSS{Case 2: $\lambda^F_{j', \mu_2'} (\bar s) \neq 0$}
		Take $s_1= \bar s$ or $s_1= \bar s\inv $  so that $\lambda^F_{j', \mu_2'} (s_1) > 0$.
		Consider the one-parameter group $U^{1,3}$ which commutes with $s_1$.  As above, any weak-$*$ limit point $\mu_3$ of 
		$\{(U^{1,3})^T\ast \mu_2' \}$ as $T\to \infty$ is $s_1$-invariant, with 
		$$\lambda^F_\av(s_1, \mu_3)\ge \lambda^F_\av(s_1, \mu_2')>0.$$
		Let $\mu_4$ be any weak-$*$ limit point of $\{A^T\ast \mu_3 \}$ as $T\to \infty$.  Then $\mu_4$ is $A$-invariant and 
		$$ \lambda^F_\av(s_1,\mu_4) \ge \lambda^F_\av(s_1,\mu_3) >0.$$
		
		Again, we claim that $\hat \mu_4=U^{1,3}\ast \hat \mu_2 $ is the Haar measure.  
		Since the groups $U^{1,3}$ and $U^{2,3}$ commute, it follows that $\hat \mu_3$ is $U^{2,3}$-invariant.  
		Also, since $\hat \mu_2$ was $A$-invariant,    \cref{claim:Ratner}\ref{polk2} shows that $\hat \mu_3$ is $A$-invariant.  Thus   $\hat \mu_3 = \hat \mu_4$ and    $\hat \mu_4$ is also invariant under the actions of $A$, 
		%$\hat \mu_3$ is $A$-invariant.  Then $\hat \mu_3 = \hat \mu_4$ and $\hat \mu_4$ is invariant under the actions of $A$, 
		$U^{1,3}$ and $U^{2,3}$.  
		%Also, 
		%$\hat \mu_4$   is $U^{1,3}$-,  $U^{2,3}$-, and $A$-invariant.  
		By     \cref{claim:Ratner}\ref{polk4} it follows that  $\hat \mu_4$ is invariant under the   following subgroups of $\Sl(3,\R)$:  
		$$ \left \{ \left(\begin{array}{ccc}\ast & 0 & 0 \\0 & \ast & \ast\\0 & \ast & \ast\end{array}\right)\right\},
		\quad \quad 
		\left \{ \left(\begin{array}{ccc}\ast & 0 & \ast \\ 0 & \ast & 0\\\ast & 0 & \ast\end{array}\right)\right\}.$$
		Again, these two groups generate all of $\Sl(3,\R)$, and  hence $\hat \mu_4$ is the Haar measure.  
		
		\fakeSS{Completion of proof}
		In either Case 1 or Case 2, since the Haar measure $\hat \mu_4$ is $A$-ergodic, we may take an $A$-ergodic component $\mu_4'$ of $\mu_4$ projecting to the Haar measure with 
		$$\lambda^F_\av(s_1,\mu_4')  >0.$$
		If    $\lambda^F_{1,\mu_4'}, \dots, \lambda^F_{p'',\mu_4'}\colon A\to \R$ denote the fiberwise Lyapunov exponents for the $A$-invariant, $A$-ergodic measure $\mu_4'$ then     $0<  \lambda^F_{j'', \mu_4'} (s_1)= \lambda^F_\av(s_1,\mu_4')  $ for some $1\le j''\le p''$  whence some fiberwise Lyapunov exponent  $ \lambda^F_{j'', \mu_4'}\colon A\to \R $ is a nonzero linear functional.
		
		This completes the proof of Proposition \ref{prop:goodmeas}.
	\end{proof}
	\subsection{Modifications of the proof of Proposition \ref{prop:goodmeas}  in $ \Sl(n,\R)$}
	
	When $\Gamma$ is a cocompact lattice in $\Sl(n,\R)$ we replace the first averaging step with a   more complicated averaging.  
	
	%We perform two averaging procedures on the measure $\mu'$  given by Proposition \ref{eq6677} to obtain the measure guaranteed by Proposition \ref{prop:goodmeas}.  
	
	\fakeSS{First averaging}
	We again take $\mu_0= \mu'$  to be the $A$-invariant measure in \cref{prop:goodmeas} with nonzero fiberwise exponent $$\text{$\lambda^F_{j, \mu_0}\colon A\to \R$, \quad $\lambda^F_{j, \mu_0}\ \neq 0$.}$$ 
	Without loss of generality (by conjugating by a permutation matrix) we may assume that for   the element $$ s=\diag(\tfrac 1 {2^{n-1}},2,\dots, 2) $$ of $A\subset \Sl(n,\R)$, we have  $$\lambda^F_{j, \mu_0} (s) \neq 0.$$
	Take $s_0$ to be either $s, $ or $ s\inv$   so that $\lambda^F_{j, \mu_0}( s_0) > 0$.

	Consider the unipotent subgroup $U\subset \Sl(n,\R)$ of matrices of the form $$ U = \left\{ \left(\begin{array}{ccccc}1 & 0 & 0&\cdots &0 \\0 & 1 & \ast &\cdots &\ast
	\\  \vdots &  & \ddots & &\vdots 
	\\0 & 0 & \cdots &1 & \ast 
	\\0 & 0 & \cdots &0 & 1 \\
	\end{array}\right)\right\}.$$
	Note  that $U$ commutes with $s_0$.
	
	Let $\{F_n\}$ be a \Folner sequence in $U$ and let $\mu_1$ be any weak-$*$ limit point of $\{F_n\ast \mu_0 \}$ as $n\to \infty$ where $$ F_n\ast \mu_0= \frac{1}{m_U(F_n)} \int_{F_n} u_* \mu_0 \ d u.$$  From facts analogous to those in Claim \ref{average}, we have that $\mu_1$ is $s_0$-invariant and $\lambda^F_\av(s_0,\mu_1)\ge \lambda^F_\av(s_0,\mu_0)>0.$
	Moreover, as $U$ is higher-dimensional, we   use  \cite[Corollary 1.3]{MR1291701} rather than     \cref{claim:Ratner}\ref{polk1}  to conclude (at least for certain \Folner sequences $\{F_n\}$ in $U$ with nice geometry)  that the projection $\hat \mu_1$ of $\mu_1$ to $G/\Gamma$ is the limit $$\hat \mu_1 = \lim F_n\ast \hat \mu_0$$ and is  $A$-invariant, ergodic, and $U$-invariant.

	We again average $\mu_1$ over a \Folner sequence of the form  $$A^T= [0,T]\times \dots \times  [0,T]$$ in $A$ (identified with $\R^{n-1}$) 
	and let $\mu_2$ be any weak-$*$ limit point of $\{A^T\ast \mu_1 \}$ as $T\to \infty$.  Then $\mu_2$ is $A$-invariant and 
	$$%\lambda^F_{A, \mu_2}(s_0) = 
	\lambda^F_\av(s_0,\mu_2)
	\ge \lambda^F_\av(s_0,\mu_1) >0.$$
	Again, we have equality of the projected measures $\hat \mu_1= \hat \mu_2$ so $\hat \mu_2$ is $U$-invariant and $A$-invariant. 
	From    \cref{claim:Ratner}\ref{polk4}, 
	$\hat \mu_2$ is also  invariant under the subgroup 
	$$ H =\left\{ \left(\begin{array}{ccccc}1 & 0 & 0&\cdots &0 \\0 & \ast & \ast &  &\ast
	\\0 & \ast &\ast  & &\ast
	\\  \vdots &  & & \ddots  &\vdots 
	%\\0 & 0 & \cdots &1 & \ast 
	\\0 & \ast & \ast& \cdots  & \ast \\
	\end{array}\right)\right\}.$$
	As $\hat \mu_2$ is $A$-ergodic, we may replace $\mu_2$ with an $A$-ergodic component $\mu_2'$ of $\mu_2$ such that  
	\begin{enumerate}
		\item  $\lambda^F_\av(s_0,\mu_2')>0$, and 
		\item  the projection of  $\mu_2'$ to $\Sl(n,\R)/\Gamma$ is   $\hat \mu_2$.
	\end{enumerate}
	Then, if $\lambda^F_{1,\mu_2'}, \dots, \lambda^F_{p',\mu_2'}\colon A\to \R$ denote the fiberwise Lyapunov exponents for  $\mu_2'$, we have     $0<  \lambda^F_{j', \mu_2'} (s_0)= \lambda^F_\av(s_0,\mu_2')  $ for some $1\le j'\le p'$.

	\fakeSS{Second averaging}
	Consider now  the roots $\beta^{1,2}$ and $\beta^{1,n}$ of $G$.  Since $\beta^{1,2}$ and  $\beta^{1,n}$ are not proportional, at most one of 
	$\beta^{1,2}$ and  $\beta^{1,n}$  is proportional to $\lambda^F_{j', \mu_2'}$.  In particular, we may find either $s$ or $\bar s$ in $A$ such that 
	\begin{enumerate}
		\item  $\beta^{1,2}(s) =0$ but $ \lambda^F_{j', \mu_2'} (s)   \neq 0$; or
		\item  $\beta^{1,n}(\bar s) =0$ but $\lambda^F_{j', \mu_2'} (\bar s)   \neq 0$.
	\end{enumerate}

	The two cases in the second averaging step of Section \ref{ss:sl3} are then identical to the above, where we either average over the 1-parameter group $U^{1,2}$ in the case $\lambda^F_{j', \mu_2'} (s)   \neq 0$ or $U^{1,n}$ in the case $\lambda^F_{j, \mu_2'}(\bar s)  \neq 0$.  The structure theory of $\Sl(n,\R)$ will then imply that the measure obtained after the second averaging projects to the Haar measure.

	\starsection{Zimmer's conjecture for actions by  lattices in other Lie groups}
	\label{sec:othergps}
	Consider a connected, simple Lie group $G$ with finite center.  Let $\Gamma\subset G$ be a cocompact lattice.  The proof of Theorem \ref{slnr} discussed above, particularly the use of Theorem \ref{thm:invmsr}  in Section \ref{sec:pp} can be adapted almost verbatim to show the following.  See also \cite{Cantat} where Theorem \ref{thm:serge} is stated and given  a mostly self-contained proof.   
	\begin{theorem}\label{thm:serge}
		Let $G$ be a connected, simple Lie group $G$ with finite center and rank at least $2$.  Let $\Gamma\subset G$ be a cocompact lattice.  Let $M$ be a compact manifold.   
		\begin{enumerate}
			\item If $\dim(M)< \rank(G)$
			then any homomorphism $\Gamma\rightarrow \Diff^{2}(M)$ has finite image.  
			\item In addition, if $\vol$ is a volume form on $M$
			and if $\dim(M)\le \rank(G)$ then any homomorphism $\Gamma \rightarrow \Diff^{2}_\vol(M)$ has finite image.
		\end{enumerate}
	\end{theorem}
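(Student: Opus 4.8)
\textbf{Proof plan for Theorem \ref{thm:serge}.}
The plan is to reduce this to the machinery already developed for $\Sl(n,\R)$ and to point out that essentially every step in the proof of Theorem \ref{slnr} (via Theorem \ref{thm:USEGOD}, Theorem \ref{thm:invmsr}, and Zimmer's cocycle superrigidity) only used the following ingredients, each of which has an analogue for a general higher-rank simple $G$: (i) $\Gamma$ is finitely generated (true for all lattices in semisimple Lie groups); (ii) strong property (T) holds for $\Gamma$, which produces, from any representation of $\Gamma$ with subexponential norm growth on a Hilbert space of type $\mathcal E_{10}$, a sequence of averaging operators converging exponentially fast to a projection onto invariant vectors (this is exactly the content of Theorem \ref{lafforgue1}, valid for cocompact lattices in higher-rank simple Lie groups by \cites{MR3407190,MR2423763}, using that such $G$ contains a subgroup isogenous to $\Sl(3,\R)$ or $\Sp(4,\R)$); (iii) the suspension construction of Section \ref{ssec:susp}, which works verbatim for any lattice $\Gamma$ in any $G$, producing $M^\alpha$ fibered over $G/\Gamma$ with an induced $G$-action; (iv) the invariance principle Theorem \ref{thm:potatosalad} (= Theorem \ref{thm:invmsr}$'$), which says that if $\dim(M)<\rank(G)$ (resp.\ $\dim(M)\le\rank(G)$ in the volume-preserving case), then any $A$-invariant measure on $M^\alpha$ projecting to Haar on $G/\Gamma$ is $G$-invariant; and (v) Zimmer's cocycle superrigidity together with the observation that a $d$-dimensional cocycle with $d<\dim_{\min\text{-rep}}(G)$ must be cohomologous to a compact-valued one — so for the fiberwise derivative cocycle it suffices that $\dim(M)$ be strictly smaller than the minimal dimension of a nontrivial linear representation of $\mathfrak g$, which holds whenever $\dim(M)\le\rank(G)$ since $d_\rep(G)>\rank(G)$ always.

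First I would state the analogue of Theorem \ref{thm:USEGOD}: under the hypotheses on $\dim(M)$, the action $\alpha\colon\Gamma\to\Diff^2(M)$ has uniform subexponential growth of derivatives. The proof is by contradiction and runs through the two key Propositions \ref{eq6677} and \ref{prop:goodmeas}. Proposition \ref{eq6677} — producing an $A$-ergodic measure on $M^\alpha$ with a nonzero fiberwise Lyapunov exponent from failure of subexponential growth — is purely dynamical and uses only the $KAK$ (Cartan) decomposition of $G$, compactness of $M^\alpha$ (here cocompactness of $\Gamma$ is essential, just as in the $\Sl(n,\R)$ case), and F\o{}lner averaging over the abelian group $A$; the argument is identical. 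Proposition \ref{prop:goodmeas} — upgrading this to a measure projecting to Haar on $G/\Gamma$ — uses iterated averaging over one-parameter unipotent root subgroups $U^\beta$ commuting with an appropriately chosen $s\in A$, Ratner's measure classification and equidistribution (Proposition \ref{claim:Ratner}, and Shah's higher-dimensional version \cite{MR1291701} when the relevant unipotent is higher-dimensional), plus the elementary structure-theoretic fact that for a suitable choice of signs the root subgroups one can reach generate all of $G$. For a general higher-rank simple $G$ one replaces the explicit $3\times 3$ bookkeeping of Section \ref{ss:sl3} by the following: choosing $s_0\in A$ generic, the subgroups $N_+$ and $N_-$ generated by positive/negative root subgroups already generate $\mathfrak g$ (this is the ideal argument at the end of the proof of Theorem \ref{thm:invmsr}), and the nonproportionality of any two roots to a fixed fiberwise exponent lets one find, at each averaging stage, a root $\beta$ and an element $s$ with $\beta(s)=0$ but $\lambda^F_{j,\mu}(s)\ne0$, so the averaging over $U^\beta$ never kills the nonzero fiberwise exponent while it forces more and more $U$-invariance of the $G/\Gamma$-projection.

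Once uniform subexponential growth is established, Steps 2 and 3 of Section \ref{sec:outline3steps} go through without change: the induced representation $\alpha_\#$ on $\mathcal H^k=W^{2,k}(S^2(T^*M))$ has subexponential norm growth (Lemma \ref{lem:normgrowth}), strong property (T) via Theorem \ref{lafforgue1} produces a $C^\ell$ $\Gamma$-invariant Riemannian metric $g$ (positive definiteness follows from the two-sided exponential estimates exactly as in the proposition in Section \ref{sec:outline3steps}), so $\alpha(\Gamma)\subset\mathrm{Isom}_g(M)$, a compact Lie group of dimension $\le\frac{m(m+1)}2$ with $m=\dim(M)$. Finally one applies Margulis superrigidity with compact codomain — the analogue of Theorem \ref{thm:MSRcompactcodomain}, whose image, if infinite, forces a copy of the compact real form $\mathfrak u$ of $\mathfrak g$ inside $\mathfrak{isom}_g(M)$; for $\dim(M)\le\rank(G)$ a dimension count ($\dim\mathfrak u=\dim\mathfrak g\gg\frac{m(m+1)}2$ when $m\le\rank(G)$, using $\rank(G)^2\ll\dim(G)$ for simple $G$) rules this out, so $\alpha(\Gamma)$ is finite. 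The main obstacle I anticipate is not conceptual but bookkeeping: verifying in Proposition \ref{prop:goodmeas} for a general restricted root system that the sequence of unipotent-subgroup averages can always be arranged to (a) preserve a nonzero fiberwise exponent and (b) terminate with a $G/\Gamma$-projection that is the full Haar measure — this requires choosing the $s\in A$ at each stage to lie in the kernel of one root but not of the surviving fiberwise exponent, and then checking that the accumulated root subgroups, together with $A$ and the $U^{\beta^{i,j}}$ obtained from Ratner's Proposition \ref{claim:Ratner}\ref{polk4}, generate $G$; this is exactly where one exploits that $G$ is simple (so the Lie algebra generated is an ideal, hence all of $\mathfrak g$), and it is the step that needs the most care in the general case, though it should follow the template already laid out for $\Sl(3,\R)$ and sketched for $\Sl(n,\R)$.
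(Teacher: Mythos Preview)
Your proposal is correct and follows essentially the same approach as the paper. The paper itself does not give a detailed proof of Theorem~\ref{thm:serge}: it simply asserts that the proof of Theorem~\ref{slnr}, and in particular the use of Theorem~\ref{thm:invmsr} (whose general-$G$ version is Theorem~\ref{thm:potatosalad}) in Section~\ref{sec:pp}, adapts almost verbatim. You have correctly identified and checked each of the ingredients that must be carried over---the suspension construction, the $KAK$ decomposition in Proposition~\ref{eq6677}, the root-subgroup averaging and Ratner input in Proposition~\ref{prop:goodmeas}, the invariance principle Theorem~\ref{thm:potatosalad}, Zimmer's cocycle superrigidity together with $d_\rep(G)>\rank(G)$, strong property~(T) from \cites{MR3407190,MR2423763}, and the compact-codomain dimension count---and your anticipated obstacle (the bookkeeping in Proposition~\ref{prop:goodmeas} for a general restricted root system, resolved via simplicity of~$\mathfrak g$) is exactly the right place to focus attention.
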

	As mentioned in \cref{fullspit}, Theorem \ref{thm:serge}  holds for $C^1$ actions; see Theorem \ref{slnrC1}.

	Theorem \ref{thm:serge} fails to give the optimal dimension bounds for the analogue of Conjecture \ref{conj:slnr} given in Conjecture \ref{conjecture:zimmergne} for actions by lattices in Lie groups other than $\Sl(n,\R)$.  See Table \ref{tab:stupid} for various conjectured critical dimensions arising in Zimmer's conjecture for other Lie groups.  
	%  For simple Lie groups $G$ that are \emph{split real forms}, the advanced invariance principle in Proposition \ref{prop:nonresinv} gives the optimal conjectured dimension bounds for triviality of actions by cocompact lattices.  

	To state the most general (as of 2018) result towards solving Conjecture \ref{conjecture:zimmergne},  to any simple Lie group  $G$, we associate a non-negative integer $r(G)$.  See \cite[Section 2.2]{1608.04995} or  \cref{foot:r}, page \pageref{foot:r}, for equivalent definitions of $r(G)$ and Table \ref{tab:stupid} for values of $r(G)$ in various examples of $G$.   For actions of lattices in a general Lie group $G$, the main result of \cite{1608.04995}, as well as the announced extension, gives finiteness of the action up to the critical dimension $r(G)$.
	\begin{theorem}[{\cite{1608.04995}  cocompact case; \cite{BFHWM} nonuniform  case}] \label{slnr:popop}
		Let $\Gamma\subset G$ be a  lattice in a higher-rank simple Lie group $G$ with finite center. Let $M$ be a compact manifold. \begin{enumerate}
			\item If $\dim(M)< r(G)$ 
			then any homomorphism $\Gamma\rightarrow \Diff^{1+\beta}(M)$ has finite image.  
			\item In addition, if $\vol$ is a volume form on $M$
			and if $\dim(M)=r(G)$ then any homomorphism $\Gamma \rightarrow \Diff^{1+\beta}_\vol(M)$ has finite image.
		\end{enumerate}
	\end{theorem}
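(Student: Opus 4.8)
The plan is to follow the three-step strategy used for Theorem~\ref{slnr}, replacing the $\Sl(n,\R)$-specific input by the general structure theory of $G$ and, in the nonuniform case, by reduction theory for arithmetic groups. First I would reduce the theorem to the assertion that $\alpha$ has \emph{uniform subexponential growth of derivatives}: granting that, strong property~(T) produces a continuous (indeed $C^\ell$) $\alpha$-invariant Riemannian metric $g$ on $M$, so that $\alpha(\Gamma)\subset \mathrm{Isom}_g(M)$, and the latter is a compact Lie group; a dimension count against Margulis's superrigidity with compact codomain then forces $\alpha(\Gamma)$ to be finite. The bulk of the work --- and the main obstacle --- is establishing subexponential growth, i.e.\ the analogue of Theorem~\ref{thm:USEGOD} for lattices in an arbitrary higher-rank simple $G$ with critical dimension $r(G)$.

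For the subexponential growth statement I would argue by contradiction, exactly as in the outline of Propositions~\ref{eq6677} and~\ref{prop:goodmeas}. Form the suspension space $M^\alpha=(G\times M)/\Gamma$ with its induced $G$-action, fiberwise tangent bundle $F=\ker(D\pi)$, and maximal split Cartan subgroup $A\subset G$. If uniform subexponential growth fails, empirical measures along orbit segments realizing the exponential blow-up of the derivative, combined with the Cartan ($KAK$) decomposition to transfer the growth into the $A$-direction and then an average over a \Folner sequence in $A$, yield an ergodic $A$-invariant measure $\mu'$ on $M^\alpha$ with a nonzero fiberwise Lyapunov exponent $\lambda^F_{j,\mu'}\colon A\to\R$. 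Next I would run the averaging scheme of Proposition~\ref{prop:goodmeas}: repeatedly push $\mu'$ forward along carefully chosen root subgroups $U^{i,j}$ (those in the centralizer of a suitable $s_0\in A$ with $\lambda^F_{j,\mu'}(s_0)>0$ and with the corresponding root vanishing on $s_0$), invoking Ratner's measure classification and Shah's equidistribution to control the projection to $G/\Gamma$ and the root structure of $\lieg$ to drive that projection to the Haar measure, all while preserving positivity of the top fiberwise exponent. This produces an ergodic $A$-invariant $\mu$ on $M^\alpha$ projecting to Haar and still carrying a nonzero fiberwise exponent. Because $\dim M\le r(G)$ (and $\alpha$ is volume-preserving in the boundary case), Theorem~\ref{thm:lplpl} shows $\mu$ is $G$-invariant; since $\dim M<d_\rep(G)$ there is no nontrivial representation of $\lieg$ into $\mathfrak{sl}(\dim M,\R)$, so Zimmer cocycle superrigidity (Theorem~\ref{thm:ZCSR}) applied to the fiberwise derivative cocycle forces it to be cohomologous to a compact-valued cocycle; Poincar\'e recurrence to sets where the conjugating function is bounded then makes every fiberwise Lyapunov exponent of $\mu$ vanish, contradicting the construction.

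With subexponential growth in hand, Steps~2 and~3 go through as in Section~\ref{sec:outline3steps}. The induced representation $\alpha_\#$ of $\Gamma$ on a Sobolev space $\mathcal H^k=W^{2,k}(S^2(T^*M))$ has subexponential norm growth (Lemma~\ref{lem:normgrowth}); strong Banach property~(T) --- for cocompact lattices by Lafforgue and de Laat--de la Salle, and for nonuniform lattices by de la Salle --- provides averaging operators $p_n\to p_\infty$ converging exponentially, and averaging a smooth background metric $g$ gives $g_\infty=p_\infty(g)$, a $C^\ell$ (hence continuous, everywhere defined) symmetric $2$-tensor that is $\alpha_\#$-invariant; the exponential convergence beats the subexponential lower bound on $g_n(\xi,\xi)$ for unit vectors $\xi$, so $g_\infty$ is positive definite. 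Thus $\alpha(\Gamma)\subset\mathrm{Isom}_{g_\infty}(M)=:K$, a compact Lie group (Myers--Steenrod, or the Hilbert--Smith theorem for bi-Lipschitz actions in low regularity) with $\dim K\le \tfrac12\dim M(\dim M+1)$. By Margulis's superrigidity with compact codomain (Theorem~\ref{thm:MSRcompactcodomain} and its analogue for general $G$), if $\alpha(\Gamma)$ were infinite then $K$ would contain a subgroup whose Lie algebra is a product of copies of the compact real form $\mathfrak k_0$ of $\lieg$; but for $\dim M\le r(G)$ one checks, case by case from Table~\ref{tab:stupid}, that $\tfrac12\dim M(\dim M+1)<\dim\mathfrak k_0$, so $\alpha(\Gamma)$ is finite.

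The hardest part, beyond assembling the cocompact argument above, is the nonuniform case: $M^\alpha$ is no longer compact, so the fiberwise derivative cocycle over $G/\Gamma$ need not be bounded, the empirical-measure and root-averaging arguments can lose mass into the cusps, and even defining Lyapunov exponents requires constructing a well-behaved Riemannian metric on $M^\alpha$. Handling this requires the structure theory of arithmetic groups, reduction theory, and the techniques of \cite{MR1828742} (as carried out for $\Sl(n,\Z)$ in \cite{1710.02735} and in general in \cite{BFHWM}), together with the extension of strong property~(T) to nonuniform lattices. Even in the cocompact case the genuinely deep input is Theorem~\ref{thm:lplpl}, whose proof rests on the Ledrappier--Young entropy formula and the coarse-Lyapunov Abramov--Rokhlin mechanism of Section~\ref{sec:advancedIP}; this is exactly what makes the optimal critical dimension $r(G)$ accessible rather than the weaker $\rank(G)$ bound of Theorem~\ref{thm:serge}.
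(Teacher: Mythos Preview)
Your proposal is correct and follows exactly the approach the paper indicates: the paper does not give a self-contained proof of Theorem~\ref{slnr:popop} but rather cites \cite{1608.04995} and \cite{BFHWM}, and your outline is precisely the natural generalization of the three-step strategy of Section~\ref{sec:outline3steps}, with Theorem~\ref{thm:lplpl} (built on the advanced invariance principle of Proposition~\ref{prop:nonresinv}) replacing Theorem~\ref{thm:invmsr} to obtain the improved critical dimension $r(G)$ rather than $\rank(G)$. You have also correctly identified where the genuine difficulties lie---the nonuniform case (noncompactness of $M^\alpha$, escape of mass, reduction theory, \cite{MR1828742}, and the extension of strong property~(T) in \cite{delaSallenonuniform}) and the entropy-theoretic machinery underlying Theorem~\ref{thm:lplpl}---which is exactly what the paper flags in its remarks following Theorems~\ref{slnr} and~\ref{thm:lplpl}.
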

	
	When $G$ is exceptional or not a split real form,  our number $r(G)$ is lower than the conjectured critical dimension in Conjecture \ref{conjecture:zimmergne}\ref{gnea} and \ref{gneb}.  However,  for lattices in all Lie groups that are {non-exceptional, split real forms} \cref{slnr:popop}  confirms Conjecture \ref{conjecture:zimmergne}\ref{gnea} and \ref{gneb}.   For instance, for actions by lattices in symplectic groups we have the following.
	\begin{theorem}[{\cite[Theorem 1.3]{1608.04995}  cocompact case; \cite{BFHWM}  nonuniform case}]
		\label{thm:spnr}
		%Conjecture \ref{conjecture:zimmer} holds for cocompact lattices in $\Sp(2n,\mathbb R)$ for $n\ge  2$.  In particular i
		For $n\ge 2$, if $M$ is a compact manifold with  $\dim(M)< 2n-1$ and if $\Gamma \subset \Sp(2n,\R)$ is a   lattice then any  homomorphism $\alpha\colon \Gamma \rightarrow \Diff^2(M)$ has finite image. In addition, if $\dim(M)=2n-1$  then any  homomorphism $\alpha\colon \Gamma \rightarrow \Diff_\vol^2(M)$ has finite image.
	\end{theorem}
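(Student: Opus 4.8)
The plan is to run the three-step scheme of \cref{sec:outline3steps} for a cocompact lattice $\Gamma\subset G=\Sp(2n,\R)$ ($n\ge 2$), substituting the advanced invariance principle \cref{thm:lplpl} for the elementary one \cref{thm:invmsr}; this is what yields the sharp critical dimension $r(\Sp(2n,\R))=2n-1$ recorded in \cref{tab:stupid} rather than merely $\rank(G)=n$. Throughout one uses that $G$ has restricted root system $C_n$, that $d_\rep(G)=2n$, and that the compact real form of $\mathfrak{sp}(2n,\R)$ is $\mathfrak{sp}(n)$, of dimension $n(2n+1)$. The case of nonuniform $\Gamma$ requires in addition the techniques of \cites{1710.02735,BFHWM} to control escape of mass in $G/\Gamma$ and is not addressed by this scheme.

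\emph{Step 1: uniform subexponential growth of derivatives.} I would first prove the $\Sp(2n,\R)$-analogue of \cref{thm:USEGOD}: if $\alpha\colon\Gamma\to\diff^2(M)$ fails to have uniform subexponential growth of derivatives, then (using that $M^\alpha$ is compact) \cref{eq6677} produces an ergodic, $A$-invariant measure $\mu'$ on $M^\alpha$ with a nonzero fiberwise Lyapunov exponent, and the symplectic version of the averaging argument behind \cref{prop:goodmeas}---the same root-subgroup averages, with \cref{average} controlling the fiberwise top exponent and \cref{claim:Ratner} (Ratner's and Shah's equidistribution theorems) upgrading the base projection---delivers such a measure $\mu$ that projects to the Haar measure on $G/\Gamma$. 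Since $\dim M\le r(G)-1$ in the non-volume-preserving case, and $\dim M\le r(G)$ with $\alpha$ volume-preserving in the second case, \cref{thm:lplpl} forces $\mu$ to be $G$-invariant. Applying the $\Sp(2n,\R)$-version of Zimmer cocycle superrigidity \cref{thm:ZCSR} to the fiberwise derivative cocycle $\calA(g,x)=\restrict{D_xg}{F(x)}$, which is $\Gl(d,\R)$-valued with $d=\dim M\le 2n-1<2n=d_\rep(G)$, and noting that there is no nontrivial representation $\Sp(2n,\R)\to\Sl(d,\R)$, one concludes the cocycle is cohomologous to a compact-valued cocycle; Poincar\'e recurrence to sets where the conjugating function has bounded norm and conorm then kills all fiberwise Lyapunov exponents of $\mu$, a contradiction. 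Hence $\alpha$ has uniform subexponential growth of derivatives.

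\emph{Steps 2 and 3: invariant metric and compact-codomain superrigidity.} Cocompact lattices in $\Sp(2n,\R)$ satisfy Lafforgue's strong Banach property (T), since $\Sp(2n,\R)$ contains a subgroup isogenous to $\Sp(4,\R)$ \cite{MR3407190}. Feeding the subexponential growth from Step 1 into \cref{lem:normgrowth} and the Hilbert-space form of strong property (T) (\cref{lafforgue1}, which holds for such $\Gamma$), the averaging construction of \cref{ssec:MSRcompact} produces, for any chosen $\ell$, an $\alpha(\Gamma)$-invariant $C^\ell$ Riemannian metric $g$ on $M$, positive-definiteness following from the exponential convergence of the averaging operators together with the lower bound on $\|D_x\alpha(\gamma)\|$. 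Then $\alpha(\Gamma)\subset K=\mathrm{Isom}_g(M)$, a compact Lie group with $\dim K\le \tfrac{m(m+1)}{2}\le n(2n-1)$ by \eqref{eq4545}, where $m=\dim M\le 2n-1$. Since $n(2n-1)<n(2n+1)=\dim\mathfrak{sp}(n)$, the Lie algebra of $K$ admits no copy of the compact real form $\mathfrak{sp}(n)$ of $\mathfrak{sp}(2n,\R)$, so the $\Sp(2n,\R)$-analogue of \cref{thm:MSRcompactcodomain} (proved, as in the $\Sl(n,\R)$ case, from the $p$-adic Margulis superrigidity theorem) forces $\alpha(\Gamma)$ into a $0$-dimensional subgroup of $K$; hence $\alpha(\Gamma)$ is finite.

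I expect the main obstacle to be Step 1, specifically the symplectic incarnation of \cref{prop:goodmeas}: starting from a measure $\mu'$ whose projection to $G/\Gamma$ need not be Haar, one must perform the delicate sequence of root-subgroup averages---dictated by the $C_n$ root system and the competing requirements that some fiberwise exponent remain nonzero on the chosen element while the projected measure acquire invariance under enough unipotents for Ratner's and Shah's theorems to force it to be the Haar measure---without ever losing the nonzero fiberwise exponent. The other genuinely new input, relative to the $\Sl(n,\R)$ case, is the replacement of \cref{thm:invmsr} by \cref{thm:lplpl} (which rests on the coarse-Lyapunov Abramov--Rokhlin formula \cref{thm:ARsuspension} and the entropy product structure of \cite{AWB-GLY-P3}); this is precisely what buys the extra unit of dimension needed to reach $2n-1$.
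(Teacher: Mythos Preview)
Your proposal is correct and follows exactly the scheme the paper uses: \cref{thm:spnr} is stated in the paper as an immediate instance of the general \cref{slnr:popop} with $r(\Sp(2n,\R))=2n-1$, and your write-up simply unpacks the three-step proof of \cref{sec:outline3steps} for $G=\Sp(2n,\R)$, correctly identifying that the advanced invariance principle \cref{thm:lplpl} (via \cref{prop:nonresinv} and \cref{thm:ARsuspension}) replaces \cref{thm:invmsr}, that strong property (T) comes from \cite{MR3407190}, and that the dimension count $n(2n-1)<n(2n+1)=\dim\mathfrak{sp}(n)$ in Step~3 goes through.
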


	Similarly, for actions by lattices in split orthogonal groups we have the following.
	\begin{theorem}[{\cite[Theorem 1.4]{1608.04995}  cocompact case; \cite{BFHWM} nonuniform case}] %}]
		\label{thm:orthogonalgroups}
		Let $M$ be a compact   manifold. %and $\omega$ a volume form on $M$.  % then
		\begin{enumerate}
			\item For $n\ge 4$, if $\Gamma\subset\So(n,n)$ is a   lattice and if   $\dim(M)< 2n-2$ then any  homomorphism $\alpha\colon \Gamma \rightarrow \Diff^2(M)$ has finite image.  If $\dim(M) = 2n-2$ %and if $M$ is not  diffeomorphic to $S^{2n-1}$ or $\RP^{2n-1} $
			then any  homomorphism $\alpha\colon \Gamma \rightarrow \Diff^2_\vol(M)$ has finite image. 
			\item For $n\ge 3$, if  $\Gamma\subset\So(n,n+1)$ is a   lattice and  if $\dim(M)< 2n-1$ then any  homomorphism $\alpha\colon \Gamma \rightarrow \Diff^2(M)$ has finite image.  If $\dim(M) = 2n -1$  
			then any  homomorphism $\alpha\colon \Gamma \rightarrow \Diff^2_\vol(M)$ has finite image.  
		\end{enumerate}
	\end{theorem}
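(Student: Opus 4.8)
The strategy is to run the three-step scheme of Section~\ref{sec:outline3steps} with $G$ equal to the split orthogonal group $\So(n,n)$ (for $n\ge 4$) or $\So(n,n+1)$ (for $n\ge 3$); both are higher-rank simple Lie groups with finite center, so the whole apparatus applies. The only point at which the two cases differ from the $\Sl(n,\R)$ case is the numerology. From Table~\ref{tab:stupid}, and using that for split groups $r(G)=d_0(G)$, one has $r(\So(n,n))=2n-2$, $d_\rep(\So(n,n))=2n$, and $r(\So(n,n+1))=2n-1$, $d_\rep(\So(n,n+1))=2n+1$. The critical dimensions asserted in the theorem are precisely $r(G)-1$ in the general case and $r(G)$ in the volume-preserving case, so the statement is the instance of Theorem~\ref{slnr:popop} for $G\in\{\So(n,n),\So(n,n+1)\}$; concretely it follows by combining the analogue of Theorem~\ref{thm:USEGOD}, Theorem~\ref{thm:lplpl}, Theorem~\ref{lafforgue1}, and Theorem~\ref{thm:MSRcompactcodomain}. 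I would carry out the argument for cocompact $\Gamma$ and invoke the nonuniform versions of the ingredients announced in \cite{BFHWM} and \cite{delaSallenonuniform} for general lattices.

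\emph{Step 1: uniform subexponential growth of derivatives.} Suppose $\alpha\colon\Gamma\to\Diff^2(M)$ fails to have uniform subexponential growth of derivatives. Form the suspension $M^\alpha\to G/\Gamma$ with induced $G$-action and let $A\simeq\R^{\rank(G)}$ be a maximal split Cartan subgroup. The analogue of Proposition~\ref{eq6677} --- which, as the text remarks, holds for any cocompact $\Gamma$ --- produces an ergodic, $A$-invariant measure $\mu'$ on $M^\alpha$ carrying a nonzero fiberwise Lyapunov exponent, and the analogue of Proposition~\ref{prop:goodmeas} (averaging over one-parameter unipotent root subgroups, plus Ratner's measure classification theorem and the structure theory of $G$) upgrades $\mu'$ to an $A$-ergodic measure $\mu$ on $M^\alpha$ projecting to the Haar measure on $G/\Gamma$ and still having a nonzero fiberwise exponent. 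Here the dimension hypothesis enters: since $\dim(M)\le r(G)-1$ (general case) or $\dim(M)\le r(G)$ (volume-preserving case), Theorem~\ref{thm:lplpl} --- whose proof rests on the coarse-Lyapunov Abramov--Rokhlin formula (Theorem~\ref{thm:ARsuspension}) and the nonresonance invariance principle (Proposition~\ref{prop:nonresinv}), not merely on the cruder Theorem~\ref{thm:invmsr} --- forces $\mu$ to be $G$-invariant. Finally, since $\dim(M)<d_\rep(G)$ there is no nontrivial representation of $\mathfrak g$ into $\mathfrak{sl}(\dim M,\R)$, so applying Zimmer's cocycle superrigidity theorem (Theorem~\ref{thm:ZCSR}) to the fiberwise derivative cocycle of the $\mu$-preserving $G$-action shows it is cohomologous to a compact-valued cocycle; Poincar\'e recurrence then forces all fiberwise Lyapunov exponents of $\mu$ to vanish, contradicting the construction of $\mu$. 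Hence $\alpha$ has uniform subexponential growth of derivatives.

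\emph{Steps 2 and 3: averaged invariant metric and compact-codomain superrigidity.} Given uniform subexponential growth, Lemma~\ref{lem:normgrowth} shows the induced representation $\alpha_\#$ of $\Gamma$ on a Sobolev space $\mathcal H^k$ of symmetric $2$-tensors has subexponential norm growth. Since $\So(n,n)$ for $n\ge 4$ and $\So(n,n+1)$ for $n\ge 3$ are higher-rank simple Lie groups containing a subgroup isogenous to $\Sp(4,\R)$, their lattices satisfy Lafforgue's strong property~(T) (by \cite{MR2423763}, with the extensions in \cite{MR3407190} for the cocompact case and \cite{delaSallenonuniform} for the nonuniform case); Theorem~\ref{lafforgue1} then produces, for $k$ large, a $\Gamma$-invariant $C^\ell$ (or merely continuous, for $C^2$ actions) symmetric $2$-tensor $g$, and the exponential convergence of the averaging operators together with the matching lower bound $\|D_x\alpha(\gamma)\|\ge Ce^{-\epsilon|\gamma|}$ shows $g$ is positive definite, hence a Riemannian metric. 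Thus $\alpha(\Gamma)\subset\mathrm{Isom}_g(M)$, a compact Lie group of dimension at most $\tfrac{m(m+1)}{2}$ with $m=\dim(M)$. For $G=\So(n,n)$ the compact real form of $\mathfrak g$ is $\mathfrak{so}(2n)$, of dimension $n(2n-1)$, and for $m\le 2n-2$ one has $\tfrac{m(m+1)}{2}\le(n-1)(2n-1)<n(2n-1)$; for $G=\So(n,n+1)$ the compact real form is $\mathfrak{so}(2n+1)$, of dimension $n(2n+1)$, and for $m\le 2n-1$ one has $\tfrac{m(m+1)}{2}\le n(2n-1)<n(2n+1)$. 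In either case $\mathrm{Isom}_g(M)$ is too small to contain a subgroup whose Lie algebra is a product of copies of the compact real form of $\mathfrak g$, so the analogue of Theorem~\ref{thm:MSRcompactcodomain} for these orthogonal groups forces $\alpha(\Gamma)$ into a $0$-dimensional subgroup; being finitely generated, $\alpha(\Gamma)$ is therefore finite.

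\emph{Main obstacle.} The crux is Step~1, and in particular obtaining $G$-invariance of $\mu$ in the optimal dimension $r(G)=d_0(G)$ rather than in $\rank(G)-1=n-1$, which is all the elementary argument of Theorem~\ref{thm:invmsr} yields. This needs the advanced invariance principle: via Theorem~\ref{thm:ARsuspension} and Proposition~\ref{prop:nonresinv} one shows $\mu$ is invariant under $A$ and under every root subgroup $U^\beta$ with $\beta$ nonresonant, then counts --- using that no two roots of $D_n$ or of $B_n$ are positively proportional, so at most $\dim(M)$ roots are resonant with a fiberwise Lyapunov exponent --- to conclude that the stabilizer $H$ of $\mu$ in $G$ has codimension at most $\dim(M)$. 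By \cite[Lemma 2.5]{1608.04995} this $H$ is parabolic, and since every proper parabolic subgroup of $\So(n,n)$ and $\So(n,n+1)$ has codimension at least $d_0(G)>\dim(M)$ in the general case (and, in the volume-preserving borderline case, the relation $\sum m_i\lambda^F_{i,\mu}=0$ coming from $\Sl^\pm$-valuedness of the fiberwise cocycle removes the remaining degree of freedom), one concludes $H=G$. Verifying the codimension bounds for proper parabolics of $\So(n,n)$ and $\So(n,n+1)$, and handling the nonuniform case, is the technical heart.
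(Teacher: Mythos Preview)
Your proposal is correct and follows exactly the approach the paper outlines: the theorem is simply the specialization of Theorem~\ref{slnr:popop} to $G=\So(n,n)$ and $G=\So(n,n+1)$, using the values $r(G)=d_0(G)=2n-2$ and $2n-1$ from Table~\ref{tab:stupid}, and the paper itself does not prove it separately but cites it as such an instance. One minor remark: both $\So(n,n)$ for $n\ge 4$ and $\So(n,n+1)$ for $n\ge 3$ already contain a subgroup locally isomorphic to $\Sl(3,\R)$ (via an $A_2$ root subsystem), so Lafforgue's original result \cite{MR2423763} suffices for strong property~(T) and you need not invoke the $\Sp(4,\R)$ extension.
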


	For actions by  lattices $\Gamma$ in simple Lie groups that are not split real forms  such as $G=\Sl(n,\mathbb{C})$, $\So(n,m)$ for $m\ge n+2$, or $\mathrm{SU}(n,m)$,    Theorem \ref{slnr:popop} above (the  main result of \cite{1608.04995} for cocompact case, \cite{BFHWM} in general) gives finiteness of all actions on manifolds whose dimension is below a certain critical  dimension.  However, this critical dimension may be below the dimension conjectured by the analogue of Conjecture \ref{conjecture:zimmergne} for these groups.   See Table \ref{tab:stupid}.

	\part{A selection of other  measure rigidity results}
	\section{Nonuniformly hyperbolic $\Z^k$-actions}\label{sec:NUHZd}%versions of Theorem \ref{thm:KS}
	Instead of considering $\Z^2$-actions  by automorphisms of $\T^3$ as in Theorem \ref{thm:KS}, we might  consider  $\Z^2$-actions on the torus  $\T^3$ generated by two commuting   diffeomorphisms $f,g\colon \T^3\to \T^3 $.

	Recall that for any homeomorphism  $f\colon  \T^3\to \T^3$ there exists a unique  $M\in \Gl(3, \Z)$ so that   any  lift $\td f\colon \R^3\to \R^3$  of $f$  is of the form $$\td f(x) = Mx + \psi(x)$$ where $\psi\colon \R^3\to \R^3$ is $\Z^3$-periodic. The linear map $M$ can also be seen as the induced action of $f$ on first homology of $\T^3$.  We call $M$ the \emph{linear data} of $f$.  
	By a theorem of Franks \cite{MR0271990}, if  $M$ has no eigenvalues of modulus 1 then there is a  continuous, surjective $h\colon \T^d\to \T^d$, homotopic to the identity,  such that \begin{equation}\label{eq12}  h \circ f=L_M\circ h\end{equation}
	where $L_M\colon \T^3\to \T^3$ is the induced automorphism of the torus; such a map $h$ is called a \emph{semiconjugacy}.\index{conjugacy!semi-}

	If $f,g\colon \T^3\to \T^3 $ are commuting  homeomorphisms with linear data $A$ and $B$, respectively, one can verify  that $A$ and $B$ commute.  Indeed if $\td f(x)= Ax + \psi(x)$ and $\td g(x) =  Bx + \phi(x)$ are lifts of $f$ and $g$, respectively, then 
	$$\td f\circ \td g(x) = AB x+ A \phi(x) + \psi(x)$$ and 
	$$\td g\circ \td f(x) = BA x+ B \psi(x) + \phi(x)$$ are both lifts of $f\circ g = g\circ f$ whence $AB= BA$.   
	
	If $A$ has no eigenvalues of modulus 1, we may take a map $h\colon \T^3\to \T^3$ with $$h\circ f = L_A \circ h$$ as in \eqref{eq12}.  Following \cite[Lemma 1]{MR2643892} (correcting \cite[Lemma 1.2]{MR2261075}) the map $h$ conjugates the $\Z^2$-action generated by $f$ and $g$ to an affine action on $\T^3$ whose linear part is generated by $L_A$ and $L_B$.  
	That is, if $\alpha\colon \Z^2\to \diff(\T^3)$ is the non-linear action $$\alpha(n_1, n_2) = f^{n_1} g^{n_2}$$ then there is an affine action $\alpha_0\colon  \Z^2\to \diff(\T^3)$  of the form $$\alpha_0(n_1, n_2) (x)= L_A^{n_1} L_B^{n_2}(x) + v_{(n_1, n_2)}$$ for some $v_{(n_1, n_2)} \in \T^3$ such that for all $(n_1, n_2)\in \Z^2$
	\begin{equation}\label{eq:semicon} h\circ \alpha(n_1, n_2) = \alpha_0(n_1, n_2)\circ h. %f^{n_1}\circ g^{n_2}(x)) = L_A^{n_1} L_B^{n_2} (h(x)).
	\end{equation}
	We note that the translation term $ (n_1,n_2)\mapsto v_{(n_1, n_2)}$ is a cocycle: $$v_{(n_1, n_2)+ (m_1, m_2)} = L_A^{n_1} L_B^{n_2} v_ {(m_1, m_2)}  +v_{(n_1, n_2)}.$$
	Moreover, the action $\alpha_0$ has a fixed point if and only if $v_{(n_1, n_2)}$ is  a coboundary: $$v_{(n_1, n_2)} =L_A^{n_1} L_B^{n_2} \eta -\eta $$ for some $\eta\in \T^3$.  
	The presence of the translation term $v_{(n_1, n_2)}$  is due to the non-uniqueness of the map $h$ satisfying \eqref{eq12}.  However, all  maps $h$  satisfying \eqref{eq12} differ by a translation by an element of the finite set of  fixed points   for $L_A$.  Thus, the  translation terms $v_{(n_1, n_2)}$ take  only finitely many possible values.  See discussion in \cite{MR2643892} for more details.  We note that it is possible to construct genuinely affine Anosov actions $\alpha_0$ without fixed points as in \cref{rem:affineAnosov}.  See for example \cite[Theorem 2]{MR1236179}.  In particular, it may be that the action $\alpha$ is not semiconjugate to any action  by automorphisms.  However, restricting to a subgroup $\Sigma\subset \Z^2$ of finite index, one has  that  $\restrict{\alpha_0}{\Sigma}\colon \Sigma \to \Diff(\T^3)$ is an action by automorphisms: $$\alpha_0(n_1, n_2) (x)= L_A^{n_1} L_B^{n_2}(x)$$
	for all $(n_1, n_2)\in \Sigma$.

	%If either $A$ or $B$ has all eigenvalues of modulus different than $1$, then there is a continuous surjection  (homotopic to the identity) $h\colon \T^3\to \T^3$ such that for all $(n_1, n_2)\in \Z^2$
	%\begin{equation}\label{eq:semicon} h(f^{n_1}\circ g^{n_2}(x)) = L_A^{n_1} L_B^{n_2} (h(x)).\end{equation}
	
	If  $f$ is Anosov then its linear data  $A$ is known to have no eigenvalues of modulus 1 and the map $h$ in \eqref{eq12} is a homeomorphism.  Suppose further that  $f$ and $g$  generate a ``genuine'' $\Z^2$-action so that the group of matrices generated by their linear data $A$ and $B$ is not virtually cyclic.  This implies that the linear action generated by $L_A$ and $L_B$  satisfies Theorem \ref{thm:KS}.  
	Restricted to a finite-index subgroup, the map $h$ conjugates the action $\alpha$ to a linear action of the  type of action considered in  Example \ref{ex:key}.  Since invertible maps preserve entropy, Theorem \ref{thm:KS} classifies all positive entropy measures  that are jointly $f$- and $g$-invariant.   We remark also that under the above assumptions,  from \cite{MR2318497}, we know  in this setting that the conjugating map $h$ satisfying \eqref{eq:semicon} is smooth.  
	Note that the assumption that the group of matrices generated by  $A$ and $B$ is not virtually cyclic is essential; for instance, if $g$ is a power of $f$ we expect no rigidity of jointly invariant measures or smoothness of the conjugacy $h$.

	If neither  $f$ nor $g$ is  Anosov, the map $h$ in \eqref{eq:semicon} may  be non-invertible.   In particular, there may exist  ergodic, $\alpha$-invariant measures $\mu$ on $\T^3$ with $h_\mu(f)>0$ such that the push-forward measure     $h_*(\mu)$ has zero entropy for $\alpha_0(\vecn)$ for every   $\vecn\in \Z^2.$
	Note however that if  $$h_{h_*(\mu)}(\alpha_0(\vecn) )> 0$$ for some $\vecn\in \Z^2$ then $h_*(\mu)$  is necessarily  Haar by Theorem \ref{thm:KS}.

	When the map $h$ in \eqref{eq:semicon} is non-invertible, analysis of measures invariant under the affine action $\alpha_0$  gives less information about   measures jointly invariant under $f$ and $g$.  However, the method of proof of Theorem \ref{thm:KS} can be adapted to study measures jointly invariant under $f$ and $g$; in particular, one can show the  following theorem which is a simplified version of the main results of \cites{MR2261075, MR2285730}.
	\begin{theorem}[{\cites{MR2261075, MR2285730}}] \label{jkjkjlkjo}
		Suppose $f,g\colon \T^3\to \T^3 $ are commuting $C^{1+\beta}$ diffeomorphisms.  Suppose the linear data of $f$ and $g$ are, respectively,  the matrices  $A$ and $B$    in Example \ref{ex:key}.  
		Then any ergodic probability measure $\mu$ that is invariant under both $f$ and $g$ and  such that  $h_*(\mu)$ is Haar % has positive entropy \footnote{Note that $h_*(\mu)$ is thus necessarily  Haar by Theorem \ref{thm:KS}} for $L_A$ or $L_B$ is
		is absolutely continuous with respect to the Riemannian volume on $\T^3$.  Such a measure always exists and is, moreover,  unique.  
		%Then there exists a unique, ergodic probability measure $\mu$ that is invariant under both $f$ and $g$ and  such that  $h_*\mu$ has positive entropy\footnote{and is hence Haar by Theorem \ref{thm:KS}} for $L_A$ or $L_B$.  Moreover, the measure $\mu$ is absolutely continuous with respect to the Riemannian volume on $\T^3$ 
	\end{theorem}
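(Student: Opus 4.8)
The strategy is to transfer the problem from the nonlinear action generated by $f$ and $g$ to the linear action generated by $L_A$ and $L_B$ via the semiconjugacy $h$ of \eqref{eq:semicon}, and then to run the leaf-wise measure / shear argument used in the proof of \cref{thm:KS}, but now applied \emph{directly} to the nonlinear coarse Lyapunov foliations of $f$ and $g$ on $\T^3$. First I would pass to a finite-index subgroup $\Sigma\subset \Z^2$ so that $\restrict{\alpha_0}{\Sigma}$ is an action by the commuting automorphisms $L_A, L_B$; since $h_*\mu$ is assumed Haar and Haar is $\Sigma$-ergodic with positive entropy, Theorem \ref{thm:KS} and the discussion following it apply to the linear factor. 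Existence of an invariant measure $\mu$ with $h_*\mu$ Haar follows by a standard Krylov--Bogolyubov argument: the set of Borel probability measures on $\T^3$ projecting under $h$ to the Haar measure $m$ is nonempty (push $m$ forward under a Borel section of $h$), weak-$*$ compact, and invariant under the (amenable, indeed abelian) group $\Z^2$, so it contains a jointly $f$- and $g$-invariant measure $\mu$; replacing $\mu$ by an ergodic component with $h_*(\cdot)=m$ keeps the projection Haar by $\Sigma$-ergodicity of $m$.

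\textbf{Key steps.} The heart of the argument is to show $\mu$ is absolutely continuous on $\T^3$. Using that $A$ and $B$ are as in Example \ref{ex:key}, they are jointly diagonalizable with three Lyapunov functionals $\lambda^1,\lambda^2,\lambda^3$ in general position (Claim \ref{claim:trivial} and \cref{claim:plp}); for the nonlinear action, by Theorem \ref{thm:oscHR} applied to any $\alpha$-invariant ergodic $\mu$ one obtains fiberwise/tangential Lyapunov functionals which, because $h$ semiconjugates to the linear action and is Hölder, are forced to coincide with the $\lambda^j$ (the semiconjugacy controls the homological data and hence the exponents up to the contribution of the correction term, which is subexponential on the relevant scale). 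Each positively-proportional class is its own coarse class, so for $\mu$-a.e.\ $x$ there are one-dimensional coarse Lyapunov manifolds $W^j(x)$ (Section \ref{sss:clm}) partitioning $\T^3$, and I build leaf-wise measures $\mu^j_x$ along them as in Section \ref{sec:condi}. Next I induce to a suspension $\R^2$-action on a $5$-manifold $N$ (as in \cref{ss:suspspace}), fix $i$ with $h_\mu(\alpha(\vecn))>0$ for $\vecn$ having $\lambda^i(\vecn)>0>\lambda^j(\vecn)$ ($j\neq i$), so $\mu^i_x$ is nonatomic by \cref{entropyvatoms2}; pick $\vecs_0\in\ker\lambda^i\setminus\{0\}$ and, after the $\pi$-partition trick (\cref{lem:Pipart}, whose hypotheses hold since $\lambda^i$ has no negatively proportional partner), conclude via the shear argument of \cref{lem:trans} that $G(\nu^i_x)=\R$; then \cref{lem:abscon} gives that $\mu^i_x$ (hence $\nu^i_x$) is absolutely continuous along $W^i$. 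Finally \cref{thm:led'} (in the form \cref{prop:easyLed}, valid in the $C^{1+\beta}$ nonuniform setting by the remark after its proof) upgrades this to $h_\mu(\alpha(\vecn))=\sum_{\lambda^j(\vecn)>0}\lambda^j(\vecn)$; running the same argument for the other expanding direction(s) and invoking \cref{entropyfacts}\ref{EFF1}--\ref{EFF2} together with \cite{MR819557} shows $\mu$ satisfies Pesin's entropy formula for the generator of a hyperbolic element, whence $\mu$ is SRB and, symmetrizing between $f$ and $g$ (or using an Anosov element of the action such as $\alpha$ of a $\vecn$ making $A^{n_1}B^{n_2}$ hyperbolic), $\mu$ is absolutely continuous with respect to Lebesgue on $\T^3$. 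Uniqueness then follows because the $f^{-n}$-pushforwards of Lebesgue converge and any two a.c.\ invariant measures projecting to $m$ must agree by ergodicity of $m$ and the Ledrappier density formula, which pins down the conditional densities along unstable leaves.

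\textbf{Main obstacle.} The delicate point I expect to be hardest is controlling the geometry in the nonlinear setting precisely enough to run \cref{lem:abscon} and \cref{prop:easyLed}: unlike in \cref{thm:KS}, the foliations $W^j(x)$ are only $C^{1+\beta}$ immersed curves and the dynamics along them is not affine, so the identification $\Phi_x$ of $W^i(x)$ with $\R$ must be replaced by a normal-form coordinate system $H_x\colon\R\to W^i(x)$ (from Sternberg/normal-form linearization of the dynamics along the one-dimensional unstable leaves, which exists because the leaves are one-dimensional and $f\in C^{1+\beta}$), and one must check that in these coordinates $(H_x^{-1})_*\mu^i_x$ genuinely acquires a translation invariance under the shear. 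The verification that the group $G((H_x^{-1})_*\mu^i_x)$ is all of $\R$ — rather than merely nontrivial — uses that the leaves are one-dimensional so that a closed nontrivial subgroup of $\R$ that acts transitively on the (perfect, by nonatomicity) support must be $\R$; this is exactly where the one-dimensionality of the coarse Lyapunov manifolds (forced by Claim \ref{claim:trivial}, i.e.\ the general position of the $\lambda^j$) is essential, and it is the step that would fail for a general pair of commuting surface or higher-dimensional diffeomorphisms. Everything else is a matter of assembling Theorems \ref{thm:oscHR}, \ref{thm:led'}, \ref{thm:LYII}, \cref{lem:Pipart}, and the semiconjugacy bookkeeping from \cite{MR2643892}.
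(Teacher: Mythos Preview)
The paper does not give a proof of this theorem; it only states it as a simplified version of \cites{MR2261075,MR2285730} and remarks that ``the method of proof of \cref{thm:KS} can be adapted.'' Your outline is precisely the Kalinin--Katok scheme and correctly identifies the main new technical ingredient beyond \cref{thm:KS}: replacing the affine parametrization $\Phi_x$ by nonstationary normal-form coordinates $H_x\colon\R\to W^i(x)$ on the one-dimensional coarse Lyapunov leaves so that the shear argument of \cref{lem:trans} and the translation-invariance conclusion of \cref{lem:abscon} make sense.

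There is, however, a genuine gap at the first step. Your claim that the nonlinear Lyapunov functionals ``are forced to coincide with the $\lambda^j$'' because the correction term in $\tilde f(x)=Ax+\psi(x)$ is ``subexponential'' confuses growth of orbits with growth of the derivative cocycle: $D_xf^n$ is a product of matrices $A+D\psi(f^k x)$, and boundedness of $\psi$ says nothing about the exponents of such a product. What you actually need (and what Kalinin--Katok establish) is that the \emph{Weyl chamber picture} of the nonlinear action matches the linear one: the semiconjugacy $h$ sends nonlinear stable sets into linear stable sets, giving $\dim E^s_{\mu,\vecn}\le\dim E^s_{A^{n_1}B^{n_2}}$ for every $\vecn$, and the entropy bound $h_\mu(\alpha(\vecn))\ge h_m(L_{A^{n_1}B^{n_2}})>0$ together with Ruelle's inequality forces nonzero exponents in both time directions; from this one deduces that the three nonlinear Lyapunov functionals are nonzero with pairwise non-proportional kernels. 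Without this, you can neither assert that the coarse Lyapunov leaves are one-dimensional nor invoke the $\pi$-partition trick (whose hypothesis in \cref{lem:Pipart} is exactly the absence of negatively proportional exponents). A smaller slip: you cannot appeal to ``an Anosov element of the action''---none of the $\alpha(\vecn)$ need be uniformly hyperbolic here---so the passage from SRB-along-each-$W^i$ to absolute continuity must go through the nonuniformly hyperbolic version (SRB for $\alpha(\vecn)$ and for $\alpha(-\vecn)$, then absolute continuity of holonomies), not a uniform one.
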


	For actions on more general manifolds,  there may be  no   a priori semiconjugacy between the nonlinear action and an affine Anosov  action.  However, under certain dynamical hypotheses on the action, the structure of the algebraic toral action can be reconstructed.  Consider a $\Z^2$-action $\alpha$ on a 3-manifold $M$ generated by two commuting diffeomorphisms $f,g\colon M\to M$.  Given an ergodic, $\Z^2$-invariant probability measure $\mu$, one can define  {Lyapunov exponent functionals} for the $\Z^2$-action as in Theorem \ref{thm:oscHR}.  These extend to linear functionals on $\R^2$.  Note that there are at most 3 (the dimension of $M$) Lyapunov exponent functionals.  Under some genericity assumptions on the Lyapunov exponent functionals, an analogue of \cref{thm:KS} and \cref{jkjkjlkjo} was obtained in \cite{MR2811602}.
	\begin{theorem}[\cite{MR2811602}]\label{KKRH}
		Let $\alpha $ be a $\Z^2$-action by $C^{1+\beta}$-diffeomorphisms of a 3-manifold and let $\mu$ be an ergodic, $\alpha$-invariant measure.  Assume there are 3 nonzero,  Lyapunov exponent functionals $\lambda_\mu^1, \lambda_\mu^2, \lambda_\mu^3$ and that no pair of exponents is proportional.  
		
		If some element $\alpha(n_1, n_2)$ has positive entropy with respect to $\mu$, then $\mu$ is absolutely continuous with respect to the Riemannian volume on $M$.
	\end{theorem}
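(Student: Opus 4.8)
The plan is to mimic the proof of Theorem \ref{thm:KS} while replacing all algebraic structure with its nonuniformly hyperbolic counterpart. The hypotheses give us exactly what is needed for that strategy: three nonzero Lyapunov exponent functionals $\lambda_\mu^1,\lambda_\mu^2,\lambda_\mu^3$, pairwise non-proportional, so that each is its own coarse Lyapunov exponent. First I would set up the suspension: induce the $\Z^2$-action to an $\R^2$-action on a $5$-dimensional space $N$ (fibered over $\T^2$ with $3$-dimensional fibers), lift $\mu$ to an $\R^2$-invariant measure $\tilde\mu$, and pass to an $A=\R^2$-ergodic component. The fiberwise Lyapunov exponents of $\tilde\mu$ are the $\lambda_\mu^i$ extended to $\R^2$, and for each $i$ there is a one-dimensional coarse fiberwise Lyapunov manifold $W^i(x)$ (these exist by the construction in Section \ref{sss:clm}, carried out fiberwise). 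Since the exponents are pairwise non-proportional, the three kernels $\ker\lambda_\mu^i\subset\R^2$ are distinct lines. The goal, exactly as in Theorem \ref{thm:KS}, is to show that for some $i$ the leaf-wise measures $\mu^i_x$ along $W^i$ are absolutely continuous with respect to the Riemannian volume on $W^i(x)$; once this is known, Ledrappier's theorem in the form of Theorem \ref{thm:led} (or \ref{thm:led'}) upgrades this to the SRB property and then the product structure of entropy (Theorem \ref{thm:entprod}) combined with a dimension count forces absolute continuity of $\mu$ itself on $M$.

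The heart of the argument is the analogue of Lemma \ref{lem:trans}: for a $\mu$-typical $x$, the group $G(\nu^i_x)$ of translations preserving the pushed-forward leaf-wise measure $\nu^i_x$ (in normal-form coordinates $H_x\colon\R\to W^i(x)$ obtained from the nonstationary normal forms of the dynamics along the one-dimensional $W^i$-leaves) acts transitively on the support of $\nu^i_x$. First I would choose $\vecs_0\in\ker\lambda_\mu^i\setminus\{0\}$; then the time-$t$ map $\phi_t$ of the flow $\tilde\alpha(t\vecs_0)$ neither expands nor contracts $W^i$-leaves, and by the choice of normalization the family $x\mapsto\nu^i_x$ is $\phi_t$-invariant (Lemma \ref{lem:transinv}). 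Since $h_\mu(\alpha(n_1,n_2))>0$ for some element, after relabeling we may arrange that $W^i$ is the relevant expanding direction and the leaf-wise measures are non-atomic (Lemma \ref{entropyvatoms2}). Then, along a $\phi_t$-ergodic component, the $\phi_t$-invariance of $x\mapsto\nu^i_x$ forces this assignment to be constant along $W^i$-leaves, producing translations in $G(\nu^i_x)$, hence transitivity on the support; Proposition \ref{lem:abscon} then gives the dichotomy "countable support or absolutely continuous," and non-atomicity kills the first option. The lack-of-ergodicity issue is handled by the $\pi$-partition trick (Lemma \ref{lem:Pipart}): one shows the measurable hull $\Xi^i$ of the $W^i$-partition is finer than the $\phi_t$-ergodic decomposition. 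This is where the non-proportionality hypothesis is essential — it guarantees one can perturb $\vecs_0$ to an $\vecs_1$ with $\lambda_\mu^i(\vecs_1)<0$ while keeping the signs of $\lambda_\mu^j(\vecs_1)$ ($j\neq i$) unchanged, so that $W^i\subset W^s_{\vecs_1}$ and $W^u_{\vecs_1}=W^u_{\vecs_0}$, and then the chain $\erg_{\vecs_0}\prec\Xi^s_{\vecs_0}=\pi_{\vecs_0}=\Xi^u_{\vecs_0}=\Xi^u_{\vecs_1}=\pi_{\vecs_1}=\Xi^s_{\vecs_1}\prec\Xi^i$ of Propositions \ref{prop:hopf} and \ref{prop:pinsker} closes the loop.

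The main obstacle — and the essential difference from Theorem \ref{thm:KS} — is the absence of a linear structure on the $W^i$-leaves. In the algebraic case the leaves are literally lines $x+E^i$ with a canonical translation group and the relation $\Phi_x^{-1}\circ\Phi_{x'}(t)=t+v$ is exact. In the nonlinear setting one must replace $\Phi_x$ by normal-form coordinates $H_x\colon\R\to W^i(x)$, which are only defined $\mu$-a.e., depend measurably on $x$, and intertwine the dynamics $\tilde\alpha(\vecs)$ with affine maps of $\R$ only because $W^i$ is one-dimensional; for one-dimensional expanding (or, here, neutral-along-$\vecs_0$) foliations the normal form is an honest affine change of coordinates, which is exactly what makes the Poincaré-recurrence/shearing argument and the identification $(H_x^{-1}\circ H_{x'})_*\nu^i_{x'}\propto\nu^i_x$ go through. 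I would therefore need to invoke the normal-form machinery (as in \cites{MR2261075,MR2811602}, ultimately resting on Ledrappier–Young theory and the one-dimensionality of the coarse Lyapunov manifolds) to make precise the statement that $\nu^i_x$ is absolutely continuous iff its image in normal-form coordinates is translation-invariant; the verification that $G(\nu^i_x)$ is a closed subgroup of translations acting transitively then proceeds as above, and the conclusion that $\mu$ is absolutely continuous on $M$ follows by feeding absolute continuity along each of the (at most three) one-dimensional coarse unstable directions into Theorem \ref{thm:entprod} to match $h_\mu(\alpha(\vecn))$ with the full sum of positive exponents, i.e. the Pesin entropy formula via Theorem \ref{entropyfacts}.
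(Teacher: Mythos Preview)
The paper does not prove this theorem; it is stated with a citation to \cite{MR2811602} and no proof is given in the text. Your sketch is in fact a reasonable outline of the strategy used in \cite{MR2811602} (and in the earlier \cite{MR2261075}): suspend to an $\R^2$-action, work with one-dimensional coarse Lyapunov manifolds via nonstationary normal forms, run the $\pi$-partition trick to saturate $\phi_t$-ergodic components by $W^i$-leaves, and then shear to obtain translation invariance of the leaf-wise measures in normal-form coordinates.

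Two places in your outline would need real work to become a proof. First, in the nonlinear setting the change of coordinates $H_x^{-1}\circ H_{x'}$ between normal-form charts at two points on the same $W^i$-leaf is \emph{affine}, not a translation; you must argue separately (using the dynamics along the leaf and recurrence) that the group preserving $\nu^i_x$ up to scale actually consists of translations rather than general affine maps. This is where the one-dimensionality of $W^i$ is used in an essential way, and it is a genuine step in \cite{MR2811602}, not a triviality. Second, your last sentence is too quick: absolute continuity of the leaf-wise measures along a single $W^i$ gives the SRB property for one element $\alpha(\vecn)$ whose full unstable is $W^i$, but this does not by itself yield absolute continuity of $\mu$ on $M$. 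One must either repeat the argument for the remaining coarse directions (which requires first arguing that positive entropy along one direction propagates, via the entropy product structure and the linearity of $\vecn\mapsto h_\mu(\alpha(\vecn)\mid\chi)$, to positive entropy along the others), or combine SRB for $\alpha(\vecn)$ with SRB for $\alpha(-\vecn)$ to get absolute continuity along both stable and unstable foliations and hence of $\mu$ itself.
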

	In \cite{MR3503686}, it is shown  in the setting of Theorem \ref{KKRH} that one can   reconstruct an action by (infra-)toral automorphism and a measurable semiconjugacy $h$ between the non-linear action $\alpha$ (restricted to a finite-index subgroup of $\Z^2$) and the algebraic action.  Moreover, the semiconjugacy $h$ takes (an ergodic component of) $\mu$ to the Lebesgue measure on the (infra-)torus, is differentiable along stable manifolds, and is differentiable (in the Whitney sense) off   sets  of arbitrarily small measure.  
	This, in particular, implies that the exponents $\lambda^i_\mu(n)$ are logarithms of algebraic numbers for every $n\in \Z^2$.  
	
	\section{Invariant measures for Cartan flows}\label{sec:WCF}
	In  Section \ref{sec:diag}, we introduced an important example of a higher-rank, continuous-time algebraic  Anosov action, namely, the    diagonal action (or Cartan flow)\index{Cartan flow} on a higher-rank semisimple homogeneous space.  We review its properties, referring  back to Section \ref{sec:slstructure} for details. \begin{example}\label{ex:2}
		Let $G= \Sl(3, \R)$  and let $\Gamma = \Sl(3,\Z)$ or any lattice in $G$.  
		Let $X$ denote the coset space $X= G/\Gamma$.  This is an $8$ dimensional  manifold (which is noncompact for $\Gamma=\Sl(3,\Z)$.)  
		$G$ acts on $X$ by  left translation.

		The group  $A\subset G$ of diagonal matrices with positive entries  is isomorphic to $\R^2$. 
		The action  $\alpha \colon A\times X\to X$ of $A$ on $X$  is given by $\alpha (s)(x) = s  x.$
		There are $6$ roots $\beta^{i,j}\colon A\to \R$ given by $\beta^{i,j}(\diag(e^{t_1}, e^{t_1}, e^{t_3}) )= t_i - t_j$ each with an associated root subgroup $U^{i,j}\subset G$.  For $x\in X$,  $W^{i,j}(x)$ is the orbit of $x$ under the 1-parameter group $U^{i,j}$:
		$$W^{i,j}(x) = \{ U^{i,j}\cdot x: t\in \R\}.$$
		For $s\in A$, the action  $\alpha(s)$ dilates distances in  $W^{i,j}(x)$ by exactly $e^{\beta^{i,j}(s)}$.  
	\end{example}

	One might ask whether an analogue of Theorem \ref{thm:KS} holds in Example \ref{ex:2}.  That is, if $\mu$ is an ergodic, $A$-invariant probability measure on $X$ such that there is some $s\in A$ with $h_\mu(\alpha(s))>0$, is $\mu$ necessarily the Haar measure  on  $X$ or on   a homogeneous submanifold of $X$?

	The answer is no.  The extension of the proof of Theorem \ref{thm:KS} breaks down in this setting as the trick in Lemma \ref{lem:Pipart}  fails. 
	Indeed,   for every root  $\beta^{i,j}$,  we have that $\beta^{i,j}$ and $\beta^{j,i}= - \beta^{i,j}$  are negatively proportional.  
	Moreover, explicit examples of diagonally invariant measures with positive entropy (for some element of the diagonal) on spaces of the form $\Sl(3,\R)/\Gamma$ for certain (cocompact) lattices $\Gamma$ were constructed by Mary Rees  in an unpublished manuscript.  See \cite{MR1989231} for detailed constructions of such measures.  
		
%\note{added remark on orbit closures}			
{A related problem is the classification of orbit closures for the $A$-action on $X=\Sl(n,\R)/\Gamma$.  Rees's construction yields  $A$-orbit closures in $\Sl(n,\R)/\Gamma$ that are non-homogeneous and of intermediate Hausdorff dimensions for certain lattices $\Gamma$.   It was shown in \cite{MR2630049} that there exist non-homogeneous $A'$-orbit closures in $\SL(n,\R)/\Sl(n,\Z)$ (for $n\ge 6$) for certain higher-rank subgroups $A'$ of the full diagonal group $A$.}

%		{\color{red} Mention non-homogeneous orbit closures for $A$-actions on $\Sl(n,\R)/\Gamma$, $n\ge 6$ \cite{Maucourant}}

%{	\red HERE}
	Returning to the classification of invariant measures,  in the case that $\Gamma=\Sl(3,\Z)$, the \index{conjecture!Margulis} {\it Margulis conjecture} asserts that all ergodic $A$-invariant measures  $\mu$ on $X$ should be algebraic.  See \cite[Conjecture 1.1]{MR2247967} and discussion in \cite[\S 1.2]{MR1754775}.  For measures  with positive entropy, this  conjecture was solved   in \cite{MR2247967} (see \cref{thm:EKL} below).  We outline the main results used in \cite{MR2247967}, namely the \textit{high} and \textit{low entropy} methods.  
	%\note{"entropy methods" is a stupid index}  \index{entropy!methods} 
		
	To discuss the high and low entropy methods, first note that there are some key differences in the structure of the foliations in this setting versus the setting of Example \ref{ex:key}.    First note that any two transverse foliations $W^i,W^j$  of the torus $\T^3$ by lines are jointly integrable; that is there is a foliation of $\T^3$ by planes $W^{i,j}$ with $W^i(x) \subset W^{i,j}(x)$ and $W^j(x) \subset W^{i,j}(x)$ for all $x$.  This follows as $\T^3$ has an abelian group structure.  In $X= \Sl(3,\R)/\Gamma$, Lyapunov foliations do not jointly integrate as the corresponding subgroups may not commute.  For instance, the subgroups $U^{1,2}$ and $U^{2,3}$ do not commute and thus the foliations $W^{1,2}(x)$ and $W^{2,3}(x)$ do not jointly integrate.  This is the key idea behind the \emph{high entropy method}. 
	Moreover,   translations along Lyapunov directions $E^i$ are isometries in the torus $\T^3$.  
	In $X= G/\Gamma$, translation by an element  of a 1-parameter subgroup $U^{i,j}$ is  {not}  isometric; there is some polynomial shearing.  This is  a key step in the proof of Ratner's measure classification theorem for unipotent flows (see \cite{MR1262705}) and is also a key idea in the \emph{low entropy method}.  
	
	We state the  versions of the high entropy and low entropy methods for Example \ref{ex:2}.  Given a measure $\mu$ on $X$, for $i\neq j$  let $\mu^{i,j}_x$ denote the locally finite leaf-wise measures obtained by conditioning $\mu$ along   $W^{i,j}$-manifolds.  		%Recall  $X=\Sl(3,\R)/\Gamma$ and the notation of \cref{ex:2}.  
	\begin{theorem}[High entropy method \cite{MR1989231}] \label{thm:HE}
%		With the notation of \cref{ex:2}, l
		Let $\mu$ be an ergodic, $A$-invariant measure on $X=\Sl(3,\R)/\Gamma$.  
		Let $i, j,$ and $k$ be distinct elements of   $\{1,2,3\}$.  %such that $i\neq k$.  
		If $\mu^{i,j}_x$ and  $\mu^{j,k}_x$ are nonatomic for a positive measure set of $x$ then $\mu$ is $U^{i,k}$-invariant.
	\end{theorem}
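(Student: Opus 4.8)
The plan is to exploit the non-commutativity of the root subgroups $U^{i,j}$ and $U^{j,k}$ together with the $A$-invariance of $\mu$, via a shearing/commutator argument on leaf-wise measures. The rough strategy: since $\mu^{i,j}_x$ is nonatomic, the conditional measures of $\mu$ along $W^{i,j}$-leaves have full support (after a Poincar\'e-recurrence argument using the dynamics of a suitable $s\in A$ that expands $W^{i,j}$, one upgrades "nonatomic" to "supported on all of $W^{i,j}(x)$"), and similarly for $W^{j,k}$. The key algebraic fact is that for appropriate choices of $u\in U^{i,j}$ and $v\in U^{j,k}$ the commutator $[u,v]$ lies in $U^{i,k}$; more precisely, in $\Sl(3,\R)$ one computes $[u^{1,2}(s),u^{2,3}(t)] = u^{1,3}(st)$ (up to sign conventions). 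So if $\mu$ is quasi-invariant (up to the proportionality of leaf-wise measures) under translations along $U^{i,j}$ and $U^{j,k}$ directions \emph{within the support of the conditionals}, one can manufacture, in the limit, genuine translation invariance along $U^{i,k}$.

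First I would set up the leaf-wise measures $\mu^{i,j}_x$, $\mu^{j,k}_x$ as in Section~\ref{sec:condi}, establish their transformation rules under $A$ (each $s\in A$ scales the parametrization of $W^{i,j}(x)$ by $e^{\beta^{i,j}(s)}$, so $\alpha(s)_*\mu^{i,j}_x \propto \mu^{i,j}_{\alpha(s)(x)}$), and record that $W^{i,j}(x)$ is the $U^{i,j}$-orbit of $W^{i,j,F}$-type fiber data — here just the $U^{i,j}$-orbit itself. Next I would run the recurrence argument: choose $s\in A$ with $\beta^{i,j}(s)<0$ (so $\alpha(s)$ contracts $W^{i,j}$-leaves); by Poincar\'e recurrence to a set where the normalized conditionals are "fat," and pushing forward, one shows that for a.e.\ $x$ the measure $\mu^{i,j}_x$ charges every neighborhood in $W^{i,j}(x)$, i.e.\ has full support; this is the same mechanism as in the proof of Lemma~\ref{entropyvatoms2}, run in the positive direction. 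Then the heart: for a $\mu$-typical $x$ and a typical point $u\cdot x$ in the support of $\mu^{i,j}_x$, the conditional measures at $x$ and at $u\cdot x$ are proportional (both being restrictions of the same leaf-wise measure); combining with full support of $\mu^{j,k}$-conditionals and conjugating by elements of $A$ to rescale, one takes a limit of approximate translates and deduces that $\mu$ is invariant (up to proportionality, then genuinely after checking normalization, exactly as in the Remark following the proof of Lemma~\ref{lem:trans}) under a one-parameter subgroup of $U^{i,k}$, hence under all of $U^{i,k}$.

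The main obstacle I expect is making the commutator/shearing step rigorous: one is not literally translating $\mu$ by elements of $U^{i,j}$ (that would be too strong and false), but only using that leaf-wise measures along $W^{i,j}$ are self-proportional under the $U^{i,j}$-action restricted to their support. Turning the commutator relation $[u,v]\in U^{i,k}$ into a statement about the measure requires carefully tracking base points: $u\cdot x$ must lie in $\supp \mu^{i,j}_x$, then $v\cdot(u\cdot x)$ in $\supp \mu^{j,k}_{u\cdot x}$, then $u\inv\cdot(v\cdot u\cdot x)$ back in a $W^{i,j}$-leaf, and one needs the chain of proportionalities to close up; the polynomial (non-isometric) nature of the $U$-actions under $A$-conjugation — absent in the torus case of Theorem~\ref{thm:KS} — is what powers the limit but also what makes the bookkeeping delicate. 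A secondary technical point is the usual lack of ergodicity for the relevant one-parameter subgroups, which I would handle by a $\pi$-partition-style argument (Lemma~\ref{lem:Pipart}) adapted to this setting, or by working directly with ergodic components of $\mu$ under a well-chosen $s\in A$. For full details one consults \cite{MR1989231}; here I would present the skeleton above and refer to that paper for the quantitative estimates in the limiting argument.
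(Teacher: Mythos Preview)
The paper does not prove this theorem: it is stated as a cited result from \cite{MR1989231}, accompanied only by the remark that $[U^{i,j},U^{j,k}]=U^{i,k}$ and that this noncommutativity is essential. So there is no proof in the paper to compare against; your sketch is being measured against the actual Einsiedler--Katok argument.

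You have the right algebraic input and the right instinct that a limiting/drift argument under $A$-conjugation is involved, but the ``chain of proportionalities'' you propose does not close. The statement that $\mu^{i,j}_x\propto\mu^{i,j}_{u\cdot x}$ for $u\in U^{i,j}$ is tautological (same leaf, same leaf-wise measure up to normalization) and carries no information about $\mu$ globally. When you then step from $u\cdot x$ to $v\cdot u\cdot x$ along $W^{j,k}$, you need to compare something at $x$ with something at $v\cdot u\cdot x$, but the $W^{j,k}$-leaf through $u\cdot x$ is a \emph{different} leaf from the one through $x$, and nothing you have written relates $\mu^{j,k}_x$ to $\mu^{j,k}_{u\cdot x}$. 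The box does not close: you end at $u^{-1}vu\cdot x = vw\cdot x$ with $w\in U^{i,k}$, but you have not produced any relation between the measure near $x$ and near $vw\cdot x$ beyond what you started with.

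The actual high-entropy argument works with the leaf-wise measure $\mu^N_x$ along the full Heisenberg subgroup $N=\langle U^{i,j},U^{j,k},U^{i,k}\rangle$ (the unstable horospherical for a suitable $s\in A$), not along the individual root subgroups separately. One shows that for a.e.\ $x$ and for $\mu^N_x$-typical $g\in N$, the translate $(g)_*\mu^N_x$ is proportional to $\mu^N_x$; the set of such $g$ is a closed subgroup $H_x\subset N$. The nonatomicity of $\mu^{i,j}_x$ and $\mu^{j,k}_x$ forces $H_x$ to contain elements with nontrivial $U^{i,j}$- and $U^{j,k}$-components, and since $H_x$ is a group it then contains their commutators, hence all of $U^{i,k}$. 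Making this precise---in particular, showing that the relevant stabilizer really is a group and contains the claimed elements---is where the $A$-equivariance $\mu^N_{s\cdot x}\propto(\mathrm{Ad}(s))_*\mu^N_x$, a Lusin/recurrence argument, and the differing dilation rates $\beta^{i,j},\beta^{j,k},\beta^{i,k}$ enter. Your $\pi$-partition concern is a red herring here: the argument does not pass through ergodic components of a one-parameter flow in the way Lemma~\ref{lem:Pipart} does.
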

	Note that the subgroups $U^{i,j}$ and $U^{j,k}$ do not commute; precisely, we have $[U^{i,j}, U^{j,k}]= U^{i,k}$.  Theorem \ref{thm:HE} states that if both Lyapunov exponents $\beta^{i,j}$ and $\beta^{j,k}$ contribute entropy (so that $\mu^{i,j}_x$ and  $\mu^{j,k}_x$ are nonatomic) then the measure $\mu$ is invariant under their bracket   $U^{i,k}= [U^{i,j}, U^{j,k}]$.  The noncommutativity of $U^{i,j}$ and $U^{j,k}$  is essential   in the proof of the  theorem. 
	
	From Theorem \ref{thm:HE} one can derive the following corollary.  % due to Katok and Einsiedler.  
	\begin{theorem} [{\cite[Theorem 4.1]{MR1989231}}] \label{thm:HE;}
		Let $\Gamma\subset \Sl(n,\R)$ be a lattice and 
		let $\mu$ be an ergodic, $A$-invariant measure on $X=\Sl(n,\R)/\Gamma$ such that for every nontrivial $s\in A$, $$h_\mu(\alpha(s))>0.$$  Then $\mu$ is the Haar measure on $X$. 
	\end{theorem}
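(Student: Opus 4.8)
\textbf{Proof plan for Theorem \ref{thm:HE;}.}
The plan is to deduce the result from the high entropy method (Theorem \ref{thm:HE}) together with a careful bookkeeping of which root subgroups contribute entropy and how their brackets generate all of $G = \Sl(n,\R)$. First I would recall that, by the product structure of entropy (Theorem \ref{thm:entprod}) applied to the $A$-action on $X$, for a fixed $s\in A$ with $\beta^{i,j}(s)>0$ the total entropy $h_\mu(\alpha(s))$ decomposes as a sum of conditional entropies $h_\mu(\alpha(s)\mid \chi)$ over coarse Lyapunov exponents $\chi$ with $\chi(s)>0$; since no two roots $\beta^{i,j}$ are proportional, the coarse Lyapunov exponents are exactly the individual roots and the corresponding coarse Lyapunov manifolds are the $U^{i,j}$-orbits $W^{i,j}(x)$. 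Thus $h_\mu(\alpha(s)\mid \beta^{i,j})>0$ if and only if the leaf-wise measures $\mu^{i,j}_x$ along $W^{i,j}$ are nonatomic on a positive measure set (by Lemma \ref{entropyvatoms2} applied fiberwise to $W^{i,j}$, using that entropy conditioned along an expanding foliation vanishes exactly when the conditional measures are atomic). Call a root $\beta^{i,j}$ \emph{active} if $\mu^{i,j}_x$ is nonatomic on a positive measure set.

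Next I would argue that the hypothesis ``$h_\mu(\alpha(s))>0$ for every nontrivial $s\in A$'' forces there to be at least one active root. Indeed, if every $\mu^{i,j}_x$ were atomic, then by Lemma \ref{entropyvatoms2} (or its multiplicative analogue along each coarse Lyapunov foliation) and Theorem \ref{thm:entprod}, $h_\mu(\alpha(s)) = 0$ for every $s$, contradicting the hypothesis. More is true: I claim the set of active roots must be ``large'' — I would show it cannot be contained in any proper parabolic. Suppose $\beta^{i,j}$ is active. By Theorem \ref{thm:HE}, activeness propagates under brackets: if $\beta^{i,j}$ and $\beta^{j,k}$ are both active, then $\mu$ is $U^{i,k}$-invariant, which in particular makes $\mu^{i,k}_x$ (a translate of Haar on $U^{i,k}$) nonatomic, i.e. $\beta^{i,k}$ is active. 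Using the hypothesis $h_\mu(\alpha(s))>0$ for \emph{every} $s$: for a given active $\beta^{i,j}$, pick $s$ in the kernel of $\beta^{i,j}$; then $h_\mu(\alpha(s))>0$ forces some \emph{other} root $\beta^{k,\ell}$ with $\beta^{k,\ell}(s)>0$ to be active, and one can choose $s$ so that $\beta^{k,\ell}$ is not ``compatible'' with $\beta^{i,j}$ in a way that, via the bracket relations $[U^{a,b},U^{b,c}]=U^{a,c}$ among the root subgroups of $\Sl(n,\R)$, lets one bootstrap activeness from one active root to all roots. Concretely, I would run an induction on $n$: once $\beta^{i,j}$ is active for one pair with $|i-j|$ minimal, applying Theorem \ref{thm:HE} with an active root sharing an index propagates activeness to an adjacent root, and iterating walks through all simple roots and hence (by further brackets) all roots.

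Once all roots are active, Theorem \ref{thm:HE} applied to each bracketing triple shows $\mu$ is invariant under every root subgroup $U^{i,j}$; since the root subgroups generate $\Sl(n,\R)$ (their Lie algebras generate $\mathfrak{sl}(n,\R)$, as used in the proof of Theorem \ref{thm:invmsr}) and $\mu$ is already $A$-invariant, $\mu$ is $G$-invariant. As $G$ acts transitively on $X = G/\Gamma$, the only $G$-invariant probability measure is the Haar measure, so $\mu$ is Haar. The main obstacle I anticipate is the bootstrapping step: getting from ``at least one active root'' to ``all roots active'' requires exploiting the hypothesis $h_\mu(\alpha(s))>0$ for all $s$ — not merely for one $s$ — and carefully choosing the elements $s$ in various root kernels so that the entropy forced into new coarse directions corresponds to roots that bracket nontrivially with the already-active ones. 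For $n=3$ this is the short combinatorial argument underlying \cite[Theorem 4.1]{MR1989231}; for general $n$ it is an induction on rank where at each stage one restricts attention to an $\Sl(3,\R)$ or $\Sl(2,\R)$ subsystem spanned by adjacent roots and applies the $n=3$ case (or Theorem \ref{thm:HE} directly) to gain one more active root. I would also need to check the noncompact case ($\Gamma = \Sl(n,\Z)$) causes no trouble: the leaf-wise measures and Theorems \ref{thm:entprod}, \ref{thm:HE} are stated for this homogeneous setting, and no escape-of-mass issue arises because $\mu$ is assumed to be a probability measure from the outset.
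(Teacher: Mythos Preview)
The paper does not prove this theorem; it states it as a corollary of Theorem \ref{thm:HE} and cites \cite[Theorem 4.1]{MR1989231} for the argument. Your plan---decompose entropy along the root foliations, use the hypothesis to force enough roots to have nonatomic leaf-wise measures, then apply Theorem \ref{thm:HE} repeatedly to bracket your way to invariance under all root subgroups and hence under $G$---is exactly the approach the paper indicates and is essentially how the argument in \cite{MR1989231} proceeds.

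One ingredient you should make explicit in the bootstrapping is the symmetry that $\mu^{i,j}_x$ is nonatomic if and only if $\mu^{j,i}_x$ is (the paper records this a few lines after the theorem, citing \cite[Corollary 3.4]{MR2247967}; it is also a consequence of $h_\mu(\alpha(s))=h_\mu(\alpha(s^{-1}))$ together with the product structure you invoke). With this symmetry in hand, any two active roots sharing an index can, after passing to an opposite, be written as $\beta^{i,j},\beta^{j,k}$ with $i,j,k$ distinct and fed into Theorem \ref{thm:HE}. Since the hypothesis forces the active roots to have $0$ in the interior of their convex hull in $A^*$---in particular they span $A^*$ and cannot all lie in pairwise disjoint index-blocks---there must exist two active roots sharing an index, and the induction on $n$ you sketch then runs cleanly. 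Without the symmetry the bracketing step is awkward to arrange, so I would invoke it explicitly.
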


	We note that the statements of    Theorems \ref{thm:HE} and \ref{thm:HE;} %and Claim \ref{thm:HE1} 
	are specific for the group $\Sl(n,\R)$.  More general high entropy methods appear in \cite{MR2191228}.

	The low entropy method is a bit more difficult to state.   We state a version for $X= \Sl(3,\R)/\Gamma$.   For each $i\neq j$, let $A'_{i,j}$ denote the kernel of $\beta^{i,j}$ %Let $H_{i,j} = \langle U^{i,j}, U^{j,i} \rangle$ and  
	and let $C(A'_{i,j})$ denote the centralizer of $A'_{i,j}$ in $G= \Sl(3,\R).$   
	\begin{theorem}[{Low entropy method, \cite[Theorem 2.3.]{MR2247967}}]\label{thm:LE}
		Let $\mu$ be an ergodic, $A$-invariant measure on $X= \Sl(3,\R)/\Gamma$.  
		If $\mu^{i,j}_x$ and $\mu^{j,i}_x$ are nonatomic for some $i,j$ and  $\mu^{i',j'}_x$ is atomic for all other pairs $i',j'$ then either 
		\begin{enumcount}
			\item $\mu$ is $U^{i,j}$-invariant, or 
			%\item \label{LE2} almost every    $A'_{i,j}$-ergodic component of $\mu$ there is  $x_0\in X$ and $s\in A'_{i,j}$ with $\alpha(s)( x_0) = x_0$ such that  $\mu$ is supported on the orbit  $C(H_{i,j}) x_0$.  
			\item \label{LE2}  there is  $x_0\in X$ and $s\in A'_{i,j}$ with $\alpha(s)( x_0) = x_0$ such that  $\mu$ is supported on the orbit  $C(A'_{i,j}) x_0$.  
		\end{enumcount}
	\end{theorem}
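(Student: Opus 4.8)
\textbf{Proof proposal for Theorem \ref{thm:LE} (low entropy method).}

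The plan is to follow the strategy of Einsiedler--Katok--Lindenstrauss \cite{MR2247967}, which exploits the \emph{polynomial shearing} of unipotent flows in $G = \Sl(3,\R)$ — in sharp contrast to the isometric translations along Lyapunov directions in $\T^3$ that powered the proof of \cref{thm:KS}. By hypothesis, exactly one pair $\{U^{i,j}, U^{j,i}\}$ of opposite root subgroups carries entropy: the leaf-wise measures $\mu^{i,j}_x$ and $\mu^{j,i}_x$ are nonatomic on a positive-measure set (hence, by \cref{entropyvatoms2} and ergodicity, almost everywhere), while $\mu^{i',j'}_x = \delta_x$ for all other pairs. The first step is to record the structural consequence: letting $A'_{i,j} = \ker\beta^{i,j} \subset A$ be the one-dimensional subgroup on which $\beta^{i,j}$ (and hence $\beta^{j,i}$) vanishes, the dynamics of $A'_{i,j}$ acts isometrically along $W^{i,j}$- and $W^{j,i}$-leaves, mimicking the situation of \cref{lem:transinv}. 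The centralizer $C(A'_{i,j})$ contains $U^{i,j}$, $U^{j,i}$, and $A$, and is (up to center) a copy of $\Gl(2,\R)$ embedded in $\Sl(3,\R)$ acting on a three-dimensional orbit in $X$ transverse to the remaining root directions.

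Next I would try to manufacture extra invariance of the leaf-wise measures $\mu^{i,j}_x$. Pick $s_0 \in A'_{i,j} \setminus \{0\}$, so that the flow $\phi_t = \alpha(ts_0)$ preserves the normalized family $\{\mu^{i,j}_x\}$ (as in \cref{lem:transinv}\ref{cc}, using the choice of normalization and $A$-invariance). Working as in \cref{lem:trans} and the $\pi$-partition trick (\cref{lem:Pipart}), one shows that almost every $\phi_t$-ergodic component is saturated by full coarse Lyapunov manifolds for the $\beta^{i,j}$-direction, so that $\mu^{i,j}_x$ is preserved — up to proportionality — under a nontrivial subgroup $\mathsf{G}(\mu^{i,j}_x)$ of the isometry group of the homogeneous leaf $W^{i,j}(x) \cong C(A'_{i,j})/(\text{point stabilizer})$. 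Since $\mathsf{G}(\mu^{i,j}_x)$ is a nontrivial closed subgroup of this isometry group, one analyzes its possible orbit structure: either it contains translations along the full $U^{i,j}$- (equivalently $U^{j,i}$-)direction — which, after running the argument for both $\mu^{i,j}_x$ and $\mu^{j,i}_x$ and applying Ledrappier's theorem in the form of \cref{thm:led'}, forces $\mu$ to be $U^{i,j}$-invariant, giving alternative \ref{LE2} of the theorem is avoided and we land in case (1) — or the orbits of $\mathsf{G}(\mu^{i,j}_x)$ are too small, and one is forced to the degenerate conclusion that $\mu^{i,j}_x$ (and $\mu^{j,i}_x$) are concentrated on a closed subgroup of $W^{i,j}(x)$. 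In the latter case the support of $\mu$ collapses onto a single $C(A'_{i,j})$-orbit $C(A'_{i,j})x_0$, and the recurrence of the $A$-action inside that orbit produces a fixed point $s \cdot x_0 = x_0$ with $s \in A'_{i,j}$, which is precisely conclusion \ref{LE2}.

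The heart of the proof — and the step I expect to be the main obstacle — is the construction of the extra translation invariance of $\mu^{i,j}_x$ from the \emph{shearing} of the unipotent flow, rather than from an isometric flow as in \cref{thm:KS}. Concretely: one takes a $\mu$-typical $x$ and two nearby points $y, y'$ in the same $W^{i,j}$-leaf, and flows by the $A$-action in a direction where $\beta^{i,j}$ is positive; the displacement between the images grows, but because $C(A'_{i,j})$ is not abelian, the relative position drifts into the commuting directions in a controlled, polynomial fashion. One must then show that this drift, combined with Poincar\'e recurrence to a set of large measure where the geometry of $\{\mu^{i,j}_x\}$ is nearly constant, produces in the limit a genuine translation of $W^{i,j}(x)$ preserving $\mu^{i,j}_x$ up to scaling — this is the analogue of the shearing argument in Ratner's work \cite{MR1262705} and in Margulis--Tomanov \cite{MR1253197}. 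Controlling this limiting procedure (ensuring the limit is nontrivial, that no escape of mass occurs when $\Gamma$ is nonuniform, and that the resulting invariance group is large enough to trigger \cref{thm:led'}) is the delicate, technically demanding part; the rest is bookkeeping with the structure theory of $\Sl(3,\R)$ from \cref{sec:slstructure} and the entropy/geometry dictionary of \cref{sec:entropyetc}. I would organize the final write-up so that the shearing lemma is isolated as a standalone proposition, proved first, with the deduction of the two alternatives of Theorem \ref{thm:LE} following from it and from \cref{thm:led'} in a few lines.
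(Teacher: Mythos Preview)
The paper does not prove this theorem; it is stated as a citation of \cite[Theorem 2.3]{MR2247967} with no argument given, so there is no in-paper proof to compare against. I can therefore only comment on the internal soundness of your sketch.

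There is a genuine gap at the step where you invoke the $\pi$-partition trick (\cref{lem:Pipart}) to conclude that $\phi_t$-ergodic components are saturated by full $W^{i,j}$-leaves. The paper itself explains (in the remark following \cref{lem:Pipart} and again at the start of \cref{sec:WCF}) that this trick fails precisely when there is a negatively proportional pair of exponents, and here $\beta^{j,i}=-\beta^{i,j}$ is exactly such a pair. Concretely: for $s_0\in A'_{i,j}$, the genuine stable and unstable directions of $\phi_t$ consist only of the root subgroups $U^{i',j'}$ with $(i',j')\notin\{(i,j),(j,i)\}$, and by hypothesis the leaf-wise measures along \emph{all} of these are atomic. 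Hence the partitions into full $W^s_{s_0}$- and $W^u_{s_0}$-leaves are measurable and equal (mod $0$) to the point partition, so $h_\mu(\phi_t)=0$ and the chain $\erg_{s_0}\prec\Xi^s_{s_0}=\pi_{s_0}=\Xi^u_{s_0}$ collapses to a triviality and yields no information whatsoever about saturation by $W^{i,j}$-leaves. The existence of Rees's examples (mentioned just before \cref{thm:HE}) shows that such saturation can genuinely fail, which is why alternative \ref{LE2} must appear in the statement.

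Your instinct that polynomial shearing of the $U^{i,j}$-flow is the mechanism is correct, and your closing paragraph is closer to the truth than the middle one: the actual low-entropy argument of Lindenstrauss and of \cite{MR2247967} works \emph{directly} with the leaf-wise measures $\mu^{i,j}_x$ and a maximal-inequality/recurrence argument (the ``$H$-principle''), without first reducing to $\phi_t$-ergodic components. You should strike the appeal to \cref{lem:Pipart} entirely and instead isolate the shearing lemma as you suggest at the end, taking the atomicity of the transverse leaf-wise measures as the input that controls the drift.
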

	Conclusion \ref{LE2} is specific for the case that $X$ is of the form $X= \Sl(n,\R)/\Gamma$ for $n = 3$.  For $n> 3$, the appropriate version of  \ref{LE2} is slightly  more complicated.

	To show that \cref{thm:HE} and \cref{thm:LE} cover all cases it is shown \cite[Corollary  3.4]{MR2247967} for any  ergodic, $A$-invariant measure $\mu$ and  every pair $i,j$, the measure $\mu^{i,j}_x$ is nonatomic if and only if $\mu^{j,i}_x$ is nonatomic.  Thus every $A$-invariant measure $\mu$ with positive entropy is considered in either \cref{thm:HE} and \cref{thm:LE}.

	To apply the low entropy method, one typically does additional work to  rule out conclusion \ref{LE2} in \cref{thm:LE}. Note that conclusion \ref{LE2} of \cref{thm:LE} occurs in    Rees's examples so it can not be ruled out for all lattices $\Gamma$.     However, for certain lattices, it can be shown that \ref{LE2} of Theorem \ref{thm:LE} does not happen.  In particular, this is verified for $\Gamma =\SL(n,\Z)$ in \cite{MR2247967}.  
	The high and low entropy method combine to give the following.
	\begin{theorem}[{\cite[Theorem 1.3, Corollary 1.4]{MR2247967}}]\label{thm:EKL}
		Let $\mu$ be an ergodic, $A$-invariant measure on $X= \Sl(3, \R)/\Sl(3, \Z)$.  Assume $\mu$ has positive entropy for some nontrivial element of $A$.  Then $\mu$ is the Haar measure on $X$. 
	\end{theorem}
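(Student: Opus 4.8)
\textbf{Proof proposal for Theorem \ref{thm:EKL}.}

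The plan is to combine the high entropy method (Theorem \ref{thm:HE}, equivalently Theorem \ref{thm:HE;}) and the low entropy method (Theorem \ref{thm:LE}) to show that every ergodic, $A$-invariant measure $\mu$ on $X=\Sl(3,\R)/\Sl(3,\Z)$ with positive entropy for some nontrivial $s_0\in A$ is the Haar measure. The argument is a case analysis on which of the leaf-wise measures $\mu^{i,j}_x$ are nonatomic. First I would record the dichotomy from \cite[Corollary 3.4]{MR2247967}: for each pair $i\neq j$, the conditional measures $\mu^{i,j}_x$ are nonatomic for a positive measure set of $x$ if and only if $\mu^{j,i}_x$ are nonatomic. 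Thus the nonatomic directions come in the pairs $\{(i,j),(j,i)\}$ corresponding to roots $\beta^{i,j}$ and $\beta^{j,i}=-\beta^{i,j}$, and since $h_\mu(\alpha(s_0))>0$, by the analogue of Lemma \ref{entropyvatoms} (the product structure of entropy, Theorem \ref{thm:entprod}, together with Theorem \ref{thm:HE;}'s underlying computations) at least one such pair is nonatomic.

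The main body splits into two cases. In the first case, the nonatomic pairs are such that one can find distinct $i,j,k$ with both $\mu^{i,j}_x$ and $\mu^{j,k}_x$ nonatomic. Then Theorem \ref{thm:HE} (high entropy method) gives that $\mu$ is $U^{i,k}$-invariant. One then iterates: invariance under a new unipotent subgroup forces (via the $A$-dynamics, Poincar\'e recurrence, and the structure of $\Sl(3,\R)$, exactly as in the proof of Theorem \ref{thm:invmsr} or the Lie-algebra generation argument therein) additional nonatomicity or additional invariance, and since the root subgroups one obtains generate $\Sl(3,\R)$, the measure $\mu$ is $G$-invariant, hence Haar by the uniqueness of the $G$-invariant probability measure on $X$. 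In the second case, exactly one pair $\{(i,j),(j,i)\}$ is nonatomic and $\mu^{i',j'}_x$ is atomic for all other pairs. Here I would invoke Theorem \ref{thm:LE} (low entropy method): either $\mu$ is $U^{i,j}$-invariant --- and then, combining with the $A$-invariance and $U^{j,i}$-invariance (from the symmetric nonatomicity) and running the same generation argument, $\mu$ is Haar --- or conclusion \ref{LE2} holds, i.e.\ $\mu$ is supported on a single orbit $C(A'_{i,j})x_0$ with $s\cdot x_0=x_0$ for some $s\in A'_{i,j}$.

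The hard part, and the place where the specific arithmetic of $\Gamma=\Sl(3,\Z)$ (as opposed to a general lattice, where Rees's examples show the theorem is false) is essential, is ruling out conclusion \ref{LE2} of Theorem \ref{thm:LE}. The centralizer $C(A'_{i,j})$ is a subgroup isogenous to $\Sl(2,\R)\times\R$, and the claim to be established is that for $\Gamma=\Sl(3,\Z)$ there is no periodic point $x_0$ of an element $s\in A'_{i,j}$ whose orbit $C(A'_{i,j})x_0$ carries an $A$-invariant measure with positive entropy; this is where one uses reduction theory and the structure of the stabilizer $\Sl(3,\Z)\cap g C(A'_{i,j}) g^{-1}$, showing it cannot be a lattice in $C(A'_{i,j})$ of the required type --- this is carried out in \cite{MR2247967}. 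Once \ref{LE2} is excluded, the second case collapses to the $U^{i,j}$-invariant subcase and we conclude as above. Finally I would note that in every surviving branch $\mu$ turns out to be $G$-invariant, so the ``homogeneous submanifold'' possibility does not actually arise for this particular $\Gamma$, and $\mu$ is the Haar measure on all of $X$.
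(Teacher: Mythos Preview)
Your overall architecture matches the paper's outline: invoke the symmetric dichotomy \cite[Corollary 3.4]{MR2247967}, split according to how many root pairs have nonatomic leaf-wise measures, apply Theorem~\ref{thm:HE} in the first case and Theorem~\ref{thm:LE} in the second, and rule out conclusion~\ref{LE2} using the arithmetic of $\Sl(3,\Z)$. Your high-entropy Case~1 is fine: once two non-opposite pairs are nonatomic, iterating Theorem~\ref{thm:HE} does produce invariance under all six root subgroups and hence under $G$.

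The gap is in Case~2, option~(a). First, ``$U^{j,i}$-invariance (from the symmetric nonatomicity)'' is a non sequitur: nonatomicity of $\mu^{j,i}_x$ is not the same as $U^{j,i}$-invariance. You can repair this by applying Theorem~\ref{thm:LE} a second time with the roles of $i$ and $j$ swapped (the hypothesis is symmetric and $A'_{i,j}=A'_{j,i}$), which, after again excluding~\ref{LE2}, does yield $U^{j,i}$-invariance. But the more serious problem is the ``same generation argument'': the subgroups $A$, $U^{i,j}$, and $U^{j,i}$ do \emph{not} generate $\Sl(3,\R)$. They generate precisely the reductive group $L=C(A'_{i,j})$ (a block-diagonal copy of $\Gl(2,\R)$), so the Lie-algebra generation argument from the proof of Theorem~\ref{thm:invmsr} stops there, and you cannot apply Theorem~\ref{thm:HE} either, since you still have only the single pair $\{(i,j),(j,i)\}$ nonatomic.

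What actually closes Case~2(a)---and this is the route in \cite{MR2247967}---is Ratner's measure classification theorem: once $\mu$ is invariant under the one-parameter unipotent subgroup $U^{i,j}$, it is algebraic, i.e.\ the $H$-invariant measure on a closed $H$-orbit for some closed subgroup $H\supset \langle A,U^{i,j}\rangle$. One then classifies the possible $H$ and checks, using the arithmetic of $\Sl(3,\Z)$, that no proper such $H$ admits a closed orbit of finite volume carrying an $A$-invariant measure of positive entropy; this is the same arithmetic input that rules out conclusion~\ref{LE2}. Hence $H=G$ and $\mu$ is Haar. So your identification of ``the hard part'' is correct, but that same arithmetic/Ratner input is needed not only to exclude~\ref{LE2} but also to finish option~(a).
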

	Note the conclusion that $\mu$ is the Haar measure above follows as we assume $n=3$ which is prime.  For the  general result on $\Sl(n,\R)/\Sl(n,\Z)$, the conclusion is that $\mu$ is algebraic.

		The study of invariant measures and orbit closures for various subgroups $H\subset \Sl(n,\R)$  acting on $X=\Sl(n,\R)/\Sl(n,\Z)$ is related to several important problems  in number theory.  See for instance the proof of Margulis's proof  \cites{MR882782,MR993328} of  the Oppenheim conjecture which reduces to the study of $H= \SO(2,1)$-orbit closures in $\Sl(3,\R)/\Sl(3,\Z)$.  See \cite[Section 1.2]{MR2158954} for further discussion.  One motivation for studying $A$-invariant measures on $\Sl(n,\R)/\Sl(n,\Z)$ is its relationship to Littlewood's conjecture.  An important consequence of \cref{thm:EKL} is that the set of values for which Littlewood's conjecture fails has Hausdorff dimension zero.  See \cite{MR2247967} as well as  \cite[\S 1.2]{MR1754775} and \cite{MR2358379} for details.

%\addtocontents{toc}{\vspace{\normalbaselineskip}} \begin{bibsection}

\part*{References}
%		\begin{bibdiv}
			\begin{biblist}
				
				\bib{MR0140660}{article}{
					author={Abramov, L.~M.},
					author={Rokhlin, V.~A.},
					title={Entropy of a skew product of mappings with invariant measure},
					date={1962},
					ISSN={0146-924x},
					journal={Vestnik Leningrad. Univ.},
					volume={17},
					%number={7},
					pages={5\ndash 13},
					%      %review={\MR{0140660}},
				}

				\bib{MR1743717}{article}{
					author={Alves, Jos{\'e}~Ferreira},
					title={S{RB} measures for non-hyperbolic systems with multidimensional
						expansion},
					date={2000},
					ISSN={0012-9593},
					journal={Ann. Sci. \'Ecole Norm. Sup. (4)},
					volume={33},
					%number={1},
					pages={1\ndash 32},
					url={http://dx.doi.org/10.1016/S0012-9593(00)00101-4},
					%      %review={\MR{1743717}},
				}

				\bib{MR1757000}{article}{
					author={Alves, Jos{\'e}~F.},
					author={Bonatti, Christian},
					author={Viana, Marcelo},
					title={S{RB} measures for partially hyperbolic systems whose central
						direction is mostly expanding},
					date={2000},
					ISSN={0020-9910},
					journal={Invent. Math.},
					volume={140},
					%number={2},
					pages={351\ndash 398},
					url={http://dx.doi.org/10.1007/s002220000057},
					%      %review={\MR{1757000}},
				}

				\bib{MR3712997}{article}{
					author={Alves, Jos\'e~F.},
					author={Dias, Carla~L.},
					author={Luzzatto, Stefano},
					author={Pinheiro, Vilton},
					title={S{RB} measures for partially hyperbolic systems whose central
						direction is weakly expanding},
					date={2017},
					ISSN={1435-9855},
					journal={J. Eur. Math. Soc. (JEMS)},
					volume={19},
					%      number={10},
					pages={2911\ndash 2946},
					%         url={https://doi.org/10.4171/JEMS/731},
					%%review={\MR{3712997}},
				}

				%
				%\bib{MR0224110}{article}{
				%      author={Anosov, D.~V.},
				%       title={Geodesic flows on closed {R}iemannian manifolds of negative
				%  curvature},
				%        date={1967},
				%        ISSN={0371-9685},
				%     journal={Trudy Mat. Inst. Steklov.},
				%      volume={90},
				%       pages={209},
				%%%review={\MR{0224110}},
				%}
				
				\bib{MR0224110}{article}{
					author={Anosov, D.~V.},
					title={Geodesic flows on closed {R}iemannian manifolds of negative
						curvature},
					date={1967},
					ISSN={0371-9685},
					journal={Proc. Steklov Inst. Math.},
					volume={90},
					pages={1\ndash 235},
					%%review={\MR{0224110}},
				}

				\bib{MR2651382}{article}{
					author={Avila, Artur},
					author={Viana, Marcelo},
					title={Extremal {L}yapunov exponents: an invariance principle and
						applications},
					date={2010},
					ISSN={0020-9910},
					journal={Invent. Math.},
					volume={181},
					%number={1},
					pages={115\ndash 189},
					url={http://dx.doi.org/10.1007/s00222-010-0243-1},
					%      %review={\MR{2651382 (2012h:37065)}},
				}
				
				\bib{MR1709302}{article}{
					author={Barreira, Luis},
					author={Pesin, Yakov},
					author={Schmeling, J{\"o}rg},
					title={Dimension and product structure of hyperbolic measures},
					date={1999},
					ISSN={0003-486X},
					journal={Ann. of Math. (2)},
					volume={149},
					%      number={3},
					pages={755\ndash 783},
					%         url={http://dx.doi.org/10.2307/121072},
					%%review={\MR{1709302 (2000f:37027)}},
				}
				
				\bib{BdHV}{book}{
					author={Bekka, Bachir},
					author={de la Harpe, Pierre},
					author={Valette, Alain},
					title={Kazhdan's property (T)},
					series={New Mathematical Monographs},
					volume={11},
					publisher={Cambridge University Press, Cambridge},
					date={2008},
					pages={xiv+472},
					isbn={978-0-521-88720-5},
					%   review={\MR{2415834}},
					doi={10.1017/CBO9780511542749},
				}

				\bib{MR799250}{article}{
					author={Benedicks, Michael},
					author={Carleson, Lennart},
					title={On iterations of {$1-ax^2$} on {$(-1,1)$}},
					date={1985},
					ISSN={0003-486X},
					journal={Ann. of Math. (2)},
					volume={122},
					%      number={1},
					pages={1\ndash 25},
					url={https://doi.org/10.2307/1971367},
					%%review={\MR{799250}},
				}
				
				\bib{MR1087346}{article}{
					author={Benedicks, Michael},
					author={Carleson, Lennart},
					title={The dynamics of the {H}\'enon map},
					date={1991},
					ISSN={0003-486X},
					journal={Ann. of Math. (2)},
					volume={133},
					%      number={1},
					pages={73\ndash 169},
					url={https://doi.org/10.2307/2944326},
					%%review={\MR{1087346}},
				}

				\bib{MR1835392}{article}{
					author={Benedicks, Michael},
					author={Viana, Marcelo},
					title={Solution of the basin problem for {H}\'enon-like attractors},
					date={2001},
					ISSN={0020-9910},
					journal={Invent. Math.},
					volume={143},
					%number={2},
					pages={375\ndash 434},
					url={http://dx.doi.org/10.1007/s002220000109},
					%      %review={\MR{1835392}},
				}
				
				\bib{MR1218323}{article}{
					author={Benedicks, Michael},
					author={Young, Lai-Sang},
					title={Sina\u\i-{B}owen-{R}uelle measures for certain {H}\'enon maps},
					date={1993},
					ISSN={0020-9910},
					journal={Invent. Math.},
					volume={112},
					%number={3},
					pages={541\ndash 576},
					url={http://dx.doi.org/10.1007/BF01232446},
					%      %review={\MR{1218323}},
				}
				
				\bib{MR2655311}{incollection}{
					author={Benoist, Yves},
					title={Five lectures on lattices in semisimple {L}ie groups},
					date={2009},
					booktitle={G\'eom\'etries \`a courbure n\'egative ou nulle, groupes discrets
						et rigidit\'es},
					series={S\'emin. Congr.},
					volume={18},
					publisher={Soc. Math. France, Paris},
					pages={117\ndash 176},
					%      %review={\MR{2655311}},
				}
				
				\bib{MR2831114}{article}{
					author={Benoist, Yves},
					author={Quint, Jean-Fran{\c{c}}ois},
					title={Mesures stationnaires et ferm\'es invariants des espaces
						homog\`enes},
					date={2011},
					ISSN={0003-486X},
					journal={Ann. of Math. (2)},
					volume={174},
					%number={2},
					pages={1111\ndash 1162},
					url={http://dx.doi.org/10.4007/annals.2011.174.2.8},
					%      %review={\MR{2831114}},
				}
				
				\bib{MR3037785}{article}{
					author={Benoist, Yves},
					author={Quint, Jean-Fran{\c{c}}ois},
					title={Stationary measures and invariant subsets of homogeneous spaces
						({II})},
					date={2013},
					ISSN={0894-0347},
					journal={J. Amer. Math. Soc.},
					volume={26},
					%number={3},
					pages={659\ndash 734},
					url={http://dx.doi.org/10.1090/S0894-0347-2013-00760-2},
					%      %review={\MR{3037785}},
				}
				
				\bib{BQIII}{article}{
					author={Benoist, Yves},
					author={Quint, Jean-Fran{\c c}ois},
					title={Stationary measures and invariant subsets of homogeneous spaces
						(iii)},
					date={2013},
					journal={Ann. of Math. (2)},
					volume={178},
					%number={3},
					pages={1017\ndash 1059},
				}
				\bib{MR2716615}{book}{
					author={Benveniste, Elie~Jerome},
					title={Rigidity and deformations of lattice actions preserving geometric
						structures},
					publisher={ProQuest LLC, Ann Arbor, MI},
					date={1996},
					url={http://gateway.proquest.com/openurl?url_ver=Z39.88-2004&rft_val_fmt=info:ofi/fmt:kev:mtx:dissertation&res_dat=xri:pqdiss&rft_dat=xri:pqdiss:9629245},
					note={Thesis (Ph.D.)--The University of Chicago},
					%      %review={\MR{2716615}},
				}

				\bib{MR1779610}{article}{
					author={Benveniste, E.~J.},
					title={Rigidity of isometric lattice actions on compact {R}iemannian
						manifolds},
					date={2000},
					ISSN={1016-443X},
					journal={Geom. Funct. Anal.},
					volume={10},
					%number={3},
					pages={516\ndash 542},
					url={http://dx.doi.org/10.1007/PL00001627},
					%      %review={\MR{1779610 (2001j:37055)}},
				}
				
				\bib{MR2154667}{article}{
					author={Benveniste, E.~Jerome},
					author={Fisher, David},
					title={Nonexistence of invariant rigid structures and invariant almost
						rigid structures},
					date={2005},
					ISSN={1019-8385},
					journal={Comm. Anal. Geom.},
					volume={13},
					%number={1},
					pages={89\ndash 111},
					url={http://projecteuclid.org/euclid.cag/1117649081},
					%      %review={\MR{2154667}},
				}

				\bib{MR716835}{article}{
					author={Berend, Daniel},
					title={Multi-invariant sets on tori},
					date={1983},
					ISSN={0002-9947},
					journal={Trans. Amer. Math. Soc.},
					volume={280},
					%number={2},
					pages={509\ndash 532},
					url={http://dx.doi.org/10.2307/1999631},
					%      %review={\MR{716835}},
				}
				
				\bib{MR1179170}{incollection}{
					author={Bogensch{\"u}tz, Thomas},
					author={Crauel, Hans},
					title={The {A}bramov-{R}okhlin formula},
					date={1992},
					booktitle={Ergodic theory and related topics, {III} ({G}\"ustrow, 1990)},
					series={Lecture Notes in Math.},
					volume={1514},
					publisher={Springer},
					address={Berlin},
					pages={32\ndash 35},
					url={http://dx.doi.org/10.1007/BFb0097526},
					%      %review={\MR{1179170 (93h:28033)}},
				}
				
				\bib{MR1749677}{article}{
					author={Bonatti, Christian},
					author={Viana, Marcelo},
					title={S{RB} measures for partially hyperbolic systems whose central
						direction is mostly contracting},
					date={2000},
					ISSN={0021-2172},
					journal={Israel J. Math.},
					volume={115},
					pages={157\ndash 193},
					url={http://dx.doi.org/10.1007/BF02810585},
					%      %review={\MR{1749677}},
				}
				
				\bib{MR0147566}{article}{
					author={Borel, Armand},
					author={Harish-Chandra},
					title={Arithmetic subgroups of algebraic groups},
					date={1962},
					ISSN={0003-486X},
					journal={Ann. of Math. (2)},
					volume={75},
					pages={485\ndash 535},
					url={https://doi.org/10.2307/1970210},
					%      %review={\MR{0147566}},
				}
				\bib{MR2126641}{incollection}{
      author={Borel, Armand},
      author={Ji, Lizhen},
       title={Compactifications of symmetric and locally symmetric spaces},
        date={2005},
   booktitle={Lie theory},
      series={Progr. Math.},
      volume={229},
   publisher={Birkh\"auser Boston, Boston, MA},
       pages={69\ndash 137},
         url={http://dx.doi.org/10.1007/0-8176-4430-X_2},
%%review={\MR{2126641 (2006c:22011)}},
}
				
				\bib{MR2726604}{article}{
					author={Bourgain, Jean},
					author={Furman, Alex},
					author={Lindenstrauss, Elon},
					author={Mozes, Shahar},
					title={Stationary measures and equidistribution for orbits of nonabelian
						semigroups on the torus},
					date={2011},
					ISSN={0894-0347},
					journal={J. Amer. Math. Soc.},
					volume={24},
					%number={1},
					pages={231\ndash 280},
					url={http://dx.doi.org/10.1090/S0894-0347-2010-00674-1},
					%      %review={\MR{2726604 (2011k:37008)}},
				}
				
				\bib{MR0442989}{book}{
					author={Bowen, Rufus},
					title={Equilibrium states and the ergodic theory of {A}nosov
						diffeomorphisms},
					series={Lecture Notes in Mathematics, Vol. 470},
					publisher={Springer-Verlag},
					address={Berlin},
					date={1975},
					%      %review={\MR{0442989 (56 \#1364)}},
				}
				
				\bib{MR0407233}{article}{
					author={Bowen, Rufus},
					title={Weak mixing and unique ergodicity on homogeneous spaces},
					date={1976},
					ISSN={0021-2172},
					journal={Israel J. Math.},
					volume={23},
					%number={3-4},
					pages={267\ndash 273},
					url={http://dx.doi.org/10.1007/BF02761804},
					%      %review={\MR{0407233}},
				}
				
				%%%\bib{MR0442989}{book}{
				%%%      author={Bowen, Rufus},
				%%%       title={Equilibrium states and the ergodic theory of {A}nosov
				%%%  diffeomorphisms},
				%%%     edition={revised},
				%%%      series={Lecture Notes in Mathematics},
				%%%   publisher={Springer-Verlag},
				%%%     address={Berlin},
				%%%        date={2008},
				%%%      volume={470},
				%%%        ISBN={978-3-540-77605-5},
				%%%        note={With a preface by David Ruelle, Edited by Jean-Ren{\'e}
				%%%  Chazottes},
				%%%%      %review={\MR{2423393 (2009d:37038)}},
				%%%}
				
				\bib{MR0380889}{article}{
					author={Bowen, Rufus},
					author={Ruelle, David},
					title={The ergodic theory of {A}xiom {A} flows},
					date={1975},
					ISSN={0020-9910},
					journal={Invent. Math.},
					volume={29},
					%number={3},
					pages={181\ndash 202},
					%      %review={\MR{0380889 (52 \#1786)}},
				}
				
				\bib{MR3150210}{article}{
					author={Bridson, Martin~R.},
					author={Grunewald, Fritz},
					author={Vogtmann, Karen},
					title={Actions of arithmetic groups on homology spheres and acyclic
						homology manifolds},
					date={2014},
					ISSN={0025-5874},
					journal={Math. Z.},
					volume={276},
					%number={1-2},
					pages={387\ndash 395},
					url={http://dx.doi.org/10.1007/s00209-013-1205-2},
					%      %review={\MR{3150210}},
				}

				\bib{MR2745276}{article}{
					author={Bridson, Martin~R.},
					author={Vogtmann, Karen},
					title={Actions of automorphism groups of free groups on homology spheres
						and acyclic manifolds},
					date={2011},
					ISSN={0010-2571},
					journal={Comment. Math. Helv.},
					volume={86},
					%      number={1},
					pages={73\ndash 90},
					url={http://dx.doi.org/10.4171/CMH/218},
					%%review={\MR{2745276}},
				}

				\bib{1608.05886}{misc}{
					author={Brown, Aaron},
					title={Smoothness of stable holonomies inside center-stable manifolds},
					journal={ },
					date={2016},
					note={Preprint, arXiv:1608.05886}
				}
				
				\bib{AWB-GLY-P2}{misc}{
					author={Brown, Aaron},
					title={Smooth ergodic theory of {$\Bbb Z^d$}-actions part 2: Entropy
						formulas for rank-1 systems},
					date={2016},
					note={Preprint, arXiv:1610.09997}
				}
				
				\bib{BDZ}{misc}{
					author={Brown, Aaron},
					author={Damjanovic, Danijela},
					author={Zhang, Zhiyuan}
%					author={Witte~Morris, Dave},
					title={$C^1$ actions on manifolds by lattices in Lie Groups}
					date={2018},
					note={In preparation} 
				}

				\bib{1608.04995}{misc}{
					author={Brown, Aaron},
					author={Fisher, David},
					author={Hurtado, Sebastian},
					title={Zimmer's conjecture: Subexponential growth, measure rigidity, and
						strong property (T)},
					date={2016},
					note={Preprint, arXiv:1608.04995}
				}
				
				\bib{1710.02735}{misc}{
					author={Brown, Aaron},
					author={Fisher, David},
					author={Hurtado, Sebastian},
					title={Zimmer's conjecture for actions of $\mathrm{SL}(m,\mathbb{Z})$},
					date={2017},
					note={Preprint, arXiv:1710.02735}
				}
				
				\bib{BFHWM}{misc}{
					author={Brown, Aaron},
					author={Fisher, David},
					author={Hurtado, Sebastian},
%					author={Witte~Morris, Dave},
					title={Zimmer's conjecture for non-uniform lattices}%On the finiteness of smooth actions of lattices in higher-rank semisimple Lie groups},
					date={2018},
					note={In preparation} 
				}
				
				\bib{AWB-GLY-P1}{misc}{
					author={Brown, Aaron},
					author={Rodriguez~Hertz, Federico},
					title={Smooth ergodic theory of {$\Bbb Z^d$}-actions part 1: Lyapunov
						exponents, dynamical charts, and coarse Lyapunov manifolds},
					date={2016},
					note={Preprint, arXiv:1610.09997}
				}
				
				\bib{1506.06826}{article}{
					author={Brown, Aaron},
					author={Rodriguez~Hertz, Federico},
					title={Measure rigidity for random dynamics on surfaces and related skew
						products},
					date={2017},
					journal={J. Amer. Math. Soc.},
					volume={30},
					pages={1055\ndash 1132},
					%      eprint={arXiv:1506.06826},
				}
				
				\bib{AWBFRHZW-latticemeasure}{misc}{
					author={Brown, Aaron},
					author={Rodriguez~Hertz, Federico},
					author={Wang, Zhiren},
					title={Invariant measures and measurable projective factors for actions
						of higher-rank lattices on manifolds},
					date={2016},
					note={Preprint, arXiv:1609.05565}
				}
				
				\bib{AWB-GLY-P3}{misc}{
					author={Brown, Aaron},
					author={Rodriguez~Hertz, Federico},
					author={Wang, Zhiren},
					title={Smooth ergodic theory of {$\Bbb Z^d$}-actions part 3: Product
						structure of entropy},
					date={2016},
					note={Preprint, arXiv:1610.09997}
				}
				
				\bib{BRHW1}{article}{
					author={Brown, Aaron},
					author={Rodriguez~Hertz, Federico},
					author={Wang, Zhiren},
					title={Global smooth and topological rigidity of hyperbolic lattice
						actions},
					date={2017},
					journal={Ann. of Math. (2)},
					volume={186},
					%number={3},
					pages={913\ndash 972},
					%      eprint={arXiv:1512.06720},
				}
				
				\bib{MR1911660}{article}{
					author={Burger, M.},
					author={Monod, N.},
					title={Continuous bounded cohomology and applications to rigidity
						theory},
					date={2002},
					ISSN={1016-443X},
					journal={Geom. Funct. Anal.},
					volume={12},
					%number={2},
					pages={219\ndash 280},
					url={http://dx.doi.org/10.1007/s00039-002-8245-9},
					%      %review={\MR{1911660}},
				}

				\bib{MR2630044}{article}{
					author={Burns, Keith},
					author={Wilkinson, Amie},
					title={On the ergodicity of partially hyperbolic systems},
					date={2010},
					ISSN={0003-486X},
					journal={Ann. of Math. (2)},
					volume={171},
					%      number={1},
					pages={451\ndash 489},
					%         url={http://dx.doi.org/10.4007/annals.2010.171.451},
					%%review={\MR{2630044 (2011g:37075)}},
				}

				\bib{MR2103473}{article}{
					author={Cantat, Serge},
					title={Version k\"ahl\'erienne d'une conjecture de {R}obert {J}.
						{Z}immer},
					date={2004},
					ISSN={0012-9593},
					journal={Ann. Sci. \'Ecole Norm. Sup. (4)},
					volume={37},
					%number={5},
					pages={759\ndash 768},
					url={https://doi.org/10.1016/j.ansens.2004.04.003},
					%      %review={\MR{2103473}},
				}
				
				\bib{Cantat}{article}{
					author={Cantat, Serge},
					title={Progr\`es r\'ecents concernant le programme de zimmer},
					date={2017},
					note={S\'eminaire Bourbaki. Vol. 2017--2018. Expos\'es 1136},
				}
%				\bib{CX}{misc}{
%					author={Cantat, Serge},
%					author={Xie, Junyi},
%					title={Algebraic actions of discrete groups: {T}he $p$-adic method},
%					date={2017},
%					note={Preprint},
%					%        ISSN={0012-9593},
%					%     journal={Ann. Sci. \'Ec. Norm. Sup\'er. (4)},
%					%      volume={45},
%					%      %number={3},
%					%       pages={447\ndash 489},
%					%%      %review={\MR{3014483}},
%				}
				
				\bib{MR3849285}{article}{
      author={Cantat, Serge},
      author={Xie, Junyi},
       title={Algebraic actions of discrete groups: the {$p$}-adic method},
        date={2018},
        ISSN={0001-5962},
     journal={Acta Math.},
      volume={220},
      number={2},
       pages={239\ndash 295},
         url={https://doi.org/10.4310/ACTA.2018.v220.n2.a2},
%%review={\MR{3849285}},
}
				
				\bib{MR3014483}{article}{
					author={Cantat, Serge},
					author={Zeghib, Abdelghani},
					title={Holomorphic actions, {K}ummer examples, and {Z}immer program},
					date={2012},
					ISSN={0012-9593},
					journal={Ann. Sci. \'Ec. Norm. Sup\'er. (4)},
					volume={45},
					%number={3},
					pages={447\ndash 489},
					%      %review={\MR{3014483}},
				}

				\bib{MR1986306}{article}{
      author={Cao, Yongluo},
       title={Non-zero {L}yapunov exponents and uniform hyperbolicity},
        date={2003},
        ISSN={0951-7715},
     journal={Nonlinearity},
      volume={16},
      number={4},
       pages={1473\ndash 1479},
         url={https://doi.org/10.1088/0951-7715/16/4/316},
%%review={\MR{1986306}},
}

				\bib{1208.4550}{misc}{
					author={Climenhaga, Vaughn},
					author={Katok, Anatole},
					title={Measure theory through dynamical eyes},
					date={2012},
					note={Preprint, arXiv:1208.4550}
				}
				
				\bib{MR1147961}{article}{
					author={Corlette, Kevin},
					title={Archimedean superrigidity and hyperbolic geometry},
					date={1992},
					ISSN={0003-486X},
					journal={Ann. of Math. (2)},
					volume={135},
					%number={1},
					pages={165\ndash 182},
					url={https://doi.org/10.2307/2946567},
					%      %review={\MR{1147961}},
				}
				
				\bib{MR2726100}{article}{
					author={Damjanovi{\'c}, Danijela},
					author={Katok, Anatole},
					title={Local rigidity of partially hyperbolic actions {I}. {KAM} method
						and {$\Bbb Z^k$} actions on the torus},
					date={2010},
					ISSN={0003-486X},
					journal={Ann. of Math. (2)},
					volume={172},
					%number={3},
					pages={1805\ndash 1858},
					url={http://dx.doi.org/10.4007/annals.2010.172.1805},
					%      %review={\MR{2726100 (2011j:37046)}},
				}
				
				\bib{MR2838045}{article}{
					author={Damjanovi{\'c}, Danijela},
					author={Katok, Anatole},
					title={Local rigidity of partially hyperbolic actions. {II}: {T}he
						geometric method and restrictions of {W}eyl chamber flows on {$SL(n,\Bbb
							R)/\Gamma$}},
					date={2011},
					ISSN={1073-7928},
					journal={Int. Math. Res. Not. IMRN},
					%number={19},
					pages={4405\ndash 4430},
					url={http://dx.doi.org/10.1093/imrn/rnq252},
					%      %review={\MR{2838045}},
				}
				
				\bib{1801.04904}{misc}{
					author={Damjanovic, Danijela},
					author={Xu, Disheng},
					title={On classification of higher rank Anosov actions on compact
						manifold},
					date={2018},
					note={Preprint, arXiv:1801.04904}
				}
				
				\bib{MR0578655}{article}{
					author={Dani, S.~G.},
					title={Invariant measures of horospherical flows on noncompact
						homogeneous spaces},
					date={1978},
					ISSN={0020-9910},
					journal={Invent. Math.},
					volume={47},
					%number={2},
					pages={101\ndash 138},
					url={http://dx.doi.org/10.1007/BF01578067},
					%      %review={\MR{0578655}},
				}
				
				\bib{MR629475}{article}{
					author={Dani, S.~G.},
					title={Invariant measures and minimal sets of horospherical flows},
					date={1981},
					ISSN={0020-9910},
					journal={Invent. Math.},
					volume={64},
					%number={2},
					pages={357\ndash 385},
					url={http://dx.doi.org/10.1007/BF01389173},
					%      %review={\MR{629475}},
				}
				
				\bib{MR835804}{article}{
					author={Dani, S.~G.},
					title={Orbits of horospherical flows},
					date={1986},
					ISSN={0012-7094},
					journal={Duke Math. J.},
					volume={53},
					%number={1},
					pages={177\ndash 188},
					url={http://dx.doi.org/10.1215/S0012-7094-86-05312-3},
					%      %review={\MR{835804}},
				}
				
				\bib{MR1016271}{article}{
					author={Dani, S.~G.},
					author={Margulis, G.~A.},
					title={Values of quadratic forms at primitive integral points},
					date={1989},
					ISSN={0020-9910},
					journal={Invent. Math.},
					volume={98},
					%number={2},
					pages={405\ndash 424},
					url={http://dx.doi.org/10.1007/BF01388860},
					%      %review={\MR{1016271}},
				}
				
				\bib{MR1032925}{article}{
					author={Dani, S.~G.},
					author={Margulis, G.~A.},
					title={Orbit closures of generic unipotent flows on homogeneous spaces
						of {${\rm SL}(3,{\bf R})$}},
					date={1990},
					ISSN={0025-5831},
					journal={Math. Ann.},
					volume={286},
					%number={1-3},
					pages={101\ndash 128},
					url={http://dx.doi.org/10.1007/BF01453567},
					%      %review={\MR{1032925}},
				}
				
				\bib{MR744294}{article}{
					author={Dani, S.~G.},
					author={Smillie, John},
					title={Uniform distribution of horocycle orbits for {F}uchsian groups},
					date={1984},
					ISSN={0012-7094},
					journal={Duke Math. J.},
					volume={51},
					%number={1},
					pages={185\ndash 194},
					url={http://dx.doi.org/10.1215/S0012-7094-84-05110-X},
					%      %review={\MR{744294}},
				}
				
				\bib{delaSallenonuniform}{misc}{
					author={de~la Salle, Mikael},
					title={Strong (T) for higher rank lattices},
					date={2017},
					note={Preprint, arXiv:1711.01900},
				}
				
				\bib{MR3407190}{article}{
					author={de~Laat, Tim},
					author={de~la Salle, Mikael},
					title={Strong property ({T}) for higher-rank simple {L}ie groups},
					date={2015},
					ISSN={0024-6115},
					journal={Proc. Lond. Math. Soc. (3)},
					volume={111},
					%number={4},
					pages={936\ndash 966},
					url={http://dx.doi.org/10.1112/plms/pdv040},
					%      %review={\MR{3407190}},
				}

				\bib{MR2342454}{article}{
					author={Einsiedler, Manfred},
					author={Fisher, Travis},
					title={Differentiable rigidity for hyperbolic toral actions},
					date={2007},
					ISSN={0021-2172},
					journal={Israel J. Math.},
					volume={157},
					pages={347\ndash 377},
					url={https://doi.org/10.1007/s11856-006-0016-0},
					%      %review={\MR{2342454}},
				}
				
				\bib{MR1989231}{article}{
					author={Einsiedler, Manfred},
					author={Katok, Anatole},
					title={Invariant measures on {$G/\Gamma$} for split simple {L}ie groups
						{$G$}},
					date={2003},
					ISSN={0010-3640},
					journal={Comm. Pure Appl. Math.},
					volume={56},
					%number={8},
					pages={1184\ndash 1221},
					url={http://dx.doi.org/10.1002/cpa.10092},
					%        note={Dedicated to the memory of J{\"u}rgen K. Moser},
					%      %review={\MR{1989231 (2004e:37042)}},
				}
				
				\bib{MR2191228}{article}{
					author={Einsiedler, Manfred},
					author={Katok, Anatole},
					title={Rigidity of measures---the high entropy case and non-commuting
						foliations},
					date={2005},
					ISSN={0021-2172},
					journal={Israel J. Math.},
					volume={148},
					pages={169\ndash 238},
					url={http://dx.doi.org/10.1007/BF02775436},
					%        note={Probability in mathematics},
					%      %review={\MR{2191228 (2007d:37034)}},
				}
				
				\bib{MR2247967}{article}{
					author={Einsiedler, Manfred},
					author={Katok, Anatole},
					author={Lindenstrauss, Elon},
					title={Invariant measures and the set of exceptions to {L}ittlewood's
						conjecture},
					date={2006},
					ISSN={0003-486X},
					journal={Ann. of Math. (2)},
					volume={164},
					%number={2},
					pages={513\ndash 560},
					url={http://dx.doi.org/10.4007/annals.2006.164.513},
					%      %review={\MR{2247967 (2007j:22032)}},
				}
				
				\bib{MR2029471}{article}{
					author={Einsiedler, Manfred},
					author={Lindenstrauss, Elon},
					title={Rigidity properties of {$\Bbb Z^d$}-actions on tori and
						solenoids},
					date={2003},
					ISSN={1079-6762},
					journal={Electron. Res. Announc. Amer. Math. Soc.},
					volume={9},
					pages={99\ndash 110 (electronic)},
					url={http://dx.doi.org/10.1090/S1079-6762-03-00117-3},
					%      %review={\MR{2029471 (2005d:37007)}},
				}
				
				\bib{MR2366231}{article}{
					author={Einsiedler, Manfred},
					author={Lindenstrauss, Elon},
					title={On measures invariant under diagonalizable actions: the rank-one
						case and the general low-entropy method},
					date={2008},
					ISSN={1930-5311},
					journal={J. Mod. Dyn.},
					volume={2},
					%number={1},
					pages={83\ndash 128},
					%      %review={\MR{2366231}},
				}

				\bib{MR2648695}{incollection}{
					author={Einsiedler, M.},
					author={Lindenstrauss, E.},
					title={Diagonal actions on locally homogeneous spaces},
					date={2010},
					booktitle={Homogeneous flows, moduli spaces and arithmetic},
					series={Clay Math. Proc.},
					volume={10},
					publisher={Amer. Math. Soc., Providence, RI},
					pages={155\ndash 241},
					%      %review={\MR{2648695}},
				}

				\bib{MR2723325}{book}{
					author={Einsiedler, Manfred},
					author={Ward, Thomas},
					title={Ergodic theory with a view towards number theory},
					series={Graduate Texts in Mathematics},
					publisher={Springer-Verlag London, Ltd., London},
					date={2011},
					volume={259},
					ISBN={978-0-85729-020-5},
					url={https://doi.org/10.1007/978-0-85729-021-2},
					%      %review={\MR{2723325}},
				}
				
				\bib{ELlong}{misc}{
					author={Eskin, Alex},
					author={Lindenstrauss, E.},
					title={Random walks on locally homogeneous spaces},
					date={2018},
					note={Preprint.}
				}
				
				\bib{MR3814652}{article}{
					author={Eskin, Alex},
					author={Mirzakhani, Maryam},
					title={Invariant and stationary measures for the {${\rm SL}(2,\Bbb R)$}
						action on moduli space},
					date={2018},
					ISSN={0073-8301},
					journal={Publ. Math. Inst. Hautes \'Etudes Sci.},
					volume={127},
					pages={95\ndash 324},
					url={https://doi.org/10.1007/s10240-018-0099-2},
					%%review={\MR{3814652}},
				}
				
				%
				%\bib{1302.3320}{article}{
				%      author={Eskin, Alex},
				%      author={Mirzakhani, Maryam},
				%       title={Invariant and stationary measures for the sl(2,r) action on
				%  moduli space},
				%        date={2013},
				%        note={Preprint},
				%}
				
				\bib{MR3418528}{article}{
					author={Eskin, Alex},
					author={Mirzakhani, Maryam},
					author={Mohammadi, Amir},
					title={Isolation, equidistribution, and orbit closures for the {${\rm
								SL}(2,\Bbb R)$} action on moduli space},
					date={2015},
					ISSN={0003-486X},
					journal={Ann. of Math. (2)},
					volume={182},
					%number={2},
					pages={673\ndash 721},
					url={http://dx.doi.org/10.4007/annals.2015.182.2.7},
					%      %review={\MR{3418528}},
				}
				\bib{MR2850125}{book}{
					author={Farb, Benson},
					author={Margalit, Dan},
					title={A primer on mapping class groups},
					series={Princeton Mathematical Series},
					publisher={Princeton University Press, Princeton, NJ},
					date={2012},
					volume={49},
					%        ISBN={978-0-691-14794-9},
					%%review={\MR{2850125}},
				}

				\bib{MR1666834}{article}{
					author={Farb, Benson},
					author={Shalen, Peter},
					title={Real-analytic actions of lattices},
					date={1999},
					ISSN={0020-9910},
					journal={Invent. Math.},
					volume={135},
					%number={2},
					pages={273\ndash 296},
					url={http://dx.doi.org/10.1007/s002220050286},
					%      %review={\MR{1666834 (2000c:22017)}},
				}
				
				\bib{MR1643954}{article}{
					author={Feres, R.},
					author={Labourie, F.},
					title={Topological superrigidity and {A}nosov actions of lattices},
					date={1998},
					ISSN={0012-9593},
					journal={Ann. Sci. \'Ecole Norm. Sup. (4)},
					volume={31},
					%number={5},
					pages={599\ndash 629},
					url={http://dx.doi.org/10.1016/S0012-9593(98)80001-3},
					%      %review={\MR{1643954 (99k:58112)}},
				}
				
				\bib{MR2369442}{incollection}{
					author={Fisher, David},
					title={Local rigidity of group actions: past, present, future},
					date={2007},
					booktitle={Dynamics, ergodic theory, and geometry},
					series={Math. Sci. Res. Inst. Publ.},
					volume={54},
					publisher={Cambridge Univ. Press, Cambridge},
					pages={45\ndash 97},
					url={https://doi.org/10.1017/CBO9780511755187.003},
					%      %review={\MR{2369442}},
				}
				
				\bib{MR2342012}{article}{
					author={Fisher, David},
					title={Deformations of group actions},
					date={2008},
					ISSN={0002-9947},
					journal={Trans. Amer. Math. Soc.},
					volume={360},
					%number={1},
					pages={491\ndash 505},
					url={https://doi.org/10.1090/S0002-9947-07-04372-3},
					%      %review={\MR{2342012}},
				}
				
				\bib{MR2807830}{incollection}{
					author={Fisher, David},
					title={Groups acting on manifolds: around the {Z}immer program},
					date={2011},
					booktitle={Geometry, rigidity, and group actions},
					series={Chicago Lectures in Math.},
					publisher={Univ. Chicago Press, Chicago, IL},
					pages={72\ndash 157},
					url={https://doi.org/10.7208/chicago/9780226237909.001.0001},
					%      %review={\MR{2807830}},
				}
				
				\bib{1711.07089}{misc}{
					author={Fisher, David},
					title={Recent progress in the zimmer program},
					date={2017},
					note={Preprint, arXiv:1711.07089}
				}
				
				\bib{MR2776843}{article}{
					author={Fisher, David},
					author={Kalinin, Boris},
					author={Spatzier, Ralf},
					title={Totally nonsymplectic {A}nosov actions on tori and nilmanifolds},
					date={2011},
					ISSN={1465-3060},
					journal={Geom. Topol.},
					volume={15},
					%number={1},
					pages={191\ndash 216},
					url={https://doi.org/10.2140/gt.2011.15.191},
					%      %review={\MR{2776843}},
				}
				
				\bib{MR2983009}{article}{
					author={Fisher, David},
					author={Kalinin, Boris},
					author={Spatzier, Ralf},
					title={Global rigidity of higher rank {A}nosov actions on tori and
						nilmanifolds},
					date={2013},
					ISSN={0894-0347},
					journal={J. Amer. Math. Soc.},
					volume={26},
					%number={1},
					pages={167\ndash 198},
					url={https://doi.org/10.1090/S0894-0347-2012-00751-6},
					%        note={With an appendix by James F. Davis},
					%      %review={\MR{2983009}},
				}
				
				\bib{MR2039990}{incollection}{
					author={Fisher, David},
					author={Margulis, G.~A.},
					title={Local rigidity for cocycles},
					date={2003},
					booktitle={Surveys in differential geometry, {V}ol.\ {VIII} ({B}oston, {MA},
						2002)},
					series={Surv. Differ. Geom.},
					volume={8},
					publisher={Int. Press, Somerville, MA},
					pages={191\ndash 234},
					url={https://doi.org/10.4310/SDG.2003.v8.n1.a7},
					%      %review={\MR{2039990}},
				}
				
				\bib{MR2198325}{article}{
					author={Fisher, David},
					author={Margulis, Gregory},
					title={Almost isometric actions, property ({T}), and local rigidity},
					date={2005},
					ISSN={0020-9910},
					journal={Invent. Math.},
					volume={162},
					%number={1},
					pages={19\ndash 80},
					url={https://doi.org/10.1007/s00222-004-0437-5},
					%      %review={\MR{2198325}},
				}
				
				\bib{MR2521112}{article}{
					author={Fisher, David},
					author={Margulis, Gregory},
					title={Local rigidity of affine actions of higher rank groups and
						lattices},
					date={2009},
					ISSN={0003-486X},
					journal={Ann. of Math. (2)},
					volume={170},
					%      number={1},
					pages={67\ndash 122},
					%         url={https://doi.org/10.4007/annals.2009.170.67},
					%%review={\MR{2521112}},
				}
				
				\bib{MR1866848}{article}{
					author={Fisher, David},
					author={Whyte, Kevin},
					title={Continuous quotients for lattice actions on compact spaces},
					date={2001},
					ISSN={0046-5755},
					journal={Geom. Dedicata},
					volume={87},
					%number={1-3},
					pages={181\ndash 189},
					url={https://doi.org/10.1023/A:1012041230518},
					%      %review={\MR{1866848}},
				}
				
				\bib{MR0271990}{incollection}{
					author={Franks, John},
					title={Anosov diffeomorphisms},
					date={1970},
					booktitle={Global {A}nalysis ({P}roc. {S}ympos. {P}ure {M}ath., {V}ol.
						{XIV}, {B}erkeley, {C}alif., 1968)},
					publisher={Amer. Math. Soc.},
					address={Providence, R.I.},
					pages={61\ndash 93},
					%      %review={\MR{0271990 (42 \#6871)}},
				}
				
				\bib{MR2026546}{article}{
					author={Franks, John},
					author={Handel, Michael},
					title={Area preserving group actions on surfaces},
					date={2003},
					ISSN={1465-3060},
					journal={Geom. Topol.},
					volume={7},
					pages={757\ndash 771},
					url={https://doi.org/10.2140/gt.2003.7.757},
					%%review={\MR{2026546}},
				}

				\bib{MR2219247}{article}{
					author={Franks, John},
					author={Handel, Michael},
					title={Distortion elements in group actions on surfaces},
					date={2006},
					ISSN={0012-7094},
					journal={Duke Math. J.},
					volume={131},
					%number={3},
					pages={441\ndash 468},
					url={http://dx.doi.org/10.1215/S0012-7094-06-13132-0},
					%      %review={\MR{2219247 (2007c:37018)}},
				}

				\bib{MR0213508}{article}{
					author={Furstenberg, Harry},
					title={Disjointness in ergodic theory, minimal sets, and a problem in
						{D}iophantine approximation},
					date={1967},
					ISSN={0025-5661},
					journal={Math. Systems Theory},
					volume={1},
					pages={1\ndash 49},
					%      %review={\MR{0213508}},
				}
				
				\bib{MR0393339}{incollection}{
					author={Furstenberg, Harry},
					title={The unique ergodicity of the horocycle flow},
					date={1973},
					booktitle={Recent advances in topological dynamics ({P}roc. {C}onf., {Y}ale
						{U}niv., {N}ew {H}aven, {C}onn., 1972; in honor of {G}ustav {A}rnold
						{H}edlund)},
					publisher={Springer, Berlin},
					pages={95\ndash 115. Lecture Notes in Math., Vol. 318},
					%      %review={\MR{0393339}},
				}
				\bib{MR0121828}{article}{
					author={Furstenberg, H.},
					author={Kesten, H.},
					title={Products of random matrices},
					date={1960},
					ISSN={0003-4851},
					journal={Ann. Math. Statist.},
					volume={31},
					pages={457\ndash 469},
					%      %review={\MR{0121828 (22 \#12558)}},
				}

				\bib{MR1254981}{article}{
					author={Ghys, \'Etienne},
					title={Sur les groupes engendr\'es par des diff\'eomorphismes proches de
						l'identit\'e},
					date={1993},
					ISSN={0100-3569},
					journal={Bol. Soc. Brasil. Mat. (N.S.)},
					volume={24},
					%number={2},
					pages={137\ndash 178},
					url={https://doi.org/10.1007/BF01237675},
					%      %review={\MR{1254981}},
				}
				
				\bib{MR1703323}{article}{
					author={Ghys, {\'E}tienne},
					title={Actions de r\'eseaux sur le cercle},
					date={1999},
					ISSN={0020-9910},
					journal={Invent. Math.},
					volume={137},
					%number={1},
					pages={199\ndash 231},
					url={http://dx.doi.org/10.1007/s002220050329},
					%      %review={\MR{1703323 (2000j:22014)}},
				}
				
				\bib{MR1740993}{article}{
					author={Goetze, Edward~R.},
					author={Spatzier, Ralf~J.},
					title={Smooth classification of {C}artan actions of higher rank
						semisimple {L}ie groups and their lattices},
					date={1999},
					ISSN={0003-486X},
					journal={Ann. of Math. (2)},
					volume={150},
					%number={3},
					pages={743\ndash 773},
					url={http://dx.doi.org/10.2307/121055},
					%      %review={\MR{1740993 (2001c:37029)}},
				}
				
				\bib{MR2060998}{article}{
					author={Guivarc'h, Y.},
					author={Starkov, A.~N.},
					title={Orbits of linear group actions, random walks on homogeneous
						spaces and toral automorphisms},
					date={2004},
					ISSN={0143-3857},
					journal={Ergodic Theory Dynam. Systems},
					volume={24},
					%number={3},
					pages={767\ndash 802},
					url={https://doi.org/10.1017/S0143385703000440},
					%review={\MR{2060998}},
				}
				
				\bib{MR2182271}{article}{
					author={Guivarc'h, Yves},
					author={Urban, Roman},
					title={Semigroup actions on tori and stationary measures on projective
						spaces},
					date={2005},
					ISSN={0039-3223},
					journal={Studia Math.},
					volume={171},
					%number={1},
					pages={33\ndash 66},
					url={https://doi.org/10.4064/sm171-1-3},
					%review={\MR{2182271}},
				}
				
				\bib{MR1213080}{article}{
					author={Hu, Huyi},
					title={Some ergodic properties of commuting diffeomorphisms},
					date={1993},
					ISSN={0143-3857},
					journal={Ergodic Theory Dynam. Systems},
					volume={13},
					%      number={1},
					pages={73\ndash 100},
					url={http://dx.doi.org/10.1017/S0143385700007215},
					%%review={\MR{1213080 (94b:58061)}},
				}

				\bib{MR1154597}{article}{
					author={Hurder, Steven},
					title={Rigidity for {A}nosov actions of higher rank lattices},
					date={1992},
					ISSN={0003-486X},
					journal={Ann. of Math. (2)},
					volume={135},
					%number={2},
					pages={361\ndash 410},
					url={http://dx.doi.org/10.2307/2946593},
					%review={\MR{1154597 (93c:22018)}},
				}
				
				\bib{MR1236179}{article}{
					author={Hurder, Steven},
					title={Affine {A}nosov actions},
					date={1993},
					ISSN={0026-2285},
					journal={Michigan Math. J.},
					volume={40},
					%      number={3},
					pages={561\ndash 575},
					url={http://dx.doi.org/10.1307/mmj/1029004838},
					%%review={\MR{1236179 (94g:58165)}},
				}

				\bib{MR1338481}{article}{
					author={Hurder, Steven},
					title={Infinitesimal rigidity for hyperbolic actions},
					date={1995},
					ISSN={0022-040X},
					journal={J. Differential Geom.},
					volume={41},
					%number={3},
					pages={515\ndash 527},
					url={http://projecteuclid.org/euclid.jdg/1214456480},
					%review={\MR{1338481 (97f:58098)}},
				}
				
				\bib{Hurtado_Burnside}{misc}{
					author={Hurtado, Sebastian},
					author = {Kocsard, Alejandro},
					author = {Rodriguez Hertz, Federico},
					title={The Burnside problem for $\mathrm{Diff}_{\mathrm{Vol}}(\mathbb S^2)$	},
					date={2016},
					note={Preprint, arXiv:1607.04603},
				}

				\bib{MR630331}{article}{
					author={Jakobson, M.~V.},
					title={Absolutely continuous invariant measures for one-parameter
						families of one-dimensional maps},
					date={1981},
					ISSN={0010-3616},
					journal={Comm. Math. Phys.},
					volume={81},
					%      number={1},
					pages={39\ndash 88},
					url={http://projecteuclid.org/euclid.cmp/1103920159},
					%%review={\MR{630331}},
				}

				\bib{MR1194793}{article}{
					author={Johnson, Aimee S.~A.},
					title={Measures on the circle invariant under multiplication by a
						nonlacunary subsemigroup of the integers},
					date={1992},
					ISSN={0021-2172},
					journal={Israel J. Math.},
					volume={77},
					%number={1-2},
					pages={211\ndash 240},
					url={http://dx.doi.org/10.1007/BF02808018},
					%review={\MR{1194793}},
				}
				
				\bib{MR2776369}{article}{
      author={Kalinin, Boris},
       title={Liv\v{s}ic theorem for matrix cocycles},
        date={2011},
        ISSN={0003-486X},
     journal={Ann. of Math. (2)},
      volume={173},
      number={2},
       pages={1025\ndash 1042},
         url={https://doi.org/10.4007/annals.2011.173.2.11},
%%review={\MR{2776369}},
}
				
				\bib{MR1858547}{incollection}{
					author={Kalinin, Boris},
					author={Katok, Anatole},
					title={Invariant measures for actions of higher rank abelian groups},
					date={2001},
					booktitle={Smooth ergodic theory and its applications ({S}eattle, {WA},
						1999)},
					series={Proc. Sympos. Pure Math.},
					volume={69},
					publisher={Amer. Math. Soc.},
					address={Providence, RI},
					pages={593\ndash 637},
					%%review={\MR{1858547 (2002i:37035)}},
				}

				\bib{MR2261075}{article}{
					author={Kalinin, Boris},
					author={Katok, Anatole},
					title={Measure rigidity beyond uniform hyperbolicity: invariant measures
						for {C}artan actions on tori},
					date={2007},
					ISSN={1930-5311},
					journal={J. Mod. Dyn.},
					volume={1},
					%number={1},
					pages={123\ndash 146},
					%review={\MR{2261075 (2008h:37023)}},
				}

				\bib{MR2643892}{article}{
					author={Kalinin, Boris},
					author={Katok, Anatole},
					author={Rodriguez~Hertz, Federico},
					title={Errata to ``{M}easure rigidity beyond uniform hyperbolicity:
						invariant measures for {C}artan actions on tori'' and ``{U}niqueness of large
						invariant measures for {$\Bbb Z^k$} actions with {C}artan homotopy data''},
					date={2010},
					ISSN={1930-5311},
					journal={J. Mod. Dyn.},
					volume={4},
					%      number={1},
					pages={207\ndash 209},
					url={https://doi.org/10.3934/jmd.2010.4.207},
					%%review={\MR{2643892}},
				}

				\bib{MR2811602}{article}{
					author={Kalinin, Boris},
					author={Katok, Anatole},
					author={Rodriguez~Hertz, Federico},
					title={Nonuniform measure rigidity},
					date={2011},
					ISSN={0003-486X},
					journal={Ann. of Math. (2)},
					volume={174},
					%number={1},
					pages={361\ndash 400},
					url={http://dx.doi.org/10.4007/annals.2011.174.1.10},
					%review={\MR{2811602}},
				}
				
				\bib{MR2240907}{article}{
					author={Kalinin, Boris},
					author={Sadovskaya, Victoria},
					title={Global rigidity for totally nonsymplectic {A}nosov {$\Bbb Z^k$}
						actions},
					date={2006},
					ISSN={1465-3060},
					journal={Geom. Topol.},
					volume={10},
					pages={929\ndash 954},
					url={https://doi.org/10.2140/gt.2006.10.929},
					%review={\MR{2240907}},
				}
				
				\bib{MR2372620}{article}{
					author={Kalinin, Boris},
					author={Sadovskaya, Victoria},
					title={On the classification of resonance-free {A}nosov {$\Bbb Z^k$}
						actions},
					date={2007},
					ISSN={0026-2285},
					journal={Michigan Math. J.},
					volume={55},
					%number={3},
					pages={651\ndash 670},
					url={https://doi.org/10.1307/mmj/1197056461},
					%review={\MR{2372620}},
				}
				
				\bib{MR2322492}{article}{
					author={Kalinin, Boris},
					author={Spatzier, Ralf},
					title={On the classification of {C}artan actions},
					date={2007},
					ISSN={1016-443X},
					journal={Geom. Funct. Anal.},
					volume={17},
					%number={2},
					pages={468\ndash 490},
					url={https://doi.org/10.1007/s00039-007-0602-2},
					%review={\MR{2322492}},
				}
				
				\bib{MR1421873}{article}{
					author={Kanai, M.},
					title={A new approach to the rigidity of discrete group actions},
					date={1996},
					ISSN={1016-443X},
					journal={Geom. Funct. Anal.},
					volume={6},
					%number={6},
					pages={943\ndash 1056},
					url={https://doi.org/10.1007/BF02246995},
					%review={\MR{1421873}},
				}
				
				\bib{MR573822}{article}{
					author={Katok, A.},
					title={Lyapunov exponents, entropy and periodic orbits for
						diffeomorphisms},
					date={1980},
					ISSN={0073-8301},
					journal={Inst. Hautes \'Etudes Sci. Publ. Math.},
					%number={51},
					pages={137\ndash 173},
					url={http://www.numdam.org/item?id=PMIHES_1980__51__137_0},
					%review={\MR{573822 (81i:28022)}},
				}

				\bib{MR1326374}{book}{
					author={Katok, Anatole},
					author={Hasselblatt, Boris},
					title={Introduction to the modern theory of dynamical systems},
					series={Encyclopedia of Mathematics and its Applications},
					publisher={Cambridge University Press},
					address={Cambridge},
					date={1995},
					volume={54},
					ISBN={0-521-34187-6},
					%        note={With a supplementary chapter by Katok and Leonardo Mendoza},
					%review={\MR{MR1326374 (96c:58055)}},
				}
				
				\bib{MR1949111}{article}{
					author={Katok, Anatole},
					author={Katok, Svetlana},
					author={Schmidt, Klaus},
					title={Rigidity of measurable structure for {${\Bbb Z}^d$}-actions by
						automorphisms of a torus},
					date={2002},
					ISSN={0010-2571},
					journal={Comment. Math. Helv.},
					volume={77},
					%      number={4},
					pages={718\ndash 745},
					%         url={https://doi.org/10.1007/PL00012439},
					%%review={\MR{1949111}},
				}

				\bib{MR1164591}{article}{
					author={Katok, A.},
					author={Lewis, J.},
					title={Local rigidity for certain groups of toral automorphisms},
					date={1991},
					ISSN={0021-2172},
					journal={Israel J. Math.},
					volume={75},
					%number={2-3},
					pages={203\ndash 241},
					url={http://dx.doi.org/10.1007/BF02776025},
					%review={\MR{1164591 (93g:58076)}},
				}
				
				\bib{MR1380646}{article}{
					author={Katok, A.},
					author={Lewis, J.},
					title={Global rigidity results for lattice actions on tori and new
						examples of volume-preserving actions},
					date={1996},
					ISSN={0021-2172},
					journal={Israel J. Math.},
					volume={93},
					pages={253\ndash 280},
					url={http://dx.doi.org/10.1007/BF02761106},
					%review={\MR{1380646 (96k:22021)}},
				}
				
				\bib{MR1367273}{article}{
					author={Katok, A.},
					author={Lewis, J.},
					author={Zimmer, R.},
					title={Cocycle superrigidity and rigidity for lattice actions on tori},
					date={1996},
					ISSN={0040-9383},
					journal={Topology},
					volume={35},
					%number={1},
					pages={27\ndash 38},
					url={http://dx.doi.org/10.1016/0040-9383(95)00012-7},
					%review={\MR{1367273 (97e:22009)}},
				}

				\bib{MR2285730}{article}{
					author={Katok, Anatole},
					author={Rodriguez~Hertz, Federico},
					title={Uniqueness of large invariant measures for {$\Bbb Z^k$} actions
						with {C}artan homotopy data},
					date={2007},
					ISSN={1930-5311},
					journal={J. Mod. Dyn.},
					volume={1},
					%number={2},
					pages={287\ndash 300},
					url={http://dx.doi.org/10.3934/jmd.2007.1.287},
					%review={\MR{2285730 (2008c:37039)}},
				}

				\bib{MR2729332}{article}{
					author={Katok, Anatole},
					author={Rodriguez~Hertz, Federico},
					title={Measure and cocycle rigidity for certain nonuniformly hyperbolic
						actions of higher-rank abelian groups},
					date={2010},
					ISSN={1930-5311},
					journal={J. Mod. Dyn.},
					volume={4},
					%number={3},
					pages={487\ndash 515},
					url={http://dx.doi.org/10.3934/jmd.2010.4.487},
					%review={\MR{2729332 (2012f:37045)}},
				}
				
				\bib{MR3503686}{article}{
					author={Katok, Anatole},
					author={Rodriguez~Hertz, Federico},
					title={Arithmeticity and topology of smooth actions of higher rank
						abelian groups},
					date={2016},
					ISSN={1930-5311},
					journal={J. Mod. Dyn.},
					volume={10},
					pages={135\ndash 172},
					url={http://dx.doi.org/10.3934/jmd.2016.10.135},
					%review={\MR{3503686}},
				}
				
							\bib{MR1307298}{article}{
								author={Katok, Anatole},
								author={Spatzier, Ralf~J.},
								title={First cohomology of {A}nosov actions of higher rank abelian
									groups and applications to rigidity},
								date={1994},
								ISSN={0073-8301},
								journal={Inst. Hautes \'Etudes Sci. Publ. Math.},
								%number={79},
								pages={131\ndash 156},
								url={http://www.numdam.org/item?id=PMIHES_1994__79__131_0},
								%review={\MR{1307298 (96c:58132)}},
							}

							\bib{MR1406432}{article}{
								author={Katok, A.},
								author={Spatzier, R.~J.},
								title={Invariant measures for higher-rank hyperbolic abelian actions},
								date={1996},
								ISSN={0143-3857},
								journal={Ergodic Theory Dynam. Systems},
								volume={16},
								%number={4},
								pages={751\ndash 778},
								url={http://dx.doi.org/10.1017/S0143385700009081},
								%review={\MR{1406432 (97d:58116)}},
							}
							
							\bib{MR1632177}{article}{
								author={Katok, A.},
								author={Spatzier, R.~J.},
								title={Differential rigidity of {A}nosov actions of higher rank abelian
									groups and algebraic lattice actions},
								date={1997},
								ISSN={0371-9685},
								journal={Tr. Mat. Inst. Steklova},
								volume={216},
								%number={Din. Sist. i Smezhnye Vopr.},
								pages={292\ndash 319},
								%review={\MR{MR1632177 (99i:58118)}},
							}
							
							\bib{MR1619571}{article}{
								author={Katok, A.},
								author={Spatzier, R.~J.},
								title={Corrections to: ``{I}nvariant measures for higher-rank hyperbolic
									abelian actions'' }, %[{E}rgodic {T}heory {D}ynam. {S}ystems {\bf 16} (1996), no.
								%  4, 751--778; {MR}1406432 (97d:58116)]},
								date={1998},
								ISSN={0143-3857},
								journal={Ergodic Theory Dynam. Systems},
								volume={18},
								%number={2},
								pages={503\ndash 507},
								url={http://dx.doi.org/10.1017/S0143385798110969},
								%review={\MR{1619571 (99c:58093)}},
							}

				\bib{MR0254907}{article}{
					author={Kingman, J. F.~C.},
					title={The ergodic theory of subadditive stochastic processes},
					date={1968},
					ISSN={0035-9246},
					journal={J. Roy. Statist. Soc. Ser. B},
					volume={30},
					pages={499\ndash 510},
					url={http://links.jstor.org/sici?sici=0035-9246(1968)30:3<499:TETOSS>2.0.CO;2-E&origin=MSN},
					%review={\MR{0254907}},
				}
				
				\bib{MR1920389}{book}{
					author={Knapp, Anthony~W.},
					title={Lie groups beyond an introduction},
					edition={Second},
					series={Progress in Mathematics},
					publisher={Birkh\"auser Boston, Inc., Boston, MA},
					date={2002},
					volume={140},
					ISBN={0-8176-4259-5},
					%review={\MR{1920389 (2003c:22001)}},
				}
				
				\bib{MR1648087}{inproceedings}{
					author={Labourie, Fran\c{c}ois},
					title={Large groups actions on manifolds},
					date={1998},
					booktitle={Proceedings of the {I}nternational {C}ongress of
						{M}athematicians, {V}ol. {II} ({B}erlin, 1998)},
					pages={371\ndash 380},
					%review={\MR{1648087}},
				}
				
				\bib{MR2423763}{article}{
					author={Lafforgue, Vincent},
					title={Un renforcement de la propri\'et\'e ({T})},
					date={2008},
					ISSN={0012-7094},
					journal={Duke Math. J.},
					volume={143},
					%number={3},
					pages={559\ndash 602},
					url={http://dx.doi.org/10.1215/00127094-2008-029},
					%review={\MR{2423763}},
				}
				
				\bib{MR743818}{article}{
					author={Ledrappier, F.},
					title={Propri\'et\'es ergodiques des mesures de {S}ina\"\i},
					date={1984},
					ISSN={0073-8301},
					journal={Inst. Hautes \'Etudes Sci. Publ. Math.},
					%number={59},
					pages={163\ndash 188},
					url={http://www.numdam.org/item?id=PMIHES_1984__59__163_0},
					%review={\MR{743818 (86f:58092)}},
				}
				
				\bib{MR850070}{incollection}{
					author={Ledrappier, F.},
					title={Positivity of the exponent for stationary sequences of matrices},
					date={1986},
					booktitle={Lyapunov exponents ({B}remen, 1984)},
					series={Lecture Notes in Math.},
					volume={1186},
					publisher={Springer},
					address={Berlin},
					pages={56\ndash 73},
					url={http://dx.doi.org/10.1007/BFb0076833},
					%review={\MR{850070 (87m:60160)}},
				}

				\bib{MR693976}{article}{
					author={Ledrappier, Fran{\c{c}}ois},
					author={Strelcyn, Jean-Marie},
					title={A proof of the estimation from below in {P}esin's entropy
						formula},
					date={1982},
					ISSN={0143-3857},
					journal={Ergodic Theory Dynam. Systems},
					volume={2},
					%number={2},
					pages={203\ndash 219 (1983)},
					url={http://dx.doi.org/10.1017/S0143385700001528},
					%review={\MR{693976 (85f:58070)}},
				}
				
				\bib{MR0476995}{article}{
					author={Ledrappier, Fran{\c{c}}ois},
					author={Walters, Peter},
					title={A relativised variational principle for continuous
						transformations},
					date={1977},
					ISSN={0024-6107},
					journal={J. London Math. Soc. (2)},
					volume={16},
					%number={3},
					pages={568\ndash 576},
					%review={\MR{0476995}},
				}

				\bib{MR819556}{article}{
					author={Ledrappier, F.},
					author={Young, L.-S.},
					title={The metric entropy of diffeomorphisms. {I}. {C}haracterization of
						measures satisfying {P}esin's entropy formula},
					date={1985},
					ISSN={0003-486X},
					journal={Ann. of Math. (2)},
					volume={122},
					%number={3},
					pages={509\ndash 539},
					url={http://dx.doi.org/10.2307/1971328},
					%review={\MR{819556 (87i:58101a)}},
				}
				
				\bib{MR819557}{article}{
					author={Ledrappier, F.},
					author={Young, L.-S.},
					title={The metric entropy of diffeomorphisms. {II}. {R}elations between
						entropy, exponents and dimension},
					date={1985},
					ISSN={0003-486X},
					journal={Ann. of Math. (2)},
					volume={122},
					%number={3},
					pages={540\ndash 574},
					url={http://dx.doi.org/10.2307/1971329},
					%review={\MR{819557 (87i:58101b)}},
				}

				\bib{MR1058434}{article}{
					author={Lewis, James~W.},
					title={Infinitesimal rigidity for the action of {${\rm SL}(n,{\bf Z})$}
						on {${\bf T}^n$}},
					date={1991},
					ISSN={0002-9947},
					journal={Trans. Amer. Math. Soc.},
					volume={324},
					%number={1},
					pages={421\ndash 445},
					url={http://dx.doi.org/10.2307/2001516},
					%review={\MR{1058434 (91f:22019)}},
				}

				\bib{MR1369092}{book}{
					author={Lind, Douglas},
					author={Marcus, Brian},
					title={An introduction to symbolic dynamics and coding},
					publisher={Cambridge University Press, Cambridge},
					date={1995},
					ISBN={0-521-55124-2; 0-521-55900-6},
					url={https://doi.org/10.1017/CBO9780511626302},
					%%review={\MR{1369092}},
				}

				\bib{MR2195133}{article}{
					author={Lindenstrauss, Elon},
					title={Invariant measures and arithmetic quantum unique ergodicity},
					date={2006},
					ISSN={0003-486X},
					journal={Ann. of Math. (2)},
					volume={163},
					%number={1},
					pages={165\ndash 219},
					url={http://dx.doi.org/10.4007/annals.2006.163.165},
					%review={\MR{2195133}},
				}
				
				\bib{MR3098967}{article}{
					author={Lu, Kening},
					author={Wang, Qiudong},
					author={Young, Lai-Sang},
					title={Strange attractors for periodically forced parabolic equations},
					date={2013},
					ISSN={0065-9266},
					journal={Mem. Amer. Math. Soc.},
					volume={224},
					%number={1054},
					pages={vi+85},
					url={http://dx.doi.org/10.1090/S0065-9266-2012-00669-1},
					%review={\MR{3098967}},
				}
				
				\bib{MR1828742}{article}{
					author={Lubotzky, Alexander},
					author={Mozes, Shahar},
					author={Raghunathan, M.~S.},
					title={The word and {R}iemannian metrics on lattices of semisimple
						groups},
					date={2000},
					%        ISSN={0073-8301},
					journal={Inst. Hautes \'Etudes Sci. Publ. Math.},
					%      number={91},
					pages={5\ndash 53},
					%         url={http://www.numdam.org/item?id=PMIHES_2000__91__5_0},
					%%review={\MR{1828742 (2002e:22011)}},
				}

				\bib{MR941238}{article}{
					author={Lyons, Russell},
					title={On measures simultaneously {$2$}- and {$3$}-invariant},
					date={1988},
					ISSN={0021-2172},
					journal={Israel J. Math.},
					volume={61},
					%number={2},
					pages={219\ndash 224},
					url={http://dx.doi.org/10.1007/BF02766212},
					%review={\MR{941238}},
				}
				
				\bib{MR0358865}{article}{
					author={Manning, Anthony},
					title={There are no new {A}nosov diffeomorphisms on tori},
					date={1974},
					ISSN={0002-9327},
					journal={Amer. J. Math.},
					volume={96},
					pages={422\ndash 429},
					%review={\MR{0358865 (50 \#11324)}},
				}
				
				\bib{Margulis-nonuniformtwo}{incollection}{
					author={Margulis, G.~A.},
					title={Non-uniform lattices in semisimple algebraic groups},
					date={1975},
					booktitle={Lie groups and their representations (proc. summer school on
						group representations of the bolyai j\'anos math. soc., budapest, 1971)},
					publisher={Halsted, New York},
					pages={371\ndash 553},
					%review={\MR{MR0422499 (54 \#10486)}},
				}
				
				\bib{MR515630}{article}{
					author={Margulis, G.~A.},
					title={Factor groups of discrete subgroups and measure theory},
					date={1978},
					ISSN={0374-1990},
					journal={Funktsional. Anal. i Prilozhen.},
					volume={12},
					%number={4},
					pages={64\ndash 76},
					%review={\MR{515630 (80k:22005)}},
				}

				\bib{MR882782}{article}{
					author={Margulis, Gregori~Aleksandrovitch},
					title={Formes quadratriques ind\'efinies et flots unipotents sur les
						espaces homog\`enes},
					date={1987},
					ISSN={0249-6291},
					journal={C. R. Acad. Sci. Paris S\'er. I Math.},
					volume={304},
					%number={10},
					pages={249\ndash 253},
					%review={\MR{882782}},
				}

				\bib{MR993328}{incollection}{
					author={Margulis, G.~A.},
					title={Discrete subgroups and ergodic theory},
					date={1989},
					booktitle={Number theory, trace formulas and discrete groups ({O}slo,
						1987)},
					publisher={Academic Press, Boston, MA},
					pages={377\ndash 398},
					%review={\MR{993328}},
				}

				\bib{MR1090825}{book}{
					author={Margulis, G.~A.},
					title={Discrete subgroups of semisimple {L}ie groups},
					%      series={Ergebnisse der Mathematik und ihrer Grenzgebiete (3) [Results in
					%  Mathematics and Related Areas (3)]},
					publisher={Springer-Verlag, Berlin},
					date={1991},
					%      volume={17},
					ISBN={3-540-12179-X},
					url={http://dx.doi.org/10.1007/978-3-642-51445-6},
					%review={\MR{1090825 (92h:22021)}},
				}

				\bib{MR1754775}{incollection}{
					author={Margulis, Gregory},
					title={Problems and conjectures in rigidity theory},
					date={2000},
					booktitle={Mathematics: frontiers and perspectives},
					publisher={Amer. Math. Soc., Providence, RI},
					pages={161\ndash 174},
					%%review={\MR{1754775}},
				}

				\bib{MR1826664}{article}{
					author={Margulis, Gregory~A.},
					author={Qian, Nantian},
					title={Rigidity of weakly hyperbolic actions of higher real rank
						semisimple {L}ie groups and their lattices},
					date={2001},
					ISSN={0143-3857},
					journal={Ergodic Theory Dynam. Systems},
					volume={21},
					%number={1},
					pages={121\ndash 164},
					url={http://dx.doi.org/10.1017/S0143385701001109},
					%review={\MR{1826664 (2003a:22019)}},
				}
				
				\bib{MR1253197}{article}{
					author={Margulis, G.~A.},
					author={Tomanov, G.~M.},
					title={Invariant measures for actions of unipotent groups over local
						fields on homogeneous spaces},
					date={1994},
					ISSN={0020-9910},
					journal={Invent. Math.},
					volume={116},
					%number={1-3},
					pages={347\ndash 392},
					url={http://dx.doi.org/10.1007/BF01231565},
					%review={\MR{1253197 (95k:22013)}},
				}
				
\bib{MR0153028}{article}{
      author={Matsushima, Yoz\^{o}},
      author={Murakami, Shingo},
       title={On vector bundle valued harmonic forms and automorphic forms on
  symmetric riemannian manifolds},
        date={1963},
        ISSN={0003-486X},
     journal={Ann. of Math. (2)},
      volume={78},
       pages={365\ndash 416},
         url={https://doi.org/10.2307/1970348},
%review={\MR{0153028}},
}

\bib{MR1333890}{book}{
      author={Mattila, Pertti},
       title={Geometry of sets and measures in {E}uclidean spaces},
      series={Cambridge Studies in Advanced Mathematics},
   publisher={Cambridge University Press, Cambridge},
        date={1995},
      volume={44},
        ISBN={0-521-46576-1; 0-521-65595-1},
         url={https://doi.org/10.1017/CBO9780511623813},
        note={Fractals and rectifiability},
%review={\MR{1333890}},
}

\bib{MR2630049}{article}{
      author={Maucourant, Fran\c{c}ois},
       title={A nonhomogeneous orbit closure of a diagonal subgroup},
        date={2010},
        ISSN={0003-486X},
     journal={Ann. of Math. (2)},
      volume={171},
      number={1},
       pages={557\ndash 570},
         url={https://doi.org/10.4007/annals.2010.171.557},
%review={\MR{2630049}},
}

				\bib{Moser}{article}{
					author={Moser, J\"{u}rgen},
					title={On the volume elements on a manifold},
					journal={Trans. Amer. Math. Soc.},
					volume={120},
					date={1965},
					pages={286--294},
					issn={0002-9947},
%					review={\MR{0182927}},
%					doi={10.2307/1994022},
				}
				
				\bib{MR0238357}{article}{
					author={Moser, J.},
					title={On a theorem of {A}nosov},
					date={1969},
					ISSN={0022-0396},
					journal={J. Differential Equations},
					volume={5},
					pages={411\ndash 440},
					%         url={https://doi.org/10.1016/0022-0396(69)90083-7},
					%%review={\MR{0238357}},
				}

				\bib{Mostow-book}{book}{
					author={Mostow, G.~D.},
					title={Strong rigidity of locally symmetric spaces},
					publisher={Princeton University Press},
					address={Princeton, N.J.},
					date={1973},
					note={Annals of Mathematics Studies, No. 78},
					%review={\MR{MR0385004 (52 \#5874)}},
				}
				
				\bib{MR0141672}{article}{
					author={Mostow, G.~D.},
					author={Tamagawa, T.},
					title={On the compactness of arithmetically defined homogeneous spaces},
					date={1962},
					ISSN={0003-486X},
					journal={Ann. of Math. (2)},
					volume={76},
					pages={446\ndash 463},
%					url={https://doi.org/10.2307/1970368},
					%review={\MR{0141672}},
				}
				
				\bib{MR2136018}{article}{
					author={Muchnik, Roman},
					title={Semigroup actions on {$\Bbb T^n$}},
					date={2005},
					ISSN={0046-5755},
					journal={Geom. Dedicata},
					volume={110},
					pages={1\ndash 47},
					%review={\MR{2136018}},
				}
				
				\bib{MR1503467}{article}{
					author={Myers, S. B.},
					author={Steenrod, N. E.},
					title={The group of isometries of a Riemannian manifold},
					journal={Ann. of Math. (2)},
					volume={40},
					date={1939},
					number={2},
					pages={400--416},
					issn={0003-486X},
%					review={\MR{1503467}},
%					doi={10.2307/1968928},
				}
				
				\bib{MR2809110}{book}{
					author={Navas, Andr\'es},
					title={Groups of circle diffeomorphisms},
					edition={Spanish},
					series={Chicago Lectures in Mathematics},
					publisher={University of Chicago Press, Chicago, IL},
					date={2011},
					ISBN={978-0-226-56951-2; 0-226-56951-9},
%					url={http://dx.doi.org/10.7208/chicago/9780226569505.001.0001},
					%review={\MR{2809110}},
				}
				
%				\bib{OV}{book}{
%					author= {Oliveira, K.},
%					AUTHOR = {Viana, M.},
%					TITLE = {Foundations of ergodic theory},
%					SERIES = {Cambridge Studies in Advanced Mathematics},
%					VOLUME = {151},
%					PUBLISHER = {Cambridge University Press, Cambridge},
%					YEAR = {2016},
%					PAGES = {xvi+530},
%				}
				
				\bib{MR0240280}{article}{
					author={Oseledec, V.~I.},
					title={A multiplicative ergodic theorem. {C}haracteristic {L}japunov,
						exponents of dynamical systems},
					date={1968},
					ISSN={0134-8663},
					journal={Trans. Mosc. Math. Soc.},
					volume={19},
					pages={197\ndash 221},
					%review={\MR{MR0240280 (39 \#1629)}},
				}

				\bib{MR2163900}{article}{
					author={Parwani, Kamlesh},
					title={Actions of {${\rm SL}(n,\Bbb Z)$} on homology spheres},
					date={2005},
					ISSN={0046-5755},
					journal={Geom. Dedicata},
					volume={112},
					pages={215\ndash 223},
%					url={http://dx.doi.org/10.1007/s10711-005-1079-5},
					%%review={\MR{2163900}},
				}

				\bib{MR0458490}{article}{
					author={Pesin, Ja.~B.},
					title={Families of invariant manifolds that correspond to nonzero
						characteristic exponents},
					date={1978},
					ISSN={0373-2436},
					journal={Math. of the USSR, Izvestija}, %{Izv. Akad. Nauk SSSR Ser. Mat.},
					volume={10},
					%number={6},
					pages={1261\ndash 1305},
					%review={\MR{MR0458490 (56 \#16690)}},
				}
				
				\bib{MR0466791}{article}{
					author={Pesin, Ja.~B.},
					title={Characteristic {L}japunov exponents, and smooth ergodic theory},
					date={1977},
					ISSN={0042-1316},
					journal={Russ. Math. Surveys}, %{Uspehi Mat. Nauk},
					volume={32},
					%number={4 (196)},
					pages={55\ndash 114},
					%review={\MR{0466791 (57 \#6667)}},
				}
				
				\bib{MR1946555}{article}{
					author={Polterovich, Leonid},
					title={Growth of maps, distortion in groups and symplectic geometry},
					date={2002},
					ISSN={0020-9910},
					journal={Invent. Math.},
					volume={150},
					%number={3},
					pages={655\ndash 686},
					url={http://dx.doi.org/10.1007/s00222-002-0251-x},
					%review={\MR{1946555}},
				}
				
				\bib{MR0385005}{article}{
					author={Prasad, Gopal},
					title={Strong rigidity of {${\bf Q}$}-rank {$1$} lattices},
					date={1973},
					ISSN={0020-9910},
					journal={Invent. Math.},
					volume={21},
					pages={255\ndash 286},
					%review={\MR{0385005}},
				}
				
				\bib{MR1332408}{article}{
					author={Qian, Nan~Tian},
					title={Anosov automorphisms for nilmanifolds and rigidity of group
						actions},
					date={1995},
					ISSN={0143-3857},
					journal={Ergodic Theory Dynam. Systems},
					volume={15},
					%number={2},
					pages={341\ndash 359},
					url={http://dx.doi.org/10.1017/S0143385700008415},
					%review={\MR{1332408 (96g:58147)}},
				}
				
				\bib{MR1415754}{article}{
					author={Qian, Nantian},
					title={Infinitesimal rigidity of higher rank lattice actions},
					date={1996},
					ISSN={1019-8385},
					journal={Comm. Anal. Geom.},
					volume={4},
					%number={3},
					pages={495\ndash 524},
					%review={\MR{1415754 (98c:58015)}},
				}
				
				\bib{MR1401783}{article}{
					author={Qian, Nantian},
					title={Tangential flatness and global rigidity of higher rank lattice
						actions},
					date={1997},
					ISSN={0002-9947},
					journal={Trans. Amer. Math. Soc.},
					volume={349},
					%number={2},
					pages={657\ndash 673},
					url={http://dx.doi.org/10.1090/S0002-9947-97-01857-6},
					%review={\MR{1401783 (97e:22011)}},
				}
				
				\bib{MR1631740}{article}{
					author={Qian, Nantian},
					author={Yue, Chengbo},
					title={Local rigidity of {A}nosov higher-rank lattice actions},
					date={1998},
					ISSN={0143-3857},
					journal={Ergodic Theory Dynam. Systems},
					volume={18},
					%number={3},
					pages={687\ndash 702},
					url={http://dx.doi.org/10.1017/S014338579810826X},
					%review={\MR{1631740 (99k:58138)}},
				}
				
				\bib{MR0173730}{article}{
      author={Raghunathan, M.~S.},
       title={On the first cohomology of discrete subgroups of semisimple {L}ie
  groups},
        date={1965},
        ISSN={0002-9327},
     journal={Amer. J. Math.},
      volume={87},
       pages={103\ndash 139},
         url={https://doi.org/10.2307/2373227},
%%review={\MR{0173730}},
}

				\bib{MR0571089}{article}{
					author={Raghunathan, M.~S.},
					title={A proof of {O}seledec's multiplicative ergodic theorem},
					date={1979},
					ISSN={0021-2172},
					journal={Israel J. Math.},
					volume={32},
					%number={4},
					pages={356\ndash 362},
					url={https://doi.org/10.1007/BF02760464},
					%review={\MR{571089}},
				}

				\bib{MR1054166}{article}{
					author={Ratner, Marina},
					title={Invariant measures for unipotent translations on homogeneous
						spaces},
					date={1990},
					ISSN={0027-8424},
					journal={Proc. Nat. Acad. Sci. U.S.A.},
					volume={87},
					%number={11},
					pages={4309\ndash 4311},
					url={http://dx.doi.org/10.1073/pnas.87.11.4309},
					%review={\MR{1054166 (91i:22009)}},
				}
				
				\bib{MR1075042}{article}{
					author={Ratner, Marina},
					title={On measure rigidity of unipotent subgroups of semisimple groups},
					date={1990},
					ISSN={0001-5962},
					journal={Acta Math.},
					volume={165},
					%number={3-4},
					pages={229\ndash 309},
					url={http://dx.doi.org/10.1007/BF02391906},
					%review={\MR{1075042 (91m:57031)}},
				}
				
				\bib{MR1135878}{article}{
					author={Ratner, Marina},
					title={On {R}aghunathan's measure conjecture},
					date={1991},
					ISSN={0003-486X},
					journal={Ann. of Math. (2)},
					volume={134},
					%number={3},
					pages={545\ndash 607},
					url={http://dx.doi.org/10.2307/2944357},
					%review={\MR{1135878 (93a:22009)}},
				}
				
				\bib{MR1262705}{article}{
					author={Ratner, M.},
					title={Invariant measures and orbit closures for unipotent actions on
						homogeneous spaces},
					date={1994},
					ISSN={1016-443X},
					journal={Geom. Funct. Anal.},
					volume={4},
					%number={2},
					pages={236\ndash 257},
					url={http://dx.doi.org/10.1007/BF01895839},
					%review={\MR{1262705 (95c:22018)}},
				}
				
				\bib{MR1464908}{article}{
      author={Repov{\u{s}}, Du{\u{s}}an},
      author={{\u{S}}{\u{c}}epin, Evgenij},
       title={A proof of the {H}ilbert-{S}mith conjecture for actions by
  {L}ipschitz maps},
        date={1997},
        ISSN={0025-5831},
     journal={Math. Ann.},
      volume={308},
      number={2},
       pages={361\ndash 364},
         url={http://dx.doi.org/10.1007/s002080050080},
%review={\MR{1464908}},
}

				\bib{MR2318497}{article}{
					author={Rodriguez~Hertz, Federico},
					title={Global rigidity of certain abelian actions by toral
						automorphisms},
					date={2007},
					ISSN={1930-5311},
					journal={J. Mod. Dyn.},
					volume={1},
					%number={3},
					pages={425\ndash 442},
					url={http://dx.doi.org/10.3934/jmd.2007.1.425},
					%review={\MR{2318497}},
				}
				
				\bib{MR3260859}{article}{
					author={Rodriguez~Hertz, Federico},
					author={Wang, Zhiren},
					title={Global rigidity of higher rank abelian {A}nosov algebraic
						actions},
					date={2014},
					ISSN={0020-9910},
					journal={Invent. Math.},
					volume={198},
					%number={1},
					pages={165\ndash 209},
					url={http://dx.doi.org/10.1007/s00222-014-0499-y},
					%review={\MR{3260859}},
				}
				
				\bib{MR0047744}{article}{
					author={Rohlin, V.~A.},
					title={On the fundamental ideas of measure theory},
					date={1952},
					ISSN={0065-9290},
					journal={Amer. Math. Soc. Translation},
					volume={10},
					pages={1\ndash 52},
					%review={\MR{MR0047744 (13,924e)}},
				}

						\bib{MR02172581}{article}{
      author={Rohlin, V.~A.},
       title={Lectures on the entropy theory of transformations with invariant
  measure},
        date={1967},
        ISSN={0042-1316},
     journal={Uspehi Mat. Nauk},
      volume={22},
      number={5 (137)},
       pages={3\ndash 56},
%      review={\MR{0217258 (36 \#349)}},
}

				\bib{MR1086631}{book}{
      author={Rudolph, Daniel~J.},
       title={Fundamentals of measurable dynamics},
      series={Oxford Science Publications},
   publisher={The Clarendon Press, Oxford University Press, New York},
        date={1990},
        ISBN={0-19-853572-4},
        note={Ergodic theory on Lebesgue spaces},
%%review={\MR{1086631}},
}

				\bib{MR1062766}{article}{
					author={Rudolph, Daniel~J.},
					title={{$\times 2$} and {$\times 3$} invariant measures and entropy},
					date={1990},
					ISSN={0143-3857},
					journal={Ergodic Theory Dynam. Systems},
					volume={10},
					%number={2},
					pages={395\ndash 406},
					url={http://dx.doi.org/10.1017/S0143385700005629},
					%review={\MR{1062766 (91g:28026)}},
				}
				
				\bib{MR516310}{article}{
					author={Ruelle, David},
					title={An inequality for the entropy of differentiable maps},
					date={1978},
					ISSN={0100-3569},
					journal={Bol. Soc. Brasil. Mat.},
					volume={9},
					%number={1},
					pages={83\ndash 87},
					url={http://dx.doi.org/10.1007/BF02584795},
					%review={\MR{516310 (80f:58026)}},
				}
				
				\bib{1708.04464}{misc}{
					author={Sargent, Oliver},
					author={Shapira, Uri},
					title={Dynamics on the space of 2-lattices in 3-space},
					date={2017},
					note={Preprint, arXiv:1708.04464}
				}
				\bib{MR1643183}{article}{
      author={Schreiber, Sebastian~J.},
       title={On growth rates of subadditive functions for semiflows},
        date={1998},
        ISSN={0022-0396},
     journal={J. Differential Equations},
      volume={148},
      number={2},
       pages={334\ndash 350},
         url={https://doi.org/10.1006/jdeq.1998.3471},
%%review={\MR{1643183}},
}

				\bib{Selberg}{incollection}{
					author={Selberg, Atle},
					title={On discontinuous groups in higher-dimensional symmetric spaces},
					date={1960},
					booktitle={Contributions to function theory (internat. colloq. function
						theory, bombay, 1960)},
					publisher={Tata Institute of Fundamental Research},
					address={Bombay},
					pages={147\ndash 164},
					%review={\MR{MR0130324 (24 \#A188)}},
				}
				
				\bib{MR1291701}{article}{
					author={Shah, Nimish~A.},
					title={Limit distributions of polynomial trajectories on homogeneous
						spaces},
					date={1994},
					ISSN={0012-7094},
					journal={Duke Math. J.},
					volume={75},
					%number={3},
					pages={711\ndash 732},
					url={http://dx.doi.org/10.1215/S0012-7094-94-07521-2},
					%review={\MR{1291701}},
				}
				\bib{MR0197684}{article}{
					author={Sina\u\i, Ja.~G.},
					title={Classical dynamic systems with countably-multiple {L}ebesgue
						spectrum. {II}},
					date={1966},
					ISSN={0373-2436},
					journal={Izv. Akad. Nauk SSSR Ser. Mat.},
					volume={30},
					pages={15\ndash 68},
					%%review={\MR{0197684}},
				}
				
				\bib{Sinai1968}{article}{
					author={Sinai, Ya.~G.},
					title={Markov partitions and c-diffeomorphisms},
					date={1968},
					ISSN={1573-8485},
					journal={Funct. Anal. Appl.},
					volume={2},
					%      number={1},
					pages={61\ndash 82},
					%         url={https://doi.org/10.1007/BF01075361},
				}
				
				\bib{MR1406436}{article}{
					author={Stuck, Garrett},
					title={Minimal actions of semisimple groups},
					date={1996},
					ISSN={0143-3857},
					journal={Ergodic Theory Dynam. Systems},
					volume={16},
					%number={4},
					pages={821\ndash 831},
					url={https://doi.org/10.1017/S0143385700009123},
					%review={\MR{1406436}},
				}
				
				\bib{MR1701385}{article}{
					author={Tucker, Warwick},
					title={The {L}orenz attractor exists},
					date={1999},
					ISSN={0764-4442},
					journal={C. R. Acad. Sci. Paris S\'er. I Math.},
					volume={328},
					%number={12},
					pages={1197\ndash 1202},
					url={http://dx.doi.org/10.1016/S0764-4442(99)80439-X},
					%review={\MR{1701385}},
				}
				
				\bib{MR0447476}{article}{
					author={Veech, William~A.},
					title={Unique ergodicity of horospherical flows},
					date={1977},
					ISSN={0002-9327},
					journal={Amer. J. Math.},
					volume={99},
					%number={4},
					pages={827\ndash 859},
					url={http://dx.doi.org/10.2307/2373868},
					%review={\MR{0447476}},
				}

				\bib{MR2358379}{article}{
					author={Venkatesh, Akshay},
					title={The work of {E}insiedler, {K}atok and {L}indenstrauss on the
						{L}ittlewood conjecture},
					date={2008},
					ISSN={0273-0979},
					journal={Bull. Amer. Math. Soc. (N.S.)},
					volume={45},
					%      number={1},
					pages={117\ndash 134},
					%         url={https://doi.org/10.1090/S0273-0979-07-01194-9},
					%%review={\MR{2358379}},
				}

				\bib{MR3289050}{book}{
					author={Viana, Marcelo},
					title={Lectures on {L}yapunov exponents},
					series={Cambridge Studies in Advanced Mathematics},
					publisher={Cambridge University Press, Cambridge},
					date={2014},
					volume={145},
					ISBN={978-1-107-08173-4},
					url={https://doi.org/10.1017/CBO9781139976602},
					%review={\MR{3289050}},
				}
				
				\bib{MR3395259}{article}{
					author={Vinhage, Kurt},
					title={On the rigidity of {W}eyl chamber flows and {S}chur multipliers
						as topological groups},
					date={2015},
					ISSN={1930-5311},
					journal={J. Mod. Dyn.},
					volume={9},
					pages={25\ndash 49},
					url={https://doi.org/10.3934/jmd.2015.9.25},
					%review={\MR{3395259}},
				}
				
				\bib{1510.00848}{misc}{
					author={Vinhage, Kurt},
					author={Wang, Zhenqi~Jenny},
					title={Local rigidity of higher rank homogeneous abelian actions: a
						complete solution via the geometric method},
					date={2015},
					note={Preprint, arXiv:1510.00848}
				}
				
				\bib{MR1073779}{article}{
					author={Walters, Peter},
					title={A dynamical proof of the multiplicative ergodic theorem},
					date={1993},
					ISSN={0002-9947},
					journal={Trans. Amer. Math. Soc.},
					volume={335},
					%number={1},
					pages={245\ndash 257},
					url={https://doi.org/10.2307/2154267},
					%review={\MR{1073779}},
				}
				
				\bib{MR1824198}{article}{
					author={Wang, Qiudong},
					author={Young, Lai-Sang},
					title={Strange attractors with one direction of instability},
					date={2001},
					ISSN={0010-3616},
					journal={Comm. Math. Phys.},
					volume={218},
					%number={1},
					pages={1\ndash 97},
					url={http://dx.doi.org/10.1007/s002200100379},
					%review={\MR{1824198}},
				}
				
				\bib{MR2005855}{article}{
					author={Wang, Qiudong},
					author={Young, Lai-Sang},
					title={Strange attractors in periodically-kicked limit cycles and {H}opf
						bifurcations},
					date={2003},
					ISSN={0010-3616},
					journal={Comm. Math. Phys.},
					volume={240},
					%number={3},
					pages={509\ndash 529},
					url={http://dx.doi.org/10.1007/s00220-003-0902-9},
					%review={\MR{2005855}},
				}
				
				\bib{MR2672298}{article}{
					author={Wang, Zhenqi~Jenny},
					title={Local rigidity of partially hyperbolic actions},
					date={2010},
					ISSN={1930-5311},
					journal={J. Mod. Dyn.},
					volume={4},
					%number={2},
					pages={271\ndash 327},
					url={http://dx.doi.org/10.3934/jmd.2010.4.271},
					%review={\MR{2672298 (2011j:37053)}},
				}
				
				\bib{MR2753946}{article}{
					author={Wang, Zhenqi~Jenny},
					title={New cases of differentiable rigidity for partially hyperbolic
						actions: symplectic groups and resonance directions},
					date={2010},
					ISSN={1930-5311},
					journal={J. Mod. Dyn.},
					volume={4},
					%number={4},
					pages={585\ndash 608},
					url={http://dx.doi.org/10.3934/jmd.2010.4.585},
					%review={\MR{2753946 (2012b:37077)}},
				}
				
				\bib{MR2974218}{article}{
					author={Wang, Zhiren},
					title={Rigidity of commutative non-hyperbolic actions by toral
						automorphisms},
					date={2012},
					ISSN={0143-3857},
					journal={Ergodic Theory Dynam. Systems},
					volume={32},
					%number={5},
					pages={1752\ndash 1782},
					url={https://doi.org/10.1017/S0143385711000484},
					%review={\MR{2974218}},
				}
				
				\bib{Weil-I}{article}{
					author={Weil, Andr{\'e}},
					title={On discrete subgroups of {L}ie groups. {II}},
					date={1962},
					ISSN={0003-486X},
					journal={Ann. of Math. (2)},
					volume={75},
					pages={578\ndash 602},
					%review={\MR{MR0137793 (25 \#1242)}},
				}
				
				\bib{MR0169956}{article}{
      author={Weil, Andr\'{e}},
       title={Remarks on the cohomology of groups},
        date={1964},
        ISSN={0003-486X},
     journal={Ann. of Math. (2)},
      volume={80},
       pages={149\ndash 157},
         url={https://doi.org/10.2307/1970495},
%%review={\MR{0169956}},
}

				\bib{MR1470739}{incollection}{
					author={Weinberger, Shmuel},
					title={{${\rm SL}(n,\bold Z)$} cannot act on small tori},
					date={1997},
					booktitle={Geometric topology ({A}thens, {GA}, 1993)},
					series={AMS/IP Stud. Adv. Math.},
					volume={2},
					publisher={Amer. Math. Soc., Providence, RI},
					pages={406\ndash 408},
					%%review={\MR{1470739}},
				}

				\bib{MR2807834}{incollection}{
					author={Weinberger, Shmuel},
					title={Some remarks inspired by the {${\rm C}^0$} {Z}immer program},
					date={2011},
					booktitle={Geometry, rigidity, and group actions},
					series={Chicago Lectures in Math.},
					publisher={Univ. Chicago Press, Chicago, IL},
					pages={262\ndash 282},
					%review={\MR{2807834}},
				}
				
				\bib{MR1198459}{article}{
					author={Witte, Dave},
					title={Arithmetic groups of higher {${\bf Q}$}-rank cannot act on
						{$1$}-manifolds},
					date={1994},
					ISSN={0002-9939},
					journal={Proc. Amer. Math. Soc.},
					volume={122},
					%number={2},
					pages={333\ndash 340},
					url={http://dx.doi.org/10.2307/2161021},
					%review={\MR{1198459}},
				}
				
				\bib{MR2158954}{book}{
					author={Witte~Morris, Dave},
					title={Ratner's theorems on unipotent flows},
					series={Chicago Lectures in Mathematics},
					publisher={University of Chicago Press},
					address={Chicago, IL},
					date={2005},
					ISBN={0-226-53983-0; 0-226-53984-9},
					%review={\MR{MR2158954 (2006h:37006)}},
				}
				
				\bib{MR3307755}{book}{
					author={Witte~Morris, Dave},
					title={Introduction to arithmetic groups},
					publisher={Deductive Press},
					date={2015},
					ISBN={978-0-9865716-0-2; 978-0-9865716-1-9},
					%review={\MR{3307755}},
				}

				%%Ye, Shengkui
				%%
				%%
				%%The action of matrix groups on aspherical manifolds
				%%Algebraic & Geometric Topology 18 (2018) 2875?2895
				%%Euler characteristics and actions of automorphism groups of free groups
				%%Algebraic & Geometric Topology 18 (2018) 1195?1204

				\bib{Ye2}{article}{
					author={Ye, Shengkui},
					title={Euler characteristics and actions of automorphism groups of free groups},
					date={2018},
					journal={Algebr. Geom. Topol.},
					volume={18},
					%number={3},
					pages={1195\ndash 1204},
					%review={\MR{1637655}},
				}

				\bib{Ye1}{article}{
					author={Ye, Shengkui},
					title={The action of matrix groups on aspherical manifolds},
					date={2018},
					journal={Algebr. Geom. Topol.},
					volume={18},
					%number={3},
					pages={2875\ndash 2895},
					%review={\MR{1637655}},
				}

				\bib{MR1637655}{article}{
					author={Young, Lai-Sang},
					title={Statistical properties of dynamical systems with some
						hyperbolicity},
					date={1998},
					ISSN={0003-486X},
					journal={Ann. of Math. (2)},
					volume={147},
					%number={3},
					pages={585\ndash 650},
					url={http://dx.doi.org/10.2307/120960},
					%review={\MR{1637655}},
				}
				
				\bib{MR1933431}{article}{
					author={Young, Lai-Sang},
					title={What are {SRB} measures, and which dynamical systems have them?},
					date={2002},
					ISSN={0022-4715},
					journal={J. Statist. Phys.},
					volume={108},
					%number={5-6},
					pages={733\ndash 754},
					url={http://dx.doi.org/10.1023/A:1019762724717},
					%        note={Dedicated to David Ruelle and Yasha Sinai on the occasion of
					%  their 65th birthdays},
					%review={\MR{1933431}},
				}
				
				%%\bib{MR776417}{book}{
				%%      author={Zimmer, Robert~J.},
				%%       title={Ergodic theory and semisimple groups},
				%%      series={Monographs in Mathematics},
				%%   publisher={Birkh\"auser Verlag, Basel},
				%%        date={1984},
				%%      volume={81},
				%%        ISBN={3-7643-3184-4},
				%%         url={http://dx.doi.org/10.1007/978-1-4684-9488-4},
				%%      %review={\MR{776417 (86j:22014)}},
				%%}
				%%
				%%\bib{MR934329}{inproceedings}{
				%%      author={Zimmer, Robert~J.},
				%%       title={Actions of semisimple groups and discrete subgroups},
				%%        date={1987},
				%%   booktitle={Proceedings of the {I}nternational {C}ongress of
				%%  {M}athematicians, {V}ol. 1, 2 ({B}erkeley, {C}alif., 1986)},
				%%   publisher={Amer. Math. Soc., Providence, RI},
				%%       pages={1247\ndash 1258},
				%%      %review={\MR{934329}},
				%%}

				\bib{MR682830}{article}{
					author={Zimmer, Robert~J.},
					title={Arithmetic groups acting on compact manifolds},
					date={1983},
					ISSN={0273-0979},
					journal={Bull. Amer. Math. Soc. (N.S.)},
					volume={8},
					%      number={1},
					pages={90\ndash 92},
					%         url={http://dx.doi.org/10.1090/S0273-0979-1983-15093-0},
					%%review={\MR{682830}},
				}
				
				\bib{MR776417}{book}{
					author={Zimmer, Robert~J.},
					title={Ergodic theory and semisimple groups},
					series={Monographs in Mathematics},
					publisher={Birkh\"auser Verlag, Basel},
					date={1984},
					volume={81},
					ISBN={3-7643-3184-4},
					%         url={http://dx.doi.org/10.1007/978-1-4684-9488-4},
					%%review={\MR{776417 (86j:22014)}},
				}
				
				\bib{MR743815}{article}{
					author={Zimmer, Robert~J.},
					title={Volume preserving actions of lattices in semisimple groups on
						compact manifolds},
					date={1984},
					ISSN={0073-8301},
					journal={Inst. Hautes \'Etudes Sci. Publ. Math.},
					number={59},
					pages={5\ndash 33},
					%         url={http://www.numdam.org/item?id=PMIHES_1984__59__5_0},
					%%review={\MR{743815}},
				}
				
				\bib{MR934329}{inproceedings}{
					author={Zimmer, Robert~J.},
					title={Actions of semisimple groups and discrete subgroups},
					date={1987},
					booktitle={Proceedings of the {I}nternational {C}ongress of
						{M}athematicians, {V}ol. 1, 2 ({B}erkeley, {C}alif., 1986)},
					publisher={Amer. Math. Soc., Providence, RI},
					pages={1247\ndash 1258},
					%%review={\MR{934329}},
				}
				
				\bib{MR900826}{incollection}{
					author={Zimmer, Robert~J.},
					title={Lattices in semisimple groups and invariant geometric structures
						on compact manifolds},
					date={1987},
					booktitle={Discrete groups in geometry and analysis ({N}ew {H}aven, {C}onn.,
						1984)},
					series={Progr. Math.},
					volume={67},
					publisher={Birkh\"auser Boston, Boston, MA},
					pages={152\ndash 210},
					%         url={http://dx.doi.org/10.1007/978-1-4899-6664-3_6},
					%%review={\MR{900826}},
				}
				
				\bib{MR1147291}{article}{
					author={Zimmer, Robert~J.},
					title={Spectrum, entropy, and geometric structures for smooth actions of
						{K}azhdan groups},
					date={1991},
					ISSN={0021-2172},
					journal={Israel J. Math.},
					volume={75},
					%      number={1},
					pages={65\ndash 80},
					%         url={http://dx.doi.org/10.1007/BF02787182},
					%%review={\MR{1147291}},
				}

				\bib{MR2493377}{article}{
					author={Zimmermann, Bruno~P.},
					title={{${\rm SL}(n,\mathbb Z)$} cannot act on small spheres},
					date={2009},
					ISSN={0166-8641},
					journal={Topology Appl.},
					volume={156},
					%      number={6},
					pages={1167\ndash 1169},
					url={http://dx.doi.org/10.1016/j.topol.2008.10.009},
					%%review={\MR{2493377}},
				}

			\end{biblist}
%		\end{bibdiv}
%		\end{bibsection}

		\bigskip
		
		\noindent \textsc{Aaron Brown}\\
		University of Chicago, Chicago, IL 60637, USA \\
		\texttt{awb@uchicago.edu}

%%%%%%%%\appendix
%%\part*{Generate bib}
%% \bibliographystyle{../AWBmath}
%%\bibliography{../bibliography}

\clearpage
%%\end{document}

\part*{Appendices}
	
\appendix

We discuss some classical notions and results that may help reading the main text, without browsing through the literature on the subject.

\section{Furstenberg's Theorem (by Dominique Malicet)}\label{App:furstenberg}

Here we give a self-contained proof of Furstenberg's Theorem (Theorem \ref{thm:furst}), mainly following the original proof in \cite{furstenberg}.

\subsection{Notations and statement}
Let $S^1$ be the $1$-dimensional torus $\mathbb{R}/\mathbb{Z}$. For $\alpha$ in $S^1$ we denote by $T_\alpha:S^1\rightarrow S^1$ the translation operator defined by $T_\alpha(x)=x+\alpha \mod 1$.
For $n$ in $\mathbb{N}$ we denote by $M_n:S^1\rightarrow  S^1$ the multiplication operator defined by $M_n(x)=nx \mod 1$.
\begin{theorem}[Furstenberg]\label{Furstenberg}\index{theorem!Furstenberg}
	Let $a$ and $b$ be two positive integers which are not powers of the same integer, and let $F$ be a closed subset of $ S^1$ invariant by $M_a$ and $M_b$: $M_a(F)=M_b(F)=F$. Then either $F$ is finite or $F= S^1$.
\end{theorem}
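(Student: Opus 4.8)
The plan is to reduce the whole statement to one arithmetic fact about the multiplicative semigroup $\Sigma=\{a^{m}b^{n}:m,n\ge 0\}$: that $\Sigma$ is \emph{non-lacunary}, i.e. if its elements are written in increasing order $s_{1}<s_{2}<\cdots$ then $s_{k+1}/s_{k}\to 1$. First I would note that the hypothesis that $a,b$ are not powers of a common integer says exactly that $\log a/\log b\notin\mathbb{Q}$, and deduce by pigeonhole (pick $m_{0},n_{0}\ge 1$ with $0<m_{0}\log a-n_{0}\log b<\varepsilon$, then use its positive integer multiples) that $\{\,m\log a-n\log b:m,n\ge 1\,\}$ is dense in $\mathbb{R}$; equivalently $\{a^{p}b^{q}:p,q\in\mathbb{Z}\}$ is dense in $(0,\infty)$, which forces $s_{k+1}/s_{k}\to 1$. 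The consequence I actually use is: for every $\varepsilon>0$ there is $\delta_{0}>0$ such that whenever $0<\delta<\delta_{0}$ the orbit $\{\,s\delta\bmod 1:s\in\Sigma\,\}$ is $\varepsilon$-dense in $S^{1}$ (the values $s_{k}\delta$ for $k$ below the threshold where $s_{k+1}/s_{k}<1+\varepsilon$ are already tiny, and past it consecutive values grow by a factor $<1+\varepsilon$, hence by steps $<\varepsilon$, until they pass $1$).

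Next I would set up the elementary reductions. Since $M_{a}M_{b}=M_{b}M_{a}=M_{ab}$, the set $F$ is invariant under $M_{s}$ for every $s\in\Sigma$, and each $M_{s}|_{F}\colon F\to F$ is onto because $M_{s}(F)=F$. It then suffices to prove that an infinite $F$ contains a nontrivial arc $J$: for then $M_{s}(J)\subseteq M_{s}(F)=F$, and choosing $s\in\Sigma$ with $s\,|J|\ge 1$ gives $M_{s}(J)=S^{1}$, so $F=S^{1}$. Moreover, if $F$ is infinite, compact and contains no arc, then near any accumulation point of $F$ it has complementary arcs (``gaps'') of arbitrarily small positive length whose endpoints lie in $F$: two nearby points of $F$ either bound a sub-arc contained in $F$ (an arc, so we are done) or trap a still shorter gap.

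The heart of the proof, and the step I expect to be the main obstacle, is to convert this ``arbitrarily small gaps'' phenomenon into an honest arc inside $F$, and here one must use $M_{a}$ and $M_{b}$ together. The idea is a blow-up: fix an accumulation point $x$ of $F$ and scales $\delta_{k}\to 0$; for $s\in\Sigma$ one has the identity $s\,(F-x)\bmod 1=F-M_{s}(x)$, so rescaling $F$ about $x$ by $s$ merely translates $F$. Choosing $s_{k}\in\Sigma$ with $s_{k}\delta_{k}$ bounded away from $0$ and $\infty$ (possible by non-lacunarity, which makes every zoom factor available through $\Sigma$), passing to a Hausdorff limit and absorbing the translation $M_{s_{k}}(x)$ along a subsequence, one gets a nonempty closed ``tangent set'' $L$ with $0\in L$, with a point $c\neq 0$ in $L$ (because $x$ is an accumulation point of $F$), and with $\lambda L$ sitting inside $F$ up to a translation for every $\lambda\in\Sigma$ (using $\lambda s_{k}\in\Sigma$). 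Since the dilates $\{\lambda c\bmod 1:\lambda\in\Sigma\}$ are dense in $S^{1}$, this already yields $\overline{F-F}=S^{1}$ — but \emph{that} is not enough (the middle-thirds Cantor set is closed, infinite, $M_{3}$-invariant, proper, and has difference set all of $S^{1}$). The genuinely delicate point, where Furstenberg's argument does its real work, is to pin down the uncontrolled translations produced by the blow-up and upgrade $\overline{F-F}=S^{1}$ to density, hence fullness, of $F$ itself — forcing the limiting object to be invariant under the connected rotation group and therefore $F$ to contain an arc. It is precisely here that both generators, and the irrationality of $\log a/\log b$, must be used simultaneously, and I expect essentially all the difficulty to be concentrated in this translation-control step; everything before it should be routine.
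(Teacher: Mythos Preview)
Your setup is sound and overlaps with the paper: the non-lacunarity of $\Sigma=\{a^{m}b^{n}\}$, the reduction ``$F$ contains an arc $\Rightarrow F=S^{1}$'', and the observation that $\overline{F-F}=S^{1}$ are all present in the paper's argument. But your proposal has a genuine gap at exactly the point you flag: you do not know how to upgrade $\overline{F-F}=S^{1}$ to $F=S^{1}$, and the blow-up/tangent-set mechanism you sketch does not by itself supply the missing ``translation control''. As you yourself note with the Cantor-set example, $\overline{F-F}=S^{1}$ is far from sufficient, and nothing in your outline explains how the Hausdorff limit $L$ acquires enough rotation-invariance to force an arc.

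The paper bypasses the blow-up entirely and uses an algebraic device you have not considered. First it isolates a special case (your ``$0$ is an accumulation point'' argument, extended to any \emph{rational} accumulation point via $M_{q}$): this already uses non-lacunarity and yields $F=S^{1}$ directly. The crucial new idea is then to manufacture rational rotations that commute with the dynamics: for $k$ coprime to $a$ and $b$, setting $n=\varphi(k)$ gives $a^{n}\equiv b^{n}\equiv 1\pmod{k}$, so $T_{1/k}$ commutes with $M_{a^{n}}$ and $M_{b^{n}}$. One then shows that the nested intersection $\bigcap_{j\ge 0}T_{1/k}^{j}(F')$ (with $F'$ the derived set of $F$) is nonempty: if it were empty, some finite stage would be a finite $(M_{a^{n}},M_{b^{n}})$-invariant set, hence consist of rationals, and the rational-accumulation-point case would already give $F=S^{1}$. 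The nonempty intersection is a $T_{1/k}$-invariant subset of $F$, so $F$ is $1/k$-dense; letting $k\to\infty$ finishes. This Fermat--Euler trick, together with the dichotomy ``either the intersection is nonempty or we fall into the rational case'', is precisely the translation-control step you were missing.
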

\begin{remark}\label{r:Furstenberg}~
	\begin{enumerate}
		\item In the case where $F$ is an invariant finite set, it is actually a set of rational numbers. (Indeed if $|F|=\ell$ and $x$ is a point of $F$ then there exists $n\le \ell$ with $a^nx=x$ modulo $1$.)
		\item The conclusion does not hold if the closed set is invariant by only one transformation $M_a$. For example the triadic Cantor set is invariant by $M_3$.
		\item We can reformulate the theorem as follows: if $a$, $b$ are integers which are not powers of the same integer, then for any irrational number $x$ the set $\{a^mb^n x,(m,n)\in\mathbb{N}^2\}$ is dense modulo $1$.
	\end{enumerate}
\end{remark}

\subsection{Proof of the theorem}
We follow the proof of Furstenberg (except that we try to avoid the unnecessary use of the existence of minimal invariant closed subsets). For the whole proof, we fix integers $a$ and $b$ which are not powers of the same integer. It is equivalent to say that $\log a$ and $\log b$ are independent over $\mathbb{Q}$. Let $F$ be a closed subset of $ S^1$ invariant by $M_a$ and $M_b$. If $F$ is infinite, it means that it has some accumulation point, and we want to deduce that actually $F=S^1$. We divide the proof into two parts:

\begin{enumerate}
	\item The first part treats the particular case where the accumulation point of $F$ is a rational number. ``Spreading'' points of $F$ close to this rational number by using $M_a$ and $M_b$, we manage to prove that $F= S^1$, mainly by combinatorial techniques.
	
	\item The second part treats the general case where the accumulation point can be irrational. The idea here is to use translations $T_\alpha$ commuting with $M_a$ and $M_b$, and to prove that there is ``some $T_\alpha$-invariance'' in $F$. The first treated case will help at some key points. The following fact can be checked by a simple computation:
	
	\begin{lemma}\label{commute}
		A translation $T_\alpha$ commutes with $M_a$ and $M_b$ if and only if $(a-1)\alpha=(b-1)\alpha=0 \mod 1$, or equivalently if $\alpha$ is a rational number (modulo $1$) whose denominator divides $a-1$ and $b-1$.
	\end{lemma}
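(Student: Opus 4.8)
\textbf{Proof plan for Lemma \ref{commute}.} This is an elementary computation, but let me lay out the two directions carefully. The statement to prove is: $T_\alpha$ commutes with both $M_a$ and $M_b$ on $S^1 = \R/\Z$ if and only if $(a-1)\alpha = 0 = (b-1)\alpha \pmod 1$, equivalently $\alpha$ is rational with denominator dividing both $a-1$ and $b-1$.

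\textbf{First direction.} Suppose $T_\alpha$ commutes with $M_a$. I would simply compute both composites at an arbitrary $x \in S^1$: on one hand $M_a(T_\alpha(x)) = a(x+\alpha) = ax + a\alpha \pmod 1$, and on the other hand $T_\alpha(M_a(x)) = ax + \alpha \pmod 1$. Equating these for all $x$ forces $a\alpha \equiv \alpha \pmod 1$, i.e.\ $(a-1)\alpha \equiv 0 \pmod 1$. The identical computation with $b$ in place of $a$ gives $(b-1)\alpha \equiv 0 \pmod 1$. (Note: the two conditions $(a-1)\alpha \equiv 0$ and $(b-1)\alpha\equiv 0$ at a single point $x=0$ already suffice, but it is cleanest to observe the relation must hold at every $x$ and hence in particular is equivalent to the relation on $\alpha$.)

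\textbf{Converse.} Conversely, if $(a-1)\alpha \equiv 0 \pmod 1$ then for any $x$, $M_a(T_\alpha(x)) = ax + a\alpha = ax + \alpha + (a-1)\alpha \equiv ax + \alpha = T_\alpha(M_a(x)) \pmod 1$, so $T_\alpha$ commutes with $M_a$; similarly with $b$. Finally, the reformulation in terms of rationality: $(a-1)\alpha \equiv 0 \pmod 1$ says exactly that $(a-1)\alpha \in \Z$, i.e.\ $\alpha = k/(a-1)$ for some integer $k$, which (after reducing the fraction) means $\alpha$ is rational with denominator dividing $a-1$; imposing the analogous condition for $b$ gives that the denominator divides $\gcd(a-1,b-1)$, in particular divides both $a-1$ and $b-1$. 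There is no real obstacle here --- the only thing to be slightly careful about is the logical equivalence ``holds for all $x$'' $\iff$ ``the condition on $\alpha$'', but since $M_a$ is a homomorphism and $T_\alpha$ a translation, the discrepancy $M_a T_\alpha(x) - T_\alpha M_a(x)$ is the constant $(a-1)\alpha$ independent of $x$, so vanishing at one point is the same as vanishing everywhere.
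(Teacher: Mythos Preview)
Your proof is correct and is exactly the ``simple computation'' the paper alludes to without writing out; the paper does not give any argument beyond that phrase, so there is nothing to compare.
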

	This condition on $\alpha$ is too restrictive to be useful (there is only a finite number of solutions, and even no solution at all if $a-1$ and $b-1$ are coprime!). That is why we will actually use translations commuting with some large powers of $M_a$ and $M_b$.
\end{enumerate}

\subsubsection{The particular case}
In this part we prove the following weak version of the theorem:
\begin{proposition}\label{particulier}
	If $F$ is closed, invariant by $M_a$ and $M_b$ and has some rational number $\frac{p}{q}$ as an accumulation point, then $F= S^1$.
\end{proposition}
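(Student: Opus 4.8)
The strategy is to exploit the rational accumulation point to produce, via the commuting multiplications $M_a$ and $M_b$, arbitrarily long arithmetic-progression-like clusters of points of $F$ near fixed points of iterates of $M_a$, and then to use a pigeonhole/spreading argument to fill out all of $S^1$. First I would reduce to the case where the accumulation point is $0$: if $p/q$ accumulates points of $F$, then since $q \mid a^{m}-1$ for suitable $m$ (because $a$ is invertible mod $q$, so some power $a^{m}\equiv 1$), the point $p/q$ is fixed by $M_{a^m}$; moreover $M_{a^m}$ also preserves $F$. By passing to the finite $M_{a^m}$-invariant orbit structure one arranges that $0$ itself is an accumulation point of $F$ (one may translate by a suitable rational commuting with a power of $M_a$ and $M_b$, or simply work near $p/q$ directly). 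So assume $0$ is an accumulation point of $F$, i.e.\ there are nonzero points $x_n\in F$ with $x_n\to 0$.

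The heart of the particular case is the following mechanism: given a point $x\in F$ with $0<x<\epsilon$ very small, the points $M_a^0(x), M_a^1(x), \dots, M_a^{k}(x)$ are $x, ax, a^2x, \dots, a^k x$ as long as $a^k x < 1$, i.e.\ as long as $k \le \log(1/x)/\log a$. These form a geometric progression starting near $0$; consecutive gaps $a^{j+1}x - a^j x = a^j x(a-1)$ grow geometrically, and the last few terms before wrapping around have gaps comparable to $\epsilon'$ for any prescribed $\epsilon'$. Thus from a single very small point of $F$ we obtain, inside $F$, a finite set that is $\epsilon'$-dense on an interval of definite length near some point of $[0,1)$. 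Iterating this — applying $M_b$ to translate/rescale these clusters to new locations, using that $\log a$ and $\log b$ are rationally independent so that $\{m\log a + n\log b \bmod \log(\text{something})\}$ is dense — one shows that $F$ is $\epsilon'$-dense in all of $S^1$ for every $\epsilon'>0$, hence $F=S^1$ since $F$ is closed. I expect the careful bookkeeping here — controlling which iterates stay in the ``linear'' regime and quantifying the density obtained — to be the main obstacle; it is elementary but delicate, and one must be careful that the spreading argument genuinely reaches every point and not just a proper closed invariant subset. The cleanest way is probably to first prove: \emph{if $0$ is an accumulation point of $F$ then $F$ contains intervals of arbitrarily small but positive length}, or rather that $F$ meets every interval, by a direct $\epsilon'$-density argument as above; this avoids invoking minimality.

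Concretely I would organize it as: (1) normalize so $0\in F'$ (derived set); (2) Spreading Lemma: for all $\epsilon'>0$ and all $y\in S^1$ there is a point of $F$ within $\epsilon'$ of $y$ — proved by taking $x\in F$ with $x$ small enough that the geometric progression $\{a^j x\}$ has a term in $(y-\epsilon', y+\epsilon')$ after first using a power of $M_b$ to adjust the ``phase'' (here the rational independence of $\log a,\log b$, equivalently that $a,b$ are not powers of a common integer, is exactly what guarantees $\{a^m b^{-n}\}$ or the relevant multiplicative semigroup is dense enough in the positive reals modulo the scaling needed); (3) conclude $F=S^1$ from closedness. The only genuinely non-routine input is the density of the multiplicative semigroup generated by $a$ and $b$ in the appropriate sense, which is precisely Remark \ref{r:Furstenberg}(3) in disguise and follows from $\log a/\log b \notin \mathbb{Q}$ together with Weyl/Kronecker equidistribution. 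I would state that as a small separate lemma and then the proof of Proposition \ref{particulier} is short.
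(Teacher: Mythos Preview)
Your core mechanism for the case of accumulation at $0$ is essentially the paper's: the key lemma (which the paper isolates as Lemma~\ref{enumerate}) is that the multiplicative semigroup $S=\{a^mb^n\}$, when listed in increasing order as $(s_k)$, satisfies $s_{k+1}/s_k\to 1$; this is exactly the ``density of the multiplicative semigroup'' you invoke, and it follows from $\log a/\log b\notin\mathbb Q$. With that in hand, given a small $x_\varepsilon\in F\cap(0,\varepsilon)$ and any target $x\in(0,1)$, one picks $k$ with $s_kx_\varepsilon\le x<s_{k+1}x_\varepsilon$ and bounds $d(\bar x,F)$ by $(s_{k+1}/s_k-1)x\to 0$. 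Note, however, that your intermediate description --- the pure geometric progression $\{a^jx\}$ being ``$\epsilon'$-dense on an interval of definite length'' --- is wrong as stated: the gaps $a^jx(a-1)$ grow geometrically and the last one before wrap-around has size comparable to $1$, not $\epsilon'$. You cannot fix this by using $M_b$ as a separate ``phase adjustment''; you must use the \emph{interleaved} semigroup $\{a^mb^n\}$ from the start, which is what your final paragraph actually says.

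The genuine gap is your reduction from accumulation at $p/q$ to accumulation at $0$. Translating $F$ by a rational does not in general give a set invariant under $M_a$ and $M_b$: by Lemma~\ref{commute}, $T_\alpha$ commutes with $M_a,M_b$ only when the denominator of $\alpha$ divides $\gcd(a-1,b-1)$, so you cannot translate an arbitrary $p/q$ to $0$ this way. ``Working near $p/q$ directly'' can be made to work but requires care. The paper's route is cleaner and different from what you propose: since $M_q$ commutes with $M_a,M_b$, the set $M_q(F)$ is closed, $M_a,M_b$-invariant, and has $0=M_q(p/q)$ as an accumulation point, so by the $0$-case $M_q(F)=S^1$. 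But this does \emph{not} immediately give $F=S^1$; one only gets $M_q^{-1}(S^1)=F\cup T_{1/q}(F)\cup\cdots\cup T_{(q-1)/q}(F)=S^1$, and then a Baire-type argument (a finite union of closed sets with empty interior has empty interior) shows $F$ contains an interval, whence $M_a^n(F)=S^1$ for large $n$. You are missing this last step entirely.
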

The proof relies on the following combinatorial lemma, which is actually the only step where we use that we have two transformations $M_a$ and $M_b$ instead of one.
\begin{lemma}\label{enumerate}
	Let us enumerate the set $S=\{a^mb^n,(m,n)\in\mathbb{N}^2\}\subset \mathbb{N}$ by an increasing sequence of integers $(s_k)_{k\in\mathbb{N}}$. Then $\lim_{k\to +\infty}\frac{s_{k+1}}{s_k}=1$
\end{lemma}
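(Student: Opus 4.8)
The statement to prove is Lemma~\ref{enumerate}: writing $S=\{a^mb^n : (m,n)\in\mathbb N^2\}$ in increasing order as $(s_k)$, one has $s_{k+1}/s_k\to 1$. The key hypothesis is that $\log a$ and $\log b$ are $\mathbb Q$-linearly independent, and the plan is to exploit this via a density argument for $\{m\log a + n\log b\}$ on the line. First I would reformulate everything additively: taking logarithms, $\log S = \{m\log a + n\log b : (m,n)\in\mathbb N^2\}$, and it suffices to show that consecutive elements of $\log S$ (sorted increasingly) have difference tending to $0$. Equivalently, for every $\varepsilon>0$ there is a bound $N_\varepsilon$ such that every interval $[t,t+\varepsilon]$ with $t\ge N_\varepsilon$ contains a point of $\log S$.

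The heart of the argument is the following elementary density fact. Since $\log a$ and $\log b$ are positive and $\mathbb Q$-independent, the additive semigroup generated by $\log a$ and $\log b$ inside $\mathbb R_{\ge 0}$ is ``eventually $\varepsilon$-dense'': there exist integers $m_0,n_0,m_1,n_1\ge 1$ with $0< (m_1\log a + n_1\log b) - (m_0\log a + n_0\log b) < \varepsilon$, i.e.\ the semigroup contains two elements whose difference $\delta$ is a positive number less than $\varepsilon$. The standard way to produce such a $\delta$: consider the fractional parts of $k\,(\log a/\log b)$ for $k=0,1,\dots,K$; by pigeonhole two of them are within $1/K$, and irrationality of $\log a/\log b$ guarantees the corresponding difference is nonzero; clearing denominators gives a genuine semigroup element of the form $p\log a - q\log b$ (or $q\log b - p\log a$) that is small and nonzero, and adding a large multiple of $\log a$ (resp.\ $\log b$) to make all coefficients positive keeps it in the semigroup while keeping it small. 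Once we have a semigroup element $\delta$ with $0<\delta<\varepsilon$, say $\delta = p\log a + q\log b$ (allowing that one "side" was negative but absorbed), we can tile: starting from any base point $u_0 = r\log a + s\log b$ in the semigroup, the points $u_0 + j\delta$ for $j=0,1,2,\dots$ all lie in $\log S$ and form an $\varepsilon$-net of the ray $[u_0,\infty)$. Taking $N_\varepsilon = u_0$ (or slightly larger to be safe) then shows every interval of length $\varepsilon$ beyond $N_\varepsilon$ meets $\log S$, which is exactly what is needed.

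To finish, I would translate back: given $\varepsilon'>0$, pick $\varepsilon>0$ with $e^{\varepsilon}<1+\varepsilon'$, obtain $N_\varepsilon$ from the above, and let $K_0$ be the least index with $s_{K_0}\ge e^{N_\varepsilon}$. For $k\ge K_0$, the interval $[\log s_k, \log s_k + \varepsilon]$ contains some $\log s_j\in\log S$; since $s_k$ is itself in $S$ and $s_j \le s_k e^{\varepsilon}$, the next element $s_{k+1}$ of the sorted list satisfies $s_k < s_{k+1}\le s_j \le s_k e^\varepsilon < s_k(1+\varepsilon')$, hence $1\le s_{k+1}/s_k < 1+\varepsilon'$. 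As $\varepsilon'$ was arbitrary, $s_{k+1}/s_k\to 1$.

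\textbf{Main obstacle.} The only genuinely delicate point is the production of the small positive semigroup element $\delta$ with \emph{all coefficients nonnegative integers}. The pigeonhole step naturally yields an element of the \emph{group} $\mathbb Z\log a + \mathbb Z\log b$ that is small, but it may have a negative coefficient; one must argue that adding a suitable large positive multiple of $\log a$ or $\log b$ pushes both coefficients nonnegative without destroying smallness — this is fine because we only need $\delta<\varepsilon$, not $\delta$ minimal, but it requires a careful (though routine) bookkeeping of which coefficient is negative and how large a correction is needed. I would state this as a short standalone sub-lemma ("the semigroup $\mathbb N\log a+\mathbb N\log b$ has arbitrarily small positive gaps and is eventually $\varepsilon$-syndetic") and prove it cleanly before deploying it, since it is exactly the place where $\mathbb Q$-independence of $\log a,\log b$ is used and it is the engine of the whole lemma.
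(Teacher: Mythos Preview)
Your overall strategy---pass to logarithms and show that $\log S$ is eventually $\varepsilon$-dense in $[N_\varepsilon,\infty)$---is exactly right and is what the paper does. The gap is in your handling of the ``main obstacle.'' You claim one can take a small group element $\delta=p\log a-q\log b$ and, by ``adding a suitable large positive multiple of $\log a$ or $\log b$,'' push both coefficients nonnegative \emph{without destroying smallness}. That is impossible: the smallest positive element of the semigroup $\mathbb{N}\log a+\mathbb{N}\log b$ is $\min(\log a,\log b)\ge\log 2$, so there is no semigroup element $\delta$ with $0<\delta<\varepsilon$ once $\varepsilon<\log 2$. Adding $N\log b$ to $p\log a-q\log b$ gives $p\log a+(N-q)\log b$, which is not small. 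Consequently the sentence ``the points $u_0+j\delta$ for $j=0,1,2,\dots$ all lie in $\log S$'' is false for all $j$; it holds only for the finitely many $j$ with $s-jq\ge 0$ where $u_0=r\log a+s\log b$.

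The fix is not to make $\delta$ a semigroup element but to absorb its negative coefficient into a moving base point. One clean version: keep $\delta=p\log a-q\log b\in(0,\varepsilon)$ as a group element. Given large $t$, set $u_0=s\log b$ with $s=\lfloor t/\log b\rfloor$; then $u_0+j\delta=(jp)\log a+(s-jq)\log b$ lies in $\log S$ whenever $j\le s/q$, and since $t-u_0<\log b$ one only needs $j\le \lceil\log b/\delta\rceil$, which is at most $s/q$ once $t$ is large. The paper packages the same idea more symmetrically: pick a finite set $A\subset\mathbb{Z}^2$ with $\{m\log a+n\log b:(m,n)\in A\}$ $\varepsilon$-dense in an interval, then translate the whole set diagonally by $p(\log a+\log b)$ for all $p\ge p_0$; the diagonal shift has both coefficients positive, so for $p_0$ large every translate lands in $\log S$, and the union of translates is $\varepsilon$-dense on a ray. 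Either route works; your tiling picture is fine once you stop insisting that $\delta$ itself live in the semigroup.
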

\begin{proof}
	Let $\varepsilon$ be any positive number. The additive group generated by $\log a$ and $\log b$ is dense in $\mathbb{R}$ (since $\log a$ and $\log b$ are independent over $\mathbb{Q}$) hence one can find a finite set $A\subset \mathbb{Z}^2$ such that $\{m\log a+n\log b,(m,n)\in A\}$ is $\varepsilon$-dense in $[0,1]$. Then, if $p_0$ is large enough, we have that the set $\{(m+p)\log a+(n+p)\log b,(m,n)\in A,p\geq p_0\}$ is a subset 
	of $\{\log s,s\in S\}$, and it is $\varepsilon$-dense in $[M,+\infty)$ where $M=p_0\log a+p_0\log b$. Thus if $\log s_k \geq M$, then $\log s_{k+1}\leq \log s_k+\varepsilon$. We conclude that $\lim_{k\to +\infty}\log s_{k+1}-\log s_k=0$ and hence that $\lim_{k\to +\infty}\frac{s_{k+1}}{s_k}=1$.
\end{proof}
\begin{proof}[Proof of Proposition \ref{particulier}]
	
	Take the set $S$ of Lemma~\ref{enumerate} and enumerate it by the increasing sequence $(s_k)$.
	We denote by $x\mapsto \bar{x}$ the canonical projection of $\mathbb{R}$ onto~$S^1$.
	
	Let us treat first the case where the accumulation point of $F$ is $0$ (modulo $1$). Then, up to replacing $F$ by $-F$ we assume that for any $\varepsilon>0$, there exists $x_\varepsilon$ in $(0,\varepsilon)$ such that $\bar{x}_\varepsilon$ belongs to $F$. Given $\varepsilon>0$ and  $x\in (\varepsilon,1)$,  let $k_\varepsilon$ be such that $s_{k_\varepsilon}x_\varepsilon\leq x< s_{k_\varepsilon+1}x_\varepsilon$. We have
	\begin{align*}
	d(\bar{x},F)\leq|x-s_{k_\varepsilon} x_\varepsilon|\leq s_{k_\varepsilon+1}x_\varepsilon-s_{k_\varepsilon}x_\varepsilon&=\left(\frac{s_{k_\varepsilon+1}}{s_{k_\varepsilon}}-1\right)s_{k_\varepsilon}x_\varepsilon\\&\leq \left(\frac{s_{k_\varepsilon+1}}{s_{k_\varepsilon}}-1\right)x.
	\end{align*}
	Letting $\varepsilon$ going to $0$ (so that $x$ can be arbitrary in $(0,1)$), we have that $k_\varepsilon\to +\infty$ hence the last term tends to $0$ by the lemma, and we conclude that $\bar{x}$ belongs to $F$. Thus $F=S^1$.
	
	In the general case where the accumulation point of $F$ is a rational number $\frac{p}{q}$, then the point $p=0 \mod 1$ is an accumulation point of $M_q(F)$, and since $M_q$ commutes with $M_a$ and $M_b$, the set $M_q(F)$ is also invariant by $M_a$ and $M_b$, and we deduce by the first case that $M_q(F)= S^1$. As a consequence, we also have that $M_q^{-1}(M_q(F))= S^1$, that is:
	$$F\cup T_{\frac{1}{q}}(F)\cup\cdots\cup T_{\frac{q-1}{q}}(F)= S^1.$$
	Since a finite union of closed sets with empty interiors has empty interior, we conclude that $F$ contains some non trivial interval $I$. But for sufficiently large $n$, $M_a^n(I)= S^1$, hence $F= S^1$ by $M_a$-invariance of~$F$.	
\end{proof}

\subsubsection{The general case}
We establish some lemmas relating $F$ with the dynamics of the translations $T_\alpha$.
\begin{lemma}\label{key}
	Let $F$ be a closed infinite set which is invariant by $M_a$ and $M_b$, and let $T_\alpha$ be any translation. Then $T_\alpha(F)\cap F\not=\emptyset$.
\end{lemma}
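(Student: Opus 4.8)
\textbf{Proof plan for Lemma \ref{key}.}
The statement claims that for a closed infinite subset $F\subset S^1$ invariant under $M_a$ and $M_b$, and for any translation $T_\alpha$, the intersection $T_\alpha(F)\cap F$ is nonempty. The plan is to argue by contradiction: suppose $T_\alpha(F)\cap F=\emptyset$. Since $F$ and $T_\alpha(F)$ are both closed and $S^1$ is compact, disjointness would give a positive distance $\delta = d(F, T_\alpha(F)) > 0$. First I would record the key structural feature: $F$ being infinite and closed has an accumulation point $x_0\in F$, so $F$ contains points arbitrarily close together; moreover, because $M_a$ expands distances locally by a factor $a$, applying powers $M_a^n$ to a short arc of $F$ near $x_0$ will eventually spread those points out to a $\tfrac{1}{a}$-dense (hence quantitatively dense) subset of $S^1$ — this is exactly the spreading mechanism already used in the proof of Proposition \ref{particulier}.

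The heart of the argument is to turn this denseness into a contradiction with the assumed gap $\delta$. The idea I would pursue: pick two points $y, z\in F$ with $0 < |y-z| < \varepsilon$ for $\varepsilon$ to be chosen small depending on $\delta$ and $\alpha$. Using Lemma \ref{enumerate} (the ratio $s_{k+1}/s_k\to 1$ for the multiplicative semigroup generated by $a$ and $b$), the orbit $\{M_{s_k}(y) - M_{s_k}(z)\bmod 1\} = \{s_k(y-z)\bmod 1\}$ becomes finer and finer in its successive gaps, so the set $\{M_{s_k}(z) : k\}\subset F$ is eventually $\eta$-dense in $S^1$ for any prescribed $\eta>0$ once $\varepsilon$ is small enough. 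Here I must be careful that $M_{s_k}$ applied to the \emph{pair} $(y,z)$ keeps both points in $F$ (which holds by invariance $M_a(F)=M_b(F)=F$, hence $M_s(F)=F$ for all $s\in S$) and that the arc between them does not wrap around prematurely — this is handled by choosing $\varepsilon < 1/s_{k_0}$ for the relevant index. Taking $\eta < \delta$, I would get a point $w\in F$ within $\delta$ of an arbitrary target; choosing the target to be $T_\alpha(w')$ for some $w'\in F$ forces $d(F, T_\alpha(F)) < \delta$, contradicting the definition of $\delta$.

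The main obstacle I anticipate is making the spreading argument genuinely uniform: $M_a$ and $M_b$ are only locally expanding (they are not injective, and a short arc can fold once it grows past length $1/a$), so one has to track a single controlled arc and stop the iteration at the right moment, then restart from a fresh pair of nearby points in $F$. The cleanest route is probably to prove a self-contained sublemma: \emph{if $F$ is closed, $M_a,M_b$-invariant, and infinite, then $F$ is $\eta$-dense in $S^1$ for every $\eta>0$, i.e.\ $F = S^1$.} But this is essentially a strengthening we are not entitled to assume at this point of the induction (it would make the whole theorem trivial), so instead I would only extract the weaker consequence actually needed — that $F$ meets every arc of a fixed length $\delta$ — which does follow from the spreading-plus-$s_{k+1}/s_k\to1$ mechanism without any irrationality hypothesis on the accumulation point, and which immediately contradicts $d(F,T_\alpha(F))=\delta>0$. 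A secondary technical point is the passage from an accumulation point that may be rational: if the accumulation point is rational one can invoke Proposition \ref{particulier} directly to conclude $F=S^1$ and the lemma is trivial; so the substantive case is when $F$ has an \emph{irrational} accumulation point, which is precisely where the gap-ratio argument of Lemma \ref{enumerate} does the work.
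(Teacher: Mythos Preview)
Your argument has a real gap at the spreading step. You write that the set of differences $\{s_k(y-z)\bmod 1\}$ becomes fine, ``so the set $\{M_{s_k}(z):k\}\subset F$ is eventually $\eta$-dense.'' That inference does not hold: knowing the distances $|M_{s_k}(y)-M_{s_k}(z)|$ take a dense set of values tells you nothing about \emph{where} either orbit lands. The spreading argument in Proposition~\ref{particulier} genuinely uses that the accumulation point is $0$ (or rational): one has $x_\varepsilon\in F\cap(0,\varepsilon)$, so $s_k x_\varepsilon$ is close to a prescribed target $x$ once $s_k x_\varepsilon\le x<s_{k+1}x_\varepsilon$. If the accumulation point $x_0$ is irrational, you only get $x_\varepsilon$ close to $x_0$, and then $s_k x_\varepsilon$ is close to $s_k x_0$, which moves unpredictably with $k$; there is no way to aim it. So your plan to show $F$ is $\delta$-dense in the irrational-accumulation-point case does not work as written, and as you yourself note, if it \emph{did} work for every $\delta$ you would have proved the whole theorem.

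The paper's proof sidesteps this completely by changing the object: observe that $T_\alpha(F)\cap F\neq\emptyset$ is equivalent to $\alpha\in F-F$. The difference set $F-F$ is closed, invariant under $M_a$ and $M_b$, and since $F$ has \emph{some} accumulation point $x$, the point $0=x-x$ is an accumulation point of $F-F$. Now $0$ is rational, so Proposition~\ref{particulier} applies directly to $F-F$ and gives $F-F=S^1$. This is a two-line proof, and the insight you were circling around---that the differences $y-z$ with $y,z\in F$ are the relevant quantities---is exactly what makes it work, once you recognize $F-F$ as itself a closed invariant set with a rational accumulation point.
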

\begin{proof}
	Note that 
	$$T_\alpha(F)\cap F\not=\emptyset\Leftrightarrow \alpha\in F-F,$$
	where $F-F=\{x-y,(x,y)\in F\times F\}$. The set $F-F$ is closed and invariant by $M_a$ and $M_b$. Moreover, if $F$ is infinite, then $F$ has some accumulation point $x$ and hence $0=x-x$ is an accumulation point of $F-F$. By Proposition \ref{particulier}, $F-F= S^1$ and hence $T_\alpha(F)\cap F\not=\emptyset$.
\end{proof}
\begin{lemma}\label{invariant}
	Let $F$ be a closed infinite set which is invariant by $M_a$ and $M_b$, and let $T_\alpha$ be a translation commuting with $M_a$ and $M_b$. Then there exists a nonempty closed set $\tilde{F}\subset F$ invariant by $M_a$, $M_b$ and $T_\alpha$.
\end{lemma}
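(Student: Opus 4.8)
\textbf{Proof proposal for Lemma \ref{invariant}.}
The plan is to use Zorn's lemma on the collection of nonempty closed subsets of $F$ that are invariant under all three maps $M_a$, $M_b$, and $T_\alpha$. First I would observe that $F$ itself is \emph{not} quite a candidate because there is no reason $T_\alpha(F) = F$; however, the existence argument only needs a minimal object, so one works instead inside the poset $\mathcal{C}$ of all nonempty closed subsets $C \subseteq F$ with $M_a(C) = C$, $M_b(C) = C$, and $T_\alpha(C) \subseteq C$ (one-sided containment is enough here, and in fact compactness forces equality since $T_\alpha$ is a homeomorphism of $S^1$). The first task is to produce at least one element of $\mathcal C$, because Zorn's lemma needs a nonempty poset. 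This is exactly where Lemma \ref{key} enters.

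Here is how I would build an initial element of $\mathcal C$. Since $F$ is closed, infinite, and $M_a$-, $M_b$-invariant, and since $T_\alpha$ commutes with $M_a$ and $M_b$ (so that $T_\alpha(F)$ is again closed, infinite and $M_a$-, $M_b$-invariant), Lemma \ref{key} applies and gives $F \cap T_\alpha(F) \neq \emptyset$. More generally, for every $n \geq 0$ the set $F \cap T_\alpha(F) \cap \dots \cap T_\alpha^n(F)$ is a finite intersection of closed, $M_a$-, $M_b$-invariant sets; I would want to argue it is nonempty. The cleanest route is to set $F_\infty := \bigcap_{n \geq 0} T_\alpha^{-n}(F)$ (or symmetrically $\bigcap_{n\geq 0} T_\alpha^n(F)$), which is closed, $M_a$- and $M_b$-invariant, and satisfies $T_\alpha(F_\infty) \subseteq F_\infty$ by construction; the point is to show $F_\infty \neq \emptyset$. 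For nonemptiness I would use compactness of $S^1$ together with the finite intersection property: it suffices that every finite sub-intersection $\bigcap_{n=0}^{N} T_\alpha^{-n}(F)$ is nonempty, which follows by applying Lemma \ref{key} iteratively — at each stage the current intersection is still closed, infinite (this needs checking), and $M_a$-, $M_b$-invariant, so it meets its own $T_\alpha$-translate. The point requiring care is the claim that these finite intersections remain \emph{infinite}: if some finite intersection became finite, Lemma \ref{key} would no longer apply, and one would need a separate small argument (perhaps passing through Proposition \ref{particulier}, since a finite invariant set consists of rationals, or else replacing $F$ by the set of its accumulation points before starting, which is again closed, infinite, and $M_a$-, $M_b$-invariant).

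Once $\mathcal C$ is shown nonempty, Zorn's lemma yields a minimal element; but actually a minimal element is more than we need — any element of $\mathcal{C}$ already is a nonempty closed set $\tilde F \subseteq F$ with $M_a(\tilde F) = \tilde F$, $M_b(\tilde F) = \tilde F$, and $T_\alpha(\tilde F) = \tilde F$, which is exactly the assertion of the lemma. So in fact the whole content is the construction of \emph{one} element of $\mathcal C$, and the $F_\infty$ construction above does this directly without invoking Zorn at all, provided the infiniteness issue is handled. The main obstacle, then, is precisely that bookkeeping: guaranteeing that the descending intersections $\bigcap_{n=0}^{N} T_\alpha^{-n}(F)$ stay infinite (so that Lemma \ref{key} keeps applying) — or, if one cannot guarantee this, arranging the argument so that the appearance of a finite invariant set is itself a favorable outcome, since a finite $M_a$-, $M_b$-invariant set is automatically a finite union of $T_\alpha$-periodic rational orbits and one can extract from it a $T_\alpha$-invariant (finite) subset by a direct combinatorial argument. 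Either way, the translation-commutation hypothesis on $T_\alpha$ is used only to ensure that $T_\alpha$-translates of $M_a$-, $M_b$-invariant sets are again $M_a$-, $M_b$-invariant, and the heavy lifting is done by Lemma \ref{key}, which in turn rests on Proposition \ref{particulier}.
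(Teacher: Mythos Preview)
Your approach is essentially the paper's: form the nested intersection $\bigcap_{k\geq 0} T_\alpha^k(F')$ and use Lemma~\ref{key} to show each finite stage is nonempty. The Zorn framing is superfluous, as you yourself note; the entire content is producing one element of~$\mathcal C$.

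Two points to sharpen. First, the paper does start from the derived set $F'$ rather than $F$, exactly the fix you suggest in passing --- this is not optional but the key device. The reason is that when the infiniteness bookkeeping breaks down (some finite intersection $F_{n-1}$ becomes finite while $F_n=\emptyset$), the paper argues by contradiction: $F_{n-1}$ finite and $M_a,M_b$-invariant forces $F_{n-1}\subset\mathbb{Q}/\mathbb{Z}$, but $F_{n-1}\subset F'$, so $F$ has a rational accumulation point, whence Proposition~\ref{particulier} gives $F=S^1$ and hence $F_n=S^1\neq\emptyset$, contradiction. Starting from $F$ rather than $F'$ you would not get this contradiction so cleanly.

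Second, your alternative route --- ``a finite $M_a$-, $M_b$-invariant set is automatically a finite union of $T_\alpha$-periodic rational orbits and one can extract from it a $T_\alpha$-invariant subset'' --- is not correct as stated. A finite $M_a,M_b$-invariant set need not meet any of its $T_\alpha$-translates: for instance $\{0\}$ is $M_a,M_b$-invariant but disjoint from $T_{1/2}(\{0\})$ when $1/2$ is an admissible $\alpha$. So that branch does not close, and the contradiction via Proposition~\ref{particulier} (using that the finite set sits inside $F'$) is genuinely needed.
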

\begin{proof}
	Since $F$ is infinite, the set $F'$ of the accumulation points of $F$ is non empty. Let us define by induction $F_0=F'$ and $F_{n+1}=F_n\cap T_\alpha(F_n)=\bigcap_{k=0}^{n+1} T^k_\alpha(F')$, and let $\tilde{F}=\bigcap_{k=0}^\infty T^k_\alpha(F')$ be the intersection of all the $F_n$'s. The sequence $(F_n)_{n\in\mathbb{N}}$ is a nested sequence of closed sets, all of them invariant by $M_a$ and $M_b$ (because $T_\alpha$ commutes with $M_a$ and $M_b$).
	The intersection $\tilde{F}$ is obviously a closed subset of $F$ invariant by $M_a$, $M_b$. It verifies 
	$T_\alpha(\tilde F)\supset \tilde F$ hence, as $T_\alpha$ is a translation, $\tilde F$ is also invariant by $T_\alpha$. What remains to prove is that all the $F_n$ are non empty, in order to conclude by compactness that $\tilde{F}$ is non empty.
	
	Let us assume by contradiction that $F_n=\emptyset$ for some $n>0$. Choosing $n$ minimal we can assume that $F_{n-1}\not=\emptyset$, and we have $T_\alpha(F_{n-1})\cap F_{n-1}=F_n=\emptyset$. By Lemma \ref{key}, $F_{n-1}$ is a finite set, and in particular it contains only rational numbers (see the first observation in Remark~\ref{r:Furstenberg}). Since $F_{n-1}\not=\emptyset$ and $F_{n-1}\subset F_0=F'$, this means that we can find a rational number in $F'$, so that by Proposition \ref{particulier}, $F'=F= S^1$ and hence $F_n= S^1$, which gives a contradiction and concludes the proof.
\end{proof}
\begin{remark}
	We will use the previous lemma with rational translations $T_\alpha$, and in this case one easily checks that the set $\tilde{F}$ defined in the proof is actually the finite intersection $\tilde{F}=F'\cap T_\alpha(F')\cap\cdots T_\alpha^{k-1}(F')$
	where $k$ is the denominator of $\alpha$ when written in reduced terms.
\end{remark}
We are now ready to prove Theorem \ref{Furstenberg}:
\begin{proof}
	Let $F$ be a closed set invariant by $M_a$ and $M_b$ that we assume infinite. Let $k$ be a large integer coprime with $a$ and $b$, and let $n=\varphi(k)$ be the cardinal of $\left(\mathbb{Z}/k\mathbb{Z}\right)^\times$ so that $a^n=b^n=1 \mod k$. Then, $F$ is invariant by $M_{a^n}=M_a^n$ and $M_{b^n}=M_b^n$, and the translation $T_{\frac{1}{k}}$ commutes with $M_{a^n}$ and $M_{b^n}$ by Lemma \ref{commute}. Applying Lemma \ref{invariant} with $M_{a^n}$ and $M_{b^n}$ instead of $M_a$ and $M_b$, we find $\tilde{F}\subset F$ non empty, invariant by $T_{\frac{1}{k}}$. In particular $\tilde{F}$ is $\frac{1}{k}$-dense, and hence so is $F$. Since $k$ can be chosen arbitrarily large, $F= S^1$.
\end{proof}

\begin{bibsection}
%%%%
%%%%\addtocontents{toc}{\SkipTocEntry}
%%%%\section*{References}
%\begin{bibdiv}
	\begin{biblist}*{prefix={A}}  %%%Can add prefix to make numbers different?? 
		
		\bib{furstenberg}{article}{
			title={Disjointness in ergodic theory, minimal sets, and a problem in
				Diophantine approximation},
			author={Furstenberg, H.},
			journal={Theory Comput. Systems},
			volume={1},
			number={1},
			pages={1--49},
			year={1967},
			publisher={Springer}
		}
		
	\end{biblist}
%	\end{bibdiv}
\end{bibsection}

\bigskip

\noindent \textsc{Dominique Malicet}\\
LAMA,  Universit\'e  Paris-Est  Marne-la-Vall\'ee, CNRS UMR 8050,\\
5  bd.~Descartes,  77454  Champs  sur Marne, France.\\
\texttt{mdominique@crans.org}

\newpage

		\addtocontents{toc}{\vspace{\normalbaselineskip}}

\section{Measurable partitions, disintegration and conditional measures (by Bruno Santiago and Michele Triestino)}\label{App:rokhlin}
\newcommand{\cC}{\mathcal{C}}
\newcommand{\cP}{\mathcal{P}}
\newcommand{\cM}{\mathcal{M}}
\newcommand{\cB}{\mathcal{B}}
\newcommand{\cQ}{\mathcal{Q}}
\newcommand{\cU}{\mathcal{U}}

%		
%		\subjclass[2010]{Primary }
%		
%		
%		\begin{abstract} 
Here we discuss more extensively the notion of conditional measures (Section \ref{sec:condi}) and review Rokhlin Disintegration Theorem. We also give some applications, mainly in relation to unstable partitions.		
For a more detailed reference, the reader may consult \cites{viana-oliveira,C1208.4550} (or \cite{rokhlin} as historical reference).\footnote{B.S. thanks Aaron Brown for helpful conversations and the organizing committee of the conference ``Workshop for young researchers: groups acting on manifolds'' for the opportunity of participating in this wonderful meeting.}  %\index{partition}%%this seems useless index entry
%	    \end{abstract}

%		\maketitle

\subsection{Introduction}

Consider a measure space $(X,\cB,\mu)$. Suppose that we \textit{partition} $X$ in an arbitrary way. Is it possible to recover the measure $\mu$ from its \textit{restriction} to the elements of the partition?
We shall address this question, defining the ``restriction'' via a classical theorem of Rokhlin, giving some affirmative answer and applying this idea to obtain interesting results.

Let us start with a simple (positive) example. 

\begin{example}[Figure \ref{toro}]
	Consider the $2$-torus $\mathbb{T}^2={S}^1\times{S}^1$, endowed with the Lebesgue measure $m$. The torus is partitioned into the vertical sets $\{y\}\times{S}^1$.
	\begin{figure}[ht!]
		\centering
		\includegraphics{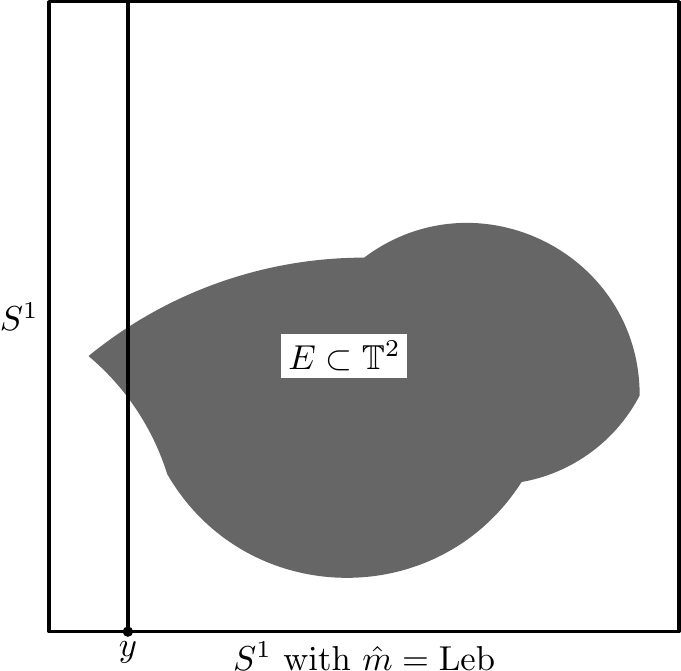}
		\caption{}
		\label{toro}
	\end{figure}
	Denote by $m_y$ the Lebesgue measure over the circle $\{y\}\times{S}^1$, and $\hat{m}$ the Lebesgue measure over ${S}^1$. 		
	If $E\subset\mathbb{T}^2$ is a measurable set we know from Fubini theorem that 
	
	\begin{equation}
		\label{e.fubini}
		m(E)=\int_{{S}^1} m_y(E)d\hat{m}(y).
	\end{equation}  
\end{example}  		

%We would like to have a disintegration of a measure like this, but  with respect to more general partitions. One would naively ask: does it always exist a disintegration, for a given partition?

As we shall see later, for some simple, dynamically defined, partitions no disintegration  like this exists. In the next sections we shall define formally the notion of a \textit{disintegration} and try to explore a little bit this concept. 

\subsection{Disintegration and Conditional Probability Measures}\label{B:secDisintegration}

Let $Q\subset {S}^1$ be a measurable subset with $\hat{m}(Q)=0$. Notice that if in (\ref{e.fubini}) we choose to calculate the integral over ${S}^1\setminus Q$ only, equality 
is not affected. Thus, it is natural to consider partitions only modulo null sets (sets of measure zero). 

More formally, let $(X,\cB,\mu)$ be a probability space. Let $\cP$ be a partition of $X$ into measurable sets. Let $\pi:X\to\cP$ be the natural projection:
\begin{center}
	\textit{$\pi(x)$ is the unique element of $\cP$ such that $x\in\pi(x)$.}
\end{center}	
We can turn $\cP$ into a measure space $(\cP,\hat{\cB},\hat{\mu})$, by saying
$$Q\in\hat{\cB}\iff\pi^{-1}(Q)\in\cB,$$
and
$$\hat{\mu}(Q)=\mu(\pi^{-1}(Q)).$$
This clearly makes the projection $\pi$ measurable.
\begin{definition}[See Definition \ref{def:condmeas}]\label{B:disintegration}
	A \textit{disintegration}\index{measure(s)!disintegration} of $\mu$ with respect to $\cP$ is a family of probability measures $\{\mu_P;P\in\cP\}\subset\cM_1(X)$ such that for every $E\in\cB$ one has
	\begin{enumerate}
		\item $\mu_P(P)=1$ for $\hat{\mu}$-almost every $P\in\cP$. \label{Bitem:1}
		\item the assignment $P\in\cP\mapsto\mu_P(E)\in\R$ is $\hat{\cB}$-measurable.
		\label{Bitem:2}
		\item $\mu(E)=\int_{\mathcal P}\mu_P(E)d\hat{\mu}(P)$. \label{Bitem:3}
	\end{enumerate}
	Each measure $\mu_P$ is called a \textit{conditional probability measure}.\index{measure(s)!conditional}
\end{definition}	

Observe that property \eqref{Bitem:3} in \cref{B:disintegration} can be reformulated in functional terms: for every $\mu$-integrable Borel function $f:X\to \R$ one has the Fubini property
\[
\int_X f d\mu = \int_{\mathcal P}\int_X fd\mu_{P}d\hat{\mu}(P).
\]

As the lemma below states, in reasonable cases disintegrations are essentially unique.

\begin{lemma}
	\label{l.contable}
	Assume that the $\sigma$-algebra $\cB$ is countably generated. If $\{\mu_P;P\in\cP\}$ and $\{\mu_P^*;P\in\cP\}$ are disintegrations then $\mu_P=\mu_P^*$ for $\hat{\mu}$ almost every $P\in\cP$.
\end{lemma}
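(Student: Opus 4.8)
The statement to prove is Lemma~\ref{l.contable}: under the assumption that $\cB$ is countably generated, any two disintegrations of $\mu$ with respect to $\cP$ agree $\hat\mu$-almost everywhere. The plan is to exploit countable generation to reduce the almost-everywhere comparison of the \emph{measures} $\mu_P$ (uncountably many of them) to an almost-everywhere comparison of countably many real-valued functions on $\cP$, where one can take a single common conull set.

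First I would fix a countable algebra $\mathcal{A} = \{A_1, A_2, \dots\}$ that generates $\cB$ (one may take $\mathcal{A}$ to be the algebra generated by a countable generating family, which is still countable). For each fixed $n$, apply property~\eqref{Bitem:2} and~\eqref{Bitem:3} of Definition~\ref{B:disintegration} to the set $E = A_n$ for both disintegrations $\{\mu_P\}$ and $\{\mu_P^*\}$: the functions $P \mapsto \mu_P(A_n)$ and $P \mapsto \mu_P^*(A_n)$ are both $\hat\cB$-measurable, and for every $Q \in \hat\cB$ one has, replacing $E$ by $A_n \cap \pi^{-1}(Q)$ in property~\eqref{Bitem:3} (and using $\mu_P(\pi^{-1}(Q)) = \one_Q(P)$, which follows from~\eqref{Bitem:1} since $P \subset \pi^{-1}(Q)$ iff $P \in Q$),
\[
\int_Q \mu_P(A_n)\, d\hat\mu(P) = \mu(A_n \cap \pi^{-1}(Q)) = \int_Q \mu_P^*(A_n)\, d\hat\mu(P).
\]
Since $Q$ is arbitrary, $\mu_P(A_n) = \mu_P^*(A_n)$ for $\hat\mu$-a.e.\ $P$; call the exceptional set $N_n$, with $\hat\mu(N_n) = 0$. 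Let $N = \bigcup_{n} N_n$, a $\hat\mu$-null set. Then for every $P \notin N$ we have $\mu_P(A_n) = \mu_P^*(A_n)$ for \emph{all} $n$, i.e.\ the two Borel probability measures $\mu_P$ and $\mu_P^*$ agree on the countable generating algebra $\mathcal{A}$.

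Finally, I would invoke the standard uniqueness part of the Carathéodory/$\pi$-$\lambda$ machinery: two probability measures on $\cB$ that agree on a generating algebra (or $\pi$-system containing $X$) agree on all of $\cB$. Hence $\mu_P = \mu_P^*$ for every $P \notin N$, which is exactly the claim. The only genuine subtlety — and the one step worth stating carefully — is the passage from "agree on $\mathcal{A}$" to "agree on $\cB$": this requires $\mathcal{A}$ to be closed under finite intersections (or to be an algebra), which is why I would pass to the generated algebra at the outset rather than working with an arbitrary countable generating \emph{family}. Everything else is a routine application of the defining properties of a disintegration together with the arbitrariness of $Q$ in property~\eqref{Bitem:3}, so I would not belabor those computations.
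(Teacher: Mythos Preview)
Your proof is correct and follows essentially the same approach as the paper's: pass to the countable algebra generated by a generating family, show the two disintegrations agree on each generator off a null set, take the countable union of exceptional sets, and conclude via the monotone class (or $\pi$-$\lambda$) theorem. You have in fact filled in more detail than the paper's sketch, in particular the computation showing $\int_Q \mu_P(A_n)\,d\hat\mu = \mu(A_n\cap\pi^{-1}(Q))$ via property~\eqref{Bitem:1}, which the paper leaves implicit.
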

\begin{proof}[Sketch of proof]
	Let $\mathcal G\subset \cB$ be a countable generating family, and $\mathcal A\subset \mathcal G$ the subalgebra generated by $\mathcal G$ (which is still countable). Using the properties of disintegrations, one proves that for any $E\in \mathcal A$, the subset $\cP_E:=\{P\in \cP\mid \mu_P(E)\neq \mu_P^*(E)\}$ has $\hat{\mu}$-measure zero. Hence the countable union $\cQ=\bigcup_{E\in\calG}\cP_E$ has $\hat{\mu}$-measure zero. So if $P\notin\cQ$, the measures $\mu_P$ and $\mu_P^*$ coincide on the generating subalgebra $\mathcal A \subset \cB$ and thus (by the monotone class theorem) on $\cB$.
\end{proof}

From this lemma, we deduce the following dynamical property.

\begin{lemma}
	\label{Bprop:disintegration_invariant}
	Let $(X,d)$ be a separable metric space, endowed with the Borel $\sigma$-algebra $\cB$ and a probability measure $\mu$. Let $(f,X,\cB,\mu)$ be a measure preserving system. Assume that there exists a partition $\cP$ of $X$ into measurable invariant subsets, such that $\mu$ admits a disintegration with respect to $\cP$. Then for $\hat{\mu}$ almost every $P\in \cP$ the conditional measure $\mu_P$ is $f$-invariant.
\end{lemma}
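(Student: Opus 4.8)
The statement asserts that if a measure-preserving system $(f,X,\mathcal{B},\mu)$ admits an invariant measurable partition $\mathcal{P}$ (meaning $f(P)=P$ for each $P\in\mathcal{P}$, so that $\pi\circ f = \pi$) and $\mu$ disintegrates over $\mathcal{P}$ as $\{\mu_P\}$, then $\mu_P$ is $f$-invariant for $\hat\mu$-a.e.\ $P$. The natural strategy is to push forward the given disintegration by $f$, observe that this produces a second disintegration of $\mu$ over the \emph{same} partition $\mathcal{P}$, and then invoke the essential uniqueness of disintegrations (Lemma~\ref{l.contable}), using that $X$ separable metric makes the Borel $\sigma$-algebra countably generated.

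\textbf{Key steps, in order.} First I would define, for each $P\in\mathcal{P}$, the pushforward measure $\nu_P := f_*\mu_{P}$. The plan is to check that $\{\nu_P\}_{P\in\mathcal{P}}$ is again a disintegration of $\mu$ with respect to $\mathcal{P}$, verifying the three defining properties in Definition~\ref{B:disintegration}. Property (a): since $\mu_P(P)=1$ and $f(P)=P$, we get $\nu_P(P) = \mu_P(f^{-1}(P)) = \mu_P(P) = 1$ for $\hat\mu$-a.e.\ $P$ — here I use that $f^{-1}(P)=P$ because the elements of $\mathcal{P}$ are invariant measurable sets. (A small caveat worth a sentence: invariance $f(P)=P$ should be read so that $f^{-1}(P)=P$ up to the usual measure-theoretic care; since $f$ preserves $\mu$ and the $P$'s partition $X$, this causes no trouble.) Property (b): for fixed $E\in\mathcal{B}$, the map $P\mapsto \nu_P(E) = \mu_P(f^{-1}E)$ is $\hat{\mathcal{B}}$-measurable because $f^{-1}E\in\mathcal{B}$ and $P\mapsto\mu_P(f^{-1}E)$ is measurable by property (b) for $\{\mu_P\}$. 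Property (c): for $E\in\mathcal{B}$,
\[
\int_{\mathcal{P}}\nu_P(E)\,d\hat\mu(P) = \int_{\mathcal{P}}\mu_P(f^{-1}E)\,d\hat\mu(P) = \mu(f^{-1}E) = \mu(E),
\]
where the middle equality is property (c) for $\{\mu_P\}$ applied to $f^{-1}E$, and the last is $f$-invariance of $\mu$. Hence $\{\nu_P\}$ is a disintegration of $\mu$ over $\mathcal{P}$.

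\textbf{Conclusion and main obstacle.} Since $(X,d)$ is separable metric, its Borel $\sigma$-algebra is countably generated, so Lemma~\ref{l.contable} applies and gives $\nu_P = \mu_P$ for $\hat\mu$-a.e.\ $P$; that is, $f_*\mu_P = \mu_P$, which is precisely the $f$-invariance of $\mu_P$. The argument is short and essentially soft, so there is no deep obstacle; the only point requiring genuine care is the bookkeeping around the invariance of the atoms — namely making precise that $f^{-1}(P)$ can be identified with $P$ for $\hat\mu$-a.e.\ $P$, and that the "$\hat\mu$-a.e." qualifiers in properties (a)--(c) for the two disintegrations can be intersected into a single full-measure set of $P$'s on which all the needed identities simultaneously hold. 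A secondary point is to make sure $\mathcal{P}$, viewed as a measure space $(\mathcal{P},\hat{\mathcal{B}},\hat\mu)$, behaves well enough for Lemma~\ref{l.contable}; this is automatic once $\mathcal{B}$ is countably generated, since $\hat{\mathcal{B}} = \pi_*\mathcal{B}$ inherits a countable generating family from the preimages of a countable generating family of $\mathcal{B}$ (and these preimages are $\mathcal{P}$-saturated by definition of $\hat{\mathcal{B}}$).
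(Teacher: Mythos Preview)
Your proposal is correct and follows essentially the same route as the paper: define $\nu_P=f_*\mu_P$, verify that $\{\nu_P\}$ satisfies the three defining properties of a disintegration of $\mu$ over $\mathcal{P}$ (using $f$-invariance of the atoms for property~(1) and $f$-invariance of $\mu$ for property~(3)), and then invoke the uniqueness Lemma~\ref{l.contable}, which applies since the Borel $\sigma$-algebra of a separable metric space is countably generated. Your added remarks on the a.e.\ bookkeeping and on $\hat{\mathcal{B}}$ are more detailed than what the paper writes but change nothing in substance.
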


\begin{proof}
	We want to prove that the family $\{f_*\mu_P;P\in\cP\}$ is also a disintegration, so that Lemma~\ref{l.contable} gives $f_*\mu_P=\mu_P$ for almost every $P\in\cP$.
	
	As $P\in\cP$ is $f$-invariant, one has $f_*\mu_P(P)=\mu_P(f^{-1}(P))=\mu_P(P)=1$, so \eqref{Bitem:1} in \cref{B:disintegration} is verified. Fix $E\in\cB$.
	Clearly the assignment $P\in\cP\mapsto f_*\mu_P(E)$ is $\hat{\cB}$-measurable, thus \eqref{Bitem:2} is verified. For \eqref{Bitem:3}, invariance of $\mu$ gives $\mu(E)=\mu(f^{-1}(E))$ and thus
	\begin{align*}
		\mu(E)=\mu(f^{-1}(E))&=\int_{\mathcal P}\mu_P(f^{-1}(E))d\hat{\mu}(P)\\
		&=\int_{\mathcal P}f_*\mu_P(E)d\hat{\mu}(P).\qedhere
	\end{align*}
\end{proof}

\begin{example}
	\label{ex.finito}
	Let $\cP=\{P_1,\ldots,P_n\}$ be a finite partition of $(X,\cB,\mu)$. Assume that no element of this partition has zero measure. Define probability measures $\mu_i$ supported on $P_i$ by the expression
	$$\mu_i(E)=\frac{\mu(E\cap P_i)}{\mu(P_i)},\:\textrm{for every}\:E\in\cB,i=1,\ldots,n.$$
	This defines the conditional probability measures.
	Indeed, we have $\hat{\mu}(\{P_i\})=\mu(P_i)$ and 
	$$\mu(E)=\sum_{i=1}^n\mu(P_i)\frac{\mu(E\cap P_i)}{\mu(P_i)}=\sum_{i=1}^n\hat{\mu}(\{P_i\})\mu_i(E).$$ 
	In the same way, we can show that every countable partition admits a disintegration.
\end{example}

There are very natural examples of partitions for which no disintegration exists at all. 

\begin{example}[cf.~\cref{ex:partition_orbits}]
	\label{ex.circle}
	Let $\theta\in\R\setminus\mathbb{Q}$ be an irrational number, $m$ be the normalised Lebesgue measure on the unit circle ${S}^1$, equipped with the Borel $\sigma$-algebra. We consider $R_{\theta}:{S}^1\to{S}^1$, defined as the circle rotation by an angle $2\pi\theta$.
	Let $\mathcal O=\left\{\{R_{\theta}^n(x)\}_{n\in\Z};x\in{S}^1\right\}$ be the partition into $R_\theta$-orbits, with induced measure $\hat m$. We claim that this partition admits no disintegration. Indeed, assume that there exists $\{\mu_P;P\in\mathcal O\}$, a disintegration of the Lebesgue measure with respect to this partition. 
	By \cref{Bprop:disintegration_invariant}, $\hat{\mu}$ almost every measure $\mu_P$ is $R_\theta$-invariant. Moreover $\mu_P(P)=1$, but this is a contradiction because no invariant probability measure can give positive mass to a countable infinite	 set (orbits are countable).
	
	More generally, given any ergodic system $(f,X,\cB,\mu)$, where $(X,d)$ is a separable metric space and $\cB$ the Borel $\sigma$-algebra, one has that the partition into orbits $\mathcal O$ admits no disintegration.
\end{example}

\subsection{Measurable Partitions}\label{B:secMeasurablePart}

In this section we shall define a class of partitions for which we  can always find a disintegration. Recall\index{partition!partial order} from Section~\ref{sec:parord} that a partition $\cP$ is \textit{finer} than a partition $\cQ$, which we denote by $\cQ\prec\cP$, if there exists a full measure subset $X_0\subset X$ such that for $\mu$-almost every $x\in X$ one has,
$$\cP(x)\cap X\subset \cQ(x)\cap X,$$
where $\cP(x)$ (resp.~$\cQ(x)$) denotes the atom of the partition $\cP$ (resp.~$\cQ$) containing $x$.
Given two partitions $\cP$ and $\cQ$, we denote by $\cP\vee \cQ$ the smallest partition that refines both $\cP$ and $\cQ$ (this is the \textit{join} introduced in Section~\ref{ss:condentropy}).\index{partition!join}
\begin{definition}[cf.~Section~\ref{ss:measpart}]\label{B:meas_partition}
	Let $(X,\cB,\mu)$ be a probability space.
	A partition $\cP$ is \textit{measurable}\index{partition!measurable} if there exists $X_0\subset X$ with $\mu(X_0)=1$ and a nested sequence of countable partitions $\cP_1\prec\cP_2\prec\cdots\prec\cP_n\prec\cdots$ of $X_0$ such that $\cP|_{X_0}=\bigvee_{n=1}^{\infty}\cP_n$. In other words, for every $P\in\cP$ there exists a sequence $P_n$, with $P_n\in\cP_n$ such that $P\cap X_0=\bigcap_{n=1}^{\infty}P_n$.	 
\end{definition}

Thus a measurable partition can be described as the joining of a nested sequence of countable partitions. Recall from Example \ref{ex.finito} that countable partitions always admit a disintegration. 

From this fact and from a suitable martingale argument, one can prove the following fundamental theorem.

\begin{theorem}[Rokhlin Disintegration Theorem]\index{theorem!Rokhlin disintegration}
	\label{t.rokhlin}
	Let $(X,d)$ be a complete and separable metric space, endowed with the Borel $\sigma$-algebra $\mathcal B$. Let $\mu$ be any probability measure on $(X,\mathcal B)$ and $\cP$ be a measurable partition. Then, there exists $\{\mu_P;p\in\cP\}$, a disintegration of $\mu$.
\end{theorem}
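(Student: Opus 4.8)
The plan is to reduce the statement about the measurable partition $\cP$ to the elementary case of countable partitions (Example \ref{ex.finito}) by exhausting $\cP$ with a nested sequence of countable partitions $\cP_1 \prec \cP_2 \prec \cdots$ whose join is $\cP$ (modulo a null set), as furnished by Definition \ref{B:meas_partition}. For each $n$ and each atom $P_n \in \cP_n$ of positive measure, set $\mu_{P_n}(E) = \mu(E \cap P_n)/\mu(P_n)$; this gives, for each $n$, a disintegration $\{\mu_{P_n}\}$ of $\mu$ with respect to $\cP_n$, and correspondingly a family of conditional measures $x \mapsto \mu^{\cP_n}_x$ where $\mu^{\cP_n}_x = \mu_{\cP_n(x)}$. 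The point is to show that, for $\mu$-almost every $x$, the sequence of measures $\mu^{\cP_n}_x$ converges weakly to a probability measure $\mu^{\cP}_x$, and that $x \mapsto \mu^{\cP}_x$ is the desired disintegration.

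First I would set up the martingale. Fix a countable family $\{f_k\}$ of bounded continuous functions on $X$ that is dense (in the supremum norm on compacta, or suffices to separate measures — here one uses that $X$ is complete separable metric, so the space of probability measures is itself a separable metrizable space and weak convergence is detected by countably many such $f_k$). For each fixed $k$, consider the sequence of functions $g_n^k(x) := \int f_k \, d\mu^{\cP_n}_x = \mathbb{E}_\mu[f_k \mid \cP_n](x)$, the conditional expectation of $f_k$ given the (increasing) sequence of $\sigma$-algebras generated by the countable partitions $\cP_n$. By the martingale convergence theorem, $g_n^k$ converges $\mu$-almost everywhere and in $L^1(\mu)$ to $g_\infty^k := \mathbb{E}_\mu[f_k \mid \sigma(\cP)]$. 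Taking a countable intersection over $k$ of the full-measure sets of convergence, we obtain a single full-measure set $X_1 \subset X$ on which $g_n^k(x) \to g_\infty^k(x)$ for every $k$ simultaneously. For $x \in X_1$, the positive linear functionals $f_k \mapsto g_n^k(x)$ (extended by density/continuity) converge pointwise; since they are uniformly bounded and positive with $g_n^0 \equiv 1$ for $f_0 \equiv 1$, the limit is a positive normalized linear functional, hence by the Riesz representation theorem corresponds to a Borel probability measure $\mu^{\cP}_x$ with $\int f_k \, d\mu^{\cP}_x = g_\infty^k(x)$ for all $k$. Measurability of $x \mapsto \mu^{\cP}_x(E)$ for Borel $E$ follows because each $g_\infty^k$ is $\mu$-measurable and the Borel structure on the space of probability measures is generated by the evaluation maps $\nu \mapsto \int f_k \, d\nu$; property \eqref{Bitem:3} of Definition \ref{B:disintegration} follows by passing to the limit in the identity $\int f_k \, d\mu = \int_{\cP} g_n^k(x) \, d\hat\mu$ (dominated convergence) and then extending from the $f_k$ to all bounded Borel functions by a monotone class argument.

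The main obstacle — and the step requiring the most care — is verifying property \eqref{Bitem:1}, namely that $\mu^{\cP}_x$ is concentrated on the atom $\cP(x)$ for $\hat\mu$-almost every $x$. This does not follow from the martingale convergence alone; it uses crucially that $\cP = \bigvee_n \cP_n$ so that the atoms shrink down: $\cP(x) = \bigcap_n \cP_n(x)$ off a null set. One argues that for each $n$, $\mu^{\cP}_x(\cP_n(x)) = 1$ for $\mu$-a.e.\ $x$: indeed $x \mapsto \mu^{\cP}_x(\cP_n(x))$ is $\sigma(\cP_n)$-measurable and its integral over any atom $P_n \in \cP_n$ equals $\mu(P_n)/\mu(P_n) = 1$ by construction and the limit identity applied to $\mathbbm{1}_{P_n}$ (approximated by continuous functions, using inner regularity of $\mu$ on the complete separable metric space $X$). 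Intersecting over the countably many $n$, we get $\mu^{\cP}_x\big(\bigcap_n \cP_n(x)\big) = 1$ a.e., which is $\mu^{\cP}_x(\cP(x)) = 1$. One final routine point: the construction produced conditional measures indexed by points $x$ rather than by atoms $P \in \cP$, and one checks that $x \mapsto \mu^{\cP}_x$ is constant on atoms almost everywhere (immediate, since each $g_\infty^k$ is $\sigma(\cP)$-measurable hence a.e.\ constant on atoms), so it descends to a genuine family $\{\mu_P\}_{P \in \cP}$ as required. Essential uniqueness is already recorded in Lemma \ref{l.contable} (the Borel $\sigma$-algebra of a separable metric space is countably generated).
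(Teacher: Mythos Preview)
Your proposal is correct and matches the paper's approach exactly: the text states (just before Theorem \ref{t.rokhlin}) only that the result follows from Example \ref{ex.finito} together with ``a suitable martingale argument,'' giving no further details, and your construction is the standard elaboration of that sketch. One technical point to watch: invoking the Riesz representation theorem on a general Polish space requires a tightness argument (or a preliminary reduction to $X=[0,1]$ via a Borel isomorphism) to guarantee that the limiting positive normalized functional is represented by a genuine Borel probability measure on $X$ rather than losing mass at infinity.
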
 

Let us see some examples of partitions which are, and which are not, measurable.

\begin{example}[Figure~\ref{circle}]
	In the two torus $\mathbb{T}^2={S}^1\times{S}^1$, consider for each pair $i,n$, with $n$ a positive integer and $i\in\{1,2,3,\ldots,2^n\}$, the interval $J(i,n)=[\frac{i-1}{2^n},\frac{i}{2^n}]$. 
	\begin{figure}
		\centering
		\includegraphics{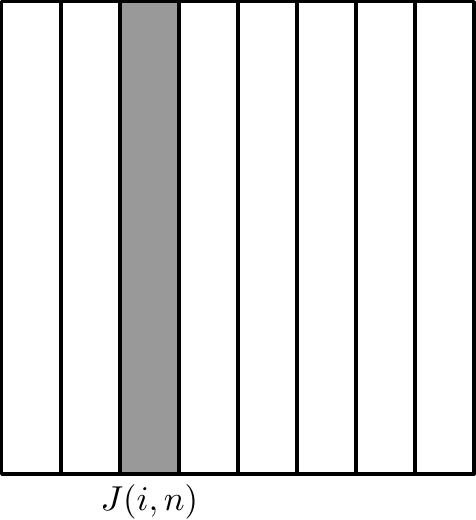}
		\caption{A measurable partition of $\mathbb{T}^2$.}
		\label{circle}
	\end{figure}
	
	Then, the partition $\cP_n=\{{S}^1\times J(i,n)\}$ is a measurable partition.
\end{example}	

\begin{example}[cf.~Example \ref{ex:truck}]
	\label{anosov}
	Denote by $m$ the Lebesgue measure on the two-dimensional torus $\mathbb T^2$. Let $f_A:\mathbb{T}^2\to\mathbb{T}^2$ be the Anosov diffeomorphism induced by the integer matrix $$A=\begin{bmatrix}
	2&1\\
	1&1 
	\end{bmatrix}.$$
	Let $\cP=\{W^u(x);x\in\mathbb{T}^2\}$ be the partition into unstable manifolds. We claim that $\cP$ is not measurable. Indeed, if $\cP$ were measurable, as it is the partition into orbits of an irrational flow, $\cP=\bigvee_{n=1}^{\infty}\cP_n$ would imply that for each $n$ there exists $P_n\in\cP_n$, with $m(P_n)=1$. Thus, the set $P=\bigcap_{n=1}^{\infty}P_n$ belongs to the partition $\cP$,
	and $m(P)=1$, which is absurd.  (This is the continuous-time version of Example~\ref{ex.circle}.) 
\end{example}	

\subsection{Ergodic Decomposition of Invariant Measures}\label{B:secErgDecomposition}

We proceed to give an important application of the disintegration theorem, namely the decomposition of invariant measures into ergodic measures.

Let $(X,\cB,\mu)$ be a probability space and $f:X\to X$ be a measurable map such that $f_*\mu=\mu$. We say that the measure preserving system $(f,X,\cB,\mu)$ is ergodic if every measurable $f$-invariant set has either zero or full $\mu$-measure. 

The goal of this section is to prove the following military principle: divide the space to conquer the ergodic decomposition. 

\begin{theorem}[Ergodic Decomposition; see \cref{def:ergdecom}] %\note{Why is this bold}
	\label{t.decomposicaoergodica}\index{ergodic decomposition}
	Let $(X,d)$ be a complete and separable metric space, endowed with the Borel $\sigma$-algebra $\cB$ and a probability measure $\mu$. Let $(f,X,\cB,\mu)$ be a measure preserving system. Then there exists a measurable partition $(\calE,\hat{\cB},\hat{\mu})$, with $f$-invariant atoms, whose disintegration $\{\mu_P;P\in\calE\}$ satisfies that $\hat{\mu}$-almost every $\mu_P$ is $f$-invariant and ergodic.
\end{theorem}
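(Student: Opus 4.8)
The statement to prove is the ergodic decomposition theorem (Theorem \ref{t.decomposicaoergodica}): for a measure-preserving system $(f,X,\cB,\mu)$ on a complete separable metric space, there is a measurable partition $\calE$ of $X$ into $f$-invariant atoms whose disintegration $\{\mu_P\}$ consists (a.e.) of ergodic $f$-invariant measures. The natural route, given everything developed in this appendix, is to \emph{construct} the partition $\calE$ explicitly as a countable join of invariant partitions, apply Rokhlin's disintegration theorem (Theorem \ref{t.rokhlin}) to get the conditional measures $\mu_P$, use Lemma \ref{Bprop:disintegration_invariant} to get that almost every $\mu_P$ is $f$-invariant, and then do the real work: showing almost every $\mu_P$ is ergodic.

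\textbf{Step 1: build a candidate partition.} Since $X$ is separable metric, fix a countable dense family $\{\varphi_k\}_{k\in\N}\subset C_b(X)$ (or a countable family of bounded measurable functions generating $\cB$). For each $k$ define $\varphi_k^+(x)=\limsup_{n\to\infty}\frac1n\sum_{j=0}^{n-1}\varphi_k(f^j(x))$, which is a Borel, $f$-invariant function. Let $\calE$ be the measurable partition generated by the countable family $\{\varphi_k^+\}_{k}$: concretely, $\calE=\bigvee_k \calE_k$ where $\calE_k$ is the (countable, up to refinement by rational level sets) partition into level sets of $\varphi_k^+$ discretized at scale $1/k$, so that $\calE$ is a nested join of countable partitions and hence measurable in the sense of Definition \ref{B:meas_partition}. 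Each atom is a countable intersection of $f$-invariant Borel sets, hence $f$-invariant. Apply Theorem \ref{t.rokhlin} to obtain a disintegration $\{\mu_P : P\in\calE\}$, and apply Lemma \ref{Bprop:disintegration_invariant} to conclude $\mu_P$ is $f$-invariant for $\hat\mu$-a.e.\ $P$.

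\textbf{Step 2: ergodicity of the conditionals.} This is the main obstacle. Fix $k$. By the pointwise (Birkhoff) ergodic theorem applied to $\mu$, the function $\frac1n\sum_{j=0}^{n-1}\varphi_k\circ f^j$ converges $\mu$-a.e.\ to some invariant limit; in particular $\varphi_k^+$ equals the a.e.\ limit of the Birkhoff averages $\mu$-a.e. Using the Fubini property $\int_X g\,d\mu=\int_\calE\int_X g\,d\mu_P\,d\hat\mu(P)$, one transfers this to: for $\hat\mu$-a.e.\ $P$, the Birkhoff averages of $\varphi_k$ converge $\mu_P$-a.e.\ to $\varphi_k^+$, which is \emph{constant on $P$} by construction of $\calE$, hence constant $\mu_P$-a.e., and that constant equals $\int_X\varphi_k\,d\mu_P$ (integrating the Birkhoff average, using $f$-invariance of $\mu_P$ and dominated convergence). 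Taking the intersection over the countable set of indices $k$ of the corresponding full-$\hat\mu$-measure sets of good $P$'s, we get: for $\hat\mu$-a.e.\ $P$, for every $k$, the Birkhoff averages of $\varphi_k$ converge $\mu_P$-a.e.\ to the constant $\int_X\varphi_k\,d\mu_P$. By a standard density/approximation argument (the $\varphi_k$ generate the Borel structure, or are dense in $C_b(X)$ which determines measures on a separable metric space), this forces $\mu_P$ to be ergodic: if $A$ were an $f$-invariant set with $0<\mu_P(A)<1$, then $\mu_P|_A/\mu_P(A)$ and $\mu_P|_{A^c}/\mu_P(A^c)$ would be distinct $f$-invariant measures, but both would assign to each $\varphi_k$ the same Birkhoff-average constant $\int\varphi_k\,d\mu_P$ on the relevant full-measure subsets, contradicting that they differ on some $\varphi_k$. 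Hence $\mu_P$ is ergodic for $\hat\mu$-a.e.\ $P$.

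\textbf{Where the difficulty lies.} The delicate points are all measure-theoretic bookkeeping rather than deep ideas: (i) checking that $\calE$ as defined is genuinely a measurable partition (the level-set partitions of the $\varphi_k^+$ at rational thresholds must be arranged into an honest nested sequence of \emph{countable} partitions, which requires a small argument since an arbitrary level-set partition of a real-valued function is uncountable — one discretizes); (ii) the Fubini-type exchange that pushes the $\mu$-a.e.\ Birkhoff convergence down to $\mu_P$-a.e.\ convergence for a.e.\ $P$, which uses the defining property \eqref{Bitem:3} of the disintegration applied to the (bounded, by subtracting if necessary) indicator of the bad set; and (iii) the final step identifying ``constant Birkhoff averages for a generating family of functions'' with ergodicity, which is where separability of $X$ is essential so that $C_b(X)$ is separable and a countable family suffices. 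I would present (i) and (iii) carefully and treat (ii) as a routine application of Tonelli combined with the pointwise ergodic theorem.
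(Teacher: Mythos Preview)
Your proposal is correct and follows essentially the same route as the paper: build the partition from level sets of Birkhoff averages of a countable generating family (the paper uses indicators of a countable algebra of sets rather than a dense family in $C_b(X)$), prove measurability by discretizing these level sets at rational thresholds, and deduce ergodicity of the conditionals from constancy of the Birkhoff limits on atoms. The only cosmetic difference is that the paper finishes ergodicity via a monotone class argument together with Corollary~\ref{c.ergodic}, whereas you argue by contradiction using that distinct measures must separate on some $\varphi_k$.
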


Furthermore, one can prove by measure theoretical arguments that the ergodic decomposition $\calE$ given by the theorem is essentially unique, in the sense that any other ergodic decomposition $(\calE',\hat\cB',\hat{\mu}')$ is measurably isomorphic to $(\calE,\hat{\cB},\hat{\mu})$ (the isomorphism is even Borel in restriction to conull subsets, see for example \cite{Schmidt}).

\smallskip

The idea for \cref{t.decomposicaoergodica} is that an ergodic system is dynamically indecomposable, since its orbits spread uniformly over the configuration space, and thus it is possible to split $X$ into the indecomposable components of the dynamics (see Example~\ref{ex.circle}).   Let us see this more closely by recalling a fundamental result in ergodic theory.

\subsubsection{The Birkhoff's ergodic theorem}

Consider the following statistical question: given a point $p\in X$ and a certain positive measure set $A\subset X$, how often does the forward $f$-orbit of $x$ visit $A$?

From a more formal point of view this means to study the behavior of the sequence 
$$\frac{1}{n}\sum_{j=0}^{n-1}\chi_A(f^j(x)).$$
So, it is natural to ask: does this sequence converges? If so, to what limit? 

%From a heuristic point of view, it is reasonable to conjecture that a system if no positive measure invariant set (ergodic) is forced to visit every region of the configuration space uniformly, since otherwise some invariant with positive weight would be produced. 
%
%The ergodic theorem clarifies this clumsy reasoning.

\begin{theorem}
	\label{t.ergodic}\index{theorem!Birkhoff's ergodic}
	Let $(f,X,\cB,\mu)$ be a measure preserving system, where $\mu$ is a probability measure. Then for every measurable set $A\subset X$ the limit   
	$$\frac{1}{n}\sum_{j=0}^{n-1}\chi_A(f^j(x))$$
	exists for $\mu$-almost every $x\in X$.
\end{theorem}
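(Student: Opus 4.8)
The statement to prove is the pointwise convergence part of Birkhoff's ergodic theorem: for a measure-preserving system $(f,X,\mathcal B,\mu)$ with $\mu$ a probability measure and $A\in\mathcal B$, the averages $\frac1n\sum_{j=0}^{n-1}\chi_A(f^j(x))$ converge $\mu$-almost everywhere. The cleanest route, and the one I would take, is to prove the general $L^1$ version (replace $\chi_A$ by an arbitrary $\varphi\in L^1(\mu)$, since $\chi_A\in L^1$) via the \emph{maximal ergodic theorem}, following the standard Garsia-style argument.

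The plan is as follows. First I would set up notation: write $S_n\varphi=\sum_{j=0}^{n-1}\varphi\circ f^j$, $A_n\varphi=\frac1n S_n\varphi$, and the two invariant functions $\overline\varphi=\limsup_n A_n\varphi$ and $\underline\varphi=\liminf_n A_n\varphi$; both are $f$-invariant since $A_n\varphi\circ f - A_n\varphi = \frac1n(\varphi\circ f^n-\varphi)\to 0$. The goal is to show $\overline\varphi=\underline\varphi$ a.e.\ and that the common value is integrable. Second, I would state and prove the maximal ergodic theorem: if $M_N=\max_{1\le n\le N}S_n\varphi$ and $E_N=\{M_N>0\}$, then $\int_{E_N}\varphi\,d\mu\ge 0$. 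The proof is the short telescoping trick: on $E_N$ one has $\varphi\ge M_N - M_N\circ f$ (using $M_N\circ f\ge S_n\circ f = S_{n+1}\varphi-\varphi$ and $M_N\ge 0$ off a harmless set), integrate and use $\int M_N\,d\mu=\int M_N\circ f\,d\mu$. Third, the key deduction: fix rationals $\alpha<\beta$ and consider the invariant set $E_{\alpha,\beta}=\{\underline\varphi<\alpha<\beta<\overline\varphi\}$. Applying the maximal ergodic theorem to $(\varphi-\beta)\chi_{E_{\alpha,\beta}}$ on the induced system restricted to the invariant set $E_{\alpha,\beta}$ gives $\int_{E_{\alpha,\beta}}(\varphi-\beta)\,d\mu\ge 0$, hence $\int_{E_{\alpha,\beta}}\varphi\,d\mu\ge\beta\,\mu(E_{\alpha,\beta})$; symmetrically, applying it to $(\alpha-\varphi)\chi_{E_{\alpha,\beta}}$ gives $\int_{E_{\alpha,\beta}}\varphi\,d\mu\le\alpha\,\mu(E_{\alpha,\beta})$. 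Since $\alpha<\beta$ this forces $\mu(E_{\alpha,\beta})=0$. Taking the countable union over all rational pairs $\alpha<\beta$ shows $\overline\varphi=\underline\varphi$ a.e., i.e.\ $A_n\varphi$ converges a.e.\ to an invariant function $\varphi^*$. Finally, integrability of $\varphi^*$ follows from Fatou's lemma: $\int|\varphi^*|\,d\mu\le\liminf\int|A_n\varphi|\,d\mu\le\|\varphi\|_{L^1}$. Specializing to $\varphi=\chi_A$ gives the statement.

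I would keep the write-up compact, since the argument is classical; the only point requiring a little care is the restriction step in the third part — one must check that applying the maximal ergodic theorem ``on the invariant set $E_{\alpha,\beta}$'' is legitimate, which it is because $f$ restricts to a measure-preserving transformation of $(E_{\alpha,\beta},\mu|_{E_{\alpha,\beta}})$ (after normalizing, or simply noting the maximal inequality and its proof never used $\mu(X)=1$, only $f$-invariance of $\mu$ and integrability of $\varphi$). I expect \textbf{the main obstacle} to be purely expository: stating the maximal ergodic theorem with exactly the right hypotheses so that the truncated, restricted functions $(\varphi-\beta)\chi_{E_{\alpha,\beta}}$ are admissible inputs, and making the $\limsup/\liminf$ invariance argument airtight. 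There is no conceptual difficulty beyond the telescoping identity.

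As an alternative I could cite the theorem directly from \cite{viana-oliveira} or \cite{MR1086631}, but since the appendix is meant to be self-contained I would include the Garsia proof sketch above, perhaps relegating the routine verifications (the telescoping inequality, the Fatou step) to one-line parenthetical remarks. The downstream use in the appendix is to identify $\varphi^*$ with the conditional expectation $\mathbb E[\varphi\mid\mathcal I]$ onto the $\sigma$-algebra of invariant sets and then, via Rokhlin disintegration over the ergodic decomposition $\mathcal E$, to recognize $\varphi^*(x)=\int\varphi\,d\mu_{\mathcal E(x)}$; I would mention this connection briefly so the reader sees why Theorem~\ref{t.ergodic} is the tool that makes Theorem~\ref{t.decomposicaoergodica} work, but the proof of the convergence statement itself stops at the existence of the limit.
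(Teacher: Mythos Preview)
Your proposal is correct and is the standard Garsia maximal-inequality proof of Birkhoff's theorem. However, the paper does not actually prove Theorem~\ref{t.ergodic}: it is stated without proof in the appendix as a classical fact and immediately used to derive Corollary~\ref{c.ergodic} and the ergodic decomposition. So there is nothing to compare against---your write-up would supply a proof where the paper simply cites the result.
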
   

It is not hard to show (though we will not do this here) that the ergodic theorem implies the following.

\begin{corollary}
	\label{c.ergodic}
	A measure preserving system $(f,X,\cB,\mu)$ is ergodic if and only if
	$$\lim_{n\to\infty}\frac{1}{n}\sum_{j=0}^{n-1}\chi_A(f^j(x))=\mu(A),$$
	for $\mu$-almost every $x\in X$, and every measurable set $A$.
\end{corollary}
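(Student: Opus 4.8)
The final statement to prove is Corollary~\ref{c.ergodic}: a measure-preserving system $(f,X,\cB,\mu)$ is ergodic if and only if the Birkhoff averages $\frac1n\sum_{j=0}^{n-1}\chi_A(f^j(x))$ converge to $\mu(A)$ for $\mu$-almost every $x$ and every measurable $A$.

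\textbf{Plan of proof.} I would prove both implications, treating the ``only if'' direction as the substantive one since it is a direct consequence of Theorem~\ref{t.ergodic} (Birkhoff). First, for the ``if'' direction: suppose the average convergence to $\mu(A)$ holds for every measurable $A$. Let $B$ be an $f$-invariant measurable set, i.e. $f^{-1}(B)=B$ (mod $0$). Then $\chi_B(f^j(x))=\chi_B(x)$ for a.e.\ $x$ and all $j$, so the Birkhoff average of $\chi_B$ at $x$ equals $\chi_B(x)$ for a.e.\ $x$; by hypothesis this average also equals $\mu(B)$ for a.e.\ $x$. Hence $\chi_B(x)=\mu(B)$ for a.e.\ $x$, which forces $\mu(B)\in\{0,1\}$. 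Thus the system is ergodic. This step is routine and short.

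For the ``only if'' direction: assume $(f,X,\cB,\mu)$ is ergodic, and fix a measurable set $A$. By Theorem~\ref{t.ergodic} the limit $\tilde\chi_A(x):=\lim_{n\to\infty}\frac1n\sum_{j=0}^{n-1}\chi_A(f^j(x))$ exists for $\mu$-a.e.\ $x$. The key observations are that $\tilde\chi_A$ is $f$-invariant (because the shifted average differs from the original by a term $\frac1n(\chi_A(f^n x)-\chi_A(x))\to 0$), that $\tilde\chi_A$ is measurable and bounded by $1$, and hence integrable. By ergodicity every $f$-invariant measurable function is constant $\mu$-a.e., so $\tilde\chi_A\equiv c$ for some constant $c$. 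To identify $c=\mu(A)$, integrate: by the dominated convergence theorem (the partial averages are bounded by $1$) and $f$-invariance of $\mu$,
\[
c=\int_X \tilde\chi_A\,d\mu=\lim_{n\to\infty}\frac1n\sum_{j=0}^{n-1}\int_X \chi_A\circ f^j\,d\mu=\lim_{n\to\infty}\frac1n\sum_{j=0}^{n-1}\mu(A)=\mu(A).
\]
This shows the convergence holds to $\mu(A)$ for a.e.\ $x$, for each fixed $A$; since the statement quantifies ``for every measurable set $A$'' with the null set allowed to depend on $A$, no further uniformity is needed and we are done.

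\textbf{Main obstacle.} There is no serious obstacle here: the corollary is essentially a repackaging of Birkhoff's theorem plus the elementary fact that invariant functions are a.e.\ constant under ergodicity. The only points requiring a little care are the justification that $\tilde\chi_A$ is genuinely $f$-invariant (a telescoping/vanishing-tail argument) and the interchange of limit and integral, which is immediate from dominated convergence since all averages lie in $[0,1]$. If one wanted the stronger statement with a single null set working simultaneously for all $A$ in a generating algebra, one would invoke separability of $L^1(\mu)$; but as phrased, the per-set version suffices and the proof is as above.
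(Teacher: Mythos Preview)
Your proof is correct and complete. The paper does not actually prove this corollary: it merely states ``It is not hard to show (though we will not do this here) that the ergodic theorem implies the following,'' so there is no argument to compare against. Your write-up supplies exactly the standard argument the paper elides.
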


\subsubsection{Proof of Theorem~\ref{t.decomposicaoergodica}}

As we said before, we need to divide the space to conquer the ergodic decomposition. So, our first task is to choose a suitable partition of $X$. Let $\cU\subset \cB$ be a countable basis for the topology of $X$, and $\cA\subset\cB$ the algebra generated by $\cU$. Notice that $\cA$ is countable and generates $\cB$. 

Then the ergodic theorem implies that for each $A\in\cA$ there exists $X_A\subset X$ with $\mu(X_A)=1$ and such that for every $x\in X_A$ the limit
$$\tau(A,x)=\lim_{n\to\infty}\frac{1}{n}\sum_{j=0}^{n-1}\chi_A(f^j(x))$$
exists. Take $X_0=\bigcap_{A\in\cA}X_A$. Then $\mu(X_0)=1$.

We define the following equivalence relation on $X_0$: $x\sim y$ if, and only if, $\tau(A,x)=\tau(A,y)$, for every $A\in\cA$. 

\begin{lemma}
	\label{l.claim}
	The partition $\calE=\{[x];x\in X_0\}$ of $X_0$ into $\sim$-equivalence classes is measurable.	
\end{lemma}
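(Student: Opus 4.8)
\textbf{Proof plan for Lemma \ref{l.claim}.}

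The plan is to exhibit $\calE$ explicitly as the join of a countable nested sequence of countable partitions of $X_0$, which is exactly Definition \ref{B:meas_partition}. First, since $\cA$ is countable, enumerate it as $\cA=\{A_1, A_2, A_3, \dots\}$. For each $k$ and each point $x\in X_0$ the quantity $\tau(A_k,x)\in[0,1]$ is well defined. The equivalence relation $\sim$ on $X_0$ is, by construction, $x\sim y \iff \tau(A_k,x)=\tau(A_k,y)$ for all $k\in\N$; so $\calE$ is the common refinement over all $k$ of the partitions induced by the individual functions $x\mapsto \tau(A_k,x)$.

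Second, I would handle one function at a time. Fix $k$. The function $\tau_k\colon X_0\to [0,1]$, $\tau_k(x)=\tau(A_k,x)$, is measurable: it is an $\mu$-a.e.\ limit of the measurable functions $x\mapsto \frac1n\sum_{j=0}^{n-1}\chi_{A_k}(f^j(x))$. A single measurable real-valued function does not by itself give a \emph{countable} partition into its level sets, so the key move is to discretize: for each integer $m\ge 1$, let $\cQ_{k,m}$ be the (finite) partition of $X_0$ into the sets $\tau_k^{-1}\big([\tfrac{i-1}{m},\tfrac{i}{m})\big)$, $i=1,\dots,m$. Each $\cQ_{k,m}$ is a finite measurable partition, and $\cQ_{k,1}\prec \cQ_{k,2}\prec\cdots$ is nested. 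Their join $\bigvee_{m\ge1}\cQ_{k,m}$ is precisely the partition of $X_0$ into level sets of $\tau_k$, because two points lie in the same atom of every $\cQ_{k,m}$ iff $|\tau_k(x)-\tau_k(y)|<1/m$ for all $m$, i.e.\ iff $\tau_k(x)=\tau_k(y)$.

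Third, I assemble everything. Define $\cP_n := \bigvee_{k\le n}\;\bigvee_{m\le n}\cQ_{k,m}$. Each $\cP_n$ is a finite join of finite measurable partitions, hence a countable (indeed finite) measurable partition of $X_0$, and clearly $\cP_1\prec\cP_2\prec\cdots$. Finally $\bigvee_{n\ge1}\cP_n=\bigvee_{k\ge1}\bigvee_{m\ge1}\cQ_{k,m}$, which by the previous step equals the partition whose atoms are the sets $\{y\in X_0: \tau(A_k,y)=\tau(A_k,x)\ \forall k\}=[x]$. Thus $\calE|_{X_0}=\bigvee_{n}\cP_n$ with $\mu(X_0)=1$, so $\calE$ is measurable in the sense of Definition \ref{B:meas_partition}. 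I do not foresee a genuine obstacle here; the only point requiring a little care is the discretization step — making sure that intersecting countably many finite ``rational interval'' partitions recovers exact level sets rather than something coarser — but that is the elementary observation that $\bigcap_m [\,\lfloor m t\rfloor/m,\,(\lfloor mt\rfloor+1)/m\,)=\{t\}$.
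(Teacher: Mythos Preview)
Your approach is essentially the same as the paper's: enumerate $\cA$, discretize the real values $\tau(A_k,\cdot)$ via finite partitions of $[0,1]$, and join everything together. (The paper uses the first $n$ rationals $q_1,\dots,q_n$ as cut points rather than your $1/m$-grid, but the idea is identical.) One small slip to fix: the partitions $\cQ_{k,m}$ are \emph{not} nested in $m$ as you claim---for instance the atom $\tau_k^{-1}\big([1/3,2/3)\big)$ of $\cQ_{k,3}$ is not contained in any atom of $\cQ_{k,2}$---but this is harmless, since your $\cP_n=\bigvee_{k,m\le n}\cQ_{k,m}$ are joins over all $m\le n$ and are therefore automatically nested in $n$, which is all Definition~\ref{B:meas_partition} requires.
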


We shall first finish the proof of Theorem~\ref{t.decomposicaoergodica} assuming Lemma~\ref{l.claim}.

\begin{proof}[Proof of \cref{t.decomposicaoergodica}]
	Let $\calE$ be the measurable partition from \cref{l.claim} and $\{\mu_P;P\in\calE\}$ be the associated disintegration. 
	Observe that $\tau(A,x)=\tau(A,f(x))$ for every $x\in X$, $A\in\cA$, thus every atom of the partition $\calE$ is $f$-invariant. By \cref{Bprop:disintegration_invariant}, almost every $\mu_P$ is $f$-invariant.
	It remains to prove that almost every $\mu_P$ is ergodic. Fix $P\in\calE$ and consider
	$$\cC=\{E\in\cB\mid \tau(E,x)\:\textrm{is defined and constant for every}\:x\in X_0\cap P\}.$$
	Notice that  $\cA\subset\cC$, by definition of $\calE$. Moreover, if $E_2\subset E_1$ are elements of $\cC$ then 
	$$\tau(E_1\setminus E_2,x)=\tau(E_1,x)-\tau(E_2,x)$$
	is well-defined and constant over $X_0\cap P$. If $\{E_i\}$ are pairwise disjoint sets then
	$$\tau\left (\bigcup_{i=1}^\infty E_i,x\right )=\sum_{i=1}^{\infty}\tau(E_i,x)$$
	is well-defined and constant over $X_0\cap P$. We conclude that $\cC$ is a monotone class (it is stable under increasing unions and decreasing intersections). By the monotone class theorem we conclude that $\cC=\cB$. By Corollary~\ref{c.ergodic} we deduce that $\mu_P$ is ergodic.	
\end{proof} 

\begin{proof}[Proof of Lemma~\ref{l.claim}]
	Let $\cA=\{A_k\}$ be an enumeration of $\cA$ and $\{q_k\}=\mathbb{Q}$ be an enumeration of the rational numbers. Fix $n\in\mathbb{N}$. We define a partition $\cP_n$ in the following way: we mark the points $q_1,\ldots,q_n$ on the line and consider the partition of $\R$ into intervals induced by these points. 
	\begin{figure}[ht!]
		\centering
		\includegraphics{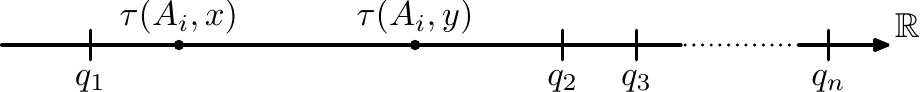}
		\caption{The partition $\cP_n$.}
		\label{lp}
	\end{figure}	
	We declare $x\sim_n y$ if and only if $\tau(A_i,x)$ and $\tau(A_i,y)$ belong to the same interval of this partition for every $i=1,\ldots,n$ (see figure \ref{lp}).  Clearly, $\tau(A_i,x)=\tau(A_i,y)$ for every $i$ if and only if $x,y\in\bigcap_{n=1}^{\infty}P_n$, with $P_n\in\cP_n$, and thus
	$\calE=\bigvee_{n=1}^{\infty}\cP_n$.
\end{proof}

%\subsection{Measurable unstable partitions}
%
%A deep generalization of Example~\ref{anosov} appears in \cite{LedrappierYoung2} (cf.~Example \ref{ex:unstable_partition_measurable}) and it is one of the first instances of the so-called \textit{invariance principle}: 
%
%{\color{red} DEFINITION OF HYPERBOLIC SET}
%
%\begin{theorem}\label{t:folklore}
%	Let $f:M\to M$ be a $C^{1+\beta}$ diffeomorphism, $\Lambda\subset M$ a hyperbolic set. Notice that the unstable manifolds $W^u$ and the stable manifolds $W^s$ form a partition of some invariant set which contains $\Lambda$. Then for every  ergodic invariant measure $\mu$ the following are equivalent:
%	\begin{enumerate}
%		\item $h_{\mu}(f)=0$ (here $h_\mu$ denotes the \textit{metric entropy} of $f$);
%		\item the partition $W^u$ is measurable;
%		\item the partition $W^s$ is measurable.
%	\end{enumerate}  
%\end{theorem}
%
%Let us give a very rough idea of $2\Rightarrow 1$. One first proves that if $W^u$ comprises a measurable partition then the disintegration consists of Dirac measures. Indeed, take a compact set $K\subset W^u(f^n(x))$, with almost full measure (say $\ge 1-\eps$). As $f$ is $C^2$, using distortion arguments one shows that almost all the mass of $\mu_{W^(x)}$, say $\ge 1-\eps$, is concentrated in a tiny neighborhood of $x$ inside $W^u(x)$. As $\eps$ is arbitrary, this shows that $\mu_{W^u(x)}=\delta_x$. From this, one deduces that the entropy vanishes. 
%
%{\color{red} MORE DETAILS}

\subsection{Non-measurable unstable partitions}
\index{measure(s)!conditional}
\cref{ex:unstable_partition_measurable} says that in general unstable partitions are not measurable. However, as we have seen in Section~\ref{sec:condi}, it is possible to define conditional measures of a measure $\mu$ conditioned on the unstable (or stable) partition. The drawback is that the conditional measures are not likely to be probability measures, and moreover they are defined only up to a multiplicative constant.  The construction is classical and goes back at least to \cites{LedrappierYoung1,LedrappierYoung2}. Since then, it appears as a standard tool in many important measure rigidity results, as we have seen for Theorem \ref{thm:KS} \cites{KatokSpatzier,KatokSpatzierC}, but one also finds it in \cites{BQ,MaT,EKL,EM,BRH}, just to cite a few. 

The idea described in Section \ref{sec:condi} consists into considering an exhaustion of the unstable partition $W^u$ by a sequence of subordinate measurable partitions~$\xi^k$ of $\mu$ conditioned on~$\xi_k$. This gives conditional  measures $\{\mu^{(k)}_P\mid P\in \xi^k\}$ that one chooses to renormalize so that the unit ball at $x$ (with respect to the intrinsic metric of $W^u(x)$), has measure $1$ as soon as this ball is contained in $\xi^k(x)$.
Then one is able to take a limit of these conditional measures.

Although the construction is somehow delicate, rigorous treatments appear rarely in the literature. A very abstract approach can be found in \cite[Section 4.1]{BQ} (in French), carefully explained in \cite{dufloux}. As conditional measures take a central place in these notes, we recall the construction as it appears in these cited works.

For this, we start with a \textit{second countable locally compact group $R$} (in practice, this will be a closed subgroup of $\R^d$, see Sections \ref{sec:proofKS} and \ref{sec:koko}) with an \textit{action on a standard Borel space $(Z,\mathcal Z)$ with discrete stabilizers} (i.e.~the stabilizer of any point $z\in Z$ is a discrete subgroup of $R$). We also fix a \textit{probability measure $\mu$} on $(Z,\mathcal Z)$.

We denote by $\mathcal M(R)$ the space of (positive) Radon measures on $R$ and by $\mathbb P\cM(R)$ the space of \textit{projective measures}, that is, of classes of Radon measures with respect to the equivalence relation that declares two measures $\sigma_1$ and $\sigma_2$ equivalent if and only if they are (positively) proportional (one writes $\sigma_1\,\propto\,\sigma_2$).

There is a natural map $\pi:\cM(R)\to\mathbb P\cM(R)$ taking a given measure to its class. Given an exhaustion $R=\bigcup_{n\in \N}X_n$ of $R$ defined by an increasing union of compact subsets $X_n\subset R$,
we define a \textit{section} $\mathbb P\cM(R)\to \cM(R)$ by assigning to a given projective measure $[\sigma]$ the unique measure $\sigma$ determined by the condition
\[
\sigma(X_n)=1,\quad\text{where $n$ is the least $k\in \N$ such that }[\sigma](X_k)>0.
\]
In practice, when $R\subset\R^d$, one may choose $X_n=R\cap[-n,n]^d$.

\begin{definition}
	A Borel subset $\Sigma\subset Z$ is a \textit{discrete section} for the action of $R$ if for any $z\in Z$, the set $\{r\in R\mid r\cdot z\in \Sigma\}$ is a closed and discrete subset of $R$. (This is called a \textit{lacunary section} in \cite{dufloux}.)
\end{definition}

A result by Kechris \cite{Kechris} states that any such action admits a \textit{complete} discrete section, that is, with the additional property that $R\cdot\Sigma=Z$.
Now, given a complete discrete section $\Sigma\subset Z$, we consider the surjective map
\[
\dfcn{a}{R\times \Sigma}{Z}{(r,z)}{r\cdot z}
\]
with countable fibers, so that the measure defined by
\[
(a^*\mu)(E)=\int_Z\#\left (E\cap a^{-1}(z)\right )d\mu(z),\quad\text{for }E\subset R\times \Sigma\text{ Borel subset},
\]
is a $\sigma$-finite measure on $R\times \Sigma$. (Observe that this is a \textit{pull-back} of a measure, so it comes from an \textit{exceptional} construction.)

Let $\pi_\Sigma:R\times \Sigma\to \Sigma$ denote the projection onto the second factor, and $\mu_\Sigma$ the push-forward by $\pi_\Sigma$ of some \textit{finite} measure equivalent to $a^* \mu$ (recall that two measures are equivalent if they share the same zero and full measure sets). By construction, $\mu_\Sigma$ is a finite measure on $\Sigma$.

The horizontal partition $\cP=\{R\times \{z\};z\in \Sigma\}$ is a measurable partition of the Borel space $R\times \Sigma$ thus (a refinement of) Rokhlin disintegration theorem (Theorem~\ref{t.rokhlin}) ensures the existence of a disintegration of the measure $a^*\mu$: for $\mu_\Sigma$-almost every $z\in \Sigma$, there exits a \textit{conditional measure} $\mu_{\Sigma,z}\in \cM(R)$ (not necessarily finite, but $\sigma$-finite) such that
\[
a^*\mu(A\times B)=\int_B\mu_{\Sigma,z}(A)\,d\mu_\Sigma(z),\quad \text{for }A\subset R,B\subset \Sigma\text{ Borel sets}.
\]

Given $r\in R$, we denote by $\rho_r$ the right multiplication by $r$ on elements of $R$. The following lemma \cite[Lemma 4.1]{BQ} tells that the \textit{class} of the conditional measure does not change as we move along one orbit:

\begin{lemma}
	For $\mu_\Sigma$-almost every $z\in\Sigma$, for any $r\in R$ such that $r\cdot z\in \Sigma$, one has
	\[
	\mu_{\Sigma,z}\,\propto\, (\rho_r)_*\mu_{\Sigma,r\cdot z}.
	\]
\end{lemma}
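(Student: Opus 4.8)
The statement to prove is that for $\mu_\Sigma$-almost every $z\in\Sigma$ and for every $r\in R$ with $r\cdot z\in\Sigma$, one has $\mu_{\Sigma,z}\,\propto\,(\rho_r)_*\mu_{\Sigma,r\cdot z}$. The heuristic is clear: both conditional measures are disintegrations of the same $\sigma$-finite measure $a^*\mu$ along the horizontal partition $\mathcal P$, but transported to a common orbit via the right-translation map; since the disintegration is essentially unique (Lemma~\ref{l.contable}), the two must agree up to a multiplicative constant. The work is in setting up the bookkeeping so that ``transported'' and ``essentially unique'' make sense on a $\sigma$-finite space with countable-to-one projections.

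\textbf{Step 1: Reduce to a countable collection of translations.} Because $R$ is second countable and the action has discrete stabilizers, the set $\{r\in R\mid r\cdot z\in\Sigma\}$ is closed and discrete for each $z$, hence countable. The plan is to choose a countable family of Borel partial transformations $\{\gamma_i\}$ of $Z$ defined by restrictions of the $R$-action, covering all pairs $(z,r\cdot z)$ with $z,r\cdot z\in\Sigma$ — this is exactly the groupoid generated by $\Sigma$. Concretely, one uses a countable basis $\{V_k\}$ of $R$ and a Borel uniformization to write the section-to-section moves as a countable collection $\gamma_i:\Sigma_i\to\Sigma_i'$ of Borel isomorphisms between Borel subsets of $\Sigma$, each of the form $\gamma_i(z)=r_i(z)\cdot z$ for a Borel function $z\mapsto r_i(z)$. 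It then suffices to prove the proportionality statement for each fixed $\gamma_i$ on a conull subset of $\Sigma_i$, and intersect the countably many conull sets.

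\textbf{Step 2: Two disintegrations of $a^*\mu$ and uniqueness.} Fix one such $\gamma=\gamma_i$ with $\gamma(z)=r(z)\cdot z$. Consider the measure $a^*\mu$ restricted to $R\times\Sigma_i$, disintegrated along $\mathcal P$ as $\{\mu_{\Sigma,z}\}$. On the other hand, build a second family by pulling back the conditional measures over $\gamma(z)$: define $\nu_z := (\rho_{r(z)})_*\mu_{\Sigma,\gamma(z)}$. The key computation is to check that $z\mapsto \nu_z$ is also (proportional, fibrewise, to) a disintegration of $a^*\mu$ along $\mathcal P$ over $\Sigma_i$ — this follows from the cocycle identity $a(rr',z)=a(r,r'\cdot z)$, which makes $\rho_{r(z)}$ intertwine the relevant slices, together with a Fubini computation showing the pushforward of $a^*\mu$ under $(r,z)\mapsto(r r(z)^{-1}\cdot,\,\gamma(z))$ is again equivalent to $a^*\mu$ (here one must track the change in the reference measure $\mu_\Sigma$ versus $\mu_{\Sigma'}$, which only affects things by an everywhere-positive Radon–Nikodym factor and hence does not change the projective class). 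By the essential uniqueness of disintegrations on a standard space with a countably generated $\sigma$-algebra (Lemma~\ref{l.contable}, applied to the $\sigma$-finite setting after exhausting $R$ by the compacts $X_n$), we conclude $\mu_{\Sigma,z}\,\propto\,\nu_z=(\rho_{r(z)})_*\mu_{\Sigma,\gamma(z)}$ for $\mu_\Sigma$-almost every $z\in\Sigma_i$.

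\textbf{Step 3: Assemble and conclude.} Taking the intersection over $i$ of the conull sets produced in Step 2 gives a single conull $\Sigma_0\subset\Sigma$ on which, for every $i$ and every $z\in\Sigma_0\cap\Sigma_i$, the proportionality holds; since the $\gamma_i$ exhaust all section-to-section moves, this yields $\mu_{\Sigma,z}\,\propto\,(\rho_r)_*\mu_{\Sigma,r\cdot z}$ whenever $z,r\cdot z\in\Sigma_0$, which is the claim. \textbf{The main obstacle} I anticipate is Step 2: making precise, on a merely $\sigma$-finite measure space with countable-to-one projection $a$, that $z\mapsto(\rho_{r(z)})_*\mu_{\Sigma,\gamma(z)}$ genuinely disintegrates $a^*\mu$ — one must carefully handle the multiplicity weights in the definition of $a^*\mu$, the (harmless, positive) discrepancy between $\mu_\Sigma$ and the pushforward measure on $\Sigma'$, and verify the cocycle bookkeeping so that no hidden Radon–Nikodym factor destroys the proportionality. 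Everything else is a routine exhaustion and countable-intersection argument.
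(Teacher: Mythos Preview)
Your proposal is correct and follows essentially the same approach as the paper: the paper's proof reduces to a countable family of graphs of Borel partial functions $r_i:\Sigma_i\to R$ (your Step~1) and then asserts it suffices to verify the proportionality on each such graph, leaving the disintegration-uniqueness computation (your Step~2) implicit. Your outline is in fact more detailed than the paper's sketch, and your identification of the $\sigma$-finite bookkeeping in Step~2 as the only nontrivial point is accurate.
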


\begin{proof}
	The hypothesis of \textit{discrete stabilizers} implies that the set $\{(r,z)\in R\times\Sigma\mid r\cdot z\in \Sigma\}$ is a countable union of graphs of partially defined, Borel injective functions $r_i:\Sigma_i\to R$. Therefore it is enough to check the equality for graphs of injective functions only.
	%\medskip
	%
	%we have to verify that, for any Borel set $E\subset R$,
	%\[
	%\mu_{\Sigma_i,r_i(z)\cdot z}(\delta_{r_i(z)}^{-1}(E))=
	%\mu_{\Sigma_i,r_i(z)\cdot z}(E\,r_i(z)^{-1})=
	%\mu_{\Sigma,z}(E).
	%\]
\end{proof}

Finally, we have \cite[Prop.~4.2]{BQ}:

\begin{proposition}[Definition of conditional measures]
	There exists (an essentially unique) Borel map $\sigma:Z\to \mathbb P\cM(R)$, a Borel set $E\subset Z$ of full $\mu$-measure such that:
	\begin{enumerate}
		\item for every discrete section $\Sigma$, for $\mu_\Sigma$-almost every $z_0\in\Sigma$ and for any $r\in R$ such that $r\cdot z_0\in E$ one has
		\[
		[\mu_{\Sigma,z_0}]=(\rho_r)_*\sigma(r\cdot z_0);
		\]
		\item for any $r\in R,z\in E$ such that $r\cdot z\in E$, one has
		\[
		\sigma(z)=(\rho_r)_*\sigma(r\cdot z).
		\]
	\end{enumerate}
\end{proposition}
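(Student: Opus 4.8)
The statement to prove is the existence of a Borel map $\sigma\colon Z\to\mathbb{P}\mathcal{M}(R)$ together with a conull set $E$ on which the conditional measures transform equivariantly under the $R$-action. The plan is to extract $\sigma$ from the family of conditional measures $\{\mu_{\Sigma,z}\}$ that were constructed above via Rokhlin disintegration, and to show that the projective class $[\mu_{\Sigma,z}]$, after transporting by the appropriate right translation, does not depend on the choice of discrete section $\Sigma$. First I would fix one complete discrete section $\Sigma_0$ (its existence is Kechris' theorem, already invoked), form the disintegration $\{\mu_{\Sigma_0,z}\}_{z\in\Sigma_0}$ of $a^*\mu$ along the horizontal partition, and use the preceding lemma to see that along each $R$-orbit meeting $\Sigma_0$ the classes $[\mu_{\Sigma_0,z}]$ are coherently related by right translations. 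Since $R\cdot\Sigma_0=Z$, this lets me \emph{define} a candidate $\sigma(z)$ for $\mu$-a.e.\ $z\in Z$ by choosing any $r$ with $r\cdot z\in\Sigma_0$ and setting $\sigma(z):=(\rho_r)_*[\mu_{\Sigma_0,r\cdot z}]$; the lemma (in the form established via the countable decomposition into graphs of Borel injections) guarantees this is independent of the choice of $r$ up to a null set, and measurability of $\sigma$ follows from measurability of the disintegration together with the Borel structure of $\mathbb{P}\mathcal{M}(R)$ (Radon measures with the vague topology, quotiented by proportionality).

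The second and main point is property (1): that for \emph{every} other discrete section $\Sigma$, and $\mu_\Sigma$-a.e.\ $z_0\in\Sigma$, one has $[\mu_{\Sigma,z_0}]=(\rho_r)_*\sigma(r\cdot z_0)$ whenever $r\cdot z_0\in E$. Here I would compare the two disintegrations — the one along $\Sigma$ and the one along $\Sigma_0$ — by writing both as disintegrations of (equivalent finite versions of) $a^*\mu$ pulled back from $Z$, and using essential uniqueness of Rokhlin disintegrations (Lemma~\ref{l.contable}). Concretely, given $\Sigma$ and $\Sigma_0$, the set $\{(r,z)\in R\times\Sigma : r\cdot z\in\Sigma_0\}$ is again a countable union of graphs of Borel injective partially-defined maps $r_i$ (this is where \emph{discrete stabilizers} is used again), so I can transport the $\Sigma$-disintegration to a $\Sigma_0$-disintegration piece by piece; uniqueness then forces the transported conditional measures to agree in class with $\{\mu_{\Sigma_0,\cdot}\}$ almost everywhere. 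Unwinding the definition of $\sigma$ yields (1). Property (2) is then essentially immediate from (1): given $r\in R$ and $z$ with $z,r\cdot z\in E$, pick a discrete section $\Sigma$ containing $z$ in its "$R\cdot\Sigma$-support" appropriately (or pick $s$ with $s\cdot z\in\Sigma_0$, so $sr^{-1}\cdot(r\cdot z)\in\Sigma_0$) and apply the cocycle identity $(\rho_{r})_*(\rho_{r'})_* = (\rho_{r'r})_*$ together with the $\Sigma_0$-definition of $\sigma$ at both $z$ and $r\cdot z$.

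The step I expect to be the main obstacle is the \emph{section-independence} argument, i.e.\ property (1) in full generality: a priori the finite measure $\mu_\Sigma$ on $\Sigma$ and the normalization implicit in passing to finite measures equivalent to $a^*\mu$ depend on $\Sigma$, so one must be careful that the null sets one discards when invoking uniqueness of disintegrations can be organized into a \emph{single} conull set $E\subset Z$ valid for all sections simultaneously. The resolution is that it suffices to check (1) for a countable dense (in an appropriate sense) family of sections, or better, to phrase everything intrinsically on $Z$: the quantity $\sigma(z)$ is pinned down by its values $[\mu_{\Sigma_0,r\cdot z}]$ along the orbit, and any other $\Sigma$ merely re-parametrizes the same orbit data, so the exceptional set is the one already built from $\Sigma_0$ plus the (countably many) null sets coming from the graph decompositions. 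I would also need to check the routine but slightly fussy measurability statements — that $z\mapsto[\mu_{\Sigma,z}]$ is Borel and that the right-translation action $R\times\mathbb{P}\mathcal{M}(R)\to\mathbb{P}\mathcal{M}(R)$ is Borel — which I would not grind through here but which are standard given that $R$ is second countable locally compact and $\mathcal{M}(R)$ with the vague topology is Polish.
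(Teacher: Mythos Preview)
Your approach is correct and is essentially the same as the paper's: the paper also fixes a complete discrete section $\Sigma_0$, defines $\sigma(z):=[(\rho_r^{-1})_*\mu_{\Sigma_0,z_0}]$ for $z=rz_0$ with $z_0\in\Sigma_0$, and invokes the preceding lemma for well-definedness. The paper's proof is in fact just those two lines and stops there, so your outline of how to verify properties (1) and (2), handle measurability, and organize the conull set $E$ goes well beyond what the paper actually writes.
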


\begin{proof}
	Fix a complete discrete section $\Sigma_0$. We define, for $z=rz_0\in R\cdot \Sigma_0=Z$,
	\[
	\sigma(z):=\left [(\rho_r^{-1})_*\mu_{\Sigma_0,z_0}\right ]\in\mathbb P\cM(R).
	\]
	The previous lemma guarantees that $\sigma(z)$ is well defined.
\end{proof}

\begin{bibsection}
	\begin{biblist}*{prefix={B}}
		
		\bib{BQ}{article}{
			AUTHOR = {Benoist, Y.},
			author = {Quint, J.-F.},
			TITLE = {Mesures stationnaires et ferm\'es invariants des espaces
				homog\`enes},
			JOURNAL = {Ann. of Math. (2)},
			VOLUME = {174},
			YEAR = {2011},
			NUMBER = {2},
			PAGES = {1111--1162},
		}
		
		\bib{BRH}{article}{
			AUTHOR = {Brown, A.W.},
			author = {Rodriguez Hertz, F.},
			TITLE = {Measure rigidity for random dynamics on surfaces and related
				skew products},
			JOURNAL = {J. Amer. Math. Soc.},
			VOLUME = {30},
			YEAR = {2017},
			NUMBER = {4},
			PAGES = {1055--1132},
		}
		
		\bib{C1208.4550}{unpublished}{
      author={Climenhaga, Vaughn},
      author={Katok, Anatole},
       title={Measure theory through dynamical eyes},
        date={2012},
        note={preprint, arXiv: 1208.4550}
}

%		\bibitem{ClimenhagaKatok} V. Climenhaga and A. Katok, Measure theory through dynamical eyes. Arxiv arXiv:1208.4550  \note{be consistent with formatting}
		
		\bib{dufloux}{article}{
			author = {Dufloux, L.},
			title = {Hausdorff dimension of limit sets},
			journal = {Geom. Dedicata},
			year={2017},
			volume = {191},
			pages={1\ndash 35},
		}
		
		\bib{EKL}{article}{
			AUTHOR = {Einsiedler, M.},
			author ={Katok, A.},
			author={Lindenstrauss, E.},
			TITLE = {Invariant measures and the set of exceptions to {L}ittlewood's
				conjecture},
			JOURNAL = {Ann. of Math. (2)},
			VOLUME = {164},
			YEAR = {2006},
			NUMBER = {2},
			PAGES = {513--560},
		}
		
		\bib{EM}{article}{
					author={Eskin, Alex},
					author={Mirzakhani, Maryam},
					title={Invariant and stationary measures for the {${\rm SL}(2,\Bbb R)$}
						action on moduli space},
					date={2018},
					ISSN={0073-8301},
					journal={Publ. Math. Inst. Hautes \'Etudes Sci.},
					volume={127},
					pages={95\ndash 324},
					url={https://doi.org/10.1007/s10240-018-0099-2},
					%%review={\MR{3814652}},
				}

%		\bib{EM}{article}{
%			author = {Eskin, A.},
%			author = {Mirzakhani, M.},
%			title = {Invariant and stationary measures for the $SL(2,\mathbb R)$ action on moduli space},
%			note = {arXiv:1302.3320},
%		}
		
		\bib{KatokSpatzier}{article}{
			AUTHOR = {Katok, A.},
			author={Spatzier, R.J.},
			TITLE = {Invariant measures for higher-rank hyperbolic abelian actions},
			JOURNAL = {Ergodic Theory Dynam. Systems},
			VOLUME = {16},
			YEAR = {1996},
			NUMBER = {4},
			PAGES = {751--778},
		}
		
		\bib{KatokSpatzierC}{article}{
			AUTHOR = {Katok, A.},
			author={Spatzier, R.J.},
			TITLE = {Corrections to: ``{I}nvariant measures for higher-rank
				hyperbolic abelian actions'' [{E}rgodic {T}heory {D}ynam.
				{S}ystems {\bf 16} (1996), no. 4, 751--778},
			JOURNAL = {Ergodic Theory Dynam. Systems},
			VOLUME = {18},
			YEAR = {1998},
			NUMBER = {2},
			PAGES = {503--507},
		}
		
		\bib{Kechris}{article}{
			AUTHOR = {Kechris, A.S.},
			TITLE = {Countable sections for locally compact group actions},
			JOURNAL = {Ergodic Theory Dynam. Systems},
			VOLUME = {12},
			YEAR = {1992},
			NUMBER = {2},
			PAGES = {283--295},
		}
		
		\bib{LedrappierYoung1}{article}{
			author={Ledrappier, F.},
			author={Young, L.-S.},
			title={The metric entropy of diffeomorphisms. {I}. {C}haracterization
				of measures satisfying {P}esin's entropy formula},
			date={1985},
			journal={Ann. of Math. (2)},
			volume={122},
			number={3},
			pages={509\ndash 539},
		}
		
		\bib{LedrappierYoung2}{article}{
			author={Ledrappier, F.},
			author={Young, L.-S.},
			title={The metric entropy of diffeomorphisms. {II}. {R}elations
				between entropy, exponents and dimension},
			date={1985},
			journal={Ann. of Math. (2)},
			volume={122},
			number={3},
			pages={540\ndash 574},
		}
		
		\bib{MaT}{article}{
			author = {Margulis, G.A.},
			author = {Tomanov, G.M.},
			title = {Invariant measures for actions of unipotent groups over local fields on homogeneous spaces},
			journal = {Invent. Math.},
			volume = {116},
			year = {1994},
			pages = {347\ndash 392},	
		}
		
		\bib{viana-oliveira}{book}{
			author= {Oliveira, K.},
			AUTHOR = {Viana, M.},
			TITLE = {Foundations of ergodic theory},
			SERIES = {Cambridge Studies in Advanced Mathematics},
			VOLUME = {151},
			PUBLISHER = {Cambridge University Press, Cambridge},
			YEAR = {2016},
			PAGES = {xvi+530},
		}

		\bib{rokhlin}{article}{
			AUTHOR = {Rohlin, V.A.},
			TITLE = {On the fundamental ideas of measure theory},
			JOURNAL = {Amer. Math. Soc. Translation},
			VOLUME = {1952},
			YEAR = {1952},
			NUMBER = {71},
			PAGES = {55},
		}
		
		\bib{MR0217258}{article}{
      author={Rohlin, V.~A.},
       title={Lectures on the entropy theory of transformations with invariant
  measure},
        date={1967},
        ISSN={0042-1316},
     journal={Uspehi Mat. Nauk},
      volume={22},
      number={5 (137)},
       pages={3\ndash 56},
%      review={\MR{0217258 (36 \#349)}},
}
		
		\bib{Schmidt}{article}{
			author = {Schmidt, K.},
			journal = {Sankhy\={a}: The Indian Journal of Statistics, Series A (1961-2002)},
			number = {1},
			pages = {10--18},
			publisher = {Springer},
			title = {A Probabilistic Proof of Ergodic Decomposition},
			volume = {40},
			year = {1978},
		}
		
	\end{biblist}
\end{bibsection}

\bigskip

\noindent \textsc{Bruno Santiago}\\
Instituto de Matem\'atica e Estat\'istica, Universidade Federal Fluminense\\
Rua Professor Marcos Waldemar de Freitas Reis, s/n, Bloco H - Campus do Gragoat\'a \\
S\~ao Domingos - Niter\'oi - RJ - Brazil CEP 24.210-201\\
\texttt{brunosantiago@id.uff.br}

\smallskip

\noindent{\scshape Michele Triestino}\\
IMB, Universit\'e de Bourgogne Franche-Comt\'e, CNRS UMR 5584\\
9 av.~Alain Savary, 21000 Dijon, France\\
\texttt{michele.triestino@u-bourgogne.fr}

\newpage
		\addtocontents{toc}{\vspace{\normalbaselineskip}}

\section{The Pinsker Partition and the Hopf Argument (by Davi Obata)}\label{App:Pinsker}

\subsection{The Pinsker, stable and unstable partitions}

Here we treat more extensively some of the notions appearing in  Section \ref{sec:abstractergodictheory}. We define the Pinsker partition and describe  its relation with the unstable and stable partitions, given by Theorem B of \cite{ly}. We shortly recall basic ingredients from measure theory; for a better discussion on these points see \cite{ck}. 

Let $(X,\mathcal{B},\mu)$ be a measure space. Given a partition $\xi$ of $X$ define $\mathcal{B}(\xi)$ to be the $\sigma$-algebra generated by the measurable sets $C \in\mathcal{B}$ that are union of elements of $\xi$. Given two partitions $\xi$ and $\eta$, we say that $\xi$ refines $\eta$ if every element of $\eta$ can be obtained by union of elements of $\xi$, we denote it by  $\eta \prec \xi$, we also say that $\eta$ coarsens $\xi$. We say that two partitions $\eta$ and $\xi$ are equal mod zero if they are the same on a set of full $\mu$-measure, and we denote it by $\eta = \xi$ (cf.~Section~\ref{sec:parord}).	

Let $f:X \to X$ be a measurable function that preserves $\mu$; recall that given any finite measurable partition $\xi$ we can define the metric entropy with respect to this partition, denoted by $h_{\mu}(f,\xi)$ (see Section \ref{sec:ME}).

\begin{definition}[cf.~\cref{sss:pinsker}]
	
	The \textbf{Pinsker partition}\index{partition!Pinsker} $\pi(f)$ is defined as the finest measurable partition such that if $\eta$ is any finite partition with $\eta \prec \pi(f)$ then $h_{\mu}(f,\eta)=0$. We can also define the \textbf{Pinsker $\sigma $-algebra} as the biggest sub-$\sigma$-algebra of $\mathcal{B}$, which we will denote by $\mathcal{P}$ such that every $A\in \mathcal{P}$ satisfies $h_{\mu}(f, \{A, A^c\}) = 0$. 
\end{definition}

Given a partition $\xi$ we define its \textbf{measurable hull}\index{partition!measurable hull}  $\Xi (\xi)$ as the finest measurable partition which coarsens $\xi$. In other words $\Xi(\xi)\prec \xi$ and if $\eta$ is a measurable partition such that $\eta \prec \xi$ then $\eta \prec \Xi (\xi)$ (cf.~\cref{ss:meashull}).

\begin{example}[cf.~\cref{ex:partition_orbits}]\label{Bex:partition_orbits}
	Let $(X,d)$ be a complete and separable metric space, $\cB$ is Borel $\sigma$-algebra, $(f,X,\cB,\mu)$ a measure preserving system and $\calE$ the ergodic decomposition (Theorem~\ref{t.decomposicaoergodica}).
	Then the measurable hull $\Xi (\mathcal{O})$ of the partition by $f$-orbits $\mathcal{O}$ is the ergodic decomposition $\calE$. 
	To see this, notice  that atoms of the ergodic decomposition $\calE$ are $f$-invariant and therefore $\calE$ coarsens $\mathcal O$. Thus it remains to prove that $\calE$ is the finest measurable partition enjoying this property. For this, we remark that, replacing $X$ with an ergodic component $P\in\calE$ and $\mu$ with $\mu_P$, we can assume that the measure $\mu$ is ergodic.
	Now, ergodic measures admit no nontrivial disintegration with respect to a measurable partition into invariant subsets (we use \cref{Bprop:disintegration_invariant}), so any finer measurable partition $\xi$, $\calE\prec \xi\prec\mathcal O$, must coincide with $\calE$.
\end{example}

From now on let us suppose that $X=M$ is a manifold, $\mathcal{B}$ is the Borel $\sigma$-algebra and $f:M \to M$ is a $C^{1+\beta}$-diffeomorphism. By Oseledec's theorem (Theorem \ref{thm:oseled}) we know that for $\mu$-almost every point $x$ the Lyapunov exponents are defined. By Pesin's theory (see Section \ref{unstmanifold11}) we know that for $\mu$-almost every point $x\in M$ there are stable/unstable manifolds $W^s(x),W^u(x)$ tangent to the directions of the Oseledec's splitting related to the negative/positive exponents: they are defined by
\begin{align*}
	W^s(x)&=\left \{ y\in M\,:\, \limsup_{n\to \infty}\frac{1}{n} \log d\left (f^n(x),f^n(y)\right )<0\right \},\\
	W^u(x)&=\left \{ y\in M\,:\,  \limsup_{n\to \infty}\frac{1}{n} \log d\left (f^{-n}(x),f^{-n}(y)\right )<0\right \}.
\end{align*}
In the case all the exponents are zero, those manifolds are just the points. Thus we obtain two partitions, the \textit{stable} and \textit{unstable partitions} which we denote by $W^s$ and $W^u$, respectively. (Their measurable hulls are denoted by $\Xi^s$ and $\Xi^u$, respectively, in Section \ref{sec:abstractergodictheory}.)

In \cite{ly} Ledrappier and Young proved the following theorem (cf.~Proposition \ref{prop:pinsker}).

\begin{theorem}[Theorem B of \cite{ly}]\index{theorem!Ledrappier--Young}
	Let $f:M \to M$ be a $C^{1+\beta}$-diffeormorphism preserving a probability measure $\mu$. Then we have equality of partitions
	\[
	\Xi (W^s) = \pi(f) = \Xi (W^u).
	\]
	
\end{theorem}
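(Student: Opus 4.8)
The statement to prove is Ledrappier--Young's Theorem B: for a $C^{1+\beta}$ diffeomorphism $f$ preserving a probability measure $\mu$, one has $\Xi(W^s) = \pi(f) = \Xi(W^u)$, where $\pi(f)$ is the Pinsker partition and $\Xi(W^{s/u})$ are the measurable hulls of the stable and unstable partitions. By symmetry (replacing $f$ with $f^{-1}$ interchanges $W^s$ and $W^u$ while fixing $\pi(f)$), it suffices to prove $\Xi(W^u) = \pi(f)$.

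The plan is to prove the two refinement relations $\Xi(W^u) \prec \pi(f)$ and $\pi(f) \prec \Xi(W^u)$ separately. For the direction $\Xi(W^u) \prec \pi(f)$: I would first argue that the partition $W^u$ into full unstable manifolds is coarser (mod zero) than the point partition only when entropy is positive, but more to the point, I want to show that the $\sigma$-algebra generated by $\Xi(W^u)$ is contained in the Pinsker $\sigma$-algebra $\mathcal P$. The key mechanism is the entropy computation of Section \ref{unsentropy}: if $\xi$ is an increasing measurable partition subordinate to $\Fol^u$, then $h_\mu(f) = h_\mu(f,\xi) = h^u_\mu(f)$ by equation \eqref{eq:LY} (Ledrappier--Young, Corollary 5.3 of \cite{MR819556}). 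The point is that $\Xi(W^u)$, being the measurable hull of an $f$-invariant partition, is itself $f$-invariant, and conditioning $f$ on the factor $(M,\mu)/\Xi(W^u)$ kills all entropy: intuitively, all the entropy of $f$ is "created transversally to" $W^u$-leaves in a way that becomes trivial on the quotient by full unstable leaves. Concretely, I would show that the factor system $(M,\mu)/\Xi(W^u)$ has zero entropy by observing that on this quotient, $f$ acts on (classes of) unstable manifolds, and since unstable manifolds are contracted by $f^{-1}$, a Hopf-type / pointwise-ergodic argument (as in Proposition \ref{prop:hopf}, which gives $\erg \prec \Xi^u$) shows measurable functions on the quotient are essentially $f$-invariant up to the stable direction — feeding into the standard fact that $\pi(f)$ is the maximal zero-entropy factor, hence $\Xi(W^u) \prec \pi(f)$.

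For the reverse direction $\pi(f) \prec \Xi(W^u)$: I would use the characterization of $\pi(f)$ as the finest partition all of whose measurable coarsenings have zero $h_\mu(f,\cdot)$, and the dual characterization of $\Xi(W^u)$ as the measurable hull. The heart is to show that any measurable partition $\eta$ with $\eta \prec W^u$ (i.e., $\eta$ coarsens the unstable partition) and, crucially, any measurable partition refined by $\Xi(W^u)$ — wait, the direction I need is: a set $A$ in the Pinsker $\sigma$-algebra is, mod zero, a union of full unstable manifolds. The argument: if $A \in \mathcal P$ then $h_\mu(f,\{A,A^c\}) = 0$; I want to conclude $A$ is $W^u$-saturated mod zero. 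Suppose not; then $A$ genuinely cuts unstable leaves. Using that the conditional measures of $\mu$ along unstable manifolds have positive-dimensional support whenever $h_\mu(f) > 0$ (Lemma \ref{entropyvatoms2}, and Theorem \ref{thm:LYII}), and that $f$ expands unstable manifolds, one constructs a nontrivial entropy contribution from the partition $\{A, A^c\}$ — contradicting $A \in \mathcal P$. This is essentially the "increasing partition subordinate to $W^u$ has positive entropy unless conditionals are atomic" mechanism, run in reverse: positive unstable entropy forces Pinsker sets to respect the unstable foliation.

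The main obstacle I expect is making rigorous the passage between "full unstable manifolds" (a non-measurable partition, by Example \ref{ex:unstable_partition_measurable} / Lemma \ref{entropyvatoms2}) and the measurable hull $\Xi(W^u)$, while tracking entropy. One cannot directly disintegrate $\mu$ along $W^u$; instead all computations must go through increasing measurable partitions $\xi$ subordinate to $\Fol^u$ and their limits, using the construction of Ledrappier--Strelcyn \cite{MR693976}. Controlling that the entropy $h_\mu(f,\xi)$ genuinely "sees" all of $h_\mu(f)$ (rather than just a piece) is exactly the content of Corollary 5.3 of \cite{MR819556}, equation \eqref{eq:LY}, which I would invoke as a black box; the remaining work is the measure-theoretic bookkeeping relating $\mathcal B(\xi)$, $\mathcal B(\Xi(W^u))$, and the Pinsker $\sigma$-algebra under $f$ and its iterates, together with the absolute-continuity / recurrence arguments (in the spirit of the proof sketch of Lemma \ref{entropyvatoms2}) needed for the second direction.
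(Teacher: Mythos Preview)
The paper does not prove this theorem. Both in the main text (Proposition~\ref{prop:pinsker}) and in Appendix~\ref{App:Pinsker}, it is quoted as Theorem~B of Ledrappier--Young \cite{MR819556} and used as a black box; no argument is supplied. So there is no ``paper's own proof'' to compare your proposal against.

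On the merits of your outline: the global architecture is right --- reduce by the $f\leftrightarrow f^{-1}$ symmetry to $\Xi(W^u)=\pi(f)$, then prove the two refinements separately, with the equality $h_\mu(f)=h^u_\mu(f)$ of \eqref{eq:LY} as the engine. But two steps are underspecified in a way that matters. For $\Xi(W^u)\prec\pi(f)$, your claim that functions on the quotient by $\Xi(W^u)$ are ``essentially $f$-invariant up to the stable direction'' via a Hopf argument is not the right mechanism: $W^u$-saturated functions need not be $f$-invariant. The standard route is the Rokhlin--Sinai tail characterization: for an increasing generating partition $\xi$ with $h_\mu(f,\xi)=h_\mu(f)$ (which your $\xi$ subordinate to $\Fol^u$ satisfies, by \eqref{eq:LY}), the Pinsker $\sigma$-algebra equals the tail $\bigcap_{n\ge 0}\mathcal B(f^n\xi)$; one then identifies this tail with $\mathcal B(\Xi(W^u))$ using that atoms of $f^n\xi$ exhaust unstable leaves. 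For $\pi(f)\prec\Xi(W^u)$, your contradiction sketch (``a Pinsker set cutting unstable leaves would create entropy'') is morally correct but is exactly the nontrivial content; it is again cleanly handled by the same tail identification rather than by an ad hoc entropy-production argument. In short: replace the Hopf-type reasoning by the Rokhlin--Sinai theorem, and both directions fall out at once from \eqref{eq:LY}.
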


Leddrapier and Young actually state the theorem in terms of $\sigma$-algebras, the result is the same, up to replace the partitions by the $\sigma$-algebras they generate.

\subsection{The Hopf Argument}

The Hopf argument was introduced by Hopf \cite{h} to prove that the geodesic flow on compact surfaces with constant negative curvature is ergodic with respect to the Liouville measure. Later Anosov proved, in \cite{a}, that every $C^2$-volume preserving, Anosov diffeomorphism is ergodic. The Hopf argument can be divided in two parts, the first part is that every ergodic component is saturated, up to a set of measure zero, by stable or unstable manifolds, the second part is that under an additional hypothesis called accessibility, one can exploit the Anosov property to show that the system is ergodic. Since then the ideas from Hopf argument have been the main tool to prove ergodicity for partial hyperbolic systems, see \cite{aw} for a survey on conservative partially hyperbolic dynamics. 

In our scenario we can state the Hopf argument in the following way (cf.~Proposition \ref{prop:hopf}).

\begin{theorem}[The Hopf Argument]\label{Hopf}\index{Hopf argument}
	Let $f:M \to M$ be a $C^{1+\beta}$-diffeormorphism preserving a probability measure $\mu$.
	The ergodic decomposition is refined by the measurable hull of the stable partition, in other words 
	\[
	\Xi (\mathcal{O}) \prec \Xi (W^s).
	\]
\end{theorem}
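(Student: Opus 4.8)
The plan is to prove that the ergodic decomposition $\Xi(\mathcal O) = \erg$ is coarser than the measurable hull $\Xi(W^s)$ of the stable partition; equivalently (passing to $\sigma$-algebras as in the footnote on the order convention) that $\sigma(\erg)\subset\sigma(\Xi^s)$. The key observation is that a set is $\erg$-saturated precisely when it is (essentially) $f$-invariant, so it suffices to show that every $f$-invariant measurable set is, modulo $\mu$-null sets, a union of full stable manifolds; once this is known, finiteness of each $\sigma$-algebra's generating data follows from separability of $C^0(M)$.

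First I would reduce the statement to a statement about a countable family of functions. Fix a countable dense subset $\{\phi_k\}\subset C^0(M)$ in the $L^1(\mu)$ topology. For each $\phi\in C^0(M)$ define the forward Birkhoff average $\phi^+(x):=\limsup_{n\to\infty}\frac1n\sum_{j=0}^{n-1}\phi(f^j(x))$. This function is defined everywhere, is Borel, and is $f$-invariant (the limsup is unchanged under one application of $f$ because the Ces\`aro average of a bounded sequence is asymptotically shift-invariant). Crucially, $\phi^+$ is constant along stable manifolds: if $y\in W^s(x)$ then $d(f^n(x),f^n(y))\to 0$, and since $\phi$ is uniformly continuous on the compact manifold $M$, $\lvert \phi(f^n(x))-\phi(f^n(y))\rvert\to 0$, so the two Birkhoff averages coincide. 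Hence each $\phi^+$ is measurable both with respect to $\sigma(\erg)$ (it is $f$-invariant) and with respect to $\sigma(\Xi^s)$ (it is constant on stable leaves, hence measurable with respect to the finest measurable partition coarsening $W^s$).

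Next I would identify $\sigma(\erg)$ as the minimal $\sigma$-algebra making all the $\phi_k^+$ measurable. By the Birkhoff ergodic theorem (Theorem~\ref{t.ergodic}) and its Corollary~\ref{c.ergodic}, the ergodic components are exactly the fibers of the map $x\mapsto (\phi_k^+(x))_{k\in\N}$: two points lie in the same ergodic component if and only if $\phi_k^+(x)=\phi_k^+(y)$ for all $k$, because on an ergodic component $\phi^+$ equals the (a.s.\ constant) space average $\int\phi\,d\mu$, while distinct ergodic components are distinguished by some $\phi_k$ with different integrals (using density of $\{\phi_k\}$ in $L^1$ and the monotone class argument carried out in the proof of Theorem~\ref{t.decomposicaoergodica}). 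Therefore $\sigma(\erg)$ is generated by $\{\phi_k^+\}$. Since each $\phi_k^+$ is $\sigma(\Xi^s)$-measurable by the previous paragraph, we conclude $\sigma(\erg)\subset\sigma(\Xi^s)$, i.e.\ $\erg\prec\Xi^s$, which is exactly $\Xi(\mathcal O)\prec\Xi(W^s)$.

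The only delicate point — the part I would regard as the main obstacle, though it is more a matter of care than of difficulty — is the interplay between the two descriptions of the ergodic decomposition (as the measurable hull $\Xi(\mathcal O)$ of the orbit partition, and as the partition $\erg$ into ergodic components produced by Rokhlin disintegration), and the need to work everywhere modulo $\mu$-null sets: the set where $\phi^+=\lim$ rather than merely $\limsup$ has full measure, the identification of $\sigma(\erg)$ with the $\sigma$-algebra generated by the $\phi_k^+$ holds only up to completion, and one must invoke the monotone class theorem to pass from the generating family to the full Pinsker-type $\sigma$-algebra. All of these are standard (and are already used in the proof of Theorem~\ref{t.decomposicaoergodica} in Appendix~\ref{App:rokhlin}), so the argument goes through cleanly. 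Note that the corresponding statement with $W^u$ in place of $W^s$ follows by applying the same argument to $f^{-1}$, and that one does \emph{not} need the deeper result $\Xi^s=\pi(f)=\Xi^u$ of \cite{ly} for this direction — only the easy Hopf-type containment.
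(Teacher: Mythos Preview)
Your proposal is correct and follows essentially the same approach as the paper: the forward Birkhoff averages $\phi^+$ of continuous functions are $f$-invariant and constant along stable leaves, and the $\sigma$-algebra they generate is exactly $\sigma(\erg)$. Your argument is in fact almost verbatim the proof of Proposition~\ref{prop:hopf} in the main text; the appendix proof of Theorem~\ref{Hopf} is organized slightly differently (it fixes one ergodic component $\nu$ and shows its set $\Lambda_\nu$ of generic points is $W^s$-saturated) but rests on the identical observation.
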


The same result holds if we change the measurable hull of the stable partition by the measurable hull of the unstable partition.

\begin{proof}
	Let $\nu$ be an invariant ergodic measure in the ergodic decomposition of $\mu$. By Birkhoff's ergodic theorem (see Theorem~\ref{t.ergodic}) for every continuous function $\varphi \in C^0(M)$ there is a measurable set $\Lambda_{\varphi}$ of full $\nu$-measure such that if $x\in \Lambda_{\varphi}$ then the limit
	\begin{equation}
		\label{eq.birkaverage}
		\displaystyle \varphi^+(x) := \lim_{n\to +\infty} \frac{1}{n} \sum_{j=0}^{n-1} \varphi(f^j(x))
	\end{equation}
	exists and it is equal to $\varphi^+(x) = \int_X \varphi\, d\nu$.
	Let $\{\varphi_k\}_{k\in \mathbb{N}}$ be a sequence which is dense in $C^0(X)$ and consider the set $\Lambda_{\nu} = \displaystyle \bigcap_{k\in \mathbb{N}} \Lambda_{\varphi_k}$;
	this set has full $\nu$-measure and has the property that if $x\in \Lambda_{\nu}$ and $\varphi \in C^0(X)$, the equality $\varphi^+(x) = \int_X\varphi\, d\nu$ holds. In other words, we have that for any $x\in \Lambda_\nu$ 
	\[
	\displaystyle \delta(x,n):= \frac{1}{n} \sum_{j=0}^{n-1}\delta_{f^j(x)} \xrightarrow{\: n\to + \infty \:} \nu, \textrm{ in the $weak^*$-topology.}
	\]
	Moreover, since the set $\Lambda_{\nu}$ has full $\nu$-measure, we can describe it as the set of all points whose forward Birkhoff's average converges to $\nu$. 
	
	We claim that if $x\in \Lambda_{\nu}$ then $W^s(x) \subset \Lambda_{\nu}$. Indeed, if $y\in W^s(x)$, then 
	\[d(f^n(x) , f^n(y)) \xrightarrow{ n \to +\infty} 0.\]
	We know that $\delta(x,n) $ converges to $\nu$; to prove that $\delta(y,n)$ also converges to $\nu$ we have to prove that for every $\varphi\in C^0(X)$ one has 
	\[
	\int_X \varphi\, d\delta(y,n) \xrightarrow{ n \to +\infty} \int_X \varphi\, d\nu.
	\]  
	By continuity, one has $| \varphi(f^n(x)) - \varphi(f^n(y))| \to 0$ as $n$ goes to infinity, thus 
	\begin{align*}
	\displaystyle \lim_{n\to +\infty} \int_X \varphi\, d\delta(y,n) &= \lim_{n\to + \infty} \frac{1}{n} \sum_{j=0}^{n-1} \varphi ( f^j(y))\\
	&= \lim_{n\to + \infty} \frac{1}{n} \sum_{j=0}^{n-1} \varphi (f^j(x)) = \int_X \varphi\, d\nu .\qedhere
	\end{align*}
\end{proof}

Since our system $f$ is invertible the same statement is also true for the measurable hull of the unstable partition. We remark that analogous results hold for flows.

\begin{example}
	We now give an example of application of this result. Consider the group $G = \mathrm{PSL}(2,\mathbb{R})$ and the subgroups
	\[
	A = 
	\begin{pmatrix}
	* & 0\\
	0 & *
	\end{pmatrix}
	, \textrm{ }
	L  =
	\begin{pmatrix}
	1& 0 \\
	* & 1
	\end{pmatrix}
	\textrm{ and }
	U= 
	\begin{pmatrix}
	1 & * \\
	0 & 1
	\end{pmatrix}.
	\]
	
	It is easy to see that the subgroups $A, L$ and $U$ generate $G$. Suppose that $G$ acts on the left on a compact manifold $M$, by $C^{1+\beta}$ diffeomorphisms. Assume that it preserves a probability measure $\mu$. We say that the measure $\mu$ is $G$-ergodic if every measurable set $B$ that is $G$-invariant has zero or full $\mu$-measure. 
	
	Of course if the measure $\mu$ is ergodic for any of the subgroups then it is ergodic for $G$. We will prove now that if the measure $\mu$ is ergodic for $G$ than it is ergodic for $D$. 
	We can parametrize the subgroups by
	\[
	A_t = 
	\begin{pmatrix}
	e^{\frac{t}{2}}& 0\\
	0 & e^{-\frac{t}{2}}
	\end{pmatrix}
	, \textrm{ }
	L _r =
	\begin{pmatrix}
	1& 0 \\
	r & 1
	\end{pmatrix}
	\textrm{ and }
	U_s= 
	\begin{pmatrix}
	1 & s \\
	0 & 1
	\end{pmatrix}.
	\]
	
	Observe that $A_t$ generates a flow and $\mu$ is an invariant measure for this flow. First we obtain that
	\begin{equation}
		\label{eq.commut}
		U_s \circ A_t = A_t \circ ( A_{-t} \circ U_s \circ A_t) =  A_t \circ 
		\begin{pmatrix}
			1& s e^{-t}\\
			0&1
		\end{pmatrix}
		= A_t \circ U_{se^{-t}}.
	\end{equation}
	Observe that from (\ref{eq.commut}), we obtain
	\[
	U_{se^t} \circ A_t = A_t \circ U_s.
	\]
	Now for any $x\in M$ and $s\in \mathbb{R}$, we have
	\[
	d(A_t(x), A_t(U_s(x)))  =  d(A_t(x), U_{se^{t}} (A_t(x))) \xrightarrow[t \to -\infty]{} 0
	\]
	and this convergence is exponentially fast. This implies that $U_s(x) \in W^u(x)$ for every $s \in \mathbb{R}$, where $W^u(x)$ is the unstable manifold of $x$ with respect to the flow $A_t$. Similarly, we can check that $ L_{re^{-t}} \circ A_t = A_t \circ L_{r}$, thus $L_r(x) \in W^s(x)$. 
	
	Let $\varphi \in C^0(M)$ be a continuous function and let $\varphi^+(\cdot)\in L^1(M,\mu)$ be the forward Birkhoff average with respect to the flow $A_t$. By Birkhoff's theorem this function is $A$-invariant and is defined on a set of full $\mu$-measure. By Theorem \ref{Hopf}, for $\mu$-almost every point $x\in M$ and for any $y\in W^s(x)$ it holds that $\varphi^+(x) = \varphi^+(y)$. By the previous calculation we know that for any $r\in \mathbb{R}$, we have $L_r(x) \in W^s(x)$. Thus, the function $\varphi^+$ is also $L$-invariant. Similarly, we conclude that $\varphi^+$ is $U$-invariant. Notice that $A$, $L$ and $U$ generate $G$, hence $\varphi^+ $ is $G$-invariant and since $\mu$ is $G$-ergodic, the only $G$-invariant functions in $L^1(M,\mu)$ are the constants. We conclude that $\varphi^+$ is constant $\mu$-almost everywhere. This is true for any continuous function, therefore $\mu$ is $A$-ergodic. 
	
	This is a simple case of a larger class of results called the ``Mautner phenomenon'' (first appearing in \cite{mautner}, see also \cite{moore}).
	
\end{example}

\begin{bibsection}
	
	%\bibliographystyle{amsalpha}
	%\begin{thebibliography}{1}
	\begin{biblist}*{prefix={C}}

		\bib{a}{book}{
			author = {Anosov, D.V.},
			title ={Geodesic flows on closed Riemann manifolds with negative curvature},
			series={Proceedings of the Steklov Institute of Mathematics, No. 90
				(1967). Translated from the Russian by S. Feder},
			publisher={American Mathematical Society, Providence, R.I.},
			date={1969},
			pages={iv+235},
%			review={\MR{0242194}},
		}
		
		\bib{ck}{unpublished}{
			author={Climenhaga, Vaughn},
			author={Katok, Anatole},
			title={Measure theory through dynamical eyes},
			date={2012},
			note={preprint, arXiv: 1208.4550}
		}
		
		\bib{h}{article}{
			author={Hopf, Eberhard},
			title={Fuchsian groups and ergodic theory},
			journal={Trans. Amer. Math. Soc.},
			volume={39},
			date={1936},
			number={2},
			pages={299--314},
%			issn={0002-9947},
%			review={\MR{1501848}},
%			doi={10.2307/1989750},
		}

	\bib{ly}{article}{
		author={Ledrappier, F.},
		author={Young, L.-S.},
		title={The metric entropy of diffeomorphisms. {I}. {C}haracterization
			of measures satisfying {P}esin's entropy formula},
		date={1985},
		journal={Ann. of Math. (2)},
		volume={122},
		number={3},
		pages={509\ndash 539},
	}
		
		\bib{mautner}{article}{
			author={Mautner, F. I.},
			title={Geodesic flows on symmetric Riemann spaces},
			journal={Ann. of Math. (2)},
			volume={65},
			date={1957},
			pages={416--431},
%			issn={0003-486X},
%			review={\MR{0084823}},
%			doi={10.2307/1970054},
		}
		
	\bib{moore}{article}{
		author={Moore, Calvin C.},
		title={The Mautner phenomenon for general unitary representations},
		journal={Pacific J. Math.},
		volume={86},
		date={1980},
		number={1},
		pages={155--169},
%		issn={0030-8730},
%		review={\MR{586875}},
	}
		
		\bib{aw}{article}{
			author={Wilkinson, Amie},
			title={Conservative partially hyperbolic dynamics},
			conference={
				title={Proceedings of the International Congress of Mathematicians.
					Volume III},
			},
			book={
				publisher={Hindustan Book Agency, New Delhi},
			},
			date={2010},
			pages={1816--1836},
%			review={\MR{2827868}},
		}
		
	\end{biblist}
	
\end{bibsection}

\bigskip

\noindent \textsc{Davi Obata}\\
LMO, Universit\'e Paris Sud, CNRS UMR 8628,\\
D\'epartement de Math\'ematiques B\^atiment 425\\
Facult\'e des Sciences d'Orsay Universit\'e Paris-Sud\\
F-91405 Orsay Cedex\\
\texttt{davi.obata@math.u-psud.fr}
%		

%\end{thebibliography}

\newpage
		
\addtocontents{toc}{\vspace{\normalbaselineskip}}
\section{Metric entropy and Lyapunov exponents after Ledrappier--Young (by S\'ebastien Alvarez and Mario Rold\'an)}\label{App:LY}
%\note{the title of this appendix is a bit weird}
% REMARK:
%After corrections, the following 4 newcommand have been added
%These newcommand can be put in the top of appendix.tex
%%%%%%%%%%%%%%%%%%%%
\newcommand{\NN}{\mathcal N}
\newcommand{\mass}{{\rm mass}}
\newcommand{\Leb}{{\rm Leb}}
\newcommand{\CC}{\mathcal C}
%%%%%%%%%%%%%%%%%%%%

Lyapunov exponents give a geometric way to measure the complexity of a map, and metric entropy gives a probabilistic way to do so. We are interested here in comparing these two notions. In these directions, the two basic results are (see Theorem \ref{entropyfacts}):

\begin{quote}\index{Margulis--Ruelle inequality}
	\textbf{Margulis--Ruelle's inequality} proven in \cite{Ru}: For every $C^1$-mapping $f$ (not necessarily invertible) of a compact Riemannian manifold $M$ preserving a probability measure $\mu$, the metric entropy is bounded above by the sum of positive Lyapunov exponents, 
	$$h_{\mu}(f)\leq\int_M\sum_{\lambda^j>0} m^j(x)\lambda^j(x)d\mu(x).$$
\end{quote}

\begin{quote}	\index{Pesin!entropy formula}
	\textbf{Pesin's Formula} (also known as \textit{Entropy Formula}) proven in \cite{Pe} (see also \cites{LS,Ma}): For every $C^{1+\beta}$ diffeomorphism $f$ of a compact Riemannian manifold $M$ preserving a probability measure $\mu$ \textit{equivalent to the Riemannian volume}, we have
	$$h_{\mu}(f)=\int_M\sum_{\lambda^j>0} m^j(x)\lambda^j(x)d\mu(x).$$
\end{quote}
where, as usual, $\lambda^1(x)>\lambda^2(x)\cdots >\lambda^{r(x)}(x)$ denote all distinct Lyapunov exponents of $f$ at $x$, $m^j(x)$ is the multiplicity of $\lambda^j(x)$, and $h_{\mu}(f)$ denotes the metric entropy. Note that when $\mu$ is ergodic, the quantities $\lambda^j(x)$, $r(x)$ and $m^j(x)$ do not depend on the choice of a $\mu$-typical $x\in M$ and the integral sum in both formulas may be omitted.

The aim of \textit{Ledrappier--Young's theory} is to further study relations of these types. In \cite{LYI}, F. Ledrappier and L.-S. Young characterize those measures which satisfy Pesin's entropy formula. In \cite{LYII} they prove a formula which is valid for every invariant measure.

We state below the first result, i.e.~the principal result of \cite{LYI}. The statement of the general formula shall be postponed until Section \ref{S:LY2}. Before stating the result, let us recall that an ergodic probability measure $\mu$ invariant by a $C^{1+\beta}$ diffeomorphism of a compact manifold $M$ is said to be an \textit{SRB measure} if it has absolutely continuous conditional measures on unstable manifolds.\index{measure(s)!SRB}

\begin{theorem}[Ledrappier--Young I]\index{theorem!Ledrappier--Young}
	\label{t:LYI}
	Let $f$ be a $C^{1+\beta}$ diffeomorphism of a compact Riemannian manifold $M$ and $\mu$ be an ergodic, $f$-invariant probability measure. Then $\mu$ is SRB if and only if
	$$h_{\mu}(f)=\sum_{\lambda^j>0}m^j\lambda^j.$$
\end{theorem}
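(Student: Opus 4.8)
\textbf{Proof plan for Theorem~\ref{t:LYI} (Ledrappier--Young I).}
The plan is to prove the two implications separately. The ``if'' direction (equality in the entropy formula $\Rightarrow$ SRB) is the harder and deeper half, and I would handle it using the unstable-entropy machinery that has already been developed in the main text. First, recall from \eqref{eq:LY} (Ledrappier--Young, \cite{MR819556} together with \cite{1608.05886} for the $C^{1+\beta}$ case) that $h_\mu(f)=h_\mu^u(f)$, so it suffices to compare $h_\mu^u(f)$ with $\sum_{\lambda^j>0}m^j\lambda^j$. The idea is to fix an increasing measurable partition $\xi$ subordinate to the unstable foliation $\Fol^u$ and reproduce the computation in the proof of \cref{prop:easyLed}, but now replacing the reference Lebesgue leaf-wise measures $m^i_x$ by the Riemannian volume $m^u_x$ along $W^u(x)$. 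The key point is that the unstable Jacobian $J^u(x)=|\det(\restrict{D_xf}{E^u(x)})|$ is no longer constant, so the analogue of \eqref{eqhard} becomes $\int\log m^\xi_x(f^{-1}\xi(x))\,d\mu(x)=-\int\log J^u\,d\mu$, and by the Margulis--Ruelle argument (or a direct subadditive computation) $\int\log J^u\,d\mu=\sum_{\lambda^j>0}m^j\lambda^j$. Running the Jensen/strict-concavity argument from \cref{prop:easyLed} verbatim, the hypothesis $h_\mu(f)=\sum_{\lambda^j>0}m^j\lambda^j$ forces $\mu^\xi_x(f^{-1}\xi(x))=m^\xi_x(f^{-1}\xi(x))$ for a.e.\ $x$, and then iterating with $f^n$ and using that the partitions $\{f^{-n}\xi\}$ generate the point partition on leaves, one obtains $\mu^\xi_x=m^\xi_x$ up to normalization for a.e.\ $x$; that is exactly the SRB property. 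The only place where genuine care is needed beyond \cref{prop:easyLed} is the existence of the increasing subordinate partition $\xi$ in the nonuniformly hyperbolic (rather than affine) setting --- this is the Ledrappier--Strelcyn construction from \cite{MR693976}, which the text has already cited --- and the measurability of the dynamically defined densities needed to make $\int\log\frac{q\circ f}{q}\,d\mu=0$ rigorous, which follows from \cite[Proposition 2.2]{MR693976} once one checks $\log(q\circ f/q)\in L^\infty(\mu)$ using that $J^u$ and $\diam^u$ of partition elements are controlled on Pesin blocks.

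The ``only if'' direction (SRB $\Rightarrow$ equality) is cleaner and is essentially \cref{entropyfacts}\ref{EFF3} combined with \cref{entropyfacts}\ref{EFF1}. By Margulis--Ruelle, $h_\mu(f)\le\sum_{\lambda^j>0}m^j\lambda^j$ always holds, so I only need the reverse inequality $h_\mu^u(f)\ge\sum_{\lambda^j>0}m^j\lambda^j$ when $\mu$ is SRB. This is the Ledrappier--Strelcyn lower bound \cite{MR693976}: given an increasing subordinate partition $\xi$, the conditional information $I_\mu(f^{-1}\xi\mid\xi)(x)=-\log\mu^\xi_x(f^{-1}\xi(x))$ has $\mu$-integral equal to $h_\mu^u(f)$, and when $\mu^\xi_x$ is absolutely continuous with respect to $m^u_x$ one estimates $\mu^\xi_x(f^{-1}\xi(x))$ from above by the ratio of volumes $m^u_x(f^{-1}\xi(x))/m^u_x(\xi(x))$ up to the density, whose logarithmic distortion has zero average by a bounded-distortion/Poincar\'e-recurrence argument; this yields $h_\mu^u(f)\ge\int\log J^u\,d\mu=\sum_{\lambda^j>0}m^j\lambda^j$. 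Together with \eqref{eq:LY} this gives equality.

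I expect the \textbf{main obstacle} to be the rigorous bookkeeping in the nonuniformly hyperbolic regime: unlike \cref{prop:easyLed}, where the dynamics along $W^i$ is affine and everything is literally constant, here one must work on Pesin (Lyapunov) blocks, control the geometry of partition elements $\xi(x)$ (precompactness, containing a neighborhood of $x$, shrinking under $f^{-n}$), and justify the interchange of limits and the $L^1$-integrability of the various logarithmic densities. All of this is standard from \cite{MR693976} and \cite{MR819556}, but it is where the ``slightly more complicated version of the proof of \cref{prop:easyLed}'' alluded to in the text's remarks actually lives. A secondary point worth flagging is that for the converse one should also note, following Ledrappier, that equality in the entropy formula in fact upgrades to the stronger conclusion that $\mu^u_x=\rho\,m^u_x$ with $\rho$ H\"older continuous and nowhere vanishing (the explicit formula \eqref{dyn_densities}), but this refinement is not needed for the statement of Theorem~\ref{t:LYI} as written and can be omitted from the core argument.
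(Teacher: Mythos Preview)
Your overall strategy for both directions is the right one and aligns with the paper's: reduce to the unstable entropy via $h_\mu(f)=h_\mu^u(f)$, then run a Jensen/concavity argument comparing $\mu^\xi_x$ to a reference leaf-wise measure. However, there is a genuine gap in your ``if'' direction. The claimed analogue of \eqref{eqhard}, namely
\[
\int\log m^\xi_x(f^{-1}\xi(x))\,d\mu(x)=-\int\log J^u\,d\mu,
\]
is \emph{false} in the nonlinear setting. Unwinding the left side using $f$-invariance of $\mu$ and change of variables gives $-\int\log\tilde J(x)\,d\mu(x)$, where $\tilde J(x)$ is the \emph{Lebesgue average} of $J^u$ over the atom $f^{-1}\xi(x)$; this need not equal $\int\log J^u\,d\mu$ unless $J^u$ is constant on atoms (the affine case of \cref{prop:easyLed}) or unless $\mu$ and Lebesgue already agree along leaves --- which is exactly what you are trying to prove. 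Without this equality the Jensen argument does not close, and you cannot conclude that $x\mapsto m^\xi_x(f^{-1}\xi(x))/\mu^\xi_x(f^{-1}\xi(x))$ is constant. Relatedly, your endpoint conclusion ``$\mu^\xi_x=m^\xi_x$'' is too strong: for a general SRB measure the conditionals equal $m^u_x$ only up to a nonconstant density.

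The fix --- which you mention in your final paragraph but then dismiss as ``not needed for the statement'' --- is precisely the missing ingredient, and it is what the paper does. One replaces $m^\xi_x$ by $\nu_x=\rho\, m^u_x$ (normalized on $\xi(x)$), where $\rho$ is the dynamically prescribed density of \eqref{dyn_densities}; this is also the content of the remark after \cref{prop:easyLed} in the main text. The function $\rho$ is engineered so that $\nu_x(f^{-1}\xi(x))$ transforms exactly by $1/J^u$, making the identity $-\int\log\nu_y(f^{-1}\xi(y))\,d\mu=\int\log J^u\,d\mu$ hold on the nose (this is Lemma~\ref{l:induction} in the appendix). Then Jensen forces $\mu^\xi_x(f^{-1}\xi(x))=\nu_x(f^{-1}\xi(x))$ a.e., and an induction over $\B(f^{-n}\xi)$ gives $\mu=\nu$, hence $\mu^\xi_x=\rho\, m^u_x$ is absolutely continuous. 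So the density $\rho$ is not a refinement of the conclusion; it is structurally required for the Jensen argument to go through. Your treatment of the ``only if'' direction is fine and matches Ledrappier--Strelcyn.
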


The ``only if'' direction is a generalization of Pesin's formula: it was proven in the conservative setting by Ya.~Pesin. R. Ma\~n\'e gave a proof in \cite{Ma} without using the theory of stable manifold. In this generality (for SRB measures rather than for smooth measures), the proof of the ``if'' part is due to F. Ledrappier and J.-M. Strelcyn in \cite{LS}.

The proof of the ``if'' part is the most difficult one and was first achieved by F. Ledrappier in \cite{L} under the hypothesis that $\mu$ is \textit{hyperbolic} (i.e.~has no zero Lyapunov exponent) (see Section \ref{ss:lyapexp}). Later, together with L.-S. Young in  \cite{LYI}, they were able to treat the difficulties that emerge when one allows the presence of zero Lyapunov exponents. 

Let us say a word about the regularity of the dynamics in Theorem \ref{t:LYI}. F. Ledrappier and L.-S. Young proved
that Theorem \textit{for $C^2$ diffeomorphisms}. As we will see later on there is a crucial step in which the $C^2$ hypothesis 
rather than the $C^{1+\beta}$ hypothesis on the dynamics was used in \cite{LYI}: obtaining the lipschitzness of the unstable 
holonomies inside center-unstable sets. A. Brown recently showed in \cite{B}, that this lipschitzness actually holds 
for $C^{1+\beta}$ dynamics. Finally let us note that the regularity can't be lowered. It comes from \cites{BCS,Pugh} 
that a $C^1$ diffeomorphism can have many hyperbolic ergodic probability measures (i.e.~without zero Lyapunov exponents)
such that the points on their supports have no unstable nor stable manifolds \cites{BCS,Pugh}.

\begin{example}[Baker vs.~horseshoe] Before entering in the details let us borrow to L.-S. Young (see \cite[Example 4.1.3.]{Yo2}) an enlightening illustration of the results above.
	
	Consider first the well known \textit{baker transformation}. This is a piecewise affine map of the unit square $C$ of $\R^2$ defined as follows. We set $T(x,y)=(2x,y/2)$ if $x<1/2$  and $T(x,y)=(2x-1,y/2+1/2)$ if $x\geq 1/2$. (See Figure \ref{f:baker}.) It is possible to prove that the Lebesgue measure $\mu$ is $T$-invariant and ergodic and that $T$ is then isomorphic to the $(\frac{1}{2},\frac{1}{2})$-Bernoulli shift. It is clear that
	$$h_{\mu}(T)=\log 2=\lambda^+,$$
	where $\lambda^+$ is the largest Lyapunov  exponent.
	
	\begin{figure}
		\[
		\includegraphics{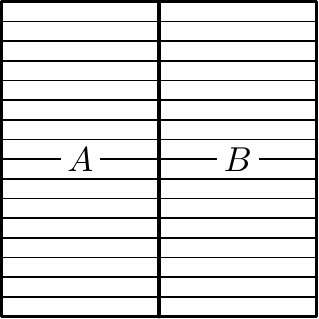}\quad\quad
		\includegraphics{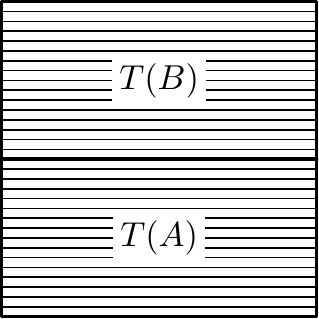}
		\]
		\caption{The baker transformation}\label{f:baker}
	\end{figure}
	
	The second transformation is a piecewise affine map defined on a subset of $C$ that may be extended to Smale's \textit{horseshoe}. (See Figure \ref{f:horseshoe}.) It has a hyperbolic invariant set $\Lambda$ of measure zero (this is a product of two Cantor set) and we endow it with its measure of maximal entropy (which is singular since it is supported on $\Lambda$). It is also isomorphic to the $(\frac{1}{2},\frac{1}{2})$-Bernoulli shift. Now we have
	$$h_{\mu}(T)=\log 2<\lambda^+.$$
	
	\begin{figure}
		\[
		\includegraphics{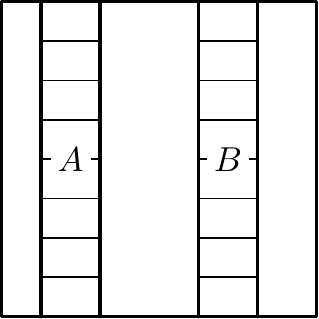}\quad\quad
		\includegraphics{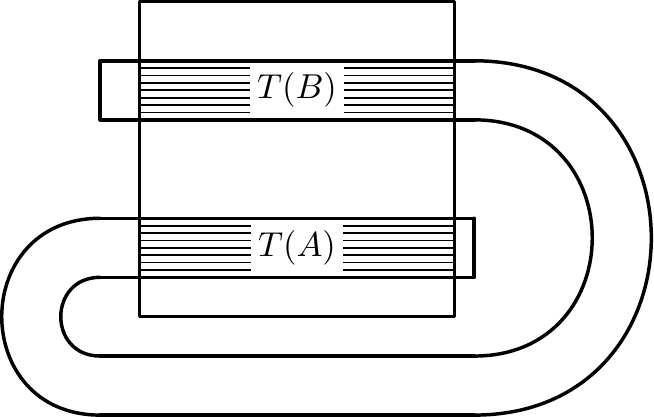}
		\]
		\caption{Smale's horseshoe}\label{f:horseshoe}
	\end{figure}
	
	The first transformation satisfies the entropy formula, while the second one does not.  One may interpret this fact as follows (see \cite{Yo2}). In a conservative system, all the expansion goes back to the system and contributes to the entropy, whence the entropy formula. In a dissipative system, some of the entropy is wasted along the way (in the second case, it is wasted to ``bend'' the horseshoe) and the entropy is concentrated in some region of smaller dimension (we will see how in the second part of Ledrappier--Young's theory).
\end{example}

\subsection{Entropy along the unstable direction}

We will focus on the second half of Ledrappier--Young's theorem (Theorem \ref{t:LYI}) i.e.~we want to prove
$$\mu\text{ satisfies the entropy formula}\quad\Longrightarrow \quad\mu\text{ is  SRB}.$$

The principal idea of this theorem is that the entropy is created by the expansion along unstable manifolds. We show below how to formalize this idea and we sketch the proof given in \cite{LYI}.

\subsubsection{Partitions subordinate to the unstable foliation}

\hyphenation{diffeo-mor-phism}
\index{partition!subordinate}
Let $f$ be a $C^{1+\beta}$ diffeomorphism of a compact Riemannian manifold $M$ and $x\in M$ be a \textit{regular point}, meaning that Lyapunov exponents $\lambda^1(x),\lambda^2(x),\ldots,\lambda^{r(x)}(x)$ and Oseledec's splitting $T_xM=E^1(x)\oplus E^2(x)\oplus\ldots\oplus E^{r(x)}(x)$ exist at $x$. By Oseledec's Theorem (Theorem~\ref{thm:oseled}) the set of such points is full for every $f$-invariant probability measure. The \textit{unstable manifold} at $x$ is defined by
$$W^u(x)=\left\{y\in M;\,\overline{\lim_{n\to\infty}}\frac{1}{n}\log\,d(f^{-n}(x),f^{-n}(y))<0\right\}.$$
By Pesin's (un)stable manifold theorem (see \cites{FHY,PeStable,RuStable}) $W^u(x)$ is a monotone union of discs tangent to $E^u(x)$ at $x$, so it is an injectively immersed Euclidean space tangent to $E^u(x)$ at $x$ where
$$E^u(x)=\bigoplus_{\lambda^j>0}E^j(x).$$
We will refer to the partition $W^u=\{W^u(x);\,x\,\,\text{regular}\}$ as the \textit{unstable foliation}. The ambient Riemannian structure induces a Riemannian structure on $W^u(x)$. This provides $W^u(x)$ with a topology that we call the \textit{internal topology}. Note that it differs from the topology induced by $M$ on $W^u(x)$.

In general unstable leaves form a non-measurable partition of $M$ and we can't disintegrate $\mu$ in unstable leaves so we will need the following definition.

Say a measurable partition $\xi$ is $(\mu$-)\textit{subordinate to the unstable foliation}
$\W^u$ if (cf.~Section \ref{unsentropy}): 
\begin{enumerate}
	\item for $\mu$-almost every $x\in M$, $\xi(x)$ is a subset of $W^u(x)$ with diameter bounded by a constant $\eps_0$ which does not depend on $x$;
	\item for $\mu$-almost every $x\in M$, $\xi(x)$ contains an open neighbourhood of $x$ inside $W^u(x)$;
	\item $\xi$ is increasing, i.e.~$f\xi\prec\xi$;
	\item $\xi$ is generating, i.e.~$\bigvee_{n=0}^{\infty}f^{-n}\xi$ is the partition into points of $M$.
\end{enumerate}

Note that unstable manifolds $W^u(x)$ are well defined for every regular point, and that every $f$-invariant probability measure gives total mass to the set of regular points. Hence the hypothesis of ergodicity of $\mu$ \textit{is not needed} in the above definition.

The existence of measurable partitions subordinate to the unstable foliation is due to F. Ledrappier and J.-M. Strelcyn \cite{LS}.

\begin{proposition}[\cite{LS}]
	\label{p:existence_partition}\index{theorem!Ledrappier--Strelcyn}
	Let $f$ be a $C^{1+\beta}$ diffeomorphism of a compact manifold $M$ and $\W^u$ be the partition into Pesin unstable manifolds of $f$. Then there exists a measurable partition $\xi$ subordinate to $\W^u$.
\end{proposition}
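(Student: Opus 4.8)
\textbf{Plan of proof of Proposition \ref{p:existence_partition}.} The goal is to construct, for $\mu$ a given $f$-invariant probability measure, a measurable partition $\xi$ of $(M,\mu)$ whose atoms are open, precompact subsets of the Pesin unstable manifolds $W^u(x)$, and which is increasing and generating in the sense above. The standard strategy (going back to Sinai in the uniformly hyperbolic case and carried out by Ledrappier--Strelcyn in the nonuniform $C^{1+\beta}$ setting) is to build a single ``nice'' set $S\subset M$ of positive measure which is a measurable union of small unstable plaques, and then let $\xi$ be the partition whose atoms are these plaques inside $S$ together with (the single extra atom) $M\setminus S$, after which one replaces $\xi$ by $\bigvee_{n\ge 0}f^n\xi$ to make it increasing and generating. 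So the plan has three stages: (1) produce local unstable plaques of uniform size over a Pesin block; (2) assemble them into a measurable partition with a ``return'' structure ensuring the increasing property; (3) verify the generating property via the contraction of unstable plaques under backward iteration.

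\textbf{Stage 1: uniform local unstable manifolds.} I would first invoke Pesin theory (as recalled in Section \ref{sec:unstmanifold}, especially \ref{unstmanifold11}): for $\mu$-a.e.\ $x$ the global unstable manifold $W^u(x)$ exists, is a $C^{1+\beta}$ injectively immersed copy of $E^u(x)$, and restricted to a Pesin regular set $\Lambda_\ell$ (a level set of the Pesin function, on which all the nonuniform estimates are uniform) the local unstable manifolds $W^u_{\mathrm{loc}}(x)$ have internal size bounded below by some $\rho_\ell>0$ and vary continuously in the $C^1$ topology. Choose $\ell$ large enough that $\mu(\Lambda_\ell)>0$, and fix a small $\varepsilon_0<\rho_\ell$. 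Around a density point $x_0\in\Lambda_\ell$ choose a small ``box'' neighborhood $B$ (a flow-box-like chart) of radius $\ll\varepsilon_0$ such that for every $y\in\Lambda_\ell\cap B$ the plaque $W^u_{\mathrm{loc}}(y)\cap B$ is a disc of internal diameter $<\varepsilon_0$ and these plaques are pairwise disjoint and depend measurably (indeed continuously on $\Lambda_\ell$) on $y$. Set $S=\bigcup_{y\in\Lambda_\ell\cap B}\big(W^u_{\mathrm{loc}}(y)\cap B\big)$; this is a measurable set of positive $\mu$-measure, foliated by unstable plaques. Property (1) (atoms inside $W^u$, uniformly bounded diameter) and property (2) (each atom contains an internal neighborhood of each of its points) will be built into this construction by taking the plaques slightly larger than the ``core'' used to index them.

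\textbf{Stage 2: the increasing property, and Stage 3: generating.} Now let $\xi_0$ be the partition of $M$ whose atoms are the plaques $W^u_{\mathrm{loc}}(y)\cap B$ for $y\in\Lambda_\ell\cap B$, and one big atom $M\setminus S$. This $\xi_0$ is measurable (the map sending a point of $S$ to the plaque containing it is measurable, being built from continuous data on $\Lambda_\ell$). It need not be increasing, so I would pass to $\xi:=\bigvee_{n=0}^\infty f^n\xi_0$; since $f$ maps unstable plaques into (larger) unstable plaques, each atom of $\xi$ is still contained in a single $W^u(x)$, and by construction $f\xi\prec\xi$, giving property (3). One must check that $\xi$ is still measurable (a countable join of measurable partitions is measurable) and that the atoms still satisfy (1)--(2): here the key point is that for $\mu$-a.e.\ $x$ the forward orbit visits $S$ infinitely often (Poincaré recurrence, since $\mu(S)>0$), so $\xi(x)=\bigcap_{n\ge 0, f^n x\in S} f^n\big(\xi_0(f^{-n}\cdot)\big)$ is, at the last return time, cut down to a precompact plaque of diameter $<\varepsilon_0$ containing an internal neighborhood of $x$ — this is property (1) with the stated uniform bound $\varepsilon_0$, and property (2). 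Finally, for the generating property (4), I would use that unstable plaques are contracted under $f^{-n}$: given two distinct points $x,y$ with $y\in W^u(x)$, backward iterates $f^{-n}x,f^{-n}y$ have internal distance going to $0$, hence eventually lie in the same small plaque, while if $y\notin W^u(x)$ they are separated already by $\xi_0$ after finitely many backward steps (distinct unstable manifolds through nearby regular points are disjoint). Either way $\bigvee_{n\ge0}f^{-n}\xi$ separates $x$ from $y$, so it is the point partition.

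\textbf{Main obstacle.} The delicate part is not the soft measure-theoretic bookkeeping but making Stage 1 genuinely work with \emph{uniform} constants in the $C^{1+\beta}$ (merely nonuniformly hyperbolic) setting: one needs the Pesin blocks $\Lambda_\ell$ on which local unstable manifolds have uniform size and vary continuously, and one needs the box $B$ chosen so that the plaques of points of $\Lambda_\ell\cap B$ genuinely do not cross the boundary of $B$ in a bad way and remain disjoint — this requires controlling how the plaque of a point $y\in\Lambda_\ell$ near the boundary of $B$ can exit $B$, and is where the continuous dependence of $W^u_{\mathrm{loc}}$ on $y\in\Lambda_\ell$ (a genuine consequence of the $C^{1+\beta}$ Pesin graph transform, or of Brown's result on Lipschitz unstable holonomies) is used. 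I would cite \cite{LS} for the original argument and \cite{LYI} for the presentation, and present this stage at the level of ``choose a Pesin block, choose a Lebesgue density point, choose a sufficiently fine box, invoke uniform Pesin estimates,'' leaving the estimate-chasing to the references.
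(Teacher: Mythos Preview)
Your plan has the right architecture---build a preliminary partition $\xi_0$ from local unstable plaques and set $\xi=\bigvee_{n\ge0}f^n\xi_0$---but there is a genuine gap in Stage~2 at exactly the point you flag as the ``main obstacle,'' and your proposed fix (continuity of plaques on Pesin blocks, Lipschitz holonomies) does not close it. The atom $\xi(x)=\bigcap_{n\ge0}f^n\bigl(\xi_0(f^{-n}x)\bigr)$ is cut by \emph{every} forward image $f^n(\partial S)$ of the boundary of your set, and nothing in your argument prevents these images from accumulating on $x$ inside $W^u(x)$ and shrinking $\xi(x)$ down to $\{x\}$, destroying property~(2). Poincar\'e recurrence to $S$ tells you only that $\xi(x)\subset W^u(x)$; it gives no lower bound on the internal radius. (Your displayed formula for $\xi(x)$ in fact drops the terms with $f^{-n}x\notin S$; those are precisely the dangerous ones, since for such $n$ you intersect with $f^n(M\setminus S)$, i.e.\ you excise $f^n(S)$ from the atom. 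There is also no ``last return time'' to invoke.)

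The paper supplies the missing mechanism: before refining, one chooses a finite partition $\mathcal P$ of $M$ into pieces of foliated charts whose boundary satisfies the summability condition $\sum_j\mu\bigl(\mathcal N_{\lambda^j}(\partial\mathcal P)\bigr)<\infty$; this is arranged by a Borel--Cantelli argument on the radii of a finite cover by balls (Lemma~\ref{l:BC}). Borel--Cantelli then gives, for $\mu$-a.e.\ $x$, an index $k(x)$ with $d(f^{-k}x,\partial\mathcal P)>\lambda^k$ for all $k\ge k(x)$; combined with uniform contraction of $f^{-1}$ along unstable plaques, this forces $\xi_{k+1}(x)=\xi_k(x)$ for all $k\ge k(x)$, so the infinite refinement \emph{stabilizes} after finitely many steps and $\xi(x)$ is genuinely an open plaque. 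This quantitative boundary control is the heart of the Ledrappier--Strelcyn construction and cannot be replaced by soft continuity or recurrence statements. A secondary difference: the paper's $\mathcal P$ covers all of $M$, so its $\xi_0$ is already a partition into bounded plaque pieces, avoiding the single large atom $M\setminus S$ that in your setup also creates a diameter-control problem for points outside $S$.
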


Let us sketch a proof of Proposition \ref{p:existence_partition} when $\W^u$ is \textit{uniformly expanding}, building on recent work of J.~Yang \cite{Ya}. This last hypothesis means that $\W^u$ is an $f$-invariant continuous foliation, tangent to a $Df$-invariant continuous plane field $E^u$, such that there exists a uniform $\lambda<1$ such that
$$||Df^{-1}|_{E^u}||<\lambda.$$

Note that in the proof below, $f$ needs only to be $C^1$ and $\mu$ does not need to be ergodic: this proof only uses the expansion property of $f$ on $\W^u$ and Borel--Cantelli's theorem.

\subsubsection*{Finite partitions and the Borel--Cantelli property} Fix a foliated atlas $\A^u$ for $\W^u$. The proof consists in constructing first a finite partition $\Pp$ of $M$ satisfying the two conditions below

\begin{enumerate}
	\item atoms of $\Pp$ are included in foliated charts of $\A^u$;
	\item the series of $\mu$-masses of the $\lambda^j$-neighbourhoods of the boundary of any atom of $\Pp$ converges.
\end{enumerate}

This partition is constructed from a covering of $M$ by finitely many small balls $B_k$ such that $\sum_j\mu(\NN_{\lambda^j}(\partial B_k))<\infty$ (in what follows $\NN_r(A)$ stands for the $r$-neighbourhood of $A$). The construction of this covering does not require any hypothesis on $\mu$ and is essentially a consequence of the following Borel--Cantelli type lemma which allows us to find the radii of these balls. For the sake of completeness we include the proof, which is both elementary and elegant.

\begin{lemma}
	\label{l:BC}
	Let $\nu$ be a finite Borel measure on $\R$ supported on $[0,r_0]$ for some $r_0>0$. Then for every $\lambda\in (0,1)$, for Lebesgue-almost every $r\in [0,r_0]$,
	$$\sum_{j=0}^\infty\nu\left(\left[r-\lambda^j,r+\lambda^j\right]\right)<\infty.$$
\end{lemma}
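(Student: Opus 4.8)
\textbf{Plan for proving Lemma \ref{l:BC}.} The statement is a Borel--Cantelli type estimate, and the natural approach is to integrate the relevant sum over $r$ and apply Tonelli's theorem to show the integral is finite, which forces finiteness for almost every $r$. First I would set
\[
S(r) = \sum_{j=0}^\infty \nu\left(\left[r-\lambda^j, r+\lambda^j\right]\right)
\]
and compute $\int_0^{r_0} S(r)\, dr$. Swapping the sum and the integral (both terms are nonnegative, so Tonelli applies), I get
\[
\int_0^{r_0} S(r)\, dr = \sum_{j=0}^\infty \int_0^{r_0} \nu\left(\left[r-\lambda^j, r+\lambda^j\right]\right) dr.
\]

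For each $j$, I would estimate the inner integral using Fubini/Tonelli once more: writing $\nu\left(\left[r-\lambda^j, r+\lambda^j\right]\right) = \int_{\R} \mathbbm{1}\{|s-r|\le \lambda^j\}\, d\nu(s)$, we obtain
\[
\int_0^{r_0} \nu\left(\left[r-\lambda^j, r+\lambda^j\right]\right) dr = \int_{\R} \left( \int_0^{r_0} \mathbbm{1}\{|s-r|\le \lambda^j\}\, dr \right) d\nu(s) \le \int_{\R} 2\lambda^j\, d\nu(s) = 2\lambda^j\, \nu(\R).
\]
Since $\nu$ is a finite measure, $\nu(\R) = \nu([0,r_0]) < \infty$, so summing over $j$ gives
\[
\int_0^{r_0} S(r)\, dr \le 2\, \nu(\R) \sum_{j=0}^\infty \lambda^j = \frac{2\,\nu(\R)}{1-\lambda} < \infty.
\]

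A function with finite Lebesgue integral on $[0,r_0]$ is finite Lebesgue-almost everywhere, so $S(r) < \infty$ for Lebesgue-almost every $r \in [0,r_0]$, which is the claim. There is no real obstacle here: the only point requiring a small amount of care is the double application of Tonelli's theorem (legitimate because all integrands are nonnegative and measurable), and the elementary bound $\int_0^{r_0}\mathbbm{1}\{|s-r|\le\lambda^j\}\,dr \le 2\lambda^j$, which holds uniformly in $s$. This lemma will then be used to select radii $r_k$ for a finite covering of $M$ by balls $B_k$ whose boundaries have the summable-neighbourhood property $\sum_j \mu(\NN_{\lambda^j}(\partial B_k)) < \infty$, feeding into the construction of the finite partition $\Pp$ and, ultimately, the subordinate measurable partition $\xi$ of Proposition \ref{p:existence_partition}.
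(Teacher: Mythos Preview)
Your proof is correct and in fact cleaner than the paper's. The paper argues via a Borel--Cantelli covering: it defines the bad set $Y_j=\{r:\nu([r-\lambda^j,r+\lambda^j])\geq 1/j^2\}$, covers $Y_j$ by intervals of length $2\lambda^j$ centered at bad points with overlap at most two, bounds the number of such intervals by $2\nu(\R)j^2$ (since each contributes at least $1/j^2$ to $\nu$), and concludes $\mathrm{Leb}(Y_j)\leq 4\nu(\R)j^2\lambda^j$; summability of this sequence and Borel--Cantelli then give that for a.e.\ $r$ the terms are eventually below $1/j^2$, hence summable. Your Tonelli computation bypasses the covering combinatorics entirely and gives the sharper bound $\int_0^{r_0}S(r)\,dr\leq 2\nu(\R)/(1-\lambda)$ in one line. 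The paper's route has the minor advantage of giving an explicit tail estimate on each term, but for the purpose at hand (feeding into the construction of $\Pp$) your argument is equally effective and more direct.
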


\begin{proof}
	For a fixed $j\geq 0$ we will define the \textit{bad set} as
	$$Y_j=\left\{r\in[0,r_0];\,\nu\left(\left[r-\lambda^j,r+\lambda^j\right]\right)\geq \frac{1}{j^2}\right\}.$$
	The bad set may be covered by finitely many \textit{bad intervals} $[r_i-\lambda^j,r_i+\lambda^j]$ for $i=1,\ldots,l$ with $r_i\in Y_j$, in such a way that any point of $Y_j$ belongs to at most two bad intervals. We can bound the number $l$ of these bad intervals because
	$$\frac{l}{j^2}\leq\sum_{i=1}^l\nu\left(\left[r_i-\lambda^j,r_i+\lambda^j\right]\right)\leq 2\nu(\R) ,$$
	so $l\leq 2\nu(\R)j^2$. We deduce that $\Leb(Y_j)\leq 4\nu(\R)j^2\lambda^j$ so $\sum_j\Leb(Y_j)<\infty$. By Borel--Cantelli's theorem for $\Leb$-almost every $r\in[0,r_0]$ there exists $j_r$ such that $r\notin Y_j$ for every $j\geq j_r$, which implies that $\sum_j\nu([r-\lambda^j,r+\lambda^j])<\infty$.
\end{proof}

This lemma being established we can construct the desired finite partition $\Pp$. We chose $r_0$ smaller than the Lebesgue number of the covering of $\A^u$. Given $x\in M$ we want to define the radius $r_x$ of a ball centered at $x$ whose boundary satisfies the second condition stated above. To do so, we define a measure $\nu_x$ on $[0,r_0]$ by
$$\nu_x([a,b])=\mu\left(\left\{y\in M;\,a\leq d(x,y)\leq b\right\}\right),$$
and we apply Lemma \ref{l:BC} to $\nu_x$. If $\nu_x$ is the zero measure, we set $r_x=r_0/2$. If it is not, Lemma \ref{l:BC} gives $r_x\in(r_0/2,r_0)$ with
$$\sum_{j=1}^{\infty}\mu\left(\NN_{\lambda^j}(\partial B(x,r_x))\right)<\infty.$$
In particular $\mu$ gives zero measure to the boundary of the spheres $B(x,r_x)$. A compactness argument allows to cover $M$ with finitely many such balls $B_k$ which are included in unstable charts (by the choice of $r_0$). The desired partition is now $\Pp=\bigvee_k \{B_k,\,^cB_k\}$.

\subsubsection*{Proof of Proposition \ref{p:existence_partition}} Now consider the measurable partition $\xi_0$ whose atoms are the intersection of atoms of $\Pp$ and unstable plaques of $\A^u$. We claim that the following partition is subordinate to $\W^u$
$$\xi=\bigvee_{j=0}^{\infty}f^j\xi_0.$$

The most complicated part is to prove that $\xi(x)$ contains a neighbourhood of $x$ inside $W^u(x)$ for $\mu$-almost every $x$. Denote by $\partial\Pp$ the union of boundaries of atoms of $\Pp$. We first use that $\mu(\partial\Pp)=0$ and the $f$-invariance of $\mu$ to find a Borel set $X$  full for $\mu$ such that $f^j(x)\notin\partial\Pp$ for every $x\in X$ and $j\in\Z$. This implies that for every $x\in X$, $\xi_k(x)$ contains an open neighbourhood of $x$ for every $k\in\N$ where
$$\xi_k=\bigvee_{j=0}^{k}f^j\xi_0.$$
The fact that $\xi(x)$ contains an open neighbourhood of $x$ comes from the fact that
$$\xi(x)=\xi_{k(x)}(x)$$
for some $k(x)\in\N$. This $k(x)$ is obtained from an argument \textit{\`a la Borel--Cantelli}. Let us explain it. Using once more the $f$-invariance of $\mu$ as well as the second property characterizing $\Pp$, we see that 
$$\sum_{j=0}^\infty \mu\left[f^j\left(\NN_{\lambda^j}(\partial\Pp)\right)\right]<\infty.$$
By Borel--Cantelli's theorem there exists a Borel set $X$ of full $\mu$-measure such that for every $x\in X$ we have $d(f^{-k}(x),\partial\Pp)>\lambda^k$ for every $k$ greater than some $k(x)$.  One easily shows that when $k\geq k(x)$ we have $\xi_k(x)=\xi_{k+1}(x)$, for the contrary would imply that $f^{k+1}(\partial\Pp)\cap\xi_k(x)\neq\vide$. Using that plaques of  $\A^u$ have uniform diameters (say smaller than $1$) we find that $d(x,f^{k+1}(\partial\Pp))\leq 1$. Using the uniform expansion of $f$ along $\W^u$, one would find that $d(f^{-(k+1)}(x),\Pp))\leq\lambda^{k+1}$, which is absurd by the definition of $k(x)$.

\begin{remark}
	\label{r:mane1}
	In order to treat the general case one has to use Pesin's theory in order to get uniform expansion in sets of positive measure. This is done by defining Pesin's sets and analysing the first return maps to these Pesin's sets. \textit{Pesin's sets}\index{Pesin!set} $\Lambda$ (we use Katok--Mendoza's terminology \cite{KH}) are sets of positive measure (but which are not invariant) enjoying the following properties 
	\begin{itemize}
		\item the size of local unstable manifolds of elements of $\Lambda$ is uniformly bounded from below; more precisely, there exists $\delta>0$ such that for every $x\in\Lambda$, the preimage of $W^u_{loc}(x)$ under the exponential map at $x$ contains the graph of a $C^1$-map from the $\delta$-neighbourhood of $0$ in $E^u(x)$ to $E^c(x)\oplus E^s(x)$;
		\item the dynamics is uniformly expanding along local unstable manifolds inside $\Lambda$; more precisely there exist constants $0<\eps<\lambda/100$ and $C>0$ such that for for every $x\in\Lambda$ every $n\geq 1$ and $m\in\Z$
		$$\left|\left|D_xf^{-n}|_{E^u(f^m(x))}\right|\right|\leq C e^{(\lambda-\eps)n} e^{-|m|\eps}.$$
		where $\lambda>0$ is the smallest positive Lyapunov exponent of $f$ for $\mu$.
	\end{itemize}
	Of course, if one wants to increase the measure of Pesin sets, one looses control on the constants $C$ and $\eps$. Nevertheless the argument sketched in the case of uniformly expanded foliations can be adapted even if one only guarantees the uniformity of the expansion in positive measure sets. This analysis is essentially an argument given by Ma\~n\'e in \cite{Ma}. We won't enter here into the details and suggest the reader to consult the classical references: \cites{Ma,LS,LYI,Yo2}.
	
\end{remark}

\subsubsection{Entropy along the unstable direction} The next step of the proof of Ledrappier and Young is to define the entropy along the unstable direction. Recall (Section \ref{ss:condentropy}) that when $\eta_1$ and $\eta_2$ are measurable partitions of $M$, $H_{\mu}(\eta_1\mid \eta_2)$ denotes the \textit{conditional entropy}\index{entropy of a partition!conditional} of $\eta_1$ given $\eta_2$ and that when $\eta$ is an increasing partition we have
$$h_{\mu}(f,\eta)=H_{\mu}(\eta\mid f\eta).$$

The next proposition allows us to define the entropy along the unstable direction (cf.~\cref{def:entsub}).

\begin{proposition}
	\label{p:indep_partition}
	Let $f:M\to M$ be a $C^{1+\beta}$ diffeomorphism of a compact manifold.
	Let $\xi_1$ and $\xi_2$ be two measurable partitions subordinate to the unstable foliation $\W^u$ of $f$. Then
	$$h_{\mu}(f,\xi_1)=h_{\mu}(f,\xi_2).$$
\end{proposition}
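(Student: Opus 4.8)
The claim is that the dynamical entropy $h_\mu(f,\xi)$ does not depend on the choice of a measurable partition $\xi$ subordinate to the unstable foliation $\W^u$. The natural strategy is to first reduce to comparing two partitions that are \emph{comparable} under the refinement order, and then to exploit the generating property. So I would begin by recalling the elementary lemma from abstract entropy theory: if $\xi \prec \eta$ are two increasing, generating measurable partitions, then $h_\mu(f,\xi) = h_\mu(f,\eta)$. (This is standard: one has $h_\mu(f,\eta) = h_\mu(f,\xi) + h_\mu(f,\eta\mid \xi^f)$ where $\xi^f = \bigvee_{n\in\Z} f^n\xi$, and since $\xi$ is generating the tail $\sigma$-algebra of $\xi$ is the whole $\sigma$-algebra mod $\mu$, forcing the defect term to vanish.) The key is that both $\xi_1$ and $\xi_2$ are \emph{generating} by hypothesis (condition (4) in the definition of subordinate partition), so the only issue is to interpolate between two subordinate partitions by a third one refining both.

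\textbf{Key steps.} First, given $\xi_1$ and $\xi_2$ subordinate to $\W^u$, I would form the join $\xi_3 = \xi_1 \vee \xi_2$ and check that $\xi_3$ is again subordinate to $\W^u$: it is measurable as a join of measurable partitions; its atoms $\xi_3(x) = \xi_1(x)\cap\xi_2(x)$ lie in $W^u(x)$ with diameter bounded by $\min(\eps_0^{(1)},\eps_0^{(2)})$; each atom contains an open neighbourhood of $x$ in $W^u(x)$ since it is the intersection of two such neighbourhoods; $\xi_3$ is increasing because $f\xi_3 = f\xi_1\vee f\xi_2 \prec \xi_1\vee\xi_2 = \xi_3$; and $\xi_3$ is generating because it refines the already-generating $\xi_1$. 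Second, I would apply the comparable-partitions lemma twice: $h_\mu(f,\xi_1) = h_\mu(f,\xi_3)$ and $h_\mu(f,\xi_2) = h_\mu(f,\xi_3)$, whence $h_\mu(f,\xi_1) = h_\mu(f,\xi_2)$. One should be slightly careful that the entropies involved are finite (or at least well-defined); this follows because a subordinate partition $\xi$ has $H_\mu(\xi\mid f\xi) \le \sum_{\lambda^j>0} m^j\lambda^j < \infty$, by an estimate bounding the number of $f\xi$-atoms meeting a given $\xi$-atom in terms of the unstable Jacobian.

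\textbf{Main obstacle.} The genuinely delicate point is justifying the finiteness of $h_\mu(f,\xi)$ and, more subtly, that the defect term $h_\mu(f,\eta\mid\xi^f)$ truly vanishes when $\xi\prec\eta$ are subordinate — i.e.\ that the generating property (4) really does imply $\xi^f$ generates the full Borel $\sigma$-algebra mod $\mu$, not merely the $\sigma$-algebra of sets saturated by points of $\bigvee_{n\ge 0}f^{-n}\xi$. Here one uses that $\bigvee_{n\ge 0} f^{-n}\xi$ is the point partition, together with the fact that $\xi^f \supset \bigvee_{n\ge 0} f^{-n}\xi$, to conclude $\xi^f$ is the point partition as well. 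With that in hand the Abramov--Rokhlin-type identity $h_\mu(f,\eta) = h_\mu(f,\xi) + h_\mu(f \mid \xi^f, \eta)$ and the triviality of conditioning on the point partition close the argument. I would cite \cite{MR819556}, Lemma 3.1.2, for the clean packaging of this computation rather than reproduce it in full, and I expect the verification that $\xi_3$ inherits all four subordinacy conditions to be the only part requiring explicit (but routine) checking.
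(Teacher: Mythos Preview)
Your reduction to the join $\xi_3=\xi_1\vee\xi_2$ is fine, and verifying that $\xi_3$ is again subordinate is routine. The gap is in the ``elementary lemma'' you invoke: it is false that any two increasing, generating measurable partitions $\xi\prec\eta$ have $h_\mu(f,\xi)=h_\mu(f,\eta)$. Take $\eta$ to be the point partition and $\xi$ any subordinate partition with $h_\mu(f,\xi)>0$; both are increasing and satisfy $\bigvee_{n\ge 0}f^{-n}\xi=\bigvee_{n\ge 0}f^{-n}\eta=\{\text{points}\}$, yet $h_\mu(f,\eta)=H_\mu(\eta\mid f\eta)=0\neq h_\mu(f,\xi)$. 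The Kolmogorov--Sinai theorem you are implicitly reaching for requires $H_\mu(\xi)<\infty$, which fails here. The identity $h_\mu(f,\eta)=h_\mu(f,\xi)+h_\mu(f,\eta\mid\xi^f)$ you write down is likewise falsified by this example, since the right-hand side is $h_\mu(f,\xi)+0>0$.

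What your argument is missing is precisely conditions (1) and (2) of subordinacy --- bounded atom diameter and atoms containing an open $W^u$-neighbourhood --- together with expansion along $\W^u$. The paper's proof compares $\xi_1$ with $\xi_1\vee\xi_2$ via the intermediate partitions $\xi_1\vee f^n\xi_2$ and splits $h_\mu(f,\xi_1\vee\xi_2)=H_\mu(\xi_1\mid f\xi_1\vee f^{n+1}\xi_2)+H_\mu(\xi_2\mid f^{-n}\xi_1\vee f\xi_2)$. The second term tends to $0$ by generating (your condition (4)), but the first tends to $H_\mu(\xi_1\mid f\xi_1)$ only because, for $\mu$-most $x$, the atom $(f\xi_1)(x)$ is eventually contained in the atom $(f^{n+1}\xi_2)(x)$: this uses that $\xi_1$-atoms have bounded diameter, that $\xi_2$-atoms contain open $W^u$-balls, and that $f$ expands these balls. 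Lemma~3.1.2 of \cite{MR819556}, which you cite, proves exactly this statement by exactly this method; it does not package an abstract entropy lemma of the kind you describe.
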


\begin{proof}
	Let us detail the argument proving this proposition. Let $\xi_1$ and $\xi_2$ be two measurable partitions subordinate to $\W^u$. It is enough to prove that $h_\mu(f,\xi_1)=h_\mu(f,\xi_1\vee\xi_2)$. The great idea of the proof is to note that since $f\xi_i\prec\xi_i$ we have for every $n\geq 0$
	$$f^n\xi_1\vee f^n\xi_2\prec\xi_1\vee f^n\xi_2\prec\xi_1\vee\xi_2.$$
	By $f$-invariance of $\mu$, the entropies of $f$ conditional to the first and last partitions coincide so we deduce that
	\begin{equation}
		\label{eq:mean_entropy1}
		h_\mu(f,\xi_1\vee\xi_2)=h_\mu(f,\xi_1\vee f^n\xi_2)=H_\mu\left(\left.\xi_1\vee f^n\xi_2\right|f\xi_1\vee f^{n+1}\xi_2\right),
	\end{equation}
	the last equality coming from the definition of conditonal entropy. Before carrying on with the proof, observe that $f$ expands the unstable foliation, so eventually the atoms of $\xi_1$ should be included in atoms of $f^n\xi_2$, for $n$ large enough. This gives a good hint that the entropy conditional to $\xi_1\vee f^n\xi_2$ should tend to the entropy conditional to $\xi_1$. Let us give a formal explanation of this intuition.
	
	We will use a formula of conditional  entropy proved in Rokhlin's classical paper \cite[\S 5.9]{Rok}. For measurable partitions $\A,\B$ and $\CC$ we have that
	\begin{equation}
		\label{eq:mean_entropy_gen}
		H_\mu(\A\vee\B|\CC)=H_\mu(\A|\CC)+H_\mu(\B|\A\vee\CC).
	\end{equation}
	Applying \eqref{eq:mean_entropy_gen} with $\A=\xi_1$, $\B=f^n\xi_2$ and $\CC=f\xi_1\vee f^{n+1}\xi_2$ and having in mind that $\xi_1\vee f\xi_1=\xi_1$ we find
	\begin{align*}
		H_\mu\left(\left.\xi_1\vee f^n\xi_2\right|f\xi_1\vee f^{n+1}\xi_2\right)
		&=H_\mu\left(\xi_1\left|f\xi_1\vee f^{n+1}\xi_2\right.\right)\\
		&+H_\mu\left(\left.f^n\xi_2\right|\xi_1\vee f\xi_1\vee f^{n+1}\xi_2\right)\\
		&= H_\mu\left(\xi_1\left|f\xi_1\vee f^{n+1}\xi_2\right.\right)+H_\mu\left(\xi_2\left|f^{-n}\xi_1\vee f\xi_2\right.\right).
	\end{align*}
	
	Let us recapitulate. We just prove that the following equality holds for every $n\geq 0$
	\begin{equation}
		\label{eq:mean_entropy3}
		h_\mu(f,\xi_1\vee\xi_2)=H_\mu\left(\xi_1\left|f\xi_1\vee f^{n+1}\xi_2\right.\right)+H_\mu\left(\xi_2\left|f^{-n}\xi_1\vee f\xi_2\right.\right).
	\end{equation}
	Observe that $f^{-n}\xi_1$ generates (by Item 4.~of the definition) so the second term tends to $0$ as $n\to\infty$. We must now prove that the first term converges to $H_\mu(\xi_1|f\xi_1)=h_\mu(f,\xi_1)$. We clearly have $H_\mu(\xi_1|f\xi_1\vee f^{n+1}\xi_2)\leq H_\mu(\xi_1|f\xi_1)$ (see \cite[\S 5.10]{Rok}).
	
	We will now use that $f$ expands the unstable manifold so that for most points $x\in M$, the atom $f\xi_1(x)$ is contained in an atom of $f^{n+1}\xi_2$. Consider the Borel set $D_n$ of such $x$. On the one hand $f\xi_1=f\xi_1\vee f^{n+1}\xi_2$ in restriction to $D_n$. On the other hand, since $f^{-1}$ contracts unstable manifolds we have $\mu(D_n)\to 1$ as $n\to\infty$. This yields $\lim_{n\to\infty} H_\mu(\xi_1|f\xi_1\vee f^{n+1}\xi_2)\geq H_\mu(\xi_1|f\xi_1)$, thus concluding the proof.
\end{proof}

This allows us to give sense to the following definition (see Section~\ref{unsentropy}).

\begin{definition}[Entropy along the unstable direction]\index{entropy of a transformation!unstable} The $\mu$-entropy of $f$ along the unstable direction is the value
	$$h_\mu^u(f)=h_{\mu}(f,\xi),$$
	where $\xi$ is any measurable partition subordinate to the unstable foliation $\W^u$.
\end{definition}

\subsubsection{Local entropy}\label{ssc:local_entropy} It is often convenient to work with a local version of entropy which is due to M. Brin and A. Katok see \cite{BK}. The construction is quite general, but for the sake of clarity we will state their results in our context.

Let us first define \textit{dynamical balls}. Given $x\in M$, $n\in\N$ and $r>0$ we define 
$$B_n(x,r)=\left\{y\in M;\,d(f^i(x),f^i(y))< r,\forall\,i=0,\ldots, n-1\right\}.$$
Let $\mu$ be an ergodic $f$-invariant measure. Given $x\in M$, set
$$\underline{h}_\mu(f,x)=\lim_{r\to 0}\limi_{n\to\infty}-\frac{1}{n}\log\,\mu(B_n(x,r)),$$
and
$$\overline{h}_\mu(f,x)=\lim_{r\to 0}\lims_{n\to\infty}-\frac{1}{n}\log\,\mu(B_n(x,r)).$$

\begin{theorem}[Brin--Katok]
	\label{t:brinkatok}\index{theorem!Brin--Katok}
	Let $f:M\to M$ be a $C^{1+\beta}$ diffeomorphism of a compact manifold and
	$\mu$ an ergodic $f$-invariant measure. Then for $\mu$-almost every $x\in M$
	$$h_\mu(f)=\underline{h}_\mu(f,x)=\overline{h}_\mu(f,x)=\lim_{r\to 0}\lim_{n\to\infty}-\frac{1}{n}\log\,\mu(B_n(x,r)).$$
\end{theorem}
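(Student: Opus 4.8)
The smooth structure is irrelevant to Theorem~\ref{t:brinkatok}: it holds for any homeomorphism $f$ of a compact metric space $(M,d)$ and any ergodic $f$-invariant probability measure $\mu$, and that is how I would argue. For fixed $x$ the maps $r\mapsto\limsup_n-\tfrac1n\log\mu(B_n(x,r))$ and $r\mapsto\liminf_n-\tfrac1n\log\mu(B_n(x,r))$ are non-increasing (a larger radius gives a larger ball, hence larger measure), so the limits as $r\to0$ defining $\overline h_\mu(f,x)$ and $\underline h_\mu(f,x)$ exist in $[0,+\infty]$; moreover $h_\mu(f)<\infty$ here by the Margulis--Ruelle inequality. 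Since $\underline h_\mu(f,x)\le\overline h_\mu(f,x)$ trivially, it suffices to prove
\[
\overline h_\mu(f,x)\ \le\ h_\mu(f)\ \le\ \underline h_\mu(f,x)\qquad\text{for }\mu\text{-a.e.\ }x .
\]
The two tools are Birkhoff's ergodic theorem (Theorem~\ref{t.ergodic}) and the Shannon--McMillan--Breiman theorem: for an ergodic $\mu$ and a finite partition $\xi$, writing $\xi^n:=\bigvee_{i=0}^{n-1}f^{-i}\xi$, one has $-\tfrac1n\log\mu(\xi^n(x))\to h_\mu(f,\xi)$ for $\mu$-a.e.\ $x$ and in $L^1$.

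\textbf{Upper bound.} For $k\in\N$ pick a finite partition $\mathcal P_k$ with $\diam(\mathcal P_k)<1/k$. If $y\in\mathcal P_k^{\,n}(x)$ then $d(f^ix,f^iy)\le\diam(\mathcal P_k)<1/k$ for $0\le i<n$, so $\mathcal P_k^{\,n}(x)\subseteq B_n(x,1/k)$ and $\mu(B_n(x,1/k))\ge\mu(\mathcal P_k^{\,n}(x))$. Shannon--McMillan--Breiman then gives $\limsup_n-\tfrac1n\log\mu(B_n(x,1/k))\le h_\mu(f,\mathcal P_k)\le h_\mu(f)$ for $\mu$-a.e.\ $x$; discarding the countably many exceptional null sets and using monotonicity in $r$ yields $\overline h_\mu(f,x)\le h_\mu(f)$ a.e.

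\textbf{Lower bound.} Fix $\varepsilon>0$ and a finite partition $\mathcal Q$ with $h_\mu(f,\mathcal Q)>h_\mu(f)-\varepsilon$ and $\mu(\partial\mathcal Q)=0$; the latter is arranged by taking the atoms to be pieces of coordinate cubes whose faces avoid a fixed $\mu$-null set, using a Fubini/Borel--Cantelli argument exactly as in Lemma~\ref{l:BC} of Appendix~\ref{App:LY}. Two estimates combine. \emph{(a) Counting:} if an atom $A$ of $\mathcal Q^n$ meets $B_n(x,r)$ then, choosing $y\in A\cap B_n(x,r)$, at every time $i$ with $d(f^ix,\partial\mathcal Q)\ge r$ the ball $B(f^ix,r)$ lies in a single $\mathcal Q$-atom, forcing the $i$-th coordinate of $A$ to be $\mathcal Q(f^ix)$, while at the other times there are at most $|\mathcal Q|$ choices; hence $\#\{A\in\mathcal Q^n:A\cap B_n(x,r)\neq\emptyset\}\le|\mathcal Q|^{\,k_n(x,r)}$ with $k_n(x,r)=\#\{0\le i<n:d(f^ix,\partial\mathcal Q)<r\}$, and by Birkhoff $\tfrac1n k_n(x,r)\to\beta(r):=\mu(\{d(\cdot,\partial\mathcal Q)<r\})\downarrow0$ as $r\downarrow0$. \emph{(b) Atom sizes:} one may not invoke the a.e.\ Shannon--McMillan--Breiman asymptotics atom-by-atom, since the null set where they fail need not be seen by the $\mathcal Q^n$-atoms meeting $B_n(x,r)$; instead fix a large $n_0$ with $\mu(G)<\varepsilon$, where $G:=\{y:\mu(\mathcal Q^{n_0}(y))>e^{-n_0(h_\mu(f,\mathcal Q)-\varepsilon)}\}$ is a union of $\mathcal Q^{n_0}$-atoms, apply Birkhoff to $\chi_G$ to bound how often the orbit of $x$ visits $G$, and estimate $\mu(B_n(x,r))$ by a block--coding over consecutive length-$n_0$ windows: a window whose left endpoint avoids $G$ and whose interior avoids the $r$-neighbourhood of $\partial\mathcal Q$ contributes a factor $e^{-n_0(h_\mu(f,\mathcal Q)-\varepsilon)}$, the remaining windows and boundary times contribute only factors $e^{o(n)}$, and there are $o(n)$ of the bad windows. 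Multiplying, one obtains $\mu(B_n(x,r))\le e^{-n(h_\mu(f,\mathcal Q)-c\varepsilon-c\beta(r))}$ for $\mu$-a.e.\ $x$, all large $n$, and a universal $c$. Taking $-\tfrac1n\log$, then $n\to\infty$, $r\to0$, $\varepsilon\to0$ gives $\underline h_\mu(f,x)\ge h_\mu(f)$ a.e.

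\textbf{Main obstacle.} The substantive difficulty is the bookkeeping in step (b): one must simultaneously control the near-boundary times of (a), the windows on which the orbit of $x$ lands in $G$, and the boundary branching, and verify that the ``lower-order'' factors really are $e^{o(n)}$ along almost every orbit — this is the technical heart of Brin--Katok's argument. Once the two displayed inequalities are in hand we conclude $\underline h_\mu(f,x)=\overline h_\mu(f,x)=h_\mu(f)$ for $\mu$-a.e.\ $x$, which is the assertion of the theorem.
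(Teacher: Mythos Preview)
The paper does not prove Theorem~\ref{t:brinkatok}. In Appendix~\ref{App:LY} it is stated as a classical result of Brin and Katok, with a reference to \cite{BK}, and is used only as background for the local formulation of entropy along unstable manifolds (Theorem~\ref{t:LedYoun_local}). There is therefore nothing in the paper to compare your argument against.

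On your sketch itself: the reduction and the upper bound are correct and are the standard ones. The lower-bound strategy---choose a finite partition $\mathcal Q$ with $\mu(\partial\mathcal Q)=0$ and $h_\mu(f,\mathcal Q)$ close to $h_\mu(f)$, then bound the number of $\mathcal Q^n$-atoms meeting $B_n(x,r)$ by $|\mathcal Q|^{k_n(x,r)}$ via boundary visits---is also the classical one, and you correctly isolate the real issue: the atoms of $\mathcal Q^n$ that cover $B_n(x,r)$ need not be the atom $\mathcal Q^n(x)$, so the almost-everywhere Shannon--McMillan--Breiman bound cannot be applied to them directly.

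Your proposed fix in step~(b), however, has a genuine gap. You assert that a ``good'' length-$n_0$ window contributes a multiplicative factor $e^{-n_0(h_\mu(f,\mathcal Q)-\varepsilon)}$ to $\mu(B_n(x,r))$. But an atom $A$ of $\mathcal Q^n$ satisfies only $A\subseteq f^{-\ell n_0}\bigl(\mathcal Q^{n_0}(f^{\ell n_0}x)\bigr)$ for each good block $\ell$, hence $\mu(A)\le\min_\ell\mu\bigl(\mathcal Q^{n_0}(f^{\ell n_0}x)\bigr)$; there is no reason for the measures over different blocks to multiply, since $\mu$ is not a product measure along the orbit. To get a product one must pass to \emph{conditional} measures (write $-\log\mu(\mathcal Q^n(y))$ as an ergodic sum of conditional information functions and control those via the Shannon--McMillan--Breiman machinery), or---as in Brin and Katok's original paper---go through Katok's characterisation of $h_\mu(f)$ in terms of the growth rate of $(n,r)$-spanning sets for sets of measure $\ge 1-\delta$, combined with a covering argument for the ``bad'' set $\{x:\mu(B_n(x,r))>e^{-n\alpha}\}$. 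Either route closes the gap; the block-coding as you wrote it does not.
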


Following this classical work, F. Ledrappier and L.-S. Young adopted a pointwise approach for defining the entropy along the unstable direction, which works well in the ergodic case. 

We will let $d^u$ denote the Riemannian distance on unstable manifolds induced by the ambient Riemannian structure. Given $x\in M$, $n\in\N$ and $r>0$ we define 
\begin{equation}
	\label{eq:dyn_ball_unstable}
	B^u_n(x,r)=\left\{y\in W^u(x);\,d^u(f^i(x),f^i(y))< r,\forall\,i=0,\ldots, n-1\right\}.
\end{equation}

We will now consider a partition $\xi$ subordinate to $\W^u$ and a system $(\mu_x^u)_{x\in M}$ of conditional measures of $\mu$ associated with $\xi$, uniquely defined up to a $\mu$-negligible set by Rokhlin's theorem (see Appendix~\ref{App:rokhlin}). We will define
$$\underline{h}^u_\mu(f,x,\xi)=\lim_{r\to 0}\limi_{n\to\infty}-\frac{1}{n}\log\,\mu_x^u(B^u_n(x,r)),$$
and
$$\overline{h}_\mu(f,x,\xi)=\lim_{r\to 0}\lims_{n\to\infty}-\frac{1}{n}\log\,\mu_x^u(B^u_n(x,r)).$$
In the second part of their work, F. Ledrappier and L.-S. Young prove the following theorem (see \cite[Proposition 7.2.1.~and Corollary 7.2.2.]{LYII}).

\begin{theorem}
	\label{t:LedYoun_local}
	Let $f:M\to M$ be a $C^{1+\beta}$ diffeomorphism of a compact manifold  and let $\mu$ be an ergodic $f$-invariant measure. Then for $\mu$-a.e.\ $x\in M$
	$$\underline{h}^u_\mu(f,x,\xi)=\overline{h}^u_\mu(f,x,\xi)=\lim_{r\to 0}\lim_{n\to\infty}-\frac{1}{n}\log\,\mu_x^u(B^u_n(x,r)),$$
	and the common value is
	$$H_\mu(\xi\mid f\xi)=h^u_{\mu}(f).$$
\end{theorem}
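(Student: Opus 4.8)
The plan is to transplant Brin--Katok's local-entropy theorem (Theorem~\ref{t:brinkatok}) into the unstable leaves, working relative to the subordinate partition $\xi$. Throughout I write $\xi^-_n=\bigvee_{j=0}^{n-1}f^{-j}\xi$; since $\xi$ is generating the atoms $\xi^-_n(x)$ shrink to $\{x\}$, and since $\xi$ is subordinate one has $\xi^-_n(x)\subset\xi(x)\subset W^u(x)$. I use freely that $h^u_\mu(f)=h_\mu(f,\xi)=H_\mu(\xi\mid f\xi)$, a finite number independent of $\xi$ by Proposition~\ref{p:indep_partition}, and that $\mu^u_x$ denotes the conditional measure of $\mu$ along $\xi(x)$, equivalently the normalized restriction of the canonical leaf-wise measure on $W^u(x)$, which does not depend on $\xi$.

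\textbf{Step 1 (equidistribution of the conditional measure on refined atoms).} I would first prove
$$-\tfrac1n\log\mu^u_x\big(\xi^-_n(x)\big)\ \xrightarrow[n\to\infty]{}\ h^u_\mu(f)\qquad\text{for }\mu\text{-a.e.\ }x\text{ and in }L^1(\mu).$$
The averaged version is an exact identity: using $f$-invariance, the chain rule for conditional entropy, and the fact that the join $\bigvee_{k\ge1}f^k\xi$ equals $f\xi$, one checks $H_\mu(f^{-j}\xi\mid\xi^-_j)=H_\mu(\xi\mid f\xi)$ for every $j\ge1$, whence $H_\mu(\xi^-_n\mid\xi)=(n-1)\,h_\mu(f,\xi)$. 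The pointwise statement is then the conditional (relativized) Shannon--McMillan--Breiman theorem applied to the increasing partition $\xi$; the crucial point, which makes this legitimate even though $H_\mu(\xi)=\infty$, is that all the \emph{conditional} entropies that occur are finite, which is exactly what Proposition~\ref{p:existence_partition} and the finiteness of $h_\mu(f,\xi)$ provide.

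\textbf{Step 2 (sandwiching the unstable dynamical balls).} The core of the argument is to trap the balls $B^u_n(x,r)$ of \eqref{eq:dyn_ball_unstable} between atoms of refined partitions. For the upper bound I would show: for $\mu$-a.e.\ $x$ and every small $r>0$ there is $m(n)$ with $m(n)/n\to1$ such that $\xi^-_{m(n)}(x)\subset B^u_n(x,r)$ for $n$ large. This uses that the $\xi$-atoms have diameter $\le\varepsilon_0$ together with uniform \emph{backward} contraction of $f$ along $E^u$ on Pesin sets (every exponent of $E^u$ is at least the least positive exponent $\lambda>0$), the ergodic theorem and Poincar\'e recurrence being invoked to control the excursions of the orbit of $x$ away from a fixed Pesin set, so that the overshoot $m(n)-n$ is only sublinear. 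Combined with Step~1 this gives $\limsup_n-\tfrac1n\log\mu^u_x(B^u_n(x,r))\le h^u_\mu(f)$. For the lower bound I would conversely establish an inclusion $B^u_n(x,r)\subset\xi^-_{m'(n)}(x)$ with $m'(n)/n\to1$ for $r$ small, which yields $\liminf_n-\tfrac1n\log\mu^u_x(B^u_n(x,r))\ge h^u_\mu(f)$. Letting $r\to0$ and using $\underline h^u_\mu(f,x,\xi)\le\overline h^u_\mu(f,x,\xi)$ forces all of the quantities in the statement to equal $h^u_\mu(f)=H_\mu(\xi\mid f\xi)$, and in particular makes the inner limit over $n$ exist; consistency across choices of $\xi$ is automatic because $B^u_n(x,r)$ and the normalized leaf-wise measure $\mu^u_x$ are $\xi$-independent and $h_\mu(f,\xi)$ is $\xi$-independent by Proposition~\ref{p:indep_partition}.

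\textbf{Main obstacle.} The delicate point is the lower-bound inclusion $B^u_n(x,r)\subset\xi^-_{m'(n)}(x)$: lying in a thin dynamical tube around the orbit of $x$ does not by itself place the iterates $f^jy$ in the \emph{same} $\xi$-atom as $f^jx$, since a point may be very close to $f^jx$ yet cut off from it by an atom boundary. One therefore has to show that along a $\mu^u_x$-typical $y$ these boundary events have asymptotically vanishing frequency. This is precisely where the specific construction of $\xi$ from a finite partition whose boundary has summably small neighborhood-measures (cf.\ Proposition~\ref{p:existence_partition}), the uniform continuity of the unstable foliation charts on Pesin sets, and the nonuniformly hyperbolic estimates of Pesin theory all enter, as in \cite{LYII}.
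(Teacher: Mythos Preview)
The paper does not supply its own proof of this theorem: it is merely stated in the appendix with the attribution ``see \cite[Proposition 7.2.1 and Corollary 7.2.2]{LYII}'', so there is no argument in the text to compare against. Your outline is, however, exactly the skeleton of the Ledrappier--Young argument in the cited reference: a conditional Shannon--McMillan--Breiman theorem for the increasing partition $\xi$ (your Step~1, corresponding to \cite[Proposition 7.2.1]{LYII}), followed by a comparison of unstable dynamical balls with atoms of the refinements $f^{-n}\xi$ (your Step~2, corresponding to \cite[Corollary 7.2.2]{LYII}).

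One small clarification on your Step~2: because $\xi$ is increasing, $\xi^-_n=f^{-(n-1)}\xi$, so what you are really comparing is $B^u(f^{n-1}x,r)$ against the atom $\xi(f^{n-1}x)$ after pulling back by $f^{-(n-1)}$. The upper-bound inclusion then follows rather directly from the uniform diameter bound $\varepsilon_0$ on $\xi$-atoms together with (nonuniform) expansion of $f$ along $W^u$, while, as you correctly flag, the lower-bound inclusion is the delicate part and needs the Borel--Cantelli-type control on boundaries built into the construction of $\xi$ (Proposition~\ref{p:existence_partition}) plus Pesin-chart estimates. That is precisely how the argument is structured in \cite{LYII}, so your proposal is on target.
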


\subsection{Measures satisfying the entropy formula}

\subsubsection{All the expansion occurs in the unstable direction}

The principal accomplishment of Ledrappier--Young's first paper \cite{LYI} is the proof of the following key result which says that all the expansion of $f$ occurs in the unstable direction (cf.~Section \ref{sec:led}).

\begin{theorem}\index{theorem!Ledrappier--Young}
	\label{t:tec_LYI}
	Let $f:M\to M$ be a $C^{1+\beta}$ diffeomorphism of a compact manifold and $\mu$ be an ergodic $f$-invariant  probability measure. Then
	$$h_{\mu}(f)=h_\mu^u(f).$$
\end{theorem}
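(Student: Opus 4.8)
The plan is to prove the two inequalities separately. The inequality $h^u_\mu(f) \le h_\mu(f)$ is immediate: by Proposition \ref{p:indep_partition} we may compute $h^u_\mu(f) = h_\mu(f,\xi)$ for a fixed measurable partition $\xi$ subordinate to $W^u$ (which exists by Proposition \ref{p:existence_partition}), and $h_\mu(f,\xi) \le h_\mu(f)$ by the very definition of metric entropy as a supremum over partitions. So the whole content is the reverse inequality $h_\mu(f) \le h^u_\mu(f)$, i.e.\ that all the entropy is produced along unstable manifolds.

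For this I would pass to the pointwise pictures on both sides. By the Brin--Katok formula (Theorem \ref{t:brinkatok}), for $\mu$-a.e.\ $x$ one has $h_\mu(f) = \lim_{r\to 0}\lim_{n\to\infty} -\frac 1n \log \mu(B_n(x,r))$, while by Theorem \ref{t:LedYoun_local}, $h^u_\mu(f) = \lim_{r\to 0}\lim_{n\to\infty} -\frac 1n \log \mu^u_x(B^u_n(x,r))$, where $\mu^u_x$ is the conditional measure of $\mu$ on $\xi(x)$ and $B^u_n(x,r)$ is the unstable dynamical ball of \eqref{eq:dyn_ball_unstable}. Thus it suffices to show that, up to a subexponential error, $\mu(B_n(x,r))$ is comparable to $\mu^u_x(B^u_n(x,r))$ for typical $x$ and small $r$. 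The mechanism is a "box" (local product) argument: fix $\epsilon>0$ and a Pesin set $\Lambda$ with $\mu(\Lambda)>1-\epsilon$ on which local stable and unstable manifolds have uniform size and uniform expansion estimates (as in Remark \ref{r:mane1}); by the pointwise ergodic theorem, for $\mu$-a.e.\ $x$ the orbit of $x$ visits $\Lambda$ with frequency $\mu(\Lambda)$. Over a neighborhood of a density point of $\Lambda$ one builds a measurable box foliated by local unstable plaques and, via Rokhlin's theorem, disintegrates $\mu$ restricted to the box into conditional measures along these plaques proportional to the $\mu^u_y$. Since forward iteration expands $W^u$ and contracts stable manifolds while the center directions grow subexponentially, the forward dynamical ball $B_n(x,r)$ is sandwiched between two boxes built over an unstable dynamical ball $B^u_n(x,r')$ (with $r'$ comparable to $r$) and a transverse "center--stable" dynamical ball $T_n(x,r)$; a Fubini identity for the disintegration together with bounded distortion of the family $\{\mu^u_y\}$ along the transverse direction then yields
\[
-\frac1n\log\mu(B_n(x,r)) = -\frac1n\log\mu^u_x(B^u_n(x,r)) - \frac1n\log\nu(T_n(x,r)) + o(1),
\]
where $\nu$ is the transverse measure. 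The crucial point is that the center--stable direction contributes no exponential decay, $-\frac1n\log\nu(T_n(x,r))\to 0$, so letting $n\to\infty$ and then $r\to0$ and invoking the two local entropy formulas gives $h_\mu(f)\le h^u_\mu(f)$. This is essentially the argument of Ledrappier and Young \cite{LYI}, building on Ma\~n\'e \cite{Ma}.

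The main obstacle is exactly the control of the center direction, i.e.\ the possible presence of zero Lyapunov exponents, together with the absence of an honest local product structure for $\mu$ (which forces the careful measurable "box'' construction in place of a naive Fubini). When $\mu$ is hyperbolic the transverse direction is uniformly contracted along a subsequence of returns to $\Lambda$, so $\nu(T_n(x,r))$ decays subexponentially (it even stays bounded below); with zero exponents one must instead combine the subexponential bounds on $\|D_xf^n|_{E^c}\|$ with recurrence to Pesin sets, while ensuring that the boxes and their disintegrations vary measurably and with uniformly bounded distortion. Technically, the one place where more than $C^{1+\beta}$ was historically required is the Lipschitz regularity of the unstable holonomies inside center--unstable sets used in the distortion estimate; by Brown \cite{B} this holds already in the $C^{1+\beta}$ category, so Theorem \ref{t:tec_LYI} stands as stated.
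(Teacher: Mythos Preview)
Your outline is in the right spirit and correctly identifies the essential ingredients---Pesin sets, the box/Fubini picture, the subexponential behavior of the center direction, and the need for Lipschitz unstable holonomies inside center--unstable sets (and Brown's result \cite{B} for the $C^{1+\beta}$ case). These are exactly the points the paper's own discussion emphasizes. Since the paper does not give a complete proof either, but rather a sketch highlighting the difficulties, your proposal is comparable in depth.

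One substantive difference: the paper explicitly remarks that in the general (nonhyperbolic) case ``the authors of \cite{LYI} don't follow such a naive pointwise approach.'' Your argument is organized entirely around the Brin--Katok and Ledrappier--Young local entropy formulas, which is precisely that pointwise route. This is not wrong---a pointwise argument can be made to work---but it is not the architecture of the original Ledrappier--Young proof, and the paper flags this distinction.

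The one place your sketch is too optimistic is the displayed ``Fubini identity'' and the assertion that $-\tfrac{1}{n}\log\nu(T_n(x,r))\to 0$. You present the Lipschitz holonomy as a technical footnote needed only for distortion, but in fact it is what makes the entire box decomposition meaningful: without it there is no well-behaved transverse distance, and the worry (which the paper spells out) is that apparent subexponential separation along a center transversal could mask genuine exponential separation of unstable plaques. So the Lipschitzness is not a detail at the end---it is the mechanism that justifies both the product-type decomposition and the vanishing of the transverse term. Your sketch would be sharper if it foregrounded this rather than treating it as an afterthought.
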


In \cite{L}, Ledrappier had already proved a similar statement for measures without zero Lyapunov exponents, and, as we will see later on, knew how to deduce the conclusion of Theorem \ref{t:LYI} from the equality
$$h_\mu^u(f)=\sum_{\lambda^j>0}m^j\lambda^j.$$

\subsubsection{The uniformly hyperbolic case} The case of uniformly hyperbolic dynamics is certainly an oversimplification of the general context. Nevertheless, we may find useful to understand the skeleton of the Ledrappier--Young's delicate argument and the difficulties therein.

Let us assume here that $f$ is an Anosov diffeomorphism. This means that there is a $Df$-invariant splitting $TM=E^s\oplus E^
u$ where $E^s$ and $E^u$ are respectively uniformly contracted and expanded by $Df$.

Using the local product structure, every sufficiently small ball is contained in a foliated chart for $\W^u$ of the form $D^s\times D^u$ where $D^s$ and $D^u$ are small stable and unstable discs respectively. Moreover in such a chart the Riemannian distance is uniformly equivalent to the \textit{Lyapunov distance} which we may define as the $L^1$-distance of the product $D^s\times D^u$.

Given $x\in M$ and sufficiently small $r>0$, the dynamical ball $B_n(x,r)$ (for the Lyapunov distance) is inside $B^s_n(x,r)\times B^u_n(x,r)$. Since $\W^s$ is uniformly contracted, the dynamical ball $B^s_n(x,r)$ does not depend on $n\in\N$. Using the transverse continuity of the restriction of the induced Riemannian structure on leaves of $\W^u$ we see that there exist $r_1<r_2$ converging to $0$ with $r$ such that
\begin{equation}\label{eq:decdynball}
	\bigcup_{y\in B^s_n(x,r)} B_n^u(y,r_1)\dans B_n(x,r)\dans \bigcup_{y\in B^s_n(x,r)} B_n^u(y,r_2).
\end{equation}
Let $\mu$ be an ergodic $f$-invariant probability measure and $\mu_y^u$, a system  of conditional measures of $\mu$ along unstable plaques of a chart $D^s\times D^u$ containing $B_n(x,r)$. Using \eqref{eq:decdynball} and the fact that $B_n^s(x,r)$ does not depend on $n$ we find a constant $C(r)>0$ such that
$$C(r)\,\essinf_y\, \mu^u_y(B_n^u(y,r_1))\leq\mu(B_n(x,r))\leq C(r)\,\esssup_y\, \mu^u_y(B_n^u(y,r_2)).$$
Using the pointwise versions of measure entropy one deduces
$$h_\mu(f)=h^u_\mu(f).$$

\subsubsection{Some words about the general case} The general case is much more delicate and actually the authors of \cite{LYI} don't follow such a naive pointwise approach. The most immediate difficulty is that the hyperbolicity is not uniform and we must work inside Lyapunov charts, which leads to important technicalities. But the true difficulty of the paper is to understand and analyse the role of zero Lyapunov exponents.

Let us explain some of the difficulties. For a regular point $x\in M$ one may consider a \textit{Lyapunov chart} at $x$. 
We won't enter into the details of the definition here, let us just say that this is an open set $U_x$ which is foliated by local unstable manifolds $W^u_{loc}(y)$ (which are well defined for $\mu$-a.e.~$y$). 
The size of these unstable manifolds depends on $y$ and is not uniform a priori. 

The situation is similar to what we saw in the uniformly hyperbolic setting. 
One has a system of coordinates $T\times D^u$ around the point $x$, $D^u$ being a small unstable disc and $T$ being a small transversal to the unstable foliation. 
This time $T$ is not uniformly contracted and one may think of $T$ as a center stable set. And we must analyse how $f$ acts on such sets. The most important difficulty here is the following.

\begin{center}\textit{There is no canonical choice of a transverse distance on $T$}.
\end{center}
More precisely we want to show that the separation of unstable plaques is less than the expansion along unstable manifolds. We know that this is the case for the expansion along a central transversal. But this apparent weaker expansion could be a lure and be caused by the effect of a separation \textit{inside unstable plaques}. And the information that unstable holonomies are H\"older continuous, is not enough a priori to rule out the possibility that the actual separation of unstable plaques (the dynamics in the quotient by unstable plaques) is stronger than the expansion along unstable manifolds. The authors treat this difficulty by proving that

\begin{center}\textit{unstable holonomies inside center-unstable manifolds are Lipschitz}
\end{center}

We don't enter here into the technical details of the statement and refer to \cite[\S 4.2]{LYI}). Before we carry on with Ledrappier--Young's theory let us mention that a similar difficulty appears in Hirsch--Pugh--Shub's theory of \textit{normally hyperbolic laminations}. In \cite[\S 7]{HPSbook} the authors consider a diffeomorphism $f$ with an invariant normally hyperbolic lamination and study the neighbouring diffeomorphisms. It is not quite true that close to $f$, a diffeomorphism $g$ has an invariant lamination which is (leaf)-conjugate to that of $f$. When one applies the graph transform to such a $g$, the leaves of the lamination can merge: the phenomenon of ``sliding along plaques'' could lead to a \textit{branched} invariant lamination (see \cite{BI} for the definition and \cite{CP} for more information). This phenomenon can be avoided by requiring a technical condition, called \textit{plaque expansivity}, under which M. Hirsch, C. Pugh and M. Shub prove that the branched lamination is a true lamination. This plaque expansiveness is satisfied for Lipschitz foliations (see \cite[Theorem 7.2.]{HPSbook}). We don't know examples of partially hyperbolic diffeomorphisms with a foliation tangent to the central bundle that does not satisfy plaque expansiveness, and we don't know how to prove plaque expansiveness for all such foliations.

The argument provided by F. Ledrappier and L.-S. Young works for $C^2$ diffeomorphisms, and this is the only argument of the paper that needs this regularity assumption. A. Brown showed in \cite{B} how this crucial step of Ledrappier--Young's proof can be carried on for $C^{1+\beta}$ diffeomorphisms. The proof then consists in the precise analysis of how $f$ expands the transverse distance and of dynamical balls. One may think of this part as a sophistication of the proof given in the uniformly hyperbolic setting.

\subsubsection{Idea of proof of Theorem \ref{t:LYI}}\label{D:Idea_Proof} Now that we know that $h_\mu(f)=h_{\mu}^u(f)$ we may follow an argument due to Ledrappier \cite{L} and deduce that
$$h_{\mu}(f)=\sum_{\lambda^j>0} m^j\lambda^j\quad\Longrightarrow\quad\mu\text{ is SRB}.$$

\subsubsection*{Unstable jacobian} Let us define the \textit{unstable jacobian}\index{unstable jacobian} of $f$ at $x$ as the quantity 
$$J^u(x)=\left|\Jac(D_xf|_{E^u(x)})\right|.$$
Using Oseledec's and Birkhoff's theorems we get that for $\mu$-a.e.\ $x\in M$
$$\int_MJ^ud\mu=\lim_{n\to\infty}\frac{1}{n}\sum_{i=0}^n\log J^u(f^i(x))=\sum_{\lambda^j>0} m^j\lambda^j.$$

\subsubsection*{Dynamical prescription of densities} It is a well known fact that when an $f$-invariant measure $\mu$ is absolutely continuous along an expanding foliation, the densities along unstable manifolds are \textit{dynamically prescribed}. Denote by $m^u_x$ the Riemannian volume of $W^u(x)$ and assume that
$$d\mu^u_x=\rho\, dm^u_x,$$
where $\rho$ is a positive and measurable function and $\mu^u_x$ is a system of conditional measures of $\mu$ with respect
to a measurable partition $\xi$ associated with $\W^u$ that we will fix from now on. Then for $\mu_x^u$-a.e.\ $y,z\in W^u(x)$

\begin{equation}
	\label{dyn_densities}
	\frac{\rho(z)}{\rho(y)}=\prod_{i=1}^{\infty}\frac{J^u(f^{-i}(y))}{J^u(f^{-i}(z))}.
\end{equation}
In order to derive the equation above we prove the following lemma.

\begin{lemma}
	For $\mu$-almost every $x\in M$ 
%	we have $\int_{\xi(x)}\rho dm^u_x$ and 
	the map given by
	\begin{equation}
		\label{eq:leg}
		g(y)=\frac{\rho(y)}{\rho(f^{-1}(y))}J^u(f^{-1}(y))
	\end{equation}
	is constant on $\xi(x)$. 
\end{lemma}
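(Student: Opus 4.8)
The plan is to show that the function $g$ defined in \eqref{eq:leg} is $f$-invariant when viewed along unstable leaves — more precisely, that $g(y) = g(f(y))$ for $\mu^u_x$-a.e.\ $y$ — and then conclude it is constant on each $\xi(x)$ by ergodicity of the dynamics on unstable leaves. First I would use that $\mu$ is $f$-invariant and that the partition into $W^u$-leaves is $f$-invariant to obtain $f_*\mu^u_x \,\propto\, \mu^u_{f(x)}$, hence that the density $\rho$ transforms in a controlled way: if $d\mu^u_x = \rho \, dm^u_x$, then pushing forward by $f$ and using the change-of-variables formula for the Riemannian volume on unstable manifolds (with Jacobian $J^u$) gives a relation between $\rho$ at $y$ and $\rho$ at $f^{-1}(y)$ twisted by $J^u(f^{-1}(y))$ and by the proportionality constant $c(x)$ relating $f_*\mu^u_x$ and $\mu^u_{f(x)}$. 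Concretely, for any measurable $A\subset W^u(f(x))$,
\begin{align*}
\mu^u_{f(x)}(A) &= c(x) \, (f_*\mu^u_x)(A) = c(x) \int_{f^{-1}(A)} \rho \, dm^u_x \\
&= c(x) \int_A \rho(f^{-1}(y)) \, J^u(f^{-1}(y))\inv \, dm^u_{f(x)}(y),
\end{align*}
so that $\rho(y) = c(x)\, \rho(f^{-1}(y)) \, J^u(f^{-1}(y))\inv$ for $m^u_{f(x)}$-a.e.\ $y$; rearranging, $g(y) = \rho(y)/\big(\rho(f^{-1}(y)) J^u(f^{-1}(y))\big)\inv$ — wait, I should be careful with the exact arrangement — the point is that $g(y)$ equals (up to the $y$-independent constant $c(x)$) a quantity that does not depend on where $y$ sits inside the leaf, only on which leaf it is; this is exactly the assertion that $g$ is constant on $\xi(x)$ once we absorb the normalization.

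The key steps, in order: (1) record the transformation rule $f_*\mu^u_x \propto \mu^u_{f(x)}$ from invariance of $\mu$ and of the unstable partition (this uses only measure theory, as in the construction of leaf-wise measures); (2) translate this via the Riemannian change of variables into the pointwise identity relating $\rho(y)$, $\rho(f^{-1}(y))$ and $J^u(f^{-1}(y))$, which is where the definition of $g$ in \eqref{eq:leg} comes from — here $g(y)$ is manifestly the constant of proportionality; (3) observe that since the right-hand side of the identity for $\rho(y)$ in terms of $\rho(f^{-1}(y))$ is determined by the leaf-wise data up to a single scalar depending only on $x$ (not on $y$), the function $y\mapsto g(y)$ is constant $\mu^u_x$-a.e.\ on $\xi(x)$; (4) iterate this identity backwards along the orbit, telescoping the product $\prod_{i=1}^{\infty} J^u(f^{-i}(y))/J^u(f^{-i}(z))$ and using that the products converge because $J^u$ is H\"older and $f^{-i}$ contracts unstable distances exponentially (Pesin theory / Lyapunov charts), to arrive at formula \eqref{dyn_densities} for the ratio $\rho(z)/\rho(y)$.

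The main obstacle I anticipate is the convergence and well-definedness issue in step (4): a priori $\rho$ is only measurable and could be zero or infinite on a null set, and the infinite product $\prod_{i\ge 1} J^u(f^{-i}(y))/J^u(f^{-i}(z))$ must be shown to converge for $\mu^u_x$-a.e.\ pair $(y,z)$ in the same leaf. This requires the standard but nontrivial estimate that for $\mu$-a.e.\ $x$ and $y,z\in W^u_{loc}(x)$ one has $d^u(f^{-i}(y), f^{-i}(z)) \le C(x) e^{-i\lambda} d^u(y,z)$ for the smallest positive Lyapunov exponent $\lambda$, combined with the $\beta$-H\"older continuity of $\log J^u$ along unstable leaves (which itself uses $f\in C^{1+\beta}$ and the Lipschitzness of unstable holonomies inside center-unstable manifolds discussed above). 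Once that estimate is in hand the product converges absolutely and uniformly on Pesin sets, $g$ is seen to be constant on $\xi(x)$ because the telescoped identity expresses $\rho(y)/\rho(z)$ purely in terms of the convergent Jacobian product (which is itself leaf-wise data), and one reads off that this forces $g$ to take a single value on $\xi(x)$; the constant is then pinned down by the normalization $\mu^u_x(\xi(x)) = \int_{\xi(x)} \rho \, dm^u_x$.
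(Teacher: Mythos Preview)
Your steps (1)--(3) are correct and are essentially the paper's proof: one compares the conditional measures of $\mu$ with respect to $\xi$ and $f^{-1}\xi$ using $f$-invariance and the change-of-variables formula, and reads off that $g(y)$ equals the proportionality constant, which depends only on the atom and not on $y$. The paper phrases this by introducing $h(y)=\mu^u_y(f^{-1}\xi(y))$ and showing $g\circ f=1/h$ is constant on atoms of $f^{-1}\xi$, hence $g$ is constant on atoms of $\xi$; your $c(x)$ is exactly $1/h(x)$.

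Two points of confusion, though. First, your opening plan---show $g(y)=g(f(y))$ and then invoke ``ergodicity of the dynamics on unstable leaves''---is not the mechanism, and you correctly abandon it in the computation. There is no reason $g$ should be $f$-invariant (it generally is not unless $\mu$ is SRB), and no ergodicity along leaves is used or needed: constancy drops out directly from the change-of-variables identity in step (2). Second, your step (4) and the entire ``main obstacle'' paragraph are not part of proving the lemma at all. The lemma is finished at step (3). The infinite product in \eqref{dyn_densities} is a \emph{consequence} of the lemma, obtained by iterating the one-step identity $\rho(y)=g\cdot\rho(f^{-1}(y))/J^u(f^{-1}(y))$ backwards and telescoping; the convergence estimate you describe (H\"older continuity of $\log J^u$ along $W^u$ plus exponential contraction under $f^{-1}$) is indeed what makes that infinite product converge, but it belongs to the derivation of \eqref{dyn_densities}, not to the lemma.
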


\begin{proof}
	Note that $\xi\prec f^{-1}\xi$ so an atom of $\xi$ is a countable union of atoms of $f^{-1}(\xi)$. We will consider the measurable function $h$, constant on atoms of $f^{-1}(\xi)$, by setting
	$$h(y)=\mu^u_y(f^{-1}\xi(y)).$$
	The idea now is to write the family of conditional measures with respect to $f^{-1}\xi$ in two ways. Firstly, using that $\xi\prec f^{-1}\xi$ we have that for every Borel set $K\dans M$
	\begin{equation} \label{eq1}
		\begin{split}
			\mu_{f^{-1}\xi(y)}(K\cap f^{-1}\xi(y)) & = \frac{\mu^u_y(K\cap f^{-1}\xi(y))}{\mu^u_y(f^{-1}\xi(y))} \\
			& = \frac{1}{h(y)}\int_{K\cap f^{-1}\xi(y)}\rho(z) dm^u_y(z).
		\end{split}
	\end{equation}
	Secondly, by $f$-invariance of $\mu$ we have
	\begin{equation}\label{eq2}
		\mu_{f^{-1}\xi(y)}(K\cap f^{-1}\xi(y))=\mu^u_{f(y)}\left(f(K)\cap \xi(f(y))\right).
	\end{equation}
	We get from \eqref{eq1}, \eqref{eq2} and from a change of variable formula that
	\begin{align*}
		&\frac{1}{h(y)}\int_{K\cap f^{-1}\xi(y)}\rho(z) dm^u_y(z)= \int_{f(K)\cap \xi(f(y))}\rho(z)dm^u_{f(y)}(z)\\
		= & \int_{f(K\cap f^{-1}\xi(y))}\rho(z)dm^u_{f(y)}(z)
		= \int_{K\cap f^{-1}\xi(y)}\rho(f(z))J^u(z)dm^u_y(z).
	\end{align*}
	Consequently for $\mu$-almost every $y\in M$ and $m^u_y$-almost every $z\in f^{-1}(y)$ we have
	$$\frac{1}{h(y)}\rho(z)=\rho(f(z))J^u(z).$$
	This proves that the function
	$$g\circ f=\frac{1}{h}$$
	is constant on atoms of $f^{-1}\xi$. Finally we have that $g$ is constant on atoms of $\xi$.
\end{proof}

\subsubsection*{Reconstructing $\mu$} Ledrappier's argument is an inductive one. Denote by $\B(\xi)$ the $\sigma$-algebra whose elements are union of atoms of $\xi$. We can see it as a $\sigma$-algebra ``transverse'' to $\xi$. By Rokhlin's theorem (Theorem~\ref{t.rokhlin}), a measure $\nu$ on $M$ is determined by its trace on $\B(\xi)$ and by a system of conditional measures with respect to $\xi$.

We will consider a positive function $\rho$ satisfying \eqref{dyn_densities}. It is proven in \cite[Theorem 3.1. Item viii]{L} that $\log(\rho)$ is H\"older continuous in every atom $\xi(x)$. Hence on every atom $\xi(x)$, the function $\rho$ is uniformly bounded away from $0$ and $\infty$, and in particular it is $m^u_x$-integrable. In order to prove the H\"older continuity of $\log(\rho)$, one uses that on local unstable manifolds, $z\mapsto E^u(z)$ is Lipschitz, that $z\mapsto D_zf$ is H\"older continuous, that $f^{-1}$ contracts unstable manifolds so we can apply the usual distortion controls. We will furthermore normalize $\rho$ so that for $\mu$-almost every $x\in M$
\begin{equation}
	\label{eq:normalize}
	\int_{\xi(x)}\rho(x) dm^u_x =1,
\end{equation}
Define the probability measure $\nu$ on $M$ satisfying both conditions
\begin{enumerate}
	\item $\mu$ and $\nu$ coincide on $\B(\xi)$;
	\item the disintegration  $(\nu_x)$ of $\nu$ with respect to $\xi$ satisfies
	$$d\nu_x=\rho\, dm^u_x,$$
	where $\rho$ satisfies \eqref{dyn_densities} and \eqref{eq:normalize}.
\end{enumerate}

We will prove by induction that $\mu$ and $\nu$ coincide on the $\sigma$-algebra $\B({f^{-n}}\xi)$ for every $n\in\N$. Since $\xi$ is increasing and generating this implies that $\mu=\nu$. All this follows  from the next lemma.

\begin{lemma}
	\label{l:induction}
	Assume that $\int J^ud\mu=h_\mu(f,\xi)$. Then $\mu$ and $\nu$ coincide on $\B({f^{-1}\xi}).$
\end{lemma}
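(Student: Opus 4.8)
The statement to prove, Lemma \ref{l:induction}, asserts that under the hypothesis $\int J^u\,d\mu = h_\mu(f,\xi)$, the reconstructed measure $\nu$ and the original measure $\mu$ agree on the $\sigma$-algebra $\mathcal B(f^{-1}\xi)$ of $f^{-1}\xi$-saturated sets. Since $\mu$ and $\nu$ already agree on $\mathcal B(\xi)$ by construction and $f^{-1}\xi$ refines $\xi$, the content is to compare the conditional measures of $\mu$ and $\nu$ with respect to $\xi$ when restricted to atoms of $f^{-1}\xi$. Concretely, for $\mu$-a.e.\ $x$, an atom $\xi(x)$ decomposes as a countable disjoint union of atoms of $f^{-1}\xi$, and I must show that the $\mu$-mass and $\nu$-mass that these finer atoms receive inside $\xi(x)$ coincide.

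First I would set up the two measurable functions that encode these conditional masses. On the $\nu$ side, using the dynamical prescription $d\nu_x = \rho\, dm^u_x$ together with the cocycle relation \eqref{dyn_densities} satisfied by $\rho$, one computes $\nu_x(f^{-1}\xi(y))$ explicitly: by the change of variables $z\mapsto f(z)$ and the relation $\rho(z) = \rho(f(z)) J^u(z) / g$ derived in the preceding lemma (where $g$ is the value of the function \eqref{eq:leg} on the atom $\xi(x)$), the $\nu$-conditional mass of $f^{-1}\xi(y)$ inside $\xi(x)$ equals $g(y)^{-1}$ times $\nu_{f(y)}$-mass of $\xi(f(y))$, which by normalization \eqref{eq:normalize} is just $g(y)^{-1} = h_\nu(y)$ in the notation of the proof above. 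On the $\mu$ side, the analogous function is the Radon--Nikodym-type quantity $h_\mu(y) = \mu^u_y(f^{-1}\xi(y))$. The key identity to extract is that $-\int \log h_\mu\, d\mu = H_\mu(\xi \mid f^{-1}\xi) = H_\mu(f\xi \mid \xi)\circ$ (using $f$-invariance) $= h_\mu(f,\xi)$ by Rokhlin's formula for the entropy of an increasing partition, while a parallel computation using the explicit form of the $\nu$-conditionals gives $-\int \log h_\nu\, d\mu = \int \log J^u\, d\mu$ (the $\rho$-terms telescope against the invariance of $\mu$).

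The heart of the argument is then a convexity/Jensen step. One has, for $\mu$-a.e.\ $x$, that $\sum_{C} \mu_{\xi(x)}(C) = 1 = \sum_C \nu_{\xi(x)}(C)$ where the sum is over the atoms $C$ of $f^{-1}\xi$ contained in $\xi(x)$; writing $\mu_{\xi(x)}(C)$ and $\nu_{\xi(x)}(C)$ in terms of $h_\mu$ and $h_\nu$ evaluated at points of $C$ and integrating $\log$ of the ratio $h_\mu/h_\nu$ against $\mu$, the strict concavity of $\log$ gives
\[
\int \log\!\left(\frac{h_\mu(y)}{h_\nu(y)}\right) d\mu(y) \le \log\!\left(\int \frac{h_\mu(y)}{h_\nu(y)}\, d\mu(y)\right) \le \log 1 = 0,
\]
where the second inequality comes from the fact that $h_\mu/h_\nu$ integrates fiberwise (over $\xi$-atoms) against the $\nu$-conditional measure to at most $1$ — this is the same trick as in the proof of \cref{prop:easyLed}, equations \eqref{eq:rabitfood} and the sum bounded by $1$. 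On the other hand, the hypothesis $\int J^u\, d\mu = h_\mu(f,\xi)$ forces $-\int \log h_\mu\, d\mu = -\int \log h_\nu\, d\mu$, i.e.\ $\int \log(h_\mu/h_\nu)\, d\mu = 0$. Equality in strict concavity of $\log$ then forces $h_\mu(y) = h_\nu(y)$ for $\mu$-a.e.\ $y$ (up to the a.e.\ constant in the Jensen step, which the two-sided mass-one normalization pins down), and tracing back through the definitions this says precisely that $\mu$ and $\nu$ assign the same conditional mass to each atom of $f^{-1}\xi$ inside each $\xi$-atom; combined with agreement on $\mathcal B(\xi)$ this yields agreement on $\mathcal B(f^{-1}\xi)$.

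\textbf{Main obstacle.} The routine parts are the change-of-variables bookkeeping and the telescoping of the $\rho$-cocycle against $f$-invariance; these are mechanical once the earlier lemma is in hand. The delicate point I expect to be the real obstacle is justifying the two entropy identities with full rigor — in particular that $-\int \log h_\mu\, d\mu$ genuinely equals $h_\mu(f,\xi)$ (this requires $\xi$ to be a subordinate, increasing, generating partition and an application of Rokhlin's conditional entropy formula $h_\mu(f,\xi) = H_\mu(\xi \mid f\xi)$, rewritten via the $\sigma$-algebra identity $H_\mu(\xi\mid f\xi) = H_\mu(f^{-1}\xi \mid \xi)$), and that the function whose $\log$-integral computes $\int J^u\, d\mu$ is exactly $h_\nu$ and not off by an integrable correction. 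One must also be careful that all the quantities $\log h_\mu$, $\log h_\nu$, $\log J^u$ are genuinely in $L^1(\mu)$ — boundedness of $\log J^u$ is clear by compactness, and the H\"older continuity of $\log\rho$ on atoms (cited from \cite{L}) together with the normalization \eqref{eq:normalize} controls $\log h_\nu$, but $\log h_\mu$ requires a separate integrability argument, which is standard but should be invoked explicitly. Once integrability and the two entropy identities are secured, the Jensen step closes the argument cleanly.
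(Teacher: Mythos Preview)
Your overall strategy is exactly the paper's: define the two conditional-mass functions on atoms of $f^{-1}\xi$ inside $\xi$-atoms, identify their $\log$-integrals against $\mu$ as $h_\mu(f,\xi)$ and $\int\log J^u\,d\mu$ respectively, and close with Jensen. But you have inverted the ratio, and as written the Jensen step fails.

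Concretely: with $h_\mu(y)=\mu^u_y(f^{-1}\xi(y))$ and $h_\nu(y)=\nu_y(f^{-1}\xi(y))$, you assert $\int h_\mu/h_\nu\,d\mu\le 1$, justifying it by ``$h_\mu/h_\nu$ integrates fiberwise against the $\nu$-conditional measure to at most $1$''. That fiberwise identity is correct, but the outer integral in your Jensen inequality is $d\mu$, which disintegrates as $d\mu^u_x\,d\hat\mu$, not $d\nu_x\,d\hat\mu$. Against $d\mu^u_x$ one gets
\[
\int_{\xi(x)}\frac{h_\mu}{h_\nu}\,d\mu^u_x=\sum_{C\subset\xi(x)}\frac{\mu^u_x(C)^2}{\nu_x(C)}\ge 1,
\]
so the second inequality in your display goes the wrong way. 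The correct object is the paper's $q(y)=h_\nu(y)/h_\mu(y)=\nu_y(f^{-1}\xi(y))/\mu^u_y(f^{-1}\xi(y))$: fiberwise against $d\mu^u_x$ this sums to $\sum_C\nu_x(C)\le 1$, whence $\int q\,d\mu\le 1$ and Jensen yields $\int\log q\,d\mu\le 0$. The hypothesis then forces equality and $q\equiv 1$. (This is precisely the orientation in \cref{prop:easyLed}: the ``candidate'' Lebesgue-type measure sits in the numerator, the actual $\mu$-conditional in the denominator, and one integrates against $d\mu$.)

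A minor slip: you write $-\int\log h_\mu\,d\mu=H_\mu(\xi\mid f^{-1}\xi)$; it should be $H_\mu(f^{-1}\xi\mid\xi)$, which by $f$-invariance equals $H_\mu(\xi\mid f\xi)=h_\mu(f,\xi)$. Your integrability concerns are reasonable but not the real obstacle; once the ratio is oriented correctly the argument is short.
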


\begin{proof}
	Each element $\xi(x)$ contains countably many atoms of $f^{-1}\xi$ and $\mu$ and $\nu$ coincide on $\B(\xi)$. Hence in order to show that $\mu$ and $\nu$ coincide on $\B({f^{-1}\xi})$ it is enough to prove that $\nu_y(f^{-1}\xi(y))=\mu_y(f^{-1}\xi(y))$ almost everywhere. We consider the derivative
	$$q(y)=\frac{\nu_y\left(f^{-1}\xi(y)\right)}{\mu_y\left(f^{-1}\xi(y)\right)}$$
	which is well-defined almost everywhere and positive. Furthermore we have $\int q\,d\mu=1$. Using Jensen's inequality and the concavity of the logarithm, one has:
	$$\int\log (q)\, d\mu\leq\log\left(\int q\,d\mu\right)=0.$$
	Moreover this inequality is an equality if and only if $\log(q)=0$ $\mu$-almost everywhere, which as mentioned before, would imply that $\mu=\nu$ on $\B({f^{-1}\xi})$.	
	On the one hand, we have by definition
	$$-\int\log \mu_y\left(f^{-1}\xi(y)\right)d\mu(y)=H_\mu(f^{-1}\xi\mid\xi)=h_{\mu}(f,\xi).$$
	On the other hand it is possible, thanks to the definition of $\rho$, to compute explicitly
	$$-\int\log\nu_y\left(f^{-1}\xi(y)\right)d\mu(y)=\int\log J^u\,d\mu.$$
	By hypothesis these two quantities are equal.
\end{proof}

\subsection{Ledrappier--Young II}\label{S:LY2}

The second part of Ledrappier--Young's work focuses on finding a general entropy formula for measures which are not SRB. The formula they find is similar, the role of the multiplicities being replaced by some quantities $\gamma^j$ representing roughly the dimension of the measure $\mu$ in the $E^j$-direction. They prove the following theorem.

\begin{theorem}[Ledrappier--Young II]\index{theorem!Ledrappier--Young}
	\label{t:LYII}
	Let $f$ be a $C^{1+\beta}$ diffeomorphism of a compact Riemannian manifold $M$ and $\mu$ be an ergodic, $f$-invariant probability measure. Let $\delta_j$ denote the dimension of $\mu$ along the $j$-th dimensional unstable manifolds and
	$$\gamma^j=\delta^j-\delta^{j-1}$$
	(with $\delta_0=0$). Then 
	$$h_{\mu}(f)=\sum_{\lambda^j>0}\gamma^j\lambda^j.$$
\end{theorem}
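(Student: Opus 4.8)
The plan is to run the inductive ``chain rule for entropies along the flag of fast unstable manifolds'' that is the heart of \cite{LYII}. Let $\lambda^1>\lambda^2>\cdots>\lambda^\ell>0$ be the distinct positive Lyapunov exponents of $\mu$, and for $1\le i\le\ell$ let $\W^i$ be the intermediate foliation whose leaves $W^i(x)$ are tangent to $\bigoplus_{\lambda^j\ge\lambda^i}E^j(x)$, so that $\W^1\subset\W^2\subset\cdots\subset\W^\ell=\W^u$. Exactly as in Proposition \ref{p:existence_partition} one builds, for each $i$, an increasing, generating measurable partition $\xi^i$ subordinate to $\W^i$, and exactly as in Proposition \ref{p:indep_partition} the quantity $h^i:=h_\mu(f,\xi^i)$ depends only on $\W^i$. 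By construction $h^0=0$ (the partition into points, if we set $\W^0$ to be the trivial foliation), and by Theorem \ref{t:tec_LYI} one has $h^\ell=h^u_\mu(f)=h_\mu(f)$. Hence it suffices to prove, for each $i$, the identity
$$h^i-h^{i-1}=\gamma^i\lambda^i,\qquad \gamma^i:=\delta^i-\delta^{i-1},$$
and then telescope.

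First I would set up the local (Brin--Katok style) description of each $h^i$. Writing $\mu^i_x$ for the system of conditional measures of $\mu$ along the atoms of $\xi^i$ and $B^i_n(x,r)$ for the forward dynamical ball inside $W^i(x)$ (as in \eqref{eq:dyn_ball_unstable}), the proof of Theorem \ref{t:LedYoun_local}, which only uses that $\W^i$ is an invariant expanding foliation equipped with an increasing generating subordinate partition, goes through at each level $i$ and yields, for $\mu$-a.e.\ $x$,
$$h^i=\lim_{r\to 0}\ \lim_{n\to\infty}\ -\tfrac1n\log\mu^i_x\big(B^i_n(x,r)\big).$$
In parallel I would record the dimension-theoretic input: the local dimension $\delta^i:=\lim_{r\to 0}\frac{\log\mu^i_x(B^i(x,r))}{\log r}$ of $\mu$ along $W^i$-leaves exists and is a.e.\ constant, these dimensions are monotone ($0\le\gamma^i=\delta^i-\delta^{i-1}\le m^i$), and, more precisely, disintegrating $\mu^i_x$ along the sub-foliation of $W^i(x)$ by $W^{i-1}$-plaques produces ``transverse'' measures whose pointwise dimension in the $E^i$-direction equals $\gamma^i$. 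Establishing that these intermediate and transverse dimensions are well defined is one of the genuinely hard parts, and I would follow \cite{LYII} closely here.

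The crux is then the geometric comparison of dynamical balls at successive levels of the flag. Working inside Lyapunov charts and using that $f$ expands $W^i$-leaves with slowest rate $e^{\lambda^i n}$ along $E^i$, I would show that, up to multiplicative errors that remain bounded as $n\to\infty$ (at the cost of shrinking $r$), the ball $B^i_n(x,r)$ has the approximate product structure
$$B^i_n(x,r)\ \asymp\ \bigcup_{y\in\mathcal T^i_n(x,r)}B^{i-1}_n(y,r'),$$
where $\mathcal T^i_n(x,r)$ is a transversal of scale $\sim e^{-\lambda^i n}$ in the $E^i$-direction to the sub-foliation of $W^i(x)$ by $W^{i-1}$-plaques. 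The delicate point --- and the one that forced the $C^2$ hypothesis in \cite{LYII} before being removed in the $C^{1+\beta}$ setting in \cite{B} --- is that the holonomy between $W^{i-1}$-plaques inside a $W^i$-leaf is Lipschitz, so that the transverse geometry genuinely sees only the $E^i$-direction and the separation of $W^{i-1}$-plaques is not faster than the $E^i$-expansion. Granting this, $\mu^i_x(B^i_n(x,r))\approx\mu^{i-1}_y(B^{i-1}_n(y,r'))\cdot(\text{transverse mass of a ball of radius }e^{-\lambda^i n})$, and by the definition of $\gamma^i$ the transverse mass is $\approx e^{-\gamma^i\lambda^i n}$. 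Applying $-\tfrac1n\log(\cdot)$ and then the iterated limits yields $h^i=h^{i-1}+\gamma^i\lambda^i$; summing over $1\le i\le\ell$ with $h^0=0$ and $h^\ell=h_\mu(f)$ gives $h_\mu(f)=\sum_{\lambda^j>0}\gamma^j\lambda^j$. I expect the combination of the two hard steps --- proving the transverse dimensions are well defined and a.e.\ constant, and proving the Lipschitz property of the intermediate unstable holonomies (together with the careful bookkeeping of zero Lyapunov exponents inside the charts, which contribute nothing to the sum but still perturb the geometry) --- to be the main obstacle, the rest being organizational once the local entropy formula of Theorem \ref{t:LedYoun_local} is in hand.
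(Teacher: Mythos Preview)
Your proposal is correct and follows essentially the same three-step strategy sketched in the paper: establish the local entropy formula along each $\W^i$, prove the incremental identity $h^i-h^{i-1}=\lambda^i\gamma^i$ by passing to the transverse/quotient dynamics inside $W^i$ (where the Lipschitzness of the $\W^{i-1}$-holonomies is the key technical ingredient, with the $C^{1+\beta}$ case handled via \cite{B}), and close with $h^{k^u}=h_\mu(f)$ from Theorem \ref{t:tec_LYI}. You have correctly identified both hard steps and their roles; there is nothing to add.
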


As for \cref{t:LYI}, F. Ledrappier and L.-S. Young proved \cref{t:LYII} only for $C^2$ diffeomorphisms, but again, after the works of A. Brown \cite{B} and of L. Barreira, Ya.~Pesin and J.~Schmeling \cite[Appendix]{BPS} establishing the required Lipschitz regularity of holonomies of intermediate foliations, it holds for $C^{1+\beta}$ diffeomorphisms.

Before entering into the proof of Theorem \ref{t:LYII} we need to introduce some of the objects appearing in the statement.

\subsubsection{Nested foliations and Hausdorff dimension}

\subsubsection*{Unstable foliations} For  a regular point $x\in M$ let
$$\lambda_1(x)>\lambda_2(x)>\cdots>\lambda_{k^u(x)}(x)>0$$
be the positive Lyapunov exponents of $x$. They are associated with a splitting of the tangent space
$$T_xM=E^1(x)\oplus E^2(x)\oplus\cdots\oplus E^{k^u(x)}(x).$$
We assume that $\mu$ is ergodic so $k^u(x)$, $m^j(x)=\dim E^j(x)$ and $\lambda^j(x)$ do not depend on the $\mu$-typical $x$. The intermediate spaces $E^2(x),\ldots,E^{k^u}(x)$ need not to be integrable. But by Pesin's theory there exist $C^2$-immersed manifolds at $\mu$ a.e.~$x$ denoted by $W^1(x),W^2(x),\ldots,W^{k^u}(x)$, tangent at $x$ to the spaces
\begin{eqnarray*}
	& &E^1(x) \\
	& &E^1(x)\oplus E^2(x)\\
	& &\vdots\\
	& &E^1(x)\oplus E^2(x)\oplus\cdots\oplus E^{k^u}(x).
\end{eqnarray*}
These manifolds are dynamically determined as follows
$$W^j(x)=\left\{y\in M;\,\lims_{n\to\infty}\frac{1}{n}\log d(f^{-n}(x),f^{-n}(y))\leq-\lambda^j\right\}.$$
They form (a.e.)~\textit{nested foliations} $\W^1,\ldots,\W^{k^u}$, where $\W^{k^u}$ is the unstable foliation $\W^u$ we have already worked with. The foliation $\W^j$ will be referred to as the $j$-th unstable foliation.

\subsubsection*{Pointwise dimension of measures} Let $X$ be a metric space and $m$ be a probability measure on $X$. Recall the following classical fact. A proof may be found in \cite{Yo1}.

\begin{definition}
	Say the dimension of $m$, denoted by $\dim m$, is well defined and equal to $\alpha$ if for $m$-a.e.~point $x$ the following limit is well defined 
	$$\alpha=\lim_{\eps\to 0}\frac{\log\,m(B(x,\eps))}{\log\eps}.$$
	In that case the dimension of $m$ coincides with its Hausdorff dimension, i.e.
	$$\dim m=\HD(m)=\inf_{m(Y)=1}\HD(Y).$$
\end{definition}

We can adopt this viewpoint and study the dimension of an ergodic measure along unstable manifolds. Using Ledrappier--Strelcyn's argument (see Proposition \ref{p:existence_partition}) one deduces for every $j$ the existence of a measurable partition $\xi^j$ subordinate to the $j$-th unstable manifold $\W^j$. We can moreover ask
$$\xi^1\succ \xi^2\succ \ldots \succ\xi^{k^u}.$$

For $j\in\{1,\ldots,k^u\}$ we denote by $d^j$ the distance in $j$-th unstable leaves for the induced Riemannian structure.
The corresponding balls are denoted by $B^j(x,r)$ and the dynamical balls (by analogy with \eqref{eq:dyn_ball_unstable} in Section~\ref{ssc:local_entropy}) are denoted by $B^j_n(x,r)$, $r>0$ and $n\in\N$.

We can consider for every $j$ a system $(\mu^j_x)_{x\in M}$ of conditional measures of $\mu$ associated with $\xi^j$. We define for $\mu$-a.e.~$x\in M$
$$\underline{\delta}^j(x,\xi^j)=\limi_{\eps\to 0}\frac{\log \mu(B^j(x,\eps))}{\log\eps},$$
$$\overline{\delta}^j(x,\xi^j)=\lims_{\eps\to 0}\frac{\log \mu(B^j(x,\eps))}{\log\eps}.$$
F. Ledrappier  and L.-S. Young prove in \cite[Proposition 7.3.1]{LYII} the following.

\begin{proposition}
	\label{p:dim_well_def}
	The numbers $\underline{\delta}^j=\underline{\delta}^j(x,\xi^j)$ and $\overline{\delta}^j=\overline{\delta}^j(x,\xi^j)$ don't depend on $\xi^j$ nor on the $\mu$-typical $x$. Moreover
	$$\underline{\delta}^j=\overline{\delta}^j.$$
	If $\delta^j$ is the common value then for $\mu$-a.e.~$x\in M$ and every system of conditional measures $(\mu_x^j)_{x\in M}$ associated with a measurable partition $\xi^j$ subordinate to $\W^j$ we have
	$$\delta^j=\lim_{\eps\to 0}\frac{\log \mu(B^j(x,\eps))}{\log\eps}.$$
\end{proposition}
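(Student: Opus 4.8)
The plan is to prove the three assertions in order: independence of $\underline{\delta}^j,\overline{\delta}^j$ from the subordinate partition $\xi^j$, then independence from the $\mu$-typical $x$, and finally the existence of the limit, i.e.\ $\underline{\delta}^j=\overline{\delta}^j$. For the first, given two measurable partitions $\xi_1,\xi_2$ subordinate to $\W^j$, each atom $\xi_k(x)$ contains an open neighbourhood of $x$ in $W^j(x)$; passing to the canonical leaf-wise measures $\mu^j_x$ on $W^j(x)$ (constructed as in Section~\ref{sec:condi}) we have, for $\eps$ small enough that $B^j(x,\eps)\subset\xi_1(x)\cap\xi_2(x)$, that $\mu^{\xi_k}_x(B^j(x,\eps))=\mu^j_x(B^j(x,\eps))/\mu^j_x(\xi_k(x))$. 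Hence $\log\mu^{\xi_1}_x(B^j(x,\eps))$ and $\log\mu^{\xi_2}_x(B^j(x,\eps))$ differ by a constant as $\eps\to0$; dividing by $\log\eps\to-\infty$ kills the difference, so $\underline{\delta}^j$ and $\overline{\delta}^j$ do not depend on $\xi^j$, and from now on I would work directly with $\mu^j_x$. For independence of $x$, I would show that $x\mapsto\underline{\delta}^j(x)$ and $x\mapsto\overline{\delta}^j(x)$ are measurable and $f$-invariant, hence constant by ergodicity of $\mu$. Invariance is a bounded-distortion statement: for $\mu$-a.e.\ $x$ the restriction of $D_xf$ to $\bigoplus_{\lambda^i\ge\lambda^j}E^i(x)$ has finite norm and conorm, so there are $0<c_x\le C_x<\infty$ with $B^j(f(x),c_x\eps)\subset f(B^j(x,\eps))\subset B^j(f(x),C_x\eps)$ for all small $\eps$; combining this with the equivariance $f_*\mu^j_x\,\propto\,\mu^j_{f(x)}$ and the finiteness of $\log c_x,\log C_x$ yields $\underline{\delta}^j(f(x))=\underline{\delta}^j(x)$ and likewise for $\overline{\delta}^j$.

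The substantive point is the equality $\underline{\delta}^j=\overline{\delta}^j$, which I would establish by induction on $j$, with trivial base case $\delta^0=0$. Assuming $\delta^{j-1}$ exists, the external input I would invoke is the Lipschitz regularity of the holonomy of the subfoliation $\W^{j-1}$ inside $\W^j$-leaves, valid for $C^{1+\beta}$ diffeomorphisms by \cite{B} and \cite{BPS}; this furnishes, along $W^j(x)$, an approximate product structure for $\mu^j_x$: it disintegrates over a transverse measure $\hat\mu_x$ on the local quotient $W^j(x)/W^{j-1}$ with conditionals equivalent, with densities bounded above and below, to the $W^{j-1}$-leaf-wise measures. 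On this quotient $f$ expands at the uniform rate $e^{\lambda^j}$ (all Lyapunov exponents along $E^j$ equal $\lambda^j$), so I can rerun the Brin--Katok local-entropy argument of Theorem~\ref{t:brinkatok} and Theorem~\ref{t:LedYoun_local} in the transverse direction: the limit $h'_j=\lim_{n\to\infty}-\frac1n\log\hat\mu_x(\hat B_n(x,\rho))$ exists for fixed small $\rho$ by the subadditive ergodic theorem, and since a transverse dynamical ball $\hat B_n(x,\rho)$ is trapped between geometric balls $\hat B(x,\eps_1)$ and $\hat B(x,\eps_2)$ with $\eps_1,\eps_2$ both comparable to $e^{-n\lambda^j}$ up to subexponential error, one obtains a genuine limit $\gamma^j=\lim_{\eps\to0}\log\hat\mu_x(\hat B(x,\eps))/\log\eps=h'_j/\lambda^j$. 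The product structure then gives $\log\mu^j_x(B^j(x,\eps))=\log\mu^{j-1}_x(B^{j-1}(x,\eps))+\log\hat\mu_x(\hat B(x,\eps))+O(1)$, so dividing by $\log\eps$ and using the inductive hypothesis shows $\delta^j=\delta^{j-1}+\gamma^j$ exists; the first two steps then upgrade this to the displayed formula, for $\mu$-a.e.\ $x$ and every subordinate $\xi^j$.

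The technical core — and the step I expect to be hardest — is making the transverse product structure of $\mu^j_x$ precise and controlling away the non-conformality, so that a single exponent $\lambda^j$ governs the transverse dimension: one must bound the ratio of the separation of $\W^{j-1}$-plaques (the dynamics in the local quotient $W^j/W^{j-1}$) to the expansion along $\W^{j-1}$-leaves, and this is exactly where the Lipschitz-holonomy hypothesis for the intermediate foliations (hence the $C^{1+\beta}$ assumption, through \cite{B}) is indispensable. Without it the transverse measure $\hat\mu_x$ need not even be well defined as a quotient of $\mu^j_x$, and the comparison between dynamical and geometric transverse balls breaks down. The remaining work is bookkeeping: checking measurability of the relevant limit functions, handling the Pesin-chart distortion estimates, and verifying that the subexponential errors in all ball comparisons indeed vanish after division by $\log\eps$.
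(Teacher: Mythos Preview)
The paper does not supply its own proof of this proposition: it is quoted directly from \cite[Proposition 7.3.1]{LYII} without argument. That said, your outline matches the global strategy the paper sketches immediately afterwards for Theorem~\ref{t:LYII} --- induction on $j$, Lipschitz regularity of the $\W^{j-1}$-holonomies inside $\W^j$ (via \cite{B} and \cite{BPS}) to obtain a transverse product structure, and reduction to a single-exponent ``conformal'' situation on the quotient $\xi^j/\xi^{j-1}$ --- and this is indeed the Ledrappier--Young route. Your first two steps (independence from $\xi^j$ via the leaf-wise normalization, and $f$-invariance plus ergodicity for independence from $x$) are correct and standard.

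One caution on the inductive step: the phrase ``on this quotient $f$ expands at the uniform rate $e^{\lambda^j}$'' overstates things --- the expansion is only asymptotically $e^{\lambda^j}$, with fluctuations controlled inside Pesin sets --- and the existence of the transverse local entropy $h'_j$ is not a direct application of the subadditive ergodic theorem but requires the partition machinery (this is exactly what the paper's Theorem~\ref{t:LedYoun_local} packages). You flag this yourself (``subexponential error'', ``Pesin-chart distortion estimates''), so the sketch is honest, but be aware that this bookkeeping is where the bulk of the work in \cite{LYII} actually lives.
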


The number $\delta^j$ is called the dimension of $\mu$ on the $j$-th unstable foliation.

\subsubsection*{Pointwise entropies} F. Ledrappier and L.-S. Young also define a  pointwise version of the entropy along the $j$-th unstable foliation. Considering a partition $\xi_j$ subordinate to $\W^j$ and a system of conditional measures $(\mu^j_x)_{x\in M}$ we define
$$\underline{h}^j_\mu(f,x,\xi^j)=\lim_{r\to 0}\limi_{n\to\infty}-\frac{1}{n}\log\,\mu_x^j(B_n^j(x,r)),$$
and
$$\overline{h}_\mu(f,x,\xi^j)=\lim_{r\to 0}\lims_{n\to\infty}-\frac{1}{n}\log\,\mu_x^j(B_n^j(x,r)).$$
They proved a result analogue to Theorem \ref{t:LedYoun_local}:
$$\underline{h}^j_\mu(f,x,\xi^j)=\overline{h}^j_\mu(f,x,\xi^j)=\lim_{r\to 0}\lim_{n\to\infty}-\frac{1}{n}\log\,\mu_x^j(B^j_n(x,r)),$$
and the common value is $$H_\mu(\xi^j\mid f\xi^j)=h^j_{\mu}(f).$$

\subsubsection{Idea of proof for Theorem \ref{t:LYII}}

\subsubsection*{Conformal case} Before explaining the idea of the proof, let us mention the simplest case.

\begin{theorem}[The conformal case]
	\label{t:confcase}
	Let $f:M\to M$ be a $C^2$ mapping and $\mu$ an ergodic $f$-invariant probability measure. Assume that $f$ has a unique Lyapunov exponent $\lambda>0$, $\mu$-a.e. Then
	$$h_\mu(f)=\lambda\dim(\mu).$$
\end{theorem}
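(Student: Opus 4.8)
The plan is to derive the formula from the pointwise entropy formula of Brin--Katok (Theorem~\ref{t:brinkatok}) together with the almost-conformal geometry of $f$ near $\mu$-generic points. Since $\lambda>0$ is the \emph{unique} Lyapunov exponent of $(f,\mu)$, Oseledec's theorem (Theorem~\ref{thm:oseled}) gives $\tfrac1n\log\|D_xf^n\|\to\lambda$ along the orbit of a $\mu$-typical $x$, with tempered control $\|D_xf^n\|\le C_\epsilon(x)e^{n(\lambda+\epsilon)}$, $C_\epsilon$ finite a.e.\ and tempered along the orbit. By Pesin's theory of local unstable manifolds (in the version valid for non-invertible maps, cf.\ Section~\ref{unstmanifold11}), $f^n$ admits on a small metric ball around $x$ a well-defined local inverse branch sending $f^n(x)$ to $x$ which is a contraction by a factor $\asymp e^{-n\lambda}$ up to the tempered constants; equivalently $f^n$ is injective and expands by $\asymp e^{n\lambda}$ on a full-dimensional Pesin neighbourhood $W^u(x)$ of $x$, and every dynamical ball $B_n(x,r)$ with $r$ small enough is contained in $W^u(x)$. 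In particular the possible non-invertibility of $f$ introduces no extra difficulty: one only ever works with these local branches.

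The technical heart of the proof, and the step I expect to be the main obstacle, is a geometric comparison lemma: for $\mu$-a.e.\ $x$, every $\epsilon>0$, all small $r>0$ and all large $n$,
\[
B\bigl(x,\ C_\epsilon(x)^{-1}\,r\,e^{-n(\lambda+\epsilon)}\bigr)\ \subseteq\ B_n(x,r)\ \subseteq\ B\bigl(x,\ C_\epsilon(x)\,r\,e^{-n(\lambda-\epsilon)}\bigr),
\]
where $C_\epsilon(x)<\infty$ is an $n$-independent, tempered factor. The right-hand inclusion uses the lower expansion bound $d(f^nx,f^ny)\ge C_\epsilon(x)^{-1}e^{n(\lambda-\epsilon)}d(x,y)$ valid for $y\in W^u(x)$ close to $x$; the left-hand inclusion uses the upper bound on $\|D_xf^k\|$ together with a Gronwall-type bootstrap carried out inside a Lyapunov chart to keep the orbit of $y$ close to that of $x$. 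Setting up the Lyapunov charts, controlling how the admissible radius $r(x)$ degenerates, and absorbing the non-uniform constants without spoiling the $e^{-n\lambda}$ scaling is exactly where all the non-uniform-hyperbolicity bookkeeping lives.

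Granting the lemma, apply $-\tfrac1n\log\mu(\cdot)$ to both inclusions and write each extreme term as $-\tfrac{\log\rho_n}{n}\cdot\tfrac{\log\mu(B(x,\rho_n))}{\log\rho_n}$ with $\rho_n\to0$ and $-\tfrac{\log\rho_n}{n}\to\lambda\mp\epsilon$. Assuming the pointwise dimension $\delta=\lim_{\rho\to0}\log\mu(B(x,\rho))/\log\rho$ exists and is a.e.\ constant (Proposition~\ref{p:dim_well_def} specialised to $k^u=1$, where $\W^1$ is the full neighbourhood so $\delta^1=\delta$; or Young's theorem on dimensions of conformal measures), we get $-\tfrac1n\log\mu(B_n(x,r))\to(\lambda\mp\epsilon)\delta$ up to the $n$-independent constants, hence by Theorem~\ref{t:brinkatok}
\[
(\lambda-\epsilon)\,\delta\ \le\ h_\mu(f)\ \le\ (\lambda+\epsilon)\,\delta .
\]
Letting $\epsilon\to0$ and using Young's identification of the pointwise and Hausdorff dimensions yields $h_\mu(f)=\lambda\,\delta=\lambda\dim(\mu)$. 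If one prefers not to import the existence of $\delta$, running the same bracketing with $\limsup$/$\liminf$ gives $\lambda\,\underline\delta\le h_\mu(f)\le\lambda\,\overline\delta$ (using $f$-invariance and ergodicity to make $\underline\delta,\overline\delta$ constant), and the equality $\underline\delta=\overline\delta$ can be recovered along the way, again following Young.

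Finally I would record that the statement is the $k^u=1$ case of the Ledrappier--Young dimension formula (Theorem~\ref{t:LYII}): with a single positive exponent $\lambda^1=\lambda$ whose unstable foliation $\W^1$ is the full local neighbourhood, one has $\gamma^1=\delta^1=\dim(\mu)$ and $h_\mu(f)=\gamma^1\lambda^1$. The self-contained route sketched above is preferable in this special case because it sidesteps the measurable-partition and Lipschitz-holonomy apparatus of \cite{LYII}, uses only the local inverse branches supplied by positivity of the exponent, and therefore applies verbatim to non-invertible $f$, such as the expanding maps $M_k$ relevant to Furstenberg's conjecture.
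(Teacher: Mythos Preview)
Your approach is essentially the same as the paper's: the paper explicitly says ``let us give the main idea'' and then argues, via Pesin theory, that the dynamical ball $B_n(x,r)$ looks like an ordinary ball $B(x,re^{-\lambda n})$, from which the Brin--Katok formula yields $-\tfrac1n\log\mu(B_n(x,r))\sim\lambda\cdot\frac{\log\mu(B(x,\eps))}{\log\eps}$ with $\eps=re^{-\lambda n}$. Your proposal is a more detailed and more carefully hedged version of exactly this sketch---in particular your two-sided inclusion lemma is precisely what the paper means by ``this affirmation is the one that needs Pesin's theory to be made rigorous,'' and your handling of the existence of $\dim(\mu)$ via $\underline\delta,\overline\delta$ is more thorough than the paper's heuristic.
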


\begin{proof} Let us give the main idea. The rigorous proof uses Pesin's theory and in particular, Ma\~n\'e's argument.
	
	Let $n\in\N$ and $\eps>0$. Locally, $f$ ``looks like'' an expansion by $e^\lambda$ and the dynamical ball $B_n(x,r)$ looks like a ball $B(x,re^{-\lambda n})$ (this affirmation is the one that needs Pesin's theory to be made rigorous). So if $\eps=re^{-\lambda n}$ we have
	$$-\frac1n \log \mu(B_n(x,r))\sim\lambda\frac{\log\mu(B(x,\eps))}{\log\eps}.$$
	The formula follows.
\end{proof}

Of course if $f$ has various Lyapunov exponents, a dynamical ball looks like an ellipsoid whose eccentricity tends to infinity and the previous argument doesn't work as easily. The precise formula is then given by Ledrappier--Young.

\subsubsection{Global strategy} The proof of Theorem \ref{t:LYII} follows three steps.

\begin{enumerate}
	\item $h^1=\lambda^1\delta^1$;
	\item $h^j-h^{j-1}=\lambda^j(\delta^j-\delta^{j-1})=\lambda^j\gamma^j$;
	\item $h^{k^u}=h^u=h_{\mu}(f)$.
\end{enumerate}

The first case is analogous to the conformal case. In restriction to the first unstable manifolds, there is only one Lyapunov exponent and $f$ looks like an expansion by $e^{\lambda^1}$. Here again one has to use Pesin's theory to make this idea rigorous.

In order to consider the second case, one has to collapse $\W^{j-1}$ inside $\W^j$ and to consider ``quotient dynamics'' on the quotient space $\W^j/\W^{j-1}$ (one rather works with quotient partitions $\xi^j/\xi^{j-1}$). Once again one of the main technical issues is that there is no canonical ``transverse distance'' on the quotient, and one has to prove and use the fact that $\W^{j-1}$-holonomies are Lipschitz inside $\W^j$. Once we manage to deal with these important technicalities, we see that the quotient dynamics induced by $f$ on $\xi^j/\xi^{j-1}$ has a unique Lyapunov exponent (this is $\lambda^j$) and that the corresponding entropy and dimension are respectively $h^j-h^{j-1}$ and $\delta^j-\delta^{j-1}$. The situation is one more time analogous to the conformal case.

The third case is treated by Theorem \ref{t:tec_LYI}. Summing those equalities, the second Ledrappier--Young's theorem follows.

\begin{bibsection}
	\bibliographystyle{plain}
	\def\cprime{$'$}
	% \bib, bibdiv, biblist are defined by the amsrefs package.
	
	\begin{biblist}*{prefix={D}}

\bib{BPS}{article}{
      author={Barreira, Luis},
      author={Pesin, Yakov},
      author={Schmeling, J{\"o}rg},
       title={Dimension and product structure of hyperbolic measures},
        date={1999},
        ISSN={0003-486X},
     journal={Ann. of Math. (2)},
      volume={149},
%      number={3},
       pages={755\ndash 783},
%         url={http://dx.doi.org/10.2307/121072},
%%review={\MR{1709302 (2000f:37027)}},
}		
		
		\bib{BCS}{article}{
			author={Bonatti,C.},
			author={Crovisier, S.},
			author={Shinohara, K.},
			title={The $C^{1+\alpha}$ hypothesis in Pesin theory revisited},
			date={2013},
			journal={Journal of Modern Dynamics},
			volume={7},
			pages={605--618},
		}
		
		\bib{BK}{article}{
			author={Brin, M.},
			author={Katok, A.},
			title={On local entropy},
			date={1983},
			journal={Lecture Notes in Math. Geometric dynamics ({R}io de {J}aneiro, 1981), Springer, Berlin},
			volume={1007},
			pages={30-38},
		}

		\bib{B}{article}{
			author={Brown, A.},
			title={Smoothness of stable holonomies inside center-stable manifolds and the $C^2$ hypothesis in Pugh-Shub and Ledrappier-Young theory},
			date={2016},
			journal={arXiv:1608.05886},
			volume={},
			pages={},
		}
		
		\bib{BI}{article}{
			author={Burago,D.},
			author={Ivanov, S.},
			title={Partially hyperbolic diffeomorphisms of 3-manifolds with
				abelian fundamental groups},
			date={2008},
			journal={Journal of Modern Dynamics},
			volume={2},
			pages={541--580},
		}

		\bib{CP}{article}{
			author={Crovisier, S.},
			author={Potrie, R.},
			title={Introduction to partially hyperbolic dynamics},
			date={},
			journal={School on Dynamical Systems, ICTP, Trieste (juillet 2015), avalaible at \url{https://www.math.u-psud.fr/~crovisie/00-CP-Trieste-Version1.pdf}},
			volume={},
			pages={},
		}
		
		\bib{FHY}{article}{
			author={Fathi, A.},
			author={Herman, M.},
			author={Yoccoz, J.C.},
			title={A proof of Pesin's stable manifold theorem},
			date={1983},
			journal={Lecture Notes in Math. Geometric dynamics ({R}io de {J}aneiro, 1981), Springer, Berlin},
			volume={1007},,
			pages={117\ndash 215},
		}
		
		\bib{HPSbook}{book}{
			author={Hirsch, M. W.},
			author={Pugh, C. C.},
			author={Shub, M.},
			title={Invariant manifolds},
			series={Lecture Notes in Mathematics},
			volume={583},
			publisher={Springer-Verlag, Berlin-New York},
			date={1977},
			pages={ii+149},
		}
		
		\bib{KH}{book}{
			author={Katok, A.},
			author={Hasselblatt, B.},
			title={Introduction to the modern theory of dynamical systems},
			series={Encyclopedia of Mathematics and its Applications},
			volume={54},
			%        note={With a supplementary chapter by Katok and Leonardo Mendoza},
			publisher={Cambridge Univ. Press},
			date={1995},
			pages={xviii+802},        
		}
		
		\bib{L}{article}{
			author={Ledrappier, F.},
			title={Propri\'et\'es ergodiques des mesures de {S}ina\"\i},
			date={1984},
			journal={Inst. Hautes \'Etudes Sci. Publ. Math.},
			volume={59},,
			pages={163\ndash 188},
		}

		\bib{LS}{article}{
			author={Ledrappier, F.},
			author={Strelcyn, J.-M.},
			title={A proof of the estimation from below in {P}esin's entropy
				formula},
			date={1982},
			journal={Ergodic Theory Dynam. Systems},
			volume={2},
			number={2},
			pages={203\ndash 219},
		}

		\bib{LYI}{article}{
			author={Ledrappier, F.},
			author={Young, L.-S.},
			title={The metric entropy of diffeomorphisms. {I}. {C}haracterization
				of measures satisfying {P}esin's entropy formula},
			date={1985},
			journal={Ann. of Math. (2)},
			volume={122},
			number={3},
			pages={509\ndash 539},
		}
		
		\bib{LYII}{article}{
			author={Ledrappier, F.},
			author={Young, L.-S.},
			title={The metric entropy of diffeomorphisms. {II}. {R}elations
				between entropy, exponents and dimension},
			date={1985},
			journal={Ann. of Math. (2)},
			volume={122},
			number={3},
			pages={540\ndash 574},
		}
		
		\bib{Ma}{article}{
			author={Ma\~n\'e, R.},
			title={A proof of {P}esin's formula},
			date={1981},
			journal={Ergodic Theory Dyn. Sys.},
			volume={1},
			number={1},
			pages={95\ndash 102},
		}
		
		\bib{PeStable}{article}{
			author={Pesin, Ya. B.},
			title={Families of invariant manifolds corresponding to non zero characteristic exponents},
			date={1978},
			journal={Math. of the USSR Isvestija},
			volume={10},
			pages={1261\ndash 1305},
		}

		\bib{Pe}{article}{
			author={Pesin, Ya. B.},
			title={Characteristic {L}yapunov exponents, and smooth ergodic
				theory},
			date={1977},
			journal={Russ. Math. Surveys},
			volume={32},
			pages={55\ndash 114},
		}
		
		\bib{Pugh}{article}{
			author={Pugh , C.},
			title={The $C^{1+\alpha}$ hypothesis in Pesin theory},
			date={1984},
			journal={Publications math\'ematiques de I.H.\'E.S.},
			volume={59},
			pages={143\ndash 161},
		}
		
		\bib{Rok}{article}{
			author={Rokhlin , V.A.},
			title={Lectures on the entropy theory of measure preserving transformations},
			date={1967},
			journal={Russian Mathematical Surveys},
			volume={22},
			pages={1\ndash 52},
		}
		
		\bib{RuStable}{article}{
			author={Ruelle, D.},
			title={Ergodic theory of differentiable maps},
			date={1979},
			journal={Inst. Hautes \'Etudes Sci. Publ. Math.},
			volume={50},
			pages={27\ndash 58},
		}
		
		\bib{Ru}{article}{
			author={Ruelle, D.},
			title={An inequality for the entropy of differentiable maps},
			date={1978},
			journal={Bol. Soc. Brasil. Mat.},
			volume={9},
			number={1},
			pages={83\ndash 87},
		}

		\bib{Ya}{article}{
			author={Yang, J.},
			title={Entropy along Expanding Foliations},
			date={2016},
			journal={preprint, arXiv:1601.05504},
			volume={},
			number={},
			pages={},
		}
		
		\bib{Yo1}{article}{
			author={Young, L.-S.},
			title={Dimension, entropy and {L}yapunov exponents},
			date={1982},
			journal={Ergodic Theory Dynam. Systems},
			volume={2},
			number={1},
			pages={109\ndash 124},
		}
		
		\bib{Yo2}{article}{
			author={Young, L.-S.},
			title={Ergodic Theory of Differentiable Dynamical Systems},
			date={1995},
			booktitle = {Real and Complex Dynamical Systems},
			publisher = {Springer Netherlands},
			pages={293\ndash 336},
		}

	\end{biblist}
\end{bibsection}

\bigskip

\noindent {\scshape S\'ebastien Alvarez}\\
CMAT, Facultad de Ciencias, Universidad de la Rep\'ublica\\
Igua 4225 esq. Mataojo. Montevideo, Uruguay.\\
\texttt{salvarez@cmat.edu.uy}

\smallskip

\noindent {\scshape Mario Rold\'an}\\
Departamento de Matem\'atica, Universidade Federal de Santa Catarina\\
Campus Universit\'ario Trindade,\\
Florian\'opolis - SC - Brazil CEP 88.040-900 \\
\texttt{roldan@impa.br}

%\clearpage
%\addcontentsline{toc}{part}{Index}

\printindex
%\note{Index entries are kind of crappy}
\clearpage

%\part*{Bibliography}
%\input{bibliography.tex}

\end{document}